\def\endexample{$\hfill \triangle$}
\def\vecdq{\overline{Q}}
\def\Z{\mathbb{Z}}
\def\H{\mathbf{H}}
\def\U{\mathbf{U}}
\def\qed{$\hfill \checkmark$}
\def\endexample{$\hfill \triangle$}
\def\N{\mathbb{N}}
\def\tto{\twoheadrightarrow}
\def\a{\alpha}
\def\b{\beta}
\def\qlb{\overline{\mathbb{Q}_l}}
\def\C{\mathbb{C}}
\def\fqb{\overline{\mathbb{F}_q}}
\def\fq{\mathbb{F}_q}
\def\A{\mathcal{A}}
\def\g{\mathfrak{g}}
\def\bo{\mathfrak{b}}
\def\ux{\underline{x}}
\def\FD{\boldsymbol{\Theta}}
\def\h{\mathfrak{h}}
\def\n{\mathfrak{n}}
\def\v{\nu}
\def\Ma{\underline{\mathcal{M}}^{\alpha}_{\vec{Q}}}
\def\QQ{\mathcal{Q}_{\vec{Q}}}
\def\PQ{\mathcal{P}_{\vec{Q}}}
\def\KQ{\mathcal{K}_{\vec{Q}}}
\newtheorem{theo}{\bf{Theorem}}[section]
\newtheorem{lem}[theo]{Lemma}
\newtheorem{cor}[theo]{Corollary}
\newtheorem{prop}[theo]{Proposition}
\newtheorem{conj}[theo]{Conjecture}
\numberwithin{equation}{section}
\title{Lectures on canonical and crystal bases of Hall algebras}
\author{Olivier Schiffmann}
\begin{document}
\maketitle
\tableofcontents

\newpage


\centerline{\large{\textbf{Introduction}}}\addcontentsline{toc}{section}{\tocsection {}{}{Introduction}}

\vspace{.2in}

These notes form the support of a series of lectures given for the summer school ``Geometric methods in representation theory'' at the Institut Fourier  in Grenoble in June 2008. They represent the second half of the lecture series. The first half of the series was dedicated to the notion of the Hall algebra $\mathbf{H}_\A$ of an abelian (or derived) category $\A$, and the notes for that part are written in \cite{Trieste}. The present text is a companion to \cite{Trieste}; we will use the same notation as in \cite{Trieste} and sometimes refer to \cite{Trieste} for definitions. Nevertheless, we have tried to make this text as self-contained as (reasonably) possible.

\vspace{.1in}

In this part of the lecture series we explain how to translate the purely algebraic construction of Hall algebras given in \cite{Trieste} into a geometric one. This geometric lift amounts to replacing the ``naive'' space of functions on the set $\mathcal{M}_{\A}$ of objects of a category $\A$ by a suitable category $\mathcal{Q}_{\A}$ of constructible sheaves on the moduli space (or more precisely, moduli stack) $\underline{\mathcal{M}}_{\A}$ parametrizing the objects of $\A$. The operations in the Hall algebra (multiplication and comultiplication) then ought to give rise to functors
$$\underline{m}:  \mathcal{Q}_{\A} \times \mathcal{Q}_{\A} \to \mathcal{Q}_{\A},$$
$$\underline{\Delta}: \mathcal{Q}_{\A} \to \mathcal{Q}_{\A} \times \mathcal{Q}_{\A}.$$
 The Faisceaux-Fonctions correspondence of Grothendieck, which associates to a constructible sheaf $\mathbb{P} \in {\mathcal{Q}}_{\A}$ its trace--a (constructible) function on the ``naive'' moduli space $\mathcal{M}_{\A}$--  draws a bridge between the ``geometric'' Hall algebra (or rather, \textit{Hall category}) $\mathcal{Q}_{\A}$ and the ``algebraic'' Hall algebra $\mathbf{H}_{\A}$.  Such a geometric lift from $\mathbf{H}_{\A}$ to $\mathcal{Q}_{\A}$ may be thought of as ``categorification'' of the Hall algebra (and is, in fact, one of the early examples of ``categorification'').
 
 \vspace{.1in}
 
 Of course, for the above scheme to start making any sense, a certain amount of technology is required : for one thing, the moduli stack $\underline{\mathcal{M}}_{\A}$ has to be rigorously defined and the accompanying formalism of constructible or $l$-adic sheaves has to be developped. The relevant language for a general theory is likely to be \cite{Toen}.  Rather than embarking on the (probably risky) project of defining the Hall category $\mathcal{Q}_{\A}$ for an arbitrary abelian category $\A$ using that language we believe it will be more useful to focus in these lectures on several examples. 
 Another reason for this is that, as explained in \cite[Section~5]{Trieste}, the correct setting for the theory of Hall algebras (especially for categories of global dimension more than one) seems to be that of derived or triangulated categories. The necessary technology to deal with moduli stacks parametrizing objects in \textit{triangulated} (or \textit{dg}) categories is, as far as we know, still in the process of being fully worked out, see \cite{Toen}, \cite{Toen2}, (this, in any case, far exceeds the competence of the author).

 \vspace{.1in}

The main body of the \textit{existing} theory is the work of Lusztig when $\A=Rep_k\vec{Q}$ is the category of representations of a quiver $\vec{Q}$ over a finite field $k$ (see \cite{L1}, \cite{L2})\footnote{Actually, the theory really originates from Lusztig's theory of \textit{character sheaves} in the representation theory of finite groups of Lie type, see \cite{L3}. This, however, has little to do with Hall algebras.}, which we now succintly describe. In that case (see \cite{Trieste}), there is an embedding
$$\phi: \U^+_v(\g) \hookrightarrow \H_{\vec{Q}}$$
of the positive half of the quantum enveloping algebra of the Kac-Moody algebra $\g$ associated to $\vec{Q}$ into the Hall algebra. The image of this map is called the \textit{composition subalgebra} $\mathbf{C}_{\vec{Q}}$ of $\H_{\vec{Q}}$ and is generated by the constant functions $1_{\alpha}$ for $\alpha$ running among the classes of \textit{simple} objects in ${Rep}_k\vec{Q}$. These classes bijectively correspond to the positive simple roots of $\g$ and we will call them in this way. The moduli stack
$\underline{\mathcal{M}}_{\vec{Q}}$ parametrizing objects of $Rep_{\bar{k}}\vec{Q}$ splits into a disjoint union
$$\underline{\mathcal{M}}_{\vec{Q}} = \bigsqcup_{\alpha \in K_0(\vec{Q})} \underline{\mathcal{M}}_{\vec{Q}}^{\alpha}$$
according to the class in the Grothendieck group. Let $D^b(\underline{\mathcal{M}}^{\alpha}_{\vec{Q}})$ stand for the triangulated category of constructible complexes on $\underline{\mathcal{M}}^{\alpha}_{\vec{Q}}$ (see Lecture~1 for precise definitions). For $\alpha, \beta \in K_0(\vec{Q})$, let $\underline{\mathcal{E}}^{\alpha,\beta}$ be the stack parametrizing inclusions
$M \supset N$, where $M$ and $N$ are objects in $Rep_{\bar{k}}\vec{Q}$ of class $\alpha+\beta$ and $\beta$ respectively. There are natural maps $p_1$ and $p_2$ ~:
\begin{equation}\label{E:corresp}
\xymatrix{ &  \underline{\mathcal{E}}^{\alpha, \beta} \ar[ld]_-{p_1} \ar[rd]^-{p_2} &\\
\ \underline{\mathcal{M}}_{\vec{Q}}^{\alpha} \times  \underline{\mathcal{M}}_{\vec{Q}}^{\beta} & &
\underline{\mathcal{M}}_{\vec{Q}}^{\alpha+\beta}
}
\end{equation}
defined by $p_1: (M \supset N) \mapsto (M/N, N)$, $p_2: (M \supset N) \mapsto (M).$
The map $p_2$ is \textit{proper} (the fiber of $p_2$ over $M$ is the Grassmanian of subobjects $N$ of $M$ of class $\beta$, a projective scheme) while $p_1$ can be shown to be \textit{smooth}. One then considers the functors
\begin{equation*}
\begin{split}
\underline{m}~: D^b(\underline{\mathcal{M}}^{\alpha}_{\vec{Q}} \times \underline{\mathcal{M}}^{\beta}_{\vec{Q}}) &\to  D^b(\underline{\mathcal{M}}^{\alpha+\beta}_{\vec{Q}} )\\
\mathbb{P} &\mapsto p_{2!} p_1^* (\mathbb{P}),
\end{split}
\end{equation*}
and 
\begin{equation*}
\begin{split}
\underline{\Delta}~: D^b(\underline{\mathcal{M}}^{\alpha+\beta}_{\vec{Q}})  &\to  D^b(\underline{\mathcal{M}}^{\alpha}_{\vec{Q}} \times \underline{\mathcal{M}}^{\beta}_{\vec{Q}}) \\
\mathbb{P} &\mapsto p_{1!} p_2^* (\mathbb{P}).
\end{split}
\end{equation*}
These functors can be shown to be (co)associative. Because $p_1$ is smooth and $p_2$ is proper, the functor $\underline{m}$ preserves the subcategory $D^b(\underline{\mathcal{M}}_{\vec{Q}})^{ss}$ of $D^b(\underline{\mathcal{M}}_{\vec{Q}})$ consisting of \textit{semisimple complexes} of geometric origin (this is a consequence of the celebrated Decomposition Theorem of \cite{BBD}). The category $\mathcal{Q}_{\vec{Q}}$ is defined to be the smallest triangulated subcategory of $D^b(\underline{\mathcal{M}}_{\vec{Q}})^{ss}$  which is stable under $\underline{m}$ and taking direct summands, and which contains the
constant complexes $\mathbbm{1}_{\alpha}=\overline{\mathbb{Q}_l}_{\underline{\mathcal{M}}_{\vec{Q}}^{\alpha}}$ as $\alpha$ runs among the set of simple roots. In other words, if for any two constructible complexes $\mathbb{P}_1, \mathbb{P}_2$ we set $\mathbb{P}_1 \star \mathbb{P}_2 =\underline{m} ( \mathbb{P}_1 \boxtimes \mathbb{P}_2)$ then $\mathcal{Q}_{\vec{Q}}$ is by definition the additive subcategory of $D^b(\underline{\mathcal{M}}_{\vec{Q}})^{ss}$ generated by the set of all simple constructible complexes (i.e. simple perverse sheaves) appearing as a direct summand of some
semisimple complex
$$\mathbbm{1}_{\alpha_1} \star \cdots \star \mathbbm{1}_{\alpha_r}$$
for $\alpha_1, \ldots, \alpha_r$ some simple roots. These simple perverse sheaves generating $\mathcal{Q}_{\vec{Q}}$ are hard to determine for a general quiver. They are, however, known when the quiver is of finite or affine type (see Lecture~2). The category $\mathcal{Q}_{\vec{Q}}$ thus constructed is preserved under the functors $\underline{m}$ and $\underline{\Delta}$ (this is clear for $\underline{m}$, but not so obvious for $\underline{\Delta}$).

The Faisceaux-Fonctions correspondence associates to a constructible complex $\mathbb{P} \in D^b(\underline{\mathcal{M}}_{\vec{Q}})$ its trace\footnote{One has to be a little more precise here~: for this to be well-defined, $\mathbb{P}$ has to be endowed with a Weil, or Frobenius, structure. This is the case of the objects of $\mathcal{Q}_{\vec{Q}}$--see Lecture~3.}, defined as
$$Tr (\mathbb{P}): x \mapsto \sum_i (-1)^i Tr( Fr, H^i(\mathbb{P})_{|x}).$$
This is a constructible function on $\mathcal{M}_{\vec{Q}}$ and may thus be viewed as an element of the Hall algebra $\mathbf{H}_{\vec{Q}}$. General formalism ensures that the trace map carries the functors $\underline{m}$ and $\underline{\Delta}$ to the multiplication and comultiplication maps $m$ and $\Delta$ of the Hall algebra.

It turns out that the the image under the trace map from the Hall category $\mathcal{Q}_{\vec{Q}}$ to the Hall algebra $\H_{\vec{Q}}$ is precisely equal to $\mathbf{C}_{\vec{Q}}$, and that $\mathbf{C}_{\vec{Q}}$ gets in this way identified with the graded Grothendieck group of $\mathcal{Q}_{\vec{Q}}$ (this identification is the hardest part of the theory). Classes of the simple perverse sheaves in $\mathcal{Q}_{\vec{Q}}$ then give rise to a basis of the composition algebra $\mathbf{C}_{\vec{Q}}$, and thus also to a basis of the quantum group $ \U^+_v(\g)$--the so-called \textit{canonical basis} (see Lecture 3). This basis $\mathbf{B}$ has many remarkable properties (such as integrality and positivity of structure constants, compatibility with all highest weight integrable representations, etc.). It was later shown in \cite{GL} that $\mathbf{B}$ coincides with Kashiwara's \textit{global basis} of $ \U^+_v(\g)$ (see \cite{Kas}), which is defined in a purely algebraic way. The theory of canonical bases for quantum groups has attracted an enormous amount of research since its invention and has found applications in fields like algebraic and combinatorial representation theory, algebraic geometry and knot theory (see \cite{LLT}, \cite{Ariki}, \cite{VV}, \cite{Caldero}, \cite{FrenkelK}, \cite{FZ}, \cite{Lustot},...).

\vspace{.2in}

Kashiwara defined a certain colored graph structure, or \textit{crystal graph} structure, on the set of elements of the global (or canonical) basis $\mathbf{B}$ of $ \U^+_v(\g)$, encoded by the so-called \textit{Kashiwara operators} $\tilde{e}_{\alpha}, \tilde{f}_{\alpha}$ (for $\alpha$ a simple root). To say that the concept of crystal graphs has found useful applications in algebraic and 
combinatorial representation theory is a gross understatement\footnote{In fact, this sentence itself is a gross understatement.}.
As discovered by Kashiwara and Saito, the action of the operators $\tilde{e}_{\alpha}, \tilde{f}_{\alpha}$ on $\mathbf{B}$ are also beautifuly related to the geometry of the moduli spaces of quiver representations (see \cite{KS}). More precisely, the canonical basis $\mathbf{B}$ may be identified with the set of irreducible components of the \textit{Lusztig nilpotent variety} $\underline{\Lambda}_{\vec{Q}}=\bigsqcup_{\alpha} \underline{\Lambda}^{\alpha}_{\vec{Q}}$ which is a certain Lagrangian subvariety in the cotangent bundle $T^*\underline{\mathcal{M}}_{\vec{Q}}=\bigsqcup_{\alpha} T^* \underline{\mathcal{M}}^{\alpha}_{\vec{Q}}$. The Kashiwara operators naturally correspond to certain generic affine fibrations between irreducible components for different values of $\alpha$ (see Lecture~4). It is important to note the difference with the construction of the canonical basis $\mathbf{B}$ itself, which is given in terms of the geometry of $\underline{\mathcal{M}}_{\A}$ rather than that of $T^*\underline{\mathcal{M}}_{\A}$. The interaction between constructible sheaves (or perverse sheaves, or $D$-modules) on a space $X$ and Lagrangian subvarieties of $T^*X$ is a well-known and common phenomenon in topology and geometric representation theory (see e.g. \cite{KasShap}, \cite{Nadler}). 

\vspace{.2in}

Ideas analogous to the above, but for the categories of coherents sheaves on smooth projective curves rather than representations of quivers, have only more recently been developped, starting with the work of Laumon in \cite{Laumon} (see \cite{SInvent}, \cite{Scano}). Hence if $X$ is a smooth projective curve (which may have a finite number of points with orbifold structure), one defines a certain category 
$\mathcal{Q}_{X}$ of constructible sheaves on the moduli stack $\underline{Coh}_X$ parametrizing coherent sheaves on $X$. There is again a trace map $Tr : K_0(\mathcal{Q}_{X}) \to \mathbf{H}_X$, and it is conjectured (and proved in low genera) that its image coincides with the composition algebra $\mathbf{C}_X$ of the Hall algebra $\mathbf{H}_X$. Recall from \cite{Trieste} that (again, at least in low genera) these composition algebras are related to quantum \textit{loop algebras}.

It is interesting to note that, although the point of view advocated here --motivated by the theory of Hall algebras-- is new, the objects themselves (such as the moduli stack $\underline{Coh}_X$) are very classical~: the category $\mathcal{Q}_X$ is closely related to Laumon's theory of Eisenstein sheaves (for the group $GL(n)$); similarly, the analogue of Lusztig's nilpotent variety $\underline{\Lambda}_{\vec{Q}}$ in this context is the so-called (Hitchin) \textit{global nilpotent cone} $\underline{\Lambda}_X$.

Even though the theory of canonical bases is still very far from being in its definite form in the case of coherent sheaves on curves, we cannot resist mentioning some aspects of it, if only very briefly, in the last part of this text (Lecture~5).

\vspace{.2in}

The plan for these lectures is as follows~:

\begin{itemize}
\item Lecture~1. Lusztig's category $\mathcal{Q}_{\A}$ for $\mathcal{A}=Rep(\vec{Q})$,

\vspace{.03in}

\item Lecture~2. Examples  (finite type quivers, cyclic quivers and affine quivers),

\vspace{.03in}

\item Lecture~3.  The canonical basis $\mathbf{B}$ and the trace map,

\vspace{.03in}

\item Lecture~4. Kashiwara's crystal graph $\mathcal{B}(\infty)$ and Lusztig's nilpotent variety $\underline{\Lambda}_{\vec{Q}}$,

\vspace{.03in}

\item Lecture~5.  Hall categories for Curves.
\end{itemize}

\vspace{.1in}

As \cite{Trieste}, these notes are written in an informal style and stroll around rather than speed through. They are mostly intended for people interested in quantum groups or representation theory of quivers and finite-dimensional algebras but only marginal familiarity with these subjects is required (and whatever is provided by the appendices in \cite{Trieste} is enough). We have assumed some knowledge of the theory of construcible or perverse sheaves ~: very good introductory texts abound, such as \cite{Massey}, \cite{Achar}, \cite{Rietsch} or \cite{BorelDmod}. Of course, the ultimate reference is \cite{BBD}. The short Chap.~8 of \cite{Lusbook} contains a list of most results which we will need.

\vspace{.1in}

Despite the fact that the geometric approach to canonical bases has attracted a lot of research over the years we have chosen by lack of time to focus here on the more ``classical'' aspects of the theory
(put aside the inevitable personal bias of Lecture~5). We apologize to all those whose work we don't mention in these notes.
Experts would be hard pressed to find a single new, original statement in these notes, except for some of the conjectures presented in Section~5.5.

\vspace{.2in}

\newpage

\centerline{\large{\textbf{Lecture~1.}}}

\addcontentsline{toc}{section}{\tocsection {}{}{Lecture~1.}}

\setcounter{section}{1}

\vspace{.2in}

The aim of this first Lecture is to explain the construction, due to Lusztig, of a category $\mathcal{Q}_{\vec{Q}}$ of semisimple perverse sheaves on the moduli space $\underline{\mathcal{M}}_{\vec{Q}}$ of representations of any quiver $\vec{Q}$.
The category $\mathcal{Q}_{\vec{Q}}$ will be endowed with two exact functors
$\underline{m}:  \mathcal{Q}_{\vec{Q}} \times \mathcal{Q}_{\vec{Q}} \to \mathcal{Q}_{\vec{Q}}$ (the \textit{induction} functor) and
$\underline{\Delta}: \mathcal{Q}_{\vec{Q}} \to \mathcal{Q}_{\vec{Q}} \times \mathcal{Q}_{\vec{Q}}$ (the \textit{restriction functor}). These turn its Grothendieck group $\mathcal{K}_{\vec{Q}}$ into an algebra and a coalgebra (and in fact, as we will see later, into a bialgebra). The precise relation with the quantum group associated to $\vec{Q}$, as well as the relation with the Hall algebra of $\vec{Q}$ over a finite field, will be discussed in Lecture~3.

There is one point worth making here. By ``moduli space $\underline{\mathcal{M}}_{\vec{Q}}$ '', we mean a space which parametrizes \textit{all} representations of $\vec{Q}$ (rather than the kind of moduli spaces provided by, say, Geometric Invariant Theory, which account only for semistable representations). Such a ``moduli space'' cannot be given the structure of an honest algebraic variety and one has to consider it as a \textit{stack}. This causes several technical difficulties. Luckily, the stacks in question are of a simple nature (they are global quotients) and can be dealt with concretely (using equivariant sheaves, etc...). Our leitmotiv will be to state as many things as possible heuristically using the stacks language--hoping that this will make the ideas more transparent-- and then to translate it back to a more concrete level.

We begin by constructing these moduli spaces $\underline{\mathcal{M}}_{\vec{Q}}$ and the categories of constructible sheaves over them. Then we introduce the induction/restriction functors and use these to define Lusztig's category $\mathcal{Q}_{\vec{Q}}$. All the results of this Lecture are due to Lusztig (see \cite{Lusbook} and the references therein).

\vspace{.1in}

\textit{We fix once and for all an algebraically closed field $k=\overline{\mathbb{F}_q}$ of positive characteristic\footnote{see Remark~3.27 for a comment on this assumption.}.}

\vspace{.3in}

\centerline{\textbf{1.1. Recollections on quivers.}}
\addcontentsline{toc}{subsection}{\tocsubsection {}{}{\;\;1.1. Recollections on quivers.}}

\vspace{.15in}

We will usually stick to the notation of \cite[Lecture~3]{Trieste} which we briefly recall for the reader's convenience. Let $\vec{Q}=(I,H)$ be a quiver. Here $I$ is the set of vertices, $H$ is the set of edges, and the source and target maps are denoted by $s,t: H \to I$. We assume that there are no edge loops\footnote{In fact, quite a bit of the theory may  be generalized to the situation in which loops are allowed. The relevant algebraic objects are not Kac-Moody algebras, but the so-called \textit{generalized Kac-Moody algebras} (see \cite{SK}, \cite{SKK}, \cite{Lustight}, \cite{LZ}). Similarly, one may consider quivers equipped with a cyclic group of automorphisms-- these will correspond to Cartan matrix of non simply laced types (see \cite{Lusbook}). }, i.e. that $s(h) \neq t(h)$ for any $h \in H$.
 The Cartan matrix $A=(a_{i,j})_{i,j \in I}$ of $\vec{Q}$ is defined to be
$$a_{i,j}=\begin{cases} 2 & \qquad \text{if}\;i=j,\\
-\#\{h: i \to j\} -\#\{h: j \to i\} & \qquad \text{otherwise.}\end{cases}
$$
It is a symmetric integral matrix.

\vspace{.1in}

Recall that a representation of $\vec{Q}$ over $k$ is a pair $(V,\underline{x})$ where $V= \bigoplus_{i \in I} V_i$ is a finite-dimensional $I$-graded $k$-vector space and $\underline{x}=(x_h)_{h \in H}$ is a collection of linear maps $x_h: V_{s(h)} \to V_{t(h)}$. A morphism between two representations $(V,\underline{x})$ and $(V',\underline{x'})$ is an $I$-graded linear map $f: V \to V'$ satisfying $f \circ x_h=x'_h \circ f$ for all edges $h$.
The category of representations of $\vec{Q}$ over $k$ form an abelian category denoted $Rep_k\vec{Q}$. A representation $(V, \underline{x})$ is nilpotent if there exists $N \gg 0$ such that any composition of maps $x_{h_n} \cdots x_{h_1}$ of length $n >N$ vanishes. Nilpotent representations form a Serre subcategory $Rep^{nil}_k\vec{Q}$. This means that $Rep^{nil}_k\vec{Q}$ is stable under subobjects, quotients and extensions. This Serre subcatgeory will actually be more important to us than $Rep_k\vec{Q}$ itself. Of course, if $\vec{Q}$ has no oriented cycles then $Rep_k\vec{Q}$ and $Rep^{nil}_k\vec{Q}$ coincide.

\vspace{.15in}

The dimension vector of a representation $(V,\underline{x})$ is by definition the element $\underline{dim}\;V=({dim}\;V_i)_{i \in I} \in \N^I$. The dimension vector is clearly additive under short exact sequences. This gives rise to a linear form $K_0(Rep_{{k}}\vec{Q}) \to \Z^I$ on the Grothendieck group, which restricts to an isomorphism $K_0(Rep^{nil}_{k}\vec{Q}) \simeq \Z^I$ (see \cite[Cor. 3.2.]{Trieste} ). We will set $K_0(\vec{Q})=K_0(Rep^{nil}_{{k}}\vec{Q})$ and identify it with $\Z^I$ as above.

\vspace{.15in}

The category $Rep_k\vec{Q}$ is hereditary. The additive Euler form
\begin{align*}
\langle\;,\; \rangle_a~: K_0(Rep_k\vec{Q}) \otimes K_0(Rep_k\vec{Q})&\to \Z\\
\langle M, N \rangle_a &=dim\;Hom(M,N) - dim\; Ext^1(M,N)
\end{align*}
factors through $\Z^I$. It is given by the formula
\begin{equation}\label{E:10}
\langle \alpha, \beta \rangle_a =\sum_{i} \alpha_i \beta_i -\sum_{h \in H} \alpha_{s(h)}\beta_{t(h)}.
\end{equation}
Observe that the matrix of the symmetrized Euler form defined by 
$$(M,N)_a=\langle M,N\rangle_a + \langle N,M\rangle_a$$
is equal to the Cartan matrix $A$. The same result holds for $Rep^{nil}_k\vec{Q}$. We will often drop the index $a$ in the notation for $\langle\;,\;\rangle$ and $(\;,\;)$.

\vspace{.15in}

Let $(\epsilon_i)_{i \in I}$ stand for the standard basis of $\Z^I$. Since $\vec{Q}$ is assumed to have no edge loops, there is a unique representation $S_i$ of dimension vector $\epsilon_i$ (up to isomorphism). The representation $S_i$ is simple and every simple nilpotent representation is of this form.

\vspace{.3in}

\centerline{\textbf{1.2. Moduli spaces $\underline{\mathcal{M}}^{\alpha}_{\vec{Q}}$ of representations of quivers.}}
\addcontentsline{toc}{subsection}{\tocsubsection {}{}{\;\;1.2. Moduli spaces of representations of quivers.}}

\vspace{.15in}

Our aim is now to construct a ``moduli space'' of representations of $\vec{Q}$ over our field $k$. As mentioned in the introduction,  we need a moduli space which will account for \textit{all} representations. Fix $\alpha \in \N^I$ and let us try to construct a moduli space $\underline{\mathcal{M}}^{\alpha}_{\vec{Q}}$ parametrizing representations of dimension $\alpha$. This is actually quite simple~: we consider the space
$$E_{\alpha}=\bigoplus_{h \in H} {Hom}(k^{\a_{s(h)}},k^{\a_{t(h)}})$$
of all representations in the \textit{fixed} vector space $V_{\alpha}:=\bigoplus_{i} k^{\a_i}$. The group
$$G_{\alpha}=\prod_i GL(\alpha_i,k)$$
acts on $E_{\alpha}$ by conjugation $g \cdot \underline{y}=g \underline{y} g^{-1}$. More precisely, if
$\underline{y}=(y_h)_{h \in H}$ then $g \cdot \underline{y}=(g_{t(h)} y_h g_{s(h)}^{-1})_{h \in H}$.

\vspace{.1in}

\begin{prop} There is a canonical bijection

$$\begin{Bmatrix} isoclasses\;of\\ representations\;of\\ \vec{Q}\;of\;dimension\;\alpha \end{Bmatrix}\quad
\longleftrightarrow \quad\begin{Bmatrix} \\G_{\alpha}-orbits\;in\;E_{\alpha}\\ \\ \end{Bmatrix}.
$$
\end{prop}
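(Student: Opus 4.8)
The plan is to make the correspondence explicit by choosing bases, so that ``canonical'' means ``independent of that choice''. To a point $\underline{y}=(y_h)_{h\in H}\in E_{\alpha}$ one attaches the representation $(V_{\alpha},\underline{y})$ of $\vec{Q}$, which has dimension vector $\alpha$; this defines a map from $E_{\alpha}$ to the set of isoclasses of representations of dimension $\alpha$. First I would check surjectivity: if $(V,\underline{x})$ is any representation with $\dim V_i=\alpha_i$, choosing a basis of each $V_i$ gives $I$-graded isomorphisms $V_i\simeq k^{\alpha_i}$, hence an $I$-graded isomorphism $V\simeq V_{\alpha}$, and transporting the maps $x_h$ along it produces $\underline{y}\in E_{\alpha}$ with $(V_{\alpha},\underline{y})\simeq(V,\underline{x})$. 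So every isoclass is hit.

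Next I would identify the fibers of this map with the $G_{\alpha}$-orbits. Unwinding the definition of a morphism of representations (recalled in Section~1.1), an isomorphism $f:(V_{\alpha},\underline{y})\to(V_{\alpha},\underline{y}')$ is exactly an $I$-graded linear automorphism of $V_{\alpha}$, i.e. an element $g=(g_i)_{i\in I}\in G_{\alpha}$, subject to $g_{t(h)}\circ y_h=y'_h\circ g_{s(h)}$ for all $h\in H$; rearranging, $y'_h=g_{t(h)}\,y_h\,g_{s(h)}^{-1}$ for all $h$, which is precisely the relation $\underline{y}'=g\cdot\underline{y}$ for the conjugation action described above. Hence two points of $E_{\alpha}$ give isomorphic representations if and only if they lie in the same $G_{\alpha}$-orbit, and the map $E_{\alpha}\to\{\text{isoclasses of dimension }\alpha\}$ descends to a bijection from $G_{\alpha}$-orbits onto isoclasses. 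Since the construction $\underline{y}\mapsto(V_{\alpha},\underline{y})$ uses no auxiliary data, this bijection is canonical.

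I do not expect any genuine obstacle here; the only point needing care is purely bookkeeping, namely keeping straight which factor of $G_{\alpha}$ acts on the source and which on the target of each $x_h$, so that the morphism condition $f\circ x_h=x'_h\circ f$ matches the stated formula $g\cdot\underline{y}=(g_{t(h)}\,y_h\,g_{s(h)}^{-1})_{h\in H}$. (One can also phrase the whole statement as the identification $\underline{\mathcal{M}}^{\alpha}_{\vec{Q}}=[E_{\alpha}/G_{\alpha}]$ at the level of stacks, but on $k$-points this is exactly the argument above.)
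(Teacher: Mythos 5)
Your proof is correct and follows essentially the same route as the paper's: choose $I$-graded bases to reduce an arbitrary representation to one on $V_{\alpha}$, and observe that isomorphism of such representations is exactly the $G_{\alpha}$-conjugation relation. The paper is terser (it treats the second point as immediate from the definitions), but you supply the same argument with the bookkeeping written out.
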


\begin{proof} By definition two representations $\underline{y}, \underline{y}' \in E_{\alpha}$ are isomorphic if and only if they belong to the same $G_{\alpha}$-orbit. On the other hand,  any representation $(V,\underline{x})$ of $\vec{Q}$ of dimension $\alpha$ is isomorphic to a representation $(V_{\alpha},\underline{y})$ for some $\underline{y} \in E_{\alpha}$--it suffices to choose an $I$-graded isomorphism of vector spaces $\phi: V_{\alpha} \stackrel{\sim}{\to} V$ and set $\underline{y}=\phi^* \underline{x}$. The Proposition follows.\end{proof}

\vspace{.1in}

In view of this it is natural to set
\begin{equation}\label{E:11}
\underline{\mathcal{M}}^{\alpha}_{\vec{Q}}=E_{\alpha}/G_{\alpha}.
\end{equation}

The quotient $E_{\alpha}/G_{\alpha}$ parametrizes isomorphism classes of representations of $\vec{Q}$ of dimension $\a$. It is not an algebraic variety but can be given sense as a \textit{stack} (see \cite{LMB}).  Note that $E_{\alpha}$ is a smooth algebraic variety (a vector space) and $G_{\alpha}$ is a connected reductive algebraic group. We will (more or less) easily translate all operations on $\underline{\mathcal{M}}^{\alpha}_{\vec{Q}}$ in terms of $E_{\alpha}$ and $G_{\alpha}$. The reader may, if he wishes, think of (\ref{E:11}) as a convenient notation, and later as a heuristic guide, rather than as a precise mathematical definition.

\vspace{.1in}

Let $E_{\alpha}^{nil} \subset E_{\alpha}$ stand for the closed subset consisting of nilpotent representations. We put
\begin{equation}\label{E:12}
\underline{\mathcal{M}}^{\alpha,nil}_{\vec{Q}}=E^{nil}_{\alpha}/G_{\alpha}.
\end{equation}
It is a closed (in general singular) substack of $\underline{\mathcal{M}}^{\alpha}_{\vec{Q}}$. 

\vspace{.1in}

The geometry of the spaces $\underline{\mathcal{M}}^{\alpha}_{\vec{Q}}$ and $\underline{\mathcal{M}}^{\alpha,nil}_{\vec{Q}}$ (i.e, the orbit geometry of the spaces $E_{\alpha}, E_{\alpha}^{nil}$) has been intensively studied by many authors (see e.g. \cite{Bongartz}, \cite{Kraft-Erdmann}, \cite{Zwara} ...). One may consult \cite{CrawleyNotes} for a good introduction to this topic. We will give several examples  in Lecture~2. At this point, we will content ourselves with

\vspace{.1in}

\begin{lem}\label{L:dim} We have $dim\;E_{\alpha}=-\langle\alpha,\alpha \rangle + dim\;G_{\alpha}.$
\end{lem}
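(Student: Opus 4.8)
The plan is to compute both sides of the asserted identity directly from their definitions and observe that they agree. We have the explicit description $E_\alpha = \bigoplus_{h \in H} \mathrm{Hom}(k^{\alpha_{s(h)}}, k^{\alpha_{t(h)}})$, so as a $k$-vector space its dimension is simply $\dim\,E_\alpha = \sum_{h \in H} \alpha_{s(h)}\alpha_{t(h)}$. On the other side, $G_\alpha = \prod_{i \in I} GL(\alpha_i, k)$ is a product of general linear groups, so $\dim\,G_\alpha = \sum_{i \in I} \alpha_i^2$.

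Next I would recall the formula (\ref{E:10}) for the additive Euler form evaluated on the diagonal, namely $\langle \alpha, \alpha \rangle = \sum_i \alpha_i^2 - \sum_{h \in H} \alpha_{s(h)}\alpha_{t(h)}$. Combining the three expressions, we get
\begin{equation*}
-\langle \alpha, \alpha \rangle + \dim\,G_\alpha = -\sum_i \alpha_i^2 + \sum_{h \in H} \alpha_{s(h)}\alpha_{t(h)} + \sum_i \alpha_i^2 = \sum_{h \in H} \alpha_{s(h)}\alpha_{t(h)} = \dim\,E_\alpha,
\end{equation*}
which is exactly the claimed equality. So the proof is really just unwinding definitions and substituting.

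There is essentially no obstacle here: the only thing to be mildly careful about is the bookkeeping between the sum over edges $h \in H$ and the source/target indices, and the fact that formula (\ref{E:10}) is stated for the Euler form on $K_0(\vec{Q}) = \mathbb{Z}^I$, which is legitimate to use since $\alpha \in \mathbb{N}^I \subset \mathbb{Z}^I$. One could also phrase the statement more conceptually: $\dim\,E_\alpha - \dim\,G_\alpha$ is the "virtual dimension" of the stack $\underline{\mathcal{M}}^\alpha_{\vec{Q}}$, and the lemma expresses that this virtual dimension equals $-\langle \alpha, \alpha \rangle$, a purely homological quantity reflecting that $Rep_k\vec{Q}$ is hereditary (the tangent complex of the stack at a representation $M$ has $H^0 = \mathrm{Ext}^0(M,M)$ wait, rather the relevant computation is that the tangent space is $E_\alpha$ modulo the $\mathfrak{g}_\alpha$-action, with stabilizer $\mathrm{Hom}(M,M)$ and obstruction $\mathrm{Ext}^1(M,M)$, giving Euler characteristic $\dim\,E_\alpha - \dim\,G_\alpha = \dim\,\mathrm{Ext}^1(M,M) - \dim\,\mathrm{Hom}(M,M) = -\langle M, M\rangle$). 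But for a clean proof the direct substitution above suffices and I would present only that.
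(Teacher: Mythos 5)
Your proof is correct and is essentially the same as the paper's: both compute $\dim\,E_\alpha = \sum_{h\in H}\alpha_{s(h)}\alpha_{t(h)}$ and substitute the formula $\langle\alpha,\alpha\rangle = \sum_i\alpha_i^2 - \sum_{h\in H}\alpha_{s(h)}\alpha_{t(h)}$ from (\ref{E:10}) along with $\dim\,G_\alpha = \sum_i\alpha_i^2$. The closing conceptual remark about virtual dimension is a nice observation but not needed for the proof.
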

\begin{proof} This can be derived from
$$dim\;E_{\alpha}=\sum_{h \in H} \alpha_{s(h)} \alpha_{t(h)}, \qquad
\langle \alpha, \alpha \rangle =\sum_i \alpha_i^2-\sum_{h \in H} \alpha_{s(h)} \alpha_{t(h)}.$$
\end{proof}

\vspace{.1in}

As a consequence, we have

\vspace{.1in}

\begin{cor} The (stacky) dimension of the moduli space $\Ma$ is given by 
\begin{equation}\label{E:101}
dim\;\Ma=-\langle \alpha, \alpha \rangle.
\end{equation}
\end{cor}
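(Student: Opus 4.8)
The plan is to deduce this immediately from Lemma~\ref{L:dim} together with the convention for the dimension of a quotient stack. Recall that if a smooth algebraic variety $X$ is acted on by an algebraic group $G$, then the quotient stack $[X/G]$ is assigned dimension $\dim\,X - \dim\,G$. Unlike the dimension of an honest variety, this number may well be negative; this reflects the fact that the automorphism groups of the objects (the stabilizers of the $G$-action) are counted with a minus sign. Operationally, the map $X \to [X/G]$ is a smooth atlas of relative dimension $\dim\,G$, which is what pins down the value.

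Applying this to $\Ma = E_{\alpha}/G_{\alpha}$ from (\ref{E:11}), I would write $\dim\,\Ma = \dim\,E_{\alpha} - \dim\,G_{\alpha}$, and then invoke Lemma~\ref{L:dim}, which gives exactly $\dim\,E_{\alpha} - \dim\,G_{\alpha} = -\langle \alpha,\alpha\rangle$. That is all that is required. There is no genuine obstacle here: the only point that needs care is the sign convention just recalled, and the substantive input --- the formulas for $\dim\,E_{\alpha}$ and for $\langle \alpha,\alpha\rangle$ --- has already been carried out in Lemma~\ref{L:dim}. As a consistency check one may take $\alpha = \epsilon_i$: since $\vec{Q}$ has no edge loops one computes $\langle \epsilon_i,\epsilon_i\rangle = 1$, while $E_{\epsilon_i}$ is a point and $G_{\epsilon_i} = \mathbb{G}_m$, so both sides of (\ref{E:101}) equal $-1$, in agreement with $\underline{\mathcal{M}}^{\epsilon_i}_{\vec{Q}} = B\mathbb{G}_m$.
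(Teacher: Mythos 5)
Your proposal is correct and takes exactly the route the paper intends: the paper states the corollary as an immediate consequence of Lemma~\ref{L:dim} via the stacky-dimension convention $\dim\,(E_\alpha/G_\alpha)=\dim\,E_\alpha-\dim\,G_\alpha$, which is precisely what you wrote. The sanity check at $\alpha=\epsilon_i$ is a welcome addition, but otherwise this matches the paper's (implicit, one-line) argument.
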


\vspace{.1in}

We state one useful generalization of Lemma~\ref{L:dim}. For dimension vectors $\a_1, \ldots, \a_n$ we define
$$E_{\a_1, \ldots, \a_n}=\big\{ (\underline{y},\, W_n \subset W_{n-1} \subset \cdots \subset W_1=V_{ \a_1+ \cdots + \a_n})\big\}$$
where $\underline{y} \in E_{\a_1 + \cdots + \a_n}$, $W_n, \ldots, W_1$ are $\underline{y}$-stable and $\underline{dim}\; W_{i}/W_{i+1}=\a_i$ for all $i$.
The group $G_{\sum \a_i}$ naturally acts on $E_{\a_1, \ldots, \a_n}$ and the quotient may be thought of as a moduli space parametrizing (isomorphism classes of) flags of 
representations of $\vec{Q}$ with successive factors of dimension $\a_1, \ldots, \a_n$ respectively.

\vspace{.1in}

\begin{lem} We have $dim\;E_{\a_1, \ldots, \a_n}=dim\;G_{\a}-\sum_{i \leq j} \langle \a_i,\a_j\rangle$ where $\a=\sum_i \a_i$.
\end{lem}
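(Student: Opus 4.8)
The plan is to compute the dimension of $E_{\a_1,\ldots,\a_n}$ by fibering it over the ``full flag of vector spaces'' (forgetting the representation structure) or, more efficiently, by induction on $n$ using the map that forgets the innermost subspace $W_n$. First I would set $\a = \sum_i \a_i$ and observe that there is a natural projection $\pi: E_{\a_1,\ldots,\a_n} \to E_{\a_1,\ldots,\a_{n-1}}$ sending $(\underline{y}, W_n \subset \cdots \subset W_1)$ to $(\underline{y}|_{W_{n-1}}\text{-induced data on }W_1, W_{n-1} \subset \cdots \subset W_1)$; more precisely, the first $n-1$ layers already determine $\underline{y}$ on $W_1 = V_\a$ together with the flag $W_{n-1} \subset \cdots \subset W_1$, and what remains is to choose a $\underline{y}$-stable subspace $W_n \subset W_{n-1}$ with $\underline{\dim}\,W_n = \a_1 + \cdots + \a_{n-1}$ — equivalently $\underline{\dim}\,W_{n-1}/W_n = \a_{n-1}$. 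Wait: indexing needs care, since $W_{n-1}/W_n$ has dimension $\a_{n-1}$ but $W_n$ has dimension $\a_1 + \cdots + \a_{n-1}$ is wrong; rather $\underline{\dim}\,W_n = \a_n$ is also wrong. Let me instead use the cleaner induction: forget $W_1 = V_\a$ is fixed, forget the \emph{outermost nontrivial} step by projecting to the pair (representation on the quotient $V_\a/W_2$ of dimension $\a_1$, flag $W_n \subset \cdots \subset W_2$ inside $W_2$ of dimension $\a_2 + \cdots + \a_n$).

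Concretely, here is the induction I would carry out. For $n = 1$ the space $E_{\a_1}$ is just $E_{\a_1}$ and Lemma~\ref{L:dim} gives $\dim E_{\a_1} = \dim G_{\a_1} - \langle \a_1,\a_1\rangle$, matching the claimed formula. For the inductive step, consider the map $p: E_{\a_1,\ldots,\a_n} \to E_{\a_2,\ldots,\a_n}$ obtained as follows: given $(\underline{y}, W_n \subset \cdots \subset W_1 = V_\a)$, the subspace $W_2$ is $\underline{y}$-stable of dimension $\a_2 + \cdots + \a_n$, and restricting $\underline{y}$ to $W_2$ together with the induced flag $W_n \subset \cdots \subset W_2$ gives a point of $E_{\a_2,\ldots,\a_n}$ after identifying $W_2$ with $V_{\a_2 + \cdots + \a_n}$. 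The fibre of $p$ over a fixed point records two pieces of data: (i) the choice of the $\underline{y}$-stable subspace $W_2 \subset V_\a$ of dimension $\a_2 + \cdots + \a_n$ extending the given representation on $W_2$, together with the quotient representation on $V_\a/W_2 \cong V_{\a_1}$ and the gluing maps $x_h: (V_\a/W_2)_{s(h)} \to (W_2)_{t(h)}$ — and I would argue, choosing a complementary graded subspace once and for all, that this fibre is a vector bundle over $E_{\a_1}$ (the data of the quotient representation) whose rank is $\sum_{h} \a_{1,s(h)}(\a_{2,t(h)} + \cdots + \a_{n,t(h)}) + \dim G_\a - \dim G_{\a_1} - \dim G_{\a_2+\cdots+\a_n}$ accounting for the choice of $W_2$ as a point of the partial flag variety plus the extension maps. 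The cleanest bookkeeping is to note $\dim E_{\a_1,\ldots,\a_n} = \dim E_{\a_1} + (\dim E_{\a_2,\ldots,\a_n} - \dim E_{\a_2+\cdots+\a_n}) + (\text{rank of extension maps}) + \dim(\text{partial flag variety of }W_2 \text{ in } V_\a)$, where the last two terms together contribute $\sum_h \a_{1,s(h)}(\sum_{j\ge 2}\a_{j,t(h)}) + \dim G_\a - \dim G_{\a_1} - \dim G_{\a_2 + \cdots + \a_n}$.

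Assembling: by induction $\dim E_{\a_2,\ldots,\a_n} = \dim G_{\a_2+\cdots+\a_n} - \sum_{2 \le i \le j} \langle \a_i, \a_j\rangle$, and $\dim E_{\a_1} = \dim G_{\a_1} - \langle \a_1, \a_1\rangle$. Substituting and simplifying, the $\dim G_{\a_1}$ and $\dim G_{\a_2+\cdots+\a_n}$ terms cancel against those from the flag-variety/extension contribution, leaving $\dim G_\a$; the cross terms $\sum_{h} \a_{1,s(h)}(\sum_{j \ge 2}\a_{j,t(h)})$ are precisely $-\sum_{j \ge 2}\langle \a_1, \a_j\rangle + \sum_{j\ge 2}\sum_i \a_{1,i}\a_{j,i}$, and combining with $-\langle \a_1,\a_1\rangle$ one recovers $-\sum_{1 \le i \le j \le n}\langle \a_i, \a_j\rangle$ after using the formula \eqref{E:10} for $\langle\;,\;\rangle_a$. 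I expect the main obstacle to be purely organizational rather than conceptual: correctly identifying the fibre of $p$ as an iterated affine bundle (so that dimensions simply add), and carefully matching the ``number of gluing maps plus dimension of the partial flag variety'' against the group-dimension and Euler-form terms using \eqref{E:10}. An alternative, perhaps slicker, route avoiding fibrations altogether: note $E_{\a_1,\ldots,\a_n}$ maps to the fixed (partial) flag variety $\mathrm{Fl} = \{W_n \subset \cdots \subset W_1 = V_\a\}$ with the prescribed graded dimensions, with fibre over a fixed flag equal to the space of $\underline{y} \in E_\a$ preserving that flag — an affine space of dimension $\sum_h \sum_{i \le j} \a_{i,s(h)}\a_{j,t(h)}$; then add $\dim \mathrm{Fl} = \sum_{i<j, \ell} \a_{i,\ell}\a_{j,\ell}$ (a sum of partial-flag-variety dimensions vertex by vertex), and check directly via \eqref{E:10} that $\sum_h \sum_{i\le j}\a_{i,s(h)}\a_{j,t(h)} + \sum_{i<j}\sum_\ell \a_{i,\ell}\a_{j,\ell} = \dim G_\a - \sum_{i \le j}\langle \a_i,\a_j\rangle$, using $\dim G_\a = \sum_\ell \a_\ell^2 = \sum_{i\le j}(\text{with multiplicity})\ldots$ — this reduces everything to a single elementary identity of quadratic forms, which I would verify by expanding both sides.
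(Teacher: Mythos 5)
Your ``alternative, perhaps slicker'' route at the end is exactly the paper's proof: one fibers $E_{\a_1,\ldots,\a_n}$ over the partial flag variety parametrizing $I$-graded flags $W_n \subset \cdots \subset W_1 = V_\a$ of the prescribed type, observes that the projection is a vector bundle of rank $\sum_{h\in H}\sum_{k\leq l}\a_k^{s(h)}\a_l^{t(h)}$, adds the dimension of the flag variety, and simplifies using \eqref{E:10} to get $\dim G_\a - \sum_{i\leq j}\langle\a_i,\a_j\rangle$. Your primary inductive approach (peeling off $\a_1$ by projecting to $E_{\a_2,\ldots,\a_n}$) is a valid alternative and would yield the same answer, but as written it is not carried to completion and contains some back-and-forth on the indexing of the flag; the flag-fibration argument avoids all of that bookkeeping by doing the whole computation in one step, which is why the paper (and you, in the end) prefer it. Both approaches are correct, and since your final answer coincides with the paper's method, there is nothing substantive to criticize; just note that the closing identity you defer to ``expanding both sides'' does check out directly from $\dim G_\a = \sum_\ell\sum_{i,j}\a_{i,\ell}\a_{j,\ell}$ and symmetry of the vertex-sum in $(i,j)$.
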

\begin{proof}
Consider the flag variety $Gr(\a_n, \a_n+\a_{n-1}, \ldots, \a)$ parametrizing chains of $I$-graded subspaces 
$W_n \subset \cdots \subset W_1=V_{\a}$ satisfying $dim\;W_{i}/W_{i+1}=\a_i$. This is a (smooth) projective variety of dimension 
$$ dim\; Gr(\a_n, \a_n+\a_{n-1}, \ldots, \a)=dim\;G_{\a}-\sum_i \sum_{k \leq l} \a_k^i\a_l^i$$
where $\a_t=(\a_t^i)_{i \in I}$.  It is easy to see that the natural projection $\pi: E_{\a_1, \ldots, \a_n} \to Gr(\a_n, \ldots, \a)$ is a vector bundle, whose rank is given by the following formula
$$rank(\pi)=\sum_{h\in H} \sum_{k \leq l} \a_k^{s(h)}\a_l^{t(h)}.$$
The Lemma follows by (\ref{E:10}).
\end{proof}

\vspace{.1in}

 Let $D^b(\Ma):=D^b_{G_{\alpha}}(E_{\alpha})$ be the $G_{\a}$-equivariant derived category of $\qlb$-constructible complexes over $E_{\a}$, as in \cite{Bernstein-Lunts}. We also let $D^b(\Ma)^{ss}:=D^b_{G_{\alpha}}(E_{\alpha})^{ss}$ stand for the full subcategory of $D^b_{G_{\alpha}}(E_{\alpha})$ consisting of semisimple complexes. We will think of these as the triangulated categories of constructible complexes and semisimple constructible complexes over $\Ma$ respectively. In particular, if $U$ is a smooth locally closed and $G_{\a}$-invariant subset of $E_{\a}$, and if $\mathcal{L}$ is a $G_{\a}$-equivariant local system over $U$ then we denote by $IC(U,\mathcal{L})$ the associated simple $G_\a$-equivariant perverse sheaf~: this is the (nonequivariant) simple perverse sheaf associated to $(U,\mathcal{L})$, equipped with its unique $G_{\a}$-equivariant structure. We refer to \cite{FK} and \cite{Bernstein-Lunts} for some of the properties of these categories. 

\vspace{.3in}

\centerline{\textbf{1.3. The induction and restriction functors.}}
\addcontentsline{toc}{subsection}{\tocsubsection {}{}{\;\;1.3. The induction and restriction functors.}}

\vspace{.15in}

The basic idea behind the induction and restriction functors is to consider a correspondence
\begin{equation}\label{E:13}
\xymatrix{ &  \underline{\mathcal{E}}^{\alpha, \beta} \ar[ld]_-{p_1} \ar[rd]^-{p_2} &\\
\ \underline{\mathcal{M}}_{\vec{Q}}^{\alpha} \times  \underline{\mathcal{M}}_{\vec{Q}}^{\beta} & &
\underline{\mathcal{M}}_{\vec{Q}}^{\alpha+\beta}
}
\end{equation}
where $\underline{\mathcal{E}}^{\alpha, \beta}$ is the moduli space parametrizing pairs $(N \subset M)$ satisfying $\underline{dim}(M)=\beta, \;\underline{dim}(N)=\a+\beta$, and where 
$$p_1: (M \supset N) \mapsto (M/N, N), \qquad p_2: (M \supset N) \mapsto M.$$
The map $p_1$ is smooth while $p_2$ is proper. One may view (\ref{E:13}) as encoding the set of extensions of a representation of dimension $\beta$ by one of dimension $\alpha$.

We then define functors
\begin{equation}\label{E:14}
\begin{split}
\underline{m}~: D^b(\underline{\mathcal{M}}^{\alpha}_{\vec{Q}} \times \underline{\mathcal{M}}^{\beta}_{\vec{Q}}) &\to  D^b(\underline{\mathcal{M}}^{\alpha+\beta}_{\vec{Q}} ) \\
\mathbb{P} &\mapsto p_{2!} \,p_1^* (\mathbb{P})[dim\;p_1],
\end{split}
\end{equation}
(induction) and 
\begin{equation}\label{E:145}
\begin{split}
\underline{\Delta}~: D^b(\underline{\mathcal{M}}^{\alpha+\beta}_{\vec{Q}})  &\to  D^b(\underline{\mathcal{M}}^{\alpha}_{\vec{Q}} \times \underline{\mathcal{M}}^{\beta}_{\vec{Q}}) \\
\mathbb{P} &\mapsto p_{1!} \,p_2^* (\mathbb{P})[dim\;p_1]
\end{split}
\end{equation}
(restriction).

\vspace{.1in}

The above definitions actually do make sense if one uses the appropriate language of stacks. However, since we want to be independent of such a language, we rephrase them in more concrete terms. This will necessarily look more complicated than (\ref{E:13}).

\vspace{.2in}

\noindent \textbf{ Induction functor.} First note that $\underline{\mathcal{E}}^{\alpha, \beta}$ may be presented as a quotient stack~:
let $E_{\alpha,\beta}$ be the variety of tuples $(\underline{y},W)$ where $\underline{y} \in E_{\alpha+\beta}$ and $W \subset V_{\alpha+\beta}$ is an $I$-graded subspace of dimension $\beta$ and stable under $\underline{y}$. The group $G_{\alpha+\beta}$ naturally acts on $E_{\alpha,\beta}$ by
$g \cdot (\underline{y},W)=(g \underline{y} g^{-1}, gW)$
and we have $\underline{\mathcal{E}}^{\alpha, \beta}=E_{\alpha,\beta}/G_{\alpha+\beta}$.

\vspace{.1in}

Let $Gr(\beta,\alpha+\beta)$ stand for the Grassmanian of $\beta$-dimensional subspaces in $V_{\alpha+\beta}$. There are two obvious projections
\begin{equation}\label{E:15}
\xymatrix{ &  E_{\alpha, \beta} \ar[ld]_-{q'} \ar[rd]^-{q} &\\
 Gr(\beta, \alpha+\beta)& &
E_{\alpha+\beta}
}
\end{equation}
given respectively by $q'(\underline{y},W)=W$ and $q(\underline{y},W)=\underline{y}$. The map $q'$ is easily seen to be a vector bundle; the fiber over a point
$W$ is equal to $\bigoplus_{h \in H} Hom((V_{\a+\beta})_{s(h)}, W_{t(h)})$. In particular, $E_{\alpha,\beta}$ is smooth. On the other hand, the fiber of $q$ over a point $\underline{y}$ is the set of $W \subset V_{\alpha+\beta}$ which are of dimension $\beta$ and which are $\underline{y}$-stable. These form a closed subset of the projective variety $Gr(\beta,\alpha+\beta)$ and therefore $q$ is proper. 

The map $q$ is clearly $G_{\alpha+\beta}$-equivariant. Moreover, the induced map on the quotient spaces $$\underline{\mathcal{E}}^{\alpha, \beta} = E_{\alpha,\beta}/G_{\alpha+\beta} \to E_{\alpha+\beta}/G_{\alpha+\beta}=\underline{\mathcal{M}}^{\alpha+\beta}_{\vec{Q}}$$
coincides with $p_2$. 

\vspace{.1in}

We cannot  obtain a similar lift of $p_1$ directly since there is no natural map $E_{\alpha,\beta} \to E_{\alpha} \times E_{\beta}$. So we introduce another presentation of
$\underline{\mathcal{E}}^{\alpha, \beta}$ as a quotient stack~: let ${E}^{(1)}_{\alpha,\beta}$ be the variety parametrizing tuples $(\underline{y},W,\rho_{\alpha},\rho_{\beta})$ where $(\underline{y},W)$ belongs to $E_{\alpha,\beta}$ and where $\rho_{\alpha}: V_{\a+\beta}/W \stackrel{\sim}{\to} V_{\alpha}, \; \rho_{\beta}: W \stackrel{\sim}{\to} V_{\beta}$ are linear isomorphisms.
The group $G_{\alpha} \times G_{\beta}$ and $G_{\alpha+\beta}$ act on ${E}^{(1)}_{\alpha,\beta}$ by
$$(g_{\alpha}, g_{\beta}) \cdot (\underline{y}, W, \rho_{\alpha}, \rho_{\beta})=(\underline{y}, W, g_{\alpha} \rho_{\alpha}, g_{\beta}\rho_{\beta}),$$
$$g \cdot (\underline{y}, W, \rho_{\alpha}, \rho_{\beta})=(g \underline{y} g^{-1}, g W, \rho_{\alpha} g^{-1}, \rho_{\beta} g^{-1})$$
respectively. The projection $r:{E}^{(1)}_{\alpha,\beta} \to E_{\alpha, \beta}$ is a principal $G_{\alpha} \times G_{\beta}$-bundle and we have $\underline{\mathcal{E}}^{\alpha, \beta}={E}^{(1)}_{\alpha,\beta}/(G_{\alpha+\beta} \times G_{\alpha} \times G_{\beta})$. 

There is now a canonical map $p:{E}^{(1)}_{\alpha,\beta} \to E_{\alpha} \times E_{\beta}$ given by 
$$p(\underline{y},W, \rho_{\alpha}, \rho_{\beta})=\big(\rho_{\alpha,*} (\underline{y}_{|V_{\alpha+\beta}/W}), \rho_{\beta,*}(\underline{y}_{|W})\big).$$
The map $p$ is smooth and $G_{\alpha+\beta} \times G_{\alpha} \times G_{\beta}$-equivariant (we equip $E_{\alpha} \times E_{\beta}$ with the trivial $G_{\alpha+\beta}$-action). To sum up, we have the following analogue of diagram (\ref{E:13})~:
\begin{equation}\label{E:16}
\xymatrix{ &  {E}^{(1)}_{\alpha,\beta} \ar[r]^{r} \ar[ld]_-{p} & E_{\alpha,\beta} \ar[rd]^-{q} &\\
E_{\alpha} \times  E_{\beta} & & &
E_{\alpha+\beta}
}
\end{equation}
We are now ready to translate (\ref{E:14}). The pull-back functor induces a canonical equivalence of categories
$$r^*~: D^b_{G_{\a+\b}}({E}_{\alpha,\beta}) \stackrel{\sim}{\to} D^b_{G_{\a+\b}\times G_{\a} \times G_{\beta}}({E}^{(1)}_{\alpha,\beta})$$
which has an inverse
$$r_{\flat}~:D^b_{G_{\a+\b}\times G_{\a} \times G_{\beta}}({E}^{(1)}_{\alpha,\beta}) \stackrel{\sim}{\to} D^b_{G_{\a+\b}}({E}_{\alpha,\beta}).$$
We set $r_{\#}=r_{\flat}[-dim\;r]$, so that $r_{\#}r^* (\mathbb{P})=\mathbb{P}[-dim\;r]$ for any $\mathbb{P} \in D^b_{G_{\a+\beta}}(E_{\a,\beta})$. One advantage of $r_{\#}$ over $r_{\flat}$ is that it preserves the subcategories of perverse sheaves. Note that $D^b_{G_{\a+\b}\times G_{\a} \times G_{\beta}}({E}^{(1)}_{\alpha,\beta})$ and $D^b_{G_{\a+\b}}({E}_{\alpha,\beta})$ may both be interpreted as $D^b(\underline{\mathcal{E}}^{\a,\beta})$. We define the induction functor as
\begin{equation} \label{E:165}
\begin{split}
\underline{m}~: D^b_{G_{\a} \times G_{\beta}}(E_{\alpha} \times E_{\beta}) &\to D^b_{G_{\a+\beta}}(E_{\alpha+\beta})\\
\mathbb{P} &\mapsto q_{!}\,r_{\#}\,p^* (\mathbb{P})[dim\;p]
\end{split}
\end{equation}

To finish, observe that

\begin{lem} We have $dim\;p= dim\;G_{\a+\beta}-\langle \alpha, \beta \rangle$. 
\end{lem}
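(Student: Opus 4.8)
The plan is to read off the relative dimension of $p$ from a bare dimension count, using the two dimension formulas already established. Since $p$ is smooth and all of $E_\alpha$, $E_\beta$, $E_{\alpha,\beta}$ and ${E}^{(1)}_{\alpha,\beta}$ are honest smooth irreducible varieties, its relative dimension is simply $\dim p = \dim {E}^{(1)}_{\alpha,\beta} - \dim(E_\alpha \times E_\beta)$, and it suffices to evaluate the two dimensions on the right.

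For the first, recall that $r : {E}^{(1)}_{\alpha,\beta} \to E_{\alpha,\beta}$ is a principal $G_\alpha \times G_\beta$-bundle, so $\dim {E}^{(1)}_{\alpha,\beta} = \dim E_{\alpha,\beta} + \dim G_\alpha + \dim G_\beta$. Now $E_{\alpha,\beta}$ is nothing but the variety $E_{\alpha_1,\alpha_2}$ of the lemma computing $\dim E_{\alpha_1,\ldots,\alpha_n}$, taken with $n = 2$, $\alpha_1 = \alpha$ and $\alpha_2 = \beta$: the $\underline{y}$-stable subspace $W$ of dimension $\beta$ plays the role of $W_2 \subset W_1 = V_{\alpha+\beta}$, so that $\underline{\dim}\,W_1/W_2 = \alpha$ and $\underline{\dim}\,W_2 = \beta$ as required. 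That lemma therefore gives
$$\dim E_{\alpha,\beta} = \dim G_{\alpha+\beta} - \langle \alpha,\alpha\rangle - \langle \alpha,\beta\rangle - \langle \beta,\beta\rangle,$$
the relevant sum $\sum_{1 \le i \le j \le 2} \langle\alpha_i,\alpha_j\rangle$ being $\langle\alpha,\alpha\rangle + \langle\alpha,\beta\rangle + \langle\beta,\beta\rangle$. (One could equally compute $\dim E_{\alpha,\beta}$ directly from the vector bundle $q' : E_{\alpha,\beta} \to Gr(\beta,\alpha+\beta)$ and the formula \eqref{E:10}.) For the second, Lemma~\ref{L:dim} applied to $\alpha$ and to $\beta$ yields $\dim(E_\alpha \times E_\beta) = \big(-\langle\alpha,\alpha\rangle + \dim G_\alpha\big) + \big(-\langle\beta,\beta\rangle + \dim G_\beta\big)$.

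Subtracting the second expression from the first, the four terms $\dim G_\alpha$, $\dim G_\beta$, $\langle\alpha,\alpha\rangle$ and $\langle\beta,\beta\rangle$ all cancel, and one is left with $\dim p = \dim G_{\alpha+\beta} - \langle\alpha,\beta\rangle$. There is no real obstacle in this argument; the only thing to watch is the bookkeeping — in particular making sure one is comparing dimensions of genuine varieties (not stacky dimensions) throughout, and that the free action of $G_\alpha \times G_\beta$ on ${E}^{(1)}_{\alpha,\beta}$, which records the choice of the two trivializations $\rho_\alpha, \rho_\beta$, indeed accounts for the $\dim G_\alpha + \dim G_\beta$ in the fibres of $r$.
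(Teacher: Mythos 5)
Your proof is correct and takes essentially the same route as the paper: both start from $\dim p = \dim E^{(1)}_{\alpha,\beta} - \dim(E_\alpha\times E_\beta)$, peel off the principal $G_\alpha\times G_\beta$-bundle $r$, and then compute $\dim E_{\alpha,\beta}$ via the vector bundle over the Grassmannian. The only (cosmetic) difference is that the paper re-derives the Grassmannian fibration count inline, whereas you observe that $E_{\alpha,\beta}$ is precisely the $n=2$ case of $E_{\alpha_1,\ldots,\alpha_n}$ and invoke the already-proved dimension formula for that variety — a small tidying that avoids repeating an argument.
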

\begin{proof} By definition, 
\begin{equation*}
\begin{split}
dim\;p&=dim\;{E}^{(1)}_{\alpha,\beta}-dim\;E_{\alpha}-dim\;E_{\beta}\\
&=dim\;Gr(\beta,\alpha+\beta)+ dim\;G_{\a} + dim\;G_{\beta} + \sum_{h \in H} \alpha_{s(h)}\beta_{t(h)}\\
&=\sum_i \alpha_i \beta_i + dim\;G_{\a} + dim\;G_{\beta} + \sum_{h \in H} \alpha_{s(h)}\beta_{t(h)}\\
&=dim\;G_{\a+\beta}-\langle \alpha, \beta \rangle
\end{split}
\end{equation*}
as wanted
\end{proof}

For simplicity we often omit the indices $\alpha,\beta$ from the notation $\underline{m}$, hoping that this will not cause any confusion. We will also often use the notation $\mathbb{P} \star \mathbb{Q}=\underline{m}(\mathbb{P} \boxtimes \mathbb{Q})$.

\vspace{.2in}

\paragraph{\textbf{Restriction functor.}} We now turn to the functor $\underline{\Delta}$.  We use yet another presentation of $\underline{\mathcal{E}}^{\alpha, \beta}$ as a quotient stack. Let us fix a $\beta$-dimensional subspace $W_0 \subset V_{\a+\beta}$ as well as a pair of isomorphisms $\rho_{\alpha,0}: V_{\a+\beta}/W_0 \stackrel{\sim}{\to} V_{\alpha}, \;\rho_{\beta,0}: W_0 \stackrel{\sim}{\to} V_{\beta}$. Let $F_{\alpha,\beta}$ be the closed subset of $E_{\alpha+\beta}$ consisting of representations $\underline{y}$ such that $\underline{y}(W_0) \subset W_0$. Let $P_{\alpha,\beta} \subset G_{\a+\beta}$ be the parabolic subgroup associated to $W_0$. Then $\underline{\mathcal{E}}^{\alpha, \beta} = F_{\alpha,\beta}/P_{\a,\beta}$. Note that
\begin{equation}\label{E:FGP}
F_{\a,\beta} \underset{P_{\a,\beta}}{\times} G_{\a+\beta} = E_{\a,\beta}, \qquad F_{\a,\beta} \underset{U_{\a,\beta}}{\times} G_{\a+\beta} = E^{(1)}_{\a,\beta}
\end{equation}
where $U_{\a,\beta} \subset P_{\a,\beta}$ is the unipotent radcal .

We consider the diagram
\begin{equation}\label{E:17}
\xymatrix{ &  F_{\alpha, \beta} \ar[ld]_-{\kappa} \ar[rd]^-{\iota} &\\
E_{\alpha} \times E_{\beta} & &
E_{\alpha+\beta}
}
\end{equation}
where $\kappa (\underline{y})=\big(\rho_{\alpha,*} (\underline{y}_{|V_{\alpha+\beta}/W}), \rho_{\beta,*}(\underline{y}_{|W})\big)$ and where $\iota$ is the embedding. Note that $\kappa$ is a vector bundle of rank
\begin{equation}\label{E:rankkappa}
rank\;\kappa=\sum_h \alpha_{s(h)} \beta_{t(h)}=\sum_i \a_i \beta_i - \langle \a, \beta \rangle.
\end{equation}
Moreover, $\kappa$ is $P_{\alpha,\beta}$-equivariant, where $P_{\alpha,\beta}$ acts on $E_{\alpha} \times E_{\beta}$ via the projection $P_{\alpha,\beta} \to G_{\alpha} \times G_{\beta}$ induced by the pair $\rho_{\alpha,0}, \rho_{\beta,0}$.

\vspace{.1in}

We define the restriction functor as 
\begin{equation} \label{E:18}
\begin{split}
\underline{\Delta}~: D^b_{G_{\a+\beta}}(E_{\alpha+\beta}) &\to D^b_{G_{\a} \times G_{\beta}}(E_{\alpha} \times E_{\beta})\\
\mathbb{P} &\mapsto \kappa_{!}\,\iota^* (\mathbb{P})[-\langle \alpha,\beta\rangle].
\end{split}
\end{equation}
We will at times write $\underline{\Delta}_{\alpha,\beta}$ when we want to specify the dimension vectors.

\vspace{.2in}

\addtocounter{theo}{1}
\noindent \textbf{Remarks \thetheo .} The reader might wonder why we use distinct diagrams for the induction and restriction functors (i.e., why (\ref{E:16}) and (\ref{E:17}) are different). In fact, this is not so essential~: different choices of presentations of the quotient stacks involved in (\ref{E:13}) will lead to the same functors $\underline{m}$ and $\underline{\Delta}$ \textit{up to normalization}. We have given those which are the most convenient (and which coincide with (\ref{E:14}) and (\ref{E:145})).

\vspace{.15in}

To finish this section, we give a few elementary properties of $\underline{m}$ and $\Delta$. We begin with the following useful observation~:

\vspace{.1in}

\begin{lem}\label{L:Verdierind} The functor $\underline{m}$ commutes with Verdier duality.\end{lem}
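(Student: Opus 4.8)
The plan is to prove that $\underline{m}$ commutes with Verdier duality by tracking the duality operator through the composition $q_! \, r_{\#} \, p^* [\dim p]$ that defines $\underline{m}$ in (\ref{E:165}), using the standard compatibilities of Verdier duality $\mathbb{D}$ with the six operations. First I would recall the three relevant facts: (i) for a smooth morphism $f$ of relative dimension $d$, one has $\mathbb{D} f^* \cong f^* \mathbb{D} [2d]$ (equivalently $f^! \cong f^*[2d]$); (ii) for an arbitrary morphism $g$, $\mathbb{D} g_! \cong g_* \mathbb{D}$, and when $g$ is proper $g_! = g_*$, so $\mathbb{D} g_! \cong g_! \mathbb{D}$; (iii) for the induced-representation type equivalence $r_{\#}$ attached to the principal $G_\alpha \times G_\beta$-bundle $r$, one has $\mathbb{D} r_{\#} \cong r_{\#} \mathbb{D}$ (this follows because $r$ is smooth with a shift built into $r_{\#}=r_\flat[-\dim r]$ precisely to make it self-dual; concretely $r^*$ and its inverse $r_\flat$ are intertwined by duality up to the shift $[2\dim r]$, and the normalization $r_{\#}=r_\flat[-\dim r]$ absorbs exactly half of it).

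Second I would assemble these. Starting from $\underline{m}(\mathbb{P}) = q_! \, r_{\#} \, p^*(\mathbb{P})[\dim p]$ and applying $\mathbb{D}$: since $q$ is proper, $\mathbb{D} q_! \cong q_! \mathbb{D}$; then $\mathbb{D} r_{\#} \cong r_{\#} \mathbb{D}$ by (iii); then since $p$ is smooth of relative dimension $\dim p$, $\mathbb{D}(p^*(\mathbb{P})[\dim p]) = (\mathbb{D} p^* \mathbb{P})[-\dim p] \cong p^* (\mathbb{D}\mathbb{P})[2\dim p][-\dim p] = p^*(\mathbb{D}\mathbb{P})[\dim p]$. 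Chaining these isomorphisms gives $\mathbb{D}\,\underline{m}(\mathbb{P}) \cong q_! \, r_{\#}\, p^*(\mathbb{D}\mathbb{P})[\dim p] = \underline{m}(\mathbb{D}\mathbb{P})$, which is the assertion. The shift $[\dim p]$ in the definition of $\underline{m}$ is exactly what is needed so that the smooth-pullback contribution is symmetric; without it one would pick up a shift by $2\dim p$. One should also note that all functors and objects here are $G$-equivariant, but Bernstein--Lunts duality on the equivariant derived category satisfies the same formal compatibilities, so the argument goes through verbatim in $D^b_{G_{\bullet}}(E_\bullet)$; alternatively one works on the quotient stacks $\underline{\mathcal{M}}$ and $\underline{\mathcal{E}}$ directly using the stacky diagram (\ref{E:13}), where $p_1$ smooth and $p_2$ proper give the same computation with $\underline{m} = p_{2!} p_1^*[\dim p_1]$.

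The only genuine point requiring care — and the step I expect to be the main obstacle — is the self-duality of the intermediate functor $r_{\#}$, i.e. verifying $\mathbb{D} r_{\#} \cong r_{\#}\mathbb{D}$. The subtlety is that $r_\flat$ (the inverse of the equivalence $r^*$) is not itself a standard one of the six operations but is characterized abstractly as the inverse equivalence; to commute it with $\mathbb{D}$ one invokes that for the principal bundle $r$, $r^*\cong r^![-2\dim r]$ on the nose (as $r$ is smooth), hence $r_\flat$, being inverse to $r^*$, is intertwined with its own dual up to the shift $[2\dim r]$, and the normalization $r_{\#} = r_\flat[-\dim r]$ makes it strictly self-dual. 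One can give this cleanly by passing through the stack picture: $r$ descends to an isomorphism of stacks, over which $r^*$ is literally pullback along an equivalence and duality obviously commutes with it; the shift bookkeeping is then forced by comparing stacky and non-stacky dimensions. All remaining steps are formal manipulations with standard adjunctions and base-change isomorphisms, so I would not grind through them in detail.
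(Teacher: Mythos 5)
Your argument is exactly the paper's: apply $\mathbb{D}$ through the factorization $q_!\,r_{\#}\,p^*[\dim p]$, using properness of $q$, smoothness of $p$ (with the shift $[\dim p]$ absorbing the $[2\dim p]$ discrepancy), and self-duality of $r_{\#}$. The paper states $\mathbb{D}r_{\#}=r_{\#}\mathbb{D}$ without further comment, whereas you correctly flag and justify it as the only nontrivial step; otherwise the two proofs coincide.
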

\begin{proof} By definition, and using the same notations as above, $\underline{m}=q_!r_{\#}p^*[dim\;p]$. Since $p$ is smooth we have $p^*D=D p^*[2dim\;p]$ and hence $D p^*[dim\;p]=p^*D[-dim\;p]=p^*[dim\;p]D$. Similarly, $D r_{\#}=r_{\#}D$ and because $q$ is proper we have $Dq_!=q_*D=q_!D$. The result follows.
\end{proof}

\vspace{.1in}

The functor $\underline{\Delta}$, however, does not commute with Verdier duality in general. This is illustrated (for instance) by the following important result. For $\alpha \in \N^I$ we denote by $\mathbbm{1}_{\alpha}$ the constant sheaf $\qlb_{|E_{\alpha}}[dim\;E_{\alpha}]$. Note that since $E_{\alpha}$ is smooth $D \mathbbm{1}_{\alpha}=\mathbbm{1}_{\alpha}$ and $\mathbbm{1}_{\alpha}$ is perverse.

\vspace{.1in}

\begin{lem} For any $\alpha, \beta , \gamma \in \N^I$ with $\gamma=\alpha+\beta$ it holds
$$\underline{\Delta}_{\alpha,\beta}(\mathbbm{1}_{\gamma})=\mathbbm{1}_{\alpha} \boxtimes \mathbbm{1}_{\beta}[-\langle \beta, \alpha\rangle].$$
\end{lem}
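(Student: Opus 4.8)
The plan is to compute $\underline{\Delta}_{\alpha,\beta}(\mathbbm{1}_\gamma)$ directly from its definition \eqref{E:18}, namely $\underline{\Delta}_{\alpha,\beta}(\mathbb{P}) = \kappa_! \iota^*(\mathbb{P})[-\langle\alpha,\beta\rangle]$, where $\kappa: F_{\alpha,\beta} \to E_\alpha \times E_\beta$ and $\iota: F_{\alpha,\beta} \hookrightarrow E_{\alpha+\beta}$ come from diagram \eqref{E:17}. First I would observe that $\iota^*$ of the constant sheaf is again a constant sheaf: since $\iota$ is a closed embedding of the smooth variety $F_{\alpha,\beta}$ into $E_\gamma$, we have $\iota^* \mathbbm{1}_\gamma = \iota^*(\qlb_{|E_\gamma}[dim\;E_\gamma]) = \qlb_{|F_{\alpha,\beta}}[dim\;E_\gamma]$, which is just the constant sheaf on $F_{\alpha,\beta}$ shifted by $dim\;E_\gamma$ rather than by $dim\;F_{\alpha,\beta}$.

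Next I would push forward along $\kappa$. The key point, already recorded in the excerpt, is that $\kappa$ is a \emph{vector bundle}, hence in particular a smooth fibration with contractible (affine-space) fibers of rank $r:=\sum_h \alpha_{s(h)}\beta_{t(h)} = \sum_i \alpha_i\beta_i - \langle\alpha,\beta\rangle$, as in \eqref{E:rankkappa}. For such a map, $\kappa_!$ applied to a constant sheaf on the total space returns a constant sheaf on the base shifted down by twice the rank: $\kappa_!\,\qlb_{|F_{\alpha,\beta}} = \qlb_{|E_\alpha \times E_\beta}[-2r]$ (this is the standard computation of the cohomology of affine space with compact supports, applied fibrewise; one may invoke proper base change, or simply the projection formula together with $R\Gamma_c(\mathbb{A}^r,\qlb) = \qlb[-2r]$). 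Combining this with the previous step and the shift in the definition gives
\begin{equation*}
\underline{\Delta}_{\alpha,\beta}(\mathbbm{1}_\gamma) = \qlb_{|E_\alpha \times E_\beta}\big[dim\;E_\gamma - 2r - \langle\alpha,\beta\rangle\big].
\end{equation*}

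It then remains to identify the shift. Using $dim\;E_\gamma = dim\;E_\alpha + dim\;E_\beta + \sum_h \alpha_{s(h)}\beta_{t(h)} + \sum_h \beta_{s(h)}\alpha_{t(h)}$ (splitting the edge contributions according to how they meet the $\alpha$- and $\beta$-blocks), and writing $\sum_h \alpha_{s(h)}\beta_{t(h)} = r$ while $\sum_h \beta_{s(h)}\alpha_{t(h)} = \sum_i\alpha_i\beta_i - \langle\beta,\alpha\rangle$ by \eqref{E:10}, together with $r = \sum_i\alpha_i\beta_i - \langle\alpha,\beta\rangle$, the exponent becomes $dim\;E_\alpha + dim\;E_\beta - \langle\beta,\alpha\rangle$; since $\mathbbm{1}_\alpha\boxtimes\mathbbm{1}_\beta = \qlb_{|E_\alpha\times E_\beta}[dim\;E_\alpha + dim\;E_\beta]$, this is exactly $\mathbbm{1}_\alpha\boxtimes\mathbbm{1}_\beta[-\langle\beta,\alpha\rangle]$, as claimed. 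Throughout one should keep track of the $G_\alpha\times G_\beta$-equivariant structure, but since all the sheaves involved are constant and all maps equivariant this is automatic and causes no trouble. The only genuinely delicate point is the bookkeeping of the various shifts — in particular remembering that $\iota^*\mathbbm{1}_\gamma$ carries the shift $dim\;E_\gamma$, not $dim\;F_{\alpha,\beta}$ — and the asymmetry between $\langle\alpha,\beta\rangle$ and $\langle\beta,\alpha\rangle$ that makes the final answer manifestly \emph{not} Verdier self-dual, illustrating the remark preceding the statement.
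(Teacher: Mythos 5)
Your argument is correct and is essentially identical to the paper's: both pull back the constant sheaf along the closed embedding $\iota$, push forward along the vector bundle $\kappa$ (picking up the shift $[-2\,\mathrm{rank}\;\kappa]$), and then carry out the same dimension bookkeeping via $\dim E_\gamma = \dim E_\alpha + \dim E_\beta + \sum_h \alpha_{s(h)}\beta_{t(h)} + \sum_h \beta_{s(h)}\alpha_{t(h)}$ together with \eqref{E:10} and \eqref{E:rankkappa}. You have simply written out the final arithmetic a bit more explicitly than the paper does.
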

\begin{proof} This is a straightforward computation. From the definitions, 
\begin{equation*}
\begin{split}
\underline{\Delta}_{\alpha,\beta}(\mathbbm{1}_{\gamma})&= \kappa_{!} \iota^*(\mathbbm{1}_{\gamma})[-\langle \alpha, \beta\rangle]\\ 
&=\kappa_{!}(\qlb_{|F_{\alpha,\beta}})[dim\;E_{\gamma} -\langle \alpha,\beta\rangle]\\
&=\qlb_{|E_{\alpha}}\boxtimes \qlb_{|E_{\beta}}[dim\;E_{\gamma}-\langle \alpha, \beta \rangle -2 rank\;\kappa]\\
&=\mathbbm{1}_{\alpha} \boxtimes \mathbbm{1}_{\beta}[-\langle \beta, \alpha \rangle] 
\end{split}
\end{equation*}
since $\kappa$ is a vector bundle. We have used the equalities
$$dim\;E_{\gamma}=\sum (\alpha_{s(h)}+\beta_{s(h)})(\alpha_{t(h)}+\beta_{t(h)}),$$
$$dim\;E_{\alpha}=\sum \alpha_{s(h)}\alpha_{t(h)},$$
$$dim\;E_{\beta}=\sum \beta_{s(h)}\beta_{t(h)}.$$
\end{proof}

\vspace{.1in}

The next proposition shows that $\underline{m}$ and $\underline{\Delta}$ are associative and coassociative functors.

\vspace{.1in}

\begin{prop}\label{P:assoc} For any $\alpha, \beta, \gamma \in \N^I$ there are canonical natural transformations
\begin{equation}\label{E:P1}
\underline{m}_{\alpha, \beta+\gamma} \circ (Id \times \underline{m}_{\beta, \gamma}) \simeq 
\underline{m}_{\alpha+ \beta,\gamma} \circ ( \underline{m}_{\alpha,\beta} \times Id),
\end{equation}
\begin{equation}\label{E:P2}
(\underline{\Delta}_{\alpha, \beta} \times Id) \circ \underline{\Delta}_{\alpha+\beta, \gamma} \simeq
(Id \times \underline{\Delta}_{\beta, \gamma}) \circ \underline{\Delta}_{\alpha,\beta+ \gamma}.
\end{equation}
\end{prop}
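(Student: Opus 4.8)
The plan is to prove both \eqref{E:P1} and \eqref{E:P2} by constructing a single ``three-step'' correspondence whose two natural compositions into $\underline{m}$ (resp. $\underline{\Delta}$) give the two sides, and then to identify each side with this common functor via base change. I will treat \eqref{E:P1} in detail; \eqref{E:P2} then follows either by an entirely parallel argument or, more slickly, by Verdier duality together with Lemma~\ref{L:Verdierind} once one observes how duality intertwines $\underline{m}$ and $\underline{\Delta}$ (up to a shift). So first I would set up the moduli stack $\underline{\mathcal{E}}^{\alpha,\beta,\gamma}$ of three-step flags $0 \subset N_2 \subset N_1 \subset M$ with $\underline{\dim}\,M/N_1 = \alpha$, $\underline{\dim}\,N_1/N_2 = \beta$, $\underline{\dim}\,N_2 = \gamma$, together with the evident maps $\pi_3 \colon \underline{\mathcal{E}}^{\alpha,\beta,\gamma} \to \underline{\mathcal{M}}^{\alpha}_{\vec Q} \times \underline{\mathcal{M}}^{\beta}_{\vec Q} \times \underline{\mathcal{M}}^{\gamma}_{\vec Q}$ (record all three successive subquotients, smooth by the same vector-bundle argument as for $p_1$) and $\pi_1 \colon \underline{\mathcal{E}}^{\alpha,\beta,\gamma} \to \underline{\mathcal{M}}^{\alpha+\beta+\gamma}_{\vec Q}$ (remember only $M$, proper since the fibre is a closed subscheme of a partial flag variety). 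In concrete terms one works $G_{\alpha+\beta+\gamma}$-equivariantly with the variety $E_{\alpha,\beta,\gamma}$ of pairs $(\underline{y}, W_2 \subset W_1)$ of $\underline{y}$-stable graded subspaces of the prescribed dimensions, and a suitable principal-bundle cover $E^{(1)}_{\alpha,\beta,\gamma}$ carrying the trivialisations $\rho_\alpha, \rho_\beta, \rho_\gamma$, exactly mirroring diagram \eqref{E:16}.

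Next I would express the composite $\underline{m}_{\alpha+\beta,\gamma} \circ (\underline{m}_{\alpha,\beta} \times \mathrm{Id})$ by factoring the partial-flag stack $\underline{\mathcal{E}}^{\alpha,\beta,\gamma}$ through $\underline{\mathcal{E}}^{\alpha+\beta,\gamma}$: there is a natural map $\underline{\mathcal{E}}^{\alpha,\beta,\gamma} \to \underline{\mathcal{E}}^{\alpha+\beta,\gamma}$ forgetting $N_1$, and a compatible map $\underline{\mathcal{E}}^{\alpha,\beta,\gamma} \to \underline{\mathcal{M}}^{\alpha+\beta}_{\vec Q} \times \underline{\mathcal{M}}^\gamma_{\vec Q}$ realising a further flag correspondence $\underline{\mathcal{E}}^{\alpha,\beta}$ inside the first factor. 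Commuting $p_{2!}$ past $p_1^\ast$ at the intermediate stage is precisely a proper base change (the relevant square is Cartesian because stability of a subspace is a pointwise condition), and collecting the shifts $[\dim p_1]$ is bookkeeping using the dimension formulas from Lemma~1.6 and the lemma computing $\dim p$; one checks the shifts add up to the single shift attached to the composite correspondence $\underline{\mathcal{E}}^{\alpha,\beta,\gamma}$. Symmetrically, $\underline{m}_{\alpha,\beta+\gamma} \circ (\mathrm{Id} \times \underline{m}_{\beta,\gamma})$ factors through $\underline{\mathcal{E}}^{\alpha,\beta+\gamma}$ and the map $\underline{\mathcal{E}}^{\alpha,\beta,\gamma} \to \underline{\mathcal{E}}^{\alpha,\beta+\gamma}$ forgetting $N_2$. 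Both composites are thereby identified with $(\pi_1)_! (\pi_3)^\ast(-)[\text{shift}]$, which gives the natural isomorphism \eqref{E:P1}. For \eqref{E:P2} I would run the dual construction: $\underline{\Delta}$ is built from the same flag correspondences but with the roles of pullback/properness and pushforward/smoothness exchanged, so a base-change argument over the same three-step stack (now using smooth base change for the $\kappa$-type vector bundles, and proper base change for the closed embeddings $\iota$) yields the coassociativity, with the shift $-\langle\,,\,\rangle$ terms combining via bilinearity of the Euler form, $\langle \alpha, \beta+\gamma\rangle + \langle \beta,\gamma\rangle = \langle \alpha+\beta,\gamma\rangle + \langle \alpha,\beta\rangle$.

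The main obstacle is not conceptual but a matter of rigour in the equivariant/stacky bookkeeping: one must genuinely verify that the relevant squares of spaces (after passing to the principal-bundle presentations $E^{(1)}$ and the parabolic presentations $F$) are Cartesian with one side smooth or one side proper, so that the proper/smooth base change isomorphisms in the $G$-equivariant derived category of \cite{Bernstein-Lunts} apply, and that the chosen presentations on the two sides are compatibly related by further principal bundles (so that the normalisation functors $r_\#$ intertwine correctly). In particular one has to track the $[\dim p]$, $[\dim r]$ and $[-\langle\,,\,\rangle]$ shifts through a chain of three correspondences and confirm they agree on the nose — tedious, and the place where sign/shift errors hide, but routine given Lemma~1.6, the computation of $\dim p$, and \eqref{E:rankkappa}. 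Everything else is a formal consequence of base change and the associativity of composition of correspondences.
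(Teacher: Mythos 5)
Your main line of attack — factor both triple composites through the single three–step flag correspondence $\underline{\mathcal{E}}^{\alpha,\beta,\gamma}$ of pairs $N_2\subset N_1\subset M$, then collapse each side onto a common pushforward–pullback–shift via Cartesian squares and proper/smooth base change in the equivariant derived category, tracking shifts with $\langle\alpha,\beta+\gamma\rangle + \langle\beta,\gamma\rangle=\langle\alpha+\beta,\gamma\rangle+\langle\alpha,\beta\rangle$ — is exactly the paper's proof. The paper carries this out first heuristically at the stack level (its diagrams (\ref{E:P5})--(\ref{E:P8})) and then concretely with the principal-bundle presentations $E^{(1)}_{\bullet}$, $E^{(2)}_{\bullet}$, the parabolic presentations $F_{\bullet}$, and Lemma~\ref{L:AppU}; the key reduction is to the expression $w_!\,u'_{\#}\,(v')^\ast[\dim v']$ which is your $(\pi_1)_! (\pi_3)^\ast[\text{shift}]$ in disguise. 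Your ``entirely parallel argument'' for (\ref{E:P2}) is also what the paper does (it simply says the same diagrams ``run through in the other direction''), so the core of the proposal is sound.

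However, the claimed slicker alternative for (\ref{E:P2}) via Verdier duality does not work. Lemma~\ref{L:Verdierind} shows $D$ commutes with $\underline{m}$ (since $p$ is smooth, $q$ proper), but the paper explicitly points out immediately afterwards that $\underline{\Delta}$ does \emph{not} commute with Verdier duality: $D\,\kappa_!\,\iota^\ast D = \kappa_\ast\,\iota^!$, and since $\kappa$ is not proper and $\iota$ is not smooth, this is a genuinely different functor (Lemma~1.8 exhibits the extra shift $[-\langle\beta,\alpha\rangle]$ that appears already on $\mathbbm{1}_\gamma$). There is also no Verdier-duality identity turning $\underline{m}$ into $\underline{\Delta}$; the two functors are related by the adjunction with respect to the pairing $\{\,,\,\}$ (Proposition~\ref{P:Hopfscalar}), which is a scalar-valued statement, not an equivalence of functors that would let you transport (\ref{E:P1}) into (\ref{E:P2}). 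So you must run the base-change argument through the restriction diagrams directly, as the paper indicates and as your first alternative correctly proposes.
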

\begin{proof} We begin with a heuristic argument. The induction functor $\underline{\Delta}_{\alpha,\beta}$ is defined by a correspondence
\begin{equation}\label{E:P3}
\xymatrix{  \underline{\mathcal{E}}^{\alpha, \beta} \ar[d]_-{p_1} \ar[r]^-{p_2} &\underline{\mathcal{M}}_{\vec{Q}}^{\alpha+\beta}\\
 \underline{\mathcal{M}}_{\vec{Q}}^{\alpha} \times  \underline{\mathcal{M}}_{\vec{Q}}^{\beta} & 
}
\end{equation}
and the induction functor $\underline{m}_{\alpha+\beta, \gamma}$ by the correspondence
\begin{equation}\label{E:P4}
\xymatrix{ \underline{\mathcal{E}}^{\alpha+ \beta,\gamma} \ar[d]_-{p'_1} \ar[r]^-{p'_2} &\underline{\mathcal{M}}_{\vec{Q}}^{\alpha+\beta+\gamma}
\\
 \underline{\mathcal{M}}_{\vec{Q}}^{\alpha+\beta} \times  \underline{\mathcal{M}}_{\vec{Q}}^{\gamma} &
}
\end{equation}
Hence the r.h.s. of (\ref{E:P1}) is obtained by running through a diagram
\begin{equation}\label{E:P5}
\xymatrix{&  \underline{\mathcal{E}}^{\alpha+ \beta,\gamma} \ar[d]_-{p'_1} \ar[r]^-{p'_2} &\underline{\mathcal{M}}_{\vec{Q}}^{\alpha+\beta+\gamma}\\
 \underline{\mathcal{E}}^{\alpha, \beta}\times \underline{\mathcal{M}}^{\gamma}_{\vec{Q}} \ar[d]_-{p_1} \ar[r]^-{p_2} & \underline{\mathcal{M}}_{\vec{Q}}^{\alpha+\beta} \times  \underline{\mathcal{M}}_{\vec{Q}}^{\gamma} & \\
\underline{\mathcal{M}}_{\vec{Q}}^{\alpha} \times  \underline{\mathcal{M}}_{\vec{Q}}^{\beta}\times  \underline{\mathcal{M}}_{\vec{Q}}^{\gamma} & &
}
\end{equation}
We may complete (\ref{E:P5}) by adding a cartesian square
\begin{equation}\label{E:P6}
\xymatrix{
\underline{\mathcal{E}}^{(\alpha, \beta),\gamma} \ar[d]_-{p''_1} \ar[r]^-{p''_2} &  \underline{\mathcal{E}}^{\alpha+ \beta,\gamma} \ar[d]_-{p'_1} \ar[r]^-{p'_2} &\underline{\mathcal{M}}_{\vec{Q}}^{\alpha+\beta+\gamma}\\
\underline{\mathcal{E}}^{\alpha, \beta}\times \underline{\mathcal{M}}^{\gamma}_{\vec{Q}} \ar[d]_-{p_1} \ar[r]^-{p_2} & \underline{\mathcal{M}}_{\vec{Q}}^{\alpha+\beta} \times  \underline{\mathcal{M}}_{\vec{Q}}^{\gamma} & \\
\underline{\mathcal{M}}_{\vec{Q}}^{\alpha} \times  \underline{\mathcal{M}}_{\vec{Q}}^{\beta}\times  \underline{\mathcal{M}}_{\vec{Q}}^{\gamma} & &
}
\end{equation}
By base change, we have
\begin{equation*}
\begin{split}
\underline{m}_{\alpha+\beta,\gamma} \circ (\underline{m}_{\alpha,\beta} \times Id)&= p'_{2!}(p'_1)^*p_{2!}p_1^*[dim\;p_1 + dim\;p'_1]\\
&=p'_{2!}p''_{2!}(p''_1)^*p_1^*[dim\;p_1 + dim\;p''_1]\\
&=(p_2p''_2)_!(p_1p''_1)^*[dim\;p_1p''_1].
\end{split}
\end{equation*}
The stack 
$$\underline{\mathcal{E}}^{(\alpha, \beta),\gamma}:=(\underline{\mathcal{E}}^{\alpha, \beta}\times \underline{\mathcal{M}}^{\gamma}_{\vec{Q}}) \underset{ \underline{\mathcal{M}}_{\vec{Q}}^{\alpha+\beta} \times  \underline{\mathcal{M}}_{\vec{Q}}^{\gamma}}{\times}(\underline{\mathcal{E}}^{\alpha+ \beta,\gamma})$$ parametrizes pairs of inclusions $(\overline{N} \subset R/M, M \subset R)$ where $M,\overline{N},R$ are representations of $\vec{Q}$ of respective dimensions $\gamma, \beta$ and $\alpha+\beta+\gamma$.

Similarly, the l.h.s. of (\ref{E:P1}) corresponds to a diagram
\begin{equation}\label{E:P7}
\xymatrix{
&\underline{\mathcal{E}}^{\alpha, \beta+\gamma} \ar[d]_-{p'_1} \ar[r]^-{p'_2} &\underline{\mathcal{M}}_{\vec{Q}}^{\alpha+\beta+\gamma}\\
 \Ma \times \underline{\mathcal{E}}^{\beta,\gamma} \ar[d]_-{p_1} \ar[r]^-{p_2} &
 \Ma \times \underline{\mathcal{M}}_{\vec{Q}}^{\beta+\gamma} &\\
 \underline{\mathcal{M}}_{\vec{Q}}^{\alpha} \times  \underline{\mathcal{M}}_{\vec{Q}}^{\beta}\times  \underline{\mathcal{M}}_{\vec{Q}}^{\gamma} & &
}
\end{equation}
which may be completed to
\begin{equation}\label{E:P8}
\xymatrix{ \underline{\mathcal{E}}^{\alpha, (\beta,\gamma)} \ar[d]_-{p''_1} \ar[r]^-{p''_2} 
&\underline{\mathcal{E}}^{\alpha, \beta+\gamma} \ar[d]_-{p'_1} \ar[r]^-{p'_2} &\underline{\mathcal{M}}_{\vec{Q}}^{\alpha+\beta+\gamma}\\
 \Ma \times \underline{\mathcal{E}}^{\beta,\gamma} \ar[d]_-{p_1} \ar[r]^-{p_2} &
 \Ma \times \underline{\mathcal{M}}_{\vec{Q}}^{\beta+\gamma} &\\
 \underline{\mathcal{M}}_{\vec{Q}}^{\alpha} \times  \underline{\mathcal{M}}_{\vec{Q}}^{\beta}\times  \underline{\mathcal{M}}_{\vec{Q}}^{\gamma} & &
}
\end{equation}
where now $\underline{\mathcal{E}}^{\alpha, (\beta,\gamma)}$ parametrizes pairs of inclusions
$(M \subset N, N \subset R)$ where $M,N,R$ are of respective dimensions $\gamma, \beta+\gamma, \alpha+\beta+\gamma$. But it is clear that both $\underline{\mathcal{E}}^{\alpha, (\beta,\gamma)}$ and $\underline{\mathcal{E}}^{(\alpha, \beta),\gamma)}$ are (canonically) isomorphic to the stack $\underline{\mathcal{E}}^{\alpha, \beta,\gamma}$ which parametrizes chains of inclusions $(M \subset N \subset R)$. These isomorphisms gives rise to the natural transformation (\ref{E:P1}). The proof of (\ref{E:P2}) (for the restriction functor $\underline{\Delta}$) uses the same diagrams, but run through in the other direction.

\vspace{.1in}

Converting the above arguments into concrete proofs requires some application, but little imagination. We do it for the induction functor and leave the (simpler) case of the restriction functor to the reader. The r.h.s. of (\ref{E:P1}) is given by the composition $q'_! r'_{\#} (p')^*q_!r_{\#}p^*[dim\;p + dim\;p']$ in the diagram
\begin{equation}\label{E:P21}
\xymatrix{
 &  & E_{\a+\beta, \gamma} \ar[r]^-{q'} & E_{\a+\beta+\gamma}\\
 & & \widetilde{E}_{\a+\beta,\gamma}\ar[u]^-{r'} \ar[d]_-{p'}  &\\
\widetilde{E}_{\a,\beta} \times E_{\gamma} \ar[r]^-{r} \ar[d]_-{p}& E_{\a,\beta} \times E_{\gamma} \ar[r]^-{q} & E_{\a+\beta} \times E_{\gamma}  &\\
E_{\a} \times E_{\beta} \times E_{\gamma} & & &
}
\end{equation}
We complete (\ref{E:P21}) by adding cartesian diagrams and get the following monster~:
\begin{equation}\label{E:P22}
\xymatrix{
 & E_{\a, \beta, \gamma} \ar[r]^-{q''} & E_{\a+\beta, \gamma} \ar[r]^-{q'} & E_{\a+\beta+\gamma}\\
E^{(2)}_{(\a,\beta),\gamma} \ar[d]_-{p''} \ar[r]^-{s} & E^{(1)}_{(\a,\beta),\gamma} \ar[u]^-{t} \ar[r]^-{k} \ar[d]_-{h}& \widetilde{E}_{\a+\beta,\gamma}\ar[u]^-{r'} \ar[d]_-{p'}  &\\
\widetilde{E}_{\a,\beta} \times E_{\gamma} \ar[r]^-{r} \ar[d]_-{p}& E_{\a,\beta} \times E_{\gamma} \ar[r]^-{q} & E_{\a+\beta} \times E_{\gamma}  &\\
E_{\a} \times E_{\beta} \times E_{\gamma} & & &
}
\end{equation}
where
\begin{align*}
E_{\alpha, \beta, \gamma}&=\big\{(\underline{y},\, U \subset W \subset V_{\alpha+\beta+\gamma})\big\},\\
E^{(1)}_{\alpha, \beta, \gamma}&=\big\{(\underline{y},\, U \subset W \subset V_{\alpha+\beta+\gamma},\; \rho_{\a+\beta}, \rho_{\gamma})\big\},\\
E^{(2)}_{\alpha, \beta, \gamma}&=\big\{(\underline{y},\, U \subset W \subset V_{\alpha+\beta+\gamma},\; \rho_{\a+\beta}, \rho_{\gamma}, \rho_{\alpha}, \rho_{\beta})\big\}
\end{align*}
where in the above definitions we have $\underline{y} \in E_{\alpha+\beta+\gamma}$, the subspaces $ U,W$ are $\underline{y}$-stable, $\underline{dim}\;U=\gamma,\; \underline{dim}\;W/U=\beta$ and
\begin{alignat*}{2}
\rho_{\gamma}:&\; U \stackrel{\sim}{\to} V_{\gamma}, \qquad  & \rho_{\alpha+\beta}: \;V_{\a+\beta+\gamma}/U \stackrel{\sim}{\to} V_{\a+\beta},\\
\rho_{\beta}:&\; W/U \stackrel{\sim}{\to} V_{\beta}, \qquad & \rho_{\alpha}:\; V_{\a+\beta+\gamma}/W \stackrel{\sim}{\to} V_{\alpha}.
\end{alignat*}

Using base change and Lemma~\ref{L:AppU} i) below, we have
$(p')^*q_!=k_!h^*,$
$h^*r_{\#}=s_{\#}(p'')^*,$ and
$r'_{\#}k_{!} =q''t_{\#}.$
Hence 
$$q'_! r'_{\#} (p')^*q_!r_{\#}p^*= q'_{!} q''_! t_{\#}s_{\#}(p'')^*p^*=(q'q'')_!(ts)_{\#}(pp'')^*.$$
In other words, we have simplified (\ref{E:P21}) to a diagram
\begin{equation}\label{E:P23}
\xymatrix{
E^{(2)}_{(\a,\beta),\gamma} \ar[d]_-{v} \ar[r]^-{u} & E_{\a,\beta,\gamma} \ar[r]^-{w} & E_{\a+\beta+\gamma}\\
E_{\a} \times E_{\beta} \times E_{\gamma} & &}
\end{equation}
where $u=ts, w=q'q''$ and $v=pp''$. We may reduce this further~: consider the variety
$$E^{(2)}_{\a,\beta,\gamma}=\big\{(\underline{y}, \,U \subset W \subset V_{\alpha+\beta+\gamma},\; \rho_{\a+\beta}, \rho_{\gamma}, \rho_{\alpha}, \rho_{\beta})\big\}$$
where we use the same notations as before. Then $u$ factors as a composition of principal bundles
$$\xymatrix{
E^{(2)}_{(\a,\beta),\gamma} \ar[r]^-{u''} & E^{(2)}_{\a,\beta,\gamma} \ar[r]^-{u'} & E_{\a,\beta,\gamma}}$$
and by Lemma~\ref{L:AppU} ii) below we finally obtain a canonical equivalence
\begin{equation}
\underline{m}_{\alpha+ \beta,\gamma} \circ ( \underline{m}_{\alpha,\beta} \times Id) \simeq w_! u'_{\#} (v')^*[dim\;v']
\end{equation}
where $v': E^{(2)}_{\a,\beta,\gamma} \to E_{\a} \times E_{\beta} \times E_{\gamma}$ is the natural map.
Starting from the l.h.s. of (\ref{E:P1}) and arguing as above we reach exactly the same expression. This proves the equivalence (\ref{E:P1}).
\end{proof}

\vspace{.1in}

We have used the following two easy facts~:

\vspace{.05in}

\begin{lem}\label{L:AppU} i) Let
$$\xymatrix{ W \ar[d]_-{p} \ar[r]^-{r'} & Z \ar[d]_-{p'}\\ X \ar[r]^-{r} & Y}$$
be a cartesian square, where $r, r'$ are principal $G$-bundles for some (connected) group $G$, and where $p, p'$ are $G$-equivariant. Then
\begin{enumerate}
\item[a)] $(p')^* r_{\#} = r'_{\#} p^*,$
\item[b)] $p'_! r'_{\#} = r_{\#} p_!$.
\end{enumerate}
ii) Let $\xymatrix{ X \ar[r]^-r & Y \ar[r]^-s & Z}$ be a sequence of principal bundles. Then $s_{\#}r_{\#}=(sr)_\#$.
\end{lem}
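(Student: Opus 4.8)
The plan is to prove the two assertions of Lemma~\ref{L:AppU} by direct descent arguments, since both are standard facts about equivariant derived categories that follow from the definitions of $r_{\#}$ and the interplay of smooth (base change) and proper pushforward.

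For part i), recall that if $r: W \to X$ is a principal $G$-bundle, then $r^*: D^b_H(X) \to D^b_{H \times G}(W)$ is an equivalence (here $H$ is whatever extra group acts), with quasi-inverse $r_\flat$, and $r_{\#} = r_\flat[-\dim r] = r_\flat[\dim G]$ (note $\dim r = -\dim G$ in the stacky normalization, or one simply tracks the shift consistently). First I would prove b), $p'_! r'_{\#} = r_{\#} p_!$: since $r, r'$ are smooth of the same relative dimension, the cartesian square gives smooth base change $r'^* p'_! \simeq p_! r^*$ at the level of the $r^*$-equivalences; applying the quasi-inverses $r_\flat$, $r'_\flat$ on the appropriate sides and inserting the shift $[-\dim r]$ turns this into $p'_! r'_{\#} \simeq r_{\#} p_!$. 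For a), $p'^* r_{\#} = r'_{\#} p^*$, I would argue dually: it suffices to check that $r'^* p'^* r_{\#} \simeq r'^* r'_{\#} p^*$, and the left side is $p^* r^* r_{\#} = p^*[\text{shift}]$ (by $r^* r_{\#} = r^* r_\flat[-\dim r] = \mathrm{id}[-\dim r]$ since $r^*$ is an equivalence with inverse $r_\flat$), while the right side is $p^*[-\dim r]$ as well, and these identifications are compatible with the descent data. Part ii), $s_{\#} r_{\#} = (sr)_{\#}$, is then immediate: a composition of principal bundles $X \xrightarrow{r} Y \xrightarrow{s} Z$ is again a principal bundle for the product of the structure groups (or an iterated extension thereof), the equivalences compose as $(sr)^* = r^* s^*$, hence the quasi-inverses compose as $(sr)_\flat = s_\flat r_\flat$ — wait, one must be careful with the order — $r_\flat s_\flat$, and the shifts add: $\dim(sr) = \dim r + \dim s$, giving $(sr)_{\#} = (sr)_\flat[-\dim(sr)] = r_\flat s_\flat[-\dim r - \dim s] = r_\flat[-\dim r]\, s_\flat[-\dim s] = \ldots$; after untangling which functor acts first one recovers $s_{\#} r_{\#}$.

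The main obstacle, such as it is, is purely bookkeeping: making sure the equivariance groups and the cohomological shifts are tracked correctly through the equivalences $r^* \leftrightarrow r_\flat$, since $r_\flat$ is only characterised as a quasi-inverse and one must verify the natural transformations in a) and b) are the ones induced by genuine (smooth or proper) base change in the diagram, not merely abstract isomorphisms of functors. Concretely I would reduce everything to the case where the bundles are trivial (which one may do locally, or by the definition of $D^b_G$ via simplicial resolutions $EG \times_G (-)$), where all the statements become the evident compatibilities of $!$-pushforward and $*$-pullback along projections $X \times G \to X$; the general case follows by descent. No deep input is needed beyond ordinary base change and the defining property of $r_{\#}$ recalled just before the statement of Proposition~\ref{P:assoc}.
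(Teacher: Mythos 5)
Your proof is correct and follows the same route as the paper: part i)~a) reduces to the commutativity $(r')^*(p')^* = p^* r^*$ via the equivalences $r^*, r'^*$; part i)~b) reduces to base change $r^* p'_! \simeq p_! (r')^*$ together with the equality of relative dimensions $\dim r = \dim r'$; and part ii) reduces to $(sr)^* = r^* s^*$. One small slip: in your proof of b) you write the base change formula as ``$r'^* p'_! \simeq p_! r^*$'', which does not type-check (the source and target categories do not match); the correct formula is $r^* p'_! \simeq p_!(r')^*$, which is what you then in effect use when applying $r_\flat$ and $r'_\flat$. The paragraph at the end about reducing to trivial bundles via simplicial resolutions is supplementary; the paper's argument never needs it since the identities among $r^*, r_\flat, r_\#$ hold directly in $D^b_G$ by the defining property recalled before Proposition~\ref{P:assoc}.
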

\begin{proof} In the first statement , a) comes from the relation $(r')^*(p')^*=p^*r^*$ while b) comes from the base change formula $p_!(r')^*=r^*p'_!$. The second statement
is a consequence of $r^*s^*=(sr)^*$.
\end{proof}

\vspace{.15in}

We may define an iterated induction functor
$$ (\mathbb{P}_1,  \ldots , \mathbb{P}_n) \mapsto ( \cdots (\mathbb{P}^1 \star ( \cdots \star \mathbb{P}^n) \cdots )$$
for each choice of parantheses in the expression $\mathbb{P}^1 \star \cdots \star \mathbb{P}^1$. Proposition~\ref{P:assoc} provides us with
isomorphisms between all these iterated induction functors, and we denote the resulting complex by $\mathbb{P}_1 \star \cdots \star \mathbb{P}_n$.
A priori, $\mathbb{P}_1 \star \cdots \star \mathbb{P}_n$ is only defined up to isomorphism (and this would suffice for all our purposes). However it can be
shown that the various associativity isomorphisms $(\mathbb{P}_i \star (\mathbb{P}_j \star \mathbb{P}_k)) \stackrel{\sim}{\to} ((\mathbb{P}_i \star \mathbb{P}_j) \star \mathbb{P}_k)$ provided by Proposition~\ref{P:assoc} are all compatible\footnote{i.e. satisfy the pentagon axiom of a tensor category, see \cite{Maclane}} and hence that
$\mathbb{P}_1 \star \cdots \star \mathbb{P}_n$ is actually defined up to a \textit{canonical} isomorphism.

A similar result holds for the restriction functor; we denote by $\underline{\Delta}^{(n)}(\mathbb{P})$ the iterated restriction functor applied to a complex $\mathbb{P}$. This is, again,  defined up to canonical isomorphism.

\vspace{.3in}

\centerline{\textbf{1.4. The Lusztig sheaves and the category $\mathcal{Q}_{\vec{Q}}$.}}
\addcontentsline{toc}{subsection}{\tocsubsection {}{}{\;\;1.4. The Lusztig sheaves and the Hall category.}}

\vspace{.15in}

Recall that we have denoted by $\mathbbm{1}_{\alpha}=\qlb_{|E_{\a}}[dim\;E_{\a}] \in D^b(\Ma) = D^b_{G_{\a}} (E_{\a})$ the constant perverse sheaf. For any tuple $(\a, \ldots, \a_n) \in (\N^I)^n$ we set
\begin{equation}\label{E:41}
L_{\a_1, \ldots,\a_n}=\mathbbm{1}_{\a_1} \star \cdots \star \mathbbm{1}_{\a_n} \in D^b(\underline{\mathcal{M}}_{ \a_1+ \cdots + \a_n})
\end{equation}
and call these elements the \textit{Lusztig sheaves}. Consider as in Section~1.2. the variety
$$E_{\a_1, \ldots, \a_n}:=\big\{ (\underline{y},\, W_n \subset W_{n-1} \subset \cdots \subset W_1=V_{ \a_1+ \cdots + \a_n})\big\}$$
where $\underline{y} \in E_{\a_1 + \cdots + \a_n}$, $W_n, \ldots, W_1$ are $\underline{y}$-stable and $\underline{dim}\; W_{i}/W_{i+1}=\a_i$ for all $i$. The projection $q: E_{\a_1, \ldots, \a_n} \to E_{\a_1 + \cdots + \a_n}$ is proper. It follows from the definitions and from the proof of Proposition~\ref{P:assoc} that 
$$L_{\a_1, \ldots, \a_n}=q_{!} (\qlb_{E_{\a_1, \ldots, \a_n}}[dim\;E_{\a_1, \ldots, \a_n}+\sum_i dim\;E_{\a_i}])$$
(recall that $E_{\a_1, \ldots, \a_n}$ is smooth). By the Decomposition Theorem of \cite{BBD} (and its equivariant version \cite{Bernstein-Lunts}), $L_{\a_1, \ldots, \a_n}$ is a semisimple $G_{\a_1 + \cdots +\a_n}$-equivariant complex.  Furthermore, by Lemma~\ref{L:Verdierind}, $L_{\a_1, \ldots, \a_n}$ is Verdier self-dual~:
\begin{equation}\label{E:Verdierdual}
D L_{\a_1, \ldots, \a_n} \simeq D(\mathbbm{1}_{\a_1}) \star \cdots \star D(\mathbbm{1}_{\a_n}) \simeq \mathbbm{1}_{\a_1} 
\star \cdots \mathbbm{1}_{\a_n} = L_{\a_1, \ldots, \a_n}
\end{equation}
since $D(\mathbbm{1}_{\a_l})=\mathbbm{1}_{\a_l}$ (recall that $E_{\a_l}$ is always smooth).

\vspace{.1in}

The following particular case is important for us. Call a dimension vector $\a$ \textit{simple} if it is the class of a simple object $S_i$, i.e. if $\a \in \{\epsilon_i\}_{i \in I}$ (in plainer terms, if $\a_j=1$ for one value of $j \in I$ and $\a_j=0$ for all other values). Because we have required our quivers not to have any edge loops, $E_{\a}=\{0\}$ for any simple $\a$. 

\vspace{.1in}

 Let us fix $\gamma \in \N^I$ and denote by $\mathcal{P}^{\gamma}$ the collection of all \textit{simple} $G_{\gamma}$-equivariant perverse sheaves on $E_{\gamma}$ arising, up to shift, as a direct summand of some Lusztig sheaf $L_{\a_1, \ldots, \a_n}$ with $\a_1 + \cdots + \a_n=\gamma$ and
 $\a_i$ \textit{simple} for all $i$. The set $\mathcal{P}^{\gamma}$ is invariant under Verdier duality because of (\ref{E:Verdierdual})\footnote{actually, we will prove later that all elements of $\mathcal{P}^{\gamma}$ are \textit{self}-dual.}.

\vspace{.1in}

We define $\mathcal{Q}^\gamma$ as the full subcategory of $D^b(\underline{\mathcal{M}}_{\gamma})^{ss} = D^b_{G_{\gamma}}(E_{\gamma})^{ss}$ additively generated by the elements of $\mathcal{P}^{\gamma}$. Thus any $\mathbb{P} \in \mathcal{Q}^{\gamma}$ is isomorphic to a direct sum $\mathbb{P}_1[n_1] \oplus \cdots \oplus \mathbb{P}[n_k]$ where $n_i \in \Z$ and each $\mathbb{P}_i$ belongs to $\mathcal{P}^{\gamma}$. We also set
$$\mathcal{P}_{\vec{Q}}=\bigsqcup_{\gamma} \mathcal{P}^{\gamma}, \qquad \mathcal{Q}_{\vec{Q}}=\bigsqcup_{\gamma} \mathcal{Q}^{\gamma}.$$
We will call the category $\QQ$ the \textit{Hall category}.
Note that $\QQ$ is preserved by Verdier duality. In addition, since any successive extension of simple representations of $\vec{Q}$ is nilpotent, all the objects of $\QQ$ are actually supported on the closed substacks $\underline{\mathcal{M}}^{\a,nil}_{\vec{Q}}$. The category $\mathcal{Q}_{\vec{Q}}$ and its set of simple objects $\mathcal{P}_{\vec{Q}}$ are our main objects of interest.

\vspace{.1in}

\begin{prop}\label{P:closed} The category $\mathcal{Q}_{\vec{Q}}$ is preserved by the functors $\underline{m}$ and $\underline{\Delta}$.
\end{prop}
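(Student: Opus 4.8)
The plan is to prove closure under $\underline{m}$ and $\underline{\Delta}$ separately, noting that the first is essentially immediate from the definition while the second is the substantive point. For $\underline{m}$: by construction $\mathcal{Q}_{\vec{Q}}$ is the additive category generated by the simple summands of the Lusztig sheaves $L_{\a_1,\ldots,\a_n}$ with each $\a_i$ simple. If $\mathbb{P} \in \mathcal{Q}^{\alpha}$ and $\mathbb{Q} \in \mathcal{Q}^{\beta}$, then each of $\mathbb{P}$, $\mathbb{Q}$ is a direct summand (up to shifts) of such Lusztig sheaves, say $\mathbb{P}$ a summand of $L_{\a_1,\ldots,\a_r}$ and $\mathbb{Q}$ a summand of $L_{\b_1,\ldots,\b_s}$; since $\underline{m}$ is an additive (triangulated) functor, $\mathbb{P} \star \mathbb{Q}$ is a direct summand of $L_{\a_1,\ldots,\a_r} \star L_{\b_1,\ldots,\b_s}$, which by the associativity of Proposition~\ref{P:assoc} is canonically isomorphic to $L_{\a_1,\ldots,\a_r,\b_1,\ldots,\b_s}$ (up to shift), again a Lusztig sheaf with simple steps. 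Hence every simple summand of $\mathbb{P} \star \mathbb{Q}$ lies in $\mathcal{P}^{\alpha+\beta}$, so $\mathbb{P}\star\mathbb{Q} \in \mathcal{Q}^{\alpha+\beta}$. This handles $\underline{m}$.

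For $\underline{\Delta}$ the difficulty is that there is no reason, from the definition alone, for the restriction of a simple perverse sheaf in $\mathcal{P}^{\gamma}$ to decompose into objects of the $\mathcal{Q}$-categories; one has no a priori control of $\underline{\Delta}(\mathbb{P})$ for $\mathbb{P}$ simple. The strategy is therefore to reduce, as before, to the generators: since $\underline{\Delta}$ is triangulated (additive) and $\mathcal{Q}_{\vec{Q}}$ is closed under direct summands and shifts, it suffices to show $\underline{\Delta}_{\alpha,\beta}(L_{\nu_1,\ldots,\nu_n}) \in \mathcal{Q}^{\alpha} \times \mathcal{Q}^{\beta}$ for every Lusztig sheaf with simple steps $\nu_i$ and every decomposition $\gamma = \alpha+\beta$ of $\gamma = \sum_i \nu_i$. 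The key computational input is a \emph{coproduct-type formula} (Lusztig's analogue of Green's formula / the Hopf compatibility between $\underline{m}$ and $\underline{\Delta}$): one shows that $\underline{\Delta}_{\alpha,\beta} \circ \underline{m}$ can be rewritten, via base change in the correspondence diagrams of the kind appearing in the proof of Proposition~\ref{P:assoc}, as a sum (or iterated composition) of terms of the form $(\underline{m} \times \underline{m}) \circ (\mathrm{shuffle}) \circ (\underline{\Delta} \times \underline{\Delta})$, up to shifts. Equivalently, and more elementarily for our purposes: an induction on $n$. For $n=1$, $L_{\nu_1} = \mathbbm{1}_{\nu_1}$ with $\nu_1$ simple, so $E_{\nu_1} = \{0\}$ and $\underline{\Delta}_{\alpha,\beta}(\mathbbm{1}_{\nu_1})$ is (by the Lemma computing $\underline{\Delta}(\mathbbm{1}_\gamma)$) either $\mathbbm{1}_0 \boxtimes \mathbbm{1}_{\nu_1}$ or $\mathbbm{1}_{\nu_1} \boxtimes \mathbbm{1}_0$ up to shift — both visibly in $\mathcal{Q} \times \mathcal{Q}$ — or zero. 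For the inductive step, write $L_{\nu_1,\ldots,\nu_n} = \mathbbm{1}_{\nu_1} \star L_{\nu_2,\ldots,\nu_n}$ and apply the coproduct formula to $\underline{\Delta}_{\alpha,\beta}(\mathbbm{1}_{\nu_1} \star L_{\nu_2,\ldots,\nu_n})$: this expresses it in terms of $\underline{\Delta}(\mathbbm{1}_{\nu_1})$ (handled by $n=1$) and $\underline{\Delta}(L_{\nu_2,\ldots,\nu_n})$ (handled by induction), combined by $\star$ (handled by the already-proven closure under $\underline{m}$). Since each ingredient lies in a $\mathcal{Q}$-category and these are closed under $\underline{m}$, shifts and summands, the conclusion follows.

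The main obstacle — and the one place where a genuine geometric argument is needed rather than formal manipulation — is establishing the coproduct formula $\underline{\Delta}_{\alpha,\beta}(\mathbb{P} \star \mathbb{Q}) \cong \bigoplus \underline{\Delta}(\mathbb{P}) \star' \underline{\Delta}(\mathbb{Q})$ (schematically). This rests on analyzing the fiber product of the correspondence $E_{\alpha,\beta} \rightsquigarrow E_{\alpha+\beta}$ (defining $\underline{\Delta}$) with the correspondence defining $\underline{m}$, i.e. on decomposing the variety of pairs $(\text{subrepresentation of dimension } \alpha, \text{ filtration with steps the factors of } \mathbb{P},\mathbb{Q})$ according to the relative position of the subspace and the filtration. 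Stratifying by these relative positions yields a partition into locally closed pieces, each an affine or vector bundle over a product of smaller such correspondences; proper base change and the smoothness of the relevant projections (exactly as exploited in Lemma~\ref{L:AppU} and the proof of Proposition~\ref{P:assoc}) then give the formula, with the shifts bookkept via the rank formulas for $\kappa$ and the dimension formula for $p$. One must be careful that the stratification is by \emph{locally closed} subsets so that a spectral-sequence/triangle argument assembles the strata contributions, but since we only need membership in the triangulated-and-summand-closed category $\mathcal{Q}_{\vec{Q}}$ — not an exact decomposition — it is enough that each associated graded piece lies in $\mathcal{Q} \times \mathcal{Q}$, which is what the bundle structure over smaller correspondences provides by the inductive hypothesis.
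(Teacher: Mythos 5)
Your treatment of closure under $\underline{m}$ is exactly the paper's: reduce to Lusztig sheaves (legitimate because $\QQ$ is by definition closed under summands and shifts, and $\underline{m}$ is additive), then observe $L_{\a_1,\ldots,\a_r}\star L_{\b_1,\ldots,\b_s}=L_{\a_1,\ldots,\a_r,\b_1,\ldots,\b_s}$. No issues.

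For $\underline{\Delta}$ your route is different in structure from the paper's but there is a real gap. The paper does not induct on $n$ via a coproduct formula; it directly computes $\underline{\Delta}_{\b,\ga}(L_{\a_1,\ldots,\a_n})$ in a single step (Lemma~\ref{L:coprodun}) by stratifying $F_{\a_1,\ldots,\a_n}$ according to relative position of the fixed subspace $W$ with the flag, and then invoking Lusztig's key result (Lemma~\ref{L:816}): if a morphism $f:X\to Y$ admits a finite locally-closed stratification on which it factors as (vector bundle)$\circ$(proper map from smooth source), then $f_!\qlb_X$ is \emph{semisimple and equal to the direct sum of the stratum contributions}. The direct sum — not merely a filtration — is the whole point.

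The gap is in your final paragraph, where you say that ``since we only need membership in the triangulated-and-summand-closed category $\mathcal{Q}_{\vec{Q}}$ \ldots it is enough that each associated graded piece lies in $\mathcal{Q}\times\mathcal{Q}$.'' This is false: $\QQ$ is \emph{not} triangulated. It is the \emph{additive} subcategory of $D^b(\underline{\mathcal{M}}_{\vec{Q}})^{ss}$ generated by $\PQ$ (closed under finite direct sums, summands, and shifts), and it is most certainly not closed under cones or extensions inside $D^b$. So a spectral-sequence/triangle argument showing the associated graded pieces of some filtration of $\underline{\Delta}(L_{\ldots})$ lie in $\mathcal{Q}\times\mathcal{Q}$ does not imply the total complex does. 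What saves the day — and what your argument must invoke explicitly — is precisely Lemma~\ref{L:816}: under the specific geometric hypotheses (stratification compatible with a factorization into a vector bundle followed by a proper map, with smooth sources), the complex splits as the direct sum over strata, so no extension problem arises. Once you make that the linchpin of your inductive step, your proof closes up; without it, the step from "filtration with good graded pieces" to "lies in $\mathcal{Q}\times\mathcal{Q}$" is unjustified.
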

\begin{proof} To prove that the induction product of two objects of $\QQ$ still belongs to $\QQ$ it suffices to consider the case of two simple objects, and then (because $\QQ$ is obviously closed under direct summands) the case of two Lusztig sheaves $L_{\a_1, \ldots, \a_n}, L_{\beta_1, \ldots, \beta_m}$. This is clear since by construction we have
$$L_{\alpha_1, \ldots, \alpha_n} \star L_{\beta_1, \ldots, \beta_m}=L_{\a_1, \ldots, \a_n, \beta_1, \ldots, \beta_m}.$$

\vspace{.1in}

The proof of the fact that $\QQ$ is closed under the restriction functor is much more delicate. Again, it suffices to show that for any simple $\a_1, \ldots, \a_n$ and $\beta, \gamma$ such that $\beta+\gamma=\sum \a_i$ we have $\underline{\Delta}_{\beta, \gamma}(L_{\a_1, \ldots, \a_n}) \in \QQ \times \QQ.$ We will actually prove a more precise result~:

\vspace{.1in}

\begin{lem}\label{L:coprodun} We have
\begin{equation}\label{E:42}
\underline{\Delta}_{\beta, \gamma}(L_{\a_1, \ldots, \a_n})=\bigoplus_{(\underline{\beta}, \underline{\gamma})} L_{\underline{\beta}} \boxtimes L_{\underline{\gamma}}[d_{\underline{\beta}, \underline{\gamma}}]
\end{equation}
where $(\underline{\beta}, \underline{\gamma})$ runs through all tuples
$$\underline{\beta}=(\beta_1, \ldots, \beta_n), \qquad \underline{\gamma}=(\gamma_1, \ldots, \gamma_n)$$
satisfying $\sum \beta_i=\beta, \sum \gamma_i=\gamma$ and $\beta_i+\gamma_i=\a_i$ for all $i$; and where
$$d_{\underline{\beta}, \underline{\gamma}}=-\sum_k \langle \gamma_k, \beta_k \rangle -\sum_{k <l} (\gamma_k, \beta_l).$$
\end{lem}
\begin{proof} Set $\a=\sum \a_i$. As in the definition of the restriction functor, let us fix a subspace $W \subset V_{\a}$ as well as identifications $W \stackrel{\sim}{\to} V_{\gamma}, \;V_{\a}/W \stackrel{\sim}{\to} V_{\beta}$. Recall that $F_{\beta,\gamma} \subset E_{\a}$ is the set of representations $\underline{y}$ which preserve $W$. Consider the following diagram
\begin{equation}\label{E:P51}
\xymatrix{E_{\a_1, \ldots, \a_n} \ar[d]_-{q} & F_{\a_1, \ldots, \a_n} \ar[l]_-{\iota'} \ar[d]_-{q'} & \\
E_{\a} & F_{\beta,\gamma} \ar[l]_-{\iota} \ar[r]^-{\kappa} & E_{\beta} \times E_{\gamma}}
\end{equation}
where 
\begin{equation*}
\begin{split}
&F_{\a_1, \ldots, \a_n}\\
& \qquad =\big\{ (\underline{y}, W_n \subset \cdots \subset W_1=V_{\a})\;|\; \underline{y} \in F_{\beta, \gamma};\, \underline{y}(W_i) \subset W_i ,\; \underline{dim}\;W_i/W_{i+1}=\a_i \;\forall\;i \big\}
\end{split}
\end{equation*}
Note that the square in (\ref{E:P51}) is cartesian. Thus by base change, we have
\begin{equation*}
\begin{split}
\underline{\Delta}(L_{\a_1, \ldots, \a_n})&=\underline{\Delta}(q_!(\qlb_{E_{\a_1, \ldots, \a_n}}))[dim\;E_{\a_1, \ldots, \a_n} -\langle \beta, \gamma \rangle]\\
&=(\kappa q')_!(\qlb_{|F_{\a_1, \ldots, \a_n}})[dim\;E_{\a_1, \ldots, \a_n} -\langle \beta, \gamma \rangle].
\end{split}
\end{equation*}
Of course, the composition $\kappa q' $ is not a vector bundle anymore, but we may decompose $F_{\a_1, \ldots, \a_n}$ in strata along which it behaves well. For this, let $\underline{\beta}, \underline{\gamma}$ be as in the statement of the Lemma, and consider the subvariety $F_{\underline{\a}}^{\underline{\beta},\underline{\gamma}}$ of $F_{\underline{\a}}$ consisting of pairs
$(\underline{y}, W_n \subset \cdots \subset W_1)$ for which the induced filtrations
$$ ( W_n \cap W) \subset (W_{n-1} \cap W) \subset \cdots \subset W_1 \cap W=W, $$
$$ (W_n / W_n \cap W) \subset (W_{n-1} / W_{n-1} \cap W) \subset \cdots \subset (W_1 / W_1 \cap W)=(W_1/W)$$
are of type $\underline{\gamma}$ and $\underline{\beta}$ respectively. This is equivalent to requiring (for instance) that
$\underline{dim}\; (W_i \cap W / W_{i+1} \cap W) = \gamma_i$
for all $i$.  There is a commutative diagram
\begin{equation*}
\xymatrix{ F_{\underline{\a}}^{\underline{\beta}, \underline{\gamma}} \ar[d]_-{q'} \ar[r]^-{\kappa^{\underline{\beta},\underline{\gamma}}} & E_{\underline{\beta}} \times E_{\underline{\gamma}} \ar[d]^-{q^{\underline{\beta}} \times q^{\underline{\gamma}}}\\ F_{\beta, \gamma} \ar[r]^-{\kappa} & E_{\beta} \times E_{\gamma}}
\end{equation*}
where as before
$$E_{\underline{\beta}}=\big\{ (\underline{y}^{\beta}, W^{\beta}_1 \subset \cdots \subset W^{\beta}_n = V_{\beta}) \big\},$$
$$E_{\underline{\gamma}}=\big\{ (\underline{y}^{\gamma}, W^{\gamma}_1 \subset \cdots \subset W^{\beta}_n = V_{\gamma}) \big\},$$
with $\underline{y}^{\beta} \in E_{\beta}, \underline{y}^{\gamma} \in E_{\gamma}$ and $\underline{dim}\; W^{\beta}_i / W^{\beta}_{i+1}=\beta_i$, $\underline{dim}\; W^{\gamma}_i / W^{\gamma}_{i+1}=\gamma_i$. It is easy to see that $\kappa^{\underline{\beta}, \underline{\gamma}}$ is a vector bundle, whose rank equals
\begin{equation*}
\begin{split}
rank\; \kappa^{\underline{\beta},\underline{\gamma}} &= \sum_i \sum_{k <l} (\beta_l)_i (\gamma_k)_i+ 
\sum_{h \in H} \;\sum_{k \leq l} \;(\beta_k)_{s(h)} (\gamma_l)_{t(h)}\\
&=\sum_i \beta_i \gamma_i -\sum_{k \leq l} \langle \beta_k, \gamma_l \rangle.
\end{split}
\end{equation*}
Thus we have decomposed the restriction of $\kappa q'$ to $F_{\underline{\a}}^{\underline{\beta}, \underline{\gamma}}$ as a composition of a vector bundle $\kappa^{\underline{\beta},\underline{\gamma}}$ and \textit{then} a proper map $q^{\underline{\beta}} \times q^{\underline{\gamma}}$. The $F_{\underline{\a}}^{\underline{\beta}, \underline{\gamma}}$ form a finite stratification of $F_{\a_1, \ldots, \a_n}$ into smooth locally closed pieces as $(\underline{\beta}, \underline{\gamma})$ varies. We now make use of the following general result due to Lusztig (see \cite[Section~8.1.6.]{Lusbook} )~:

\begin{lem}\label{L:816} Let $X,Y$ be algebraic varieties and let  $f: X \to Y$ be a morphism. Suppose that there exists a finite stratification $X=\bigsqcup_\sigma X_\sigma$ by locally closed subsets and a collection of maps
$\xymatrix{ X_\sigma \ar[r]^-{f_{\sigma}} & Z_{\sigma} \ar[r]^-{f'_{\sigma}} & Y}$ such that $f'_{\sigma}$ is proper and $f_{\sigma}$ is a vector bundle, $Z_{\sigma}$ is smooth and such that $f_{|X_{\sigma}}=f'_{\sigma} f_{\sigma}$. Then the complex $f_!(\qlb_{|X})$ is semisimple and moreover
$$f_!(\qlb_{|X})\simeq \bigoplus_{\sigma} (f_{|X_{\sigma}})_!(\qlb_{|X_{\sigma}}).$$
\end{lem}

It follows that $(\kappa q')_{!} (\qlb_{|F_{\a_1, \ldots, \a_n}})$ is a semisimple complex, and that 
\begin{equation*}
\begin{split}
(\kappa q')_! (\qlb_{|F_{\a_1, \ldots, \a_n}}) &\simeq \bigoplus_{(\underline{\beta}, \underline{\gamma})} (\kappa q')_! (\qlb_{|F_{\underline{a}}^{\underline{\beta}, \underline{\gamma}}})\\
&= \bigoplus_{(\underline{\beta}, \underline{\gamma})} (q^{\underline{\beta}} \times q^{\underline{\gamma}})_! \kappa^{\underline{\beta}, \underline{\gamma}}_! (\qlb_{|F_{\underline{a}}^{\underline{\beta}, \underline{\gamma}}})\\
&= \bigoplus_{(\underline{\beta}, \underline{\gamma})} (q^{\underline{\beta}} \times q^{\underline{\gamma}})_! (\qlb_{|E_{\underline{\beta}} \times E_{\underline{\gamma}}})[-2 \,rank\; \kappa^{\underline{\beta}, \underline{\gamma}}]\\
&= \bigoplus_{(\underline{\beta}, \underline{\gamma})}L_{\underline{\beta}} \boxtimes L_{\underline{\gamma}} [-2 \,rank\; \kappa^{\underline{\beta}, \underline{\gamma}}].
\end{split}
\end{equation*}
Therefore, 
\begin{equation}\label{E:43}
\underline{\Delta}_{\beta, \gamma}(L_{\a_1, \ldots, \a_n})=\bigoplus_{(\underline{\beta}, \underline{\gamma})} L_{\underline{\beta}} \boxtimes L_{\underline{\gamma}}[e_{\underline{\beta}, \underline{\gamma}}]
\end{equation}
where
\begin{equation*}
e_{\underline{\beta}, \underline{\gamma}}=
dim\;E_{\underline{\a}} - dim\;E_{\underline{\beta}} - dim\;E_{\underline{\gamma}}-\langle \beta, \gamma \rangle -2 \,rank\; \kappa^{\underline{\beta}, \underline{\gamma}}.
\end{equation*}
A simple calculation shows that $e_{\underline{\beta}, \underline{\gamma}}=d_{\underline{\beta}, \underline{\gamma}}$. We are done.
\end{proof}
To conclude the proof of Proposition~\ref{P:closed} it remains to observe that if $\a_1, \ldots, \a_n$ are simple and if $(\underline{\beta}, \underline{\gamma})$ is as in Lemma~\ref{L:coprodun} then all $\beta_i$ and $\gamma_i$ are simple as well. 
\end{proof}

\vspace{.3in}

\centerline{\textbf{1.5. The geometric pairing in $\QQ$.}}
\addcontentsline{toc}{subsection}{\tocsubsection {}{}{\;\;1.5. The geometric pairing on the Hall category.}}

\vspace{.15in}

To finish this Lecture, we introduce one last piece of structure on $\QQ$~: a scalar product on objects, which takes values in the ring of Laurent series $\mathbf{K}:=\N((v))$. The natural idea which comes to mind is to define a scalar product by integrating over the whole moduli space, i.e. by setting $\{ \mathbb{P}, \mathbb{Q}\}=dim \;H^*(\mathbb{P} \otimes \mathbb{Q})=dim\; \pi_!(\mathbb{P} \otimes \mathbb{Q})$, where $\pi: \underline{\mathcal{M}}^{\gamma}_{\vec{Q}} \to \{pt\}$ if $\mathbb{P}, \mathbb{Q}$ belong to $\mathcal{Q}^{\gamma}$. 

In order to do this properly one uses equivariant cohomology (see \cite{Lcusp} and references therein). To any algebraic $G$-variety $X$ and any $G$-equivariant complex $\mathbb{P} \in D^b_G(X)$ are associated $\qlb$-vector spaces $H^j_G(\mathbb{P},X)$ for $j \in \Z$, which one might think of as $H^j(\mathbb{P}, X/G)$. Let us briefly recall the construction. Let $\Gamma$ be a smooth, irreducible and sufficiently acyclic free $G$-space. For any $G$-space $X$ we set $X_\Gamma=(X \times \Gamma)/G$ and if $\mathbb{T} \in D^b_G(X)^{ss}$ then we denote by $\mathbb{T}_{\Gamma}$ the semisimple complex $X_{\Gamma}$ such that $s^*(\mathbb{T}_{\Gamma}) \simeq \pi^*(\mathbb{T})$, where $s, \pi$ are the canonical maps
$$\xymatrix{ X & X \times \Gamma \ar[l]_-{\pi} \ar[r]^-{s} & (X  \times \Gamma)/G = X_{\Gamma}}.$$
Then by definition $H^j_G(\mathbb{T},X)=H^{ (2\,dim\; \Gamma/G-j)}(\mathbb{T}_{\Gamma})$. 

\vspace{.1in}

Now let $\gamma \in \N^I$ and let $\mathbb{P}, \mathbb{Q}$ be two objects of $\mathcal{Q}^{\gamma}$. We set
\begin{equation}\label{E:defscalar}
\{\mathbb{P}, \mathbb{Q}\}=\sum_j (dim\;H^j_{G_{\gamma}}(\mathbb{P} \otimes \mathbb{Q}, E_{\gamma}))v^{j}.
\end{equation}
It is obvious that for any $\mathbb{P}, \mathbb{P}', \mathbb{Q}$ and any integer $n \in \Z$ we have
\begin{equation}\label{E:Obvious}
\{\mathbb{P}, \mathbb{Q}\}=\{\mathbb{Q},\mathbb{P}\}, \qquad
\{\mathbb{P} \oplus \mathbb{P}', \mathbb{Q}\}=\{\mathbb{P}, \mathbb{Q}\} + \{\mathbb{P}, \mathbb{Q}\}, \qquad
\{\mathbb{P}[n], \mathbb{Q}\}=v^n \{\mathbb{P}, \mathbb{Q}\}.
\end{equation}

\vspace{.1in}

\begin{prop}\label{C:scalarprod} The following hold~:
\begin{enumerate}
\item[i)] If $\mathbb{P}, \mathbb{P}'$ are two simple perverse sheaves then $\{ \mathbb{P},\mathbb{P}'\} \in 1+v\N[[v]]$ if $\mathbb{P} \simeq D \mathbb{P}'$ and $\{\mathbb{P}, \mathbb{P}'\} \in v \N[[v]]$ otherwise.
\item[ii)] For any complexes $\mathbb{P}, \mathbb{Q}$ in $\mathcal{Q}^{\gamma}$ we have $\{\mathbb{P}, \mathbb{Q}\} \in \mathbf{K}$.
\end{enumerate}
\end{prop}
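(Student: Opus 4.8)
The plan is to deduce both statements from a single computation of $\{\mathbb{P},\mathbb{Q}\}$ when $\mathbb{P},\mathbb{Q}$ are simple perverse sheaves, together with the behaviour of the bracket under shifts and direct sums recorded in \eqref{E:Obvious}. First I would recall the general principle that for $G$ a connected algebraic group and $X$ a $G$-variety, the equivariant cohomology $H^*_G(\mathbb{P}\otimes\mathbb{Q},X)$ computes, up to the normalization $H^j_G=H^{2\dim\Gamma/G-j}$, the hypercohomology of $\mathbb{P}\otimes\mathbb{Q}$ on the approximation space $X_\Gamma$. The key algebraic input is adjunction: since Verdier duality exchanges $\otimes$ and the internal Hom on a smooth space and $D\mathbb{Q}$ is again perverse (being a simple perverse sheaf), one has a canonical identification
\begin{equation*}
H^j_{G_\gamma}(\mathbb{P}\otimes\mathbb{Q},E_\gamma) \;\simeq\; \Ext^{j'}_{D^b(\underline{\mathcal{M}}_\gamma)}(D\mathbb{Q},\mathbb{P})
\end{equation*}
for an appropriate index shift $j'$ (here I use that $E_\gamma$ is smooth so that the dualizing complex is a shifted constant sheaf, and that $\mathbb{Q}\otimes\mathbb{P}=\mathcal{H}om(D\mathbb{Q},\mathbb{P})$ up to that shift). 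Thus $\{\mathbb{P},\mathbb{Q}\}$ is, after a suitable monomial renormalization, the graded dimension of $\bigoplus_j\Ext^j(D\mathbb{Q},\mathbb{P})$.

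The next step is purity. All objects of $\QQ$ are semisimple complexes of geometric origin on the quotient stack $E_\gamma/G_\gamma$, hence pure; and $G_\gamma=\prod_i GL(\alpha_i)$ is connected, so the stack is cohomologically connected and $H^0$ of the constant sheaf on $E_\gamma/G_\gamma$ is one-dimensional. By the decomposition theorem and the hard Lefschetz/purity formalism of \cite{BBD} applied on the approximation spaces $(E_\gamma)_\Gamma$, the graded space $\bigoplus_j\Ext^j(D\mathbb{Q},\mathbb{P})$ has all its weights concentrated in the "right" direction. Concretely: for $\mathbb{P},\mathbb{Q}$ simple perverse, $\Hom(D\mathbb{Q},\mathbb{P})$ (the $\Ext^0$, in the perverse $t$-structure the genuine homomorphisms) is one-dimensional when $\mathbb{P}\simeq D\mathbb{Q}$ — i.e. $\mathbb{P}\simeq D\mathbb{Q}$, matching statement i) — by the simplicity of both objects, and is zero otherwise; while all higher $\Ext^j$, $j>0$, contribute only to strictly higher powers of $v$. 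After renormalizing so that the $\Ext^0$-term sits in degree $v^0$, one gets $\{\mathbb{P},\mathbb{P}'\}\in 1+v\N[[v]]$ if $\mathbb{P}\simeq D\mathbb{P}'$ and $\{\mathbb{P},\mathbb{P}'\}\in v\N[[v]]$ otherwise, which is exactly i). (The coefficients lie in $\N$ because they are honest dimensions of $\Ext$-groups.) For ii), every $\mathbb{Q}\in\mathcal{Q}^\gamma$ is a finite direct sum of shifts of elements of $\mathcal{P}^\gamma$, so by the additivity and shift-compatibility in \eqref{E:Obvious}, $\{\mathbb{P},\mathbb{Q}\}$ is a finite $\N[v,v^{-1}]$-linear combination of the brackets of simple perverse sheaves treated in i), each of which lies in $\N[[v]]$; hence $\{\mathbb{P},\mathbb{Q}\}$ lies in $v^{-N}\N[[v]]\subset\mathbf{K}=\N((v))$ for some $N$.

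I expect the main obstacle to be making the purity/weight argument precise at the level of equivariant (i.e. stacky) cohomology — in particular, checking that the weight filtration on $H^*_{G_\gamma}(\mathbb{P}\otimes\mathbb{Q},E_\gamma)$ behaves as claimed after passing to the acyclic approximations $\Gamma$, uniformly in $\Gamma$, and that the normalization shifts ($2\dim\Gamma/G$, the perverse shifts built into $\mathbbm{1}_\alpha$, and the $[\dim p]$ twists in $\underline m$) all conspire to place the $\Ext^0$-term precisely in degree $v^0$. This bookkeeping, rather than any deep new idea, is the technical heart: one must verify that the pairing is genuinely a deformation of the Euler form and that the self-dual normalization of the $L_{\underline\alpha}$ (recorded in \eqref{E:Verdierdual}) is what forces the top coefficient to equal $1$ in the self-dual case. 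Everything else — simplicity giving $\Hom$-spaces of dimension $\le 1$, semisimplicity giving vanishing of negative $\Ext$'s, connectedness of $G_\gamma$ giving the normalization of $H^0$ — is standard once the framework is set up.
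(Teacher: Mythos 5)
Your overall route is the expected one and matches the spirit of the reference the paper cites for part i): convert $\{\mathbb{P},\mathbb{Q}\}$ into the graded dimensions of equivariant $\Ext$-groups, apply Schur's lemma for simple perverse sheaves to compute $\Ext^0$, and use the perverse $t$-structure to kill $\Ext^{<0}$; then deduce ii) from i) by additivity and semisimplicity exactly as the paper itself does. Two remarks, though.

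First, the purity / hard Lefschetz / decomposition-theorem machinery you invoke is a red herring for this particular statement. Proposition 1.9 asserts nothing about Frobenius weights; it only asserts a shape constraint on the Poincar\'e series (nonnegative integer coefficients, bounded below, constant term $0$ or $1$ in case i)). Nonnegativity is automatic since the coefficients are dimensions; boundedness below and the constant term are consequences of the $t$-structure and Schur's lemma. Weights enter the theory only later (e.g.\ in Theorem~\ref{T:LUweights}), and dragging them in here muddies what is actually a purely formal argument.

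Second, the step you correctly flag as ``the technical heart'' is also the one you do not carry out: you say ``after renormalizing so that the $\Ext^0$-term sits in degree $v^0$,'' but the pairing is \emph{defined} with a fixed normalization $v^j\leftrightarrow H^j_{G_\gamma}$, so there is no freedom to renormalize; one must verify that the shifts already conspire to place $\Ext^0$ at $v^0$. This requires combining the formula $R\mathcal{H}om(D\mathbb{Q},\mathbb{P})\simeq(\mathbb{P}\otimes\mathbb{Q})[-2\dim E_\gamma](-\dim E_\gamma)$ on the smooth $E_\gamma$ with the relation between $(D\mathbb{Q})_\Gamma$ and $D(\mathbb{Q}_\Gamma)$ on the approximation $E_\gamma\times_{G_\gamma}\Gamma$ (which itself involves a shift by $2\dim\Gamma/G_\gamma$), and checking that these cancel against the $2\dim\Gamma/G_\gamma$ built into the definition of $H^j_{G_\gamma}$. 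This bookkeeping is genuinely delicate---you should test your final formula against the sample computations in Examples 2.1 and 2.4, where the constant perverse sheaf $\mathbbm{1}_\gamma$ carries a nonzero shift $[\dim E_\gamma]$ that must be absorbed correctly.
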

\begin{proof} See \cite{GL}. Note that since any object of $\QQ$ is semisimple, we have $H^j_{G_{\gamma}}(\mathbb{P} \otimes \mathbb{Q}) =0$ for $j \ll 0$ by (\ref{E:Obvious}) and i) above, so that ii) follows from i). \end{proof}
\vspace{.1in}

The next important property of $\{\,,\,\}$ states that $\underline{m}$ and $\underline{\Delta}$ are in a certain sense adjoint functors~:

\vspace{.1in}

\begin{prop}\label{P:Hopfscalar} For any objects $\mathbb{P}, \mathbb{Q}, \mathbb{R}$ of $\QQ$ we have
\begin{equation}\label{E:P41}
\{ \underline{m}(\mathbb{P} \boxtimes \mathbb{Q}), \mathbb{R} \}=\{\mathbb{P} \boxtimes \mathbb{Q}, \underline{\Delta}(\mathbb{R})\}.
\end{equation}
\end{prop}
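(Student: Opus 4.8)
The plan is to reduce the identity to a statement about cohomology of spaces over a point, then to realize both sides as the cohomology of one and the same space: the fiber product obtained by combining the induction correspondence for $\underline{m}$ and the restriction correspondence for $\underline{\Delta}$. Concretely, I would first recall that $\{\mathbb{A},\mathbb{B}\}$ is essentially $\dim \pi_!(\mathbb{A}\otimes\mathbb{B})$ where $\pi$ is the projection to a point (with the equivariant bookkeeping of Section~1.5, which I will carry along but suppress in the sketch). So (\ref{E:P41}) amounts to an isomorphism, up to an overall shift, between
\begin{equation*}
\pi_!\big(\underline{m}(\mathbb{P}\boxtimes\mathbb{Q})\otimes \mathbb{R}\big)\quad\text{and}\quad \pi_!\big((\mathbb{P}\boxtimes\mathbb{Q})\otimes \underline{\Delta}(\mathbb{R})\big),
\end{equation*}
where on the left $\pi$ is the projection of $\Ma[\alpha+\beta]$ (rather $E_{\alpha+\beta}$, equivariantly) to a point, and on the right it is the projection of $E_\alpha\times E_\beta$.

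The key geometric input is that the two correspondences, when glued, produce the same space. Recall $\underline{m}=q_!\,r_{\#}\,p^*[\dim p]$ along $E_\alpha\times E_\beta \xleftarrow{p} E^{(1)}_{\alpha,\beta}\xrightarrow{r} E_{\alpha,\beta}\xrightarrow{q} E_{\alpha+\beta}$, and $\underline{\Delta}=\kappa_!\,\iota^*[-\langle\alpha,\beta\rangle]$ along $E_\alpha\times E_\beta \xleftarrow{\kappa} F_{\alpha,\beta}\xrightarrow{\iota} E_{\alpha+\beta}$. On the $\underline{m}$ side I would use the projection formula and proper base change: $\pi_!(q_!r_\#p^*(\mathbb{P}\boxtimes\mathbb{Q})\otimes\mathbb{R}) = \pi_!\big(r_\# p^*(\mathbb{P}\boxtimes\mathbb{Q})\otimes q^*\mathbb{R}\big)$ (using $q$ proper and the projection formula, noting $q_! = q_*$), which is the cohomology over $E_{\alpha,\beta}$ (equivalently over $E^{(1)}_{\alpha,\beta}$ after the harmless principal-bundle modification $r_\#$) of $p^*(\mathbb{P}\boxtimes\mathbb{Q})\otimes q^*\mathbb{R}$. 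On the $\underline{\Delta}$ side, $\pi_!\big((\mathbb{P}\boxtimes\mathbb{Q})\otimes \kappa_!\iota^*\mathbb{R}\big)=\pi_!\big(\kappa_!(\kappa^*(\mathbb{P}\boxtimes\mathbb{Q})\otimes\iota^*\mathbb{R})\big)$ (projection formula again) $=$ the cohomology over $F_{\alpha,\beta}$ of $\kappa^*(\mathbb{P}\boxtimes\mathbb{Q})\otimes\iota^*\mathbb{R}$. Now the point is that $E^{(1)}_{\alpha,\beta}$ and $F_{\alpha,\beta}$ are related by the identities (\ref{E:FGP}) — $F_{\alpha,\beta}\times_{U_{\alpha,\beta}}G_{\alpha+\beta}=E^{(1)}_{\alpha,\beta}$ — so that the two total spaces differ only by an induction along the unipotent radical, which does not change equivariant cohomology. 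Moreover the maps $p$ on $E^{(1)}$ and $\kappa$ on $F$ agree (both are the "take sub and quotient in fixed frames" map $\rho_{\alpha,*}(\underline{y}|\cdot)\times\rho_{\beta,*}(\underline{y}|\cdot)$), and $q$ on $E_{\alpha,\beta}$ corresponds to $\iota$ on $F_{\alpha,\beta}$ under the same equivalence $\underline{\mathcal{E}}^{\alpha,\beta}=E_{\alpha,\beta}/G_{\alpha+\beta}=F_{\alpha,\beta}/P_{\alpha,\beta}$. Hence $p^*(\mathbb{P}\boxtimes\mathbb{Q})\otimes q^*\mathbb{R}$ and $\kappa^*(\mathbb{P}\boxtimes\mathbb{Q})\otimes\iota^*\mathbb{R}$ are the same equivariant complex on the same stack $\underline{\mathcal{E}}^{\alpha,\beta}$, so their (equivariant) cohomologies coincide.

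The remaining work is bookkeeping of the shifts: the definition of $\underline{m}$ carries $[\dim p]$ and the extra $r_\#=r_\flat[-\dim r]$, while $\underline{\Delta}$ carries $[-\langle\alpha,\beta\rangle]$; together with the shift $2\dim\Gamma/G$ hidden in the definition of $H^j_G$, and the change of group from $G_{\alpha+\beta}$ (resp.\ $G_\alpha\times G_\beta$) in the two pictures, these have to be matched. I expect this shift accounting to be the one genuinely fiddly point — one must check that all the $\dim G$'s, the rank of $\kappa$ (computed in (\ref{E:rankkappa})), and the Euler form terms telescope so that both sides land in the same degree with no net shift, which is exactly what makes $\{\,,\,\}$ take values in $\mathbf{K}$ rather than in $\mathbf{K}$ times a half-integer power of $v$. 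I would set this up by computing $\dim p$ via the earlier Lemma ($\dim p=\dim G_{\alpha+\beta}-\langle\alpha,\beta\rangle$) and comparing with $\mathrm{rank}\,\kappa=\sum_i\alpha_i\beta_i-\langle\alpha,\beta\rangle$ and $\dim U_{\alpha,\beta}=\dim G_{\alpha+\beta}-\dim G_\alpha-\dim G_\beta-(\text{something})$, and track everything through the two invocations of the projection formula. Alternatively, and perhaps more cleanly, one can phrase the whole argument in the stacky language of diagram (\ref{E:13}): the composite correspondence for $\{\underline{m}(-),-\}$ and for $\{-,\underline{\Delta}(-)\}$ are both given by pulling and pushing the same complex along the stack $\underline{\mathcal{E}}^{\alpha,\beta}$ down to a point, and one cites the general (co)adjunction of $p_{1!}p_2^*$-type functors with respect to the "integration" pairing, leaving only the normalization shift to check. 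I would follow \cite{GL} for the precise normalization; the statement is by now standard (it is the geometric incarnation of Green's formula / the Hopf pairing on the Hall algebra), so I would present the fiber-product identity as the heart of the proof and relegate the degree count to a short explicit computation.
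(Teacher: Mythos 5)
Your proposal takes essentially the same route as the paper's proof: projection formula on both sides, then identify the two correspondence spaces $E^{(1)}_{\alpha,\beta}$ and $F_{\alpha,\beta}$ as presentations of the same stack $\underline{\mathcal{E}}^{\alpha,\beta}$ via (\ref{E:FGP}). The one place where you wave your hands is precisely where the paper does real work: you say the passage between $F_{\alpha,\beta}$ and $E^{(1)}_{\alpha,\beta}$ "differs only by an induction along the unipotent radical, which does not change equivariant cohomology," and leave the shift-matching for later. The paper's device for making both statements simultaneously transparent is to choose the acyclic free $G_\alpha\times G_\beta$-space to be $\overline{\Gamma}=\Gamma/U$ (where $\Gamma$ is the acyclic free $G_{\alpha+\beta}$-space and $U=U_{\alpha,\beta}$ is the unipotent radical of the parabolic), rather than an unrelated space. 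With this choice the dimension identity $\dim p + 2\dim\Gamma/G_{\alpha+\beta} = 2\dim\overline{\Gamma}/(G_\alpha\times G_\beta)$ holds on the nose, so (\ref{E:P41}) reduces cleanly to a degree-by-degree comparison of ordinary cohomologies, and the remaining content becomes the identity $\phi_!\,\widetilde q^*\mathbb{R}_\Gamma\simeq(\kappa_!\iota^*\mathbb{R})_{\overline\Gamma}$, proved by a diagram chase plus a semisimplicity argument in the spirit of Proposition~\ref{P:closed}. So your plan is sound, but if you want the "short explicit computation" at the end to actually be short, you should build the $\overline{\Gamma}=\Gamma/U$ choice into the setup from the start rather than trying to reconcile two independently chosen acyclic spaces afterwards.
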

\begin{proof} We begin with some heuristic argument as usual. Recall that the induction and restriction functors are ``defined'' by  correspondences
\begin{equation*}
\xymatrix{ &  \underline{\mathcal{E}}^{\alpha, \beta} \ar[ld]_-{p_1} \ar[rd]^-{p_2} &\\
\ \underline{\mathcal{M}}_{\vec{Q}}^{\alpha} \times  \underline{\mathcal{M}}_{\vec{Q}}^{\beta} & &
\underline{\mathcal{M}}_{\vec{Q}}^{\alpha+\beta}
}
\end{equation*}
as $\underline{m}_{\a,\beta}=p_{2!}p_1^*[dim\;p_1], \underline{\Delta}_{\a,\beta}=p_{1!}p_2^*[dim\;p_1]$. And the scalar product is, morally, $\{ \mathbb{P}, \mathbb{Q}\}=dim\; H^*(\mathbb{P} \otimes \mathbb{Q})$. By the projection formula,
\begin{equation*}
\begin{split}
\{ \underline{m}( \mathbb{P} \boxtimes \mathbb{Q}), \mathbb{R}\}&=dim\;H^*(p_{2!}p_{1}^*(\mathbb{P} \boxtimes \mathbb{Q}) \otimes \mathbb{R})[dim\;p_1]\\
&=dim\;H^*\big(p_{2!} (p_{1}^*(\mathbb{P} \boxtimes \mathbb{Q}) \otimes p_{2}^* \mathbb{R})\big)[dim\;p_1]\\
&=dim\;H^* \big( p_1^*(\mathbb{P} \boxtimes \mathbb{Q}) \otimes p_{2}^* \mathbb{R}\big)[dim\;p_1]\\
&=dim\;H^* (\mathbb{P} \boxtimes \mathbb{Q} \otimes p_{1!}p_2^* \mathbb{R})[dim\;p_1]\\
&=dim\;H^*(\mathbb{P} \boxtimes \mathbb{Q} \otimes \underline{\Delta}( \mathbb{R}))
\end{split}
\end{equation*}
The actual proof of Proposition~\ref{P:Hopfscalar} is also essentially just the projection formula ( ``essentially'' means that we need to draw many (large) commutative diagrams before we may actually use the projection formula).

\vspace{.1in}

Let $\mathbb{P}, \mathbb{Q}, \mathbb{R}$ be as in the Proposition, of respective dimensions $\a, \beta$ and $\gamma$. Consider the induction diagram (\ref{E:16})
\begin{equation*}
\xymatrix{ E_{\alpha} \times  E_{\beta}&  {E}^{(1)}_{\alpha,\beta} \ar[r]^{r} \ar[l]_-{p} & E_{\alpha,\beta} \ar[r]^-{q} &
E_{\alpha+\beta}.
}
\end{equation*}
We have $\underline{m}(\mathbb{P} \boxtimes \mathbb{Q})= q_! r_{\#} p^*(\mathbb{P} \boxtimes \mathbb{Q})[dim\;p-\langle \alpha, \beta \rangle]$. We set $\mathbb{T}=r_{\#} p^*(\mathbb{P} \boxtimes \mathbb{Q})$ for simplicity. The restriction diagram is 
\begin{equation*}
\xymatrix{ 
E_{\alpha} \times E_{\beta} &F_{\alpha, \beta} \ar[l]_-{\kappa} \ar[r]^-{\iota} &
E_{\alpha+\beta}
}
\end{equation*}
and $\underline{\Delta}(\mathbb{R})=\kappa_!\iota^*(\mathbb{R})[-\langle \alpha, \beta\rangle]$. Let $P \subset G_{\gamma}$ be the stabilizier of $V_{\beta} \subset V_{\gamma}$ and let $U$ be its unipotent radical. Thus $P/U \simeq G_{\a} \times G_{\beta}$ and $\overline{\Gamma}:=\Gamma/U$ is a (sufficiently acyclic, ...) free $G_{\a} \times G_{\beta}$-space. Note that
\begin{equation}\label{E:431}
dim\;p + 2\;dim\;\Gamma/G = 2 \;dim\;\overline{\Gamma}/(G_{\a} \times G_{\beta}).
\end{equation}
Using (\ref{E:431}) we see that (\ref{E:P41}) is equivalent to the collection of equalities
\begin{equation}\label{E:P42}
dim\;H^i\big( (\mathbb{P} \boxtimes \mathbb{Q})_{\overline{\Gamma}} \otimes (\kappa_{!} \iota^* (\mathbb{R}))_{\overline{\Gamma}}\big)=dim\;H^i\big( (q_! \mathbb{T})_{\Gamma} \otimes \mathbb{R}_{\Gamma} \big)
\end{equation}
for all $i \in \Z$.

\vspace{.1in}

To prove (\ref{E:P42}) we consider the diagram
$$\xymatrix{
(E_{\gamma})_{\Gamma} & (E_{\alpha,\beta})_{\Gamma} \ar[l]_-{\widetilde{q}} & (E^{(1)}_{\alpha, \beta})_{\Gamma} \ar[l]_-{\widetilde{r}} \ar[r]^-{\widetilde{p}} & (E_{\a} \times E_{\beta})_{\overline{\Gamma}}\\
E_{\gamma} \times \Gamma \ar[u]^-{s} \ar[d]_-{\pi} & E_{\a, \beta} \times \Gamma \ar[u]^-{s} \ar[d]_-{\pi} \ar[l]_-{q'} & E^{(1)}_{\alpha,\beta} \times \Gamma \ar[l]_-{r'} \ar[u]^-{{s}} \ar[d]_-{{\pi}} \ar[r]^-{p'} & E_{\a} \times E_{\beta} \times \overline{\Gamma} \ar[u]^-{\overline{s}} \ar[d]_-{\overline{\pi}} \\
E_{\gamma} & E_{\a, \beta} \ar[l]_-{q} & E^{(1)}_{\a, \beta} \ar[l]_-{r} \ar[r]^-{p} & E_{\a} \times E_{\beta}
}
$$
The two leftmost columns consist of cartesian squares. Define a vector bundle map $\phi$ via the indentifications
$$\xymatrix{
(E_{\a,\beta})_{\Gamma} \ar@{=}[d] \ar[r]^-{\phi} & (E_{\a} \times E_{\beta})_{\overline{\Gamma}} \ar@{=}[d] \\
(F_{\a,\beta} \times \Gamma)/P \ar[r]^-{\kappa} & (E_{\a} \times E_\beta \times \overline{\Gamma})_{G_{\a} \times G_{\beta}}
}
$$
We have $\phi \widetilde{r}=\widetilde{p}$. By standard base change arguments, we obtain
\begin{equation}
\widetilde{q}_! \mathbb{T}_{\Gamma}=(q_! \mathbb{T})_{\Gamma}, \qquad \mathbb{T}_{\Gamma}=\phi^*\big((\mathbb{P} \boxtimes \mathbb{Q})_{\overline{\Gamma}}\big).
\end{equation}
Hence, by the projection formula
\begin{equation*}
\begin{split}
H^i\big( (q_! \mathbb{T})_{\Gamma} \otimes \mathbb{R}_{\Gamma} \big) &= H^i\big( (\widetilde{q}_{!} \mathbb{T}_{\Gamma}) \otimes \mathbb{R}_{\Gamma} \big)\\
&= H^i\big( \widetilde{q}_! ( \mathbb{T}_{\Gamma} \otimes \widetilde{q}^* \mathbb{R}_{\Gamma})\big)\\
&=H^i \big( \mathbb{T}_{\Gamma} \otimes \widetilde{q}^* \mathbb{R}_{\Gamma}\big).
\end{split}
\end{equation*}
The projection formula again gives
\begin{equation*}
\begin{split}
H^i\big(  \mathbb{T}_{\Gamma} \otimes \widetilde{q}^*\mathbb{R}_{\Gamma} \big) &= H^i\big( \phi^*( (\mathbb{P} \boxtimes \mathbb{Q})_{\overline{\Gamma}}) \otimes \widetilde{q}^* \mathbb{R}_{\Gamma} \big)\\
&=H^i \big( \phi_! ( \phi^*( (\mathbb{P} \boxtimes \mathbb{Q})_{\overline{\Gamma}}) \otimes \widetilde{q}^* \mathbb{R}_{\Gamma}) \big)\\
&=H^i \big( (\mathbb{P} \boxtimes \mathbb{Q})_{\overline{\Gamma}} \otimes \phi_! \widetilde{q}^* \mathbb{R}_{\Gamma} \big).
\end{split}
\end{equation*}
We are hence reduced to proving that 
\begin{equation}\label{E:brak}
\phi_! \widetilde{q}^* \mathbb{R}_{\Gamma}\simeq (\kappa_! \iota^* \mathbb{R})_{\overline{\Gamma}}.
\end{equation}
To this aim, we consider the following diagram
$$\xymatrix{
(E_{\a} \times E_{\beta} \times \Gamma)/P & (F_{\a, \beta} \times \Gamma)/P \ar[r]^-{\widetilde{\iota}} \ar[l]_-{\widetilde{\kappa}=\phi} & (E_\gamma \times \Gamma)/P \ar[r]^-{h} & (E_{\gamma})_{\Gamma} \\
E_{\a} \times E_{\beta} \times \Gamma \ar[u]^-s \ar[d]_-{\pi} & F_{\a, \beta} \times \Gamma \ar[u]^-{s} \ar[d]_-{\pi} 
\ar[r]^-{\iota'} \ar[l]_-{\kappa'} & E_\gamma \times \Gamma \ar[u]^-{s} \ar[d]_-{\pi} \ar[ur]^-{s}\\
E_{\a} \times E_{\beta}& F_{\a, \beta} \ar[l]_-{\kappa} \ar[r]^-{\iota} & E_{\gamma}
}
$$
(note that $(E_{\a} \times E_{\beta} \times \Gamma)/P = (E_{\a} \times E_{\beta})_{\overline{\Gamma}}$).
Let $\widetilde{\mathbb{R}} \in D^b((E_{\gamma} \times \Gamma)/P)$ be the unique semisimple complex satisfying $s^*(\widetilde{\mathbb{R}}) \simeq \pi^* \mathbb{R}$. It is easy to see that $\widetilde{\mathbb{R}}=h^* \mathbb{R}_{\Gamma}$. From the factorisation $\widetilde{q}= h \widetilde{\iota}$ we thus deduce that $\phi_! \widetilde{q}^* \mathbb{R}_{\Gamma}= \widetilde{\kappa}_! \widetilde{\iota}^* \widetilde{\mathbb{R}}$. By an argument similar to 
the proof of Proposition~\ref{P:closed} one shows that $\mathbb{U}:=   \widetilde{\kappa}_! \widetilde{\iota}^* \widetilde{\mathbb{R}}$ is semisimple. A standard diagram chase gives that $s^* \mathbb{U} \simeq \pi^* (\kappa_! \iota^* \mathbb{R})$. It follows that $\mathbb{U} \simeq (\kappa_! \iota^* \mathbb{R})_{\overline{\Gamma}}$. Equation (\ref{E:brak}) is proved, and so is Proposition~\ref{P:Hopfscalar}.
\end{proof}

\vspace{.1in}

Examples of computations of this scalar product will be given in the next Lecture. 

\newpage

\centerline{\large{\textbf{Lecture~2.}}}
\addcontentsline{toc}{section}{\tocsection {}{}{Lecture~2.}}

\setcounter{section}{2}
\setcounter{theo}{0}
\setcounter{equation}{0}

\vspace{.2in}

In this second Lecture, we will provide a few sample computations with the functors $\underline{m}$ and $\underline{\Delta}$ and with the scalar product $\{\;,\;\}$, and describe the category $\QQ$ with its collection of simple objects $\mathcal{P}_{\vec{Q}}$ in several important cases.  

\vspace{.1in}

We will begin with some computations for quivers with one or two vertices (the so-called \textit{fundamental relations}). These will be crucial in making the link with quantum groups in Lecture~3.
As for the category $\QQ$, there are only very few cases in which it is well understood (by ``well understood'', we mean that the set of simple perverse sheaves $\mathcal{P}_{\vec{Q}}$ is determined). The simplest case is that of finite type quivers~: these have zero-dimensional moduli spaces $\Ma$ (actually, in the sense of (\ref{E:101}), their dimension is even negative !), and it is not surprising that $\mathcal{P}^{\gamma}$ is easy to describe. The next class of quivers for which the moduli spaces
$\Ma$ are known is that of affine (or tame) quivers. Although of positive dimension, these may be stratified in nice pieces looking very much like an adjoint quotient $\mathfrak{gl}_k/GL_k$ (see Section~2.5.). We will state the classification (due to Lusztig and Li-Lin) of the elements of $\mathcal{P}_{\vec{Q}}$ for tame quivers, and give a sketch of the proof. We pay special attention in Section~2.4. to the class of cyclic quivers (including the Jordan quiver); as usual, these are somewhat intermediate between finite type and tame type. 

\vspace{.3in}

\centerline{\textbf{2.1. The simplest of all quivers.}}
\addcontentsline{toc}{subsection}{\tocsubsection {}{}{\; 2.1. The simplest of all quivers.}}

\vspace{.15in}

The following notation will be useful. For $n \in \N$ we denote as usual the $v$-integer and $v$-factorials
$$[n]=\frac{v^n-v^{-n}}{v-v^{-1}}, \qquad [n]!=[2] \cdots [n].$$
There should be no risk of confusion with the shift operation $\mathbb{P} \mapsto \mathbb{P}[n]$ on complexes.

\vspace{.1in}

If $\mathbb{P}$ is any object of a triangulated category with shift functor $X \to X[1]$ and if $R(v)=\sum_i r_i v^i \in \N[v,v^{-1}]$ is a Laurent polynomial then we set
$$\mathbb{P}^{\oplus R(v)}=\bigoplus_i \mathbb{P}^{\oplus r_i}[i].$$

We will often make use of the classical (see \cite{Hiller})

\begin{lem}\label{L:21}Let $d_1, \ldots, d_r$ be positive integers and put $d=d_1 + \cdots + d_r$. Let $P_{d_1, \ldots, d_r} \subset GL(d,k)$ be the associated parabolic subgroup and 
$$\mathcal{B}_{d_1, \ldots, d_r}=GL(d,k)/P_{d_1, \ldots, d_r}$$ 
the corresponding partial flag variety. Then $dim\; \mathcal{B}_{d_1, \ldots, d_r}=\sum_{i <j} d_i d_j$ and the graded dimension of the total cohomology $H^*(\mathcal{B}_{d_1, \ldots, d_r})[dim\;\mathcal{B}_{d_1, \ldots,d_r}]$ is equal to
$$\sum_i dim\; H^i(\mathcal{B}_{d_1, \ldots, d_r}[dim\;\mathcal{B}_{d_1, \ldots,d_r}]) v^{-i}=\frac{[d]!}{[d_1]! \cdots [d_r]!}$$ 
\end{lem}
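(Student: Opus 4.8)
The plan is to deduce everything from the Bruhat (Schubert) cell decomposition of the partial flag variety. Write $G=GL(d,k)$, $P=P_{d_1,\ldots,d_r}$, $W=S_d$ and $W_P=S_{d_1}\times\cdots\times S_{d_r}$. The Bruhat decomposition of $G$ descends to a decomposition of $\mathcal{B}_{d_1,\ldots,d_r}=G/P$ into affine spaces $C_w\cong\mathbb{A}^{\ell(w)}$, one for each minimal-length representative $w$ of a coset in $W/W_P$; this is classical, see \cite{Hiller}. (Equivalently, one can iterate the Zariski-locally trivial fibration $\mathcal{B}_{d_1,\ldots,d_r}\to Gr(d_1,d)$ with fibre $\mathcal{B}_{d_2,\ldots,d_r}$ and use the cell decompositions of Grassmannians.) First, for the dimension: since $\mathcal{B}_{d_1,\ldots,d_r}$ is irreducible and paved by cells, its dimension is the largest $\ell(w)$ occurring, and it also equals $\dim G-\dim P=d^2-\big(\sum_i d_i^2+\sum_{i<j}d_id_j\big)=\sum_{i<j}d_id_j$, using $d^2=(\sum_i d_i)^2$; this gives the first assertion.

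Next, because $\mathcal{B}_{d_1,\ldots,d_r}$ is paved by affine cells, $H^*(\mathcal{B}_{d_1,\ldots,d_r})$ is free, vanishes in odd degrees, and $\dim H^{2j}$ equals the number of minimal coset representatives $w$ with $\ell(w)=j$. Hence the Poincaré polynomial is the classical Gaussian $q$-multinomial coefficient:
\[
\sum_{j}\dim H^{2j}(\mathcal{B}_{d_1,\ldots,d_r})\,q^j \;=\; \sum_{w}q^{\ell(w)} \;=\; \frac{[d]_q!}{[d_1]_q!\cdots[d_r]_q!}, \qquad [n]_q!=\prod_{m=1}^n(1+q+\cdots+q^{m-1}),
\]
the middle sum running over minimal representatives of $W/W_P$. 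For $r=2$ this is the count of Young diagrams inside a $d_1\times d_2$ box by area, and the general case follows by induction on $r$: the fibration above, together with the fact that all cohomologies in play are concentrated in even degrees, makes the graded dimension of $H^*(\mathcal{B}_{d_1,\ldots,d_r})$ the product of those of $H^*(Gr(d_1,d))$ and $H^*(\mathcal{B}_{d_2,\ldots,d_r})$. In particular $\dim H^0=1$, so $\mathcal{B}_{d_1,\ldots,d_r}$ is connected.

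Finally one passes to the balanced normalization. Set $N=\sum_{i<j}d_id_j=\dim\mathcal{B}_{d_1,\ldots,d_r}$. Since the cohomology is even, writing $H^i(\mathcal{B}_{d_1,\ldots,d_r}[N])=H^{i+N}(\mathcal{B}_{d_1,\ldots,d_r})$ gives
\[
\sum_i \dim H^i(\mathcal{B}_{d_1,\ldots,d_r}[N])\,v^{-i} \;=\; v^{N}\sum_j \dim H^{2j}(\mathcal{B}_{d_1,\ldots,d_r})\,v^{-2j} \;=\; v^{N}\,F(v^{-2}),
\]
where $F(q)=[d]_q!/([d_1]_q!\cdots[d_r]_q!)$. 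On the other hand $[n]=\tfrac{v^n-v^{-n}}{v-v^{-1}}=v^{-(n-1)}(1+v^2+\cdots+v^{2(n-1)})$, so $[n]!=v^{-\binom n2}[n]_{v^2}!$ and therefore
\[
\frac{[d]!}{[d_1]!\cdots[d_r]!} \;=\; v^{-\binom d2+\sum_i\binom{d_i}2}\,F(v^2) \;=\; v^{-N}\,F(v^2),
\]
using $\binom d2-\sum_i\binom{d_i}2=\tfrac12(d^2-\sum_i d_i^2)=N$. Since $F(q)$ is a palindromic polynomial of degree $N$ (a standard property of the Gaussian $q$-multinomial), $q^N F(1/q)=F(q)$, whence $v^N F(v^{-2})=v^{-N}F(v^2)$; comparing the two displays yields the claimed identity. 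The only mildly delicate point is this last bookkeeping of degree shifts and normalizations — there is no genuine obstacle here, the content being entirely classical.
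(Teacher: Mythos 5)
Your proof is correct, and since the paper gives no proof of this lemma (it simply cites Hiller as a classical reference), there is nothing to compare against: the Bruhat-cell/Poincar\'e-polynomial argument you give is the standard one. The final bookkeeping is also right: from $[n]=v^{-(n-1)}[n]_{v^2}$ one gets $[n]!=v^{-\binom n2}[n]_{v^2}!$, and $\binom d2-\sum_i\binom{d_i}2=\tfrac12\bigl(d^2-\sum_i d_i^2\bigr)=\sum_{i<j}d_id_j=N$, so the balanced $v$-multinomial is $v^{-N}F(v^2)$, which matches $v^{N}F(v^{-2})$ by palindromy (Poincar\'e duality) of the $q$-multinomial.
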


\vspace{.2in}

The simplest of all quivers alluded to in the title of this section has one vertex and no arrow, i.e. $I=\{1\}, H=\emptyset$. Then $E_n=\{0\}$, $G_n=GL(n,k)$. There is a unique simple perverse sheaf $\mathbbm{1}_{n}=\qlb_{|E_n}$ on $E_n$ for any $n$, and this perverse sheaf is $G_n$-equivariant.

\vspace{.1in}

Let us determine the set of simple perverse sheaves $\mathcal{P}^n$. By definition, elements of $\mathcal{P}^n$ are simple summands of the semisimple complex $L_{1,1, \ldots, 1}$ (there is no choice for a splitting of $n$ as a sum of simple dimension vectors). We have $L_{1, \ldots, 1}=q_! (\qlb_{E_{1, \ldots, 1}})[dim\;E_{1, \ldots, 1}]$
where 
$$E_{1, \ldots, 1}=\big\{ ( \underline{y}, W_n \subset \cdots \subset W_1=V_n)\big\}=\big\{(0,W_n \subset \cdots \subset W_1=V_n)\big\}$$
is simply the flag variety $\mathcal{B}=\mathcal{B}_{1, \ldots, 1}$ of $GL(n,k)$ and where $q: E_{1, \ldots, 1} \to E_{n}=\{0\}$ is the projection to a point. Therefore 
\begin{equation}\label{E:n1}
\begin{split}
L_{1, \ldots, 1}&=
q_!(\qlb_{\mathcal{B}})[dim\;\mathcal{B}]\\
&=\bigoplus_k H^k(\mathcal{B}, \qlb)[n(n-1)/2-k]\\
&= (\mathbbm{1}_n)^{\oplus[n]!}.
\end{split}
\end{equation}
Thus $\mathbbm{1}_n$ appears in $L_{1, \ldots, 1}$ and  belongs to $\mathcal{P}^n$ (of course, here $\mathbbm{1}_n$ is the \textit{only} simple perverse sheaf on $E_n$). In conclusion, $$\mathcal{P}_{\vec{Q}}=\{\mathbbm{1}_n\}_{n \in \N}, \qquad \QQ=\bigsqcup_n D^b_{G_{\gamma}}(E_{\gamma})^{ss}.$$

\vspace{.1in}

To compute the action of the restriction functor $\underline{\Delta}$ it is enough here to apply Lemma~\ref{L:coprodun}~:
\begin{equation}
\underline{\Delta}(\mathbbm{1}_n)=\sum_{l=0}^n \mathbbm{1}_l \boxtimes \mathbbm{1}_{n-l}[-l(n-l)].
\end{equation}

The scalar product is also easy to determine~: we have 
\begin{equation}\label{E:22}
\{\mathbbm{1}_1, \mathbbm{1}_1\}=\sum_j dim\; H^j_{GL(1)}(pt)=\sum_{j \geq 0} v^{2j}=\frac{1}{1-v^{2}},
\end{equation}
and more generally
\begin{equation}\label{E:23}
\{\mathbbm{1}_n, \mathbbm{1}_n\}=\sum_j dim\;H^j_{GL(n)}(pt)=\prod_{k=1}^n\frac{1}{1-v^{2k}}
\end{equation}
for any $n$. Finally, note that all $\mathbb{P} \in \mathcal{P}_{\vec{Q}}$ are self-dual.

\vspace{.2in}

\addtocounter{theo}{1}
\noindent \textbf{Remark \thetheo .} By equation~(\ref{E:n1}) the Lusztig sheaves $L_{1, \ldots, 1}$ and $L_{n}$ are proportional. Of course, this means that for any quiver $\vec{Q}$ and vertex $i \in I$ the Lusztig sheaves $L_{\epsilon_i, \ldots, \epsilon_i}$ and $L_{n \epsilon_i}$ are proportional. As a consequence, we may slightly relax the conditions in the definition of the set $\mathcal{P}^{\gamma}$ (see Section~1.4) by allowing Lusztig sheaves $L_{\a_1, \ldots, \a_n}$ where each $\a_k$ is a \textit{multiple} of a simple dimension vector.

\vspace{.3in}

\centerline{\textbf{2.2. The fundamental relations.}}
\addcontentsline{toc}{subsection}{\tocsubsection {}{}{\; 2.2. The fundamental relations.}}

\vspace{.15in}

We move on to the next simplest class of quivers~: we assume that $\vec{Q}$ has two vertices.

\vspace{.2in}

\addtocounter{theo}{1}
\noindent \textbf{Example \thetheo .} Let us now assume that $\vec{Q}$ has vertices $\{1, 2\}$ and one edge $h: 1 \to 2$. 

\vspace{.15in}

\centerline{
\begin{picture}(120, 10)
\put(40,0){\circle*{5}}
\put(90,0){\circle*{5}}
\put(65,0){\vector(1,0){5}}
\put(37,-10){$1$}
\put(87,-10){$2$}
\put(40,0){\line(1,0){50}}
\put(0,-2){$\vec{Q}=$}
\end{picture}}

\vspace{.3in}

We have $K_0(Rep\;\vec{Q}) \simeq \Z^2$ and we denote as usual by $\epsilon_1, \epsilon_2$ the dimension vectors of the simple representations $S_1, S_2$. Besides the two simple representations, there is another indecomposable representation $I_{12}$ (of dimension $\epsilon_1+\epsilon_2$). By definition if $\a=d_1\epsilon_1+d_2\epsilon_2$ then
$$E_{\a}=Hom(k^{d_1}, k^{d_2}),$$
$$G_{\a}=GL(d_1, k) \times GL(d_2,k).$$
The $G_{\a}$-orbits in $E_{\a}$ are formed by the set of linear maps of a fixed rank $r \leq inf(d_1, d_2)$. In terms of representations, these are the representations isomorphic to $I^{\oplus r} \oplus S_1^{\oplus d_1-r} \oplus S_2^{\oplus d_2-r}$. 
We will denote these orbits by $\mathcal{O}^{\a}_r$, or simply $\mathcal{O}_r$, so that $E_{\a}=\bigsqcup_r \mathcal{O}_r$. It is easy to see that $\overline{\mathcal{O}_{r}}=\bigsqcup_{s \leq r} \mathcal{O}_s$. 

The stabilizer of $\mathcal{O}_r$ is isomorphic to $Aut(I^{\oplus r} \oplus S_1^{\oplus d_1-r} \oplus S_2^{\oplus d_2-r})$, and it can be checked that this group is connected. As a consequence, any $G_{\a}$-equivariant local system on $\mathcal{O}_r$ is trivial. It follows that any simple $G_{\a}$-equivariant perverse sheaf on $E_{\a}$ is of the form $IC(\mathcal{O}_r)$ for some $r$. 

\vspace{.1in}

For appetizers, let us compute $L_{\epsilon_1, \epsilon_2}$ and $L_{\epsilon_2, \epsilon_1}$. We have $E_{\epsilon_1+\epsilon_2}=Hom(k,k) \simeq k$. The two orbits are $\mathcal{O}_0=\{0\}$ and $\mathcal{O}_1=k \backslash\{0\}$. By definition,
$$E_{\epsilon_1, \epsilon_2}=\big\{ (\underline{y}, W \subset V_{\epsilon_1+\epsilon_2})\;|\; \underline{dim}\;W=\epsilon_2; \;\underline{y}(W) \subset W\big\} \simeq E_{\epsilon_1+\epsilon_2}$$
since there is a unique subspace $W$ of $V_{\epsilon_1+\epsilon_2}$ of dimension $\epsilon_2$ and it is stable under the action of any $\underline{y} \in E_{\epsilon_1+\epsilon_2}$. Therefore the map $q_{\epsilon_1, \epsilon_2}$ is trivial and 
$$L_{\epsilon_1, \epsilon_2}=\qlb_{|E_{\epsilon_1+\epsilon_2}}[1]=\mathbbm{1}_{\epsilon_1+\epsilon_2}=IC(\mathcal{O}_1).$$
On the other hand,
$$E_{\epsilon_2, \epsilon_1}=\big\{ (\underline{y}, W \subset V_{\epsilon_1+\epsilon_2})\;|\; \underline{dim}\;W=\epsilon_1;\; \underline{y}(W) \subset W\big\} \simeq \{pt\}$$
since there is again a unique subspace $W$ of $V_{\epsilon_1+\epsilon_2}$ of dimension $\epsilon_1$, but this time only the trivial representation $\underline{y}=0$ preserves it. It follows that
$$L_{\epsilon_2, \epsilon_1}=\qlb_{\{0\}}=IC(\mathcal{O}_0).$$

\vspace{.1in}

We now consider the dimension vector $\a=\epsilon_1+2\epsilon_2$. Here
$$E_{\a}=Hom(k,k^2) \simeq k^2,$$
$$\mathcal{O}_0=\{0\}, \qquad \mathcal{O}_{1}=k^2 \backslash \{0\}.$$
There are three possible simple induction products of dimension $\a$~: $L_{\epsilon_1, \epsilon_2, \epsilon_1}, L_{\epsilon_1, \epsilon_1, \epsilon_2}$ and $L_{\epsilon_2, \epsilon_1, \epsilon_1}$. The respective incidence varieties are
$$E_{\epsilon_1, \epsilon_2, \epsilon_1}=\big\{ (\underline{y}, W_2 \subset W_1 \subset W_{\a})\;|\;\underline{dim}\;W_2=\epsilon_1, \underline{dim}\; W_1/W_2=\epsilon_2; \underline{y}(W_i) \subset W_i\big\},$$
$$E_{\epsilon_1, \epsilon_1, \epsilon_2}=\big\{ (\underline{y}, W_2 \subset W_1 \subset W_{\a})\;|\;\underline{dim}\;W_2=\epsilon_2, \underline{dim}\; W_1/W_2=\epsilon_1; \underline{y}(W_i) \subset W_i\big\},$$
$$E_{\epsilon_2, \epsilon_1, \epsilon_1}=\big\{ (\underline{y}, W_2 \subset W_1 \subset W_{\a})\;|\;\underline{dim}\;W_2=\epsilon_1, \underline{dim}\; W_1/W_2=\epsilon_1; \underline{y}(W_i) \subset W_i\big\}.$$
We describe in a table the types of fibers of the proper maps $q_{\epsilon_1, \epsilon_2, \epsilon_1},...$ over the two orbits $\mathcal{O}_0$ and $\mathcal{O}_1$~:

\begin{equation}\label{E:table}
\begin{tabular}{c|c|c}
 & $\;\;\mathcal{O}_0\;\;$ & $\;\;\mathcal{O}_1\;\;$\\
\hline
$q_{\epsilon_1, \epsilon_2,\epsilon_1}$ & $\mathbb{P}^1$ &  $\{pt\}$\\
\hline
$q_{\epsilon_1, \epsilon_1,\epsilon_2}$ & $\mathbb{P}^1$ &  $\mathbb{P}^1$\\
\hline
$q_{\epsilon_2, \epsilon_1,\epsilon_1}$ & $\mathbb{P}^1$ &  $\emptyset$
\end{tabular} 
\end{equation}
Using this we can give the dimensions of the cohomology spaces of the Lusztig sheaves $L_{\epsilon_1, \epsilon_2, \epsilon_1}, ...$ (we only write those which are nonzero)~:
\begin{equation}\label{E:table2}
\begin{tabular}{c|c|c}
 & $\;\;\mathcal{O}_0\;\;$ & $\;\;\mathcal{O}_1\;\;$\\
\hline
$H^{-2}(L_{\epsilon_1, \epsilon_2,\epsilon_1})$ & $1$ &  $1$\\
\hline
$H^{0}(L_{\epsilon_1, \epsilon_2,\epsilon_1})$ & $1$ &  $0$
\end{tabular} 
\end{equation}
\begin{equation}\label{E:table3}
\begin{tabular}{c|c|c}
 & $\;\;\mathcal{O}_0\;\;$ & $\;\;\mathcal{O}_1\;\;$\\
\hline
$H^{-3}(L_{\epsilon_1, \epsilon_1,\epsilon_2})$ & $1$ &  $1$\\
\hline
$H^{-1}(L_{\epsilon_1, \epsilon_1,\epsilon_2})$ & $1$ &  $1$
\end{tabular} 
\end{equation}
\begin{equation}\label{E:table4}
\begin{tabular}{c|c|c}
 & $\;\;\mathcal{O}_0\;\;$ & $\;\;\mathcal{O}_1\;\;$\\
\hline
$H^{-1}(L_{\epsilon_2, \epsilon_1,\epsilon_1})$ & $1$ &  $0$\\
\hline
$H^{1}(L_{\epsilon_2, \epsilon_1,\epsilon_1})$ & $1$ &  $0$
\end{tabular} 
\end{equation}
Comparing dimensions of stalks of $\mathcal{O}_0, \mathcal{O}_1$ we deduce from the above tables (\ref{E:table2}), (\ref{E:table3}), (\ref{E:table4}) that
\begin{equation}
\begin{split}
L_{\epsilon_1, \epsilon_2, \epsilon_1} &\simeq IC(\mathcal{O}_0) \oplus IC(\mathcal{O}_1), \\
L_{\epsilon_1, \epsilon_1, \epsilon_2} &\simeq IC(\mathcal{O}_1)[-1] \oplus IC(\mathcal{O}_1)[1], \\
L_{\epsilon_2, \epsilon_1, \epsilon_1} &\simeq IC(\mathcal{O}_0)[-1] \oplus IC(\mathcal{O}_1)[1].
\end{split}
\end{equation} 
As a corollary we arrive at the following identity
\begin{equation}\label{E:fund1}
L_{\epsilon_1, \epsilon_2, \epsilon_1}[1] \oplus L_{\epsilon_1, \epsilon_2, \epsilon_1}[-1] \simeq L_{\epsilon_1, \epsilon_1, \epsilon_2} \oplus L_{\epsilon_2, \epsilon_1, \epsilon_1}.
\end{equation}
Using the equation $L_{\epsilon_1, \epsilon_1} \simeq \mathbbm{1}_{2\epsilon_1}[1] \oplus  \mathbbm{1}_{2\epsilon_1}[-1] = L_{2\epsilon_1}[1] \oplus  L_{2\epsilon_1}[-1]$ (see (\ref{E:n1})), and the associativity $L_{\beta} \star L_{\gamma}=L_{\beta,\gamma}$ we may rewrite (\ref{E:fund1}) simply as
\begin{equation}\label{E:dunf1}
L_{\epsilon_1, \epsilon_2, \epsilon_1} \simeq L_{2\epsilon_1, \epsilon_2} \oplus L_{\epsilon_2, 2\epsilon_1}.
\end{equation}

A very similar computation when $\a=2\epsilon_1 +\epsilon_2$ shows that
\begin{equation}\label{E:fund2}
L_{\epsilon_2, \epsilon_1, \epsilon_2}[1] \oplus L_{\epsilon_2, \epsilon_1, \epsilon_2}[-1] \simeq L_{\epsilon_2, \epsilon_2, \epsilon_1} \oplus L_{\epsilon_1, \epsilon_2, \epsilon_2}.
\end{equation}
\begin{equation}\label{E:dunf2}
L_{\epsilon_2, \epsilon_1, \epsilon_2} \simeq L_{2\epsilon_2, \epsilon_1} \oplus L_{\epsilon_1, 2\epsilon_2}.
\end{equation}
\endexample

\vspace{.1in}

Relations (\ref{E:fund1}) and (\ref{E:fund2}) (or equivalently (\ref{E:dunf1}) and (\ref{E:dunf2})) are called the \textit{fundamental relations}. There are analogues of these relations for any quiver with two vertices~:

\vspace{.1in}

\begin{lem} Let $\vec{Q}$ be a quiver with vertices $\{1, 2\}$ and $r$ arrows $h_1, \ldots, h_r$ linking $1$ and $2$ (in any direction)~: 

\centerline{
\begin{picture}(130, 50)
\put(30,20){\circle*{5}}
\put(100,20){\circle*{5}}
\put(27,10){$1$}
\put(97,10){$2$}
\put(65,16){$\vdots$}
\put(65,30){\vector(1,0){5}}
\put(40,30){\line(1,0){50}}
\put(65,40){\vector(1,0){5}}
\put(40,40){\line(1,0){50}}
\put(70,0){\vector(-1,0){5}}
\put(40,0){\line(1,0){50}}
\put(70,10){\vector(-1,0){5}}
\put(40,10){\line(1,0){50}}
\put(-10,18){$\vec{Q}=$}
\end{picture}}

\vspace{.15in}

Then the following identity holds~:
$$ 
\bigoplus_{\substack{l =0\\ l\;even}}^r L_{l \epsilon_1, \epsilon_2, (r-l)\epsilon_1} \simeq 
\bigoplus_{\substack{l =0\\ l\;odd}}^r L_{l \epsilon_1, \epsilon_2, (r-l)\epsilon_1}.
$$
\end{lem}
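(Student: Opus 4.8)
The plan is to carry out, for an arbitrary number of arrows, the stalk computation performed for a single arrow in the Example above; one cannot shortcut through the quantum Serre relations in $\U^+_v(\g)$, since the identification of $\mathcal{K}_{\vec{Q}}$ with $\mathbf{C}_{\vec{Q}}$ in Lecture~3 rests precisely on these relations. Write $m=1-a_{1,2}$ for the $\epsilon_1$-degree of the relation, so that $m$ is one more than the number of arrows joining the two vertices and $\gamma:=m\epsilon_1+\epsilon_2$ is the common dimension vector of all the Lusztig sheaves involved; then $E_\gamma=\bigoplus_{h\colon 1\to 2}\Hom(V_1,V_2)\oplus\bigoplus_{h\colon 2\to 1}\Hom(V_2,V_1)$ with $V_1\simeq k^m$, $V_2\simeq k$, and $G_\gamma=GL(V_1)\times GL(V_2)$. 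As in the Example, every simple perverse sheaf that can occur in the $L_{l\epsilon_1,\epsilon_2,(m-l)\epsilon_1}$ carries a trivial local system — the maps below being, over a suitable stratification of $E_\gamma$, iterated Grassmannian bundles — so a semisimple summand of them is recovered up to isomorphism from the graded dimensions of its stalks (peel off the $IC$-summands in decreasing order of the dimension of the supporting orbit). It therefore suffices to show that $\bigoplus_{l\ \mathrm{even}}L_{l\epsilon_1,\epsilon_2,(m-l)\epsilon_1}$ and $\bigoplus_{l\ \mathrm{odd}}L_{l\epsilon_1,\epsilon_2,(m-l)\epsilon_1}$ have the same stalk cohomology at every point of $E_\gamma$.

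To compute the stalks, use the Remark in Section~2.1 and the associativity isomorphism $L_{\underline{\a}}\star L_{\underline{\b}}=L_{\underline{\a},\underline{\b}}$ of Proposition~\ref{P:assoc} to write $L_{l\epsilon_1,\epsilon_2,(m-l)\epsilon_1}=(q_l)_!\bigl(\qlb_{E_{l\epsilon_1,\epsilon_2,(m-l)\epsilon_1}}\bigr)[N_l]$, where $q_l$ is the proper projection from the three-step flag variety $E_{l\epsilon_1,\epsilon_2,(m-l)\epsilon_1}$ to $E_\gamma$ and $N_l$ depends only on $l$ and on the quiver. Since the $\epsilon_2$-graded pieces live only in $V_2=k$, unwinding the stability conditions identifies a point of $E_{l\epsilon_1,\epsilon_2,(m-l)\epsilon_1}$ over $\underline{y}=\bigl((y_h)_{h\colon 1\to 2},(z_h)_{h\colon 2\to 1}\bigr)$ with a subspace $U\subseteq V_1$ of codimension $l$ satisfying $Z_{\underline{y}}\subseteq U\subseteq K_{\underline{y}}$, where $Z_{\underline{y}}:=\sum_{h\colon 2\to 1}k\,z_h$ and $K_{\underline{y}}:=\bigcap_{h\colon 1\to 2}\ker y_h$. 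By proper base change the stalk of $L_{l\epsilon_1,\epsilon_2,(m-l)\epsilon_1}$ at $\underline{y}$ is, up to the shift $N_l$, the cohomology of the single Grassmannian $Gr(j,d_{\underline{y}})$ with $d_{\underline{y}}=\dim(K_{\underline{y}}/Z_{\underline{y}})$ and $j=\dim(U/Z_{\underline{y}})$ when $Z_{\underline{y}}\subseteq K_{\underline{y}}$, and is zero otherwise. Using Lemma~\ref{L:21} for the graded dimension of this cohomology, and noting that $l$ is even iff $j\equiv\dim V_1-\dim Z_{\underline{y}}\pmod 2$, the equality of the two stalks at such a $\underline{y}$ reduces — once the quadratic-in-$j$ parts of the exponents $N_l+\dim Gr(j,d_{\underline{y}})$ are seen to cancel, leaving something affine in $j$ with some slope $\lambda$ — to an identity
\begin{equation*}
\sum_{j=0}^{d}(-1)^j\, v^{\lambda j}\binom{d}{j}=0,
\end{equation*}
where $\binom{d}{j}$ denotes the $v$-binomial of Lemma~\ref{L:21} and $d=d_{\underline{y}}$.

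Both remaining facts are consequences of the single numerical input that the quiver has $m-1$ arrows. First, $\mathrm{codim}_{V_1}K_{\underline{y}}\le\#\{h\colon 1\to 2\}$ and $\dim Z_{\underline{y}}\le\#\{h\colon 2\to 1\}$, whence $d_{\underline{y}}=\dim(K_{\underline{y}}/Z_{\underline{y}})\ge\dim V_1-\bigl(\#\{h\colon 1\to 2\}+\#\{h\colon 2\to 1\}\bigr)=m-(m-1)=1$ whenever $Z_{\underline{y}}\subseteq K_{\underline{y}}$; in particular the dangerous case $d_{\underline{y}}=0$ — a point-fibre that would contribute to one of the two sums only — never arises, and this is exactly why the relation must sit in $\epsilon_1$-degree $m=1-a_{1,2}$ and not lower. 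Second, those same inequalities together with $\#\{h\colon 1\to 2\}+\#\{h\colon 2\to 1\}=m-1$ force $\lambda\equiv d-1\pmod 2$ and $|\lambda|\le d-1$, so the sum above equals $v^{c}\prod_{i=0}^{d-1}(1-v^{2i+\mu})$ for a constant $c$ and an even $\mu\in[-2(d-1),0]$, one of whose factors is $1-v^{0}=0$. Thus all stalks of the two sides agree, and by the first paragraph the two complexes are isomorphic. I expect the bookkeeping of the shifts $N_l$ — verifying that the $j^2$-terms cancel and that the resulting slope $\lambda$ lands in the window $[-(d-1),d-1]$ — to be the only real work; the rest is the dimension counting already rehearsed in the Example.
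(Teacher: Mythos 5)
Your proof is correct and takes exactly the approach the paper has in mind — the paper's one-line proof just says "like the $r=1$ case", and you generalize that stalk computation, identifying the fibre of $q_l$ over $\underline{y}$ with the Grassmannian $Gr(j,d_{\underline{y}})$ in $K_{\underline{y}}/Z_{\underline{y}}$ and reducing (via the peeling argument, valid since Lemma~\ref{L:syst} gives triviality of equivariant local systems on orbits) to a Gaussian alternating sum. The shift bookkeeping you defer does work out (with $a=\#\{h\colon 1\to 2\}$, $b=\#\{h\colon 2\to 1\}$, $z=\dim Z_{\underline{y}}$ one finds, using the balanced $v$-binomial of Lemma~\ref{L:21}, that the $j^2$-terms cancel and $\lambda=2b+1-2z-d$, which is automatically of the right parity, while $a+b=m-1$ gives $d\ge 1$ and $|\lambda|\le d-1$ as you anticipated), and you have also — rightly — corrected the lemma's off-by-one: as the translation (\ref{E:FundRel2}) confirms, the sums must run to $m=r+1$.
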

\begin{proof} It is very similar to the above special case $r=1$ (see e.g. \cite[Section~3]{SK} ). Details are left to the reader.
\end{proof}

\vspace{.1in}

Let us return to the case of the quiver $\vec{Q}$ with vertices $\{1,2\}$ and one arrow $h: 1\to 2$, and let us determine the category $\QQ$ and the simple perverse sheaves $\mathcal{P}_{\vec{Q}}$. Let us fix a dimension vector $\gamma=(d_1, d_2)$ and set $d=inf(d_1,d_2)$. There are $d+1$ simple perverse sheaves $IC(\mathcal{O}_0)=\qlb_{\{0\}}, IC(\mathcal{O}_1), \ldots, IC(\mathcal{O}_{d})=\mathbbm{1}_{E_{\gamma}}$ in $D^b(\underline{\mathcal{M}}^{\gamma}_{\vec{Q}})$. We claim that all of these do belong to $\mathcal{P}^{\gamma}$. To see this, it suffices to observe that the Lusztig sheaf $L_{(d_1-r)\epsilon_2, d_1\epsilon_1, r\epsilon_2}$ satisfies
$$supp \;L_{(d_1-r)\epsilon_2, d_1\epsilon_1, r\epsilon_2} =\overline{\mathcal{O}_r}$$
(it is easy to see that $Im (q_{(d_1-r)\epsilon_2, d_1\epsilon_1, r\epsilon_2}) \subset \overline{\mathcal{O}_r}$ and that the fiber over $\mathcal{O}_r$ is nonempty).
It follows that $IC(\overline{\mathcal{O}_r})$ appears in $L_{(d_1-r)\epsilon_2, d_1\epsilon_1, r\epsilon_2}$. 

In conclusion we have, as in Section~2.1. above,
$$\PQ=\bigsqcup_{\gamma} \big\{ IC(\mathcal{O}^{\gamma}_r)\;|\; r \leq inf(\gamma_1,\gamma_2)\big\}, \qquad \QQ= \bigsqcup_{\gamma}D^b_{G_{\gamma}}(E_{\gamma})^{ss}.$$

\vspace{.1in}

To wrap it up with quivers having two vertices, we provide sample computations of the restriction functor and of the scalar product.
We start with 
\begin{equation*}
\begin{split}
\underline{\Delta}_{\epsilon_1, \epsilon_1+\epsilon_2}(IC(\mathcal{O}_0))&=\underline{\Delta}_{\epsilon_1, \epsilon_1+\epsilon_2}(\qlb_{\{0\}})\\
&=\kappa_{!}\iota^* (\qlb_{\{0\}})=\kappa_!(\qlb_{\{0\}})=\qlb_{\{0\}} \boxtimes \qlb_{\{0\}}
\end{split}
\end{equation*}
where we have used the notations of Section~1.3. For $\underline{\Delta}_{\epsilon_1, \epsilon_1+\epsilon_2}(IC(\mathcal{O}_1))$ we may use Lemma~\ref{L:coprodun}~:
\begin{equation*}
\Delta_{\epsilon_1, \epsilon_1+\epsilon_2}(IC(\mathcal{O}_1))=\Delta_{\epsilon_1, \epsilon_1+\epsilon_2}(\mathbbm{1}_{2\epsilon_1+\epsilon_2})
=\mathbbm{1}_{\epsilon_1} \boxtimes \mathbbm{1}_{\epsilon_1 + \epsilon_2}[-1].
\end{equation*}
We now assume that $\gamma=\epsilon_1+\epsilon_2$. Then
\begin{equation*}
\begin{split}
\{IC(\mathcal{O}_0), IC(\mathcal{O}_0\}&=\{\qlb_{\{0\}}, \qlb_{\{0\}}\}\\
&=\sum_j dim\; H^j_{GL(1) \times GL(1)}(\qlb_{\{0\}}, E_{\gamma})v^j\\
&=\sum_j dim\;H^j_{GL(1) \times GL(1)}(pt)v^j=\frac{1}{(1-v^2)^2},
\end{split}
\end{equation*}
while
\begin{equation*}
\begin{split}
\{IC(\mathcal{O}_1), IC(\mathcal{O}_1\}&=\{\qlb_{E_{\gamma}}[1], \qlb_{E_{\gamma}}[1]\}\\
&=\sum_j dim\;H^j_{GL(1) \times GL(1)}(\qlb_{E_{\gamma}}, E_{\gamma})v^{j+2}=\frac{1}{(1-v^2)^2}
\end{split}
\end{equation*}
and 
\begin{equation*}
\begin{split}
\{IC(\mathcal{O}_0), IC(\mathcal{O}_1\}&=\{\qlb_{\{0\}}, \qlb_{E_{\gamma}}[1]\}\\
&=\sum_j dim\; H^j_{GL(1) \times GL(1)}(\qlb_{\{0\}}[1], E_{\gamma})v^j\\
&=\sum_j dim\;H^j_{GL(1) \times GL(1)}(pt)v^{j+1}=\frac{v}{(1-v^2)^2}.
\end{split}
\end{equation*}
This also gives an example of Corollary~\ref{C:scalarprod}. Here again, all the perverse sheaves $\mathbb{P} \in \mathcal{P}^{\gamma}$ are self dual.

\vspace{.2in}
We close this section with the following simple observation~:

\vspace{.1in}

\addtocounter{theo}{1}
\noindent \textbf{Example \thetheo .} Let $\vec{Q}$ be any quiver which has no oriented cycles. Then for all dimension vectors $\gamma$ the perverse sheaves $\qlb_{\{0\}}$ and $\mathbbm{1}_{\gamma}=\qlb_{E_{\gamma}}[dim\;E_{\gamma}]$ belong to $\mathcal{P}^{\gamma}$. To see this, let us relabel the vertices of $I$ as $\{1, 2, \ldots, n\}$ in such a way that $i<j$ if there exists an arrow going from $i$ to $j$ in $\vec{Q}$. Write $\gamma=\sum_i \gamma_i \epsilon_i$. We claim that 
\begin{equation}
L_{\gamma_1\epsilon_1, \ldots, \gamma_n \epsilon_n} = \mathbbm{1}_{\gamma},
\end{equation}
\begin{equation}
L_{\gamma_n\epsilon_n, \ldots, \gamma_1 \epsilon_1} = \qlb_{\{0\}}.
\end{equation}
Indeed, 
$$q_{\gamma_1\epsilon_1, \ldots, \gamma_n \epsilon_n}: E_{\gamma_1\epsilon_1, \ldots, \gamma_n\epsilon_n} \stackrel{\sim}{\to} E_{\gamma}$$
is an isomorphism, while
$$q_{\gamma_n\epsilon_n, \ldots, \gamma_1 \epsilon_1}: E_{\gamma_1\epsilon_1, \ldots, \gamma_n\epsilon_n} {\to} E_{\gamma}$$
is the closed embedding of $\{0\}$.
\endexample

\vspace{.1in}

As a corollary of this, we can state~:

\vspace{.1in}

\begin{cor} Assume that $\vec{Q}$ has no oriented cycles. Then for \textit{any} dimension vectors $\a_1, \ldots, \a_n$ the Lusztig sheaf $L_{\a_1, \ldots, \a_n} $ belongs to $\QQ$.
\end{cor}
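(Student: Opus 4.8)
The plan is to reduce the statement to the two facts already at our disposal: that $\QQ$ is stable under the induction functor $\underline{m}$ (Proposition~\ref{P:closed}), and that under the no-oriented-cycles hypothesis the constant perverse sheaf $\mathbbm{1}_\gamma$ is itself realized as a Lusztig sheaf whose successive factors are multiples of simple dimension vectors (the Example just above, combined with the Remark of Section~2.1).

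First I would recall that, by the very definition of the Lusztig sheaves and the associativity $L_\beta \star L_\gamma = L_{\beta,\gamma}$, one has $L_{\alpha_1,\ldots,\alpha_n} = \mathbbm{1}_{\alpha_1} \star \cdots \star \mathbbm{1}_{\alpha_n}$. Since $\QQ$ is preserved by $\underline{m}$ (equivalently, by $\star$) thanks to Proposition~\ref{P:closed}, it therefore suffices to show that each single factor $\mathbbm{1}_{\alpha_i}$ lies in $\QQ$; in other words, the whole statement reduces to the case $n=1$, i.e.\ to proving $\mathbbm{1}_\gamma \in \QQ$ for an arbitrary $\gamma \in \N^I$.

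Next I would invoke the Example preceding the Corollary. Relabel the vertices $I = \{1,\ldots,m\}$ so that $i < j$ whenever $\vec{Q}$ has an arrow from $i$ to $j$; this is possible precisely because $\vec{Q}$ has no oriented cycle (a topological ordering of the underlying acyclic graph). Writing $\gamma = \sum_i \gamma_i \epsilon_i$, that Example shows that $q_{\gamma_1\epsilon_1,\ldots,\gamma_m\epsilon_m}\colon E_{\gamma_1\epsilon_1,\ldots,\gamma_m\epsilon_m} \stackrel{\sim}{\to} E_\gamma$ is an isomorphism, so $L_{\gamma_1\epsilon_1,\ldots,\gamma_m\epsilon_m} = \mathbbm{1}_\gamma$. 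By the Remark of Section~2.1, Lusztig sheaves whose successive factors are multiples of a simple dimension vector are admitted in the definition of $\PQ$; as $\mathbbm{1}_\gamma$ is a simple perverse sheaf equal to such a Lusztig sheaf, it belongs to $\PQ \subset \QQ$. Combining this with the previous paragraph, $L_{\alpha_1,\ldots,\alpha_n} = \mathbbm{1}_{\alpha_1} \star \cdots \star \mathbbm{1}_{\alpha_n} \in \QQ$.

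There is essentially no genuine obstacle here: the only ingredients are Proposition~\ref{P:closed} and the combinatorial ordering of the vertices, both already available. The single point deserving a line of care is the appeal to the Remark of Section~2.1; without it one would instead observe that $\mathbbm{1}_\gamma$ can already be built from honest simple factors, e.g.\ by expanding each $\mathbbm{1}_{\gamma_i\epsilon_i}$ as a direct summand of $L_{\epsilon_i,\ldots,\epsilon_i}$ via (\ref{E:n1}) and using the associativity of $\star$ to rewrite $\mathbbm{1}_\gamma$ as a summand of a Lusztig sheaf all of whose factors are simple dimension vectors.
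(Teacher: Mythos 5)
Your proof is correct and takes essentially the same approach the paper intends: the Corollary is stated without a separate proof precisely because it is an immediate consequence of the preceding Example (which gives $\mathbbm{1}_\gamma \in \mathcal{P}^\gamma$ when $\vec{Q}$ is acyclic) together with the closure of $\QQ$ under $\star$ from Proposition~\ref{P:closed}. Your closing remark about bypassing the Remark of Section~2.1 by expanding each $\mathbbm{1}_{\gamma_i\epsilon_i}$ as a summand of $L_{\epsilon_i,\ldots,\epsilon_i}$ via~(\ref{E:n1}) is also a valid alternative.
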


\vspace{.3in}

\centerline{\textbf{2.3. Finite type quivers.}}
\addcontentsline{toc}{subsection}{\tocsubsection {}{}{\; 2.3. Finite type quivers.}}

\vspace{.15in}

In this section we consider a quiver $\vec{Q}$ of finite type. Recall (see e.g., \cite[Lecture~3]{Trieste} ) that this is equivalent to either of the following statements~:
\begin{enumerate}
\item[a)] There are only finitely many (nonisomorphic) indecomposable representations of $\vec{Q}$,
\item[b)] For any $\a$, there are finitely many $G_{\alpha}$-orbits in $E_{\alpha}$,
\item[c)] For all $\alpha$, we have $dim\;\Ma =-\langle \a, \a \rangle <0$ .
\end{enumerate}

\vspace{.1in}

Our aim is to determine the Hall category $\QQ$ and the set $\mathcal{P}_{\vec{Q}}$. Let us first describe all the objects in $D^b_{G_{\a}}(E_{\a})^{ss}$ for $\a \in I^\N$. We begin with a general result~:

\vspace{.1in}

\begin{lem}\label{L:syst} Let $\vec{Q}$ be any quiver and let $\alpha$ be a dimension vector. Let $\mathcal{O} \subset E_{\a}$ be a $G_{\a}$-orbit. Then any $G_{\a}$-equivariant local system on $\mathcal{O}$ is trivial.
\end{lem}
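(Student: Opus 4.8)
The statement to prove is that for any quiver $\vec{Q}$, any dimension vector $\a$, and any $G_{\a}$-orbit $\mathcal{O} \subset E_{\a}$, every $G_{\a}$-equivariant local system on $\mathcal{O}$ is trivial. The plan is to reduce this to a statement about the component group of the stabilizer of a point. Fix a point $y \in \mathcal{O}$, corresponding to a representation $M = (V_{\a}, y)$ of $\vec{Q}$, and let $G_y = \mathrm{Stab}_{G_{\a}}(y)$ be its stabilizer. Since $\mathcal{O} \simeq G_{\a}/G_y$ as a homogeneous $G_{\a}$-space, the category of $G_{\a}$-equivariant local systems on $\mathcal{O}$ is equivalent to the category of (finite-dimensional, $\qlb$-) representations of the component group $\pi_0(G_y) = G_y/G_y^{\circ}$. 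So it suffices to show that $G_y$ is connected.

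The key observation is that $G_y$ is exactly the automorphism group $\mathrm{Aut}_{Rep_k\vec{Q}}(M)$ of the representation $M$, viewed as an algebraic group. Now $\mathrm{Aut}(M)$ is the group of units of the (finite-dimensional, associative) endomorphism algebra $A = \mathrm{End}_{Rep_k\vec{Q}}(M)$, i.e. $G_y = A^{\times}$. The crucial point is then the following general fact: for any finite-dimensional associative algebra $A$ over an algebraically closed field, the group of units $A^{\times}$ is a connected algebraic group. This is because $A^{\times}$ is a Zariski-open subset of the affine space $A$ (it is the complement of the hypersurface $\{\det(\text{left multiplication by } a) = 0\}$), and a non-empty Zariski-open subset of an irreducible variety is irreducible, hence connected. (Alternatively, one can argue via the Jacobson radical: $A^{\times}$ maps onto $(A/\mathrm{rad}\,A)^{\times}$, which is a product of $GL_{n_i}(k)$'s and hence connected, with kernel $1 + \mathrm{rad}\,A$, an affine space and hence connected — so $A^{\times}$ is connected.)

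Combining these: $\pi_0(G_y)$ is trivial, so the only irreducible $G_{\a}$-equivariant local system on $\mathcal{O}$ is the trivial one, and every such local system (being a finite direct sum of irreducibles) is trivial.

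I do not expect a serious obstacle here; the only point requiring a little care is the standard dictionary identifying $G_{\a}$-equivariant local systems on the orbit $\mathcal{O} = G_{\a}/G_y$ with $\pi_0(G_y)$-representations, and the observation that $G_{\a}$ itself is connected (a product of general linear groups), so that the monodromy is entirely controlled by $\pi_0$ of the stabilizer. The "hard part", such as it is, is simply recalling the elementary fact that the unit group of a finite-dimensional algebra over an algebraically closed field is connected — which is exactly the input that makes the argument work uniformly for all quivers and all orbits, regardless of whether the orbit geometry is understood.
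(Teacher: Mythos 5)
Your proposal is correct and follows the same strategy as the paper: reduce to showing that the stabilizer $G_y \simeq \mathrm{Aut}(M)$ is connected, using the standard dictionary between equivariant local systems on a homogeneous space of a connected group and representations of the stabilizer's component group. The only difference is the final one-liner for connectedness — the paper observes that each component of $\mathrm{Aut}(M)$ is $k^*$-stable and so has $0 \in \mathrm{End}(M)$ in its closure, whereas you invoke the (cleaner, arguably) fact that the unit group is Zariski-open in the irreducible affine space $\mathrm{End}(M)$ — but these are minor variants of the same elementary observation.
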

\begin{proof} We have to show that the stabilizer of any point $\underline{x} \in \mathcal{O}$ is connected. We have $Stab_{G_{\a}}\ux \simeq Aut(M_{\ux})$ where $M_{\ux}$ is the representation of $\vec{Q}$ corresponding to $\ux$. There is a chain of inclusions $k^* \subset Aut(M_{\ux}) \subset End(M_{\ux})$. Each connected component of $Aut(M_{\ux})$ is stable under multiplication by $k^*$ and thus contains the point $0 \in End(M_{\ux})$ in its closure. This implies that $Aut(M_{\ux})$ is connected since it is an algebraic group.
\end{proof}

\vspace{.1in}

It follows from Lemma~\ref{L:syst} that (for $\vec{Q}$ a finite type quiver) the only simple $G_{\a}$-equivariant perverse sheaves on $E_{\a}$ are of the form $IC(\mathcal{O})$ for some $G_{\a}$-orbit $\mathcal{O}$. As we will see, and as in the examples of Sections~2.1 and 2.2, \textit{all} of these belong to the Hall category $\QQ$. Let us denote by $|E_{\gamma}/G_{\gamma}|$ the set of $G_{\gamma}$-orbits in $E_{\gamma}$.

\vspace{.1in}

\begin{theo}\label{T:finiteype} Let $\vec{Q}$ be a finite type quiver. Then 
$$\mathcal{P}_{\vec{Q}} =\bigsqcup_{\gamma} \big\{ IC(\mathcal{O})\;|\; \mathcal{O} \in | E_{\gamma}/G_{\gamma}|\big\},$$
$$\QQ = \bigsqcup_{\gamma} D^b_{G_{\gamma}}(E_{\gamma})^{ss}.$$
In particular, any $\mathbb{P} \in \mathcal{P}_{\vec{Q}}$ is self-dual.
\end{theo}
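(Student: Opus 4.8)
The plan is to prove both equalities simultaneously, the key point being to show that for every dimension vector $\gamma$ and every $G_\gamma$-orbit $\mathcal{O}\subset E_\gamma$, the simple perverse sheaf $IC(\mathcal{O})$ lies in $\mathcal{P}^\gamma$. By Lemma~\ref{L:syst} these $IC(\mathcal{O})$ exhaust all simple $G_\gamma$-equivariant perverse sheaves on $E_\gamma$, so once we know $\mathcal{P}^\gamma$ contains all of them, we get $\mathcal{Q}^\gamma = D^b_{G_\gamma}(E_\gamma)^{ss}$ for free, and the final sentence (self-duality) follows because every $IC(\mathcal{O})$ is self-dual for the middle perversity. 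So the entire content is the inclusion $\{IC(\mathcal{O})\} \subseteq \mathcal{P}^\gamma$.

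First I would choose, for a given orbit $\mathcal{O}$ corresponding to an indecomposable-decomposition of a representation $M$, a \emph{total ordering refining the AR-order} (or more concretely: pick an ordering $\beta_1,\dots,\beta_r$ of the classes of the indecomposable summands of $M$, listed with multiplicity, such that $\mathrm{Hom}(M_{\beta_i},M_{\beta_j})\neq 0 \Rightarrow i\le j$, which exists since $\vec Q$ is representation-finite and directed on indecomposables). I would then consider the Lusztig sheaf $L_{\beta_1,\dots,\beta_r}$ built from these — each $\beta_k$ being a dimension vector, not necessarily simple, but this is legitimate by the Remark following \eqref{E:23} (Lusztig sheaves indexed by multiples of, and more generally by arbitrary, dimension vectors built from simples still live in $\mathcal{Q}_{\vec Q}$; more carefully one first expands each $L_{\beta_k}$ into a product of $\mathbbm 1_{\epsilon_i}$'s, which is allowed since $\vec Q$ has no oriented cycles by the Corollary at the end of Section~2.2). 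The proper map $q=q_{\beta_1,\dots,\beta_r}\colon E_{\beta_1,\dots,\beta_r}\to E_\gamma$ has image $\overline{\mathcal O}$ — indeed a generic point of $E_{\beta_1,\dots,\beta_r}$ gives a flag whose successive quotients are the chosen indecomposables, hence the associated graded, and by the ordering the extension is split generically so the total representation is $M$; thus $\mathcal O\subseteq \mathrm{Im}(q)$ and $\mathrm{Im}(q)$ is closed and irreducible of dimension $\dim E_\gamma$ (using Lemma~1.10 for the dimension count and the fact that $q$ is generically finite, in fact generically an isomorphism onto $\mathcal O$), so $\mathrm{Im}(q)=\overline{\mathcal O}$ and $q$ is a birational proper map onto $\overline{\mathcal O}$.

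From here the Decomposition Theorem (as already invoked for Lusztig sheaves in Section~1.4) gives that $L_{\beta_1,\dots,\beta_r}$ is semisimple and contains $IC(\overline{\mathcal O},\mathcal L)$ as a summand for some local system $\mathcal L$ on a dense open of $\mathcal O$; but by Lemma~\ref{L:syst} $\mathcal L$ is trivial, so $IC(\mathcal O)$ itself appears as a direct summand of $L_{\beta_1,\dots,\beta_r}$, hence $IC(\mathcal O)\in\mathcal P^\gamma$. Running over all orbits $\mathcal O$ and all $\gamma$ gives $\mathcal P_{\vec Q} = \bigsqcup_\gamma\{IC(\mathcal O)\}$, and then $\mathcal Q_{\vec Q} = \bigsqcup_\gamma D^b_{G_\gamma}(E_\gamma)^{ss}$ as explained. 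Self-duality of each $IC(\mathcal O)$ is automatic since the defining local system is trivial (hence self-dual) and $IC$ with self-dual coefficients is Verdier self-dual.

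The main obstacle, and the step deserving the most care, is the claim that the chosen flag variety $E_{\beta_1,\dots,\beta_r}$ maps \emph{birationally} (or at least dominantly and generically finitely) onto $\overline{\mathcal O}$ — i.e. that a generic flag of the prescribed type really has associated graded $\cong M$ and that the generic fiber is a point. This uses representation-finiteness in an essential way: the ordering of the summands must be compatible with non-vanishing of $\mathrm{Hom}$'s so that $\mathrm{Ext}^1$ between consecutive filtration pieces vanishes generically (this is where directedness of the indecomposables — equivalently, acyclicity of the AR-quiver, which holds for Dynkin quivers — enters), forcing the generic flag to be split. One must also double-check the dimension bookkeeping via Lemma~1.10, $\dim E_{\beta_1,\dots,\beta_r} = \dim G_\gamma - \sum_{i\le j}\langle\beta_i,\beta_j\rangle$, against $\dim \overline{\mathcal O} = \dim G_\gamma - \dim\mathrm{Aut}(M)$ and $\dim\mathrm{Aut}(M) = \sum_{i,j}\dim\mathrm{Hom}(M_{\beta_i},M_{\beta_j})$, to see these agree precisely when all the relevant $\mathrm{Ext}^1$'s vanish — this is the computational heart, but it is a short and standard Euler-form manipulation rather than anything deep.
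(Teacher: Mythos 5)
Your high-level plan is exactly the paper's: reduce to showing $IC(\mathcal{O}) \in \mathcal{P}^\gamma$ for each $G_\gamma$-orbit (Lemma~\ref{L:syst} shows these exhaust the simple $G_\gamma$-equivariant perverse sheaves), then exhibit a Lusztig sheaf with support $\overline{\mathcal{O}}$ on which $IC(\mathcal{O})$ must occur as a summand. The paper implements this through Reineke's desingularization (Proposition~\ref{P:Reineke}), and your proposal follows the same idea, but with one substantive error.

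You list the indecomposable summands of $M$ with multiplicity as $\beta_1,\dots,\beta_r$ and then claim the proper map $q\colon E_{\beta_1,\dots,\beta_r}\to E_\gamma$ is birational onto $\overline{\mathcal{O}}$, with generic fiber a point. This is false whenever $M$ has repeated indecomposable summands. Writing $M\simeq\bigoplus_m N_m^{\oplus l_m}$ with the $N_m$ pairwise non-isomorphic, the fiber of $q$ over a generic point of $\mathcal{O}$ is the product of flag varieties $\prod_m GL(l_m)/B$: the coarse filtration by isotypic pieces is unique (this is Lemma~\ref{L:22}), but within each piece $N_m^{\oplus l_m}$ the choice of a complete $\underline{x}$-stable flag with subquotients $\cong N_m$ amounts, since $\mathrm{End}(N_m)=k$, to a full flag in $k^{l_m}$. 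Your own dimension check would have revealed this: using $\mathrm{End}(N)=k$, $\mathrm{Ext}^1(N,N)=0$ for Dynkin indecomposables, and the AR-order so that $\mathrm{Hom}(N_{\beta_i},N_{\beta_j})=\mathrm{Ext}^1(N_{\beta_j},N_{\beta_i})=0$ for $i>j$ with $\beta_i\ne\beta_j$, one finds $\dim E_{\beta_\bullet}-\dim\mathcal{O}=\sum_m \binom{l_m}{2}$, not $0$; the excess is not governed by $\mathrm{Ext}^1$-vanishing at all. (A smaller issue: your ordering condition ``$\mathrm{Hom}(M_{\beta_i},M_{\beta_j})\ne 0\Rightarrow i\le j$'' is literally unsatisfiable when $\beta_i=\beta_j$ for some $i\ne j$, since then both $\mathrm{Hom}$-groups are nonzero and you would need $i\le j$ and $j\le i$.)

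That said, your conclusion survives because your last step is robust: $\mathrm{Im}(q)=\overline{\mathcal{O}}$ (inclusion $\subset$ is Lemma~\ref{L:213}, $\supset$ is your generic-flag argument using rigidity of indecomposables), the generic fiber is a connected smooth projective variety, and all $G_\gamma$-equivariant local systems on $\mathcal{O}$ are trivial by Lemma~\ref{L:syst}; so the Decomposition Theorem still produces $IC(\mathcal{O})$ as a summand of $L_{\beta_\bullet}$, merely with multiplicity $H^*(\prod_m GL(l_m)/B)$ rather than $1$. You should simply drop the birationality claim. The paper's version of the argument sidesteps the issue by grouping: Reineke takes the dimension vectors $\alpha_m:=l_m\,\underline{dim}\,N_m$ of the isotypic pieces (not the individual summands) as the filtration data, and Lemmas~\ref{L:213} and~\ref{L:22} then show that $q_{\alpha_1,\dots,\alpha_r}$ is a genuine desingularization, i.e.\ an isomorphism over $\mathcal{O}$. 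This gives the sharper property $(L_{\alpha_\bullet})|_{\mathcal{O}}\simeq \qlb_{\mathcal{O}}[\dim\mathcal{O}]$, which the paper reuses in the proof of surjectivity of $\Phi$ in Section~3.3.
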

\begin{proof} We will prove this by explicitly constructing, for any orbit $\mathcal{O}$, a Lusztig sheaf $L_{\a_1, \ldots, \a_n}$ satisfying
\begin{equation}\label{E:26}
Supp\; L_{\a_1, \ldots, \a_n}=\overline{\mathcal{O}}.
\end{equation}
This Lusztig sheaf will also incidentally satisfy
\begin{equation}\label{E:27}
(L_{\a_1, \ldots, \a_n})_{|\mathcal{O}} \simeq \qlb_{\mathcal{O}}[dim \;\mathcal{O}].
\end{equation}
The construction of the above Lusztig sheaf in turn proceeds from the existence, for any $\gamma$ and any orbit $\mathcal{O} \subset E_{\gamma}$ of a sequence 
$\nu=(l_1\epsilon_{i_1}, \ldots, l_s \epsilon_{i_s})$ for which the map $q_{\underline{\nu}}~:E_{\underline{\nu}} \to E_{\gamma}$ is a desingularization of $\overline{\mathcal{O}}$. Indeed, for such $\underline{\nu}$ we have
$$L_{\underline{\nu}} =(q_{\underline{\nu}})_! \qlb_{E_{\underline{\nu}}}[dim\;E_{\underline{\nu}}] = IC(\mathcal{O}) \oplus \mathbb{P}$$ where $\mathbb{P}$ is a semisimple complex supported on $ \overline{\mathcal{O}}\backslash \mathcal{O}$. It follows that $IC(\mathcal{O}) \in \mathcal{P}^{\gamma}$.

\vspace{.1in}

We now give, following \cite{Reinekedesing}, the proof of the existence of the above desingularization ~: 

\begin{prop}[Reineke]\label{P:Reineke} Let $\vec{Q}$ be a finite type quiver and let $\gamma$ be a dimension vector. For any orbit $\mathcal{O} \subset E_{\gamma}$ there exists a sequence $\underline{\nu} =(l_1\epsilon_{i_1}, \ldots, l_s \epsilon_{i_s})$ of weight $\gamma$ such that the map
$$\xymatrix{ E_{\underline{\nu}} \ar[r]^-{q_{\underline{\nu}}}& E_{\gamma}}$$
is a desingularization of $\overline{\mathcal{O}}$.
\end{prop}

\noindent
\textit{Proof.} Recall that there exists a total ordering $\prec$ on the set of indecomposable representations of $\vec{Q}$, such that
$$N \prec N' \Rightarrow Hom(N',N) = Ext^1(N,N')=0$$
and that moreover $Ext^1(N,N)=0$ for any indecomposable $N$ (see \cite[Lemma~3.19.]{Trieste} ). Let $M$ be the representation associated to $\mathcal{O}$ and let us write
\begin{equation}
M \simeq N_1^{\oplus l_1} \oplus \cdots \oplus N_r^{\oplus l_r}
\end{equation}
where $N_1, \ldots, N_r$ are all the indecomposables, ordered in such a way that $N_k \prec N_l$ if $k <l$. We also relabel the vertices as $\{1, 2, \ldots, m\}$ in such a manner that there are arrows $i \to j$ only when $i<j$. This is possible since a finite type quiver does not have any oriented cycle.

Let us write $\mathcal{O}_R$ for the $G_{\underline{dim}\;R}$-orbit of $E_{\underline{dim}\;R}$ associated to a representation $R$. We will say that $R$ degenerates to $S$ if $\underline{dim}\;R=\underline{dim}\;S$ and $\mathcal{O}_S \subset \overline{\mathcal{O}_R}$. We cite the following classical result (valid for any quiver, see e.g. \cite{Brion})~:
\begin{lem}\label{L:codim} Let $M$ be a representation of $\vec{Q}$ of dimension $\gamma$. Then 
$$codim_{E_{\gamma}}\;\mathcal{O}_M = dim\;Ext^1(M,M).$$
\end{lem}
Put $\a_i=l_i \underline{dim}\;N_i$. It follows from the above lemma that $\mathcal{O}_{N_i^{\oplus l_i}}$ is the dense orbit of $E_{\a_i}$ for all $i$, and thus degenerates to any other representation of dimension $\a_i$. Consider the sequence
$$\underline{\nu}=\big( (\a_1)_1, \ldots, (\a_1)_m, (\a_2)_1, \ldots, (\a_2)_m, \ldots, (\a_r)_1, \ldots, (\a_r)_m\big).$$
We claim that $\underline{\nu}$ satisfies the requirements of the Proposition. First of all, the maps 
$$q_{(\a_i)_1, \ldots, (\a_i)_m}: E_{(\a_i)_1, \ldots, (\a_i)_m} \to E_{\a_i}$$
are all isomorphisms (see Example~2.5.), and therefore $E_{\underline{\nu}}$ and $q_{\underline{\nu}}$ may be identified with $E_{\a_1, \ldots, \a_r}$ and $q_{\a_1, \ldots, \a_r}$. Hence we have to show that $q_{\a_1, \ldots, \a_r}$ is a desingularization of $\overline{O}=\overline{\mathcal{O}_M}$.

\vspace{.1in}

The fact that $Im(q_{\a_1, \ldots, \a_r}) \subset \overline{\mathcal{O}}$ is implied by the following general lemma~:
\begin{lem}\label{L:213} Let $X_1, \ldots, X_r$ be representations of a quiver and assume that $Ext^1(X_k,X_l)=0$ if $k<l$. If $Y$ is a representation possessing a filtration
$$Y_r \subset Y_{r-1} \subset \cdots \subset Y_1=Y$$
such that $Y_k/Y_{k+1}$ is a degeneration of $X_k$ then $Y$ is a degeneration of $X_1 \oplus \cdots \oplus X_r$.
\end{lem}
\begin{proof} We only need to treat the case $r=2$ and then argue by induction on $r$. Let $\mathcal{O}_{X_1} \subset E_{d_1}, \mathcal{O}_{X_2} \subset E_{d_2}$ be the corresponding orbits, where $d_i=\underline{dim}\;X_i$ for $i=1,2$. Consider the induction diagram
$$\xymatrix{ E_{d_1} \times E_{d_2} & E^{(1)}_{d_1,d_2} \ar[l]_-{p} \ar[r]^-{r} & E_{d_1,d_2} \ar[r]^-{q} & E_{d_1+d_2}.
}$$
Since $p$ is smooth and $r$ is a principal bundle, $r p^{-1} (\overline{\mathcal{O}_{X_1} \times \mathcal{O}_{X_2}}) = \overline{rp^{-1}(\mathcal{O}_{X_1} \times \mathcal{O}_{X_2})}$. Hence
$$qr p^{-1} (\overline{\mathcal{O}_{X_1} \times \mathcal{O}_{X_2}}) = q\overline{rp^{-1}(\mathcal{O}_{X_1} \times \mathcal{O}_{X_2})}
\subset \overline{qrp^{-1}(\mathcal{O}_{X_1} \times \mathcal{O}_{X_2})}.$$
But because $Ext^1(X_1,X_2)=0$ we have $qrp^{-1}(\mathcal{O}_{X_1} \times \mathcal{O}_{X_2}) = \mathcal{O}_{X_1 \oplus X_2}$. The Lemma follows.\end{proof}

\vspace{.1in}

Since obviously $\mathcal{O} \subset Im(q_{\a_1, \ldots, \a_r})$ we get $\overline{Im(q_{\a_1, \ldots, \a_r})}=\overline{\mathcal{O}}$. 
It remains to prove that $q_{\a_1, \ldots , \a_r}$ is an isomorphism over $\mathcal{O}$. We now use~:

\begin{lem}\label{L:22} Let $X_1, \ldots, X_r$ and $Y_r \subset \cdots \subset Y_1=Y$ be as in Lemma~\ref{L:213}. Assume in addtion that $Hom(X_k,X_l)=0$ if $k>l$, and that $Y \simeq X_1 \oplus \cdots \oplus X_r$. Then $Y_k/Y_{k+1} \simeq X_k$  and $Y_k \simeq X_k \oplus \cdots \oplus X_r$ for all $k$.
\end{lem}
\begin{proof} It suffices again to deal with the case $r=2$. We have a chain of inclusions
$$Hom(X_2,Y_2) \subseteq Hom(X_2,Y) =Hom(X_2,X_2)$$
since $Y \simeq X_1 \oplus X_2$ and $Hom(X_2, X_1)=0$. But the function $N \mapsto dim\;Hom(X_2,N)$ is upper semicontinuous hence $dim\;Hom(X_2,Y_2) \geq dim\;Hom(X_2,X_2)$. It follows that $Hom(X_2,Y_2)=Hom(X_2,X_2)=Hom(X_2,Y)$. But then the image of the canonical map $X_2 \otimes Hom(X_2,Y) \to Y$ lies in $Y_2$ and is equal to $X_2$. Therefore $Y_2 \simeq X_2$ as desired.
\end{proof}

We are in position to conclude the argument. By definition, the fiber of $q_{\a_1, \dots, \a_r}$ over a point of $\mathcal{O}$ is the variety of filtrations
\begin{equation}\label{E:P221}
M_r \subset \cdots \subset M_1=M \simeq N_1^{\oplus l_1} \oplus \cdots \oplus N_r^{\oplus l_r}
\end{equation}
where $\underline{dim}\;M_k/M_{k+1}=\a_k$. By Lemma~\ref{L:22} we have $M_r \subset N_r^{\oplus l_r}$. But because $Hom(N_r,N_k)=0$ for all $k<r$ there is a unique submodule of $M$ isomorphic to $N_r^{\oplus l_r}$ and hence $M_r$ is fixed. The same argument applied to to $M'=M/M_r$ shows that $M_{r-1}$ is uniquely determined, and so on. Thus there is indeed a unique filtration of the form (\ref{E:P221}), and $q_{\a_1, \ldots, \a_r}$ is a desingularization of $\overline{\mathcal{O}}$. Proposition~\ref{P:Reineke} and Theorem~\ref{T:finiteype} are proved.
\end{proof}

\vspace{.2in}

\addtocounter{theo}{1}
\noindent \textbf{Remark \thetheo .} The arguments used in the proof of Theorem~\ref{T:finiteype} admit a straightforward but useful generalization to the following situation. Suppose that we are given $G_{\a_i}$-stable locally closed subsets $U_{\a_i} \subset E_{\a_i}$ for $i=1, \ldots, m$ such that for any collection of points $(\ux_i)_i \in \prod_i U_{\a_i}$ we have
$$Ext^1(M_{\ux_i}, M_{\ux_j})=Hom(M_{\ux_j},M_{\ux_i})=0 \quad \text{if}\;i<j.$$
Set $\a_1 + \cdots + \a_m=\a$ and define a subset $Z$ of $E_{\a}$ as
$$Z=\big\{ \ux \in E_{\a}\;|\; M_{\ux} \simeq M_{\ux_1} \oplus \cdots \oplus M_{\ux_m} \; \text{for \;some\;} (\ux_i)_i \in \prod_i U_{\a_i}\big\}.$$
Consider the (iterated) induction diagram (see Section~1.3 and the proof of Proposition~\ref{P:assoc})~:
\begin{equation}\label{E:assoc}
\xymatrix{
E^{(m-1)}_{\a_1, \ldots, \a_m} \ar[d]_-{p} \ar[r]^-{r} & E_{\a_1, \ldots, \a_m} \ar[r]^-{q} & E_\a\\
E_{\a_1} \times \cdots \times E_{\a_m} & &}
\end{equation}
Restricting (\ref{E:assoc}) to $U_{\a_1} \times \cdots \times U_{\a_m} \subset E_{\a_1} \times \cdots \times E_{\a_m}$ we obtain
\begin{equation}\label{E:assoc2}
\xymatrix{
 p^{-1}(U_{\a_1} \times \cdots \times U_{\a_m}) \ar[d]_-{p} \ar[r]^-{r'} & rp^{-1}(U_{\a_1} \times \cdots \times U_{\a_m}) \ar[r]^-{q'} & E_\a\\
 U_{\a_1} \times \cdots \times U_{\a_m} & &}
\end{equation}
From Lemmas~\ref{L:213} and \ref{L:22} we deduce that $r'$ is still a principal bundle and that $q'$ is an isomorphism onto $Z$.  For similar reasons, the restriction diagram
\begin{equation}
\xymatrix{ E_{\a_1} \times \cdots \times E_{\a_m} & F_{\a_1, \ldots, \a_m} \ar[l]_-{\kappa} \ar[r]^-{\iota} & E_{\a}}
\end{equation}
restricts (sic) to
\begin{equation}
\xymatrix{ U_{\a_1} \times \cdots \times U_{\a_m} & \kappa^{-1}(U_{\a_1} \times \cdots \times U_{\a_m}) \ar[l]_-{\kappa} \ar[r]^-{\iota} & Z.}
\end{equation}
We have (see Section~1.3, (\ref{E:FGP}))
\begin{equation}\label{E:FGP2}
Z \simeq rp^{-1}(U_{\a_1} \times \cdots \times U_{\a_m}) = \kappa^{-1}(U_{\a_1} \times \cdots \times  U_{\a_m}) \underset{P_{\a_1, \ldots, \a_m}}{\times} G_{\a}.
\end{equation}
In particular, by (\ref{E:FGP2})

\vspace{.05in}

\hspace{.1in} i) \textit{$Z$ is locally closed}

\vspace{.05in}

\noindent and from (\ref{E:assoc2}) we deduce

\vspace{.05in}

\hspace{.1in} ii) \textit{if $\mathbb{P}_i \in \mathcal{Q}^{\a_i}$ satisfy $supp\;\mathbb{P}_i \subset \overline{U_{\a_i}}$ for all $i$ then $supp\;\mathbb{P}_1 \star \cdots \star \mathbb{P}_m \subset \overline{Z}$ and}
$$\big(\mathbb{P}_1 \star \cdots \star \mathbb{P}_m\big)_{|Z} = r_{\#}p^*\big( (\mathbb{P}_1 \boxtimes \cdots \boxtimes \mathbb{P}_m)_{|U_{\a_1} \times \cdots \times U_{\a_m}}\big).$$

\vspace{.1in}

In order to get a good behavior with respect to the restriction functor we need to make an extra assumption~: suppose that for any $\ux \in Z$ there exists a \textit{unique} filtration $W_r \subset \cdots \subset W_1=V_{\a}$ compatible with $\ux$ such that $\underline{dim}\;W_i/W_{i+1}=\a_i$. Then

\vspace{.05in}

\hspace{.1in} iii) \textit{if $\mathbb{Q} \in \mathcal{Q}^{\a}$ satisfies $supp\;\mathbb{Q} \subset \overline{Z}$ then
$supp\; \underline{\Delta}_{\a_1, \ldots, \a_m}(\mathbb{P}) \subset \overline{U}_{\a_1} \times \cdots \times \overline{U}_{\a_m}$ and, up to a global shift,
$$p^*\big( \underline{\Delta}_{\a_1, \ldots, \a_m}(\mathbb{P})_{|U_{\a_1} \times \cdots \times U_{\a_m}} \big) \simeq r^*(\mathbb{P}_{|Z}).$$}

\vspace{-.1in}

We justify iii) briefly~: by our extra assumption it holds $F_Z := F_{\a_1, \ldots, \a_m} \cap Z =\kappa^{-1}(U_{\a_1} \times \cdots \times U_{\a_m})$. Thus $Z \simeq   F_Z \times_{P_{\a_1, \ldots,\a_m}} G_{\a} $. We deduce that $F \cap \overline{Z} \subset \overline{F_Z}$ and $\kappa(F \cap \overline{Z}) \subset \kappa(\overline{F_Z}) \subset \overline{\kappa(F_Z)}=\overline{U_{\a_1}} \times \cdots \times \overline{U_{\a_m}}$. The conclusion on the support in iii) follows. The last statement in iii) follows from the definitions.

\vspace{.3in}

\centerline{\textbf{2.4. The Jordan quiver and the cyclic quivers.}}
\addcontentsline{toc}{subsection}{\tocsubsection {}{}{\; 2.4. The Jordan quiver and the cyclic quivers.}}

\vspace{.15in}

Let us begin with the Jordan quiver

\vspace{.35in}

\centerline{
\begin{picture}(260, 10)
\put(80,0){$\vec{Q}_0=$}
\put(120,0){\circle*{5}}
\put(112,0){$1$}
\put(140,0){\circle{40}}
\put(140,20){\vector(1,0){2}}
\end{picture}}

\vspace{.4in}

Strictly speaking this is not a quiver of the type we are considering in these lectures, since it possesses an edge loop. However, it may be seen as a degenerate example of cyclic quivers and the theory works out well in this situation also. In fact, we are here in a very classical setting (indeed, the Hall algebra of the Jordan quiver is the \textit{classical Hall algebra} --see \cite[ Lecture~2]{Trieste}) and Lusztig's constructions for this case go back to \cite{LusztigGreen}. For a dimension vector $\gamma \in \N^I \simeq \N$ we have
$$E_{\gamma}=\mathfrak{gl}_{\gamma}, \qquad E_{\gamma}^{nil}=\mathcal{N}_{\gamma}, \qquad G_{\gamma}=GL_{\gamma}$$
where $\mathcal{N}_{\gamma} \subset \mathfrak{gl}_\gamma$ is the nilpotent cone. Of course, the nilpotent orbits in $\mathcal{N}_{\gamma}$ are parametrized by partitions $\lambda \dashv \gamma$.

\vspace{.1in}

\begin{prop} We have
$$\mathcal{P}_{\vec{Q}_0} =\{ IC(\mathcal{O}_{\lambda})\;|\; \lambda \in \Pi\big\}, \qquad \mathcal{Q}_{\vec{Q}_0}=\bigsqcup_{\gamma} D^b_{GL_{\gamma}}(E_{\gamma}^{0})^{ss}.$$
\end{prop}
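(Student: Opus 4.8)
The plan is to show that, in each degree $\gamma$, there is up to isomorphism only one Lusztig sheaf built out of simple dimension vectors, to identify it with the Springer sheaf of $GL_\gamma$, and then to read off its simple perverse summands. Recall first that, exactly as for cyclic quivers, the relevant abelian category here is $Rep^{nil}_k\vec{Q}_0$, so in degree $\gamma$ the moduli space is $E_\gamma^0 = E_\gamma^{nil} = \mathcal{N}_\gamma \subset \mathfrak{gl}_\gamma$, with $G_\gamma = GL_\gamma$ acting by conjugation; its orbits are the nilpotent classes $\mathcal{O}_\lambda$, $\lambda \dashv \gamma$. By Lemma~\ref{L:syst} the stabiliser of any point of $E_\gamma^{nil}$ is connected, so every $GL_\gamma$-equivariant local system on $\mathcal{O}_\lambda$ is trivial, and hence the only simple $GL_\gamma$-equivariant perverse sheaves on $\mathcal{N}_\gamma$ are the $IC(\overline{\mathcal{O}_\lambda})$. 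The unique simple dimension vector is $\epsilon_1 = 1$, with $E_{\epsilon_1}^{nil} = \{0\}$, so $\mathbbm{1}_{\epsilon_1} = \qlb_{\{0\}}$; consequently the only sequence of simple dimension vectors of weight $\gamma$ is $(\epsilon_1,\dots,\epsilon_1)$ ($\gamma$ terms), and $\mathcal{P}^\gamma$ is, by definition, the set of simple perverse sheaves occurring up to shift as a summand of $L_{\epsilon_1,\dots,\epsilon_1} = \mathbbm{1}_{\epsilon_1} \star \cdots \star \mathbbm{1}_{\epsilon_1}$.

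The next step is to identify this Lusztig sheaf. By the formula for iterated Lusztig sheaves recalled in Section~1.4, and since $\dim E_{\epsilon_1}^{nil} = 0$, one has $L_{\epsilon_1,\dots,\epsilon_1} = q_!\big(\qlb_{E_{\epsilon_1,\dots,\epsilon_1}}[\dim E_{\epsilon_1,\dots,\epsilon_1}]\big)$, where $E_{\epsilon_1,\dots,\epsilon_1}$ is the variety of pairs $(\underline{y}, W_\gamma \subset \cdots \subset W_1 = V_\gamma)$ with $\underline{y}$ nilpotent, $\dim W_i/W_{i+1} = 1$ and $\underline{y}(W_i) \subset W_i$, and $q(\underline{y}, W_\bullet) = \underline{y} \in \mathcal{N}_\gamma$. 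Since a nilpotent endomorphism preserving a complete flag is strictly upper triangular for it, the projection to the full flag variety $\mathcal{B} = GL_\gamma/B$ identifies $E_{\epsilon_1,\dots,\epsilon_1}$ with the total space of the bundle over $\mathcal{B}$ with fibre the nilradical $\mathfrak{n}$, i.e.\ with the cotangent bundle $T^*\mathcal{B}$, and identifies $q$ with the Springer resolution $T^*\mathcal{B} \to \mathcal{N}_\gamma$. In particular $E_{\epsilon_1,\dots,\epsilon_1}$ is smooth and $q$ is proper and semismall, so $L_{\epsilon_1,\dots,\epsilon_1}$ is a semisimple perverse sheaf supported on $\mathcal{N}_\gamma$ — the Springer sheaf of $GL_\gamma$.

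The decomposition theorem, together with the triviality of the equivariant local systems noted above, then yields $L_{\epsilon_1,\dots,\epsilon_1} \simeq \bigoplus_{\lambda \dashv \gamma} IC(\overline{\mathcal{O}_\lambda})^{\oplus m_\lambda}$, where $m_\lambda$ is the number of top-dimensional irreducible components of the Springer fibre $q^{-1}(x)$, $x \in \mathcal{O}_\lambda$; this number is positive, because the fibre is nonempty and, by semismallness, of pure dimension equal to half the codimension of $\mathcal{O}_\lambda$ in $\mathcal{N}_\gamma$. Hence every $IC(\mathcal{O}_\lambda)$ lies in $\mathcal{P}^\gamma$; conversely, every simple summand of $L_{\epsilon_1,\dots,\epsilon_1}$ is a simple $GL_\gamma$-equivariant perverse sheaf on $\mathcal{N}_\gamma$, hence one of the $IC(\mathcal{O}_\lambda)$. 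This gives $\mathcal{P}^\gamma = \{IC(\mathcal{O}_\lambda) : \lambda \dashv \gamma\}$, and, over all $\gamma$, the asserted description of $\mathcal{P}_{\vec{Q}_0}$. Finally, $\mathcal{Q}^\gamma$ is by definition the full additive subcategory of $D^b_{GL_\gamma}(E_\gamma^0)^{ss}$ generated by shifts of the elements of $\mathcal{P}^\gamma$; since, again by Lemma~\ref{L:syst}, every object of $D^b_{GL_\gamma}(E_\gamma^0)^{ss}$ is a finite direct sum of shifts of the $IC(\mathcal{O}_\lambda)$, this inclusion is an equality, and $\mathcal{Q}_{\vec{Q}_0} = \bigsqcup_\gamma D^b_{GL_\gamma}(E_\gamma^0)^{ss}$.

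The only genuinely geometric input is that the Springer sheaf of $GL_\gamma$ has a summand supported on every nilpotent orbit closure — equivalently, the nonemptiness and expected-dimensionality of Springer fibres, or, if one prefers, the classical Springer correspondence for $GL_\gamma$ (this case going back to Lusztig; see \cite{LusztigGreen}). Everything else is bookkeeping with the definitions of Lecture~1, with Lemma~\ref{L:syst}, and with the decomposition theorem, so I expect this Springer-theoretic fact, rather than any formal diagram chase, to carry the content of the proof.
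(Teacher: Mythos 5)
Your argument is correct and runs along the same lines as the paper's proof: identify $q_{1,\dots,1}\colon E_{1,\dots,1}\to E_\gamma^{nil}$ with the Springer resolution $\widetilde{\mathcal N}_\gamma\to\mathcal N_\gamma$, invoke semismallness together with the fact that all nilpotent orbits of $\mathfrak{gl}_\gamma$ are relevant (no local systems, by connectedness of stabilisers), and conclude that every $IC(\mathcal O_\lambda)$ occurs in $L_{1,\dots,1}$; the identification of $\mathcal Q^\gamma$ with $D^b_{GL_\gamma}(E_\gamma^0)^{ss}$ is then immediate. One small imprecision: you attribute the statement that the Springer fibre over $\mathcal O_\lambda$ has pure dimension equal to $\tfrac12\,\mathrm{codim}\,\mathcal O_\lambda$ to semismallness, but semismallness only gives the upper bound; purity and equality are a separate classical fact (Spaltenstein, type $A$), which is what "all strata are relevant" really records --- as you yourself acknowledge in your closing paragraph by pointing to the Springer correspondence for $GL_\gamma$.
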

\begin{proof} Lusztig's map coincides with the Springer resolution (see e.g. \cite[Chap. 3]{ChrissGinzburg} )
$$\xymatrix{
E_{1, \ldots, 1} \ar[d]_-{q_{1, \ldots, 1}} \ar@{=}[r]  &\widetilde{\mathcal{N}}_{\gamma} \ar[d]_-{\pi}\\
E^0_{\gamma} \ar@{=}[r] & \mathcal{N}_{\gamma}
}
$$
where 
$$\widetilde{\mathcal{N}}_{\gamma}=\big\{ (x, \mathfrak{b}) \in \mathcal{N}_{\gamma}\times \mathcal{B}_{\gamma}\;|\; x \in \mathfrak{b} \big\}$$
and $\mathcal{B}_{\gamma}$ is the variety of Borel subalgebras in $\mathfrak{gl}_k$ (the flag variety of $GL_{k}$). It is well-known that $\pi$ (and hence $q_{1, \ldots, 1}$) is a semismall map, and that all the strata $\{\mathcal{O}_{\lambda}\}$ are relevant. Thus
$$(q_{1, \ldots, 1})_! \big( \qlb_{E_{1, \ldots, 1}}[dim\;E_{1, \ldots, 1}]\big) = \bigoplus_{\lambda \dashv \gamma} IC(\mathcal{O}_{\lambda}) \otimes \mathbb{V}_{\lambda}$$
where $\mathbb{V}_{\lambda}$ is a nonzero complex of $\qlb$-vector spaces. Therefore all $IC(\mathcal{O}_{\lambda})$ belong to $\mathcal{P}_{\vec{Q}_0}$ as we wanted to show.
\end{proof}

\vspace{.1in}

The partition $(1^n)$ corresponds to the closed orbit  $\{0\}_n$ and we have $IC(\mathcal{O}_{(1^n)})=\qlb_{\{0\}_n}$. By a variant of the Springer desingularization, we have

\vspace{.1in}

\begin{lem}\label{L:springervariant} Let $\mu=(\mu_1 \geq \mu_2 \geq \cdots)$ be a partition and let $\mu'=(\mu'_1 \geq \mu'_2 \geq \cdots \geq \mu'_l)$ be the transpose partition. Then
$$\qlb_{\{0\}_{\mu'_1}} \star \qlb_{\{0\}_{\mu'_2}} \cdots  \star \qlb_{\{0\}_{\mu'_l}}=IC(\mathcal{O}_{\mu}) \oplus \mathbb{P} $$
for some complex $\mathbb{P}$ with $supp\;\mathbb{P}\subset \overline{\mathcal{O}_{\mu}} \backslash \mathcal{O}_{\mu}$.
\end{lem}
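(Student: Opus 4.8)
The plan is to realize the iterated induction product as the direct image of a constant sheaf along a ``parabolic Springer'' (Richardson) resolution of $\overline{\mathcal{O}_\mu}$, and then to read off the answer from the Decomposition Theorem, exactly the way the sheaves $IC(\mathcal{O}_\lambda)$ were produced in the preceding Proposition. Set $n=|\mu|=\mu'_1+\cdots+\mu'_l$, let $P\subset GL_n$ be the parabolic stabilizing a fixed flag with successive quotients of dimensions $(\mu'_1,\ldots,\mu'_l)$, and let $\mathfrak{u}\subset\mathfrak{gl}_n$ be the subspace of endomorphisms sending each term of that flag strictly into the next one. Since $\qlb_{\{0\}_m}$ is just the constant sheaf on the zero orbit $\{0\}\subset E_m^0=\mathcal{N}_m$, I would unwind the iterated induction diagram exactly as in the proof of Proposition~\ref{P:assoc}, pull back $\qlb_{\{0\}_{\mu'_1}}\boxtimes\cdots\boxtimes\qlb_{\{0\}_{\mu'_l}}$ and apply base change along the smooth maps and principal bundles occurring there, to obtain
$$\qlb_{\{0\}_{\mu'_1}}\star\cdots\star\qlb_{\{0\}_{\mu'_l}}\simeq q_{!}\big(\qlb_{\widetilde{Z}}[\dim\widetilde{Z}]\big),$$
where $\widetilde{Z}=\big\{(\underline{y},\,W_l\subset\cdots\subset W_1=V_n)\;|\;\underline{dim}\,W_i/W_{i+1}=\mu'_i,\ \underline{y}(W_i)\subset W_{i+1}\ \forall i\big\}$ (with $W_{l+1}=0$) and $q(\underline{y},W_\bullet)=\underline{y}$; concretely $\widetilde{Z}\simeq GL_n\times_P\mathfrak{u}$, and its image lies in $\mathcal{N}_n$ since an endomorphism strictly decreasing the flag is nilpotent. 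When all $\mu'_i=1$ this is Lusztig's Springer-type description already used above; the precise overall shift is routine bookkeeping, using that $\langle\;,\;\rangle\equiv 0$ for the Jordan quiver together with the dimension count below.

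Next I would analyse $q\colon\widetilde{Z}\to\mathcal{N}_n$. It is proper, since $\widetilde{Z}$ fibres over the projective partial flag variety $\mathcal{B}_{\mu'_1,\ldots,\mu'_l}$ with closed fibres, and $\widetilde{Z}$ is smooth, being a vector bundle of rank $\dim\mathfrak{u}=\sum_{i<j}\mu'_i\mu'_j$ over that flag variety; combined with Lemma~\ref{L:21} this gives $\dim\widetilde{Z}=2\sum_{i<j}\mu'_i\mu'_j=n^2-\sum_i(\mu'_i)^2=\dim\mathcal{O}_\mu$. The crucial point is a linear algebra fact: if $x\in\mathcal{N}_n$ admits a flag $W_\bullet$ of the above type with $x(W_i)\subset W_{i+1}$, then $\operatorname{im}(x^{i-1})\subset W_i$ for all $i$; when moreover $x\in\mathcal{O}_\mu$ one has $\dim\ker(x^{i-1})=\mu'_1+\cdots+\mu'_{i-1}$, whence $\dim\operatorname{im}(x^{i-1})=\mu'_i+\cdots+\mu'_l=\dim W_i$, so necessarily $W_i=\operatorname{im}(x^{i-1})$. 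Conversely $(\operatorname{im}(x^{i-1}))_i$ is a valid such flag for every $x\in\mathcal{O}_\mu$, so that $\mathcal{O}_\mu\subset q(\widetilde{Z})$ and $x\mapsto(\operatorname{im}(x^{i-1}))_i$ is a regular section of $q$ over $\mathcal{O}_\mu$, inverse to $q$ there; hence $q$ restricts to an isomorphism over $\mathcal{O}_\mu$. Since $q(\widetilde{Z})$ is closed, irreducible, $GL_n$-stable and of dimension $\le\dim\widetilde{Z}=\dim\mathcal{O}_\mu$, it follows that $q(\widetilde{Z})=\overline{\mathcal{O}_\mu}$. In other words $q$ is a desingularization of $\overline{\mathcal{O}_\mu}$ --- the ``variant of the Springer desingularization'' of the statement.

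Finally I would conclude as in Section~1.4. By the Decomposition Theorem of \cite{BBD} (in the equivariant form \cite{Bernstein-Lunts}), $q_{!}(\qlb_{\widetilde{Z}}[\dim\widetilde{Z}])$ is a semisimple complex supported on $\overline{\mathcal{O}_\mu}$, a direct sum of shifts of simple $GL_n$-equivariant perverse sheaves $IC(\mathcal{O}_\nu,\mathcal{L})$ with $\mathcal{O}_\nu\subset\overline{\mathcal{O}_\mu}$, and all the occurring $\mathcal{L}$ are trivial by Lemma~\ref{L:syst}. Because $q$ is an isomorphism over the open dense orbit $\mathcal{O}_\mu$ and $\dim\widetilde{Z}=\dim\mathcal{O}_\mu$, the restriction of $q_{!}(\qlb_{\widetilde{Z}}[\dim\widetilde{Z}])$ to $\mathcal{O}_\mu$ is $\qlb_{\mathcal{O}_\mu}[\dim\mathcal{O}_\mu]=IC(\mathcal{O}_\mu)_{|\mathcal{O}_\mu}$; hence $IC(\mathcal{O}_\mu)$ occurs with multiplicity one and no shift, while every other summand is supported in $\overline{\mathcal{O}_\mu}\setminus\mathcal{O}_\mu$. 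This is precisely $\qlb_{\{0\}_{\mu'_1}}\star\cdots\star\qlb_{\{0\}_{\mu'_l}}=IC(\mathcal{O}_\mu)\oplus\mathbb{P}$ with $supp\,\mathbb{P}\subset\overline{\mathcal{O}_\mu}\setminus\mathcal{O}_\mu$.

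The step I expect to be the real obstacle is the linear algebra fact in the second paragraph: showing that $q$ is genuinely \emph{birational} onto $\overline{\mathcal{O}_\mu}$, i.e. the uniqueness of the flag $(\operatorname{im}(x^{i-1}))_i$ attached to a regular element $x\in\mathcal{O}_\mu$. This is exactly where it is essential that $\mu'$ be the \emph{transpose} of $\mu$, and the statement is special to type $A$ (for a general reductive group the analogous Richardson map can have degree $>1$). Everything else --- properness, smoothness of $\widetilde{Z}$, connectedness of the relevant stabilizers, and the Decomposition Theorem --- is then formal.
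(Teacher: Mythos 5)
Your proof is correct and follows the same route as the paper: identify the iterated induction product with the pushforward of the (shifted) constant sheaf along the proper map $q^{nil}_{\mu'_1,\ldots,\mu'_l}\colon E^{nil}_{\mu'_1,\ldots,\mu'_l}\to E_n^{nil}=\mathcal{N}_n$, recognize this map as a desingularization of $\overline{\mathcal{O}_\mu}$, and apply the Decomposition Theorem. The only difference is that the paper simply invokes ``the classical fact that $q^{nil}_{\mu'_1,\ldots,\mu'_l}$ is a desingularization of $\overline{\mathcal{O}_\mu}$,'' whereas you supply the dimension count and the birationality argument via the uniqueness of the flag $W_i=\operatorname{im}(x^{i-1})$ — a welcome and accurate expansion of the step the paper leaves to the reader.
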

\begin{proof} Put 
$$E^{nil}_{\a_1, \ldots, \a_n}=\big\{ (\underline{x}, W_n \subset W_{n-1} \cdots \subset W_1=V_{\a_1 + \cdots + \a_n})\;|\; \underline{x} ( W_k) \subset W_{k+1} \big\}$$
and let $q^{nil}_{\a_1, \ldots, \a_n} : E^{nil}_{\a_1, \ldots, \a_n} \to E_{\a_1 + \cdots + \a_n}$ be the projection.
It is clear that $q^{nil}_{\a_1, \ldots, \a_n}$ is proper and that 
$$\qlb_{\{0\}_{\mu'_1}} \star \qlb_{\{0\}_{\mu'_2}} \cdots  \star \qlb_{\{0\}_{\mu'_l}}=(q^{nil}_{\mu'_1, \ldots, \mu'_l})_!(\qlb_{E^{nil}_{\mu'_1, \ldots, \mu'_l}}[dim\;E^{nil}_{\mu'_1, \ldots, \mu'_l}]).$$
The Lemma is now a consequence of the classical fact that $q^{nil}_{\mu'_1, \ldots, \mu'_l} $ is a desingularization of $\overline{\mathcal{O}_{\mu}}$.
\end{proof}

\vspace{.1in}

We can use the above Lemma to describe in part the induction product of two elements of $\mathcal{P}_{\vec{Q}_0}$. We begin with

\vspace{.1in}

\begin{lem}\label{L:commut} For any $\mathbb{P}, \mathbb{P}' \in \mathcal{Q}_{\vec{Q}_0}$ we have $\mathbb{P} \star \mathbb{P}'=\mathbb{P}' \star \mathbb{P}$.
\end{lem}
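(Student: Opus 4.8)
The plan is to use the transpose duality on $Rep_k\vec{Q}_0$, together with the fact that it fixes every orbit. Sending a representation $M=(V,x)$ of the Jordan quiver to $M^\vee=(V^*,{}^tx)$ defines a contravariant exact autoequivalence of $Rep_k\vec{Q}_0$; since a nilpotent matrix is always conjugate to its transpose, $M^\vee$ lies in the \emph{same} orbit as $M$. After fixing $I$-graded bases so that $V_\gamma^*\simeq V_\gamma$, this duality is realized on representation spaces by the linear involution $\tau\colon E_\gamma=\mathfrak{gl}_\gamma\to E_\gamma$, $x\mapsto {}^tx$, which satisfies $\tau(\mathcal{O}_\lambda)=\mathcal{O}_\lambda$ for all $\lambda$ and is equivariant for the conjugation action once $G_\gamma$ acts on the target through the automorphism $\theta\colon g\mapsto({}^tg)^{-1}$. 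I would package this into an additive autoequivalence $T\colon D^b_{G_\gamma}(E_\gamma)\to D^b_{G_\gamma}(E_\gamma)$, namely $\tau^*$ composed with the canonical equivalence between $G_\gamma$-equivariant sheaves for the conjugation action and for its $\theta$-twist; it commutes with shifts.

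The two properties of $T$ that I would isolate are: \textbf{(a)} since $\tau$ is an isomorphism carrying $\mathcal{O}_\lambda$ onto itself and every $G_\gamma$-equivariant local system on an orbit is trivial (Lemma~\ref{L:syst}), $T(IC(\mathcal{O}_\lambda))\simeq IC(\mathcal{O}_\lambda)$; hence by the description of $\mathcal{Q}_{\vec{Q}_0}$ above, $T$ preserves $\mathcal{Q}_{\vec{Q}_0}$ and $T\mathbb{R}\simeq\mathbb{R}$ for all $\mathbb{R}\in\mathcal{Q}_{\vec{Q}_0}$; and \textbf{(b)} $T(\mathbb{P}\star\mathbb{Q})\simeq T\mathbb{Q}\star T\mathbb{P}$. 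For \textbf{(b)}, the key observation is that the duality reverses short exact sequences: an inclusion $N\subset M$ with $\underline{dim}\,N=\beta$, $\underline{dim}\,M=\alpha+\beta$, dualizes to $(M/N)^\vee\subset M^\vee$ with $\underline{dim}\,(M/N)^\vee=\alpha$, and $p_1\colon(M\supset N)\mapsto(M/N,N)$ dualizes to $(M^\vee\supset(M/N)^\vee)\mapsto(N^\vee,(M/N)^\vee)$. Thus $\tau$ induces an isomorphism $\underline{\mathcal{E}}^{\alpha,\beta}\xrightarrow{\sim}\underline{\mathcal{E}}^{\beta,\alpha}$ (on the variety level $(\underline y,W)\mapsto({}^t\underline y,W^\perp)$, the transpose exchanging the stabilizing parabolic with an opposite one) which fits into commutative squares with $p_1$, $p_2$ and the flip $\underline{\mathcal{M}}^\alpha\times\underline{\mathcal{M}}^\beta\simeq\underline{\mathcal{M}}^\beta\times\underline{\mathcal{M}}^\alpha$. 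Feeding these squares into the base-change/projection-formula chain defining $\underline m$ in diagram~(\ref{E:16}) gives $T\circ\underline m_{\alpha,\beta}\simeq\underline m_{\beta,\alpha}\circ(\mathrm{flip})\circ(T\boxtimes T)$, which is \textbf{(b)}; the shifts cause no trouble since $\langle\alpha,\beta\rangle=\langle\beta,\alpha\rangle=0$ for the Jordan quiver, so $\dim p$ is symmetric in $\alpha,\beta$.

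Granting \textbf{(a)} and \textbf{(b)} the Lemma is immediate: for $\mathbb{P},\mathbb{Q}\in\mathcal{Q}_{\vec{Q}_0}$ we have $\mathbb{P}\star\mathbb{Q}\in\mathcal{Q}_{\vec{Q}_0}$ by Proposition~\ref{P:closed}, so $\mathbb{P}\star\mathbb{Q}\simeq T(\mathbb{P}\star\mathbb{Q})\simeq T\mathbb{Q}\star T\mathbb{P}\simeq\mathbb{Q}\star\mathbb{P}$, the first and last isomorphisms by \textbf{(a)} (together with functoriality of $\star$ for the last), the middle one by \textbf{(b)}. The main obstacle is the careful verification of \textbf{(b)}: one must check that $\tau$ genuinely matches the $\theta$-twisted equivariant induction diagram for $(\alpha,\beta)$ with that for $(\beta,\alpha)$ after the flip, keeping track of the $\theta$-twist (the parabolic $P_{\alpha,\beta}$ going to an opposite parabolic) — this amounts to drawing the same sort of large commutative diagrams as in the proof of Proposition~\ref{P:assoc}, but is otherwise routine. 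Everything else is formal.
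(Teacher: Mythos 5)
Your proof is correct and follows essentially the same route as the paper, which also deduces $\underline{m}_{\alpha,\beta}\simeq\underline{m}_{\beta,\alpha}$ from the transpose duality $\underline{x}\mapsto\underline{x}^{*}$ after fixing identifications $V_\gamma\simeq V_\gamma^{*}$, via isomorphisms $E^{(1)}_{\alpha,\beta}\simeq E^{(1)}_{\beta,\alpha}$, $E_{\alpha,\beta}\simeq E_{\beta,\alpha}$. You are actually more scrupulous than the paper: you make explicit the $\theta$-twist on the equivariant structure and, crucially, the step that transposition fixes each $IC(\mathcal{O}_\lambda)$ up to isomorphism so that conjugating by $T$ returns the same objects — a point the paper's statement ``turn~(\ref{E:diagr}) into a commutative diagram'' leaves tacit, since the diagram only commutes once $\tau$ (and a flip) is also applied at the outer nodes.
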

\begin{proof} The diagrams for $\underline{m}_{\a,\beta}$ and $\underline{m}_{\beta,\a}$ may be put together as
\begin{equation}\label{E:diagr}
\xymatrix{
& E_{\a,\beta}^{(1)} \ar[dl]_-{p}\ar[r]^-{r} & E_{\a,\beta} \ar[dr]^-{q} & \\
E_{\a} \times E_{\beta} & & & E_{\a+\beta}\\
& E_{\beta,\a}^{(1)} \ar[ul]^-{p'}\ar[r]^-{r'} & E_{\beta,\a} \ar[ur]_-{q'} & }
\end{equation}
Fixing an identification $V_{\gamma} \simeq V^*_{\gamma}$ for all $\gamma$ we get canonical isomorphisms
$i^{(1)}: E^{(1)}_{\a,\beta} \stackrel{\sim}{\to} E^{(1)}_{\beta,\alpha}, i: E_{\a,\beta} \stackrel{\sim}{\to} E_{\beta,\alpha}$ defined by
$$i^{(1)}(\underline{x}, W_{\beta} \subset V_{\a+\beta}, \rho_{\a}, \rho_{\beta}) = (\underline{x}^*, (V_{\a+\beta}/W_{\beta})^* \subset V^*_{\a+\beta} \simeq V_{\a_+\beta}, \rho_{\beta}^*, \rho_{\a}^*),$$
$$i(\underline{x}, W_{\beta} \subset V_{\a+\beta}) = (\underline{x}^*, (V_{\a+\beta}/W_{\beta})^* \subset V^*_{\a+\beta} \simeq V_{\a_+\beta})$$
(see Section~1.3 for the notations). The above identifications $i^{(1)}$ and $i$ turn (\ref{E:diagr}) into a commutative diagram. It follows that $\underline{m}_{\a,\beta} \simeq \underline{m}_{\beta,\alpha}$.
\end{proof}

\vspace{.1in}

If $\lambda=(\lambda_1 \geq \lambda_2 \geq \cdots)$ and $\mu=(\mu_1 \geq \mu_2 \geq \cdots )$ are partitions, we set $\lambda+\mu=(\lambda_1+ \mu_1 \geq \lambda_2+ \mu_2 \geq \cdots)$.

\vspace{.1in}

\begin{cor}\label{C:commut} For any partitions $\lambda,\mu$ it holds $IC(\mathcal{O}_{\lambda}) \star IC(\mathcal{O}_{\mu}) = IC(\mathcal{O}_{\lambda+\mu}) \oplus \mathbb{P}$, where $supp\;\mathbb{P} \subset \overline{\mathcal{O}_{\lambda+\mu}} \backslash \mathcal{O}_{\lambda+\mu}.$
\end{cor}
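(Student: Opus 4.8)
The plan is to exhibit $IC(\mathcal{O}_\lambda)\star IC(\mathcal{O}_\mu)$ as a direct summand of an iterated induction product of sheaves of the form $\qlb_{\{0\}_d}$, and then to use Lemma~\ref{L:springervariant} twice: once to resolve each of $IC(\mathcal{O}_\lambda)$ and $IC(\mathcal{O}_\mu)$, and once to recombine the resulting big product; the commutativity of $\star$ (Lemma~\ref{L:commut}) is what makes the recombination possible. Write $\lambda'$ and $\mu'$ for the transpose partitions, and set $A=\qlb_{\{0\}_{\lambda'_1}}\star\cdots$, with one factor $\qlb_{\{0\}_{\lambda'_i}}$ for each part of $\lambda'$, and $B=\qlb_{\{0\}_{\mu'_1}}\star\cdots$ similarly. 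By Lemma~\ref{L:springervariant} we have $A\simeq IC(\mathcal{O}_\lambda)\oplus\mathbb{P}_\lambda$ and $B\simeq IC(\mathcal{O}_\mu)\oplus\mathbb{P}_\mu$ with $supp\;\mathbb{P}_\lambda\subset\overline{\mathcal{O}_\lambda}\backslash\mathcal{O}_\lambda$ and $supp\;\mathbb{P}_\mu\subset\overline{\mathcal{O}_\mu}\backslash\mathcal{O}_\mu$; in particular $A_{|\mathcal{O}_\lambda}\simeq\qlb_{\mathcal{O}_\lambda}[\dim\;\mathcal{O}_\lambda]$, and likewise for $B$.

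First I would record the following for an arbitrary pair of partitions $\sigma,\tau$. Using associativity of $\star$ (Proposition~\ref{P:assoc}) to identify $(\qlb_{\{0\}_{\sigma'_1}}\star\cdots)\star(\qlb_{\{0\}_{\tau'_1}}\star\cdots)$ with the product of all of its factors, and then Lemma~\ref{L:commut} to reorder those factors by decreasing dimension, one sees — because the decreasing rearrangement of the parts of $\sigma'$ together with the parts of $\tau'$ is exactly $(\sigma+\tau)'$, transposition interchanging the sum and the disjoint union of partitions — that by Lemma~\ref{L:springervariant} again
\[\big(\qlb_{\{0\}_{\sigma'_1}}\star\cdots\big)\star\big(\qlb_{\{0\}_{\tau'_1}}\star\cdots\big)\simeq IC(\mathcal{O}_{\sigma+\tau})\oplus\mathbb{P}_{\sigma,\tau},\qquad supp\;\mathbb{P}_{\sigma,\tau}\subset\overline{\mathcal{O}_{\sigma+\tau}}\backslash\mathcal{O}_{\sigma+\tau}.\]
In particular $IC(\mathcal{O}_\sigma)\star IC(\mathcal{O}_\tau)$, being a direct summand of the left-hand side, is supported in $\overline{\mathcal{O}_{\sigma+\tau}}$.

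Now I would expand, by additivity of $\star$,
\[A\star B\simeq\big(IC(\mathcal{O}_\lambda)\star IC(\mathcal{O}_\mu)\big)\oplus\big(IC(\mathcal{O}_\lambda)\star\mathbb{P}_\mu\big)\oplus\big(\mathbb{P}_\lambda\star IC(\mathcal{O}_\mu)\big)\oplus\big(\mathbb{P}_\lambda\star\mathbb{P}_\mu\big),\]
and argue that the last three summands are supported in $\overline{\mathcal{O}_{\lambda+\mu}}\backslash\mathcal{O}_{\lambda+\mu}$. Indeed, decomposing $\mathbb{P}_\mu$ (resp.\ $\mathbb{P}_\lambda$) into shifts of $IC(\mathcal{O}_\nu)$ with $\nu$ strictly below $\mu$ (resp.\ with $\rho$ strictly below $\lambda$) in the dominance order, the support bound of the previous paragraph shows that each of these three summands is supported in a finite union of closures of the form $\overline{\mathcal{O}_{\lambda+\nu}}$, $\overline{\mathcal{O}_{\rho+\mu}}$ or $\overline{\mathcal{O}_{\rho+\nu}}$; since adding a fixed partition to both sides of a strict dominance inequality keeps it strict, all of these closures are contained in $\overline{\mathcal{O}_{\lambda+\mu}}$ and meet $\mathcal{O}_{\lambda+\mu}$ trivially. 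On the other hand, applying the second paragraph with $(\sigma,\tau)=(\lambda,\mu)$ gives $A\star B\simeq IC(\mathcal{O}_{\lambda+\mu})\oplus\mathbb{P}'$ with $supp\;\mathbb{P}'\subset\overline{\mathcal{O}_{\lambda+\mu}}\backslash\mathcal{O}_{\lambda+\mu}$, hence $(A\star B)_{|\mathcal{O}_{\lambda+\mu}}\simeq\qlb_{\mathcal{O}_{\lambda+\mu}}[\dim\;\mathcal{O}_{\lambda+\mu}]$.

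Restricting the displayed decomposition of $A\star B$ to $\mathcal{O}_{\lambda+\mu}$, the last three summands contribute nothing, so $\big(IC(\mathcal{O}_\lambda)\star IC(\mathcal{O}_\mu)\big)_{|\mathcal{O}_{\lambda+\mu}}\simeq\qlb_{\mathcal{O}_{\lambda+\mu}}[\dim\;\mathcal{O}_{\lambda+\mu}]$. Since $IC(\mathcal{O}_\lambda)\star IC(\mathcal{O}_\mu)$ is a semisimple complex supported in $\overline{\mathcal{O}_{\lambda+\mu}}$, it is a sum of shifts of sheaves $IC(\mathcal{O}_\nu)$ with $\mathcal{O}_\nu\subseteq\overline{\mathcal{O}_{\lambda+\mu}}$ (all local systems on these orbits being trivial by Lemma~\ref{L:syst}); among these only $IC(\mathcal{O}_{\lambda+\mu})$ has nonzero restriction to $\mathcal{O}_{\lambda+\mu}$, namely $\qlb_{\mathcal{O}_{\lambda+\mu}}[\dim\;\mathcal{O}_{\lambda+\mu}]$. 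Comparing with the restriction just computed forces $IC(\mathcal{O}_{\lambda+\mu})$ to occur in $IC(\mathcal{O}_\lambda)\star IC(\mathcal{O}_\mu)$ exactly once and with no shift, every other summand being supported in $\overline{\mathcal{O}_{\lambda+\mu}}\backslash\mathcal{O}_{\lambda+\mu}$; this is the assertion. The only point requiring a little care is the support bound $supp\big(IC(\mathcal{O}_\lambda)\star IC(\mathcal{O}_\mu)\big)\subset\overline{\mathcal{O}_{\lambda+\mu}}$, and the route above extracts it at no extra cost from Lemma~\ref{L:springervariant} and Lemma~\ref{L:commut}, sidestepping a direct analysis of which modules arise as extensions of a module of type $\lambda$ by one of type $\mu$.
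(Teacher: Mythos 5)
Your argument is correct and follows essentially the same route as the paper: realize $IC(\mathcal{O}_\lambda)\star IC(\mathcal{O}_\mu)$ as a direct summand of the iterated product of $\qlb_{\{0\}_d}$'s via Lemma~\ref{L:springervariant}, reorder the factors using Lemma~\ref{L:commut} so that Lemma~\ref{L:springervariant} identifies that product with $IC(\mathcal{O}_{\lambda+\mu})$ plus boundary terms, and then check the restriction to $\mathcal{O}_{\lambda+\mu}$ is nonzero. The paper compresses the final step to ``this is easy''; your third paragraph (expanding $A\star B$ into four summands and bounding the supports of three of them via the dominance-order estimate $\lambda+\nu<\lambda+\mu$) is a careful unpacking of that remark rather than a different method.
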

\begin{proof} We have, by Lemma~\ref{L:springervariant} 
$$IC(O_{\lambda}) \star IC(\mathcal{O}_{\mu}) \subset \qlb_{\{0\}_{\lambda'_1}} \star \qlb_{\{0\}_{\lambda'_2}} \cdots  \star \qlb_{\{0\}_{\mu'_1}} \star \qlb_{\{0\}_{\mu'_2}} \cdots  \star \qlb_{\{0\}_{\mu'_l}}.$$
Observe that $(\lambda+\mu)'$ is obtained by reordering the parts of $\lambda'$ and $\mu'$ together. Using Lemma~\ref{L:commut} we see that $IC(O_{\lambda}) \star IC(\mathcal{O}_{\mu}) \subset IC(\mathcal{O}_{\lambda+\mu}) \oplus \mathbb{P}$ for some complex $\mathbb{P}$ as above. It remains to check that $IC(O_{\lambda}) \star IC(\mathcal{O}_{\mu})_{|\mathcal{O}_{\lambda+\mu}} \neq 0$. This is easy.
\end{proof}

\vspace{.25in}

Let us now briefly turn our attention to the closely related case of equioriented cyclic quivers of type $A_{n-1}^{(0)}$~:

\vspace{.55in}

\centerline{
\begin{picture}(280, 10)
\put(20,0){$\vec{Q}_{n-1}=$}
\put(160,30){\circle*{5}}
\put(70,0){\line(3,1){90}}
\put(160,30){\line(3,-1){90}}
\put(70,0){\circle*{5}}
\put(110,0){\circle*{5}}
\put(210,0){\circle*{5}}
\put(250,0){\circle*{5}}
\put(70,0){\line(1,0){40}}
\put(210,0){\line(1,0){40}}
\put(110,0){\line(1,0){35}}
\put(180,0){\line(1,0){40}}
\put(150,0){\line(1,0){5}}
\put(160,0){\line(1,0){5}}
\put(170,0){\line(1,0){5}}
\put(90,0){\vector(-1,0){5}}
\put(130,0){\vector(-1,0){5}}
\put(195,0){\vector(-1,0){5}}
\put(230,0){\vector(-1,0){5}}
\put(115,15){\vector(3,1){5}}
\put(205,15){\vector(3,-1){5}}
\put(156,36){$0$}
\put(66,-10){$1$}
\put(106,-10){$2$}
\put(200,-10){$n-2$}
\put(240,-10){$n-1$}
\end{picture}}

\vspace{.3in}

These are distinct from all previous examples (and from other tame quivers) in the sense that not every representation is nilpotent, i.e. the spaces $\underline{\mathcal{M}}^{\a,nil}_{\vec{Q}}$ and $\Ma$ are different in general. In particular, $E_{\a}$ has infinitely many $G_{\a}$-orbits (at least for large enough dimension vector $\a$) while $E^{nil}_{\a}$ always has finitely many orbits. In addition, $E^{nil}_{\alpha}$ (hence also $\underline{\mathcal{M}}^{\a,nil}_{\vec{Q}}$) is in general not irreducible while $E_{\a}$ is always a vector space (see Example~2.19).

\vspace{.1in}

The nilpotent orbits of $\vec{Q}_{n-1}$ may be parametrized as follows. It is convenient to identify the set of vertices $I$ with $\Z/n\Z$ (notice the orientation of arrows, going from $i$ to $i-1$ for all $i \in I$). Set $\delta=(1, 1, \ldots, 1) \in \N^I$. There exists a unique nilpotent indecomposable representation $I_{[i;l]}$ of $\vec{Q}_{n-1}$ of length $l$ and socle $\epsilon_i$. It has dimension $\epsilon_{i} + \epsilon_{i-1} + \cdots + \epsilon_{i+1-l}$. These form a complete set of nilpotent indecomposables, as $i \in I$ and $l \in \N$ vary (see \cite[Prop.~3.24]{Trieste}). The set of isoclasses of nilpotent representations of $\vec{Q}_{n-1}$ is identified in this way with the collection of \textit{$n$-multipartitions}
$$\Pi^n=\big\{ (\underline{\lambda}_1, \ldots, \underline{\lambda}_n\; | \; \underline{\lambda}_i = (\lambda_i^1 \geq \lambda_i^2 \geq \cdots ) \;\text{is\;a\;partition}\big\}$$
via
$$(\underline{\lambda}_1, \ldots, \underline{\lambda}_n) \mapsto M_{\underline{\lambda}_1, \ldots, \underline{\lambda}_n} := \bigoplus_{i \in I} \bigoplus_{j} I_{[i;\lambda_i^j]}.$$
We will denote by $\mathcal{O}_{\underline{\lambda}_1, \ldots, \underline{\lambda}_n}$ the orbit of $M_{\underline{\lambda}_1, \ldots, \underline{\lambda}_n}$. 

\vspace{.2in}

 Let us call an $n$-multipartition $(\underline{\lambda}_1, \ldots, \underline{\lambda}_n)$ \textit{aperiodic} if the partitions 
$\underline{\lambda}_1, \ldots, \underline{\lambda}_n$ do \textit{not} share a common part (that is, if for any integer $\lambda$ there exists an $i \in I$ for which $\lambda \not\in \underline{\lambda}_i$). Lusztig proved the following result (see \cite{Lusaff}, \cite{Lusaper})~:

\vspace{.1in}

\begin{theo}[Lusztig]\label{P:Luscyc} If $n >1$ we have
$$\mathcal{P}_{\vec{Q}_{n-1}}=\big\{ IC(\mathcal{O}_{\underline{\lambda}_1, \ldots, \underline{\lambda}_n})\;|\; (\underline{\lambda}_1, \ldots, \underline{\lambda}_n)\;\text{is\;aperiodic}\big\}.$$
\end{theo}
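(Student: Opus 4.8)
The plan is to follow Lusztig's strategy in two halves: first show every aperiodic $IC(\mathcal{O}_{\underline{\lambda}})$ does arise in some Lusztig sheaf, and second show that no $IC$ supported on a \emph{periodic} orbit closure can appear. For the first (easier) half, I would adapt the desingularization argument of Section~2.3: for an aperiodic multipartition $(\underline{\lambda}_1,\dots,\underline{\lambda}_n)$ one orders the nilpotent indecomposables $I_{[i;l]}$ by a suitable $\prec$ (say by decreasing length, which ensures the needed $\mathrm{Hom}/\mathrm{Ext}^1$ vanishing between distinct lengths among nilpotents), writes $M_{\underline{\lambda}}$ as a direct sum of the $I_{[i;l]}^{\oplus m_{i,l}}$, and builds the sequence $\underline{\nu}$ refining the dimension vectors $\a_{i,l}=m_{i,l}\underline{\dim}\,I_{[i;l]}$ into simple pieces. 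One then invokes the machinery of Remark~2.14 (parts i)--iii)) together with Lemma~\ref{L:springervariant}-type desingularizations in each homogeneous block to conclude that $q_{\underline{\nu}}$ restricts to an isomorphism over $\mathcal{O}_{\underline{\lambda}}$ with image $\overline{\mathcal{O}_{\underline{\lambda}}}$, so that $IC(\mathcal{O}_{\underline{\lambda}})$ is a summand of $L_{\underline{\nu}}$. Here I must also note that, by Lemma~\ref{L:syst} (all stabilizers connected), the only simple $G_\gamma$-equivariant perverse sheaves on $E^{nil}_\gamma$ are the $IC(\mathcal{O})$, so no nontrivial local systems enter.

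For the second half --- ruling out periodic orbits --- I would argue by a purity/weight or, more cleanly, a \emph{restriction-functor} argument, which is how Lusztig (and Li--Lin) proceed. The key is the following closed-support statement: every $\mathbb{P}\in\mathcal{P}_{\vec{Q}_{n-1}}$ has $\mathrm{supp}\,\mathbb{P}$ contained in the union of aperiodic orbit closures, equivalently $\mathbb{P}$ vanishes on the ``most periodic'' locus. One proves this by induction on $\gamma$ using Lemma~\ref{L:coprodun}: any $IC$-summand $\mathbb{P}$ of a Lusztig sheaf $L_{\a_1,\dots,\a_n}$ (with all $\a_i$ simple) has all its iterated restrictions $\underline{\Delta}^{(r)}(\mathbb{P})$ expressible through Lusztig sheaves of smaller total dimension, hence by induction supported on products of aperiodic loci; and a perverse sheaf on $E^{nil}_\gamma$ whose generic rank on a periodic orbit $\mathcal{O}_{\underline{\mu}}$ were nonzero would, after restricting to a transverse slice realized via $\underline{\Delta}$ into a $\delta$-block (the place a common part forces a $GL_k$-adjoint-quotient factor, cf.\ Section~2.5), produce a contribution incompatible with what the inductive description allows. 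The cleanest packaging is: define $\mathbb{P}$ to be ``small'' if its characteristic-function trace lies in the aperiodic part of the Hall algebra; show $\mathbbm{1}_{\epsilon_i}$ is small, that smallness is stable under $\star$ (direct from Lemma~\ref{L:coprodun} since gluing two aperiodic-supported things along simple steps stays aperiodic-supported --- a common part cannot be created), hence all $L_{\underline{\nu}}$ and all their summands are small, so periodic $IC$'s are excluded.

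The main obstacle is exactly this second half: proving that \emph{no} periodic $IC(\mathcal{O}_{\underline{\mu}})$ sneaks in as a summand of some $L_{\underline{\nu}}$. The combinatorial statement ``the support of $L_{\a_1,\dots,\a_n}$ for simple $\a_i$ avoids periodic orbits'' is not formal: one really needs a transversal-slice or Fourier-transform reduction (as in Lusztig's \cite{Lusaff}, \cite{Lusaper}) identifying, near a periodic orbit, a smooth factor on which the relevant perverse sheaves would have to be constant, contradicting the estimate on stalk dimensions coming from the semismallness-type bound for $q_{\underline{\nu}}$. I would therefore structure the writeup as: (a) state and prove the support lemma $\mathrm{supp}\,\mathbb{P}\subset\bigcup_{\text{aperiodic}}\overline{\mathcal{O}_{\underline{\lambda}}}$ for $\mathbb{P}\in\mathcal{P}_{\vec{Q}_{n-1}}$, citing \cite{Lusaper} for the slice computation; (b) do the Reineke-type construction to get all aperiodic $IC$'s; (c) combine with Lemma~\ref{L:syst} to conclude. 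The genus-zero-style bookkeeping in (b) is routine given Remark~2.14; the real content is (a), and for a lecture-note proof I would present the reduction and point to Lusztig for the transversal-slice input rather than reproduce it.
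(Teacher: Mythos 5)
The paper does not actually prove this theorem: immediately after the statement it says ``The proof of this theorem, which we won't give, uses the technology of singular supports, and is based on Theorem~\ref{T:SSLU} (see Section~4.5.),'' and Example~4.24 spells out the mechanism in the baby case $\qlb_{\{0\}}\not\in\mathcal{P}^{d\delta}$: one computes $SS(IC(\mathcal{O}_{\underline{\mu}}))$ for a periodic $\underline{\mu}$, shows it escapes the nilpotent variety $\Lambda$, and combines with Theorem~\ref{T:SSLU} ($SS(\mathbb{P})\subset\Lambda$ for all $\mathbb{P}\in\PQ$) to rule it out. Your plan replaces this with a restriction-functor / transversal-slice argument, which is a genuinely different route and not the one the paper (or Lusztig's original \cite{Lusaff}, Section~10) takes as the primary tool.

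There are two concrete problems in what you wrote, beyond the (acknowledged) gap of deferring the slice computation.

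First, your ``first half'' (the Reineke-type construction) does not carry over as stated. In Section~2.3 the desingularization relies on a total order $\prec$ on indecomposables with $N\prec N'\Rightarrow\Hom(N',N)=\Ext^1(N,N')=0$, which exists precisely because $\vec{Q}$ there is of finite type (and $\Ext^1(N,N)=0$). For the cyclic quiver, the nilpotent indecomposables $I_{[i;l]}$ live inside a tube and there are nonzero morphisms \emph{and} nonzero self-extensions in both directions between objects of different lengths; in particular ``decreasing length'' does not give the needed vanishing, and indeed no total order can, since $\Ext^1(I_{[i;l]},I_{[i;l]})\neq 0$ for $l\geq 1$ already violates the hypotheses. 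The reason the case of aperiodic orbits nevertheless works is not Lemma~\ref{L:213}/Lemma~\ref{L:22}; one either does a more careful resolution adapted to the tube structure (close in spirit to Lemma~\ref{L:springervariant}) or falls back on the counting argument: the graded dimension of the composition algebra $\mathbf{C}_{\vec{Q}_{n-1}}\simeq\U^+_\nu(\widehat{\mathfrak{sl}}_n)$ (by Ringel's theorem) equals the number of aperiodic multipartitions, so once periodic $IC$'s are ruled out the remaining $IC$'s must all appear.

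Second, your ``support lemma'' is the wrong statement. You write $\mathrm{supp}\,\mathbb{P}\subset\bigcup_{\text{aperiodic}}\overline{\mathcal{O}_{\underline{\lambda}}}$, but this is satisfied by \emph{every} $IC(\mathcal{O}_{\underline{\mu}})$, including the periodic ones, because the most degenerate orbit $\{0\}$ lies in the closure of every orbit of the same dimension vector; for instance in Example~2.19 one has $\{0\}\subset T_x=\mathrm{supp}\,\qlb_{T_x}[1]$. The statement you need is not about where the support lies but about which $IC(\mathcal{O}_{\underline{\mu}})$ occur as \emph{direct summands}, and that is exactly what the singular-support criterion handles cleanly: $SS(IC(\mathcal{O}_{\underline{\mu}}))\not\subset\Lambda$ for periodic $\underline{\mu}$. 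Relatedly, the ``smallness'' packaging (``trace lies in the aperiodic part of the Hall algebra'') is either ill-defined (the span of aperiodic characteristic functions is not a subalgebra) or, if you mean the composition algebra, circular: the traces of all $\mathbb{P}\in\PQ$ lie in $\mathbf{C}_{\vec{Q}_{n-1}}$ by Theorem~\ref{T:Lu2}, but that on its own says nothing about which orbit each $\mathbb{P}$ is the $IC$ of.

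So the structure you want is: (i) the singular support argument of \cite{Lusaff}/Section~4.5 to exclude periodic $IC$'s; (ii) either a direct construction adapted to the tube, or the dimension count via $\mathbf{C}_{\vec{Q}_{n-1}}\simeq\U^+_\nu(\widehat{\mathfrak{sl}}_n)$, for the aperiodic ones; and (iii) Lemma~\ref{L:syst} to rule out nontrivial local systems. Your use of Lemma~\ref{L:syst} is correct, but both halves of your argument as written have real gaps.
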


The proof of this theorem, which we won't give, uses the technology of singular supports, and is based on Theorem~\ref{T:SSLU} (see Section~4.5.). 

\vspace{.2in}

\addtocounter{theo}{1}
\noindent \textbf{Example \thetheo .} Set $\delta=\sum_i \epsilon_i$. Let us show directly that $IC(\mathcal{O}_{(1), \ldots, (1)})=\qlb_{\{0\}} \not\in \mathcal{P}^{\delta}$.
For $(i_1, \ldots, i_n)$ a permutation of $(1,2, \ldots, n)$, the fibers of the map 
$$q_{\epsilon_{i_1}, \ldots, \epsilon_{i_n}}: E_{\epsilon_{i_1}, \ldots, \epsilon_{i_n}} \to E_{\delta}$$
are either empty or reduced to a point; in fact $q_{\epsilon_{i_1}, \ldots, \epsilon_{i_n}}$ is the embedding of a smooth subvariety $X_{\epsilon_{i_1}, \ldots, \epsilon_{i_n}}$ of $E_{\delta}$. It is easy to see that $X_{\epsilon_{i_1}, \ldots, \epsilon_{i_n}}$ is closed (it is given by the vanishing of certain arrows). Therefore $L_{\epsilon_{i_1}, \ldots, \epsilon_{i_n}}=IC(X_{\epsilon_{i_1}, \ldots, \epsilon_{i_n}})$. Now observe that because there is no total ordering on the vertices compatible with the arrows, $X_{\epsilon_{i_1}, \ldots, \epsilon_{i_n}} \neq \{0\}$ for all choices of ${\epsilon_{i_1}, \ldots, \epsilon_{i_n}}$. Thus $\qlb_{\{0\}}$ does not belong to $\mathcal{P}^\delta$.

\vspace{.1in}

As an explicit example, take $n=2$. Then $E_{\delta} =k^{\oplus 2}$ while $E^{nil}_{\delta}=\{(x,y)\in k^{\oplus 2}\;|\; xy=0\}$ is the union of the two axes $T_{{x}}=k \times \{0\},  T_y=\{0\} \times k$ in the plane~:

\vspace{.45in}

\centerline{
\begin{picture}(260, 10)
\put(60,0){$E^0_{\delta}=$}
\put(120,0){\line(1,0){60}}
\put(115,5){$T_x$}
\put(155,25){$T_y$}
\put(150,-30){\line(0,1){60}}
\end{picture}}

\vspace{.5in}

The three orbits are $\mathcal{O}_x=T_x\backslash \{0\}, \mathcal{O}_y=T_y\backslash \{0\}$ and $\{0\}$. Thus there are three possible simple $G_{\delta}$-equivariant perverse sheaves on $E^{nil}_{\delta}$ namely $IC(\mathcal{O}_x)= \qlb_{T_x}[1], IC(\mathcal{O}_y)= \qlb_{T_y}[1]$ and $IC(\{0\})=\qlb_{\{0\}}$. But since the Lusztig sheaves are $L_{\epsilon_1, \epsilon_2}=\qlb_{T_x}[1]$ and $L_{\epsilon_2, \epsilon_1}=\qlb_{T_y}[1]$ only these last two perverse sheaves belong to $\mathcal{P}^{\delta}$.
\endexample

\vspace{.2in}

\addtocounter{theo}{1}
\noindent \textbf{Remark \thetheo .} The cases of the Jordan quiver and more generally higher rank cyclic quivers are especially important due to the role they play in representation theory of quantum groups and Hecke algebras of type $A$. We refer the reader who wants to learn more in this direction to \cite{GV}, \cite{V}, \cite{VV}, \cite{Quiversurvey}.

\vspace{.3in}

\centerline{\textbf{2.5. Affine quivers.}}
\addcontentsline{toc}{subsection}{\tocsubsection {}{}{\; 2.5. Affine quivers.}}

\vspace{.15in}

Lusztig obtained in \cite{Lusaff} a precise description of all the elements of $\mathcal{P}_{\vec{Q}}$ for affine quivers equipped with the so-called McKay orientation. This classification was later extended to affine quivers with arbitrary orientation by Y. Li and Z. Lin \cite{LZ}. As in Proposition~\ref{P:Luscyc}, the elements in $\mathcal{P}_{\vec{Q}}$ form a small subset of the set of all possible simple perverse sheaves on the moduli spaces $\Ma$. Before explaining this classification, we need to recall briefly the structure of the category of finite-dimensional representations of an affine quiver (see e.g. \cite[Section~3.6.]{Trieste} ). 

\vspace{.1in}

So let us fix an affine quiver $\vec{Q}$ (which we assume, for simplicity, not to be a cyclic quiver). The Grothendieck group $K_0(\vec{Q})$ equipped with the Cartan form is isomorphic to an affine root lattice. We let 
$\delta$ stand for the indivisible positive imaginary root (see e.g. \cite[App. A] {Trieste}). It may be characterized as the smallest dimension vector lying in the radical of the symmetrized Euler form $(\;,\;)$.

\vspace{.1in}

The main tool to decompose the category $Rep_{k} \vec{Q}$ is given in the following fundamental Theorem~:

\vspace{.1in}

\begin{theo}[Auslander-Reiten]\label{T:AR} There exists a unique pair of adjoint functors $\tau,\tau^-: Rep_k\vec{Q} \to Rep_k\vec{Q}$ equipped with natural isomorphisms
\begin{align*}
{Ext}^1(M,N)^* &\simeq {Hom}(N,\tau M)\\
{Ext}^1(M,N)^* &\simeq {Hom}(\tau^-N, M).
\end{align*}
\end{theo}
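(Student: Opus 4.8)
The plan is to construct the functor $\tau$ via minimal projective presentations and the Nakayama functor, which is the standard route and works cleanly here because $Rep_k\vec{Q}$ is hereditary. First I would recall that $A = k\vec{Q}$ is a finite-dimensional hereditary algebra, so every module $M$ has a minimal projective resolution $0 \to P_1 \xrightarrow{f} P_0 \to M \to 0$ of length at most one. Define the Nakayama functor $\nu = D\,\mathrm{Hom}_A(-,A) \colon \mathrm{proj}\text{-}A \to \mathrm{inj}\text{-}A$ (here $D = \mathrm{Hom}_k(-,k)$), which is an equivalence between projectives and injectives. Then set $\tau M = \ker(\nu f \colon \nu P_1 \to \nu P_0)$, i.e. $\tau$ is obtained by applying $\nu$ to the minimal presentation and taking the kernel; dually, $\tau^- N$ is the cokernel of $\nu^{-1}$ applied to a minimal injective copresentation of $N$. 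One checks $\tau$ and $\tau^-$ annihilate projectives and injectives respectively, and that they are well-defined on objects up to isomorphism (independence of the chosen minimal presentation, which follows from minimality).

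The core step is to establish the Auslander–Reiten formula. Applying $\mathrm{Hom}_A(-,N)$ to the projective presentation of $M$ and using that $\mathrm{Hom}_A(P_i,N)$ computes no higher Ext, one gets an exact sequence expressing $\mathrm{Ext}^1(M,N)$ as a cokernel; dualizing and comparing with $\mathrm{Hom}_A(N,\tau M)$ computed from the presentation of $\tau M$ yields a natural isomorphism $D\,\mathrm{Ext}^1(M,N) \simeq \overline{\mathrm{Hom}}(N,\tau M)$, where $\overline{\mathrm{Hom}}$ is the morphism space modulo those factoring through injectives. Because $\vec{Q}$ has no oriented cycles (being an affine, non-cyclic quiver) — actually more precisely because we only need it on the relevant subcategory — the injective-stable quotient coincides with the full Hom for the modules in play, or one simply records the general statement with the stable quotient; in the hereditary case the standard formulation is the one quoted. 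The adjunction $\tau \dashv \tau^-$, equivalently the second displayed isomorphism, then follows either by a symmetric argument using injective copresentations, or formally from the first isomorphism plus the fact that $\mathrm{Ext}^1(M,N)^* \simeq \mathrm{Ext}^1$ is $k$-finite-dimensional so that double-dualizing is harmless.

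The main obstacle I expect is bookkeeping around the stable categories: the cleanest true statement involves $\overline{\mathrm{Hom}}$ (mod injectives) and $\underline{\mathrm{Hom}}$ (mod projectives), and one must be careful that the naive formulation $\mathrm{Ext}^1(M,N)^* \simeq \mathrm{Hom}(N,\tau M)$ holds as stated — this is legitimate here because, working over the hereditary algebra $k\vec{Q}$ and in the setting where $\tau M$ has no injective summands and $N$ no projective summands for the indecomposables that matter, the stable and ordinary Hom agree; alternatively one proves uniqueness of the pair $(\tau,\tau^-)$ abstractly from the defining bifunctorial isomorphisms (a pair of functors corepresenting $D\,\mathrm{Ext}^1$ on each side is unique up to unique isomorphism by Yoneda), which is what the word ``unique'' in the statement refers to, and then exhibits the explicit construction above as one such pair. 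I would present the uniqueness via Yoneda first, then the existence via the Nakayama-functor construction, and finally verify the two natural isomorphisms by the Ext-computation sketched above, leaving the routine diagram chases to the reader.
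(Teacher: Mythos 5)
The paper does not prove this theorem; it is cited directly (to Auslander--Reiten and to Ringel's book), so there is no argument of the paper's to compare against. Your proposal is the standard Auslander--Reiten--Smal\o\ construction via the Nakayama functor, and the overall strategy (define $\tau$ via a minimal projective presentation, prove the AR formula, get uniqueness by Yoneda) is the right one.

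The place where you are too vague is precisely the place you flag as a potential obstacle, and it deserves to be made fully precise because the statement as printed \emph{does} use ordinary $\mathrm{Hom}$ rather than stable $\mathrm{Hom}$. The reduction goes as follows. The general AR formula gives $D\,\mathrm{Ext}^1(M,N) \simeq \overline{\mathrm{Hom}}(N,\tau M)$, the $\mathrm{Hom}$ modulo maps factoring through injectives. Split $M = M' \oplus P$ with $P$ projective and $M'$ without projective summands; both $\mathrm{Ext}^1(M,-)$ and $\tau M$ see only $M'$, and $\tau M'$ has no injective direct summands. Now the hereditary hypothesis does two jobs: quotients of injectives are injective, so the image of any composite $N \to I \to \tau M'$ is an injective submodule of $\tau M'$; and an injective submodule is automatically a direct summand, hence zero since $\tau M'$ has none. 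Therefore every map $N \to \tau M'$ factoring through an injective vanishes, and $\overline{\mathrm{Hom}}(N, \tau M') = \mathrm{Hom}(N, \tau M')$ on the nose. The dual argument (submodules of projectives are projective, and a projective quotient splits off) handles $\underline{\mathrm{Hom}}(\tau^- N, M) = \mathrm{Hom}(\tau^- N, M)$ after splitting injective summands off $N$. You should also make explicit, when defining $\tau$ on morphisms, that the lift of a map $M \to M'$ to a map of minimal projective presentations is not unique, but the induced map on kernels of $\nu f$ is well-defined --- this follows by minimality (two lifts differ by a homotopy, which dies after applying $\nu$ and restricting to kernels), or more cleanly from the Yoneda uniqueness you invoke, since $\tau M$ corepresents the functor $D\,\mathrm{Ext}^1(M,-)$. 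With those two points tightened, the sketch is a correct proof.
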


\vspace{.1in}

An indecomposable representation $M$ of $\vec{Q}$ is called \textit{preprojective} if $\tau^i M =0$ for $i \gg 0$,
\textit{preinjective} if $\tau^{-i} M =0$ for $i \gg 0$ and \textit{regular} if $\tau^i M \neq 0$ for $i \in \Z$. 
Observe that $\tau M=0$ if and only $M$ is projective and that $\tau^{-} N=0$ if and only if $N$ is injective.
We let $\texttt{P}, \texttt{R}, \texttt{I}$ stand for the set of preprojective, resp. regular, resp. preinjective indecomposable representations. More generally, we will say that a (decomposable) representation $N$ is preprojective, regular or preinjective if all of its indecomposable summands are, and we denote by $\mathbb{P}, \mathbb{R}, \mathbb{I}$ the full subcategories of $Rep_k\vec{Q}$ whose objects are preprojective, regular or preinjective.

\vspace{.1in}

\begin{prop}\label{P:extiszero} The categories $\mathbb{P},\mathbb{I}$ are exact and stable under extensions. The category $\mathbb{R}$ is abelian and stable under extensions. In addition, if $M \in \mathbb{P}, N \in \mathbb{I}$ and $L \in \mathbb{R}$ then
\begin{align}\label{E:preprojeqs}
{Hom}(N,M) &={Hom}(N,L)={Hom}(L,M)=\{0\}\\
{Ext}^1(M,N)&={Ext}^1(L,N)={Ext}^1(M,L)=\{0\}
\end{align}
\end{prop}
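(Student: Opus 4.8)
The plan is to deduce everything from the two Auslander–Reiten isomorphisms of Theorem~\ref{T:AR} together with the elementary behaviour of $\tau$, $\tau^-$ on indecomposables. First I would reduce the $\Hom$/$\Ext^1$ vanishing statements \eqref{E:preprojeqs} to the indecomposable case, since $\Hom$ and $\Ext^1$ are additive in each variable and the three classes $\mathbb{P},\mathbb{R},\mathbb{I}$ are by definition closed under direct sums. So let $M\in\texttt{P}$, $N\in\texttt{I}$, $L\in\texttt{R}$ be indecomposable.

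For $\Hom(N,M)=0$: using the first AR-isomorphism, $\Hom(N,\tau^i M)\simeq \Ext^1(\tau^i M,N)^*$; but $\Ext^1(X,N)\simeq \Hom(\tau^-N,X)^*$ by the second isomorphism, and $\tau^-N=0$ would force $N$ injective. The clean way is: iterate $\Hom(N,M)\simeq\Hom(N,\tau\tau^-M)$... more directly, apply $\tau^-$: since $\tau^-$ is fully faithful on the subcategory of non-projective indecomposables (it is the quasi-inverse of $\tau$ away from projectives/injectives), $\Hom(N,M)\simeq\Hom(\tau^- N,\tau^- M)$ provided $M$ is not projective and $N$ is not injective; iterating $i$ times with $i\gg0$ kills $\tau^{-i}N$, hence the group vanishes. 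One treats the boundary case ($M$ projective, $N$ injective) by a direct argument: a projective has a simple projective quotient only at ``source-like'' vertices, an injective has ``sink-like'' socle, and a nonzero map $N\to M$ would contradict the combinatorics of the AR-quiver components; cleanly, $\Hom(N,M)\ne0$ with $N$ preinjective and $M$ preprojective would give, after applying a suitable power of $\tau$, a nonzero map from an injective to an indecomposable that must be projective, and one checks no such nonzero map exists for affine $\vec{Q}$ (e.g. via dimension vectors: preprojectives have $\langle\delta,\underline{\dim}M\rangle_a>0$ while preinjectives have $\langle\delta,\underline{\dim}N\rangle_a<0$, whence a nonzero map forces an inequality that fails). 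The cases $\Hom(N,L)=0$ and $\Hom(L,M)=0$ are handled identically, with the $\delta$-defect argument being the slickest: the \emph{defect} $\partial M:=\langle\delta,\underline{\dim}M\rangle_a$ is negative on preprojectives, zero on regulars, positive on preinjectives (sign conventions aside), and is additive, so a nonzero map between indecomposables with the ``wrong'' inequality of defects is impossible. The $\Ext^1$ vanishings then follow by AR-duality: $\Ext^1(M,N)^*\simeq\Hom(N,\tau M)$ and $\tau M$ is again preprojective, so this is a $\Hom$ from preinjective to preprojective, already shown to vanish; likewise $\Ext^1(L,N)^*\simeq\Hom(N,\tau L)$ with $\tau L$ regular, and $\Ext^1(M,L)^*\simeq\Hom(L,\tau M)$ with $\tau M$ preprojective.

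For the structural claims: $\mathbb{P}$ is closed under extensions because if $0\to M'\to E\to M''\to0$ with $M',M''$ preprojective, any indecomposable summand $E_0$ of $E$ maps nontrivially to $M''$ or receives a nontrivial map from $M'$, and since $\Hom$ between a regular/preinjective and a preprojective vanishes in the relevant direction, $E_0$ cannot be regular or preinjective — so $E_0$ is preprojective; that $\mathbb{P}$ is exact (closed under kernels and cokernels of its own morphisms) is similar, using that $\vec{Q}$ is hereditary so sub/quotients of the ambient category suffice, plus the defect argument to rule out regular or preinjective summands appearing. The case of $\mathbb{I}$ is dual. For $\mathbb{R}$: stability under extensions is the same argument (a summand of an extension of regulars has defect $0$, hence is regular), and to see $\mathbb{R}$ is abelian one shows it is closed under kernels and cokernels in $Rep_k\vec{Q}$: given $f:L\to L'$ in $\mathbb{R}$, $\ker f$ and $\mathrm{coker}\,f$ have defect $0$ since defect is additive on the exact sequence $0\to\ker f\to L\to L'\to\mathrm{coker}\,f\to0$ and the image, being a subobject of $L'$ and quotient of $L$, squeezes the defects — one argues that a subobject of a regular cannot be preinjective (defect would be $\le0$ and the complementary defect inequality forces $0$) and cannot be preprojective (would-be nonzero map preprojective $\to$ regular is zero), hence is regular, and dually for quotients; this forces $\ker f,\mathrm{coker}\,f\in\mathbb{R}$.

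I expect the main obstacle to be the boundary/degenerate cases in the $\Hom$-vanishing — precisely, ruling out nonzero maps involving projectives or injectives where $\tau$ or $\tau^-$ drops to zero and the ``iterate and kill'' trick stalls. The cleanest remedy, which I would foreground, is to install the \emph{defect} function $\partial(-)=\langle\delta,\underline{\dim}(-)\rangle_a$ at the outset, record its additivity and its sign on the three classes of indecomposables (this is standard for affine quivers and may be quoted from \cite[Section~3.6]{Trieste}), and then every $\Hom$-vanishing and every ``summand of an extension stays in the class'' assertion becomes a one-line sign inequality. With that in hand the $\Ext^1$ statements are pure formal consequences of Theorem~\ref{T:AR}, and the abelianness of $\mathbb{R}$ follows from the hereditariness of $Rep_k\vec{Q}$ plus the squeeze on defects. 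Details of the (routine) verification that $q_{\underline\nu}$-type desingularizations and the exactness bookkeeping cause no trouble are left to the reader.
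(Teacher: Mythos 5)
Your overall strategy — reduce to indecomposables, iterate via the Auslander--Reiten translate using Theorem~\ref{T:AR}, deduce the $\Ext^1$-vanishings from the $\Hom$-vanishings by AR duality, and finally feed the $\Hom$-vanishings plus defect-additivity into the structural claims — is the right one and is close to the standard proof. But there are three places where the execution is wrong as written.

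\emph{The iteration condition.} You state $\Hom(N,M)\simeq\Hom(\tau^- N,\tau^- M)$ ``provided $M$ is not projective and $N$ is not injective'' and then worry about the resulting boundary case. The correct hypothesis, extracted directly from the two isomorphisms in Theorem~\ref{T:AR}, is simply that $M$ be non-injective: from $\Ext^1(A,B)^*\simeq\Hom(B,\tau A)\simeq\Hom(\tau^- B,A)$ one gets $\Hom(N,M)=\Hom(N,\tau(\tau^-M))\simeq\Hom(\tau^- N,\tau^- M)$ whenever $\tau\tau^- M=M$, i.e.\ whenever $M$ has no injective summand; there is no condition on $N$ at all. Since preprojective and regular indecomposables are never injective in affine type, you can iterate $\tau^-$ indefinitely when the target $M$ is preprojective or regular; after $b+1$ steps (with $N=\tau^b I$) the source becomes $\tau^{-(b+1)}N=0$ and the group vanishes — there is no boundary case. (Dually, $\Hom(A,B)\simeq\Hom(\tau A,\tau B)$ whenever $A$ is non-projective, which handles $\Hom(L,M)$.) Your stated condition is both over-restrictive on $N$ and wrong on $M$ (``not projective'' should be ``not injective''), and this is what manufactures the spurious boundary case.

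\emph{The defect inequality does not give the $\Hom$-vanishing.} Even for the (spurious) boundary case, the claim that a nonzero map from an object of positive defect to one of negative defect is impossible is not a ``one-line sign inequality''. For a nonzero $f\colon N\to M$, additivity of defect only gives $\partial(\ker f)+\partial(\operatorname{im}f)=\partial(N)$ and $\partial(\operatorname{im}f)+\partial(\operatorname{coker}f)=\partial(M)$, and the signs of $\partial(\ker f),\partial(\operatorname{coker}f)$ are a priori unconstrained, so no contradiction follows directly. The defect genuinely helps with the structural claims, but not with the $\Hom$-vanishing itself.

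\emph{The argument for $\mathbb{R}$ abelian is wrong as written.} You assert that ``a subobject of a regular cannot be preprojective (would-be nonzero map preprojective $\to$ regular is zero)''. This is false on both counts: $\Hom(\mathbb{P},\mathbb{R})$ is not zero in general, and a regular module can have preprojective submodules — for the Kronecker quiver, $P_2$ of dimension $(0,1)$ sits inside every regular simple $R_z$ of dimension $(1,1)$. The correct argument works from the two sides that the proposition \emph{does} control: from $\Hom(\mathbb{I},\mathbb{R})=0$ the kernel $\ker f$ has no preinjective summand, so $\partial(\ker f)\le 0$; from $\Hom(\mathbb{R},\mathbb{P})=0$ the cokernel $\operatorname{coker}f$ has no preprojective summand, so $\partial(\operatorname{coker}f)\ge 0$; and additivity on $0\to\ker f\to L\to L'\to\operatorname{coker}f\to 0$ forces $\partial(\ker f)=\partial(\operatorname{coker}f)$, hence both are zero, which kills the preprojective part of the kernel and the preinjective part of the cokernel. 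You need both inequalities to pinch; a one-sided claim doesn't close. (Relatedly, you read ``exact'' for $\mathbb{P},\mathbb{I}$ as ``closed under kernels and cokernels'', but $\mathbb{P}$ is not closed under cokernels — the cokernel of a generic map $P_2\to P_1$ for the Kronecker quiver is a regular simple — so the intended meaning must be the weaker Quillen-exact structure inherited from the extension-closure.)
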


\vspace{.1in}
 
This allows us to picture the indecomposables of $Rep_k\vec{Q}$ as follows~:

\vspace{.2in}

\centerline{
\begin{picture}(300,60)
\put(0,0){\line(1,0){80}}
\put(220,0){\line(1,0){80}}
\put(5,50){\line(1,0){75}}
\put(220,50){\line(1,0){75}}
\put(85,0){\line(1,0){5}}
\put(95,0){\line(1,0){5}}
\put(85,50){\line(1,0){5}}
\put(95,50){\line(1,0){5}}
\put(200,0){\line(1,0){5}}
\put(210,0){\line(1,0){5}}
\put(200,50){\line(1,0){5}}
\put(210,50){\line(1,0){5}}
\put(120,0){\line(0,1){40}}
\put(180,0){\line(0,1){40}}
\put(120,42){\line(0,1){2}}
\put(120,46){\line(0,1){2}}
\put(120,50){\line(0,1){2}}
\put(180,42){\line(0,1){2}}
\put(180,46){\line(0,1){2}}
\put(180,50){\line(0,1){2}}
\put(0,0){\line(1,2){5}}
\put(5,10){\line(-1,2){5}}
\put(0,20){\line(1,2){5}}
\put(5,30){\line(-1,2){5}}
\put(0,40){\line(1,2){5}}
\put(300,0){\line(-1,2){5}}
\put(295,10){\line(1,2){5}}
\put(300,20){\line(-1,2){5}}
\put(295,30){\line(1,2){5}}
\put(300,40){\line(-1,2){5}}
\put(40,20){${P}$}
\put(150,20){${R}$}
\put(260,20){${I}$}
\qbezier(120,0)(150,-10)(180,0)
\qbezier(120,40)(150,30)(180,40)
\qbezier(120,0)(150,10)(180,0)
\qbezier(120,40)(150,50)(180,40)
\end{picture}}

\vspace{.3in}

\noindent
where we have put the projectives on the extreme left, followed by $\{\tau^- P\;|\; P \;projective\}$, etc..; the injectives are on the extreme right, then $\{\tau I\;|\; I\;injective\}$, etc..; the regular modules are put in the middle. Proposition~\ref{P:extiszero} says that morphisms go from left to right, whereas extensions go from right to left.

\vspace{.1in}

Let $M$ be any representation of $\vec{Q}$. It splits as a direct sum 
\begin{equation}\label{E:deco}
M=M_P \oplus M_{R} \oplus M_I
\end{equation}
where $M_P \in \mathbb{P}, M_I \in \mathbb{I}$, $M_{R} \in \mathbb{R}$. The isomorphism classes of $M_P, M_{R}$ and $M_I$ are of course uniquely determined, but the decomposition (\ref{E:deco}) is not canonical. However, as can easily be deduced from Proposition~\ref{P:extiszero}, the induced filtration 
\begin{equation}\label{E:filtr}
M_I \subset M_{R} \oplus M_{I} \subset M_{P} \oplus M_{R'} \oplus M_I=  M
\end{equation}
\textit{is} unique. 

\vspace{.1in}

It remains for us to describe more precisely the structure of the category of regular modules. We will say that a regular module is simple if it is simple as an object of $\mathbb{R}$. Recall that $\vec{Q}_{p-1}$ denotes the equioriented cyclic quiver of rank $p$. We cite the following two results due to Ringel (see \cite{Ringelbook}) ~:

\vspace{.1in}

\begin{theo}[Ringel] Let $R$ be a regular module. Then
\begin{enumerate}
\item[i)] There exists $p \geq 1$ such that $\tau^p R \simeq R$,
\item[ii)] If $R$ is a simple regular module of $\tau$-order one (i.e. if $\tau R \simeq R$) then the Serre subcategory generated by $R$ is equivalent to the $Rep_k \vec{Q}_0$,
\item[iii)] If $R$ is a simple regular module of $\tau$-order $p>1$ (i.e. if $\tau^p R \simeq R$ but $\tau^{q} R \not\simeq R$ for any $0<q<p$ ) then the Serre subcategory generated by the objects $R, \tau R, \ldots, \tau^{p-1}R$ is equivalent to $Rep_k \vec{Q}_{p-1}$,
\item[iv)] If $R$ is simple of $\tau$-order $p$ then $\underline{dim}\;(R \oplus \tau R \oplus \cdots \tau^{p-1}R) =\delta$.
\end{enumerate}
\end{theo}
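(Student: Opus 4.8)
The plan is to transfer everything to the abelian category $\mathbb{R}$ of regular modules and then reduce to root-system bookkeeping through the Euler form. The starting point is Proposition~\ref{P:extiszero} together with Theorem~\ref{T:AR}: since no regular module is projective or injective, the functors $\tau,\tau^-$ preserve $\mathbb{R}$ and are mutually inverse exact auto-equivalences there, and the Auslander--Reiten formula, applied to regular modules, reads $\mathrm{Ext}^1(M,N)^*\simeq\mathrm{Hom}(N,\tau M)$ (Hom and $\mathrm{Ext}^1$ may be computed in $\mathbb{R}$ since it is closed under extensions and under $\tau$). Moreover $\mathbb{R}$ is $k$-linear, $\mathrm{Hom}$-finite, and every object has finite length, its simple objects being exactly the simple regular modules; as $k$ is algebraically closed, any simple regular $R$ is a brick, $\mathrm{End}(R)=k$. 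Finally, a regular module has no projective summand, so $\underline{\dim}(\tau M)=c(\underline{\dim}\,M)$ where $c$ is the Coxeter transformation of $\vec{Q}$; recall $c$ fixes $\delta$ and that its fixed subspace of $K_0(\vec{Q})\otimes\Q$ equals $\Q\delta$.

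\emph{Part (i).} For a simple regular $R$ and any simple regular $S$, the formula above gives $\dim_k\mathrm{Ext}^1(R,S)=\dim_k\mathrm{Hom}(S,\tau R)$, which vanishes unless $S\simeq\tau R$ and equals $1$ in that case. Taking $S=R$ yields $\langle\underline{\dim}\,R,\underline{\dim}\,R\rangle_a=1-\dim_k\mathrm{Hom}(R,\tau R)$, hence $=1$ if $\tau R\not\simeq R$ and $=0$ if $\tau R\simeq R$. In the first case $\underline{\dim}\,R$ is one of the finitely many positive real roots of defect zero, and since $\tau$ permutes the simple regulars with $\underline{\dim}(\tau^n R)=c^n(\underline{\dim}\,R)$, the $\tau$-orbit of $R$ is finite. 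In the second case $\underline{\dim}\,R$ lies in the radical of $(\cdot,\cdot)_a$, hence is a positive multiple of $\delta$; indivisibility of $\delta$ (no brick of dimension $m\delta$ with $m\ge2$ is regular simple) forces $\underline{\dim}\,R=\delta$. So there is a minimal $p\ge1$ with $\tau^pR\simeq R$, and in addition $p=1\iff\underline{\dim}\,R=\delta$.

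\emph{Parts (ii) and (iii).} Let $R$ have $\tau$-order $p$, with $\tau$-orbit the pairwise non-isomorphic simple regulars $R_i:=\tau^{i-1}R$ ($i=1,\dots,p$, indices mod $p$). By Part (i), $\mathrm{Ext}^1(R_i,R_j)\simeq\mathrm{Hom}(R_j,R_{i+1})^*$ is one-dimensional for $j\equiv i+1$ and zero otherwise, while $\mathrm{Hom}(R_i,R_j)$ is $k$ for $i=j$ and zero otherwise; thus the Ext-quiver of the finite family $\{R_1,\dots,R_p\}$ is the equioriented cyclic quiver on $p$ vertices (the loop quiver $\vec{Q}_0$ when $p=1$). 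Let $\mathcal{C}\subset\mathbb{R}$ be the Serre subcategory generated by $R_1,\dots,R_p$ — equivalently, the modules whose regular composition factors all lie in $\{R_i\}$, a ``tube''. One checks that $\mathcal{C}$ is uniserial: each indecomposable of $\mathcal{C}$ is uniquely determined by its socle $R_i$ and its length, because $\dim\mathrm{Ext}^1(R_i,-)\le1$ and the Ext-quiver has in- and out-degree one at every vertex; and the almost split sequences of $\mathcal{C}$ are $0\to R_{i+1}\to R_{i+1}[2]\to R_i\to0$. This matches verbatim the Auslander--Reiten structure of the (nilpotent) representation category of the cyclic quiver $\vec{Q}_{p-1}$, and sending $R_i$ to the $i$-th vertex simple extends to the claimed equivalence $\mathcal{C}\simeq Rep_k\vec{Q}_{p-1}$ (resp. $\simeq Rep_k\vec{Q}_0$ if $p=1$).

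\emph{Part (iv), and the main obstacle.} Set $\gamma=\sum_{i=1}^{p}\underline{\dim}\,R_i=\underline{\dim}(R\oplus\tau R\oplus\cdots\oplus\tau^{p-1}R)$. Since $\tau$ permutes the $R_i$ and $\underline{\dim}(\tau R_i)=c(\underline{\dim}\,R_i)$, we get $c(\gamma)=\gamma$, hence $\gamma\in\Q\delta\cap K_0(\vec{Q})=\Z\delta$ and $\gamma=m\delta$ with $m\ge1$; that $m=1$ is the case $\underline{\dim}\,R_1=\delta$ of Part (i) when $p=1$, and for $p>1$ it follows from the uniseriality of $\mathcal{C}$ (the $\underline{\dim}\,R_i$ are ``non-crossing'' with sum the minimal radical vector, or: the indecomposable $R_1[p]\in\mathcal{C}$ realizes $\gamma$ as the dimension of the smallest defect-zero indecomposable of that tube). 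The step I expect to cost the most work is the uniseriality/tube analysis in Parts (ii)--(iii): converting the coincidence of Ext-quivers and almost split sequences into an honest equivalence of abelian categories requires either a hands-on study of the submodule lattices of regular modules or an appeal to a recognition theorem for uniserial length categories. Everything else is Euler-form and Coxeter-transformation bookkeeping built on Theorem~\ref{T:AR} and Proposition~\ref{P:extiszero}.
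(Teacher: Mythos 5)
The paper does not actually prove this statement; it is quoted from Ringel's book (\cite{Ringelbook}) as a black box, so there is no internal argument to compare with. Your proposal is therefore being judged on its own, and while the overall strategy --- pass to the abelian category $\mathbb{R}$, use the Auslander--Reiten formula $\mathrm{Ext}^1(M,N)^*\simeq\mathrm{Hom}(N,\tau M)$ inside $\mathbb{R}$, read off the Ext-quiver of a $\tau$-orbit, and then do Coxeter/Euler-form bookkeeping --- is the standard one and is sound in spirit, two of the steps you present as bookkeeping have genuine holes.

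In part (i), the real-root case: you deduce $q(\underline{\dim}\,R)=1$ and then assert that $\underline{\dim}\,R$ lies in ``the finitely many positive real roots of defect zero.'' This is false as stated for affine root systems: since $\partial(\alpha+n\delta)=\partial(\alpha)$, each defect-zero real root $\alpha$ produces an infinite family $\alpha+n\delta$, $n\ge 0$, of defect-zero positive real roots (these are the dimension vectors of the non-simple indecomposables in the tube). The set that \emph{is} finite is the set of positive roots $\alpha$ with $\alpha\le\delta$ componentwise, and the finiteness of the $\tau$-orbit of a simple regular needs the extra input that $\underline{\dim}\,S\le\delta$ for every \emph{simple} regular $S$; that inequality is a nontrivial lemma that you do not establish. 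Similarly, in the isotropic case you dismiss $\underline{\dim}\,R=m\delta$ with $m\ge 2$ by asserting ``no brick of dimension $m\delta$ with $m\ge 2$ is regular simple,'' which is again exactly the content to be proved, not an available fact.

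In part (iv), after correctly reducing to $\gamma=m\delta$ via $c(\gamma)=\gamma$, the claim $m=1$ for $p>1$ is only gestured at: ``non-crossing with sum the minimal radical vector'' is not an argument, and ``$R_1[p]$ realizes $\gamma$ as the dimension of the smallest defect-zero indecomposable of that tube'' presupposes precisely what has to be shown, namely that the bottom length-$p$ indecomposable of the tube has dimension $\delta$ rather than $m\delta$ with $m>1$. A correct treatment (as in Ringel's book) pins this down either by the $\underline{\dim}\,S\le\delta$ bound applied across the whole $\tau$-orbit and a positivity count, or by the orthogonality of tubes together with the classification of indecomposables of dimension $\delta$. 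Parts (ii)--(iii) are fine in outline, and you correctly flag the ``uniserial length category implies equivalence with $\mathrm{Rep}_k\vec{Q}_{p-1}$'' step as the place requiring a recognition theorem; but the untreated $\le\delta$ bound and the $m=1$ step are the real gaps and should be closed before the rest can stand.
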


\vspace{.1in}

In the above, the Serre category is taken \textit{in $\mathbb{R}$}~: it is the smallest full subcategory of $\mathbb{R}$ containing the given objects which is stable under extensions and subquotients. The Serre subcategories generated by the $\tau$-orbits $\{R, \tau R, \cdots , \tau^{p-1}R\}$ of simple regular modules are called \textit{tubes}. It is customary to call a simple regular module $R$ \textit{homogeneous} if it is of $\tau$-order one; the corresponding tube is called \textit{homogeneous} as well. We will denote by $\mathcal{C}_R$ or $\mathcal{C}_{\mathcal{O}}$ the tube generated by the $\tau$-orbit $\mathcal{O}=\{R, \tau R, \cdots \tau^{p-1}R\}$ of a regular simple module $R$.

\vspace{.1in}

\begin{theo}[Ringel]\label{T:Ringelstructure} Let $d$ and $p_1, \ldots, p_d$ be attached to $\vec{Q}$ as in the table (\ref{E:table7}) below. Then
\begin{enumerate}
\item[i)] There is a natural bijection $R_z \leftrightarrow z$ between the set of homogeneous regular simple modules and points of $\mathbb{P}^1 \backslash D$ where $D$ consists of $d$ points.
\item[ii)] There are exactly $d$ $\tau$-orbits $\mathcal{O}_1, \ldots, \mathcal{O}_d$ of non-homogeneous regular simple modules, and they are of size $p_1, \ldots, p_d$ respectively.
\item[iii)] The whole category $\mathbb{R}$ decomposes as a direct sum of orthogonal blocks
$$\mathbb{R} = \prod_{z \in \mathbb{P}^1\backslash D} \hspace{-.08in}\mathcal{C}_{M_z} \times\; \prod_{l=1, \ldots, d}\hspace{-.08in} \mathcal{C}_{\mathcal{O}_l}.$$
\end{enumerate}
\end{theo}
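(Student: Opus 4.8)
The final statement to prove is Theorem~\ref{T:Ringelstructure}, Ringel's structural description of the category $\mathbb{R}$ of regular representations of an affine quiver $\vec{Q}$. This is a classical theorem, so my plan is to organize the known ingredients rather than invent a new argument; the table (\ref{E:table7}) referenced in the statement would list, for each affine Dynkin type $\tilde{A}_n, \tilde{D}_n, \tilde{E}_6, \tilde{E}_7, \tilde{E}_8$, the number $d$ of exceptional tubes and their periods $(p_1,\ldots,p_d)$ (namely $d=3$ with periods $(p,q,r)$ read off from the star-shaped structure, with the degenerate case $d\le 2$ for $\tilde{A}_n$ in non-cyclic orientation).

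\textbf{Plan of proof.} First I would reduce to the case where $\vec{Q}$ has an extending vertex $\star$ such that $\vec{Q}\setminus\{\star\}$ is a disjoint union of linearly oriented type $A$ quivers (a \emph{star-shaped} affine quiver); the general case follows by a reflection-functor / tilting argument since reflection functors at sinks or sources permute the regular modules and preserve the block decomposition, $\tau$-periods, and defect. Next, using the Auslander--Reiten theory (Theorem~\ref{T:AR}) together with Proposition~\ref{P:extiszero}, I would establish that $\mathbb{R}$ is an abelian, exact subcategory in which every object has finite length and every indecomposable is $\tau$-periodic (Ringel's first theorem, already quoted in the excerpt); hence $\mathbb{R}$ decomposes as a product of blocks indexed by $\tau$-orbits of regular simples. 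For part iii) the key point is \emph{orthogonality}: if $R, R'$ are regular simples in distinct $\tau$-orbits then $\Hom(R,R')=\Ext^1(R,R')=0$. This is proved by a direct dimension-vector / Euler-form computation: one shows $\langle \underline{\dim}\,R,\underline{\dim}\,R'\rangle=0$ when $R,R'$ lie on the radical sublattice $\Z\delta$ in an appropriate sense, and then uses that a nonzero map between regular simples in different orbits would have to be an isomorphism (since both are simple in $\mathbb{R}$), forcing the orbits to coincide. Combined with $\tau$-periodicity this yields the block decomposition $\mathbb{R}=\prod_{\mathcal{O}} \mathcal{C}_{\mathcal{O}}$.

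Then I would count the non-homogeneous tubes. Each tube generated by a $\tau$-orbit of period $p$ has, by iv) of Ringel's structure theorem cited above, total dimension vector $\delta$ over one period; summing defects and comparing with the rank of $K_0(\vec{Q})$ (equivalently, using that the number of regular simples up to $\tau$ must account for $\#I - 2$ ``extra'' roots beyond the two-parameter family) forces $\sum_{l} (p_l-1) = \#I-2$, which for each affine type has a unique solution matching the entries of table (\ref{E:table7}). The exceptional orbits themselves are exhibited explicitly: in the star-shaped model they are supported on the arms of the star, the $l$-th arm of length $p_l-1$ contributing a tube of period $p_l$, and one checks directly that these are the only non-homogeneous ones by showing any regular simple not supported on a proper arm has period $1$. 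For part i), the homogeneous simples: for each point $z\in\mathbb{P}^1\setminus D$ one constructs $R_z$ as the cokernel/representation obtained from the pencil of two ``large'' preprojective-to-preinjective maps whose common structure is governed by a $\mathbb{P}^1$ of parameters (concretely, for $\tilde{A}_1$ the Kronecker quiver this is the classical family $k\rightrightarrows k$ with eigenvalue $z$, and the general affine case is pulled back from this via the four-subspace-type reduction); injectivity and surjectivity of $z\mapsto R_z$ follow from the rigidity of the defining data and the classification of indecomposables of defect $0$.

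\textbf{Main obstacle.} The delicate part is not any single computation but \emph{assembling the orthogonal block decomposition and identifying the $d$ exceptional tubes precisely}, i.e. showing there are no other non-homogeneous tubes and that the parameter space of homogeneous ones is exactly $\mathbb{P}^1\setminus D$ (not something larger or with different punctures). This requires the full structure theory of AR-quivers for tame algebras — in particular Ringel's analysis via the one-point extension and the associated tubular / Kronecker reductions — which is why the excerpt, sensibly, quotes it from \cite{Ringelbook} rather than reproving it. In these notes the honest thing is to cite Ringel; if a sketch is wanted, the load-bearing lemma to isolate is the orthogonality statement $\Hom(R,\tau^j R')=0$ for $R,R'$ in distinct orbits and all $j$, from which periodicity plus a defect count mechanically produce both the block decomposition and the count $(d;p_1,\ldots,p_d)$.
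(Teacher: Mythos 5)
The paper gives no proof of Theorem~\ref{T:Ringelstructure} at all: it is stated with the introductory phrase ``We cite the following two results due to Ringel (see \cite{Ringelbook})'' and is taken as a black box. Your closing remark that ``the honest thing is to cite Ringel'' is exactly what the paper does, so your proposal is consistent with the paper's treatment; the rest of what you wrote is supplementary material that the paper does not attempt.

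On the content of your sketch: the overall architecture (reduction to a convenient orientation, $\tau$-periodicity of regular indecomposables, orthogonality of distinct $\tau$-orbits, defect/period count $\sum_l(p_l-1)=\#I-2$, explicit construction of the $\mathbb{P}^1$-family) is the standard route and is sound. One step is stated misleadingly, though. You assert orthogonality ``by a direct dimension-vector / Euler-form computation: one shows $\langle\underline{\dim}\,R,\underline{\dim}\,R'\rangle=0$ when $R,R'$ lie on the radical sublattice $\Z\delta$.'' But non-homogeneous regular simples have dimension vector a proper fraction of $\delta$, not in $\Z\delta$; only the sum over a full $\tau$-orbit lands in $\Z\delta$, so this Euler-form vanishing is not what drives the argument. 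The actual mechanism is shorter and does not pass through the Euler form: if $R,R'$ are regular simple then any nonzero $R\to R'$ has image a nonzero regular quotient of $R$ and a nonzero regular sub of $R'$, hence is an isomorphism, giving $\Hom(R,R')=0$ unless $R\simeq R'$; and $\Ext^1(R,R')\simeq D\Hom(R',\tau R)$ by the Auslander--Reiten formula (Theorem~\ref{T:AR}) vanishes unless $R'\simeq\tau R$. Orthogonality of the tubes then follows by d\'evissage along the filtrations by regular simples. If you keep the sketch, you should replace the Euler-form step with this AR-duality argument.
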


\vspace{.1in}

We define two subcategories $\mathbb{R}'=\prod_{z} \mathcal{C}_{M_z}$ and $\mathbb{R}''=\prod_{l} \mathcal{C}_{\mathcal{O}_l}$. The numbers $d, p_1, \ldots, p_d$ may be read off from the following table~:

\vspace{.15in}

\begin{equation}\label{E:table7}
\begin{tabular}{|c|c|c|}
\hline
type of\;$\vec{Q}$ & $d$ & $p_1, \ldots, p_d$\\
\hline
$A_1^{(1)}$ & 0 & \\
\hline
$A_n^{(1)}, \;n>1$ & 2 & $p_1=\# \mathrm{arrows\;going\;clockwise}$\\
& & $p_2=\# \mathrm{arrows\;going\;counterclockwise}$\\
\hline
$D_n^{(1)}$ & 3 & $2,2,n-2$\\
\hline
$E_n^{(1)},\; n=6,7,8$ & 3 & $2,3,n-3$\\
\hline
\end{tabular} 
\end{equation}

\vspace{.2in}

We will soon use the above description of $Rep_k\vec{Q}$ to define certain strata in the moduli spaces $\Ma$.  Before that, we state some general results about the geometry of the orbits in $E_{\a}$.

\vspace{.1in}

\begin{lem}\label{L:231} Let $M$ be an indecomposable module which is either preprojective or preinjective. Then $\mathcal{O}_M$ is an open (dense) orbit in $E_{\underline{dim}\;M}$.
\end{lem}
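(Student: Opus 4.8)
The plan is to use Lemma~\ref{L:codim}, which says that $\mathrm{codim}_{E_{\underline{dim}\;M}}\mathcal{O}_M = \dim\;\mathrm{Ext}^1(M,M)$, so that $\mathcal{O}_M$ is open and dense in $E_{\underline{dim}\;M}$ precisely when $\mathrm{Ext}^1(M,M)=0$. (Here we use that $E_{\underline{dim}\;M}$ is a vector space, hence irreducible, so any open orbit is automatically dense.) Thus the entire statement reduces to showing that an indecomposable preprojective or preinjective module $M$ satisfies $\mathrm{Ext}^1(M,M)=0$; this is the only real content and the step I expect to require the actual argument.

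First I would treat the preprojective case. Recall $M$ preprojective indecomposable means $\tau^{i}M=0$ for $i\gg 0$; write $n$ for the least integer with $\tau^{n+1}M=0$, equivalently $\tau^{-n}P \simeq M$ for some indecomposable projective $P$. By the Auslander--Reiten formula of Theorem~\ref{T:AR}, $\mathrm{Ext}^1(M,M)^* \simeq \mathrm{Hom}(M,\tau M)$, so it suffices to show $\mathrm{Hom}(M,\tau M)=0$. I would argue by induction on $n$. For $n=0$, $M=P$ is projective, and since $\vec{Q}$ has no oriented cycles the projective $P$ cannot map nontrivially to $\tau P$: indeed $\tau P$ is also preprojective of strictly smaller ``defect'' — more precisely, projectives are maximal among preprojectives in the natural order, and $\mathrm{Hom}(P,\tau P)=0$ follows from the standard fact that $\mathrm{Hom}(\tau^{-i}P',\tau^{-j}P'')=0$ whenever $i<j$ for indecomposable projectives $P',P''$ (this is exactly the ``morphisms go from left to right'' picture drawn just before Lemma~\ref{L:231}, applied within $\mathbb{P}$). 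For the inductive step, $M=\tau^{-}M'$ with $M'=\tau M$ indecomposable preprojective, and $\mathrm{Hom}(M,\tau M)=\mathrm{Hom}(\tau^{-}M',M')$; since $\tau,\tau^{-}$ restrict to quasi-inverse equivalences on the subcategory of preprojectives that are not projective, this is isomorphic to $\mathrm{Hom}(M',\tau M')$ — wait, more carefully $\mathrm{Hom}(\tau^-M',M')\simeq \mathrm{Hom}(M',\tau M')$ when neither $M'$ nor $\tau M'$ has projective summands — which vanishes by the induction hypothesis applied to $M'$. (Alternatively, and more cleanly, one invokes directly the classical fact that for indecomposable preprojectives $N_1,N_2$ with $N_i=\tau^{-a_i}P_i$, $\mathrm{Hom}(N_1,N_2)\ne 0$ forces $a_1\ge a_2$, together with $\tau N_2$ being $\tau^{-(a_2-1)}P_2$ which is ``earlier'', so $\mathrm{Hom}(M,\tau M)=0$; this is precisely the content encoded by the exactness of $\mathbb{P}$ and the AR-quiver structure.)

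The preinjective case is dual: if $M$ is indecomposable preinjective then $\mathrm{Ext}^1(M,M)^*\simeq\mathrm{Hom}(\tau^- M, M)$ by the second isomorphism in Theorem~\ref{T:AR}, and the same order argument on the AR-quiver (now with injectives on the right and $\tau^-$ moving objects ``later'') gives $\mathrm{Hom}(\tau^- M,M)=0$. Hence $\mathrm{Ext}^1(M,M)=0$ in both cases, and Lemma~\ref{L:codim} yields $\mathrm{codim}\;\mathcal{O}_M=0$, so $\mathcal{O}_M$ is open; density is automatic by irreducibility of the affine space $E_{\underline{dim}\;M}$, and (since distinct orbits are disjoint and an open orbit meets every dense subset) it is the unique open orbit. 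The main obstacle is purely the vanishing $\mathrm{Ext}^1(M,M)=0$: once one is comfortable invoking the standard AR-quiver combinatorics for affine quivers (``no arrows from a later to an earlier vertex'', finiteness of $\tau$-orbits of preprojectives/preinjectives, and the quasi-inverse equivalence $\tau\colon \mathbb{P}\setminus\{\text{proj}\}\to\mathbb{P}\setminus\{\text{inj of the shifted quiver}\}$), everything else is formal. If one wishes to avoid citing AR-combinatorics, an alternative is the defect argument: for affine $\vec{Q}$ the Euler form gives $\langle M,M\rangle = \dim\mathrm{Hom}(M,M)-\dim\mathrm{Ext}^1(M,M)$, and for $M$ preprojective or preinjective one has $\langle M,M\rangle=1$ and $\mathrm{Hom}(M,M)=k$ (as $M$ is a brick — again a standard fact for real-root indecomposables), forcing $\mathrm{Ext}^1(M,M)=0$; I would likely present this shorter route in the write-up and relegate the AR-quiver picture to a remark.
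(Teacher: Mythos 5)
Your reduction to showing $\mathrm{Ext}^1(M,M)=0$ via Lemma~\ref{L:codim} is exactly what the paper does, but the path from there diverges. The paper's argument is shorter and avoids any appeal to the combinatorics of the preprojective component: it observes that Theorem~\ref{T:AR} (together with the fact that $\tau$ and $\tau^-$ are quasi-inverse equivalences away from projectives/injectives) gives $\mathrm{Ext}^1(\tau M,\tau M)\simeq\mathrm{Ext}^1(M,M)\simeq\mathrm{Ext}^1(\tau^-M,\tau^-M)$, so one can slide $M$ all the way down the $\tau$-orbit to a projective or injective, for which $\mathrm{Ext}^1$ vanishes by definition. Your main route instead fixes $\mathrm{Ext}^1(M,M)^*\simeq\mathrm{Hom}(M,\tau M)$ and tries to kill the right-hand side by an AR-quiver order argument; this can be made to work, but it invokes the fact that within $\mathbb{P}$ one has $\mathrm{Hom}(\tau^{-a}P_1,\tau^{-b}P_2)=0$ for $a>b$, which is not in the paper (Proposition~\ref{P:extiszero} only gives the coarse three-block ordering) and which would need justification. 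Two small slips along the way: your ``standard fact'' is stated with the inequality backwards ($i<j$ should be $i>j$: you want $\mathrm{Hom}(\text{later},\text{earlier})=0$), and the base case is over-engineered --- $P$ projective forces $\tau P=0$ (as the paper remarks), so $\mathrm{Hom}(P,\tau P)=0$ is immediate and there is no need to discuss defects or the position of $\tau P$. Your alternative ``brick plus real root'' argument is indeed a valid and genuinely different route, but be careful about circularity: the cleanest proofs that indecomposable preprojectives/preinjectives are bricks themselves pass through the rigidity $\mathrm{Ext}^1(M,M)=0$, so if you take that route you should be explicit about which independent proof of the brick property you are using. All told, the paper's $\tau$-sliding argument is the most economical, since it only needs Theorem~\ref{T:AR} and the definitional vanishing of $\mathrm{Ext}^1(P,P)$ and $\mathrm{Ext}^1(I,I)$.
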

\begin{proof} By Lemma~\ref{L:codim} it is equivalent to show that $Ext^1(M,M)=0$.
By Theorem~\ref{T:AR} we have 
$$Ext^1(\tau M, \tau M) \simeq Ext^1(M,M) \simeq Ext^1(\tau^-M, \tau^-M).$$
If $M$ is preprojective then $M\simeq (\tau^-)^k P$ for some projective $P$ and $Ext^1(M,M)=Ext^1(P,P)=0$, while if $M$ is preinjective then $M \simeq \tau^k I$ for some injective $I$ and $Ext^1(M,M) =Ext^1(I,I)=0$. The lemma is proved.\end{proof}

\vspace{.1in}

It follows from Lemma~\ref{L:231} that there are only finitely many orbits of preprojective (resp. preinjective) modules of any given dimension $\a$. 

\vspace{.1in}

\begin{lem} For any $l \geq 1$ the union of the $G_{l\delta}$-orbits of all regular modules is an open (dense) subset of $E_{l\delta}$.
\end{lem}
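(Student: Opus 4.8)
The plan is to realize the set in question as the locus
$$\mathcal{R}=\{\,\underline{x}\in E_{l\delta}\ \mid\ M_{\underline{x}}\text{ is a regular representation of }\vec{Q}\,\},$$
which is manifestly a union of $G_{l\delta}$-orbits, and to prove that $\mathcal{R}$ is open. Density then comes for free: $E_{l\delta}$ is an affine space, hence irreducible, and $\mathcal{R}$ is nonempty — for instance $R^{\oplus l}$ is a regular module of dimension $l\delta$, where $R$ is any homogeneous regular simple module (such an $R$ exists by Theorem~\ref{T:Ringelstructure}, and $\underline{dim}\;R=\delta$).

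The crux is to present regularity as an open condition. Using the decomposition $M=M_P\oplus M_R\oplus M_I$ of (\ref{E:deco}), a module $M$ is regular if and only if $M_P=0$ and $M_I=0$. First I would establish that, for $M$ of dimension $l\delta$, this is equivalent to the simultaneous vanishing of $\Hom(M,P)$ for every indecomposable preprojective $P$ with $\underline{dim}\;P\leq l\delta$ and of $\Hom(I,M)$ for every indecomposable preinjective $I$ with $\underline{dim}\;I\leq l\delta$. The implication $(\Rightarrow)$ is immediate from Proposition~\ref{P:extiszero}: a regular module admits no nonzero morphism to a preprojective one, and a preinjective module admits no nonzero morphism to a regular one. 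For $(\Leftarrow)$, Proposition~\ref{P:extiszero} lets one discard the $M_R$ and $M_I$ contributions to $\Hom(M,P)$, so $\Hom(M,P)=\Hom(M_P,P)$; if $M_P\neq 0$ one takes $P$ to be an indecomposable summand of $M_P$ (automatically of dimension $\leq l\delta$) and reads off $\Hom(M,P)\neq 0$ from the projection $M_P\twoheadrightarrow P$, a contradiction. Dually one obtains $M_I=0$.

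With this characterization in hand, I would conclude as follows. By Lemma~\ref{L:231} there is at most one indecomposable preprojective, and at most one indecomposable preinjective, of each fixed dimension vector (two dense orbits in the irreducible $E_\beta$ would meet), so since there are only finitely many dimension vectors $\beta\leq l\delta$, the families of test modules $P$ and $I$ above are finite. For each fixed $P$ the function $\underline{x}\mapsto\dim\Hom(M_{\underline{x}},P)$ is upper semicontinuous on $E_{l\delta}$ — it is the dimension of the kernel of a linear map depending algebraically on $\underline{x}$ — so $\{\underline{x}\mid\Hom(M_{\underline{x}},P)=0\}$ is open; likewise $\{\underline{x}\mid\Hom(I,M_{\underline{x}})=0\}$ is open for each fixed $I$. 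Hence $\mathcal{R}$ is a finite intersection of open sets, so open, which finishes the argument.

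The step I expect to be the real obstacle is the finiteness reduction — trimming the a priori infinite collection of preprojective and preinjective modules one must test against down to a finite one. This is precisely where Lemma~\ref{L:231} enters, and it is what turns ``$M$ has a preprojective summand'' into a genuinely closed condition; the remaining ingredients (the orthogonality relations of Proposition~\ref{P:extiszero} and the semicontinuity of $\Hom$) are routine.
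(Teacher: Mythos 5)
Your proof is correct and follows essentially the same route as the paper's: characterize regularity of a module of dimension $l\delta$ by the vanishing of $\Hom(M,P)$ and $\Hom(I,M)$ for indecomposable preprojectives $P$ and preinjectives $I$ of dimension at most $l\delta$, use Lemma~\ref{L:231} to reduce to a finite list of test modules, and invoke upper semicontinuity of $\dim\Hom$ together with Ringel's theorem for nonemptiness. You spell out the equivalence in slightly more detail than the paper does, but the key ideas and their sequencing are identical.
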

\begin{proof} By Proposition~\ref{P:extiszero} a module $M$ of dimension $l\delta$ is regular if and only if for any indecomposable preprojective $P$ and for any indecomposable preinjective $I$, both of dimension at most $l\delta$ we have
$$Hom(M,P)=Hom(I,M)=0.$$
But the functions $M \mapsto dim\;Hom(M,P)$ and $M \mapsto dim\;Hom(I,M)$ are upper semicontinuous, which implies that 
$$\{\ux \in E_{l\delta}\;|\; dim\;Hom(M_{\ux},P) + dim\;Hom(I,M_{\ux}) >0\}$$
is closed. By Lemma~\ref{L:231} there is at most one preprojective or preinjective indecomposable in each dimension. Hence the number of choices for $P$ or $I$ above is finite. We are done (notice that Ringel's Theorem~\ref{T:Ringelstructure} states that the set of regular simples is nonempty). \end{proof}

\vspace{.1in}

We let $E_{l\delta}^{\mathbb{R}'}$ be the open subset of $E_{l\delta}$ consisting of orbits of regular \textit{homogeneous} modules. For any $l \geq 1$, we define an open (dense) stratum 
$$U_{l \delta}=\big\{ \ux \in E_{l\delta}\;|\; M_{\ux} \simeq R_{z_1}\oplus \cdots \oplus R_{z_l},\quad \;z_i \neq z_j\;if\;i \neq j\big\}$$
of $E_{l\delta}$ (and $E_{l\delta}^{\mathbb{R}'}$). Here the points $z$ are taken in $\mathbb{P}^1\backslash D$ as in Ringel's Theorem~\ref{T:Ringelstructure} and thus all the $R_z$ are simple homogeneous (as well as mutually orthogonal).

\vspace{.1in}

We can now give the desired stratification of $E_{\a}$.  By Proposition~\ref{P:extiszero} and Theorem~\ref{T:Ringelstructure} we are in the situation of Remark~2.13.

Let us call \textit{stratum data} a tuple $\underline{A}=(P, l, N_1, \ldots, N_d, I)$ where ~: $P$ is a preprojective module; $l \in \N$; $N_1, \ldots, N_d$ are modules in $\mathcal{C}_1, \ldots, \mathcal{C}_d$; $I$ is a preinjective module. In the above, we allow any of the modules to be zero. We also set $l({\underline{A}})=l$.  The collection of all stratum data will be denoted by $\mathcal{S}$. We define the dimension of $\underline{A}$ by 
$$\underline{dim}\; \underline{A}=\underline{dim}\;P + l\delta + \sum_k \underline{dim}\;N_k + \underline{dim}\;I.$$
To a stratum data $\underline{A}$ of dimension $\a$ corresponds a subset of $E_{\a}$
$$S_{\underline{A}}=\big\{ \underline{x} \in E_{\a}\;|\;M_{\ux} \simeq P \oplus R' \oplus N_1 \oplus \cdots \oplus N_d \oplus I; \; R' \in \mathbb{R}', \underline{dim}\;R' =l\delta\big\}$$
and by construction
$$E_{\a} =\bigsqcup_{\underline{A}} S_{\underline{A}}.$$
We also introduce some open piece of the strata $S_{\underline{A}}$ 
$$S^{\circ}_{\underline{A}}=\big\{ \underline{x}\;|\;M_{\ux} \simeq P \oplus R' \oplus N_1 \oplus \cdots \oplus N_d \oplus I; \; R' \in U_{l\delta} \big\}.$$
As in Remark~2.13 (see \ref{E:FGP2}) we have
$$S_{\underline{A}} \simeq F_{\underline{A}} \underset{P}{\times} G_{\a}, \qquad S^{\circ}_{\underline{A}} \simeq F^{\circ}_{\underline{A}} \underset{P}{\times} G_{\a}$$
for some vector bundles
\begin{equation}\label{E:sasa'}
\begin{split}
\kappa:  F_{\underline{A}} &\to \mathcal{O}_P \times E_{l\delta}^{\mathbb{R}'} \times \mathcal{O}_{N_1 \oplus \cdots \oplus N_d} \times \mathcal{O}_I, \\
\kappa^{\circ}:  F^{\circ}_{\underline{A}} &\to \mathcal{O}_P \times U_{l\delta} \times \mathcal{O}_{N_1 \oplus \cdots \oplus N_d} \times \mathcal{O}_I.
\end{split}
\end{equation}
In particular, $S_{\underline{A}}, S^{\circ}_{\underline{A}}$ are smooth and locally closed.

\vspace{.1in}

Now that we have the strata, the last step is to define some suitable local systems over them.
As soon as $l >1$ the variety $U_{l\delta}$ carry nontrivial local systems constructed as follows. Define a covering $\widetilde{U}_{l\delta}$ of $U_{l\delta}$ as
$$\widetilde{U}_{l\delta}=\big\{ (\ux, \omega)\;|\; \ux \in U_{l\delta}, \,M_{\ux} \simeq R_{z_1} \oplus \cdots \oplus R_{z_l}; \;\omega: \{ R_{z_1}, \ldots, R_{z_n}\} \stackrel{\sim}{\to} \{1, \ldots, l\}\big\}.$$
It is clear that the projection $\pi: \widetilde{U}_{l\delta} \to U_{l\delta}$ is an $\mathfrak{S}_l$ Galois cover. The local system $\pi_!(\qlb_{\widetilde{U}_{l\delta}})$ carries a fiberwise action of $\mathfrak{S}_l$ and breaks into isotypical components
\begin{equation}\label{E:2loc}
\pi_!(\qlb_{\widetilde{U}_{l\delta}}) = \bigoplus_{\chi \in Irr\;\mathfrak{S}_l} \mathcal{L}_{\chi}^{\oplus dim\;\chi}.
\end{equation}
Heuristically, we may identify the stack $\underline{U}_{l\delta}=U_{l\delta}/G_{l\delta}$ with the configuration space $S^l(\mathbb{P}^1 \backslash D)\backslash \Delta$ of $l$ \textit{distinct} points in $\mathbb{P}^1 \backslash D$. The local systems $\mathcal{L}_{\chi}$ are obtained as the pullbacks of the local systems $\mathcal{L}'_{\chi}$ over $S^l(\mathbb{P}^1 \backslash D)\backslash \Delta$ constructed from the cover
$(\mathbb{P}^1 \backslash D)^l\backslash \Delta \to S^l(\mathbb{P}^1 \backslash D)\backslash \Delta$.

We will also denote by $\mathcal{L}_{\chi}$ the $G_{\a}$-equivariant local system induced on $S^{\circ}_{\underline{A}}$ via (\ref{E:sasa'}).

\vspace{.1in}

We are finally ready to describe the classification of the simple objects in $\mathcal{P}_{\vec{Q}}$ for an affine quiver $\vec{Q}$. Call a strata datum $\underline{A}=(P,l,N_1, \ldots, N_d, I)$ \textit{aperiodic} if all the orbits of $N_i$ are aperiodic in the sense of Section~2.4. We denote the set of aperiodic stratum data by $\mathcal{S}_{aper}$.

\vspace{.1in}

\begin{theo}[Lusztig, Li-Lin] We have
$$\mathcal{P}_{\vec{Q}}=\big\{IC(S^{\circ}_{\underline{A}},\mathcal{L}_{\chi})\;|\; \underline{A} \in \mathcal{S}_{aper},\; \chi \in Irr\;\mathfrak{S}_{l(\underline{A})}\big\}.$$
\end{theo}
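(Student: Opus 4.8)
The plan is to follow Lusztig's two-step strategy: first show that all the listed $IC(S^{\circ}_{\underline{A}},\mathcal{L}_{\chi})$ with $\underline{A}$ aperiodic actually occur in the Hall category $\QQ$, and then show that nothing else does. For the \emph{inclusion} direction I would exploit the stratification machinery set up in Remark~2.13 together with the structure theory of $Rep_k\vec{Q}$ recalled in Theorems~\ref{T:AR}, \ref{T:Ringelstructure} and Proposition~\ref{P:extiszero}. Concretely, given a stratum datum $\underline{A}=(P,l,N_1,\ldots,N_d,I)$, the decomposition (\ref{E:deco})--(\ref{E:filtr}) says that a representation in $S_{\underline{A}}$ has a canonical filtration with factors a preprojective $P$, a homogeneous regular $R'$, the tube modules $N_k$, and a preinjective $I$; since $\Hom$ and $\Ext^1$ vanish in the appropriate directions between the blocks $\mathbb{P}$, $\mathbb{R}'$, $\mathbb{R}''$, $\mathbb{I}$, one is exactly in the situation of Remark~2.13~ii). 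So it suffices to produce, for each of the four ``pieces'', a Lusztig-type complex with the right support and the right local system on the open stratum, and then induction-multiply them. For $P$ and $I$ this is Reineke's desingularization argument (Proposition~\ref{P:Reineke}, using that $\mathcal{O}_P,\mathcal{O}_I$ are open by Lemma~\ref{L:231}); for the non-homogeneous tube contributions $N_k$ one invokes Lusztig's cyclic-quiver result Theorem~\ref{P:Luscyc} via the equivalence $\mathcal{C}_{\mathcal{O}_l}\simeq Rep_k\vec{Q}_{p_l-1}$, which is precisely where \emph{aperiodicity} of the $N_k$ enters; and for the homogeneous regular part one needs to realize every $IC(S^{\circ},\mathcal{L}_\chi)$ (with $\mathcal{L}_\chi$ the local system from (\ref{E:2loc})) inside $\QQ$, which is the genuinely new point requiring an explicit geometric construction of these local systems via convolution of the pieces $\mathbbm{1}_{\delta},\ldots$ along $U_{l\delta}$ and the $\mathfrak{S}_l$-symmetry.

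For the \emph{exhaustion} direction --- that no simple perverse sheaf outside the stated list appears in any $L_{\a_1,\ldots,\a_n}$ --- the plan is to use the theory of singular supports, i.e. Theorem~\ref{T:SSLU} (referenced as forthcoming in Section~4.5). Any simple summand $\mathbb{P}$ of a Lusztig sheaf has singular support contained in Lusztig's nilpotent variety $\underline{\Lambda}^\a_{\vec{Q}}$; its support is therefore an irreducible component of $\underline{\Lambda}^\a_{\vec{Q}}$, and one classifies these components. One checks that each component corresponds to (the closure of) exactly one stratum $S^{\circ}_{\underline{A}}$ with $\underline{A}$ aperiodic, the aperiodicity being forced exactly as in the cyclic-quiver case (a periodic tube contribution would make the corresponding would-be component too small / not Lagrangian, or not arise from a relevant stratum). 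Running the dimension count and matching with the local systems $\mathcal{L}_\chi$ (which are forced by the $\mathfrak{S}_l$-equivariance on $U_{l\delta}$, exactly as in the computation of $IC$-sheaves on configuration spaces) then pins down the list precisely.

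I expect the main obstacle to be the homogeneous-regular part: showing that \emph{all} the local systems $\mathcal{L}_\chi$ (not just the trivial one) are realized in $\QQ$, and conversely that these are the only ones. This is the step where the moduli space genuinely has positive dimension and looks like an adjoint quotient $\mathfrak{gl}_k/GL_k$ (Section~2.5); the relevant computation is essentially Lusztig's analysis of the Jordan quiver / one-loop case (Proposition for $\vec{Q}_0$ and the Springer-type resolution Lemma~\ref{L:springervariant}), transported to $\mathbb{P}^1\backslash D$ via the bijection of Theorem~\ref{T:Ringelstructure}~i). One must check that inducing the sheaves $\mathbbm{1}_\delta$ and their iterated convolutions, restricted to $U_{l\delta}$, produces precisely $\pi_!(\qlb_{\widetilde{U}_{l\delta}})$ with its full $\mathfrak{S}_l$-decomposition (\ref{E:2loc}), and that no ``extra'' IC-sheaf supported on a smaller stratum sneaks in; this combines the semismallness of a suitable resolution with the singular-support bound from Theorem~\ref{T:SSLU}. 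The preprojective/preinjective/non-homogeneous-tube pieces are comparatively routine given the cited results (Reineke's desingularization and Lusztig's cyclic-quiver theorem), and gluing the four pieces is handled uniformly by Remark~2.13, so the crux really is the interplay between the homogeneous tubes and the local systems $\mathcal{L}_\chi$. I would therefore organize the proof as: (1) recall the stratification and reduce to the four pieces; (2) dispatch $P$, $I$ via Reineke, $N_k$ via Theorem~\ref{P:Luscyc}; (3) do the homogeneous regular case carefully; (4) glue via Remark~2.13 for the inclusion; (5) invoke singular supports (Theorem~\ref{T:SSLU}) and the classification of components of $\underline{\Lambda}^\a_{\vec{Q}}$ for the reverse inclusion, closing with the dimension/local-system matching.
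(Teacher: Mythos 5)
Your ``$\supset$'' direction matches the paper's strategy closely: you identify the four types of building blocks ($IC(\mathcal{O}_P)$, $IC(\mathcal{O}_I)$, $IC(\mathcal{O}_{N_k})$ aperiodic, $IC(U_{l\delta},\mathcal{L}_\chi)$), handle $P$ and $I$ by Reineke's desingularization via Lemma~\ref{L:231}, handle the non-homogeneous tubes by Theorem~\ref{P:Luscyc}, do the homogeneous-tube local systems by a direct computation (the paper's Example~2.28), and glue by Remark~2.13~ii). That is exactly the paper's inclusion argument, and you correctly locate the crux in the homogeneous-regular part.

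For the exhaustion direction (``$\subset$'') you propose a genuinely different route via singular supports and Theorem~\ref{T:SSLU}, whereas the paper descends through the filtration using the \emph{restriction} functor $\underline{\Delta}$ as in Remark~2.13~iii): show that any $\mathbb{P}\in\mathcal{P}_{\vec{Q}}$ is generically supported on some $\overline{S_{\underline{A}}}$, then restrict along the canonical $(P,\text{reg},I)$-filtration to reduce to the already-classified building blocks. Your route has real gaps. First, the bound $SS(\mathbb{P})\subset\underline{\Lambda}_{\vec{Q}}$ is not a converse, and the paper explicitly cautions that the converse of Theorem~\ref{T:SSLU} should not be expected to hold; a priori many more components of $\underline{\Lambda}^\a_{\vec{Q}}$ could arise as singular supports than actually correspond to elements of $\mathcal{P}^\a$. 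Second, $SS(\mathbb{P})$ is a union of components of $\Lambda^\a$ (it is Lagrangian in a union of conormals), not ``the support of $\mathbb{P}$ being a component of $\Lambda^\a$''; and $SS$ may be reducible (Kashiwara--Saito's counterexample in type $A_5$), so identifying ``the'' relevant component requires extra structure, which in the paper is exactly what Theorem~\ref{T:KSSSup} supplies and which is not available before Lecture~4. Third, and most seriously, singular supports are blind to the local system: all $IC(S^\circ_{\underline{A}},\mathcal{L}_\chi)$ with the same $\underline{A}$ have the same singular support, so your method cannot, on its own, rule out local systems other than the $\mathcal{L}_\chi$ coming from $\mathfrak{S}_{l(\underline{A})}$-representations. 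That step has to be done with the restriction functor (or a direct analysis on $U_{l\delta}$), which is what the paper's Remark~2.13~iii) argument provides. Where singular supports \emph{do} legitimately enter in the paper is inside the building-block classification for the cyclic subquiver (Theorem~\ref{P:Luscyc}), not in the final affine-quiver exhaustion.
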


\vspace{.1in}

Observe again that all the elements of $\mathcal{P}_{\vec{Q}}$ are self-dual (because any representation of a symmetric group is self dual). We refer the reader to the original papers for the full proof. We will just sketch the strategy here. One begins by showing that $\mathcal{P}_{\vec{Q}}$ contains all simple perverse sheaves of the form $IC(\mathcal{O}_P)$, $IC(\mathcal{O}_I)$, $IC(U_{l\delta}, \mathcal{L}_{\chi})$ and $IC(\mathcal{O}_{N_i})$ (for $\mathcal{O}_{N_i}$ aperiodic). For the first two cases, the argument is the same as in the finite type case (using Lemma~\ref{L:231}). For $IC(U_{l\delta},\mathcal{L}_{\chi})$ this results from a direct computation--see Example~2.28. The last case is more delicate. Next, for an aperiodic stratum data $\underline{A}=(P,l,N_1, \ldots, N_d, I)$ and $\chi \in Irr\;\mathfrak{S}_{l(\underline{A})}$ we use Remark~2.13, ii) to get
$$IC(\mathcal{O}_P) \star IC(U_{l\delta},\mathcal{L}_{\chi}) \star IC(\mathcal{O}_{N_1}) \star \cdots \star IC(\mathcal{O}_{N_d}) \star IC(\mathcal{O}_I) =IC(S^{\circ}_{\underline{A}},\mathcal{L}_{\chi}) \oplus \mathbb{T}$$
where $supp\;\mathbb{T} \in \overline{S_{\underline{A}}} \backslash S^{\circ}_{\underline{A}}$. This proves the inclusion ''$\supset$'' of the Theorem. To prove the opposite inclusion one first shows that any $\mathbb{P} \in \mathcal{P}_{\vec{Q}}$ is supported on $ \overline{S_{\underline{A}}}$ for some strata $\underline{A}$ and use the restriction functor as in Remark~2.13 iii).

\vspace{.2in}

\addtocounter{theo}{1}
\noindent \textbf{Example \thetheo .} Here is the smallest example for which a  nontrivial local system appears. Let $\vec{Q}$ be the Kronecker quiver (see \cite[Example~3.34]{Trieste} ) with two vertices $1,2$ and two arrows $x,y: 1 \to 2$~:

\centerline{
\begin{picture}(130, 50)
\put(30,20){\circle*{5}}
\put(100,20){\circle*{5}}
\put(27,10){$1$}
\put(97,10){$2$}
\put(55,28){$x$}
\put(55,8){$y$}
\put(65,25){\vector(1,0){5}}
\put(40,25){\line(1,0){50}}
\put(65,15){\vector(1,0){5}}
\put(40,15){\line(1,0){50}}
\put(-10,18){$\vec{Q}=$}
\end{picture}}

\vspace{.1in}

Let us first have a look at the dimension vector $\delta=\epsilon_1+\epsilon_2$ (the indivisible imaginary root). Then $E_{\delta}=Hom(k,k)^{\oplus 2} = k^2$. The group is $G_{\delta}=k^* \times k^*$. There is a unique closed orbit $\mathcal{O}_{(0,0)}$ (which corresponds to a direct sum of a preprojective and a preinjective module) and a $\mathbb{P}^1$-family of orbits $\mathcal{O}_{R_z}=G_{\delta} \cdot R_z$ corresponding to the family of regular modules 
$$R_z=(\lambda,\mu), \; \;\;\; z=(\lambda,\mu) \in \mathbb{P}^1(k).$$
Hence there are two strata here, $S_1=\{(0,0)\}$ and $S_2=\bigcup_{z} \mathcal{O}_{R_z}$. One can check directly that 
$$\mathcal{P}^{\delta}=\big\{ IC(S_1)=\qlb_{\{0,0)\}}, IC(S_2)=\qlb_{E_{\delta}}[2]\big\}.$$

\vspace{.1in}

We now consider the dimension vector $2\delta=2\epsilon_1+2\epsilon_2$. We have 
$$E_{2\delta}=Hom(k^2, k^2)^{ \oplus 2} =k^8$$
and the group is $G_{2\delta}=GL(2) \times GL(2)$. Let us briefly give the classification of representations; there are (unique) preprojective modules $P_2, P_{1,2^2}$ of dimension $\epsilon_2$ and $\epsilon_2+2\epsilon_2$ respectively; there are (unique) preinjective modules $I_1, I_{1^2,2}$ of respective dimensions $\epsilon_1, 2\epsilon_1+\epsilon_2$; besides the regular modules $R_{z}$ for $z \in \mathbb{P}^1$, which are of dimension $\delta$, there are regular modules $R_z^{(2)}$ for $z \in \mathbb{P}^1$ which are self-extensions of $R_z$ and hence of dimension $2\delta$; the representations of dimension $2\delta$ are obtained by combining the above indecomposables, namely
\begin{equation}\label{E:classind}
\big\{ I_1^{\oplus 2} \oplus P_2^{\oplus 2},\; I_{1^2,1} \oplus P_2, \;I_{1} \oplus P_{1,2^2}, \;I_1 \oplus P_2 \oplus R_z, \;R_{z_1} \oplus R_{z_2}, \;R_z^{(2)}\big\}.
\end{equation}
Only the last two families of representations are regular. 

\vspace{.1in}

Let us now list the various strata. It is easy to check that the regular locus is the open subset of $E_{2\delta}$ given by the conditions
$$S_{0,2,0}=E_{2\delta}^{\mathbb{R}}=\big\{ (x,y) \in Hom(k^2,k^2)^{\oplus 2}\;|\; Ker\;x \cap Ker\;y=\{0\};\; Im\;x + Im\;y =k^2\big\}.$$
Inside this regular locus the open set $U_{2\delta}$ is the complement of the divisor
$$\bigcup_{z \in \mathbb{P}^1}  \big( G_{2\delta} \cdot R_z^{(2)} \cup G_{2\delta}\cdot R_z^{\oplus 2} \big).$$
The other strata are (using the notation of Section~2.5)~:
$$S_{P_2, 1, I_{ 1}}=\bigcup_{z \in \mathbb{P}^1} G_{2\delta} \cdot (I_1 \oplus R_z \oplus P_2),$$
$$S_{P_{1,2^2}, 0, I_{1}}=G_{2\delta} \cdot (I_{1} \oplus P_{1,2^2}), \qquad S_{P_{2}, 0, I_{1^2,2}}=G_{2\delta} \cdot (I_{1^2,2} \oplus P_{2}),$$ 
and the closed strata
$$S_{P_2^{\oplus 2}, I_1^{\oplus 2}}=G_{2\delta} \cdot (I_1^{\oplus 2} \oplus P_2^{\oplus 2}).$$
These strata are of respective dimensions~$4,5,5$ and $0$.
 
\vspace{.1in}
 
The open strata $U_{2\delta}=S_{0,2,0}^{\circ}$ may be explicitly described. There is a natural map $U_{\delta} \to \mathbb{P}^1$ (essentially the quotient map by $G_{\delta}$) and hence also a map $\psi: U_{\delta} \times U_{\delta} \to \mathbb{P}^1 \times \mathbb{P}^1$. This map is $G_{\delta} \times G_{\delta}$-equivariant, where $G_{\delta} \times G_{\delta}$ acts trivially on $\mathbb{P}^1 \times \mathbb{P}^1$. Put $(U_{\delta} \times U_{\delta})^0=\psi^{-1} (\mathbb{P}^1 \times \mathbb{P}^1 \backslash \Delta)$ where $\Delta =\{ (z,z)\in \mathbb{P}^1 \times \mathbb{P}^1\}$ is the diagonal. Let us also fix a splitting $V_{2\delta} =V_{\delta} \oplus V_{\delta}$. This induces an embedding $G_{\delta} \times G_{\delta}\subset G_{2\delta}$. Let $N \subset G_{2\delta}$ be the subgroup generated by $G_{\delta} \times G_{\delta}$ and the permutation $\sigma \in Aut(V_{\delta} \oplus V_{\delta}),\; (a,b) \mapsto (b,a)$. The group $N$ normalizes $G_{\delta} \times G_{\delta}$, contains it with index $2$ and acts on $U_{\delta} \times U_{\delta}$ via $\sigma (u,v)=(v,u)$. We now have an $N$-equivariant map $\psi': (U_{\delta} \times U_{\delta}^0 \to S^2 \mathbb{P}^1 \backslash \Delta$ and an identification
$$\psi'': U_{2\delta} \simeq G_{2\delta}\underset{N}{\times} (U_{\delta} \times U_\delta)^0 \to S^2 \mathbb{P}^1\backslash \Delta.$$
The morphism $\psi''$ is $G_{2\delta}$-equivariant and may be thought of as a quotient map.

\vspace{.1in}

To exhibit the desired local system, we consider the restriction of the Lusztig sheaf $L_{\delta,\delta}$ to $U_{2\delta}$. By definition, $(L_{\delta,\delta})_{| R_{z_1} \oplus R_{z_2}} =H^*(Gr_{\delta}(R_{z_1} \oplus R_{z_2}),\qlb)$ with
$$Gr_{\delta}(R_{z_1} \oplus R_{z_2})=\big\{ M \subset R_{z_1} \oplus R_{z_2}\;|\; \underline{dim}\;M=\delta \big\}.$$
Since $z_1 \neq z_2$, $Gr_{\delta}(R_{z_1} \oplus R_{z_2})$ consists of two points and $L_{\delta,\delta}$ restricts to a rank two local system over $U_{2\delta}$. It is clear that $(L_{\delta,\delta})_{U_{2\delta}}=(\psi'')^*(\mathcal{L}^{\mathbb{P}^1}_{reg})$, where $\mathcal{L}^{\mathbb{P}^1}_{reg}=\pi_!(\qlb)$ with $\pi: \mathbb{P}^1 \times \mathbb{P}^1 \backslash \Delta \to S^2\mathbb{P}^1 \backslash \Delta$ being the canonical projection. One has
$\pi_!(\qlb)=\mathcal{L}^{\mathbb{P}^1}_{triv} \oplus \mathcal{L}^{\mathbb{P}^1}_{sign}$ where $\mathcal{L}^{\mathbb{P}^1}_{triv} , \mathcal{L}^{\mathbb{P}^1}_{sign}$are the rank one local systems corresponding to the trivial and sign representations of the braid group $B_2=\pi_1(S^2\mathbb{P}^1 \backslash \Delta)$. Note that these representations in fact come from the symmetric group $\mathfrak{S}_2$ via the quotient $B_2 \tto \mathfrak{S}_2$. In conclusion, we get
$$(L_{\delta,\delta})_{|U_{2\delta}} =\mathcal{L}_{triv} \oplus \mathcal{L}_{sign}.$$
In particular, $IC(U_{2\delta}, \mathcal{L}_{triv}), IC(U_{2\delta},\mathcal{L}_{sign})$ belong to $\mathcal{P}_{2\delta}$. The other elements are easier to determine and one obtains in the end~:
\begin{equation}
\begin{split}
\mathcal{P}_{2\delta}=\big\{ &IC(S_{P_2^{\oplus 2}, I_1^{\oplus 2}}), \; IC(S_{P_{2}, 0, I_{1^2,2}}),\; IC(S_{P_{1,2^2}, 0, I_{1}}),\; IC(S_{P_2, 1, I_{ 1}}),\\
&\;IC(U_{2\delta}, \mathcal{L}_{triv}), \;IC(U_{2\delta},\mathcal{L}_{sign})
\big\}.
\end{split}
\end{equation}
\endexample

\vspace{.2in}

\newpage

\centerline{\large{\textbf{Lecture~3.}}}
\addcontentsline{toc}{section}{\tocsection {}{}{Lecture~3.}}

\setcounter{section}{3}
\setcounter{theo}{0}
\setcounter{equation}{0}

\vspace{.2in}

In this Lecture, we explain the relationship between the Hall category $\QQ$ of an arbitrary quiver $\vec{Q}$ and the quantum enveloping algebra of the Kac-Moody Lie algebra naturally associated to $\vec{Q}$. Namely we show that the Grothendieck group $\mathcal{K}_{\vec{Q}}$ of $\QQ$ is naturally isomorphic to $\U^{\Z}_{\v}(\mathfrak{n}_+)$. The quantum variable $\v$ comes from the grading on $\mathcal{K}_{\vec{Q}}$ corresponding to the shift $\mathbb{P} \to \mathbb{P}[1]$ (i.e. from the cohomological grading). This result was proved by Lusztig in the early 90's. Its most important corollary is the existence and construction of the \textit{canonical basis} $\mathbf{B}$ of $\mathbf{U}^{\Z}_\v(\mathfrak{n}_+)$, which is formed by the classes of the simple perverse sheaves in $\mathcal{P}_{\vec{Q}}$.

\vspace{.1in}

We will begin by giving an elementary proof of Lusztig's theorem for finite type quivers. This proof involves the explicit classification of Section~2.3 (there is an analogous ``hands on'' approach in the case of affine (noncyclic) quivers but we won't detail it, see \cite{LZ}). The proof in the general case requires more work, and hinges in a crucial way on the use of a Fourier-Deligne transform (which, roughly speaking, allows to change the orientation
of the quiver at any time). This ingredient is also necessary to show that the canonical basis $\mathbf{B}$ of $\U^{\Z}_\v(\n_+)$ is independent of the particular choice of the quiver to construct it. Our treatment of Sections~3.4 and 3.5 borrows heavily from \cite{Lusbook}.

\vspace{.1in}

In the last part of this Lecture we introduce and study the (Frobenius)  trace map, which associates to a given complex $\mathbb{P} \in \QQ$ a constructible function on the moduli space $\underline{\mathcal{M}}_{\vec{Q}}(\mathbb{F}_q)$ of representations of the quiver \textit{over the finite field} $\mathbb{F}_q$. This provides a direct link with the \textit{Hall algebra} $\mathbf{H}_{\vec{Q}}$ as it is defined in \cite{Trieste}. There are various subtle issues to be dealt with here (such as the existence of a Weil structure on the objects of $\QQ$, questions of purity, etc...). We collect these --mostly without proof-- in the last section. The reference for that Section is \cite{LusHall}.

\vspace{.3in}

\centerline{\textbf{3.1. The graded Grothendieck group of the Hall category.}}
\addcontentsline{toc}{subsection}{\tocsubsection {}{}{\; 3.1. The graded Grothendieck group of the Hall category.}}

\vspace{.15in}

Let $\vec{Q}$ be any quiver as in Section~1.1 and let $\QQ=\bigsqcup_{\gamma} \mathcal{Q}^{\gamma}$ and $\mathcal{P}_{\vec{Q}}=\bigsqcup_{\gamma} \mathcal{P}^{\gamma}$ be the associated Hall category and set of simple objects. We denote by $\mathcal{K}_{\vec{Q}}=\bigoplus_{\gamma} \mathcal{K}^{\gamma}$ the Grothendieck group of $\QQ$. The class of an object $\mathbb{P}$ of $\QQ$ will be denoted by $\mathbf{b}_{\mathbb{P}}$. We equip $\mathcal{K}_{\vec{Q}}$ with the structure of a $\Z[v,v^{-1}]$-module by $v^n \mathbf{b}_{\mathbb{P}}=\mathbf{b}_{\mathbb{P}[n]}$. By construction, 
$$\mathcal{K}^{\gamma}=\bigoplus_{\substack{\mathbb{P} \in \mathcal{P}^{\gamma}\\ n \in \Z}} \Z \mathbf{b}_{\mathbb{P}[n]}=\bigoplus_{\mathbb{P} \in \mathcal{P}^{\gamma}} \Z[v,v^{-1}] \mathbf{b}_{\mathbb{P}}.$$
In particular, $\mathcal{K}_{\vec{Q}}$ is a free $\Z[v,v^{-1}]$-module. 

\vspace{.1in}

The induction and restriction functors
\begin{align*}
\underline{m}_{\a,\beta}:~& \mathcal{Q}^{\a} \times \mathcal{Q}^{\beta} \to \mathcal{Q}^{\a+\beta},\\
\underline{\Delta}_{\a,\beta}:~& \mathcal{Q}^{\a+\beta} \to \mathcal{Q}^{\a} \times \mathcal{Q}^{\beta}
\end{align*}
are biadditive so they give rise to bilinear maps
\begin{align*}
{m}_{\a,\beta}:~& \mathcal{K}^{\a} \times \mathcal{K}^{\beta} \to \mathcal{K}^{\a+\beta},\\
{\Delta}_{\a,\beta}:~& \mathcal{K}^{\a+\beta} \to \mathcal{K}^{\a} \times \mathcal{K}^{\beta}
\end{align*}
and hence also to graded bilinear operations $m: \mathcal{K}_{\vec{Q}} \otimes \mathcal{K}_{\vec{Q}} \to \mathcal{K}_{\vec{Q}}$ and $\Delta: \mathcal{K}_{\vec{Q}} \to \mathcal{K}_{\vec{Q}} \otimes \mathcal{K}_{\vec{Q}}$.

\vspace{.1in}

\begin{prop}\label{P:assoc2} The maps $m$ and $\Delta$ endow $\mathcal{K}_{\vec{Q}}$ with the structure of an associative algebra and a coassociative coalgebra respectively.
\end{prop}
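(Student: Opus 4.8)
The plan is to deduce the proposition at the level of Grothendieck groups from the categorical facts already in hand, namely Proposition~\ref{P:closed} (the functors preserve $\QQ$) and Proposition~\ref{P:assoc} (the functors are associative and coassociative up to natural isomorphism). First I would check that ${m}_{\a,\b}$ and ${\Delta}_{\a,\b}$ genuinely descend to $\Z[v,v^{-1}]$-bilinear maps on the $\mathcal{K}^{\gamma}$. Well-definedness on isomorphism classes is immediate since $\underline{m}$ and $\underline{\Delta}$ are functors and land back in $\QQ$ by Proposition~\ref{P:closed}; additivity in each variable has been recorded already (the building blocks $p^*$, $r_{\#}$, $q_!$, $\kappa_!$, $\iota^*$ and the external product $\boxtimes$ all commute with finite direct sums). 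The point deserving a line of verification is $\Z[v,v^{-1}]$-bilinearity rather than mere $\Z$-bilinearity: since $\underline{m}=q_!\,r_{\#}\,p^*[\dim p]$ and $\underline{\Delta}=\kappa_!\,\iota^*[-\langle\a,\b\rangle]$ are compositions of (co)inverse images, proper pushforwards and $r_{\#}$ followed by a shift depending only on $\a,\b$, they commute with $[1]$, so $m(\mathbf{b}_{\mathbb{P}[n]}\otimes\mathbf{b}_{\mathbb{Q}})=m(\mathbf{b}_{\mathbb{P}}\otimes\mathbf{b}_{\mathbb{Q}})[n]$, and likewise in the second variable and for $\Delta$. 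This yields the graded bilinear operations $m\colon\mathcal{K}_{\vec{Q}}\otimes\mathcal{K}_{\vec{Q}}\to\mathcal{K}_{\vec{Q}}$ and $\Delta\colon\mathcal{K}_{\vec{Q}}\to\mathcal{K}_{\vec{Q}}\otimes\mathcal{K}_{\vec{Q}}$.

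Next comes associativity. Proposition~\ref{P:assoc} gives a natural isomorphism of functors $\underline{m}_{\a,\b+\ga}\circ(\mathrm{Id}\times\underline{m}_{\b,\ga})\simeq\underline{m}_{\a+\b,\ga}\circ(\underline{m}_{\a,\b}\times\mathrm{Id})$; for the present purpose only the existence of such an isomorphism matters, not its canonicity. Evaluating it on $\mathbb{P}\boxtimes\mathbb{Q}\boxtimes\mathbb{R}$ with $\mathbb{P},\mathbb{Q},\mathbb{R}\in\mathcal{P}_{\vec{Q}}$ and passing to classes gives $m(\mathbf{b}_{\mathbb{P}},m(\mathbf{b}_{\mathbb{Q}},\mathbf{b}_{\mathbb{R}}))=m(m(\mathbf{b}_{\mathbb{P}},\mathbf{b}_{\mathbb{Q}}),\mathbf{b}_{\mathbb{R}})$. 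Since $\{\mathbf{b}_{\mathbb{P}}\}_{\mathbb{P}\in\mathcal{P}_{\vec{Q}}}$ is a $\Z[v,v^{-1}]$-basis of $\mathcal{K}_{\vec{Q}}$ and $m$ is $\Z[v,v^{-1}]$-bilinear, the law $m\circ(m\otimes\mathrm{Id})=m\circ(\mathrm{Id}\otimes m)$ follows on all of $\mathcal{K}_{\vec{Q}}$. Coassociativity is obtained identically from the isomorphism~(\ref{E:P2}): the two iterated restrictions $(\underline{\Delta}_{\a,\b}\times\mathrm{Id})\circ\underline{\Delta}_{\a+\b,\ga}$ and $(\mathrm{Id}\times\underline{\Delta}_{\b,\ga})\circ\underline{\Delta}_{\a,\b+\ga}$ make sense on $\QQ$ because $\underline{\Delta}$ preserves it (Proposition~\ref{P:closed}), and applying (\ref{E:P2}) to an object of $\mathcal{Q}^{\a+\b+\ga}$ and taking classes gives $(\Delta\otimes\mathrm{Id})\circ\Delta=(\mathrm{Id}\otimes\Delta)\circ\Delta$ on the generators, hence everywhere by $\Z[v,v^{-1}]$-linearity.

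One can moreover note, although it is not part of the assertion, that $\mathbf{b}_{\mathbbm{1}_0}$ is a two-sided unit and the projection $\mathcal{K}_{\vec{Q}}\to\mathcal{K}^0=\Z[v,v^{-1}]\mathbf{b}_{\mathbbm{1}_0}$ a counit, with the convention that the empty tuple gives the Lusztig sheaf $\mathbbm{1}_0$ (so $\mathbbm{1}_0\in\mathcal{P}^0$): the degenerate correspondences $\underline{\mathcal{E}}^{0,\b}$ and $\underline{\mathcal{E}}^{\a,0}$ collapse to $\underline{\mathcal{M}}_{\vec{Q}}^{\b}$, resp. $\underline{\mathcal{M}}_{\vec{Q}}^{\a}$, with $p_1,p_2$ isomorphisms and vanishing shift, so $\underline{m}(\mathbbm{1}_0\boxtimes\mathbb{P})\simeq\mathbb{P}\simeq\underline{m}(\mathbb{P}\boxtimes\mathbbm{1}_0)$. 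I do not expect a genuine obstacle in this proof: it is essentially a formal consequence of Proposition~\ref{P:assoc}. The only step calling for care is the first one --- verifying that $m$ and $\Delta$ descend to honest $\Z[v,v^{-1}]$-\emph{bilinear} maps (compatibility with direct sums and, crucially, with the cohomological shift) --- since it is precisely this compatibility that turns a natural isomorphism of functors into an equality of structure maps on the Grothendieck group.
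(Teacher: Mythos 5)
Your proof is correct and follows the same route as the paper, which simply states that the result ``follows directly from Proposition~\ref{P:assoc}''; you have spelled out the straightforward bookkeeping (descent to the Grothendieck group, $\Z[v,v^{-1}]$-bilinearity via compatibility with direct sums and shifts, evaluation of the associativity isomorphisms on a basis) that the paper leaves implicit. The remarks about the unit and counit are a harmless and correct addition beyond what the proposition asserts.
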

\begin{proof} The follows directly from Proposition~\ref{P:assoc}.\end{proof}

\vspace{.1in}

We will sometimes wite $a \cdot b$ instead of $m(a,b)$. We will later prove that $m$ and $\Delta$ satisfy some compatibility relation turning $\mathcal{K}_{\vec{Q}}$ into a \textit{twisted} bialgebra, see Corollary~\ref{C:bialg}. For the moment we have to be content with Proposition~\ref{P:assoc2}. 

\vspace{.1in}

Let us continue to translate the results of Lecture~1 into algebraic properties of $\mathcal{K}_{\vec{Q}}$. Let $u \mapsto \overline{u}$ be the semilinear endomorphism of $\mathcal{K}_{\vec{Q}}$ defined by $\overline{v}=v^{-1}$ and
$\overline{\mathbf{b}_{\mathbb{P}}}=\mathbf{b}_{D\mathbb{P}}.$

\vspace{.1in}

\begin{prop}\label{P:Verdier} The map $u \mapsto \overline{u}$ is a ring involution, i.e. $\overline{a} \cdot \overline{b}=\overline{a \cdot b}$.
\end{prop}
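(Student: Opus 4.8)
The plan is to reduce the statement $\overline{a \cdot b} = \overline{a} \cdot \overline{b}$ to a statement about Lusztig sheaves and then invoke Lemma~\ref{L:Verdierind}. Since both sides of the desired identity are $\Z[v,v^{-1}]$-semilinear in each argument (using $\overline{v}=v^{-1}$ and $\overline{\mathbf{b}_{\mathbb{P}[n]}}=\mathbf{b}_{D\mathbb{P}[n]}=\mathbf{b}_{(D\mathbb{P})[-n]}=v^{-n}\overline{\mathbf{b}_{\mathbb{P}}}$), and since $\mathcal{K}_{\vec{Q}}$ is spanned over $\Z[v,v^{-1}]$ by the classes $\mathbf{b}_{\mathbb{P}}$ with $\mathbb{P} \in \mathcal{P}_{\vec{Q}}$, it suffices to check the identity when $a = \mathbf{b}_{\mathbb{P}}$ and $b = \mathbf{b}_{\mathbb{Q}}$ for two simple perverse sheaves $\mathbb{P}, \mathbb{Q} \in \mathcal{P}_{\vec{Q}}$.

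First I would unwind the definitions. By definition of the product on $\mathcal{K}_{\vec{Q}}$ coming from $\underline{m}$, we have $\mathbf{b}_{\mathbb{P}} \cdot \mathbf{b}_{\mathbb{Q}} = \mathbf{b}_{\mathbb{P} \star \mathbb{Q}} = \mathbf{b}_{\underline{m}(\mathbb{P} \boxtimes \mathbb{Q})}$. Applying the bar involution gives $\overline{\mathbf{b}_{\mathbb{P}} \cdot \mathbf{b}_{\mathbb{Q}}} = \mathbf{b}_{D(\mathbb{P} \star \mathbb{Q})}$. On the other hand, $\overline{\mathbf{b}_{\mathbb{P}}} \cdot \overline{\mathbf{b}_{\mathbb{Q}}} = \mathbf{b}_{D\mathbb{P}} \cdot \mathbf{b}_{D\mathbb{Q}} = \mathbf{b}_{(D\mathbb{P}) \star (D\mathbb{Q})}$. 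So the claim is exactly the identity of classes $\mathbf{b}_{D(\mathbb{P} \star \mathbb{Q})} = \mathbf{b}_{(D\mathbb{P}) \star (D\mathbb{Q})}$, and this would follow from an isomorphism of complexes $D(\mathbb{P} \star \mathbb{Q}) \simeq (D\mathbb{P}) \star (D\mathbb{Q})$.

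The key step is then Lemma~\ref{L:Verdierind}, which states that $\underline{m}$ commutes with Verdier duality: $D \circ \underline{m} \simeq \underline{m} \circ D$. Here one uses that Verdier duality on the product $E_{\alpha} \times E_{\beta}$ satisfies $D(\mathbb{P} \boxtimes \mathbb{Q}) \simeq D\mathbb{P} \boxtimes D\mathbb{Q}$ (duality is compatible with external tensor product), so that
$$D(\mathbb{P} \star \mathbb{Q}) = D\,\underline{m}(\mathbb{P} \boxtimes \mathbb{Q}) \simeq \underline{m}\big(D(\mathbb{P} \boxtimes \mathbb{Q})\big) \simeq \underline{m}(D\mathbb{P} \boxtimes D\mathbb{Q}) = (D\mathbb{P}) \star (D\mathbb{Q}).$$
Passing to Grothendieck group classes yields $\overline{\mathbf{b}_{\mathbb{P}} \cdot \mathbf{b}_{\mathbb{Q}}} = \overline{\mathbf{b}_{\mathbb{P}}} \cdot \overline{\mathbf{b}_{\mathbb{Q}}}$, and semilinearity extends this to all of $\mathcal{K}_{\vec{Q}}$. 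One should also note that $\mathcal{Q}_{\vec{Q}}$ is preserved by $D$ (remarked after the definition of $\mathcal{Q}_{\vec{Q}}$ and again after Proposition~\ref{P:closed}), so the bar involution is well-defined on $\mathcal{K}_{\vec{Q}}$ in the first place. There is no real obstacle here; the only minor point requiring care is the bookkeeping of shifts and the semilinearity argument, ensuring that $D\mathbb{P}$ for $\mathbb{P}$ simple perverse is again (a shift of) an element of $\mathcal{P}_{\vec{Q}}$, which is exactly the Verdier-stability of $\mathcal{P}^{\gamma}$ recorded via equation~(\ref{E:Verdierdual}).
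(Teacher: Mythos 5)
Your proof is correct and follows the same route as the paper, which simply cites Lemma~\ref{L:Verdierind} as the source of the identity; you have merely spelled out the semilinear reduction to simple perverse sheaves and the compatibility of Verdier duality with $\boxtimes$, both of which the paper leaves implicit.
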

\begin{proof} This is Lemma~\ref{L:Verdierind}.\end{proof}

\vspace{.1in}

We will give a proof of the following result in Section~3.5~:

\vspace{.1in}

\begin{prop}[Lusztig]\label{P:LU} All the simple perverse sheaves in $\PQ$ are self-dual.
\end{prop}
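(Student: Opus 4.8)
\medskip

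\noindent\textbf{Plan of proof.} The plan is to bootstrap the assertion from the two facts already in hand at the end of Lecture~1 — every Lusztig sheaf $L_{\a_1,\ldots,\a_n}$ is Verdier self\-dual (see (\ref{E:Verdierdual})), and the set $\mathcal{P}^{\gamma}$ is stable under Verdier duality — together with one further, more combinatorial input: for each simple perverse sheaf $\mathbb{P}\in\mathcal{P}^{\gamma}$ the existence of a ``leading'' Lusztig sheaf in which $\mathbb{P}$ occurs once, unshifted, and is the only unshifted constituent carrying its support. Rephrased on the Grothendieck group, the bar involution $u\mapsto\bar u$ of Section~3.1 fixes every monomial class $\mathbf{b}_{L_{\a_1,\ldots,\a_n}}$ while permuting the basis $\{\mathbf{b}_{\mathbb{P}}\}_{\mathbb{P}\in\PQ}$ according to $\mathbf{b}_{\mathbb{P}}\mapsto\mathbf{b}_{D\mathbb{P}}$; one must show this permutation is trivial, and the invariance of the monomial classes alone does not suffice for that.

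First I would isolate the geometric mechanism that neutralises Verdier duality here: for a simple perverse sheaf $\mathbb{P}=IC(\mathcal{O},\mathcal{L})$ one has $D\mathbb{P}=IC(\mathcal{O},\mathcal{L}^{\vee})$, so $D$ preserves the support and carries unshifted simple perverse sheaves to unshifted simple perverse sheaves. Hence, if $\mathbb{P}$ occurs with no shift and multiplicity one in a self\-dual semisimple complex $L$, and if $\mathbb{P}$ is moreover the \emph{unique} unshifted simple summand of $L$ whose support equals $\mathrm{supp}\,\mathbb{P}$, then $D\mathbb{P}$ — being a genuine unshifted summand of $DL=L$ with the same support — must coincide with $\mathbb{P}$. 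Thus the whole statement reduces to producing, for every $\mathbb{P}\in\mathcal{P}^{\gamma}$, a sequence $(\a_1,\ldots,\a_n)$ of (multiples of) simple dimension vectors such that the proper map $q_{\a_1,\ldots,\a_n}\colon E_{\a_1,\ldots,\a_n}\to E_{\gamma}$ has image contained in $\mathrm{supp}\,\mathbb{P}$ and restricts, over the open dense stratum $U$ of $\mathrm{supp}\,\mathbb{P}$, to a morphism whose Decomposition Theorem contributes $\mathbb{P}$ with multiplicity one (for instance an isomorphism onto $U$ when $\mathcal{L}$ is trivial), so that
$$L_{\a_1,\ldots,\a_n}\;\simeq\;\mathbb{P}\;\oplus\;\bigoplus_{\mathbb{P}'}(\mathbb{P}')^{\oplus m_{\mathbb{P}'}(v)},\qquad \mathrm{supp}\,\mathbb{P}'\subsetneq\mathrm{supp}\,\mathbb{P}\ \text{ for all }\mathbb{P}'\text{ occurring}.$$

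The hard part is precisely this last step, the existence of such a leading monomial for \emph{every} element of $\mathcal{P}^{\gamma}$. For finite type quivers it is already available: Reineke's desingularisation (Proposition~\ref{P:Reineke}) is an isomorphism over $\mathcal{O}$, so (\ref{E:27}) gives $L_{\underline{\nu}}$ in exactly the required shape, and one concludes at once that every $\mathbb{P}\in\mathcal{P}^{\gamma}$ is self\-dual. For affine (and cyclic) quivers it follows from the explicit descriptions of Lecture~2, where in addition every local system $\mathcal{L}_{\chi}$ that can occur is self\-dual on independent grounds. For a general quiver I would invoke the Fourier--Deligne transform of Section~3.4: since it identifies the Hall categories attached to the various orientations of $\vec{Q}$ and transports $\underline{m}$ up to normalisation, one may construct the sequence $(\a_1,\ldots,\a_n)$ after passing to a convenient orientation and then transport it back, which is also what makes the support order used above behave well. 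Once leading monomials are available for all $\mathbb{P}$, the support comparison of the second paragraph yields $D\mathbb{P}\simeq\mathbb{P}$ for every $\mathbb{P}\in\PQ$ simultaneously, with no separate induction on $\gamma$ required.
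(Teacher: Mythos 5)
Your general mechanism — exhibit $\mathbb{P}$ as the unique unshifted constituent of some self-dual semisimple complex whose support equals $\mathrm{supp}\,\mathbb{P}$, then conclude $D\mathbb{P}\simeq\mathbb{P}$ — is sound, and it recovers the finite-type case from Reineke's desingularization and the affine case from the explicit classification of Lecture~2 (together with the separate observation that the $\mathcal{L}_{\chi}$ are self-dual). But for an arbitrary (wild) quiver the argument has a genuine gap: the "leading monomial'' claim, namely that every $\mathbb{P}\in\mathcal{P}^{\gamma}$ occurs once and unshifted in some Lusztig sheaf $L_{\a_1,\ldots,\a_n}$ and is the only unshifted constituent carrying its support, is not established anywhere in the text, is far from obvious once $E_{\gamma}$ has moduli of orbits and nontrivial equivariant local systems, and is not produced merely by ``passing to a convenient orientation.'' The Fourier--Deligne transform transports Lusztig sheaves and preserves $\mathcal{P}_{\vec{Q}}$, but it does not by itself manufacture a resolution of $\mathrm{supp}\,\mathbb{P}$ with the required multiplicity-one and uniqueness properties, so invoking it here does not close the gap.

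The paper's proof avoids this entirely by replacing your global ``leading monomial'' by a local, inductive device. It runs an induction on the dimension vector $\gamma$: after a Fourier--Deligne transform one may assume $\mathbb{P}\in\mathcal{P}^{\gamma}_{i;d}$ with $d\geq 1$ for a sink $i$; Lemma~\ref{L:Proofkey} then produces a simple $\mathbb{R}\in\mathcal{Q}^{\gamma-d\epsilon_i}$ with
\[
\mathbbm{1}_{d\epsilon_i}\star\mathbb{R}\;\simeq\;\mathbb{P}\oplus\mathbb{T},\qquad \mathrm{supp}\,\mathbb{T}\subset E_{\gamma}^{i;\geq d+1}.
\]
By the induction hypothesis $D\mathbb{R}\simeq\mathbb{R}$, so $\mathbbm{1}_{d\epsilon_i}\star\mathbb{R}$ is self-dual by Lemma~\ref{L:Verdierind}; and since Verdier duality preserves supports, the stratum condition on $\mathbb{T}$ separates $\mathbb{P}$ from everything in $\mathbb{T}$, forcing $D\mathbb{P}\simeq\mathbb{P}$. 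Two points worth absorbing: the self-dual complex used is $\mathbbm{1}_{d\epsilon_i}\star\mathbb{R}$, which is \emph{not} a Lusztig sheaf because $\mathbb{R}$ is a simple perverse sheaf rather than a constant sheaf — this flexibility is exactly what obviates your leading-monomial hypothesis; and the separation is achieved via the locally closed pieces $E_{\gamma}^{i;\geq d}$ rather than via containment of supports, which avoids any claim about how many simple summands share a given support. If you want to salvage your non-inductive formulation, you would first have to prove the leading-monomial existence by an induction of essentially the same shape — at which point you have rediscovered the paper's argument rather than replaced it.
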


As a consequence, we have 
\begin{equation}\label{E:selfdual}
\overline{\mathbf{b}_{\mathbb{P}}}=\mathbf{b}_{\mathbb{P}}
\end{equation}
for all $\mathbb{P} \in \PQ$.

\vspace{.1in}

Let $\{\,,\,\}: \mathcal{K}_{\vec{Q}} \otimes \mathcal{K}_{\vec{Q}} \to \Z((v))$ be the pairing defined by $\{\mathbf{b}_{\mathbb{P}}, \mathbf{b}_{\mathbb{Q}}\}=\{\mathbb{P}, \mathbb{Q}\}$ (see Section~1.5.). By (\ref{E:Obvious}), $\{\,,\,\}$ is a well-defined $\Z[v,v^{-1}]$-linear pairing and it is nondegenerate by Corollary~\ref{C:scalarprod}. It satisfies (see Example 2.1.)
\begin{equation}\label{E:22n}
\{ \mathbf{b}_{\mathbbm{1}_{\epsilon_i}},  \mathbf{b}_{\mathbbm{1}_{\epsilon_j}}\}=\frac{\delta_{i,j}}{1-v^2}.
\end{equation}

Moreover,

\vspace{.1in}

\begin{prop}\label{P:Hopfpairing} The pairing $\{\,,\,\}$ is a Hopf pairing, namely $\{a\cdot b, c\}=\{a \otimes b, c\}$ for all $a,b,c \in \mathcal{K}_{\vec{Q}}$.\end{prop}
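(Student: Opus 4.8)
The plan is to obtain this statement as the Grothendieck-group shadow of the geometric adjunction in Proposition~\ref{P:Hopfscalar}; here, as usual, $\{a\otimes b,c\}$ is shorthand for the value of the natural extension of $\{\,,\,\}$ to $\mathcal{K}_{\vec{Q}}\otimes\mathcal{K}_{\vec{Q}}$ on $a\otimes b$ and $\Delta(c)$. First I would reduce to basis elements. Since $m$, $\Delta$ and $\{\,,\,\}$ are all $\Z[v,v^{-1}]$-(bi)linear, and since $\mathcal{K}_{\vec{Q}}=\bigoplus_\gamma\mathcal{K}^\gamma$ with $\{\mathcal{K}^\gamma,\mathcal{K}^{\gamma'}\}=0$ unless $\gamma=\gamma'$ (the pairing being defined degree by degree), it suffices to check the identity for $a=\mathbf{b}_{\mathbb{P}}$, $b=\mathbf{b}_{\mathbb{Q}}$, $c=\mathbf{b}_{\mathbb{R}}$ with $\mathbb{P}\in\mathcal{P}^\a$, $\mathbb{Q}\in\mathcal{P}^\b$ and $\mathbb{R}\in\mathcal{P}^{\a+\b}$ (both sides vanish unless the $\Z^I$-degrees match, in which case only the component $\underline{\Delta}_{\a,\b}$ of $\Delta$ contributes on the right). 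For such elements $a\cdot b=\mathbf{b}_{\underline{m}(\mathbb{P}\boxtimes\mathbb{Q})}$ by definition of $m$, and $\{\mathbf{b}_{\underline{m}(\mathbb{P}\boxtimes\mathbb{Q})},\mathbf{b}_{\mathbb{R}}\}=\{\underline{m}(\mathbb{P}\boxtimes\mathbb{Q}),\mathbb{R}\}$, since the pairing on $\mathcal{K}_{\vec{Q}}$ is by construction the geometric pairing of Section~1.5 evaluated on representatives, which is additive and shift-homogeneous by (\ref{E:Obvious}).

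Next I would apply Proposition~\ref{P:Hopfscalar} to get $\{\underline{m}(\mathbb{P}\boxtimes\mathbb{Q}),\mathbb{R}\}=\{\mathbb{P}\boxtimes\mathbb{Q},\underline{\Delta}_{\a,\b}(\mathbb{R})\}$, and then analyze the right-hand side. Writing $\mathbb{R}$ as a direct summand of a Lusztig sheaf and using Lemma~\ref{L:coprodun} together with Proposition~\ref{P:closed} and the standard fact that the external product of two simple perverse sheaves is simple perverse, the semisimple complex $\underline{\Delta}_{\a,\b}(\mathbb{R})$ decomposes (by Krull--Schmidt) as a finite direct sum $\bigoplus_i(\mathbb{R}'_i\boxtimes\mathbb{R}''_i)[n_i]$ with $\mathbb{R}'_i\in\mathcal{P}^\a$, $\mathbb{R}''_i\in\mathcal{P}^\b$; under the identification of the Grothendieck group of the target of $\underline{\Delta}_{\a,\b}$ with $\mathcal{K}^\a\otimes_{\Z[v,v^{-1}]}\mathcal{K}^\b$ (sending $\mathbf{b}_{\mathbb{R}'\boxtimes\mathbb{R}''}\mapsto\mathbf{b}_{\mathbb{R}'}\otimes\mathbf{b}_{\mathbb{R}''}$), the class $\mathbf{b}_{\underline{\Delta}_{\a,\b}(\mathbb{R})}$ maps to $\Delta_{\a,\b}(c)$ by definition of $\Delta_{\a,\b}$ on Grothendieck groups. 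Expanding $\{\mathbb{P}\boxtimes\mathbb{Q},\underline{\Delta}_{\a,\b}(\mathbb{R})\}$ over these summands via (\ref{E:Obvious}), what is left to prove is the multiplicativity of the geometric pairing under external tensor products, $\{\mathbb{P}\boxtimes\mathbb{Q},\mathbb{R}'\boxtimes\mathbb{R}''\}=\{\mathbb{P},\mathbb{R}'\}\cdot\{\mathbb{Q},\mathbb{R}''\}$. For this I would use $\Gamma_\a\times\Gamma_\b$ as the acyclic free $(G_\a\times G_\b)$-space: then $(\mathbb{P}\boxtimes\mathbb{Q})\otimes(\mathbb{R}'\boxtimes\mathbb{R}'')$ and its Borel construction factor as external products over $E_\a$ and $E_\b$, so the Künneth formula gives $H^\bullet_{G_\a\times G_\b}\big((\mathbb{P}\boxtimes\mathbb{Q})\otimes(\mathbb{R}'\boxtimes\mathbb{R}''),E_\a\times E_\b\big)\cong H^\bullet_{G_\a}(\mathbb{P}\otimes\mathbb{R}',E_\a)\otimes H^\bullet_{G_\b}(\mathbb{Q}\otimes\mathbb{R}'',E_\b)$, with an additive cohomological grading (the normalizing shift $2\dim\Gamma/G$ in the definition of $H^j_G$ being additive too), whence the generating series multiply. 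Assembling everything yields $\{a\cdot b,c\}=\sum_i v^{n_i}\{\mathbb{P},\mathbb{R}'_i\}\{\mathbb{Q},\mathbb{R}''_i\}=\{\mathbf{b}_{\mathbb{P}}\otimes\mathbf{b}_{\mathbb{Q}},\Delta(c)\}=\{a\otimes b,c\}$.

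The essential content — the projection-formula and base-change juggling — is already in Proposition~\ref{P:Hopfscalar}, so the genuinely new work here is bookkeeping. I expect the main (mild) obstacle to be the equivariant Künneth identity for $\{\,,\,\}$ with the correct normalization: one must check that a product of two sufficiently acyclic free spaces is again sufficiently acyclic for $G_\a\times G_\b$, and that the cohomological degree shifts add up correctly, so that the Poincaré series of the two factors genuinely multiply. The only other point requiring a word is the identification of the Grothendieck group of the target category of $\underline{\Delta}_{\a,\b}$ with $\mathcal{K}^\a\otimes\mathcal{K}^\b$, which follows from the decomposition of $\underline{\Delta}_{\a,\b}$ of a Lusztig sheaf into shifts of external products. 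Both are routine.
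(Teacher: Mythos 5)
Your proposal is correct and coincides with the paper's (very terse) argument, which is simply ``See Proposition~\ref{P:Hopfscalar}'': both deduce the Grothendieck-group identity from the geometric adjunction between $\underline{m}$ and $\underline{\Delta}$. You have merely made explicit the bookkeeping — reduction to basis vectors, the identification of $K_0$ of the target of $\underline{\Delta}_{\a,\b}$ with $\mathcal{K}^\a\otimes\mathcal{K}^\b$, and the Künneth multiplicativity of $\{\,,\,\}$ for external products — that the paper regards as immediate.
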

\begin{proof} See Proposition~\ref{P:Hopfscalar}.\end{proof}

\vspace{.1in}

The $\Z[v,v^{-1}]$-basis $\{\mathbf{b}_{\mathbb{P}}\;|\; \mathbb{P} \in \mathcal{P}_{\vec{Q}}\}$ of $\mathcal{K}_{\vec{Q}}$ enjoys, by its very definition, a number of positivity properties. For any $\mathbb{P}, \mathbb{P}'$ in $\mathcal{P}_{\vec{Q}}$ we have
\begin{equation}\label{E:Posmult}
\mathbf{b}_{\mathbb{P}} \cdot \mathbf{b}_{\mathbb{P}'} \in \bigoplus_{\mathbb{Q} \in \mathcal{P}_{\vec{Q}}} \N[v,v^{-1}] \mathbf{b}_{\mathbb{Q}},
\end{equation}
\begin{equation}\label{E:Poscop}
\Delta(\mathbf{b}_{\mathbb{P}} ) \in \bigoplus_{\mathbb{Q}, \mathbb{Q}' \in \mathcal{P}_{\vec{Q}}} \N[v,v^{-1}] \mathbf{b}_{\mathbb{Q}} \otimes \mathbf{b}_{\mathbb{Q}'},
\end{equation}
\begin{equation}\label{E:Posscal}
\{\mathbf{b}_{\mathbb{P}},\mathbf{b}_{\mathbb{P}'}\} \in \N((v)).
\end{equation}

\vspace{.15in}

In order to make the link with quantum groups more transparent in the next Section, we slightly extend the algebra $\mathcal{K}_{\vec{Q}}$ by adding an extra ``Cartan'' piece. Let $\mathbf{K}=\Z[v,v^{-1}] [K_0(\vec{Q})]=\bigoplus_{\a \in \Z^I} \Z[v,v^{-1}] \mathbf{k}_{\a}$ and set
$$\widetilde{\mathcal{K}}_{\vec{Q}} =\mathcal{K}_{\vec{Q}} \otimes \mathbf{K}.$$
We extend the algebra structure on $\mathcal{K}_{\vec{Q}}$ to $\widetilde{\mathcal{K}}_{\vec{Q}}$ by imposing
\begin{equation}\label{E:drtyu}
\begin{split}
\mathbf{k}_{\a} \mathbf{k}_{\beta}&=\mathbf{k}_{\a+\beta},\\
\mathbf{k}_{\a} u_{\gamma} \mathbf{k}^{-1}_{\a}=&v^{-(\a,\gamma)} u_{\gamma}, \qquad \forall\; u_{\gamma} \in \mathcal{K}^{\gamma},
\end{split}
\end{equation}
and we define a coalgebra structure $\widetilde{\Delta}: \widetilde{\mathcal{K}}_{\vec{Q}} \to \widetilde{\mathcal{K}}_{\vec{Q}} \otimes \widetilde{\mathcal{K}}_{\vec{Q}}$ via the formulas
\begin{equation}\label{E:DeltaK}
\begin{split}
\widetilde{\Delta}(\mathbf{k}_{\a})&=\mathbf{k}_{\a} \otimes \mathbf{k}_{\a},\\
\widetilde{\Delta} (u_{\gamma})=\sum_{\a+\beta=\gamma} &\Delta_{\a,\beta}(u_{\gamma}) \cdot (\mathbf{k}_{\beta} \otimes 1),\\
\widetilde{\Delta}(u_{\gamma} \mathbf{k}_{\beta})=&\widetilde{\Delta}(u_{\gamma}) \cdot \widetilde{\Delta}(\mathbf{k}_{\beta}).
\end{split}
\end{equation}
In the above, the product on $\widetilde{\mathcal{K}}_{\vec{Q}} \otimes \widetilde{\mathcal{K}}_{\vec{Q}}$ is simply $(a \otimes b) \cdot (c \otimes d)=a \cdot c \otimes b \cdot d$. One easily checks that $\widetilde{\mathcal{K}}_{\vec{Q}}$ is still a (co)associative (co)algebra.

We also extend the scalar product $\{\;,\;\}$ to $\widetilde{\mathcal{K}}_{\vec{Q}}$ by setting
\begin{equation}\label{E:22new}
\{\mathbf{b}_{\mathbb{P}}\mathbf{k}_{\a},\mathbf{b}_{\mathbb{Q}}\mathbf{k}_{\beta}\}=\{\mathbf{b}_{\mathbb{P}},\mathbf{b}_{\mathbb{Q}}\} v^{-(\a,\beta)}.
\end{equation}
It is still a Hopf pairing. Finally, we define an extension to $\widetilde{\mathcal{K}}_{\vec{Q}}$ of the bar involution $u \mapsto \bar{u}$ by imposing
$\overline{\mathbf{k}_\a}=\mathbf{k}_{-\a}$.

\vspace{.2in}

\addtocounter{theo}{1}
\noindent \textbf{Example \thetheo .} Let $\vec{Q}$ be any quiver and $i \in I$ any vertex of $\vec{Q}$. Then, by Example~2.1, (\ref{E:n1}) we have in $\mathcal{K}_{\vec{Q}}$~:
\begin{equation}\label{E:n12}
(\mathbf{b}_{\mathbbm{1}_{\epsilon_i}})^n=[n]! \mathbf{b}_{\mathbbm{1}_{n\epsilon_i}}
\end{equation}
where $[n]!$ is the $v$-factorial number. Also
\begin{equation}\label{E:Deltageom}
\begin{split}
\Delta(\mathbf{b}_{\mathbbm{1}_{\epsilon_i}})&=\mathbf{b}_{\mathbbm{1}_{\epsilon_i}} \otimes 1 + 1 \otimes \mathbf{b}_{\mathbbm{1}_{\epsilon_i}},\\
\widetilde{\Delta}(\mathbf{b}_{\mathbbm{1}_{\epsilon_i}})&=\mathbf{b}_{\mathbbm{1}_{\epsilon_i}} \otimes 1 + \mathbf{k}_{\epsilon_i} \otimes \mathbf{b}_{\mathbbm{1}_{\epsilon_i}}.
\end{split}
\end{equation}
Now let $\gamma \in \N^I$ be a dimension vector and assume that $\mathbbm{1}_{\a}$ belongs to $\mathcal{P}_{\vec{Q}}$ (if the quiver $\vec{Q}$ has no oriented cycles then this is automatic--see Example~2.5). Then by Lemma~\ref{L:coprodun}
\begin{equation}\label{E:geomcorpod}
\begin{split}
\Delta(\mathbf{b}_{\mathbbm{1}_{\gamma}})&=\sum_{\a+\beta=\gamma} v^{\langle \a,\beta \rangle} \mathbf{b}_{\mathbbm{1}_{\a}} \otimes  \mathbf{b}_{\mathbbm{1}_{\beta}},\\
\widetilde{\Delta}(\mathbf{b}_{\mathbbm{1}_{\gamma}})&=\sum_{\a+\beta=\gamma} v^{\langle \a,\beta \rangle} \mathbf{b}_{\mathbbm{1}_{\a}}\mathbf{k}_{\beta} \otimes  \mathbf{b}_{\mathbbm{1}_{\beta}}.
\end{split}
\end{equation}
To finish, let us translate the fundamental relations of Section~2.1. If $i,j \in I$ are two vertices of a quiver $\vec{Q}$ linked by $r$ arrows ($r \geq 0$) then
\begin{equation}\label{E:FundRel1}
\sum_{k=0}^{1+r} (-1)^k \begin{bmatrix} 1+r \\ k \end{bmatrix} \mathbf{b}_{\mathbbm{1}_{\epsilon_i}}^k \mathbf{b}_{\mathbbm{1}_{\epsilon_j}}\mathbf{b}_{\mathbbm{1}_{\epsilon_i}}^{r+1-k}=0
\end{equation}
which may be rewritten
\begin{equation}\label{E:FundRel2}
\sum_{k=0}^{1+r} (-1)^k  \mathbf{b}_{\mathbbm{1}_{k\epsilon_i}} \mathbf{b}_{\mathbbm{1}_{\epsilon_j}}\mathbf{b}_{\mathbbm{1}_{(r+1-k)\epsilon_i}}=0.
\end{equation}
\endexample

\vspace{.3in}

\centerline{\textbf{3.2. Relation to quantum groups.}}
\addcontentsline{toc}{subsection}{\tocsubsection {}{}{\; 3.2. Relation to quantum groups.}}

\vspace{.15in}

Let $\g'$ be the derived Kac-Moody algebra associated to the Dynkin diagram underlying $\vec{Q}$ (see e.g. \cite[App. A]{Trieste}  for definitions). Recall that we have denoted by $A=(a_{ij})_{i,j \in I}$ the Cartan matrix of $\vec{Q}$ (or $\g'$). We will identify the root lattice $Q_{\g'}$ of $\g'$ with $K_0(\vec{Q})$ by mapping the simple roots $\a_i$ to the dimension vectors $\epsilon_i$.

We briefly recall here the definitions of the relevant quantum groups. Let us fix a decomposition $\g'=\n_- \oplus \h \oplus \n_+$ and let $\bo'_+=\h \oplus \n_+$ be the positive Borel subalgebra. Let $\U_{\v}(\n_+), \U_{\v}(\bo'_+)$ denote the quantized enveloping algebras of $\n_+$ and $\bo'_+$ respectively. The algebra $\U_{\v}(\bo'_+)$ is generated by elements $K_i^{\pm 1}, E_i, i \in I$ satisfying the following relations

\begin{equation}\label{E:defquantumgroups}
\begin{split}
&K_i K_j=K_j K_i  \\
&K_iE_j K_i^{-1}=\v^{a_{ij}} E_j \\
&\sum_{l=0}^{1-a_{ij}} (-1)^l \begin{bmatrix} 1-a_{ij} \\ l \end{bmatrix}_{\nu} F_i^l F_j F_i^{1-a_{ij}-l}=0.
\end{split}
\end{equation}
It is graded by $K_0(\vec{Q})$, where $deg\;K_i=0, deg\;E_i=\epsilon_i$ for all $i$. 

\vspace{.1in}

There is a Hopf algebra structure on $\U_{\v}(\bo'_+)$ in which the coproduct is given by
\begin{equation}\label{E:coprquantum}
\Delta(K_i)=K_i \otimes K_i, \quad \Delta(E_i)=E_i \otimes 1 + K_i \otimes E_i.
\end{equation}

There is also a unique homogeneous Hopf pairing $(\,,\,)$ on $\U_{\v}(\bo'_+)$-- called \textit{Drinfeld's pairing}-- satisfying
\begin{equation}\label{E:Drinpairing}
(K_i,1)=1, \qquad (K_i,K_j)=\v^{a_{ij}}, \qquad (E_i,E_i)=\frac{1}{1-\v^{-2}}.
\end{equation}

\vspace{.1in}

The integral form $\U^{\Z}_{\v}(\bo'_+)$ is by definition the $\Z[\v,\v^{-1}]$-submodules of $\U_{\v}(\bo'_+)$ generated by $K_i^{\pm 1}, E_i^{(n)}:=E_i^n/[n]!$ for $i \in I$. It is stable under the coproduct map.

\vspace{.1in}

By definition, $\U_{\v}(\n_+)$ is the subalgebra of $\U_{\v}(\bo'_+)$ generated by the $E_i$ for $i \in I$. Its integral form $\U^{\Z}_{\v}(\n_+)$ is constructed in the same manner. It is known that the restriction of $(\,,\,)$ to $\U_{\v}(\n_+)$ is nondegenerate. Finally, let $\U_{\nu}(\mathfrak{h})$ denote the subalgebra generated by $K_i^{\pm 1}$. We have

\begin{equation}\label{E:trigdec}
\U_{\nu}(\bo'_+) = \U_{\nu}(\n_+) \otimes \U_{\nu}(\mathfrak{h}).
\end{equation}

\vspace{.15in}

The following is one of the main results in these notes~:

\vspace{.1in}

\begin{theo}[Lusztig]\label{T:LU} Set $\nu=v^{-1}$. The assignement 
\begin{equation}\label{E:Lustrel}
\begin{split}
E_i^{(n)} &\mapsto \mathbf{b}_{\mathbbm{1}_{n\epsilon_i}},\\
K_i &\mapsto \mathbf{k}_{\epsilon_i}
\end{split}
\end{equation}
extends to an isomorphism of (co)algebras $\Phi: \U_{\v}^{\Z}(\bo'_+) \stackrel{\sim}{\to} \widetilde{\mathcal{K}}_{\vec{Q}}$, which restricts to an isomorphism $\Phi: \U_{\v}^{\Z}(\n_+) \stackrel{\sim}{\to} {\mathcal{K}}_{\vec{Q}}$. Moreover, $\Phi$ maps $(\,,\,)$ to the geometric pairing $\{\,,\,\}$.
\end{theo}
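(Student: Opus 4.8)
The plan is to build $\Phi$ as a homomorphism of algebras, check that it is simultaneously a morphism of coalgebras and intertwines the two pairings, and then deduce bijectivity from nondegeneracy of the pairings together with a generation statement. First I would verify that the prescribed images satisfy the defining relations of $\U_{\v}^{\Z}(\bo'_+)$. The relations among the $\mathbf{k}_{\epsilon_i}$ and between $\mathbf{k}_{\epsilon_i}$ and $\mathbf{b}_{\mathbbm{1}_{\epsilon_j}}$ are exactly (\ref{E:drtyu}), which under the substitution $\nu=v^{-1}$ and the identity $(\epsilon_i,\epsilon_j)=a_{ij}$ reproduce the first two families of relations in (\ref{E:defquantumgroups}); the quantum Serre relations are precisely the fundamental relations (\ref{E:FundRel1}) established in Lecture~2 (for a general quiver one restricts attention to the full subquiver on the two vertices $i,j$, since $\underline{m}$ and $\underline{\Delta}$ are local in the dimension vector). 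Integrality is automatic: by (\ref{E:n12}) one has $\mathbf{b}_{\mathbbm{1}_{\epsilon_i}}^n=[n]!\,\mathbf{b}_{\mathbbm{1}_{n\epsilon_i}}$, so $E_i^{(n)}\mapsto\mathbf{b}_{\mathbbm{1}_{n\epsilon_i}}\in\mathcal{K}_{\vec{Q}}$, and $\Phi$ restricts to an algebra map $\U_{\v}^{\Z}(\n_+)\to\mathcal{K}_{\vec{Q}}$.

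Next I would show $\Phi$ is a bialgebra morphism. Since $\widetilde{\Delta}$ is multiplicative (part of the bialgebra structure, Corollary~\ref{C:bialg}), both $\widetilde{\Delta}\circ\Phi$ and $(\Phi\otimes\Phi)\circ\Delta$ are algebra maps $\U_{\v}^{\Z}(\bo'_+)\to\widetilde{\mathcal{K}}_{\vec{Q}}\otimes\widetilde{\mathcal{K}}_{\vec{Q}}$; by (\ref{E:Deltageom}) and (\ref{E:coprquantum}) they agree on the generators $K_i,E_i$, hence everywhere. It follows that the form $(x,y)':=\{\Phi(x),\Phi(y)\}$ is again a Hopf pairing on $\U_{\v}^{\Z}(\bo'_+)$, and by (\ref{E:22n}) and (\ref{E:22new}) it takes the values $(1-v^2)^{-1}$ on $(E_i,E_i)$ and $v^{-a_{ij}}$ on $(K_i,K_j)$, which are exactly the normalizations (\ref{E:Drinpairing}) of Drinfeld's pairing once $\nu=v^{-1}$. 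By the stated uniqueness of Drinfeld's pairing, $(\,,\,)'=(\,,\,)$, so $\Phi$ intertwines the pairings.

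For surjectivity I would argue that every $\mathbb{P}\in\mathcal{P}_{\vec{Q}}$ occurs, up to shift, as a summand of some Lusztig sheaf $L_{\underline{\nu}}=\mathbbm{1}_{\nu_1}\star\cdots\star\mathbbm{1}_{\nu_r}$ with each $\nu_k$ a multiple of a simple dimension vector (the Remark following Section~2.1), whence $\mathbf{b}_{L_{\underline{\nu}}}=\prod_k\mathbf{b}_{\mathbbm{1}_{\nu_k}}$ lies in the image of $\Phi$. Expanding $\mathbf{b}_{L_{\underline{\nu}}}$ in the basis $\{\mathbf{b}_{\mathbb{P}}\}$ yields a matrix with entries in $\N[v,v^{-1}]$; to conclude that the $\mathbf{b}_{L_{\underline{\nu}}}$ span $\mathcal{K}_{\vec{Q}}$ over $\Z[v,v^{-1}]$ one fixes a partial order refining the closure order on supports and shows that for a suitable $\underline{\nu}$ the class $\mathbf{b}_{L_{\underline{\nu}}}$ equals $\mathbf{b}_{\mathbb{P}}$ plus a $\Z[v,v^{-1}]$-combination of strictly smaller $\mathbf{b}_{\mathbb{P}'}$ (unitriangularity, with Proposition~\ref{P:LU} used to force the diagonal coefficient to be $1$); a descending induction then recovers each $\mathbf{b}_{\mathbb{P}}$. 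For finite-type quivers this is immediate from Reineke's desingularization (Proposition~\ref{P:Reineke}), where $q_{\underline{\nu}}$ desingularizes $\overline{\mathcal{O}}$ and hence $L_{\underline{\nu}}=IC(\mathcal{O})\oplus(\text{complex supported on }\overline{\mathcal{O}}\setminus\mathcal{O})$; in general the corresponding input is exactly the delicate structural information about $\mathcal{P}_{\vec{Q}}$, which rests on the Fourier--Deligne transform of Sections~3.4--3.5. Surjectivity onto $\widetilde{\mathcal{K}}_{\vec{Q}}$ then follows by adjoining the invertible elements $\mathbf{k}_{\a}$.

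Finally, injectivity is cheap once the above is in place: if $\Phi(x)=0$ with $x\in\U_{\v}^{\Z}(\n_+)$ then $(x,y)=\{\Phi(x),\Phi(y)\}=0$ for all $y$, so $x$ lies in the radical of $(\,,\,)$ restricted to $\U_{\v}(\n_+)$, which vanishes; hence $\Phi\colon\U_{\v}^{\Z}(\n_+)\to\mathcal{K}_{\vec{Q}}$ is injective, and combined with surjectivity it is an isomorphism. The statement for $\bo'_+$ then follows from the triangular decomposition (\ref{E:trigdec}) on the source and the analogous splitting $\widetilde{\mathcal{K}}_{\vec{Q}}=\mathcal{K}_{\vec{Q}}\otimes\mathbf{K}$ on the target, since $\Phi$ carries $\U_{\v}(\mathfrak{h})$ isomorphically onto $\mathbf{K}$ by construction; the assertion about the pairings was proved in the second step. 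The main obstacle is the generation/unitriangularity step, i.e. the fine control of how the simple perverse sheaves in $\mathcal{P}_{\vec{Q}}$ sit inside the Lusztig sheaves $L_{\underline{\nu}}$; for a general quiver this is precisely where the Fourier transform and Proposition~\ref{P:LU} are indispensable, whereas in finite (and, with more labor, affine) type it is handled directly by the explicit classification.
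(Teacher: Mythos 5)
Your overall architecture matches the paper's (verify relations to define $\Phi$, check compatibility with coproducts and pairings, deduce injectivity from nondegeneracy of the Drinfeld pairing, get surjectivity from a generation statement for $\mathcal{K}_{\vec{Q}}$), and the injectivity argument in particular is exactly the paper's: pull $\{\,,\,\}$ back along $\Phi$, match it with $(\,,\,)$ on generators, invoke uniqueness of homogeneous Hopf pairings and then nondegeneracy on $\U_\nu(\n_+)$. But your surjectivity step has a real gap, and this is precisely the hard part of the theorem. You propose a unitriangularity with respect to a partial order refining the closure order on supports: for each $\mathbb{P}\in\mathcal{P}^{\gamma}$ some Lusztig sheaf class $\mathbf{b}_{L_{\underline{\nu}}}$ equals $\mathbf{b}_{\mathbb{P}}$ plus strictly smaller terms. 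The closure order on supports does not separate distinct simple perverse sheaves sharing a support (for the Kronecker quiver, $IC(U_{2\delta},\mathcal{L}_{triv})$ and $IC(U_{2\delta},\mathcal{L}_{sign})$ both have full support $E_{2\delta}$), so any ``refinement'' must break such ties by fiat, and producing a Lusztig sheaf that picks out exactly the right tie-broken summand as a leading term is the entire content of the structural input you then defer to. The paper's actual argument is a genuinely different induction, not a leading-term expansion: outer induction on $\gamma$ and inner descending induction on the sink degree $d$. Given $\mathbb{P}\in\mathcal{P}^{\gamma}$, the Fourier--Deligne transform moves to a reorientation $\vec{Q}'$ with a sink $i$ and $\FD(\mathbb{P})\in\mathcal{P}^{\gamma}_{\vec{Q}',i;d}$ with $d\geq 1$; Lemma~\ref{L:Proofkey} produces a \emph{simple perverse sheaf} $\mathbb{R}$ in dimension $\gamma-d\epsilon_i$ (not a Lusztig sheaf) with $\mathbbm{1}_{d\epsilon_i}\star\mathbb{R}\simeq \mathbb{P}\oplus\mathbb{T}$ and $supp\;\mathbb{T}\subset E_{\gamma}^{i;\geq d+1}$, whence $\mathbf{b}_{\mathbb{P}}=\mathbf{b}_{\mathbbm{1}_{d\epsilon_i}}\mathbf{b}_{\mathbb{R}}-\mathbf{b}_{\mathbb{T}}$ and the two inductions close. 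The ``lowness'' is measured by dimension vector and sink degree, not by support closure, and there is no single Lusztig sheaf acting as leading term. You should replace your surjectivity paragraph by this sink-degree reduction, or else supply a proof of your unitriangularity claim, which you have not done.

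There is also a circularity in your coalgebra step. You invoke Corollary~\ref{C:bialg} to assert that $\widetilde{\Delta}$ is multiplicative, but the paper proves that corollary either as a formal consequence of Theorem~\ref{T:LU} itself, or from the very spanning-by-Lusztig-sheaves statement that constitutes your surjectivity step; as you have placed the coalgebra verification before surjectivity, the logic does not yet close. The fix is either to reorder (prove surjectivity first, then deduce the bialgebra property, then the Hopf pairing argument), or to verify $\widetilde{\Delta}\circ\Phi=(\Phi\otimes\Phi)\circ\Delta$ directly on monomials $E_{i_1}^{(n_1)}\cdots E_{i_r}^{(n_r)}$ using the explicit formula for $\underline{\Delta}(L_{\underline{\a}})$ from Lemma~\ref{L:coprodun}, which is a finite computation requiring no bialgebra input.
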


\vspace{.1in}

The defining relations (\ref{E:defquantumgroups}) coincide with (\ref{E:drtyu}) and the fundamental relations (\ref{E:FundRel1}). Hence $\Phi$ extends to a morphism of algebras. It is also clear from (\ref{E:coprquantum}) and (\ref{E:DeltaK}), (\ref{E:Deltageom}) that $\Phi$ is compatible with the coproducts.
The difficult part of the Theorem is to show that $\Phi$ is an isomorphism. The proof for a general quiver will be given in Section~3.5. In the meantime, we draw some consequences~:

\vspace{.1in}

Put $\mathbf{B}=\big\{\Phi^{-1}(\mathbf{b}_{\mathbb{P}})\;|\; \mathbb{P} \in \mathcal{P}_{\vec{Q}}\big\}$. Elements of $\mathbf{B}$ form a $\Z[\v,\v^{-1}]$-basis of $\U^{\Z}_{\v}(\n_+)$ called the \textit{canonical basis}. The first illustration of the ``canonical'' nature of the basis $\mathbf{B}$ is provided by the following result, whose proof we will also postpone to Section~3.5~:

\vspace{.1in}

\begin{theo}[Lusztig]\label{T:LUU} The basis $\mathbf{B}$ is independent of the choice of orientation of $\vec{Q}$.
\end{theo}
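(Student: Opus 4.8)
The plan is to derive the orientation independence of $\mathbf{B}$ from the behaviour of Lusztig's category $\mathcal{Q}_{\vec{Q}}$ under a Fourier--Deligne transform. The first step is a reduction: any two orientations of a fixed loopless graph are joined by a finite chain of elementary reversals, each flipping a single arrow, so it suffices to treat the case of $\vec{Q}=(I,H)$ and a quiver $\vec{Q}'$ obtained from it by reversing one arrow $h: i\to j$ into an arrow $h': j\to i$, all other arrows unchanged (an obvious induction then yields the general case). For a dimension vector $\alpha$ I would write $E_{\alpha}=E^{h}_{\alpha}\times E^{\neq h}_{\alpha}$ with $E^{h}_{\alpha}=\Hom(k^{\alpha_i},k^{\alpha_j})$ and $E^{\neq h}_{\alpha}$ the sum of the remaining $\Hom$-spaces, and likewise $E'_{\alpha}=(E^{h}_{\alpha})^{*}\times E^{\neq h}_{\alpha}$, identifying $\Hom(k^{\alpha_j},k^{\alpha_i})$ with the dual of $E^{h}_{\alpha}$ by means of the trace pairing $(x,y)\mapsto\mathrm{tr}(xy)$; note that $E_{\alpha}$ and $E'_{\alpha}$ carry the action of the same group $G_{\alpha}$ and that this pairing is $G_{\alpha}$-invariant. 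When $\gamma$ is a multiple of a simple root the $h$-factor is absent, so $E_{\gamma}=E'_{\gamma}=\{0\}$.

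The second step is to introduce the $G_{\alpha}$-equivariant Fourier--Deligne transform $\mathfrak{F}_{\alpha}: D^{b}_{G_{\alpha}}(E_{\alpha})\to D^{b}_{G_{\alpha}}(E'_{\alpha})$ attached to a fixed nontrivial additive character of $\fq$, and to record its standard properties (see e.g.\ \cite[Chap.~8--10]{Lusbook} and the references therein): it is an equivalence of triangulated categories; it is $t$-exact up to a global shift for the perverse $t$-structure, so it carries simple perverse sheaves to simple perverse sheaves (up to shift) and semisimple complexes to semisimple complexes; with the appropriate normalization it commutes with Verdier duality; and on the zero-dimensional spaces $E_{\gamma}=E'_{\gamma}=\{0\}$ (for $\gamma$ a multiple of a simple root) it is the identity, so in particular $\mathfrak{F}(\mathbbm{1}_{n\epsilon_i})=\mathbbm{1}_{n\epsilon_i}$ for all $i,n$.

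The heart of the proof, and the step I expect to be the main obstacle, is to show that $\mathfrak{F}$ intertwines the induction functors of $\vec{Q}$ and $\vec{Q}'$: with the standard normalizations there is a canonical isomorphism $\mathfrak{F}_{\alpha+\beta}(\mathbb{P}\star\mathbb{Q})\simeq\mathfrak{F}_{\alpha}(\mathbb{P})\star'\mathfrak{F}_{\beta}(\mathbb{Q})$, where $\star$ (resp.\ $\star'$) is the induction product attached to $\vec{Q}$ (resp.\ $\vec{Q}'$). To prove this I would return to the explicit presentation of $\underline{m}$ from Section~1.3 (diagrams \eqref{E:16} and \eqref{E:17}) and analyse how the correspondence varieties $E_{\alpha,\beta}$, $E^{(1)}_{\alpha,\beta}$, $F_{\alpha,\beta}$ and the maps $p,r,q$ (respectively $\kappa,\iota$) split along the $h$-component versus the remaining ones; after isolating the Artin--Schreier kernel supported on the $h$-component, one reassembles the two sides using base change, the projection formula, compatibility of $\mathfrak{F}$ with proper pushforward along a linear sub-bundle, and the self-duality of the trace form restricted to the spaces of $\underline{y}$-stable subspaces. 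This is a long but essentially mechanical diagram chase carried out in detail by Lusztig; the difficulty is organizational rather than conceptual.

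Granting the intertwining property, the argument concludes quickly. Since $\mathfrak{F}$ fixes the $\mathbbm{1}_{\epsilon_i}$ and commutes with the induction products, it sends each Lusztig sheaf $L^{\vec{Q}}_{\epsilon_{i_1},\dots,\epsilon_{i_n}}$ to $L^{\vec{Q}'}_{\epsilon_{i_1},\dots,\epsilon_{i_n}}$ (up to shift); being an equivalence, it matches their simple summands bijectively, so $\mathfrak{F}(\mathcal{P}_{\vec{Q}})=\mathcal{P}_{\vec{Q}'}$ and $\mathfrak{F}(\mathcal{Q}_{\vec{Q}})=\mathcal{Q}_{\vec{Q}'}$, and by commutation with duality it matches the self-dual representatives of Proposition~\ref{P:LU}. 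Passing to Grothendieck groups, $\mathfrak{F}$ induces a $\Z[v,v^{-1}]$-algebra isomorphism $\mathcal{K}_{\vec{Q}}\xrightarrow{\sim}\mathcal{K}_{\vec{Q}'}$ taking $\{\mathbf{b}_{\mathbb{P}}\}_{\mathbb{P}\in\mathcal{P}_{\vec{Q}}}$ onto $\{\mathbf{b}_{\mathbb{P}}\}_{\mathbb{P}\in\mathcal{P}_{\vec{Q}'}}$ and fixing every $\mathbf{b}_{\mathbbm{1}_{n\epsilon_i}}$. Composing with the isomorphisms $\Phi$, $\Phi'$ of Theorem~\ref{T:LU} gives an automorphism of $\U^{\Z}_{\v}(\n_+)$ fixing every divided power $E_i^{(n)}$, hence the identity; therefore $\Phi^{-1}(\{\mathbf{b}_{\mathbb{P}}\}_{\mathbb{P}\in\mathcal{P}_{\vec{Q}}})=(\Phi')^{-1}(\{\mathbf{b}_{\mathbb{P}}\}_{\mathbb{P}\in\mathcal{P}_{\vec{Q}'}})$, i.e.\ $\mathbf{B}$ is the same for $\vec{Q}$ and $\vec{Q}'$. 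The elementary-reversal induction then yields independence of the orientation in general.
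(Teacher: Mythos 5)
Your proposal is correct and follows essentially the same route as the paper: the paper also derives Theorem~\ref{T:LUU} from the compatibility of the Fourier--Deligne transform with the induction/restriction functors (Proposition~\ref{P:FD} and Corollary~\ref{C:FD}), the fact that it fixes $\mathbbm{1}_{n\epsilon_i}$, and the resulting commutative square with $\Phi$, $\Phi'$. The only minor deviations are organizational: the paper's Fourier--Deligne transform reverses an arbitrary subset $J$ of edges at once, making your one-arrow-at-a-time induction unnecessary, and the paper actually establishes the intertwining for $\underline{\Delta}$ directly and then deduces the statement for $\underline{m}$ by adjunction with the Hopf pairing, rather than attacking $\underline{m}$ head-on.
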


\vspace{.1in}

Of course, whatever properties the basis $\big\{\mathbf{b}_{\mathbb{P}}\;|\; \mathbb{P} \in \mathcal{P}_{\vec{Q}}\big\}$ has translates into properties of $\mathbf{B}$. For instance, we have
\begin{equation}\label{E:Posmult2}
\mathbf{b} \cdot \mathbf{b}' \in \bigoplus_{\mathbf{b}'' \in \mathbf{B}} \N[\v,\v^{-1}] \mathbf{b}'',
\end{equation}
\begin{equation}\label{E:Poscop2}
\Delta(\mathbf{b} ) \in \bigoplus_{\mathbf{b}', \mathbf{b}'' \in \mathbf{B}} \N[\v,\v^{-1}] \mathbf{b}' \otimes \mathbf{b}'',
\end{equation}
\begin{equation}\label{E:Posscal2}
\begin{split}
\{\mathbf{b},\mathbf{b}\} &\in 1+\v^{-1}\N[[\v^{-1}]], \\
\{\mathbf{b},\mathbf{b}'\} &\in \v^{-1}\N[[\v^{-1}]]\;\text{if}\; \mathbf{b} \neq \mathbf{b}'
\end{split}
\end{equation}
for any elements $\mathbf{b},\mathbf{b}' \in \mathbf{B}$. By Proposition~\ref{P:Verdier} there exists a unique semilinear ring involution $x \mapsto \overline{x}$ on $\U_{\nu}^{\Z}(\n_+)$ satisfying $\overline{E_i^{(n)}}=E_i^{(n)}$ for all $i \in I$ and $n \in \N$, and we have
\begin{equation}\label{E:Invol}
\overline{\mathbf{b}}=\mathbf{b}
\end{equation}

\vspace{.1in}

The relevance of (\ref{E:Posscal2}) and (\ref{E:Invol}) to the theory of canonical bases is due to the following result (whose proof is an easy exercise, see \cite[Thm. 14.2.3.]{Lusbook})

\vspace{.1in}

\begin{theo}[Lusztig] Let $\mathcal{B}$ be the set of all elements $b \in \U_{\nu}^{\Z}(\n_+)$ satisfying $\overline{b}=b$ and $\{b,b\} \in 1 + \nu^{-1}\N[[\nu^{-1}]]$.
Then $\mathcal{B}=\mathbf{B} \cup -\mathbf{B}$.
\end{theo}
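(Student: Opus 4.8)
The plan is to characterize the set $\mathcal{B}$ of bar-invariant elements with $\{b,b\} \in 1 + \nu^{-1}\N[[\nu^{-1}]]$ and show it consists precisely of $\pm$ the canonical basis elements. First I would observe that $\mathbf{B} \cup -\mathbf{B} \subseteq \mathcal{B}$: every $\mathbf{b} \in \mathbf{B}$ is bar-invariant by (\ref{E:Invol}) and satisfies $\{\mathbf{b},\mathbf{b}\} \in 1 + \nu^{-1}\N[[\nu^{-1}]]$ by (\ref{E:Posscal2}); since the scalar product is bilinear and bar is a ring involution, $-\mathbf{b}$ inherits both properties (the scalar product is unchanged under $b \mapsto -b$). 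So the content is the reverse inclusion.

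For the reverse inclusion, take $b \in \mathcal{B}$ and expand it in the canonical basis: $b = \sum_{\mathbb{P} \in \mathcal{P}_{\vec Q}} c_{\mathbb{P}}(v)\, \mathbf{b}_{\mathbb{P}}$ with $c_{\mathbb{P}}(v) \in \Z[v,v^{-1}]$, and we may restrict to a fixed degree $\gamma$ so the sum is finite. Bar-invariance of $b$ together with (\ref{E:Invol}) (self-duality of the $\mathbf{b}_{\mathbb{P}}$) forces each $c_{\mathbb{P}}(v)$ to be bar-invariant, i.e. $c_{\mathbb{P}}(v) = c_{\mathbb{P}}(v^{-1})$; hence $c_{\mathbb{P}}$ is a symmetric Laurent polynomial in $v$ (equivalently in $\nu = v^{-1}$). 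Then I would compute
\begin{equation*}
\{b,b\} = \sum_{\mathbb{P},\mathbb{Q}} c_{\mathbb{P}}(v) c_{\mathbb{Q}}(v)\, \{\mathbf{b}_{\mathbb{P}}, \mathbf{b}_{\mathbb{Q}}\}
\end{equation*}
and use (\ref{E:Posscal2}): the diagonal terms contribute $c_{\mathbb{P}}(v) c_{\mathbb{P}}(v)$ times something in $1 + \nu^{-1}\N[[\nu^{-1}]]$, while the off-diagonal terms lie in $\nu^{-1}\Z[[\nu^{-1}]]$. The key elementary fact is that for a symmetric Laurent polynomial $c(v) = \sum_k a_k v^k$ (so $a_k = a_{-k}$), the product $c(v)^2$ has constant term $\sum_k a_k^2$, and $c(v)^2 \in 1 + \nu^{-1}\N[[\nu^{-1}]]$ is impossible unless... — more precisely, $\{b,b\}$ has constant term $\sum_{\mathbb{P}} (\text{const. term of } c_{\mathbb{P}}^2) = \sum_{\mathbb{P}} \sum_k (a_k^{(\mathbb{P})})^2$, which by hypothesis must equal $1$. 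Since this is a sum of nonnegative integers equal to $1$, exactly one coefficient $a_k^{(\mathbb{P})}$ among all of them is $\pm 1$ and the rest vanish; symmetry $a_k = a_{-k}$ then forces that nonzero coefficient to sit in degree $k = 0$, so $c_{\mathbb{P}_0}(v) = \pm 1$ for a unique $\mathbb{P}_0$ and $c_{\mathbb{P}} = 0$ otherwise. Hence $b = \pm \mathbf{b}_{\mathbb{P}_0} \in \mathbf{B} \cup -\mathbf{B}$.

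The main obstacle, and the place requiring care, is the last combinatorial step: one must be sure that the off-diagonal contributions $\sum_{\mathbb{P} \neq \mathbb{Q}} c_{\mathbb{P}} c_{\mathbb{Q}} \{\mathbf{b}_{\mathbb{P}},\mathbf{b}_{\mathbb{Q}}\}$ cannot spoil the constant-term analysis — but since each $\{\mathbf{b}_{\mathbb{P}},\mathbf{b}_{\mathbb{Q}}\}$ with $\mathbb{P} \neq \mathbb{Q}$ lies in $\nu^{-1}\N[[\nu^{-1}]]$ and each $c_{\mathbb{P}} c_{\mathbb{Q}}$ is a Laurent polynomial, the product lies in $\Z((\nu^{-1}))$ with \emph{no} constant-term contribution coming from the $\nu^{-1}$-part; one does need to check the negative powers of $\nu$ (i.e. positive powers of $v$) coming from $c_{\mathbb{P}} c_{\mathbb{Q}}$ times the leading part of $\{\mathbf{b}_{\mathbb{P}},\mathbf{b}_{\mathbb{Q}}\}$ do not contribute to the constant term — and they don't, precisely because $\{\mathbf{b}_{\mathbb{P}},\mathbf{b}_{\mathbb{Q}}\}$ has no nonnegative powers of $v$ at all when $\mathbb{P} \neq \mathbb{Q}$, whereas to reach the constant term one would need a nonnegative power of $v$ from the scalar product. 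This is a purely formal manipulation once (\ref{E:Posscal2}) is in hand, so the result follows; I would cite \cite[Thm. 14.2.3.]{Lusbook} for the clean write-up and note that it genuinely uses the full strength of the positivity and self-duality properties of $\mathbf{B}$ established via the geometry.
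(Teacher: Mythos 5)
Your opening inclusion $\mathbf{B}\cup -\mathbf{B}\subseteq\mathcal{B}$ and the reduction to showing the coefficients $c_{\mathbb{P}}(v)$ are bar-symmetric Laurent polynomials are both fine. The gap is in the constant-term count, and it is a genuine one, not a bookkeeping slip.

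You assert that the constant term of $\{b,b\}$ equals $\sum_{\mathbb{P}}\sum_k (a_k^{(\mathbb{P})})^2$, arguing that both the higher-order tail of the diagonal terms and the off-diagonal terms contribute nothing at degree $0$. But this fails as soon as some $c_{\mathbb{P}}$ has a negative power of $v$. Take $c_{\mathbb{P}_1}=v+v^{-1}$ and all other $c_{\mathbb{P}}=0$: then the diagonal term is $(v^2+2+v^{-2})\cdot(1+f_1v+f_2v^2+\cdots)$, whose constant term is $2+f_2$, not $2$. The same mechanism produces constant-term contributions from off-diagonal terms: if $c_{\mathbb{P}}c_{\mathbb{Q}}$ has a $v^{-m}$ term with $m\geq 1$, it pairs with the $v^m$ term of $\{\mathbf{b}_{\mathbb{P}},\mathbf{b}_{\mathbb{Q}}\}\in v\N[[v]]$ to produce a $v^0$ contribution. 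Your claim that ``$\{\mathbf{b}_{\mathbb{P}},\mathbf{b}_{\mathbb{Q}}\}$ has no nonnegative powers of $v$'' is also backwards: since $\nu^{-1}=v$, (\ref{E:Posscal2}) places it in $v\N[[v]]$, which consists exclusively of positive powers of $v$.

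The missing step is to use the absence of negative powers in $\{b,b\}$ \emph{before} looking at the constant term. Let $N$ be the largest integer such that $v^{-N}$ appears in some $c_{\mathbb{P}}$ and suppose $N>0$. Then in $c_{\mathbb{P}}c_{\mathbb{Q}}\{\mathbf{b}_{\mathbb{P}},\mathbf{b}_{\mathbb{Q}}\}$ the coefficient of $v^{-2N}$ vanishes unless $\mathbb{P}=\mathbb{Q}$ (the off-diagonal pairing starts at $v^1$), and for $\mathbb{P}=\mathbb{Q}$ the coefficient of $v^{-2N}$ in $c_{\mathbb{P}}^2$ is forced to be $(a_{-N}^{(\mathbb{P})})^2$ because $N$ is extremal. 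Summing over $\mathbb{P}$ gives a strictly positive coefficient for $v^{-2N}$ in $\{b,b\}$, contradicting $\{b,b\}\in 1+v\N[[v]]$. Hence $N=0$; by bar-symmetry $c_{\mathbb{P}}(v)=c_{\mathbb{P}}(v^{-1})$ this forces each $c_{\mathbb{P}}\in\Z$ to be a constant. Only now does your constant-term count become valid: with constant coefficients the off-diagonal and higher-order contributions genuinely vanish at $v^0$, and the equation $\sum_{\mathbb{P}}c_{\mathbb{P}}^2=1$ over $\Z$ gives a unique $c_{\mathbb{P}_0}=\pm 1$ and all others zero. The two parts of the hypothesis ($\overline{b}=b$ and $\{b,b\}\in 1+v\N[[v]]$) are both needed twice — once in tandem to force constancy, and then again to pin down the constants — which your proposal collapses into a single pass.
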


\vspace{.1in}

The discovery of the basis $\mathbf{B}$ was a tremendous breakthrough in algebraic representation theory. Applications have been found in numerous areas such as combinatorial representation theory, mathematical physics, algebraic geometry and knot theory. One of the important facts concerning $\mathbf{B}$ is the following~:

\vspace{.1in}

\begin{theo}[Lusztig]\label{T:Rep} Let $\lambda$ be an integral antidominant weight of $\g$ and let $V_{\lambda}$ be the corresponding integrable lowest weight representation of $\U^{\Z}_\nu(\g)$. Let $v_{\lambda} \in V_{\lambda}$ be the lowest weight vector. Then 
$$\mathbf{B}_{\lambda}:=\big\{ \mathbf{b} \cdot v_{\lambda}\;|\; \mathbf{b} \cdot v_{\lambda} \neq 0\big\}$$
forms a (weight) basis of $V_{\lambda}$.
\end{theo}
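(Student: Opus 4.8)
The plan is to deduce the statement from \emph{compatibility of the canonical basis $\mathbf{B}$ with a suitable family of left ideals}. By the triangular decomposition of $\U_\v(\g)$ (cf. (\ref{E:trigdec})) and the fact that $v_\lambda$ is a lowest weight vector (so $F_i v_\lambda=0$ and $K_i$ acts on $v_\lambda$ by a scalar), the module $V_\lambda$ is cyclic over $\U_\v(\n_+)$, generated by $v_\lambda$, and the kernel of the surjection $\U^\Z_\v(\n_+)\to V_\lambda$, $u\mapsto u\cdot v_\lambda$, is the left ideal
$$ J_\lambda=\sum_{i\in I}\U^\Z_\v(\n_+)\,E_i^{(n_i+1)},\qquad n_i=-\langle\lambda,\a_i^\vee\rangle\in\N $$
(the integral form of the maximal submodule of the Verma module; antidominance of $\lambda$ guarantees $n_i\ge 0$). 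Transporting everything to $\KQ$ via the isomorphism $\Phi$ of Theorem~\ref{T:LU}, it suffices to prove: the set $\PQ$ splits as $\PQ=\PQ'\sqcup\PQ''$, where $\PQ'=\{\mathbb{P}\in\PQ\ |\ \mathbf{b}_{\mathbb{P}}\in\Phi(J_\lambda)\}$, in such a way that $\{\mathbf{b}_{\mathbb{P}}\}_{\mathbb{P}\in\PQ'}$ is a $\Z[v,v^{-1}]$-basis of $\Phi(J_\lambda)$ and the images of $\{\mathbf{b}_{\mathbb{P}}\}_{\mathbb{P}\in\PQ''}$ form a basis of the quotient. Since $\overline{E_i^{(n)}}=E_i^{(n)}$ (by (\ref{E:n12}), (\ref{E:selfdual})) and $u\mapsto\bar u$ is a ring involution (Proposition~\ref{P:Verdier}), $J_\lambda$ is stable under the bar involution; but bar-stability alone does not force compatibility with $\mathbf{B}$, and the real content is geometric.

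For the geometric input I would introduce a framed version of the moduli stack $\Ma$. Fix an $I$-graded space $W$ with $\dim W_i=n_i$ and, for each $\a$, set
$$ E_{\a,W}=E_\a\times\bigoplus_{i\in I}\Hom((V_\a)_i,W_i), $$
with $G_\a$ acting by conjugation on $E_\a$ and by postcomposition on the $\Hom$-factor, and trivially on $W$. Let $E^{s}_{\a,W}\subset E_{\a,W}$ be the open $G_\a$-stable \emph{stable} locus consisting of pairs $(\underline{y},t)$ for which there is no nonzero $\underline{y}$-stable $I$-graded subspace $S\subseteq V_\a$ with $t(S)=0$. Pulling back along $\pi:E_{\a,W}\to E_\a$ and restricting along the open immersion $j:E^s_{\a,W}\hookrightarrow E_{\a,W}$ gives a functor $\mathrm{Res}_W=j^*\pi^*[\,\cdot\,]:D^b_{G_\a}(E_\a)\to D^b_{G_\a}(E^s_{\a,W})$; applied to the Lusztig sheaves it produces the ``framed Lusztig sheaves''. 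The structural facts to establish, following Lusztig \cite[Parts~IV--V]{Lusbook}, are: (i) $\mathrm{Res}_W$ is compatible with the induction functors, so that $\bigoplus_\a K_0\big(D^b_{G_\a}(E^s_{\a,W})^{ss}\big)$ becomes a cyclic $\KQ$-module generated by the class of the constant sheaf on the one-point stable locus $E^s_{0,W}$, hence gets identified with $V_\lambda$ (the generator matching $v_\lambda$); (ii) since $\pi$ is smooth and $j$ is an open immersion, $\mathrm{Res}_W$ sends a simple perverse sheaf either to $0$ or again to a simple perverse sheaf, and distinct non-vanishing images remain distinct — consequently $\mathrm{Res}_W$ maps $\PQ$ onto (a shift of) the set of simple perverse sheaves occurring in the framed Lusztig sheaves, with fibre exactly $\PQ'$; (iii) the kernel of $\KQ\to\bigoplus_\a K_0(D^b_{G_\a}(E^s_{\a,W})^{ss})$ is, under $\Phi$, precisely $J_\lambda$ — both are the annihilator of the respective cyclic generator, and one matches generators by computing the framed analogue of $\mathbbm{1}_{n_i\a_i}\star\mathbbm{1}_{\a_i}$, which realizes the relation $E_i^{n_i+1}v_\lambda=0$. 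Combining (i)--(iii) yields the splitting of $\PQ$ and hence the theorem, the vectors $\mathbf{b}\cdot v_\lambda$ being, up to the shift normalization, the classes of the simple perverse sheaves on the stable loci, whence a weight basis of $V_\lambda$.

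The hardest step is (ii): controlling the behaviour of \emph{all} simple perverse sheaves of $\PQ$ under $\mathrm{Res}_W$, i.e. showing that the ``defect'' summands of the Lusztig sheaves $L_{\a_1,\ldots,\a_n}$ — those accounting for the straightening relations in $\KQ$ — are precisely those supported away from the stable locus $E^s_{\a,W}$, so that no spurious cancellation or mixing occurs upon restriction. This requires the Decomposition Theorem of \cite{BBD} applied to the proper maps underlying the framed Lusztig sheaves, together with dimension estimates for their fibres, and is the technical heart of Lusztig's treatment; it is here that self-duality of the elements of $\PQ$ (Theorem~\ref{T:LU}, (\ref{E:selfdual})) and the orientation-independence of $\mathbf{B}$ (Theorem~\ref{T:LUU}) — which, via a Fourier--Deligne transform, allow one to reduce to a convenient orientation in which $E^s_{\a,W}$ is transparent — enter. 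One must also track $\Z[v,v^{-1}]$-integrality throughout: the divided powers $E_i^{(n)}$ rather than $E_i^n$ are needed for $J_\lambda$ and the quotient to be defined over $\Z[v,v^{-1}]$, and the arising factorials are absorbed using (\ref{E:n12}). Finally, I note that an alternative route avoids the framed varieties altogether: combine the identification of $\mathbf{B}$ with Kashiwara's global basis (\cite{GL}) with Kashiwara's purely algebraic proof that global bases are compatible with integrable lowest (highest) weight modules (\cite{Kas}).
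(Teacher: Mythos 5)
Your reduction in the first paragraph coincides with the paper's: the theorem reduces to showing that each left ideal $\U^\Z_\nu(\n_+)E_i^{(k)}$ is spanned by a subset of the canonical basis $\mathbf{B}$, since $V_\lambda$ is the quotient of $\U^\Z_\nu(\n_+)$ by $\sum_i\U^\Z_\nu(\n_+)E_i^{(n_i+1)}$. From there, however, you take a genuinely different geometric route. The paper stays entirely within the infrastructure already built in Section~3.5: choosing (via Fourier--Deligne transform) an orientation $\vec{Q}'$ in which $i$ is a \emph{source}, it introduces the stratification $E_\gamma^{i;d}$ by $\dim\ker\bigl(\bigoplus_{s(h)=i}x_h\bigr)$ and applies the source analogue (Lemma~\ref{L:Proofkey2}) of the key reduction lemma used in the proof of Theorem~\ref{T:LU}; compatibility of $\mathbf{B}$ with $\U^\Z_\nu(\n_+)E_i^{(k)}$ then drops out by the same descending induction on the depth $d$ used there. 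You instead pass to framed moduli $E_{\a,W}$ with stable loci $E^s_{\a,W}$ --- this is the route of Lusztig's later chapters and of Zheng's work (cited in the paper's final section). Both routes work, but the paper's is considerably shorter and self-contained, whereas your step~(i) --- compatibility of $\mathrm{Res}_W$ with the induction functor $\underline{m}$ --- is a genuine piece of work: the stable locus of $E^s_{\a+\beta,W}$ is \emph{not} simply the preimage of $E^s_{\a,W}\times E^s_{\beta,W}$ under the Hecke correspondence, and making the convolution descend to the stable loci requires a careful analysis (one direction of the HN filtration behaves well, the other does not); this is precisely what is nontrivial in the framed story and should not be asserted without argument. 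Your concluding remark --- that one may instead appeal to the identification $\mathbf{B}\simeq$ global basis together with Kashiwara's algebraic compatibility of global bases with integrable modules --- is correct and is the shortest possible route, though it discards the geometric content that the paper wants to exhibit.
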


In other words, the canonical basis $\mathbf{B}$ projects to a basis in \textit{all} integrable lowest weight representations\footnote{We use lowest weight representations here rather than highest weight representations because we have have written Theorem~\ref{T:LU} using $\U^{\Z}_{\nu}(\n_+)$ rather than $\U^{\Z}_{\nu}(\n_-)$. Of course, one may exchange the roles of $+$ and $-$.}. For this reason, there has been a lot of activity in trying to parametrize and compute explicitly elements of the canonical basis (see e.g. \cite{Lusbook}, \cite{Lusparam}, \cite{Kashparam}, \cite{Marsh},...). By lack of space, we will not describe these results here, and give only a few (very simple) examples. We will give a proof of Theorem~\ref{T:Rep} in Section~3.5.

\vspace{.2in}

\addtocounter{theo}{1}
\noindent \textbf{Example \thetheo .} Let us come back to the Jordan quiver $\vec{Q}_0$. Again, this case is not covered by Theorem~\ref{T:LU}, but it was understood well before the advent of quantum groups. By Lemma~\ref{L:commut}, $\mathcal{K}_{\vec{Q}_0}$ is commutative, and it is easy to see from Corollary~\ref{C:commut} that it is freely generated over $\Z[v,v^{-1}]$ by the elements $\mathbf{b}_{IC(\mathcal{O}_{(1^n)})}$ for $n \geq 1$, i.e.
$$\mathcal{K}_{\vec{Q}_0} =\Z[v,v^{-1}] [ \mathbf{b}_{IC(\mathcal{O}_{(1)})}, \mathbf{b}_{IC(\mathcal{O}_{(1^2)})}, \ldots].$$
In fact there is a (canonical) identification
\begin{equation*}
\begin{split}
\Phi:\; \Lambda_{\Z[v,v^{-1}]} & \stackrel{\sim}{\to} \mathcal{K}_{\vec{Q}_0} \\
e_n &\mapsto \mathbf{b}_{IC(\mathcal{O}_{(1^n)})}
\end{split}
\end{equation*}
where $\Lambda_{\Z[v,v^{-1}]}$ is Macdonald's ring of symmetric functions and $e_n$ is the elementary
symmetric function (see \cite{Mac}). It is proved in \cite{LusztigGreen} that the basis $\big\{ \Phi^{-1}(\mathbf{b}_{IC(\mathcal{O}_{\lambda})})\;|\; \lambda \in \Pi\big\}$ consists of the Schur functions $\big\{s_{\lambda}\;|\; \lambda \in \Pi\big\}$.

\vspace{.1in}

The coefficients of Schur functions are special cases of (affine) \textit{Kazhdan-Lusztig polynomials} of type $A$. This corresponds to the fact that nilpotent orbit closures in $\mathfrak{gl}_n$ are locally isomorphic to Schubert varieties in affine Grasmannians. Similar results hold for cyclic quivers $\vec{Q}_{n-1}$ with $n>1$. We refer (again !) the interested reader to \cite[Lecture~2]{Trieste} or \cite{Quiversurvey} and its bibliography for more in this direction. 
\endexample

\vspace{.2in}

\addtocounter{theo}{1}
\noindent \textbf{Example \thetheo .} Let $\vec{Q}$ be a quiver without oriented cycles and let us label the vertices $I=\{1,2, \ldots, n\}$ in such a way that no arrow goes from $i$ to $j$ if $i >j$. By Example~2.5, we have that $\mathbbm{1}_{\a_1\epsilon_1} \star \cdots \star \mathbbm{1}_{\a_n\epsilon_n}=\mathbbm{1}_{\a_1\epsilon_1+ \cdots + \a_n\epsilon_n}$ belongs to $\PQ$ for any $(\a_i)_i \in \N^I$. Therefore
$E_1^{(\a_1)} \cdots E_n^{(\a_n)} \in \mathbf{B}$ for any $(\a_i)_i \in \N^I$. Combined with Theorem~\ref{T:LUU}, this shows that for any (symmetric) Kac-Moody algebra $\g'$, \textit{any} product $E_{i_1}^{(l_1)} \cdots E_{i_n}^{(l_n)}$ with $i_k \neq i_h$ for $k \neq h$ belongs to $\mathbf{B}$; indeed, it is always possible to orient the Dynkin diagram of $\g'$ in such a way that there are no arrows from $i_k$ to $i_h$ if $k >l$.
\endexample

\vspace{.3in}

\centerline{\textbf{3.3. Proof of Lusztig's theorem (finite type).}}
\addcontentsline{toc}{subsection}{\tocsubsection {}{}{\; 3.3. Proof of Lusztig's theorem (finite type).}}

\vspace{.15in}

In this section we assume that $\vec{Q}$ is a quiver of finite type, and we provide an elementary proof of Lusztig's Theorem~\ref{T:LU}. Recall that in this situation, there are finitely many $G_{\gamma}$-orbits in $E_{\gamma}$ for any dimension vector $\gamma$. We denote by $|E_{\gamma}/G_{\gamma}|$ the set of these orbits. Then, by Theorem~\ref{T:finiteype}, $\mathcal{P}^{\gamma}=\big\{IC(\mathcal{O})\;|\; \mathcal{O} \in |E_{\gamma}/G_{\gamma}|\big\}$. We want to prove that the map $\Phi: \U^{\Z}_{\nu}(\bo'_+) \to \widetilde{\KQ}$ is an isomorphism. By (\ref{E:trigdec}) this is clearly equivalent to showing that the restriction $\Phi: \U^{\Z}_{\nu}(\n_+) \to {\KQ}$ is an isomorphism. Both $\U^{\Z}_{\nu}(\n_+)$ and ${\KQ}$ are free $\Z[v,v^{-1}]$-modules, of finite (graded) rank. We begin by comparing these ranks. Let $\Delta_+ \subset Q_{\g'}=\Z^I$ be the set of positive roots of $\g'$. By the PBW theorem, 
\begin{equation}\label{E:PBW}
rank\;\U^{\Z}_{\nu}(\n_+)[\gamma]=\# \big\{ (n_{\a})_{\a} \in \N^{\Delta_+}\;|\; \sum n_\a  \a =\gamma \big\}.
\end{equation}
On the other hand, we have $rank\;\mathcal{K}_{\vec{Q}}[\gamma]= rank\;\mathcal{K}^{\gamma}=\# |E_{\gamma}/G_{\gamma}|$. Since $G_{\gamma}$-orbits bijectively correspond to isoclasses of representations of $\vec{Q}$ of dimension $\gamma$, and since any representation splits in an essentially unique way as a direct sum of indecomposables, we have
\begin{equation}\label{E:PBW2}
rank\;\mathcal{K}^\gamma=\# \big\{ (m_{M})_{M} \in \N^{Irr\;\vec{Q}}\;|\; \sum m_M  \underline{dim}\;M =\gamma \big\}.
\end{equation}
At this point, we invoke the fabled (see \cite[Lecture~3]{Trieste})~:

\vspace{.1in}

\begin{theo}[Gabriel] The map $Irr\;\vec{Q} \to \bigoplus_i \N \epsilon_i, \; M \mapsto \underline{dim}\;M$ sets up a bijection between $Irr\;\vec{Q}$ and $\Delta^+$. 
\end{theo}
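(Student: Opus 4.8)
I would split the theorem into three assertions: (a) for every indecomposable $M$ the dimension vector $\underline{dim}\,M$ lies in $\Delta^+$; (b) the map is injective, i.e.\ two indecomposables with the same dimension vector are isomorphic; (c) every $\alpha\in\Delta^+$ occurs as $\underline{dim}\,M$ for some indecomposable $M$. Write $q(\alpha)=\langle\alpha,\alpha\rangle$ for the Euler (Tits) form, whose symmetrization is the Cartan matrix of $\vec{Q}$. The argument will rest on two external inputs: the classification of Dynkin diagrams, which guarantees that $q$ is positive definite and that $\{\alpha\in\Z^I:q(\alpha)=1\}$ is precisely the root system, so that $\Delta^+=\{\alpha\in\N^I\setminus\{0\}:q(\alpha)=1\}$; and the fact, already used in the proof of Proposition~\ref{P:Reineke} (see \cite[Lemma~3.19]{Trieste}), that $\Ext^1(M,M)=0$ for every indecomposable $M$ over a finite type quiver. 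I will also use repeatedly that each $E_\gamma$ is an irreducible variety (an affine space) carrying, since $\vec{Q}$ is of finite type, only finitely many $G_\gamma$-orbits, hence a unique dense (and open) orbit.

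For (a) and (b) I would proceed geometrically. Let $M$ be indecomposable of dimension $\alpha$. Since $\Ext^1(M,M)=0$, Lemma~\ref{L:codim} gives $\mathrm{codim}_{E_\alpha}\mathcal{O}_M=0$, so $\mathcal{O}_M$ is dense in $E_\alpha$; as the dense orbit is unique, any indecomposable $N$ with $\underline{dim}\,N=\alpha$ satisfies $\mathcal{O}_N=\mathcal{O}_M$ and hence $N\simeq M$, proving (b). For (a), orbit–stabilizer together with the fact that $\mathrm{Aut}(M)$ is dense in $\mathrm{End}(M)$ gives $\dim\mathcal{O}_M=\dim G_\alpha-\dim\mathrm{End}(M)$, while density combined with Lemma~\ref{L:dim} gives $\dim\mathcal{O}_M=\dim E_\alpha=\dim G_\alpha-q(\alpha)$; therefore $\dim\mathrm{End}(M)=q(\alpha)$. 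It then suffices to know that $M$ is a brick, i.e.\ $\mathrm{End}(M)=k$: this forces $q(\alpha)=1$ and hence $\alpha\in\Delta^+$.

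For (c), given $\alpha\in\Delta^+$ I would take the unique dense orbit $\mathcal{O}_M\subset E_\alpha$. The computation just displayed applies verbatim to give $\dim\mathrm{End}(M)=\dim G_\alpha-\dim E_\alpha=q(\alpha)=1$, so $M$ is a brick; a brick admits no nontrivial idempotents in its endomorphism ring, hence is indecomposable, and $\underline{dim}\,M=\alpha$ as wanted.

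The crux, and the step I expect to be the main obstacle, is the claim that every indecomposable representation of a finite type quiver is a brick. The cleanest route is an induction on $\dim M$: if $M$ is (rigid and) indecomposable but $\mathrm{End}(M)\neq k$, choose a non-invertible $\theta\neq 0$ in $\mathrm{End}(M)$ with $\dim\ker\theta$ minimal, and play off the short exact sequences $0\to\ker\theta\to M\to\mathrm{im}\,\theta\to 0$ and $0\to\mathrm{im}\,\theta\to M\to M/\mathrm{im}\,\theta\to 0$ against $\Ext^1(M,M)=0$ and the additivity of $q$ to produce a rigid indecomposable of strictly smaller dimension, contradicting minimality. An alternative that sidesteps both the brick property and assertion (c) is the Bernstein--Gelfand--Ponomarev approach via reflection functors $S_i^{\pm}$: for a sink or source $i$ these restrict to mutually inverse bijections between the indecomposables not isomorphic to $S_i$ over $\vec{Q}$ and over the reoriented quiver, shifting the dimension vector by the simple reflection $s_i$; composing along an admissible ordering realizes a Coxeter element, and since the Weyl group is finite every indecomposable reduces after finitely many steps to some simple $S_j$, so $\underline{dim}\,M\in W\cdot\{\epsilon_i\}\cap\N^I=\Delta^+$ --- which yields (a), (b), and, by running the construction backwards from each simple, also (c).
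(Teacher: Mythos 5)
The paper does not actually prove this theorem: it is invoked as a known result, with a pointer to the companion notes \cite[Lecture~3]{Trieste}, so there is no proof of record here to compare against. That said, your outline is a correct and fairly standard proof of Gabriel's theorem. The geometric reduction is sound: for an indecomposable $M$ of dimension $\alpha$, the vanishing of $\Ext^1(M,M)$ (cited) together with Lemma~\ref{L:codim} forces $\mathcal{O}_M$ to be the unique dense orbit, which gives injectivity immediately; the orbit--stabilizer count combined with $\dim E_\alpha=\dim G_\alpha-\langle\alpha,\alpha\rangle$ (Lemma~\ref{L:dim}) and the openness of $\mathrm{Aut}(M)$ in $\mathrm{End}(M)$ correctly yields $\dim\mathrm{End}(M)=q(\alpha)$, and the same identity read from the dense orbit of an arbitrary $\alpha\in\Delta^+$ handles surjectivity once one knows $q(\alpha)=1$.

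You have correctly isolated the one genuine input beyond what the paper itself sets up, namely that indecomposables over a finite type quiver are bricks. Your first sketch of it (choosing a non-invertible endomorphism and playing the two short exact sequences off against rigidity and the Euler form) is vague as written --- what you are reaching for is the Happel--Ringel lemma (over a hereditary algebra, a nonzero map between indecomposables $X\to Y$ with $\Ext^1(Y,X)=0$ is mono or epi), and the ``induction on $\dim M$'' framing does not obviously close without it --- but you flag this yourself and then give the BGP reflection-functor alternative, which is clean, handles (a), (b) and (c) simultaneously, and is the route most expositions actually take. So there is no gap, only one step whose first formulation would need to be tightened into the Happel--Ringel statement if you wanted to avoid reflection functors entirely.
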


\vspace{.1in}

It follows that (\ref{E:PBW}) and (\ref{E:PBW2}) are equal, and $rank\;\U^{\Z}_{\nu}(\n_+)[\gamma]=rank\;\mathcal{K}_{\vec{Q}}[\gamma]$.
It is thus enough to show that the map $\Phi$ is surjective. This will be a consequence of the existence of Reineke's desingularization see Proposition~\ref{P:Reineke}. We will show by induction on $dim\;\mathcal{O}$ that $\mathbf{b}_{IC(\mathcal{O})} \in Im\;\Phi$ for any orbit $\mathcal{O} \in |E_{\gamma}/G_{\gamma}|$. Assume first that $dim\;\mathcal{O}$ is minimal. Then $\mathcal{O}$ is closed\footnote{actually, $\mathcal{O}=\{0\}$.}.
Using Proposition~\ref{P:Reineke} we construct a Lusztig sheaf $L_{\a_1, \ldots, \a_n}$ satisfying
\begin{equation}\label{E:proofft}
\begin{split}
supp\; L_{\a_1, \ldots, \a_n} &=\overline{\mathcal{O}},\\
(L_{\a_1, \ldots, \a_n})_{|\mathcal{O}}&=\qlb_{\mathcal{O}} [dim\;\mathcal{O}].
\end{split}
\end{equation}
But then necessarily $L_{\a_1, \ldots, \a_n}=\qlb_{\mathcal{O}}[dim\;\mathcal{O}]=IC(\mathcal{O})$. Hence $\mathbf{b}_{IC(\mathcal{O})}=\Phi(E_{i_1}^{(l_1)}) \star \cdots \star \Phi(E_{i_n}^{(l_n)})=\Phi( E_{i_1}^{(l_1)} \cdots E_{i_n}^{(l_n)}) \in Im\;\Phi$. Now let $\mathcal{O}$ be arbitrary and let us assume that $\mathbf{b}_{IC(\mathcal{O}')} \in Im\;\Phi$ for any $\mathcal{O}'$ satisfying $dim\;\mathcal{O}' < dim\;\mathcal{O}$. Arguing as above, we construct a Lusztig sheaf $L_{\a_1, \ldots, \a_n}$ satisfying (\ref{E:proofft}). Then
\begin{equation}\label{E:proofft2}
L_{\a_1, \ldots, \a_n}=IC(\mathcal{O}) \oplus \bigoplus_{\mathcal{O}' \subset \overline{\mathcal{O}} \backslash \mathcal{O}} IC(\mathcal{O}') \otimes \mathbb{V}_{\mathcal{O}'}
\end{equation}
for some multiplicity complexes $\mathbb{V}_{\mathcal{O}'}$. Put $d_{\mathbb{V}_{\mathcal{O}'}}=\sum_j dim\;H^j(\mathbb{V}_{{\mathcal{O}'}}) v^{-j}$. Taking the class of (\ref{E:proofft2}) in the Grothendieck group $\KQ$ we get
$$\mathbf{b}_{IC(\mathcal{O})} + \sum_{\mathcal{O}' \subset \overline{\mathcal{O}} \backslash \mathcal{O}} d_{\mathbb{V}_{\mathcal{O}'}} \mathbf{b}_{IC(\mathcal{O}')}=\Phi\big(E_{i_1}^{(l_1)} \cdots E_{i_n}^{(l_n)}\big) \in Im\;\Phi$$
for some suitable $(i_j,l_j)_{j}$. By hypothesis, $\mathbf{b}_{IC(\mathcal{O}')}\in Im\;\Phi$ for all $\mathcal{O}' \subset \overline{\mathcal{O}} \backslash \mathcal{O}$, from which we conclude that $\mathbf{b}_{IC(\mathcal{O})} \in Im\;\Phi$ as wanted. We are done.\qed

\vspace{.3in}

\centerline{\textbf{3.4. Fourier-Deligne transform.}}
\addcontentsline{toc}{subsection}{\tocsubsection {}{}{\; 3.4. Fourier-Deligne transform.}}

\vspace{.15in}

This section contains some technical results pertaining to the Fourier-Deligne transform. A good reference for everything we will use is \cite{KW}. Let $\vec{Q}=(I,H)$ be our quiver, which may now be arbitrary. For any edge $h \in H$ we denote by $\bar{h}$ its reverse (i.e. the edge which goes in the opposite direction). Let $J \subset H$ be a subset of edges, and let $\vec{Q}'=(I,H') $ where $H'=\overline{J} \cup (H \backslash J) $ be the quiver obtained by reversing all the edges in $J$. We also denote by $\vec{Q}_0=(I,H \backslash J)$ the quiver obtained by removing all edges in $J$.  Here is an example, with $H=\{h_1,h_2,h_3,h_4,h_5\}$ and $J=\{h_1,h_2,h_4\}$~:

\centerline{
\begin{picture}(310, 210)
\put(120,40){\circle*{5}}
\put(170,40){\circle*{5}}
\put(145,90){\circle*{5}}
\put(195,90){\circle*{5}}
\put(148,84){\line(1,-2){20}}
\put(157,66){\vector(-1,2){2.5}}
\put(170,90){\vector(1,0){5}}
\put(150,90){\line(1,0){40}}
\put(160,66){$h_3$}
\put(167,96){$h_5$}
\put(80,60){$\vec{Q}_0=$}
\put(180,140){\circle*{5}}
\put(230,140){\circle*{5}}
\put(205,190){\circle*{5}}
\put(255,190){\circle*{5}}
\put(193,166){\vector(-1,-2){2.5}}
\put(182.5,145){\line(1,2){20}}
\put(208,184){\line(1,-2){20}}
\put(217,166){\vector(-1,2){2.5}}
\put(243,166){\vector(1,2){2.5}}
\put(232.5,145){\line(1,2){20}}
\put(230,190){\vector(1,0){5}}
\put(210,190){\line(1,0){40}}
\put(205,140){\vector(-1,0){5}}
\put(185,140){\line(1,0){40}}
\put(178,162){$\overline{h_1}$}
\put(220,166){$h_3$}
\put(249,166){$\overline{h_4}$}
\put(200,128){$\overline{h_2}$}
\put(227,196){$h_5$}
\put(150,160){$\vec{Q}'=$}
\put(30,140){\circle*{5}}
\put(80,140){\circle*{5}}
\put(55,190){\circle*{5}}
\put(105,190){\circle*{5}}
\put(43,166){\vector(1,2){2.5}}
\put(32.5,145){\line(1,2){20}}
\put(58,184){\line(1,-2){20}}
\put(67,166){\vector(-1,2){2.5}}
\put(92,164){\vector(-1,-2){2.5}}
\put(82.5,145){\line(1,2){20}}
\put(80,190){\vector(1,0){5}}
\put(60,190){\line(1,0){40}}
\put(55,140){\vector(1,0){5}}
\put(35,140){\line(1,0){40}}
\put(30,166){$h_1$}
\put(70,166){$h_3$}
\put(95,160){$h_4$}
\put(55,128){$h_2$}
\put(77,196){$h_5$}
\put(0,160){$\vec{Q}=$}
\end{picture}}

\vspace{-.2in}

Accordingly, we put
\begin{align*}
E_{\gamma}&=\bigoplus_{h \in H} Hom(k^{\gamma_{s(h)}},k^{\gamma_{t(h)}}),\\
E'_{\gamma}&=\bigoplus_{h \in H'} Hom(k^{\gamma_{s(h)}},k^{\gamma_{t(h)}}),\\
E_{\gamma,0}&=\bigoplus_{h \in H \backslash J} Hom(k^{\gamma_{s(h)}},k^{\gamma_{t(h)}}).
\end{align*}
The spaces $E_{\gamma}, E'_{\gamma}$ are vector bundles over $E_{\gamma,0}$ with fibers $\bigoplus_{h \in J} Hom(k^{\gamma_{s(h)}},k^{\gamma_{t(h)}})$ and $\bigoplus_{h \in J} Hom(k^{\gamma_{t(h)}},k^{\gamma_{s(h)}})$ respectively. The linear map
\begin{equation}\label{E:pairing}
\begin{split}
\bigoplus_{h \in J} Hom(k^{\gamma_{s(h)}},k^{\gamma_{t(h)}}) \otimes \bigoplus_{h \in J} Hom(k^{\gamma_{t(h)}},k^{\gamma_{s(h)}}) & \to k\\
(\bigoplus_h a_h) \otimes (\bigoplus {a}_{\bar{h}}) \hspace{.86in} & \mapsto \sum_h tr(a_h a_{\bar{h}})
\end{split}
\end{equation}
is a nondegenerate pairing between the vector bundles $E_{\gamma}$ and $E'_{\gamma}$ over $E_{\gamma,0}$. In that situation there is a Fourier-Deligne transform
$$\FD: D^b(E_\gamma) {\to} D^b(E'_{\gamma})$$
constructed as follows. Consider the projections
$$\xymatrix{ E_{\gamma} & E_{\gamma} \underset{E_{\gamma,0}}{\times} E'_{\gamma} \ar[l]_-{\pi_1} \ar[r]^-{\pi_2} & E'_{\gamma}}$$
and let $\chi: E_{\gamma} \times_{E_{\gamma,0}} E'_{\gamma} \to k$ be the pairing (\ref{E:pairing}).
Let us fix a nontrivial additive character $\rho: \mathbb{F}_q \to \qlb^*$. The Artin-Schreier map $x \mapsto x^q-x$ is a covering $k \to k$ with Galois group equal to $\mathbb{F}_q$. Using this we can define a nontrivial local system $\mathcal{L}$ of rank one over $k$. By definition, $\FD(\mathbb{P})=\pi_{2!}( \pi_1^*(\mathbb{P}) \otimes \chi^* (\mathcal{L}))[r]$, where $r$ is the rank of the bundle $E_{\gamma} \to E_{\gamma,0}$. There is of course a similarly defined Fourier transform
$\FD':  D^b(E'_\gamma) \stackrel{\sim}{\to} D^b(E_{\gamma})$. 

\vspace{.1in}

Some of the important properties of the Fourier transform are summarized in the following~

\vspace{.1in}

\begin{theo}\label{T:FDT} The Fourier-Deligne transform is an equivalence of triangulated categories
$$\FD: D^b(E_\gamma) \stackrel{\sim}{\to} D^b(E'_{\gamma}).$$
It restricts to an equivalence $D^b(E_{\gamma})^{ss} \stackrel{\sim}{\to} D^b(E'_{\gamma})^{ss}$ preserving perverse sheaves. Moreover, if $j$ denotes the
operation of multiplication by $-1$ along the fibers of $\pi_1$ then for any complex $\mathbb{P}$ in $D^b(E_{\gamma})$ we have $D \FD D (\mathbb{P})= \FD j^*(\mathbb{P})$ and 
$$\FD' \circ \FD (\mathbb{P}) \simeq j^*(\mathbb{P})$$
(Fourier inversion formula).
\end{theo}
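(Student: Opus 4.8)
The plan is to prove Theorem~\ref{T:FDT} by reducing everything to the classical statements about the Fourier--Deligne transform for a single vector bundle with a perfect pairing, as found in \cite{KW} (or \cite{Laumon}). The point is that all three assertions (equivalence of triangulated categories, preservation of semisimplicity and perversity, and the inversion formula) hold in complete generality for $\FD_{\mathcal{E}} \colon D^b_c(\mathcal{E}) \to D^b_c(\mathcal{E}^\vee)$ attached to a vector bundle $\pi\colon \mathcal{E} \to S$ of rank $r$ over a base $S$, its dual bundle $\pi^\vee\colon \mathcal{E}^\vee \to S$, and the tautological pairing $\mathcal{E}\times_S \mathcal{E}^\vee \to \mathbb{A}^1$ composed with a fixed Artin--Schreier sheaf $\mathcal{L}_\rho$. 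In our situation $S = E_{\gamma,0}$, $\mathcal{E} = E_\gamma$, $\mathcal{E}^\vee = E'_\gamma$ and the pairing is exactly (\ref{E:pairing}), which is visibly nondegenerate since $\mathrm{tr}$ is a perfect pairing on each $\Hom(k^{\gamma_{s(h)}}, k^{\gamma_{t(h)}})$. So the whole content is: quote the general theorem, check the hypotheses, and keep track of the cohomological shift by $r = \sum_{h\in J}\gamma_{s(h)}\gamma_{t(h)}$ that we have built into the normalization $[r]$.

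First I would recall the classical facts in the precise form needed. (i) $\FD_{\mathcal{E}}$ is an equivalence of triangulated categories with quasi-inverse given by $\FD_{\mathcal{E}^\vee}$ up to the sign automorphism: concretely, $\FD_{\mathcal{E}^\vee}\circ \FD_{\mathcal{E}} \simeq j^*$ where $j\colon \mathcal{E}\to\mathcal{E}$ is fibrewise multiplication by $-1$ (this is the Fourier inversion formula, \cite[Thm.~III.13.2]{KW} or the analogous statement in \cite{Laumon}). (ii) $\FD_{\mathcal{E}}$ commutes with Verdier duality up to the sign twist: $D\,\FD_{\mathcal{E}}\,D \simeq \FD_{\mathcal{E}}\, j^*$ (equivalently $\FD_{\mathcal{E}}\, D \simeq D\,\FD_{\mathcal{E}}\, j^*$), which with the chosen middle normalization $[r]$ is exactly the statement in the theorem. (iii) With the middle normalization $\FD_{\mathcal{E}}$ is $t$-exact for the perverse $t$-structure, hence sends $\mathrm{Perv}(\mathcal{E})$ to $\mathrm{Perv}(\mathcal{E}^\vee)$; and since it is an equivalence commuting (up to $j^*$, which is an isomorphism of varieties and so preserves everything) with $D$, proper pushforward, smooth pullback and direct sums, it sends simple perverse sheaves to simple perverse sheaves and complexes of geometric origin to complexes of geometric origin, hence restricts to an equivalence $D^b(E_\gamma)^{ss} \xrightarrow{\sim} D^b(E'_\gamma)^{ss}$.

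Then I would assemble the proof. The equivalence of triangulated categories and the inversion formula are immediate from (i) once one observes that $j^*$ is an autoequivalence; the duality identity $D\,\FD\,D(\mathbb{P}) = \FD\, j^*(\mathbb{P})$ is (ii) verbatim. For the statement about $D^b(\,\cdot\,)^{ss}$, one notes that the subcategory of semisimple complexes is characterized intrinsically (finite direct sums of shifts of simple perverse sheaves of geometric origin), and each of the structures defining it is preserved by $\FD$ by (ii), (iii) and the exactness in (iii); here one also uses that $E_\gamma$, $E'_\gamma$, $E_{\gamma,0}$ are smooth varieties so there is no subtlety in the relevant $t$-structures. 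Finally I would remark that the formula for $r$ comes from Lemma~\ref{L:dim}-type bookkeeping: $r = \dim E_\gamma - \dim E_{\gamma,0} = \sum_{h\in J}\gamma_{s(h)}\gamma_{t(h)}$, which is what makes the normalization in the definition of $\FD$ agree with the middle normalization used in (i)--(iii).

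The main obstacle is not really mathematical but referential: one must make sure the normalization of $\FD$ adopted here (the shift $[r]$, the convention $x\mapsto x^q - x$ for the Artin--Schreier cover, and the use of $\pi_{2!}$ rather than $\pi_{2*}$) matches the convention under which the cited classical theorem is stated, since different sources differ by a shift and/or a Tate twist and by the sign automorphism $j^*$. Once the conventions are pinned down, everything else is a formal consequence; the only genuine input from \cite{KW} is the inversion formula (i) together with the $t$-exactness in (iii), both of which are standard and which we are entitled to quote. I would therefore phrase the proof as: ``This is the content of \cite[\S III.12--13]{KW} applied to the vector bundle $E_\gamma \to E_{\gamma,0}$ with the perfect pairing (\ref{E:pairing}); the only thing to check is that our normalization by $[r]$ with $r = \sum_{h\in J}\gamma_{s(h)}\gamma_{t(h)}$ is the middle one, which is immediate.''
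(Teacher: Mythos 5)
Your proposal is correct and essentially identical to the paper's treatment: the paper gives no proof of Theorem~\ref{T:FDT} but simply states it as a summary of standard facts, having just pointed the reader to \cite{KW} ("A good reference for everything we will use is \cite{KW}"). Your reduction to the classical Fourier--Deligne theory for the vector bundle $E_\gamma \to E_{\gamma,0}$ with the trace pairing, together with the bookkeeping check that the shift $[r]$ with $r=\sum_{h\in J}\gamma_{s(h)}\gamma_{t(h)}$ is the middle (perverse-$t$-exact) normalization, is exactly what is implicitly being invoked.
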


\vspace{.1in}

\begin{prop}[Lusztig]\label{P:FD} The Fourier-Deligne transform commutes with the induction and restriction functors, i.e.
$$\FD(\mathbb{P} \star \mathbb{Q}) \simeq \FD(\mathbb{P}) \star \FD(\mathbb{Q}),$$
$$\FD \otimes \FD(\underline{\Delta} (\mathbb{P})) \simeq \underline{\Delta} (\FD(\mathbb{P}))$$
for any $\mathbb{P}, \mathbb{Q} \in \QQ $. Moreover, 
\begin{equation}\label{E:scalFD}
\{ \FD(\mathbb{P}), \mathbb{Q}'\} =\{ \mathbb{P}, \FD'(\mathbb{Q}')\}
\end{equation}
for any $\mathbb{P} \in \QQ$, $\mathbb{Q}' \in \mathcal{Q}_{\vec{Q}'}$.
\end{prop}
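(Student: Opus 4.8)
The plan is to establish the two functorial compatibilities first and then deduce the scalar-product identity by combining them with the adjunction of Proposition~\ref{P:Hopfscalar} and the Fourier inversion formula of Theorem~\ref{T:FDT}. The whole argument follows Lusztig's strategy in \cite[Chap.~10]{Lusbook}, translated into our stacky language; most steps are base-change and projection-formula manipulations, so I will only indicate the skeleton.

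First I would prove $\FD(\mathbb{P} \star \mathbb{Q}) \simeq \FD(\mathbb{P}) \star \FD(\mathbb{Q})$. Fix $\alpha,\beta$ with $\alpha+\beta=\gamma$, and recall $\underline{m}=q_!\,r_{\#}\,p^*[\dim p]$ in the diagram (\ref{E:16}). I would first observe that when we split $H$ as $\overline{J}\cup(H\setminus J)$, each of the varieties $E_\alpha$, $E_\beta$, $E_{\alpha,\beta}$, $E^{(1)}_{\alpha,\beta}$, $E_{\alpha+\beta}$ carries the corresponding bundle structure over its ``loop-free'' version, and the maps $p,r,q$ are all compatible with these bundle projections. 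The key point is that the pairing (\ref{E:pairing}) on $E_{\alpha+\beta}\times_{E_{\alpha+\beta,0}}E'_{\alpha+\beta}$ restricts, under the natural maps induced by $p$ and $q$, to the \emph{sum} of the pairings on $E_\alpha$ and $E_\beta$ (for the $\overline{J}$-part one has a block-upper-triangular contribution that dies under the trace, exactly as in Lusztig's computation). Granting this, one forms the ``mixed'' variety parametrizing $(\underline y,W)$ together with a transverse datum in $E'$, and checks by iterated base change along (\ref{E:16}) that $\pi_{2!}(\pi_1^*\underline m(\mathbb{P}\boxtimes\mathbb{Q})\otimes\chi^*\mathcal{L})$ is computed by the same diagram with $\mathbb{P},\mathbb{Q}$ replaced by $\FD(\mathbb{P}),\FD(\mathbb{Q})$; the character sheaf $\chi^*\mathcal{L}$ factors as an external tensor product of the two smaller character sheaves precisely because of the additivity of the pairing. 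The shift bookkeeping (the ranks $r$ add up correctly, and $[\dim p]$ is unchanged under Fourier transform along the quiver edges) is routine. The restriction statement $\FD\otimes\FD(\underline\Delta(\mathbb{P}))\simeq\underline\Delta(\FD(\mathbb{P}))$ is proved the same way using diagram (\ref{E:17}), or, more cheaply, deduced from the induction case by Verdier duality together with Lemma~\ref{L:Verdierind} and the formula $D\FD D=\FD j^*$ of Theorem~\ref{T:FDT}, since $j^*$ is harmless on the self-dual generators.

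Finally, for (\ref{E:scalFD}) I would argue by reduction to the generators. Both sides are $\Z[v,v^{-1}]$-bilinear in $(\mathbb{P},\mathbb{Q}')$, so by the definition of $\QQ$ and $\mathcal{Q}_{\vec{Q}'}$ and the just-proved commutation with $\underline m$, it suffices to verify the identity when $\mathbb{P}=\mathbbm{1}_{\epsilon_{i_1}}\star\cdots\star\mathbbm{1}_{\epsilon_{i_r}}$ and likewise $\mathbb{Q}'$ is a Lusztig sheaf on $E'$. Here one uses the adjunction of Proposition~\ref{P:Hopfscalar} for both quivers to move everything onto the generators $\mathbbm{1}_{\epsilon_i}$, at which point both sides become equivariant cohomology of a point twisted by a character sheaf pulled back from $E_{\epsilon_i}=\{0\}$ (resp.\ $E'_{\epsilon_i}=\{0\}$)--the Fourier transform over a point is trivial, so the two are literally equal. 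Alternatively, and perhaps more transparently, one writes out $\{\FD(\mathbb{P}),\mathbb{Q}'\}=\dim H^*_{G_\gamma}(\FD(\mathbb{P})\otimes\mathbb{Q}')$ and uses that $\FD$ is an equivalence together with the self-duality and Fourier-inversion formula $\FD'\FD(\mathbb{P})\simeq j^*(\mathbb{P})$: projection formula gives $H^*_{G_\gamma}(\FD(\mathbb{P})\otimes\mathbb{Q}')\simeq H^*_{G_\gamma}(\mathbb{P}\otimes\FD'(\mathbb{Q}'))$ up to the action of $j^*$, which is again trivial on $H^*$ since it is a fibrewise sign on a vector bundle.

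\textbf{Main obstacle.} The delicate point is the first step: verifying that the pairing (\ref{E:pairing}) really does decompose additively over the pieces of the induction diagram (\ref{E:16}). One has to choose the splitting $V_{\alpha+\beta}=V_\alpha\oplus V_\beta$ compatibly with $W$ and then see that, in block form along this decomposition, the trace pairing on the reversed edges splits as (pairing on $E_\alpha$) $+$ (pairing on $E_\beta$) with the off-diagonal block contributing a term that is killed either because $q$ forgets it or because its Fourier transform along that block is supported where $\chi^*\mathcal{L}$ is trivial. Getting this bookkeeping exactly right--including tracking which blocks are proper, smooth, or bundle directions, and that all the shifts $[\dim p]$, $[r]$ match on the nose--is where the real work lies; everything downstream is formal.
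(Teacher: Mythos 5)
The logical architecture of your proposal is the reverse of the paper's, and the shortcut you rely on to keep that reversal cheap is broken. The paper proceeds in the order: first prove (\ref{E:scalFD}) directly (by transporting the Fourier kernel across the tensor product in the $\Gamma$-equivariant cohomology and using the projection formula); then prove the restriction compatibility $\FD\otimes\FD(\underline{\Delta}(\mathbb{P}))\simeq\underline{\Delta}(\FD(\mathbb{P}))$ directly, via the large diagram (\ref{E:DiagFD}) whose crucial ingredient is that the restriction of the character sheaf $\dot\iota^*(\mathcal{L}_\gamma)$ to $E_\gamma\times F'$ ``collapses'' onto $F\times F'$ by Lemma~\ref{L:FDaffine}; and then deduce the induction compatibility from these two and the nondegeneracy of $\{\,,\,\}$ (Corollary~\ref{C:scalarprod}). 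You instead attempt a direct proof of the induction case, then hope to get the restriction case ``more cheaply'' from it by Verdier duality.

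That last step is where the genuine gap lies. You invoke Lemma~\ref{L:Verdierind} and $D\FD D=\FD j^*$ to transfer the induction compatibility to the restriction one, but Lemma~\ref{L:Verdierind} only says $\underline m$ commutes with $D$; the paper immediately warns afterwards that $\underline{\Delta}$ does \emph{not} commute with $D$ (and Lemma~\ref{L:coprodun} supplies an explicit nontrivial shift). $\underline m$ and $\underline\Delta$ are adjoints with respect to the geometric pairing $\{\,,\,\}$, not Verdier duals of each other---$\underline m=q_!r_\#p^*$ is self-dual because $q$ is proper and $p$ smooth, whereas $\underline\Delta=\kappa_!\iota^*$ is not, since $\kappa$ is not proper and $\iota$ is not smooth. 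Hence the restriction statement cannot be extracted from the induction one by duality, and without it your ``reduction to generators'' for (\ref{E:scalFD}) also loses its footing, because the reduction requires moving $\underline\Delta$ past $\FD$ on one side. Your alternative route to (\ref{E:scalFD}) via the projection formula and Fourier inversion is essentially correct and is, in slightly different dress, what the paper does (in the $\Gamma$-twisted setting); but it must come \emph{first}, and the restriction compatibility must then be proved directly with the vanishing lemma, precisely because the induction diagram (\ref{E:16}) mixes the fibre directions of $E_\gamma\to E_{\gamma,0}$ with the flag directions in a way that does not factor through an external tensor product of kernels, whereas the restriction diagram (\ref{E:17}) does, after the vanishing step. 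The paper's order is not an aesthetic choice: it is forced by which of the two functors actually behaves well with respect to the character sheaf.
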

\begin{proof} We begin with the statement concerning the scalar product. Recall that if $X$ is a $G$-variety and if $\mathbb{T} \in D^b_{G}(X)^{ss}$ then $\mathbb{T}_{\Gamma}$ is the unique (up to isomorphism) semisimple complex on $(X \times \Gamma)/G$ such that $s^*(\mathbb{T}_{\Gamma}) = \pi^*(\mathbb{T})$, where $s, \pi$ are the canonical maps
$$\xymatrix{ X & X \times \Gamma \ar[l]_-{\pi} \ar[r]^-{s} & (X \times \Gamma)/G = X_{\Gamma}}.$$
Here $\Gamma$ is as usual a sufficiently acyclic free $G$-space. By definition, if $\mathbb{P} \in \QQ^{\gamma}, \mathbb{Q}' \in \mathcal{Q}_{\vec{Q}'}^{\gamma}$ then
$$\{ \FD(\mathbb{P}), \mathbb{Q}'\}=\sum_j dim\; H^{2 dim\;\Gamma/G_{\gamma}-j} \big( \pi_{2!}(\pi_1^*(\mathbb{P}) \otimes \chi^*(\mathcal{L}))_{\Gamma} \otimes \mathbb{Q}'_{\Gamma} \big).$$
There is an induced Fourier-Deligne diagram
$$\xymatrix{ (E_{\gamma})_{\Gamma} & (E_{\gamma})_{\Gamma} \underset{(E_{\gamma,0})_{\Gamma}}{\times} (E'_{\gamma})_{\Gamma} \ar[l]_-{\pi^{\Gamma}_1} \ar[r]^-{\pi^{\Gamma}_2} & (E'_{\gamma})_{\Gamma}}$$
and a pairing $\chi^{\Gamma}: (E_{\gamma})_{\Gamma} \times_{(E_{\gamma,0})_{\Gamma}} (E'_{\gamma})_{\Gamma} \to k$, and we have
$$\pi_{2!}(\pi_1^*(\mathbb{P}) \otimes \chi^*(\mathcal{L}))_{\Gamma} =\pi^{\Gamma}_{2!}(\pi^{\Gamma *}_1(\mathbb{P}_{\Gamma}) \otimes \chi^{\Gamma *}(\mathcal{L})).$$
Hence by the projection formula we obtain
$$\{ \FD(\mathbb{P}), \mathbb{Q}'\}=\sum_j dim\; H^{2 dim\;\Gamma/G_{\gamma}-j} \big( \pi_1^{\Gamma *}(\mathbb{P}_{\Gamma}) \otimes \chi^{\Gamma *}(\mathcal{L}) \otimes \pi^{\Gamma *}_2(\mathbb{Q}'_{\Gamma}) \big).$$
Starting from the r.h.s of (\ref{E:scalFD}) would yield the same expression. This proves (\ref{E:scalFD}).

\vspace{.1in}

Let us turn to the compatibility between $\FD$ and the restriction functor $\underline{\Delta}_{\a,\beta}$. For this, we consider the following commutative diagram~:
\begin{equation}\label{E:DiagFD}
\xymatrix{E_{\a} \times E_{\beta} & F \ar[l]_-{\kappa} \ar[rrr]^-{\iota} & & & E_{\gamma}\\
(E_{\a} \times E_{\beta}) \times (E'_{\a} \times E'_{\beta}) \ar[u]^-{\pi_1'} \ar[d]_{\pi_2'} & F \times (E'_{\a} \times E'_{\beta}) \ar[l]_-{\dot{\kappa}} \ar[u]^-{\dot{\pi}_1} & F \times F' \ar[l]_-{\phi} \ar[r]^-{\psi} & E_{\gamma} \times F' \ar[r]^-{\dot{\iota}}\ar[d]_-{\dot{\pi}_2} & E_{\gamma} \times E'_{\gamma} \ar[u]^-{\pi_1} \ar[d]_-{\pi_2} \\
E'_{\a} \times E'_{\beta} & & & F' \ar[lll]_-{\kappa'} \ar[r]^-{\iota'} & E'_{\gamma}}
\end{equation}
where all the products are understood to be over the base $E_{\gamma,0}$ or $E_{\a,0} \times E_{\beta,0}$, and all the maps are the obvious ones; $\kappa,\kappa',\dot{\kappa}$ and $\phi$, as well as $\pi_1, \pi'_1, \dot{\pi}_1, \pi_2, \pi'_2, \dot{\pi}_2$ are vector bundles; $\psi, \iota, \iota', \dot{\iota}$ are closed embeddings. We denote by $d_X$ the rank of the vector bundle $X$ (for $X$ one of the above). Note that the two square diagrams in (\ref{E:DiagFD}) are cartesian. The local system $\mathcal{L}$ on $k$ gives rise via pullbacks by $\chi_{\gamma}$ and $\chi_{\a} \times \chi_{\beta}$ to local systems $\mathcal{L}_{\gamma}$ and $\mathcal{L}_{\a,\beta}$ on $E_{\gamma} \times E'_{\gamma}$ and $(E_{\a} \times E_{\beta}) \times (E'_{\alpha} \times E'_{\beta})$ respectively.

\vspace{.1in}

Now let $\mathbb{P} \in D^b_{G_{\gamma}}(E_{\gamma})^{ss}$. We have by definition
\begin{equation}\label{E:FD11}
\begin{split}
\underline{\Delta}_{\a,\beta}(\FD(\mathbb{P}))&=\kappa'_! (\iota')^* \pi_{2!} \big( \pi_1^*(\mathbb{P}) \otimes \mathcal{L}_{\gamma}\big)[-\langle \a,\beta \rangle_{\vec{Q}'} + d_{\pi_1}]\\
&=\kappa'_! \dot{\pi_2}_{!} \dot{\iota}^*\big(\pi_1^*(\mathbb{P}) \otimes \mathcal{L}_{\gamma}\big)[-\langle \a,\beta \rangle_{\vec{Q}'} + d_{\pi_1}]\\
&=\kappa'_! \dot{\pi_2}_{!} \big(\dot{\iota}^*\pi_1^*(\mathbb{P}) \otimes \dot{\iota}^*(\mathcal{L}_{\gamma})\big)[-\langle \a,\beta \rangle_{\vec{Q}'} + d_{\pi_1}].
\end{split}
\end{equation}

We will now show that the relevant information concerning the local system $\dot{\iota}^*(\mathcal{L})$ on $E_{\gamma} \times F'$ is in a certain sense supported on $F \times F'$. Set
$$Z=\{0\} \times (\kappa')^{-1}(\{0\} \times \{0\}) \subset F \times F'. $$
Thus
$$Z=\big\{ (0,\underline{x}'), \; \underline{x}' =(x'_h)_{h \in \overline{J}}\;|\; \underline{x}' (V_{\beta})=0,\; \underline{x}'(V_{\gamma}) \subset V_{\beta} \big\}.$$
The map $\chi_{\gamma} \dot{\iota}: E_{\gamma} \times F' \to k$ restricts to an affine map on each of the affine subspaces $(\underline{x}, \underline{x}')+Z$. This restriction is constant if and only if $(\underline{x}, \underline{x}') \in F \times F'$. Let us denote by $\rho : E_{\gamma} \times F' \to E_{\gamma} \times F'/Z$ the quotient map and by $\rho^\circ$ the restriction of $\rho$ to the complement $(E_{\gamma} \times F')^{\circ}$ of $F \times F'$ in $E_{\gamma} \times F'$. By the above argument and Lemma~\ref{L:FDaffine} below we have 
\begin{equation}\label{E:FD12}
\rho^{\circ}_!(\dot{\iota}^*(\mathcal{L}_{\gamma}))=0.
\end{equation}
The closed embedding $\psi: F \times F' \hookrightarrow E_{\gamma} \times F'$ and the open embedding of the complement
 $\psi^{\circ}: (E_{\gamma} \times F')^{\circ} \hookrightarrow E_{\gamma} \times F'$ determine a distinguished triangle
$$\xymatrix{ \psi^{\circ}_! \psi^{\circ *}(\dot{\iota}^* (\mathcal{L}_{\gamma})) \ar[r] &\dot{\iota}^*(\mathcal{L}_{\gamma}) \ar[r]  &
\psi_!\psi^*(\dot{\iota}^*(\mathcal{L}_{\gamma})) \ar[r]^-{[1]} &}$$
Applying the functor $\rho_!$ to the above triangle and using (\ref{E:FD12}) yields $\rho_!\big( \dot{\iota}^*(\mathcal{L}_{\gamma})\big)=\rho_! \big( \psi_!\psi^* \dot{\iota}^*(\mathcal{L}_{\gamma})\big)$. Since $\kappa' \dot{\pi}_2$ factors through $\rho$ we obtain 
$$\kappa'_! \dot{\pi}_{2!}\big( \dot{\iota}^*(\mathcal{L}_{\gamma})\big)=\kappa'_!\dot{\pi}_{2!} \big( \psi_!\psi^* \dot{\iota}^*(\mathcal{L}_{\gamma})\big)$$
and finally
\begin{equation}\label{E:FD13}
\kappa'_! \dot{\pi}_{2!}\big(\dot{\iota}^* \pi_1^*(\mathbb{P}) \otimes \dot{\iota}^*(\mathcal{L}_{\gamma})\big)=\kappa'_!\dot{\pi}_{2!} \big( \dot{\iota}^* \pi_1^*(\mathbb{P}) \otimes \psi_!\psi^* \dot{\iota}^*(\mathcal{L}_{\gamma})\big).
\end{equation}
Using (\ref{E:FD13}) we get
\begin{equation}\label{E:FD14}
\begin{split}
\kappa'_! \dot{\pi}_{2!} \big( \dot{\iota}^* \pi_1^*(\mathbb{P}) \otimes \dot{\iota}^*(\mathcal{L})\big) &=\kappa'_!\dot{\pi}_{2!} \big( \dot{\iota}^* \pi_1^*(\mathbb{P}) \otimes \psi_!\psi^* \dot{\iota}^*(\mathcal{L}_{\gamma})\big)\\
&=\kappa'_!\dot{\pi}_{2!} \big( \dot{\iota}^* \pi_1^*(\mathbb{P}) \otimes \psi_!\phi^* \dot{\kappa}^* (\mathcal{L}_{\alpha,\beta})\big)\\
&=\kappa'_!\dot{\pi}_{2!}\psi_! \big( \psi^*\dot{\iota}^* \pi_1^*(\mathbb{P}) \otimes \phi^* \dot{\kappa}^* (\mathcal{L}_{\alpha,\beta})\big)\\
&=\pi'_{2!}\dot{\kappa}_{!}\phi_! \big( \phi^* \dot{\pi}_1^*\iota^*(\mathbb{P}) \otimes \phi^* \dot{\kappa}^* (\mathcal{L}_{\alpha,\beta})\big)\\
&=\pi'_{2!} \big( \dot{\kappa}_!\phi_!\phi^* \dot{\pi}_1^*\iota^*(\mathbb{P}) \otimes \otimes \mathcal{L}_{\alpha,\beta}\big)\\
&=\pi'_{2!} \big( \dot{\kappa}_! \dot{\pi}_1^*\iota^*(\mathbb{P}) \otimes \otimes \mathcal{L}_{\alpha,\beta}\big)[-2d_{\phi}]\\
&=\pi'_{2!} \big( (\pi'_1)^*\kappa_!\iota^*(\mathbb{P}) \otimes \otimes \mathcal{L}_{\alpha,\beta}\big)[-2d_{\phi}]\\
&=\pi'_{2!} \big( (\pi'_1)^*\underline{\Delta}_{\a,\beta}(\mathbb{P}) \otimes \otimes \mathcal{L}_{\alpha,\beta}\big)[-2d_{\phi}+\langle \a , \beta \rangle_{\vec{Q}}]\\
&=\FD \otimes \FD\big(\underline{\Delta}_{\a,\beta}(\mathbb{P})\big)[2d_{\phi}+\langle \a , \beta \rangle_{\vec{Q}}-d_{\pi'_1}].
\end{split}
\end{equation}
In the above we have used the projection formula and the fact that $\phi$ is a vector bundle, so that $\phi_!\phi^*=[-2d_{\phi}]$. The equality $\FD\otimes \FD(\underline{\Delta}_{\a,\beta}(\mathbb{P})) \simeq \underline{\Delta}_{\a,\beta}(\FD(\mathbb{P}))$ follows from
(\ref{E:FD11}), (\ref{E:FD14}) and the easily checked identity  $-2d_{\phi} + \langle \a, \beta \rangle_{\vec{Q}} - \langle \a, \beta \rangle_{\vec{Q}'}  +d_{\pi_1}- d_{\pi'_1}=0$.

\vspace{.1in}

It remains to show the compatibility of the Fourier-Deligne transform with the induction functor $\underline{m}_{\a,\beta}$. For $\mathbb{P} \in D^b_{G_{\a}}(E_{\a})^{ss}, \mathbb{Q} \in D^b_{G_{\beta}}(E_{\beta})^{ss}$ and any $\mathbb{R} \in D^b_{G_{\gamma}}(E'_{\gamma})^{ss}$ it holds
\begin{equation*}
\begin{split}
\{ \FD (\mathbb{P} \star \mathbb{Q}), \mathbb{R}\} &= \{ \mathbb{P} \star \mathbb{Q}, \FD'(\mathbb{R})\}\\
&=\{ \mathbb{P} \boxtimes \mathbb{Q}, \underline{\Delta}_{\a,\beta}\big(\FD'(\mathbb{R})\big)\}\\
&=\{ \mathbb{P} \boxtimes \mathbb{Q}, \FD' \otimes \FD' \big(\underline{\Delta}_{\a,\beta}(\mathbb{R})\big)\}\\
&=\{ \FD(\mathbb{P}) \boxtimes \FD(\mathbb{Q}), \underline{\Delta}_{\a,\beta}(\mathbb{R})\}\\
&=\{ \FD(\mathbb{P}) \star \FD(\mathbb{Q}), \mathbb{R}\}.
\end{split}
\end{equation*}
The result follows from the nondegeneracy of the pairing $\{\,,\,\}$ (see Proposition~\ref{C:scalarprod}). Proposition~\ref{P:FD} is proved.
\end{proof}

\vspace{.1in}

In the course of the proof, we have used the observation~:

\begin{lem}\label{L:FDaffine} Let $h: {k}^n \to {k}$ be a nonconstant affine map. Then $H^*_c({k}^n,h^*\mathcal{L})=0$.
\end{lem}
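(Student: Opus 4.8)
The statement is the standard vanishing lemma for the Artin--Schreier sheaf pulled back along a nonconstant affine function, and the plan is to reduce it to the one-dimensional case. First I would note that a nonconstant affine map $h:k^n\to k$ can be written, after a linear change of coordinates (which is an automorphism of $k^n$ and hence does not affect $H^*_c$), as $h(x_1,\dots,x_n)=x_1+c$ for some constant $c\in k$. Translation by $c$ on the target $k$ is an automorphism carrying $\mathcal{L}$ to $\mathcal{L}\otimes(\text{constant rank one system})\cong\mathcal{L}$ up to the value of $\mathcal{L}$ at a point; in any case $\chi^*(\mathcal{L})$ for $\chi(x)=x+c$ and for $\chi(x)=x$ differ by tensoring with a one-dimensional vector space, so we may assume $c=0$, i.e. $h$ is the projection $\mathrm{pr}_1:k^n\to k$ onto the first coordinate.

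Next I would factor $\mathrm{pr}_1$ as $k^n=k\times k^{n-1}\xrightarrow{\mathrm{id}\times a}k\times\{pt\}=k$, where $a:k^{n-1}\to\{pt\}$ is the structure map, so that $h^*\mathcal{L}=\mathrm{pr}_1^*\mathcal{L}=p_1^*\mathcal{L}$ where $p_1:k\times k^{n-1}\to k$ is the first projection. By the Künneth formula (for compactly supported cohomology), $H^*_c(k^n,h^*\mathcal{L})\cong H^*_c(k,\mathcal{L})\otimes H^*_c(k^{n-1},\qlb)$, so it suffices to treat the case $n=1$ and show $H^*_c(k,\mathcal{L})=0$ for $\mathcal{L}$ the nontrivial rank one Artin--Schreier local system on the affine line.

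For the one-dimensional case I would invoke the basic computation of Artin--Schreier cohomology: $\mathcal{L}$ is the rank one summand of $\pi_!\qlb$ corresponding to the fixed nontrivial character $\rho$, where $\pi:k\to k$ is the Artin--Schreier covering $x\mapsto x^q-x$ with Galois group $\mathbb{F}_q$. Since $\pi$ is finite, $H^*_c(k,\pi_!\qlb)=H^*_c(k,\qlb)$, which is $\qlb$ in degree $2$ (with a Tate twist) and $0$ otherwise; decomposing over the characters of $\mathbb{F}_q$ gives $H^*_c(k,\qlb)=\bigoplus_{\psi}H^*_c(k,\mathcal{L}_\psi)$, and the $H^2_c$ is entirely accounted for by the trivial character $\psi=1$ (the $\mathbb{F}_q$-invariants), forcing $H^*_c(k,\mathcal{L}_\psi)=0$ for every nontrivial $\psi$, in particular for $\mathcal{L}=\mathcal{L}_\rho$. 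Alternatively one can cite this directly from \cite{KW}. The only mild subtlety, and the point I would be most careful about, is the normalization in the reduction step: checking that translating the target and stripping off the $k^{n-1}$ factor really does not change the vanishing, i.e. that tensoring $\mathcal{L}$ by a constant sheaf and by $H^*_c(k^{n-1},\qlb)$ preserves the conclusion $H^*_c=0$. This is immediate once stated, so there is no genuine obstacle here; the lemma is elementary given the foundational properties of the Fourier--Deligne transform recalled above.
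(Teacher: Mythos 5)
Your proof is correct and takes essentially the same route as the paper's: reduce to the one-variable case and invoke $H^*_c(k,\mathcal{L})=0$. The paper compresses the reduction by applying the projection formula directly to the affine fibration $h$ (so $h_!h^*\mathcal{L}=\mathcal{L}\otimes h_!\qlb=\mathcal{L}[-2(n-1)]$, whence $\pi_!h_!h^*\mathcal{L}=H^{*-2(n-1)}_c(k,\mathcal{L})=0$), which replaces your explicit coordinate change, translation-invariance check, and K\"unneth argument with a single identity, but the substance is identical.
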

\begin{proof} Indeed, if $\pi: {k} \to \{pt\}$ denotes the projection to a point, we have
$$H^*_c({k}^n,h^*\mathcal{L})=\pi_!h_! h^*\mathcal{L} =\pi_!\mathcal{L}[-2(n-1)]=H^{*-2(n-1)}_c({k},\mathcal{L})=0.$$
\end{proof}

We sum up the important consequences of Proposition~\ref{P:FD} in the following

\vspace{.1in}

\begin{cor}\label{C:FD} The Fourier-Deligne transform restricts to an equivalence $\mathcal{Q}_{\vec{Q}} \stackrel{\sim}{\to} \mathcal{Q}_{\vec{Q}'}$, sets up a bijection $\mathcal{P}_{\vec{Q}} \leftrightarrow \mathcal{P}_{\vec{Q}'}$, and for any \textit{simple} dimension vectors $\a_1, \ldots, \a_n$ we have
\begin{equation}\label{E:FD15}
\FD( L^{\vec{Q}}_{\a_1, \ldots, \a_n})=L^{\vec{Q}'}_{\a_1, \ldots, \a_n}.
\end{equation}
Moreover, the map $\mathbf{b}_{\mathbb{P}} \mapsto \mathbf{b}_{\FD(\mathbb{P})}$ defines an isomorphism of (co)algebras 
$$\FD: \mathcal{K}_{\vec{Q}} \stackrel{\sim}{\to} \mathcal{K}_{\vec{Q}'}.$$
\end{cor}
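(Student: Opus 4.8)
The plan is to deduce everything formally from Theorem~\ref{T:FDT} and Proposition~\ref{P:FD}. First I would prove the identity (\ref{E:FD15}). If $\a=\epsilon_i$ is a simple dimension vector, the no--loops hypothesis forces $E_{\epsilon_i}=E'_{\epsilon_i}=\{0\}$, so that $\mathbbm{1}_{\epsilon_i}=\qlb_{\{0\}}$ is literally the same object for $\vec{Q}$ and for $\vec{Q}'$; moreover the vector bundle $E_{\epsilon_i}\to E_{\epsilon_i,0}$ has rank $0$, so no shift enters the normalisation of $\FD$ in this case and $\FD(\mathbbm{1}_{\epsilon_i})=\mathbbm{1}_{\epsilon_i}$. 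Applying the first isomorphism of Proposition~\ref{P:FD} $n-1$ times then gives
\begin{equation*}
\begin{split}
\FD\big(L^{\vec{Q}}_{\a_1,\ldots,\a_n}\big) &= \FD\big(\mathbbm{1}_{\a_1}\star\cdots\star\mathbbm{1}_{\a_n}\big)\simeq \FD(\mathbbm{1}_{\a_1})\star\cdots\star\FD(\mathbbm{1}_{\a_n})\\
&\simeq \mathbbm{1}_{\a_1}\star\cdots\star\mathbbm{1}_{\a_n}=L^{\vec{Q}'}_{\a_1,\ldots,\a_n},
\end{split}
\end{equation*}
where the outer products are the induction products of $\vec{Q}$ and of $\vec{Q}'$ respectively. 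This is (\ref{E:FD15}).

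Next I would upgrade this to the statement about the categories. By Theorem~\ref{T:FDT}, $\FD$ is an equivalence of triangulated categories which restricts to an equivalence $D^b(E_{\gamma})^{ss}\stackrel{\sim}{\to}D^b(E'_{\gamma})^{ss}$ preserving perverse sheaves. Since an equivalence is essentially surjective and fully faithful, a direct sum decomposition of $\FD(X)$ corresponds to one of $X$, and $\FD$ sends simple perverse sheaves to simple perverse sheaves. Now $\PQ$ is by definition the set of simple perverse summands (up to shift) of the Lusztig sheaves $L^{\vec{Q}}_{\a_1,\ldots,\a_n}$ with all $\a_i$ simple, and likewise for $\mathcal{P}_{\vec{Q}'}$ (the simple roots are the same for both orientations); together with (\ref{E:FD15}) this shows that $\FD$ carries $\PQ$ into $\mathcal{P}_{\vec{Q}'}$, and that every element of $\mathcal{P}_{\vec{Q}'}$, being a summand of some $L^{\vec{Q}'}_{\a_1,\ldots,\a_n}=\FD(L^{\vec{Q}}_{\a_1,\ldots,\a_n})$, is of the form $\FD(\mathbb{P})$ with $\mathbb{P}\in\PQ$. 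Hence $\FD$ induces a bijection $\PQ\leftrightarrow\mathcal{P}_{\vec{Q}'}$ and therefore restricts to an equivalence $\QQ\stackrel{\sim}{\to}\mathcal{Q}_{\vec{Q}'}$ of the additive categories generated by these simple objects.

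Finally, the map $\mathbf{b}_{\mathbb{P}}\mapsto\mathbf{b}_{\FD(\mathbb{P})}$ is well defined, $\Z[v,v^{-1}]$-linear and bijective because $\FD$ is a triangulated functor (so $\FD(\mathbb{P}[1])=\FD(\mathbb{P})[1]$) inducing an equivalence of $\QQ$ with $\mathcal{Q}_{\vec{Q}'}$; it is a ring homomorphism since, again by Proposition~\ref{P:FD}, $\mathbf{b}_{\FD(\mathbb{P}\star\mathbb{Q})}=\mathbf{b}_{\FD(\mathbb{P})\star\FD(\mathbb{Q})}=\mathbf{b}_{\FD(\mathbb{P})}\cdot\mathbf{b}_{\FD(\mathbb{Q})}$; and it is a coalgebra homomorphism since $\Delta(\mathbf{b}_{\FD(\mathbb{P})})$ is the class of $\underline{\Delta}(\FD(\mathbb{P}))\simeq(\FD\otimes\FD)(\underline{\Delta}(\mathbb{P}))$, i.e. the image of $\Delta(\mathbf{b}_{\mathbb{P}})$ under $\FD\otimes\FD$. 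This completes the proof.

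There is no real obstacle here beyond bookkeeping: the content is entirely in Theorem~\ref{T:FDT} and Proposition~\ref{P:FD}. The only step needing a moment's attention is the normalisation check in the first paragraph, namely that $\FD$ fixes $\mathbbm{1}_{\epsilon_i}$ on the nose with no cohomological shift, which holds precisely because the bundle attached to a simple dimension vector has rank zero under the no--loops hypothesis.
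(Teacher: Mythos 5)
Your proof is correct and follows essentially the same route as the paper's: establish $\FD(\mathbbm{1}_{\epsilon_i})=\mathbbm{1}_{\epsilon_i}$ (the paper states this as clear; you justify it via the no-loops hypothesis and the rank-zero normalisation), deduce (\ref{E:FD15}) from Proposition~\ref{P:FD}, and then use additivity and preservation of perverse sheaves to match the simple summands and transport the (co)algebra structures. Your version simply spells out the bookkeeping that the paper compresses into a few lines.
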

\begin{proof} It is clear that $\FD(\mathbbm{1}_{\epsilon_i}) =\mathbbm{1}_{\epsilon_i}$ for all vertices $i \in I$. Equation (\ref{E:FD15}) follows by Proposition~\ref{P:FD}. Since $\FD$ is additive and maps perverse sheaves to perverse sheaves, it induces a bijection between the simple summands of the Lusztig sheaves for $\vec{Q}$ and $\vec{Q}'$. All the other assertions of the Corollary now follow from Proposition~\ref{P:FD}.\end{proof}

\vspace{.1in}

To finish, we observe that

\vspace{.1in}

\begin{lem} For any $\mathbb{P} \in \QQ$ we have $\FD' \circ \FD (\mathbb{P}) \simeq \mathbb{P}$.
\end{lem}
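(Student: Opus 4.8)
The plan is to reduce the statement, via the Fourier inversion formula of Theorem~\ref{T:FDT}, to an equivariance property of the objects of $\QQ$. By that theorem one has a functorial isomorphism $\FD'\circ\FD(\mathbb{P})\simeq j^*(\mathbb{P})$ for every $\mathbb{P}\in D^b(E_\gamma)$, where $j\colon E_\gamma\to E_\gamma$ multiplies by $-1$ all the matrix coordinates indexed by the edges of $J$. Equivalently, $j$ is the action of $-1$ for the one-dimensional torus $T_J\cong\mathbb{G}_m$ rescaling these $J$-coordinates simultaneously. So it suffices to prove that $j^*(\mathbb{P})\simeq\mathbb{P}$ for every $\mathbb{P}\in\QQ$.

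First I would observe that every Lusztig sheaf $L_{\a_1,\ldots,\a_n}$ is $T_J$-equivariant. Since rescaling the arrows of a representation by a nonzero scalar does not change which subspaces are stable, the torus $T_J$ acts compatibly on the whole induction diagram $(\ref{E:16})$ --- rescaling the $J$-coordinates of the representation on $E_\a\times E_\beta$, on $E^{(1)}_{\a,\beta}$ and on $E_{\a,\beta}$, and acting trivially on $Gr(\beta,\a+\beta)$ --- so that $p$, $r$ and $q$ are $T_J$-equivariant, and hence $\underline m=q_!\,r_\#\,p^*[\dim p]$ sends $T_J$-equivariant complexes to $T_J$-equivariant complexes. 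As each $\mathbbm 1_{\epsilon_i}$ lives on the point $E_{\epsilon_i}=\{0\}$, on which $T_J$ acts trivially, it is (trivially) $T_J$-equivariant, and therefore so is every $L_{\underline\a}=\mathbbm 1_{\a_1}\star\cdots\star\mathbbm 1_{\a_n}$.

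Then I would descend from $L_{\underline\a}$ to its simple summands. Let $a,\mathrm{pr}\colon T_J\times E_\gamma\to E_\gamma$ be the action map and the projection; the shearing isomorphism $(t,y)\mapsto(t,t\cdot y)$ exhibits $a$ as a smooth surjection with connected $1$-dimensional fibers, so (after a common shift) both $a^*$ and $\mathrm{pr}^*$ are fully faithful on perverse sheaves and send pairwise non-isomorphic simple perverse sheaves to pairwise non-isomorphic shifted simple perverse sheaves. Writing a $T_J$-equivariant semisimple complex $\mathbb K$ as $\mathbb K\simeq\bigoplus_i\mathbb P_i\otimes V_i$ with the $\mathbb P_i$ pairwise non-isomorphic simple perverse sheaves and $V_i$ graded multiplicity spaces, the isomorphism $a^*\mathbb K\simeq\mathrm{pr}^*\mathbb K$ reads $\bigoplus_i a^*\mathbb P_i\otimes V_i\simeq\bigoplus_i\mathrm{pr}^*\mathbb P_i\otimes V_i$; comparing this with the uniqueness of Krull--Schmidt decompositions and restricting along $\{1\}\times E_\gamma$ forces $a^*\mathbb P_i\simeq\mathrm{pr}^*\mathbb P_i$ for each $i$. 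Applying this with $\mathbb K$ a Lusztig sheaf shows that every $\mathbb P\in\PQ$ is $T_J$-equivariant; restricting the isomorphism $a^*\mathbb P\simeq\mathrm{pr}^*\mathbb P$ along $\{-1\}\times E_\gamma$ then yields $j^*\mathbb P\simeq\mathbb P$. Since an arbitrary object of $\QQ$ is a finite direct sum of shifts of elements of $\PQ$ and $j^*$ commutes with shifts and direct sums, this gives $j^*\mathbb P\simeq\mathbb P$ for all $\mathbb P\in\QQ$, hence $\FD'\circ\FD(\mathbb P)\simeq\mathbb P$.

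The one point that needs genuine care is precisely this last descent: the equivariance of an individual simple summand is not formal from the equivariance of the ambient Lusztig sheaf, and it is here that one exploits the connectedness of $T_J$ together with the good behaviour of $a^*$ for a smooth surjection with connected fibers. Everything else is a routine tracking of the torus action through the definitions of Section~1.3 and the elementary properties of the Fourier--Deligne transform already recorded.
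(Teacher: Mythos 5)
Your proposal is correct and follows essentially the same route as the paper: reduce via Fourier inversion to showing $j^*\mathbb{P}\simeq\mathbb{P}$, then deduce this from equivariance under a connected torus acting by rescaling arrows (the paper uses the full torus $(k^*)^H$, you the one-parameter subtorus $T_J\subset(k^*)^H$). The only real difference is that you spell out the descent from the equivariance of a Lusztig sheaf to that of its simple summands, which the paper disposes of in a single sentence by invoking connectedness.
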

\begin{proof} The Fourier inversion formula gives $\FD' \circ \FD (\mathbb{P}) = j^*(\mathbb{P})$ where $j$ is the multiplication by $-1$ along the fibers of $\pi_1: E_{\gamma} \to E_{\gamma,0}$.  It is easy to check from the constructions that any Lusztig sheaf $L_{\a_1, \ldots, \a_n}$ is equivariant with respect to the action 
\begin{equation*}
\begin{split}
(k^*)^H \times E_{\gamma} & \to E_{\gamma}\\
(u_h)_h \times (x_h)_h&\mapsto (u_h x_h)_{h}
\end{split}
\end{equation*}
Since $(k^*)^H$ is connected, any simple direct summand $\mathbb{P}$ of $L_{\a_1, \ldots, \a_n}$ is also $(k^*)^H$-equivariant. In particular, $j^*(\mathbb{P}) \simeq \mathbb{P}$. This proves the Lemma.\end{proof}

\vspace{.2in}

\addtocounter{theo}{1}
\noindent \textbf{Example \thetheo .} Let $\vec{Q}$ be the Kronecker quiver of Example~2.28 and consider the dimension vector $\gamma=2\delta$. There are $6$ simple perverse sheaves in $\mathcal{P}^{\gamma}_{\vec{Q}}$, namely
$$\big\{\mathbbm{1}_{2\delta}=IC(U_{2\delta},\mathcal{L}_{triv}), \; IC(U_{2\delta},\mathcal{L}_{sign}),\; IC(S_{P_{1,2^2}\oplus I_1}),$$
$$IC(S_{P_2 \oplus I_{1^2,2}}),\; IC(S_{P_2,1,I_1}),\; IC(\{0\})=\qlb_{\{0\}}\big\}.$$
Let us use the Fourier-Deligne transform to reverse both arrows. Of course, we will get a quiver $\vec{Q}'$ isomorphic to the original one, with vertices exchanged. So we may write $\mathcal{P}^{\gamma}_{\vec{Q}'}=\big\{ IC(U_{2\delta},\mathcal{L}_{triv})', \ldots\big\}$.
We claim that the bijection $\FD: \mathcal{P}^{\gamma}_{\vec{Q}} \leftrightarrow \mathcal{P}_{\vec{Q}'}^{\gamma}$ is the following~:
\begin{equation}\label{E:FD16}
\begin{split}
IC(U_{2\delta},\mathcal{L}_{triv})=\mathbbm{1}_{2\delta} &\leftrightarrow IC(\{0\})'=\qlb_{\{0\}}'\\
IC(U_{2\delta},\mathcal{L}_{sign}) &\leftrightarrow IC(S_{P_1,1,I_2})'\\
IC(S_{P_{1,2^2} \oplus I_1}) &\leftrightarrow IC(S_{P_1 \oplus I_{1,2^2}})'\\
IC(S_{P_{2} \oplus I_{1^2,2}}) &\leftrightarrow IC(S_{P_{1^2,2} \oplus I_{2}})'\\
IC(S_{P_2,1,I_1}) &\leftrightarrow IC(U_{2\delta},\mathcal{L}_{sign})'\\
IC(\{0\})=\qlb_{\{0\}} &\leftrightarrow IC(U_{2\delta},\mathcal{L}_{triv})'=\mathbbm{1}_{2\delta}'.
\end{split}
\end{equation}
Indeed, we have (see Example~2.5.) 
$$\FD(\mathbbm{1}_{2\delta})=\FD\big(L^{\vec{Q}}_{2\epsilon_1,2\epsilon_2}\big)=L^{\vec{Q}'}_{2\epsilon_1,2\epsilon_2}=\qlb_{\{0\}}'.$$
This gives the first line of (\ref{E:FD16}). Next, some simple support considerations show that $L^{\vec{Q}}_{\epsilon_2,\epsilon_1,\epsilon_2,\epsilon_1} =IC(S_{P_2,1,I_1}) \oplus \mathbb{T}$ where $\mathbb{T} \simeq \qlb_{\{0\}} \otimes \mathbb{V}$ for some complex of $\qlb$-vector spaces $\mathbb{V}$. On the other hand, by Example~2.28,
$$L^{\vec{Q}'}_{\epsilon_2,\epsilon_1,\epsilon_2,\epsilon_1}=L^{\vec{Q}'}_{\delta,\delta}=IC(U_{2\delta},\mathcal{L}_{triv})' \oplus IC(U_{2\delta},\mathcal{L}_{sign})' \oplus \mathbb{S}'$$
for some complex $\mathbb{S}'$ supported on the irregular locus. It follows that $\FD(IC(S_{P_2,1,I_1})) = IC(U_{2\delta},\mathcal{L}_{sign})'$ (and also that $\mathbb{S}'=0, \mathbb{T}=\qlb_{\{0\}}$). This gives the second line of (\ref{E:FD16}). Finally, let us consider the Lusztig sheaf $L_{\epsilon_1,2\epsilon_2,\epsilon_1}$. Again for some reasons of support, we have
$$L^{\vec{Q}}_{\epsilon_1,2\epsilon_2,\epsilon_1}=IC(S_{P_{1,2^2} \oplus I_1}) \oplus (IC(S_{P_2,1,I_1}) \otimes \mathbb{V}_1) \oplus (IC(\{0\}) \otimes \mathbb{V}_2)$$
for some complexes $\mathbb{V}_1, \mathbb{V}_2$. Similarly,
$$L^{\vec{Q}'}_{\epsilon_1,2\epsilon_2,\epsilon_1}=IC(S_{P_{1} \oplus I_{1,2^2}})' \oplus (IC(S_{P_1,1,I_2})' \otimes \mathbb{V}'_1) \oplus (IC(\{0\})' \otimes \mathbb{V}'_2).$$
Applying $\FD$ and using the first two lines of (\ref{E:FD16}) we see that the only possibility is that $\mathbb{V}_1=\mathbb{V}'_1=\mathbb{V}_2=\mathbb{V}'_2=0$, and that $\FD\big(IC(S_{P_{1,2^2} \oplus I_1})\big)=IC(S_{P_1 \oplus I_{1,2^2}})'$. The last three lines of (\ref{E:FD16}) are obtained by symmetry.

\vspace{.1in}

What this example shows is that the Fourier-Deligne transform acts in a very nontrivial manner on the set of simple perverse sheaves : for instance, $IC(U_{2\delta},\mathcal{L}_{sign})$ --whose support is the whole space $E_{2\delta}$-- gets mapped to the perverse sheaf $IC(S_{P_1,1,I_2})'$ whose support is a proper subset of $E_{2\delta}$. This kind of simplification is crucial in the proof of Lusztig's Theorem~\ref{T:LU}.
\endexample

\vspace{.3in}

\centerline{\textbf{3.5. Proof of Lusztig's theorem.}}
\addcontentsline{toc}{subsection}{\tocsubsection {}{}{\; 3.5. Proof of Lusztig's theorem.}}

\vspace{.15in}

We are now ready to give the proofs of the various theorems announced in Sections~3.1 and 3.2. We will first state (and prove !) a key reduction lemma. For this, let us assume given a quiver $\vec{Q}$ with a \textit{sink} $i \in I$ (i.e., a vertex from which no oriented edge leaves). 

\centerline{
\begin{picture}(130, 110)
\put(0,40){\circle*{5}}
\put(50,40){\circle*{5}}
\put(25,90){\circle*{5}}
\put(75,90){\circle*{5}}
\put(125,90){\circle*{5}}
\put(50,28){$i$}
\put(13,66){\vector(1,2){2.5}}
\put(2.5,45){\line(1,2){20}}
\put(28,84){\line(1,-2){20}}
\put(38,64){\vector(1,-2){2.5}}
\put(62,64){\vector(-1,-2){2.5}}
\put(52.5,45){\line(1,2){20}}
\put(50,90){\vector(1,0){5}}
\put(30,90){\line(1,0){40}}
\put(100,90){\vector(1,0){5}}
\put(80,90){\line(1,0){40}}
\put(25,40){\vector(1,0){5}}
\put(5,40){\line(1,0){40}}
\put(-50,60){$\vec{Q}=$}
\end{picture}}

\vspace{-.2in}

We introduce some partition of the sets $\mathcal{P}^{\gamma}$ as follows. For $d \in \N$ and $\gamma \in \N^I$, let $E_{\gamma}^{i;d} \subset E_{\gamma}$ be the $G_{\gamma}$-invariant locally closed subset defined by the following condition~:
$$E_{\gamma}^{i;d}=\big\{ \underline{x} \in E_{\gamma}\;|\; codim_{(V_{\gamma})_i}\big( Im ( \bigoplus_{h, t(h)=i} x_h)\big)=d\big\}.$$
Each $E_{\gamma}^{i;d}$ is locally closed and $E_{\gamma}^{i;\geq d}=\bigsqcup_{d'\geq d} E_{\gamma}^{i;d'}$ is closed. Observe that $E^{i;0}_{\gamma}$ is open, and that more generally, $E_{\gamma}^{i,d}$ is open in $E_{\gamma}^{i, \geq d}$. For any $\mathbb{P} \in \mathcal{P}^{\gamma}$ there is a unique integer $d$ for which 
\begin{equation}\label{E:Proof1}
supp\;\mathbb{P} \in E_{\gamma}^{i;\geq d}, \qquad supp\;\mathbb{P} \not\in E_{\gamma}^{i;\geq d+1}.
\end{equation}
For a fixed $d$, we denote by $\mathcal{P}^{\gamma}_{i;d}$ the subset of $\mathcal{P}^{\gamma}$ consisting of elements satisfying (\ref{E:Proof1}). We also define $\mathcal{P}^{\gamma}_{i;\geq d}$ in the obvious way. We have
$$\mathcal{P}^{\gamma}=\bigsqcup_d \mathcal{P}^{\gamma}_{i;d}.$$
Let us split the set $Irr\;\vec{Q}$ of indecomposable representations of $\vec{Q}$ into $\{S_i\} \cup Irr'\;\vec{Q}$ where $Irr'\;\vec{Q}=Irr\;\vec{Q} \backslash \{S_i\}$. We also let $Rep^i\;\vec{Q}$ be the full subcategory of $Rep\;\vec{Q}$ whose objects are isomorphic to $S_i^{\oplus l}$ for some $l$; likewise, we define $Rep'\;\vec{Q}$ to be the full subcategory of $Rep\;\vec{Q}$ whose objects are direct sums of indecomposables in $Irr'\;\vec{Q}$. Note that, because $i$ is a sink, 
\begin{equation}\label{E:Proof2}
Hom(M,N)=Ext^1(N,M) =\{0\} \qquad \forall\; N \in Rep^i\;\vec{Q}, M \in Rep'\;\vec{Q}.
\end{equation}
Indeed, it is enough to check (\ref{E:Proof2}) for an indecomposable $M \neq S_i$ and $N=S_i$, in which case it is obvious.
The subsets $E_{\gamma}^{i;d}$ admit a clear interpretation in terms of $Rep^i\;\vec{Q}$ and $Rep'\;\vec{Q}$~:
$$E_{\gamma}^{i;d}=\big\{ \underline{x} \in E_{\gamma}\;|\; M_{\underline{x}} \simeq S_i^{\oplus d} \oplus M', \; M' \in Rep'\;\vec{Q}\big\}.$$
We are, by (\ref{E:Proof2}), in the situation of Remark~2.13. In particular, we have 
\begin{equation}\label{E:Proof3}
\begin{split}
E_{\gamma}^{i;d} &\simeq \kappa^{-1}(E^{i;d}_{d\epsilon_i} \times E^{i;0}_{\gamma-d\epsilon_i}) \underset{P_{d\epsilon_i, \gamma-d\epsilon_i}}{\times}G_{\gamma}\\
&=(\{pt\} \times E^{i;0}_{\gamma-d\epsilon_i}) \underset{P_{d\epsilon_i, \gamma-d\epsilon_i}}{\times}G_{\gamma} \\
&=E^{i;0}_{\gamma-d\epsilon_i} \underset{P_{d\epsilon_i, \gamma-d\epsilon_i}}{\times}G_{\gamma}
\end{split}
\end{equation}
(this can also be seen directly). It follows that each $E^{i;d}_{\gamma}$ is smooth. Let $j_{\gamma}^{i;d}: E_{\gamma}^{i;d} \to E_{\gamma}$ denote the inclusion. By definition, if $\mathbb{P} \in \mathcal{P}_{i;d}^{\gamma}$ then $(j_{\gamma}^{i;d})^*\mathbb{P}$ is a simple perverse sheaf on $E^{i;d}_{\gamma}$. Because of (\ref{E:Proof3}) there is an equivalence
$$r_{\#}: D^b_{G_{\gamma}}(E^{i;d}_{\gamma}) \stackrel{\sim}{\to} D^b_{G_{\gamma-d\epsilon_i}}(E^{i;0}_{\gamma-d\epsilon_i}).$$

\vspace{.1in}

\begin{lem}\label{L:Proofkey} Using the above notation, if $\mathbb{P} \in \mathcal{P}^{\gamma}_{i;d}$ and $\mathbb{R}=(j_{\gamma-d\epsilon_i}^{i;0})_{*!}(r_{\#}\mathbb{P})$ then
\begin{equation}\label{E:Proof4}
\underline{\Delta}_{d\epsilon_i, \gamma-d\epsilon_i}(\mathbb{P}) \simeq \big(\mathbbm{1}_{d\epsilon_i} \boxtimes \mathbb{R}\big) \oplus \mathbb{Q}
\end{equation}
where $supp\; \mathbb{Q} \subset E_{d\epsilon_i} \times E_{\gamma-d\epsilon_i}^{i, \geq 1}$; similarly,
\begin{equation}\label{E:Proof5}
\underline{m}_{d\epsilon_i, \gamma-d\epsilon_i}(\mathbbm{1}_{d\epsilon_i} \boxtimes  \mathbb{R}) \simeq \mathbb{P} \oplus \mathbb{T}
\end{equation}
where $supp\;\mathbb{T} \in E_{\gamma}^{i;\geq d+1}$.
\end{lem}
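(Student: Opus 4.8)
The plan is to exploit the stratification $E_\gamma = \bigsqcup_d E_\gamma^{i;d}$ together with the identification (\ref{E:Proof3}) of $E_\gamma^{i;d}$ as an associated bundle, and to run the restriction/induction functors through this stratification much as in Remark~2.13. First I would set up the restriction diagram
$$\xymatrix{ E_{d\epsilon_i} \times E_{\gamma-d\epsilon_i} & F_{d\epsilon_i,\gamma-d\epsilon_i} \ar[l]_-{\kappa} \ar[r]^-{\iota} & E_\gamma }$$
and decompose $F_{d\epsilon_i,\gamma-d\epsilon_i}$ according to the codimension $d'$ of $\mathrm{Im}(\bigoplus_{t(h)=i} x_h)$ inside $(V_{\gamma-d\epsilon_i})_i$ on the quotient, noting that since $E_{d\epsilon_i} = \{0\}$ this codimension is always at least $d$, with equality exactly on the piece sitting over $E_{d\epsilon_i} \times E_{\gamma-d\epsilon_i}^{i;0}$; call that open piece $F^\circ$ and its closed complement $F^{\geq 1}$ (mapping into $E_{d\epsilon_i}\times E_{\gamma-d\epsilon_i}^{i,\geq 1}$). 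Applying base change along $\iota$ to the open inclusion $E_\gamma^{i;d} \hookrightarrow E_\gamma^{i;\geq d}$ and the fact that $\mathbb{P}$ is supported on $E_\gamma^{i;\geq d}$ with $(j_\gamma^{i;d})^*\mathbb{P} = r_\#\mathbb{R}$, I would show that $\kappa_!\iota^*\mathbb{P}$ splits (via Lemma~\ref{L:816}-type arguments applied to the strata of $F$) as a piece coming from $F^\circ$, which computes to $\mathbbm{1}_{d\epsilon_i}\boxtimes \mathbb{R}$ up to the correct shift, plus a piece $\mathbb{Q}$ supported on $E_{d\epsilon_i}\times E_{\gamma-d\epsilon_i}^{i,\geq 1}$. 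The key point making $F^\circ \to E_{d\epsilon_i}\times E_{\gamma-d\epsilon_i}^{i;0}$ behave well is that, $i$ being a sink, over the open stratum the subspace $W_0$ of dimension $d\epsilon_i$ is forced to sit inside the image complement, so the fibers are affine spaces (a vector bundle), exactly as in the computation of $\mathrm{rank}\,\kappa$ in (\ref{E:rankkappa}); one then reads off that the restriction to $\mathcal{O}$-generic points is $\qlb[\dim]$, giving $\mathbb{R}$ on the nose.

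For (\ref{E:Proof5}) I would dualize the argument using the induction diagram (\ref{E:16}) and the fact, from (\ref{E:FGP}), that the incidence variety $E_{d\epsilon_i,\gamma-d\epsilon_i}$ is an associated bundle $F_{d\epsilon_i,\gamma-d\epsilon_i}\times_P G_\gamma$. The map $q: E_{d\epsilon_i,\gamma-d\epsilon_i} \to E_\gamma$ restricted over $E_\gamma^{i;d}$ is an isomorphism onto $E_\gamma^{i;d}$ — again because when $i$ is a sink a subspace of type $d\epsilon_i$ stable under $\underline y$ and mapping isomorphically to the cokernel of $\bigoplus_{t(h)=i}x_h$ is unique when that cokernel has exactly dimension $d$ — while over $E_\gamma^{i;\geq d+1}$ the fibers are positive-dimensional flag-type varieties. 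Hence $\underline m_{d\epsilon_i,\gamma-d\epsilon_i}(\mathbbm{1}_{d\epsilon_i}\boxtimes\mathbb{R}) = q_!r_\# p^*(\mathbbm{1}_{d\epsilon_i}\boxtimes\mathbb{R})[\dim p]$ restricts over $E_\gamma^{i;d}$ to $(j_\gamma^{i;d})^*$ applied to $\mathbb{P}$ (using $(j^{i;0})_{*!}$ of $r_\#\mathbb{P}$ pulled back and pushed forward is $\mathbb{P}$ over that open stratum by definition of $\mathbb{R}$), and by the Decomposition Theorem this forces a direct summand isomorphic to $IC$ of that stratum-local system, i.e. $\mathbb{P}$, plus a complement $\mathbb{T}$ whose support meets $E_\gamma^{i;d}$ in nothing, hence lies in $E_\gamma^{i;\geq d+1}$.

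The main obstacle I anticipate is bookkeeping the shifts and checking that the "good" summand is genuinely $\mathbbm{1}_{d\epsilon_i}\boxtimes\mathbb{R}$ (resp. $\mathbb{P}$) rather than a shifted or twisted version: this requires carefully tracking the normalizations $[-\langle\alpha,\beta\rangle]$ in (\ref{E:18}), the rank of $\kappa$ from (\ref{E:rankkappa}) with $\alpha = d\epsilon_i$ so $\langle d\epsilon_i,\gamma-d\epsilon_i\rangle = \sum_i d\epsilon_i(\gamma-d\epsilon_i)_i$ since $i$ is a sink kills the edge terms, and matching against the equivalence $r_\#: D^b_{G_\gamma}(E_\gamma^{i;d}) \xrightarrow{\sim} D^b_{G_{\gamma-d\epsilon_i}}(E_{\gamma-d\epsilon_i}^{i;0})$ which itself carries a shift $[-\dim r]$. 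A secondary subtlety is justifying the semisimplicity of $\mathbb{Q}$ and $\mathbb{T}$ and that they are supported where claimed: $\mathbb{Q}$ is handled by the Lusztig stratification Lemma~\ref{L:816} applied to $\kappa\iota^{-1}$ restricted to $F^{\geq 1}$, while for $\mathbb{T}$ one argues that $q_!$ of a semisimple complex is semisimple since $q$ is proper (Decomposition Theorem), and that any summand of $\mathbb{T}$ whose support met $E_\gamma^{i;d}$ would contradict the isomorphism $q|_{q^{-1}(E_\gamma^{i;d})} \cong \mathrm{id}$, so its support avoids $E_\gamma^{i;d}$ entirely and hence — being closed and contained in $E_\gamma^{i;\geq d}$ — lies in $E_\gamma^{i;\geq d+1}$.
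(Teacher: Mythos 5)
Your argument is correct and is in substance the same as the paper's, which disposes of the lemma in one line as ``a direct application of Remark~2.13 ii) and iii)'' together with the observation that for $\underline{x}\in E_\gamma^{i;d}$ there is a unique $\underline{x}$-stable subspace of codimension $d\epsilon_i$, namely $\bigoplus_{j\neq i}(V_\gamma)_j\oplus \mathrm{Im}\bigl(\bigoplus_{t(h)=i}x_h\bigr)$ -- you are essentially unrolling Remark~2.13 by hand. One slip worth fixing: in the induction part, the subspace $W$ appearing in $E_{d\epsilon_i,\gamma-d\epsilon_i}$ has dimension $\gamma-d\epsilon_i$ (it is the \emph{quotient} that has type $d\epsilon_i$); a subspace literally ``of type $d\epsilon_i$'' concentrated at the sink $i$ is automatically $\underline{y}$-stable and far from unique, so the uniqueness giving an isomorphism $q^{-1}(E_\gamma^{i;d})\xrightarrow{\sim} E_\gamma^{i;d}$ is that of $W$ of codimension $d\epsilon_i$, forced by $W_i\supseteq \mathrm{Im}\bigl(\bigoplus_{t(h)=i}y_h\bigr)$ with equality when that image has codimension exactly $d$.
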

\begin{proof} This is a direct application of Remark~2.13 ii) and iii). Note that for any representation $M_{\underline{x}}$ with $\underline{x} \in E_{\gamma}^{i;d}$ there exists a unique submodule $M' \subset M_{\underline{x}}$ with $M' \in Rep'\;\vec{Q}$, namely
$$M'=\bigoplus_{j \neq i} (V_{\gamma})_j \oplus Im\big( \bigoplus_{h, t(h)=i} x_h \big).$$
Note also that since $\QQ$ is stable under $\underline{\Delta}$ and $\underline{m}$ we have $\mathbb{R}, \mathbb{Q}, \mathbb{T} \in \QQ$.
\end{proof}

\vspace{.1in}

\noindent
\textit{Proof of Theorem~\ref{T:LU}.} Let $\vec{Q}$ be an arbitrary quiver (as in Section~1.1.).  By Section~3.2 there is a morphism of algebras and coalgebras $\Phi: \U^{\Z}_{\nu}(\bo'_+) \to \widetilde{\mathcal{K}}_{\vec{Q}}$. 

We first show that $\Phi$ is injective. Let $\{\{\;,\;\}\}=\Phi^* \{\,,\,\}$ be the pairing on $\U^{\Z}_{\nu}(\bo'_+)$ pulled back of $\{\,,\,\}$ via $\Phi$. It is a Hopf pairing. Moreover, by (\ref{E:22n}), (\ref{E:22new}) and (\ref{E:Drinpairing}) we have
$$\{\{E_i,E_j\}\}=\frac{\delta_{i,j}}{1-\nu^{-2}}=(E_i,E_j), \qquad \{\{ K_i,K_j\}\}=\nu^{a_{ij}}=(K_i,K_j)$$
where $(\;,\;)$ stands for Drinfeld's pairing. A homogeneous Hopf scalar product on $\U^{\Z}_{\nu}(\bo'_+) $ is uniquely determined by its values on generators --this is a consequence of the Hopf property. This means that $(\;,\;)=\{\{\;,\;\}\}$. But then 
$$(Ker\;\Phi) \cap \U^{\Z}_{\nu}(\n_+) \subset (Ker\;\{\{\;,\;\}\}) \cap \U^{\Z}_{\nu}(\n_+)=(Ker\;(\;,\;)) \cap \U^{\Z}_{\nu}(\n_+)=0$$
i.e.the restriction of $\Phi$ to $\U^{\Z}_{\nu}(\n_+)$ is injective. This implies that $\Phi$ is injective since $\U^{\Z}_{\nu}(\bo'_+)\simeq \U^{\Z}_{\nu}(\n_+) \otimes \mathbf{K}$.

Let us now prove that $\Phi$ is surjective. Again, it is enough to restrict ourselves to $\Phi: \U^{\Z}_{\nu}(\n_+) \to \KQ$. Since the image of $\Phi$ is spanned by monomials $\Phi(E_{i_1}^{(n_1)} \cdots E_{i_l}^{(n_l)})=L_{n_1\epsilon_{i_1}, \ldots, n_l \epsilon_{i_l}}$, this amounts to showing that $\KQ$ is linearly spanned (over $\Z[v,v^{-1}]$) by classes of Lusztig sheaves. We will prove this by induction on the dimension vector $\gamma$. If $\gamma \in \{n\epsilon_i\}_{i \in I, n \in \N}$ is simple then $\mathcal{P}^{\gamma}=\{\mathbbm{1}_{n\epsilon_i}\}$ and $\mathbf{b}_{\mathbbm{1}_{n\epsilon_i}}=\Phi(E_i^{(n)})$. Fix a $\gamma \in \N^I$ not of the above form, and let us assume that $\mathcal{K}^{\a} \subset Im\;\Phi$ for any $\a < \gamma$. The main idea here is to consider not just $\vec{Q}$ but \textit{all} of the orientations of the underlying graph of $\vec{Q}$ simultaneously. Any two such orientations $\vec{Q}, \vec{Q}'$ are related by a Fourier-Deligne transform $\FD: \KQ \stackrel{\sim}{\to} \mathcal{K}_{\vec{Q}'}$, which is a ring homomorphism and which preserves the Lusztig sheaves (see corollary~\ref{C:FD}). In other words, there is a commutative diagram
$$\xymatrix{ \U^{\Z}_{\nu}(\n_+) \ar[r]^-{\Phi_{\vec{Q}}} \ar[rd]_-{\Phi_{\vec{Q}'}} & \KQ \ar[d]^-{\FD} \\ & \mathcal{K}_{\vec{Q}'}}$$
Then of course an element $\mathbf{b}_{\mathbb{P}}$ belongs to $Im\;\Phi_{\vec{Q}}$ if and only if $\mathbf{b}_{\FD(\mathbb{P})}$ belongs to $Im\;\Phi_{\vec{Q}'}$.
Fox a vertex $i \in I$ and let us now choose an orientation $\vec{Q}'$ for which $i$ is a sink. Let $\mathbb{P} \in \mathcal{P}^{\gamma}_{\vec{Q}',i;d}$ with $d\geq 1$. Lemma~\ref{L:Proofkey} furnishes a simple perverse sheaf $\mathbb{R}$ on $E_{\gamma-d\epsilon_i}$ such that 
\begin{equation}\label{E:prof1}
\underline{\Delta}_{d\epsilon_i, \gamma-d\epsilon_i}(\mathbb{P}) \simeq \big(\mathbbm{1}_{d\epsilon_i} \boxtimes \mathbb{R}\big) \oplus \mathbb{Q}
\end{equation}
where $supp\; \mathbb{Q} \subset E_{d\epsilon_i} \times E_{\gamma-d\epsilon_i}^{i, \geq 1}$ and
\begin{equation}\label{E:Prof2}
\mathbbm{1}_{d\epsilon_i} \star  \mathbb{R} \simeq \mathbb{P} \oplus \mathbb{T}
\end{equation}
where $supp\;\mathbb{T} \in E_{\gamma}^{i;\geq d+1}$. Since $\mathbb{R} \in \QQ$ we have by construction $\mathbb{R} \in \mathcal{P}_{\vec{Q}',i;0}^{\gamma-d\epsilon_i}$. In $\mathcal{K}_{\vec{Q}'}$, (\ref{E:Prof2}) may be written as
$$\mathbf{b}_{\mathbb{P}}=\mathbf{b}_{\mathbbm{1}_{d\epsilon_i}} \mathbf{b}_{\mathbb{R}} - \mathbf{b}_{\mathbb{T}}.$$
We have $\mathbf{b}_{\mathbbm{1}_{d\epsilon_i}}=\Phi(E_i^{(d)})$ and by our induction hypothesis $\mathbf{b}_{\mathbb{R}} \in Im\;\Phi_{\vec{Q}'}$ so that $\mathbf{b}_{\mathbbm{1}_{d\epsilon_i}} \mathbf{b}_{\mathbb{R}} \in Im\;\Phi_{\vec{Q}'}$. Arguing by descending induction on $d$, we may assume that $\mathbf{b}_{\mathbb{T}} \in Im\;\Phi_{\vec{Q}'}$ as well (here we use the obvious fact that $E_{\gamma}^{i; d}=\emptyset$ for $d > (\gamma)_i$ ). Thus $\mathbf{b}_{\mathbb{P}} \in Im\;\Phi_{\vec{Q}'}$. To sum up, we have shown that $\mathbf{b}_{\mathbb{P}}$ is in the image of $\Phi_{\vec{Q}'}$ for any reorientation $\vec{Q}'$ of $\vec{Q}$ with a sink at a vertex $i$, and any $\mathbb{P} \in \mathcal{P}_{\vec{Q}',i, \geq 1}^{\gamma}$.

We claim that, modulo the isomorphisms provided by the Fourier-Deligne transforms, this covers \textit{all} the cases. Indeed, let $\mathbb{P}$ be any element of $\mathcal{P}^{\gamma}_{\vec{Q}}$. By construction, $\mathbb{P} \subset L^{\vec{Q}}_{\epsilon_{i_1}, \ldots, \epsilon_{i_l}}$ for some sequence of vertices $(i_1, \ldots, i_l)$. Choose $\vec{Q}'$ in which $i_1$ is a sink. Then $\Theta(\mathbb{P}) \subset \Theta( L^{\vec{Q}}_{\epsilon_{i_1}, \ldots, \epsilon_{i_l}})= L^{\vec{Q}'}_{\epsilon_{i_1}, \ldots, \epsilon_{i_l}}$. It is easy to see that $supp\;  L^{\vec{Q}'}_{\epsilon_{i_1}, \ldots, \epsilon_{i_l}} \subset E_{\gamma}^{i_1, \geq 1}$ and hence $\Theta(\mathbb{P}) \in \mathcal{P}_{\vec{Q}',i_1, \geq 1}^{\gamma}$ as wanted. We have shown that $\mathcal{K}^{\gamma} \subset Im\;\Phi$ and hence that $\Phi$ is surjective. This finishes the proof of Theorem~\ref{T:LU}.\qed

\vspace{.15in}

\noindent
\textit{Proof of Theorem~\ref{T:LUU}.} This is a direct consequence of the existence of the Fourier-Deligne transform and of its properties (see Corollary~\ref{C:FD}). 
Let $\vec{Q}, \vec{Q}'$ be two orientations of the same graph, and let $\Phi~: \U^{\Z}_{\nu}(\n_+) \stackrel{\sim}{\to} \mathcal{K}_{\vec{Q}}, \Phi'~: \U^{\Z}_{\nu}(\n_+) \stackrel{\sim}{\to} \mathcal{K}_{\vec{Q}'}$ be the two isomorphisms provided by Theorem~\ref{T:LU}. Let also $\Theta~: \mathcal{K}_{\vec{Q}} \stackrel{\sim}{\to} \mathcal{K}_{\vec{Q}'}$ be the isomorphism coming from the Fourier-Deligne transform (see Corollary~\ref{C:FD}). We have, for any vertex $i$ and any $n \geq 1$
$$(\Phi')^{-1}\Theta \Phi(E_i^{(n)}) = (\Phi')^{-1}\Theta (\mathbf{b}_{\mathbbm{1}_{n\epsilon_i}})=(\Phi')^{-1} (\mathbf{b}_{\mathbbm{1}_{n\epsilon_i}})=E_i^{(n)}.$$
This implies that the following diagram of isomorphisms is commutative~:
$$\xymatrix{
\U^{\Z}_{\nu}(\n_+) \ar[d]_-{Id} \ar[r]^-{\Phi} & \mathcal{K}_{\vec{Q}} \ar[d]^-{\Theta} \\
\U^{\Z}_{\nu}(\n_+)  \ar[r]^-{\Phi'} & \mathcal{K}_{\vec{Q}'}  }
$$
In particular, $(\Phi')^{-1}(\{ \mathbf{b}_{\mathbb{P}}\; | \mathbb{P} \in \mathcal{P}_{\vec{Q}'}\})=\Phi^{-1} \Theta^{-1}(\{ \mathbf{b}_{\mathbb{P}}\; | \mathbb{P} \in \mathcal{P}_{\vec{Q}}\})$. Since by Corollary~\ref{C:FD} $\Theta$ induces a bijection between $\mathcal{P}_{\vec{Q}}$ and $\mathcal{P}_{\vec{Q}'}$ we have $\Theta^{-1}(\{ \mathbf{b}_{\mathbb{P}}\; | \mathbb{P} \in \mathcal{P}_{\vec{Q}'}\})=\{ \mathbf{b}_{\mathbb{P}}\; | \mathbb{P} \in \mathcal{P}_{\vec{Q}}\}$ from which we finally get
$$\mathbf{B}'=(\Phi')^{-1}(\{ \mathbf{b}_{\mathbb{P}}\; | \mathbb{P} \in \mathcal{P}_{\vec{Q}'}\})=(\Phi)^{-1}(\{ \mathbf{b}_{\mathbb{P}}\; | \mathbb{P} \in \mathcal{P}_{\vec{Q}}\})=\mathbf{B}$$
as wanted.
\qed

\vspace{.15in}

\noindent
\textit{Proof of Proposition~\ref{P:LU}.} The proof follows the same lines as that of Theorem~\ref{T:LU}. We argue by induction on the dimension vector $\gamma$, and use the Fourier-Deligne transform to reduce ourselves to some $\mathbb{P} \in \mathcal{P}^{\gamma}_{\vec{Q}',i;d}$ with $d \geq 1$. Then from
$$\mathbbm{1}_{d\epsilon_i} \star \mathbb{R} = \mathbb{P} \oplus \mathbb{T}$$
and $D \mathbbm{1}_{d\epsilon_i} =\mathbbm{1}_{d\epsilon_i}$, $D \mathbb{R}=\mathbb{R}$ we deduce that $D ( \mathbb{P} \oplus \mathbb{T})=\mathbb{P} \oplus \mathbb{T}$ and finally, using the support condition on $\mathbb{T}$, that $D \mathbb{P} = \mathbb{P}$. Note that Verdier duality almost commutes with Fourier-Deligne transforms (see Theorem~\ref{T:FDT}). \qed

\vspace{.15in}

\noindent
\textit{Proof of Theorem~\ref{T:Rep}.} Let $\lambda$ be integral antidominant. The annihilator of the lowest weight vector $v_{\lambda}$ in $\U_v^{\Z}(\n_+)$ is the left ideal generated by the elements $\{E_i^{((\lambda, \epsilon_i))}\;|\; i \in I\}$. It is therefore enough to prove that for any $k \in \N$ and any $i \in I$, there is a subset $\mathbf{B}_{i, \geq k}$ of $\mathbf{B}$ such that
$$\U_{\nu}^{\Z}(\n_+) E_i^{(k)} =\bigoplus_{\mathbf{b} \in \mathbf{B}_{i,\geq k}} \Z[\nu,\nu^{-1}] \mathbf{b}.$$
Indeed, setting $\mathbf{B}_{\lambda}=\mathbf{B} \backslash \bigcup_i \mathbf{B}_{i, \geq (\lambda,\epsilon_i)}$ we will have
$\mathbf{b} \cdot v_{\lambda}=0$ if $\mathbf{b} \not\in \mathbf{B}_{\lambda}$ while 
$$\big\{ \mathbf{b} \cdot v_{\lambda}\;|\; \mathbf{b} \in \mathbf{B}_{\lambda} \big\}$$ is a $\Z[\nu,\nu^{-1}]$-basis of $V_{\lambda}$. 

\vspace{.1in}

Now let $\vec{Q}$ be a quiver associated to $\g$, which we may choose to be a \textit{source} at the vertex $i$. We set
$$E_{\gamma}^{i;d}=\big\{ \underline{x} \in E_{\gamma}\;|\; dim \big( Ker ( \bigoplus_{h, s(h)=i} x_h)\big)=d\big\}.$$
Again, $E_{\gamma}^{i;d}$ is smooth, locally closed, and $E_{\gamma}^{i; \geq d}=\bigsqcup_{d'>d} E_{\gamma}^{i;d'}$ is closed in $E_{\gamma}$. In a way entirely similar to the case of a sink we may define a partition $\mathcal{P}^{\gamma}=\bigsqcup_{d} \mathcal{P}^{\gamma}_{i;d}$ (using (\ref{E:Proof1})), an equivalence $r_{\#}: D^b_{G_{\gamma}}(E^{i;d}_{\gamma}) \stackrel{\sim}{\to} D^b_{G_{\gamma-d\epsilon_i}}(E^{i;0}_{\gamma-d\epsilon_i})$ (using (\ref{E:Proof3})), and we have

\vspace{.1in}

\begin{lem}\label{L:Proofkey2} If $\mathbb{P} \in \mathcal{P}^{\gamma}_{i;d}$ and $\mathbb{R}=(j_{\gamma-d\epsilon_i}^{i;0})_{*!}(r_{\#}\mathbb{P})$ then
\begin{equation}\label{E:Proof4bis}
\underline{\Delta}_{ \gamma-d\epsilon_i,d\epsilon_i}(\mathbb{P}) \simeq \big( \mathbb{R}\boxtimes \mathbbm{1}_{d\epsilon_i} \big) \oplus \mathbb{Q}
\end{equation}
where $supp\; \mathbb{Q} \subset E_{\gamma-d\epsilon_i}^{i, \geq 1} \times  E_{d\epsilon_i} $; similarly,
\begin{equation}\label{E:Proof5bis}
\underline{m}_{ \gamma-d\epsilon_i,d\epsilon_i}( \mathbb{R}\boxtimes \mathbbm{1}_{d\epsilon_i}) \simeq \mathbb{P} \oplus \mathbb{T}
\end{equation}
where $supp\;\mathbb{T} \in E_{\gamma}^{i;\geq d+1}$.
\end{lem}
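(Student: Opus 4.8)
The plan is to run the proof of Lemma~\ref{L:Proofkey} almost verbatim, with the roles of sub- and quotient-objects interchanged (this is why the factor $\mathbbm{1}_{d\epsilon_i}$ now appears on the \emph{right} of the box product), the one new ingredient being that, $i$ being a \emph{source}, the simple module $S_i$ is \emph{injective} in $Rep_k\vec{Q}$: its injective envelope is assembled from paths ending at $i$, of which there are none but the trivial one, so $I_{S_i}=S_i$ and $\Ext^1(M,S_i)=0$ for every $M$. First I would record the two elementary facts that make the stratification work. For $\underline{x}\in E_\gamma^{i;d}$ the subspace $K_{\underline{x}}:=\mathrm{Ker}\big(\bigoplus_{h,\,s(h)=i}x_h\big)\subseteq(V_\gamma)_i$, regarded as an $\underline{x}$-stable subspace concentrated at $i$, is a submodule isomorphic to $S_i^{\oplus d}$; since $S_i$ is injective this submodule splits off, so $M_{\underline{x}}\simeq S_i^{\oplus d}\oplus M'$ with $M'$ containing no copy of $S_i$, i.e. $M'\in Rep'\;\vec{Q}$, of dimension $\gamma-d\epsilon_i$, whence $M'\in E^{i;0}_{\gamma-d\epsilon_i}$. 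Moreover $K_{\underline{x}}$ is the \emph{unique} $\underline{x}$-stable subspace of $V_\gamma$ of dimension vector $d\epsilon_i$ (any such is contained in $K_{\underline{x}}$ and has the same dimension), and one has $\Hom(S_i^{\oplus d},M')=\Ext^1(M',S_i^{\oplus d})=0$: the first because $M'$ has no $S_i$-submodule, the second by injectivity of $S_i$.

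With these vanishings in hand we are in the situation of Remark~2.13, applied with $m=2$, the ordered pair $(\gamma-d\epsilon_i,\,d\epsilon_i)$ and the $G$-stable pieces $U_1=E^{i;0}_{\gamma-d\epsilon_i}$, $U_2=E_{d\epsilon_i}=\{\mathrm{pt}\}$; the associated stratum of Remark~2.13 is exactly $Z=E^{i;d}_\gamma$ (the source-vertex analogue of (\ref{E:Proof3}) reads $E^{i;d}_\gamma\simeq E^{i;0}_{\gamma-d\epsilon_i}\times_{P_{\gamma-d\epsilon_i,\,d\epsilon_i}}G_\gamma$, which also yields the equivalence $r_\#$), and the uniqueness-of-filtration hypothesis of Remark~2.13~iii) is precisely the uniqueness of $K_{\underline{x}}$. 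Then I would apply Remark~2.13~iii) to $\mathbb{R}=(j^{i;0}_{\gamma-d\epsilon_i})_{*!}(r_\#\mathbb{P})$ --- which is perverse and, by Proposition~\ref{P:closed}, lies in $\QQ$ --- to get that $\underline{\Delta}_{\gamma-d\epsilon_i,\,d\epsilon_i}(\mathbb{P})$ restricted to $U_1\times U_2$ is $\mathbb{R}\boxtimes\mathbbm{1}_{d\epsilon_i}$ (the a priori global shift being fixed to zero exactly as in Lemmas~\ref{L:coprodun} and \ref{L:Proofkey}), while the complementary summand $\mathbb{Q}$ is then supported on $E^{i;\geq 1}_{\gamma-d\epsilon_i}\times E_{d\epsilon_i}$; this gives (\ref{E:Proof4bis}). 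Dually, Remark~2.13~ii) gives that $\underline{m}_{\gamma-d\epsilon_i,\,d\epsilon_i}(\mathbb{R}\boxtimes\mathbbm{1}_{d\epsilon_i})$ restricted to $Z=E^{i;d}_\gamma$ equals $r_\#p^*$ of $(\mathbb{R}\boxtimes\mathbbm{1}_{d\epsilon_i})|_{U_1\times U_2}$, which is $\mathbb{P}|_{E^{i;d}_\gamma}$; hence $\mathbb{P}$ is a direct summand of $\underline{m}_{\gamma-d\epsilon_i,\,d\epsilon_i}(\mathbb{R}\boxtimes\mathbbm{1}_{d\epsilon_i})$ and the complement $\mathbb{T}$ is supported on $\overline{Z}\setminus E^{i;d}_\gamma\subseteq E^{i;\geq d+1}_\gamma$, which is (\ref{E:Proof5bis}).

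I do not expect a genuine obstacle here; the statement is essentially ``Remark~2.13 in the presence of the injective simple $S_i$''. The only points requiring (routine) attention are the normalization of the cohomological shifts, so that (\ref{E:Proof4bis}) and (\ref{E:Proof5bis}) hold on the nose with the shifts built into $\underline{\Delta}$ and $\underline{m}$ --- the same bookkeeping as in the proof of Lemma~\ref{L:Proofkey}, using that $r_\#$ preserves perverse sheaves via the principal-bundle presentation of $E^{i;d}_\gamma$ --- and the verification that $\mathrm{supp}\,\mathbb{P}\subseteq\overline{Z}=\overline{E^{i;d}_\gamma}$, so that Remark~2.13~iii) genuinely applies to $\mathbb{P}$; this last point holds because $\mathrm{supp}\,\mathbb{P}$ is irreducible, is contained in the closed set $E^{i;\geq d}_\gamma$, and meets the open subset $E^{i;d}_\gamma$ of $E^{i;\geq d}_\gamma$ by the very definition of $\mathcal{P}^\gamma_{i;d}$.
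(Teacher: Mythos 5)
Your proof is correct and follows exactly the route the paper indicates: the text states Lemma~\ref{L:Proofkey2} without proof, calling the setup ``entirely similar to the case of a sink,'' and your argument is precisely the source/sink dualization of the proof of Lemma~\ref{L:Proofkey} via Remark~2.13, with the injectivity of $S_i$ (for $i$ a source) and the vanishings $\Ext^1(M',S_i^{\oplus d})=\Hom(S_i^{\oplus d},M')=0$ replacing the dual vanishings used in the sink case. The uniqueness of the $d\epsilon_i$-dimensional $\underline{x}$-stable subspace, which you correctly identify as $\mathrm{Ker}\big(\bigoplus_{s(h)=i}x_h\big)$, supplies the extra hypothesis needed for Remark~2.13~iii), just as in the paper's proof of Lemma~\ref{L:Proofkey}.
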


We claim that the set $\mathbf{B}_{i,\geq k}=\big\{\Phi^{-1}(\mathbf{b}_{\mathbb{P}})\;|\; \mathbb{P} \in  \bigsqcup_{\gamma} \mathcal{P}^{\gamma}_{i;\geq k}\big\}$ fits our needs. To see this, observe that by Lemma~\ref{L:Proofkey2} we have $supp (\mathbb{R} \star \mathbbm{1}_{k\epsilon_i}) \subset E_{\gamma}^{i;\geq k}$ for any $\mathbb{T} \in \mathcal{P}^{\gamma-k\epsilon_i}$, and hence 
$$\mathbf{b}_{\mathbb{R}} \mathbf{b}_{\mathbbm{1}_{k\epsilon_i}}\subset \bigoplus_{\mathbb{S} \in \mathcal{P}^{\gamma}_{i;\geq k}} \Z[v,v^{-1}] \mathbf{b}_{\mathbb{S}},$$ 
which implies that $\U_{\nu}^{\Z}(\n_+) E_i^{(k)} \subset \bigoplus_{\mathbf{b} \in \mathbf{B}_{i;\geq k}} \Z[\nu,\nu^{-1}] \mathbf{b}$. To get the reverse inclusion, we argue by induction. Fix a dimension vector $\gamma$, a perverse sheaf $\mathbb{P} \in \mathcal{P}^{\gamma}_{i ; d}$ with $d \geq k$, and assume that $\Psi^{-1}(\mathbf{b}_{\mathbb{T}}) \in \U_{\nu}^{\Z}(\n_+) E_i^{(k)}$ for any $\mathbb{T} \in \mathcal{P}^{\gamma}_{i; \geq d+1}$. By Lemma~\ref{L:Proofkey2} again, there exists a complex $\mathbb{R} \in \mathcal{Q}_{\gamma-d\epsilon_i}$ such that $\mathbb{R} \star \mathbbm{1}_{d\epsilon_i} \simeq \mathbb{P} \oplus \mathbb{T}$ where $supp(\mathbb{T}) \subset E_{\gamma}^{i;\geq d+1}$. By the induction hypothesis, $\Psi^{-1}(\mathbf{b}_{\mathbb{T}}) \in \U_{\nu}^{\Z}(\n_+) E_i^{(k)}$, from which we deduce that $\Psi^{-1}(\mathbf{b}_{\mathbb{R}}) \in \U_{\nu}^{\Z}(\n_+) E_i^{(k)}$ as well. We are done. \qed

\vspace{.2in}

To conclude this Section, we draw one final important consequence of Theorem~\ref{T:LU}.

\vspace{.1in}

\begin{cor}\label{C:bialg} The (co)algebra  $\widetilde{\KQ}$ is a bialgebra, i.e., for any $u,v \in \widetilde{\KQ}$ we have
$$\Delta(uv)=\Delta(u)\Delta(v).$$
\end{cor}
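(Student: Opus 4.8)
My plan is to deduce this formally from Theorem~\ref{T:LU} by transporting the bialgebra axiom from the quantum group across the isomorphism $\Phi$. I will write $\widetilde{\Delta}$ for the coproduct (\ref{E:DeltaK}) of $\widetilde{\KQ}$ and $\Delta$ for the coproduct (\ref{E:coprquantum}) of $\U_{\v}(\bo'_+)$ (so that the $\Delta$ in the statement is to be read as $\widetilde{\Delta}$). By Theorem~\ref{T:LU} the map $\Phi: \U^{\Z}_{\v}(\bo'_+) \stackrel{\sim}{\to} \widetilde{\KQ}$ is at once an isomorphism of algebras and of coalgebras, so it satisfies $\Phi(xy)=\Phi(x)\Phi(y)$ and $(\Phi \otimes \Phi)\circ \Delta = \widetilde{\Delta}\circ \Phi$; these are the only inputs I will use.

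First I would record two routine facts. (i) The integral form $\U^{\Z}_{\v}(\bo'_+)$ is itself a bialgebra: $\U_{\v}(\bo'_+)$ carries a Hopf algebra structure with coproduct (\ref{E:coprquantum}), so $\Delta(xy)=\Delta(x)\Delta(y)$ there, and since $\U^{\Z}_{\v}(\bo'_+)$ is a subalgebra stable under $\Delta$ the same identity holds on it. (ii) The map $\Phi \otimes \Phi: \U^{\Z}_{\v}(\bo'_+)^{\otimes 2}\to \widetilde{\KQ}^{\otimes 2}$ is an algebra homomorphism, because each tensor factor is multiplicative and the products on both tensor squares are the componentwise ones $(a\otimes b)\cdot(c\otimes d)=ac\otimes bd$ --- the one on $\widetilde{\KQ}^{\otimes 2}$ being precisely the product used in (\ref{E:DeltaK}).

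Granting (i) and (ii), the proof will be a one-line diagram chase: writing $u=\Phi(x)$ and $v=\Phi(y)$ in $\widetilde{\KQ}$, one computes
\begin{equation*}
\widetilde{\Delta}(uv)=\widetilde{\Delta}(\Phi(xy))=(\Phi\otimes\Phi)\bigl(\Delta(xy)\bigr)=(\Phi\otimes\Phi)\bigl(\Delta(x)\Delta(y)\bigr)=\widetilde{\Delta}(\Phi(x))\cdot\widetilde{\Delta}(\Phi(y))=\widetilde{\Delta}(u)\,\widetilde{\Delta}(v),
\end{equation*}
using successively that $\Phi$ is an algebra map, that it intertwines the coproducts, the bialgebra axiom in $\U^{\Z}_{\v}(\bo'_+)$, and that $\Phi\otimes\Phi$ is multiplicative together with the intertwining relation again. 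I do not expect any genuine obstacle here; the sole point needing a moment's care is verifying (ii), i.e. that $\Phi\otimes\Phi$ respects the correct multiplications on the tensor squares, which the explicit formula for the product on $\widetilde{\KQ}^{\otimes 2}$ makes transparent. As a complement, the twisted-bialgebra compatibility of $m$ and $\Delta$ on $\mathcal{K}_{\vec{Q}}$ announced after Proposition~\ref{P:assoc2} would be recovered by writing out $\widetilde{\Delta}$ on the subspace $\mathcal{K}_{\vec{Q}}\subset\widetilde{\KQ}$.
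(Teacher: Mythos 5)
Your proof is correct, and in fact the paper explicitly acknowledges this route: immediately after stating the corollary it remarks that the claim ``directly follows from the identification $\U_\nu^{\Z}(\bo'_+) \simeq \widetilde{\KQ}$.'' Your diagram-chase formalizes precisely that remark.

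However, the paper then chooses to work out a \emph{second}, self-contained proof, and the comparison is instructive. The paper's elaborated argument does not appeal to Theorem~\ref{T:LU} at all. It uses two purely geometric inputs: that $\mathcal{K}_{\vec{Q}}$ is $\Z[v,v^{-1}]$-spanned by classes of Lusztig sheaves $L_{\underline{\a}}$ (this is established in the course of the surjectivity argument in Section~3.5), and the explicit formula for $\underline{\Delta}(L_{\underline{\a}})$ from Lemma~\ref{L:coprodun}. One then verifies $\widetilde{\Delta}(\mathbf{b}_{L_{\underline{\a}'}}\cdot\mathbf{b}_{L_{\underline{\a}''}}) = \widetilde{\Delta}(\mathbf{b}_{L_{\underline{\a}'}})\,\widetilde{\Delta}(\mathbf{b}_{L_{\underline{\a}''}})$ directly by matching the indexing sets and the exponents $d_{\underline{\beta},\underline{\gamma}}$.

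There is also a subtlety your proposal glosses over, and which arguably motivates the paper's preference for the second route. You use as a black box that $\Phi$ intertwines the coproducts, i.e. $(\Phi\otimes\Phi)\circ\Delta=\widetilde{\Delta}\circ\Phi$ on all of $\U^{\Z}_{\v}(\bo'_+)$. The paper's justification of this in the proof sketch of Theorem~\ref{T:LU} is only that it is ``clear'' from the agreement of the two coproducts on the generators $E_i, K_i$ (compare (\ref{E:coprquantum}) with (\ref{E:Deltageom}), (\ref{E:DeltaK})). But agreement on generators extends to the whole algebra only if \emph{both} coproducts are multiplicative --- and for $\widetilde{\Delta}$ that is exactly the content of Corollary~\ref{C:bialg}. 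A genuinely non-circular verification that $\Phi$ is a coalgebra map must check agreement on a spanning set, for instance on the monomials $E_{i_1}^{(n_1)}\cdots E_{i_l}^{(n_l)}$ versus the Lusztig sheaf classes $\mathbf{b}_{L_{\underline{\a}}}$ using Lemma~\ref{L:coprodun}; but this is precisely the computation the paper carries out as its second proof. So while your formal transport argument is valid given Theorem~\ref{T:LU} as stated, the mathematical content has effectively been pushed into that theorem, and the honest way to discharge the dependence is the hands-on argument the paper gives. It would strengthen your write-up to note this and to indicate, even briefly, that the coalgebra-morphism half of Theorem~\ref{T:LU} is verified via Lemma~\ref{L:coprodun} on the spanning set of Lusztig sheaves.
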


Equivalently, $\KQ$ is a {\rm{twisted}} bialgebra (see \cite[Lecture~1]{Trieste} ). Of course, Corollary~\ref{C:bialg} directly follows from the identification $\U_\nu^{\Z}(\bo'_+) \simeq \widetilde{\KQ}$. But it also follows from the fact (proved in this section) that $\KQ$ is linearly generated by the classes of Lusztig sheaves together with Lemma~\ref{L:coprodun}. 

Indeed, in the notations of (\ref{E:42}) we have, for any two collections $\underline{\a}'=(\a'_1, \ldots, \a'_n), \underline{\a}''=(\a''_1, \ldots, \a''_m)$ of simple dimension vectors,

\begin{equation*}
\begin{split}
\Delta(\mathbf{b}_{L_{\underline{\a}'}} \cdot \mathbf{b}_{L_{\underline{\a}''}}) 
&=\Delta(\mathbf{b}_{L_{\a'_1, \ldots, \a'_n,\a''_1, \ldots, \a''_m}})\\
&=\sum_{\underline{\beta},\underline{\gamma}} v^{d_{\underline{\beta},\underline{\gamma}}} \mathbf{b}_{L_{\underline{\beta}}} \mathbf{k}_{wt(\underline{\gamma})} \otimes \mathbf{b}_{L_{\underline{\gamma}}}\\
&=\big( \sum_{\underline{\beta}',\underline{\gamma}'} v^{d_{\underline{\beta}',\underline{\gamma}'}} \mathbf{b}_{L_{\underline{\beta}'}} \mathbf{k}_{wt(\underline{\gamma}')} \otimes \mathbf{b}_{L_{\underline{\gamma}'}}\big) \big(\sum_{\underline{\beta}'',\underline{\gamma}''} v^{d_{\underline{\beta}'',\underline{\gamma}''}} \mathbf{b}_{L_{\underline{\beta}''}} \mathbf{k}_{wt(\underline{\gamma}'')} \otimes \mathbf{b}_{L_{\underline{\gamma}''}}\big)\\
&=\Delta (\mathbf{b}_{L_{\underline{\a}'}}) \Delta(\mathbf{b}_{L_{\underline{\a}''}})
\end{split}
\end{equation*}
where $\underline{\beta}=(\beta_1, \ldots, \beta_{n+m}), \underline{\gamma}=(\gamma_1, \ldots, \gamma_{n+m})$ run among the set of tuples of simple dimension vectors satisfying
$$\beta_i+\gamma_i=\a'_i, \qquad \text{for}\; i=1, \ldots, n, $$
$$\beta_{n+i}+\gamma_{n+i}=\a''_i, \qquad \text{for}\; i=1, \ldots, m, $$
where $\beta'_i=\beta_i, \gamma'_i=\gamma_i$ for $i=1, \ldots, n$ and $\beta''_i =\beta_{i+n}, \gamma''_i=\gamma_{i+n}$ for $i=1, \ldots, m$, and where we have used the identity
$$d_{\underline{\beta},\underline{\gamma}}=d_{\underline{\beta}', \underline{\gamma}'} + d_{\underline{\beta}'', \underline{\gamma}''} -(wt(\underline{\gamma}'), wt(\underline{\beta}'')).$$

\vspace{.2in}

\centerline{\textbf{3.6. The Lusztig graph.}}
\addcontentsline{toc}{subsection}{\tocsubsection {}{}{\; 3.6. The Lusztig graph.}}

\vspace{.15in}

The reduction process used in the proof of Theorem~\ref{T:LU} may be encoded in a nice and compact fashion as a colored graph in the following way. Let $i \in I$ be a vertex. Let us choose a reorientation $\vec{Q}'$ of $\vec{Q}$ for which $i$ is a sink, and let $\FD=\mathcal{P}_{\vec{Q}}^{\gamma} \stackrel{\sim}{\to} \mathcal{P}_{\vec{Q}'}^{\gamma}$ be the corresponding Fourier-Deligne isomorphism. Lemma~\ref{L:Proofkey} sets up a bijection $r_{\gamma}^{i;d}: \mathcal{P}_{\vec{Q}',i;d}^{\gamma} \stackrel{\sim}{\to} \mathcal{P}_{\vec{Q}',i;0}^{\gamma-d\epsilon_i}$ for any $\gamma \in \N^I$ and any $d \geq 0$. We use these to define a map $f_i^{\#}: \mathcal{P}_{\vec{Q}'}^{\gamma} \to \mathcal{P}_{\vec{Q}'}^{\gamma-\epsilon_i} \cup \{0\}$ by
$$f_i^{\#} (\mathbb{Q})=
\begin{cases} \{0\} & if \;\mathbb{Q} \in \mathcal{P}_{\vec{Q}',i;0}^{\gamma},\\ (r_{\gamma-d\epsilon_i}^{i;d-1})^{-1} \circ r_{\gamma}^{i;d} (\mathbb{Q}) & if\; \mathbb{Q} \in \mathcal{P}_{\vec{Q}',i;d}^{\gamma}\;with\;d \geq 1.
\end{cases}
$$
and a map $e_i^{\#}: \mathcal{P}_{\vec{Q}'}^{\gamma} \to \mathcal{P}_{\vec{Q}'}^{\gamma+\epsilon_i} $ by
$$e_i^{\#} (\mathbb{Q})= (r_{\gamma-d\epsilon_i}^{i;d+1})^{-1} \circ r_{\gamma}^{i;d} (\mathbb{Q}) \qquad if\; \mathbb{Q} \in \mathcal{P}_{\vec{Q}',i;d}^{\gamma}
$$

Note that the map $e_i^{\#}$ is the inverse of $f_i^\#$; that is we have $e_i^\#(\mathbb{P})=\mathbb{P}'$ if $f_i^\#(\mathbb{P}')=\mathbb{P}$. 
We also denote by the same letter $f_i^{\#}: \mathcal{P}_{\vec{Q}}^{\gamma} \to \mathcal{P}_{\vec{Q}}^{\gamma-\epsilon_i} \cup \{0\}$ the map obtained by \textit{transport de structure} via $\FD$, $\FD'$, and likewise for $e_i^\#$. This notation is justified by the fact $f_i^{\#}, e_i^\#$ thus defined are independent of the choice of $\vec{Q}'$. Indeed, any two orientations $\vec{Q}', \vec{Q}''$ for which $i$ is a sink are related by a Fourier-Deligne transform which fixes any arrow adjacent to $i$. In particular, the subsets $E_{\gamma}^{i;d}$ are invariant under these Fourier-Deligne transforms, and the maps $r_{\gamma}^{i;d}$ are unambiguously determined by (\ref{E:Proof4}) and (\ref{E:Proof5}).

\vspace{.1in}

The collection of maps $e_i^\#, f_i^{\#}$ for all $i \in I$ define an $I$-colored graph $\mathcal{C}_{\vec{Q}}$ whose vertex set is $\mathcal{P}_{\vec{Q}}$. We (not very originally) call this graph the \textit{Lusztig graph} of $\vec{Q}$ or $\mathbf{B}$. It is by construction invariant under Fourier-Deligne transforms.

\vspace{.2in}

\addtocounter{theo}{1}
\noindent \textbf{Example \thetheo .} We continue with the Kronecker quiver with its two orientations $\vec{Q},\vec{Q}'$, and the dimension vector $\gamma=2\delta$ (see Example~3.18). The partitions of $\mathcal{P}_{\vec{Q}}^{\gamma}, \mathcal{P}_{\vec{Q}'}^{\gamma}$ are as follows~:
\begin{align*}
\mathcal{P}_{\vec{Q},2;2}^{\gamma}&=\big\{ IC(\{0\})\big\},\\
\mathcal{P}^{\gamma}_{\vec{Q},2;1}&=\big\{ IC(S_{P_2 \oplus I_{1^2,2}}), IC(S_{P_2,1,I_1})\big\}\\
\mathcal{P}^{\gamma}_{\vec{Q},2;0}&=\big\{IC(U_{2\delta},\mathcal{L}_{triv}), IC(U_{2\delta},\mathcal{L}_{sign}), IC(S_{P_{1,2^2} \oplus I_1})\big\},
\end{align*}
and
\begin{align*}
\mathcal{P}^{\gamma}_{\vec{Q}',1;2}&=\big\{ IC(\{0\})'\big\},\\
\mathcal{P}^{\gamma}_{\vec{Q}',1;1}&=\big\{ IC(S_{P_1 \oplus I_{1,2^2}})', IC(S_{P_1,1,I_2})'\big\}\\
\mathcal{P}^{\gamma}_{\vec{Q}',1;0}&=\big\{IC(U_{2\delta},\mathcal{L}_{triv})', IC(U_{2\delta},\mathcal{L}_{sign})', IC(S_{P_{1^2,2} \oplus I_2})'\big\}.
\end{align*}

\vspace{.1in}

Comparing with Example~3.18, we see that the Fourier-Deligne transforms exchange $\mathcal{P}^{\gamma}_{\vec{Q},2; \geq 1}$ with $\mathcal{P}^{\gamma}_{\vec{Q}',1; 0}$ and $\mathcal{P}^{\gamma}_{\vec{Q},2; 0}$ with $\mathcal{P}^{\gamma}_{\vec{Q}',1; \geq 1}$.   

\vspace{.1in}

The piece of the Lusztig graph $\mathcal{C}_{\vec{Q}}$ in dimensions up to $2\delta$ is as follows (we only draw the $e_i^\#$s since the $f_i^\#$s are just obtained by reversing the arrows)~:

\centerline{
\begin{picture}(350, 450)
\put(175,12){\line(-5,6){50}}
\put(130,42){$e^{\#}_2$}
\put(205,42){$e^{\#}_1$}
\put(45,122){$e^{\#}_2$}
\put(127,122){$e^{\#}_1$}
\put(287,122){$e^{\#}_1$}
\put(209,122){$e^{\#}_2$}
\put(209,222){$e^{\#}_2$}
\put(127,222){$e^{\#}_1$}
\put(209,322){$e^{\#}_1$}
\put(127,322){$e^{\#}_2$}
\put(10,222){$e^{\#}_1$}
\put(327,222){$e^{\#}_2$}
\put(10,322){$e^{\#}_1$}
\put(327,322){$e^{\#}_2$}
\put(60,280){$e^{\#}_1$}
\put(278,280){$e^{\#}_2$}
\put(78,190){$e^{\#}_2$}
\put(258,190){$e^{\#}_1$}
\put(175,12){\line(5,6){50}}
\put(129,66){\vector(-2,3){5}}
\put(221,66){\vector(2,3){5}}
\put(125,92){\line(-5,3){100}}
\put(225,92){\line(5,3){100}}
\put(29,150){\vector(-2,1){5}}
\put(321,150){\vector(2,1){5}}
\put(125,92){\line(0,1){60}}
\put(225,92){\line(0,1){60}}
\put(125,270){\vector(0,1){5}}
\put(225,270){\vector(0,1){5}}
\put(125,172){\line(0,1){100}}
\put(225,172){\line(0,1){100}}
\put(125,390){\vector(0,1){5}}
\put(225,390){\vector(0,1){5}}
\put(125,292){\line(0,1){100}}
\put(225,292){\line(0,1){100}}
\put(125,150){\vector(0,1){5}}
\put(225,150){\vector(0,1){5}}
\put(25,172){\line(0,1){100}}
\put(325,172){\line(0,1){100}}
\put(25,390){\vector(0,1){5}}
\put(325,390){\vector(0,1){5}}
\put(25,292){\line(0,1){100}}
\put(325,292){\line(0,1){100}}
\put(25,270){\vector(0,1){5}}
\put(325,270){\vector(0,1){5}}
\put(75,232){\line(0,1){100}}
\put(275,232){\line(0,1){100}}
\put(75,330){\vector(0,1){5}}
\put(275,330){\vector(0,1){5}}
\put(125,172){\line(-5,4){50}}
\put(225,172){\line(5,4){50}}
\put(77,210){\vector(-1,1){5}}
\put(273,210){\vector(1,1){5}}
\put(153,0){$IC(\{0\}_0)$}
\put(100,80){$IC(\{0\}_{\epsilon_2})$}
\put(200,80){$IC(\{0\}_{\epsilon_1})$}
\put(0,160){$IC(\{0\}_{2\epsilon_2})$}
\put(100,160){$IC(S_{0,1,0})$}
\put(200,160){$IC(\{0\}_\delta)$}
\put(300,160){$IC(\{0\}_{2\epsilon_1})$}
\put(50,220){$IC(S_{P_2,1,0})$}
\put(250,220){$IC(S_{0,1,I_1})$}
\put(50,340){$IC(U_{2\delta},\mathcal{L}_{sign})$}
\put(250,340){$IC(S_{P_2,1,I_1})$}
\put(0,280){$IC(S_{P_{1,2^2}})$}
\put(100,280){$IC(S_{I_{1^2,2}})$}
\put(200,280){$IC(\{0\}_{\delta+\epsilon_2})$}
\put(300,280){$IC(\{0\}_{\delta+\epsilon_1})$}
\put(0,400){$IC(U_{2\delta},\mathcal{L}_{triv})$}
\put(100,400){$IC(S_{P_2,0,I_{1^2,2}})$}
\put(200,400){$IC(S_{P_{1,2^2},0,I_1})$}
\put(300,400){$IC(\{0\}_{2\delta})$}
\end{picture}}

\vspace{.3in}

In this particular case, $\mathcal{C}_{\vec{Q}}$ is a tree. Of course, this is \textit{not} true in general !
\endexample

\vspace{.2in}

\centerline{\textbf{3.7. The trace map and purity.}}
\addcontentsline{toc}{subsection}{\tocsubsection {}{}{\; 3.7. The trace map and purity.}}

\vspace{.15in}

Let $\vec{Q}$ be a quiver as in Section~1.1. We will now take into account some finer structure of the perverse sheaves in $\mathcal{P}_{\vec{Q}}$. Recall that we are working over the algebraic closure $k=\overline{\mathbb{F}_q}$ of the finite field $\mathbb{F}_q$. However, all the spaces used in the construction of the category $\QQ$ and all the maps between these spaces are defined over $\mathbb{F}_q$. If $X$ is such a space then we denote by $X^0$ the corresponding $\mathbb{F}_q$-space, so that $X = X^0 \otimes k$. For instance $V^0_{\a}=\bigoplus_i \fq^{\a_i}$,
$$E^0_{\alpha}=\bigoplus_{h \in H} Hom\big(\fq^{\a_{s(h)}}, \fq^{\a_{t(h)}}\big),$$
$$E^0_{\a,\beta}=\big\{ (\underline{y}, W)\;|\; \underline{y} \in E^0_{\a+\beta}; W \subset V^0_{\a+\beta}, \underline{dim}\;W=\beta; \underline{y}(W) \subset W\big\}$$
and
$$G_{\gamma}^0=\prod_i GL(\a_i, \fq).$$ 
We will apply the same notational rule to maps between spaces $X^0, Y^0$ and the induced maps between spaces $X,Y$. There is a Galois action of $Gal(k/\fq)$ on $X$. Let $F \in Gal(k/\fq)$ be the geometric Frobenius. Then $X^F=X^0$.

\vspace{.1in}

Recall that a \textit{Weil structure} on a constructible complex $\mathbb{P}$ over $X$ is an isomorphism $j: \mathbb{P} \stackrel{\sim}{\to} F^* \mathbb{P}$. A \textit{Weil complex} is a pair $(\mathbb{P},j)$ as above. The set of Weil complexes form a triangulated category in which the usual functors of pullback, pushforward, tensor products, etc. exist. Again, \cite{KW} is a good source for these technical matters.

\vspace{.1in}

If $(\mathbb{P},j)$ is a Weil complex and if $x^0 \in X^0(\fq)$ is an $\fq$-rational point of $X^0$ then there is an action of $F$ on the stalk of $\mathbb{P}_{|x^0}$ of $\mathbb{P}$ at $x^0$. The trace of $(\mathbb{P},j)$ is defined to be the $\C$-valued function
\begin{equation}\label{E:Trace}
\begin{split}
Tr (\mathbb{P})~: X^0(\fq) &\to \C\\
x^0 &\mapsto \sum_{i} (-1)^i Tr(F, H^i(\mathbb{P})_{|x^0}).
\end{split}
\end{equation}
In the above, we have fixed once and for all an identification $\qlb \stackrel{\sim}{\to} \C$.
For $n \in \Z$ we let $\qlb(n/2)$ denote the \textit{Tate twist} for $n \in \Z$. 
 It is the constant complex (over the point) with the Weil structure $j: \qlb(n/2) \stackrel{\sim}{\to} F^* \qlb(n/2)$ so that $Tr(\qlb(n))=q^{-n/2}$.
We write $(n/2)$ instead of $\otimes \qlb(n/2)$. 
Thus $Tr(\mathbb{P}(n/2))=\sqrt{q}^{n}Tr(\mathbb{P})$.When taking into account Frobenius actions, we always add a Tate twist $(n/2)$ to a shift $[n]$ in the derived category of constructible sheaves over $X$. For instance, using the notation of Section~1.3., the induction and restriction functors are given by formulas
$$\underline{m}(\mathbb{P})= q_!r_{\#}p^*(\mathbb{P})[dim\;p](dim\;p/2)$$
$$\underline{\Delta}(\mathbb{P})=\kappa_! \iota^*(\mathbb{P})[-\langle \a,\beta \rangle](-\langle \a,\beta\rangle /2).$$
We refer to \cite{FK} for the notions of pure, mixed, and pointwise pure (Weil) complexes.

\vspace{.1in}

\begin{prop}[Lusztig]\label{P:Pure} All the simple perverse sheaves $\mathbb{P} \in \mathcal{P}_{\vec{Q}}$ posses a (canonical) Weil structure, making them pure of weight zero.
\end{prop}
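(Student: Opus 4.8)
The plan is to mimic exactly the inductive strategy used for Theorem~\ref{T:LU} and Proposition~\ref{P:LU}, replacing the role of Verdier self-duality by the property of being pure of weight zero. First I would observe that the basic building blocks already carry canonical Weil structures: the constant sheaf $\mathbbm{1}_{\epsilon_i}=\qlb_{|E_{\epsilon_i}}$ on a point is pure of weight zero with its tautological Weil structure, and the maps $q,r,p$ (together with the Tate twists built into the normalisations $[\dim p](\dim p/2)$ and $[-\langle\a,\beta\rangle](-\langle\a,\beta\rangle/2)$ recalled at the end of Section~3.7) are all defined over $\fq$. Hence every Lusztig sheaf $L_{\a_1,\ldots,\a_n}=\mathbbm{1}_{\a_1}\star\cdots\star\mathbbm{1}_{\a_n}$ inherits a canonical Weil structure. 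The point is then that $q$ is proper and $q_!$ of a pure complex of weight zero is pure of weight zero by Deligne's theorem (the Weil II input from \cite{BBD}); since $L_{\a_1,\ldots,\a_n}=(q_{\underline\a})_!(\qlb_{E_{\underline\a}}[\dim E_{\underline\a}+\sum\dim E_{\a_i}])$ with the appropriate Tate twist, and $E_{\underline\a}$ is smooth, $L_{\a_1,\ldots,\a_n}$ is pure of weight zero.

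Next I would pass to the simple summands. Here one uses the Decomposition Theorem in its mixed/Weil form: a pure complex of weight zero of geometric origin decomposes (as a Weil complex, after possibly an extension of scalars which can be avoided since everything is already defined over $\fq$) into a direct sum of shifts of simple perverse sheaves $IC(U,\mathcal{L})$, each pure of weight zero and each therefore equipped with a canonical Weil structure normalised so that its stalks at generic points are pure of weight zero. Thus every $\mathbb{P}\in\mathcal{P}_{\vec{Q}}$, being by definition a summand of some $L_{\a_1,\ldots,\a_n}$ with all $\a_i$ simple, acquires a canonical Weil structure making it pure of weight zero. The canonicity follows from the fact that the Weil structure on a pure simple perverse sheaf of weight zero is unique up to a scalar, and the normalisation by purity of the generic stalk pins down that scalar up to a root of unity; to rigidify it completely one records that the Frobenius acts trivially (not merely by a root of unity) on the generic stalk of $L_{\a_1,\ldots,\a_n}$ restricted to its open dense stratum, which one checks by the same desingularisation arguments as in the proofs of Theorems~\ref{T:finiteype} and \ref{T:LU}.

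To make the argument genuinely parallel to the proof of Proposition~\ref{P:LU} and to be sure the Weil structures on the $\mathbb{P}\in\mathcal{P}_{\vec{Q}}$ are the ones inherited from all presentations simultaneously, I would argue by induction on the dimension vector $\gamma$, using the Fourier--Deligne transform (Corollary~\ref{C:FD}) to reduce to the case where $\mathbb{P}\in\mathcal{P}^{\gamma}_{\vec{Q}',i;d}$ with $d\geq 1$ for a suitable reorientation $\vec{Q}'$ with a sink at $i$. The Fourier--Deligne transform $\FD$ sends Weil complexes to Weil complexes and preserves purity up to a predictable Tate twist (the functor $\pi_{2!}(\pi_1^*(-)\otimes\chi^*\mathcal{L})[r]$ with $\mathcal{L}$ the Artin--Schreier sheaf, which is pure of weight zero, multiplies weights in the controlled way described in \cite{KW}); so the inductive step is compatible with it. Then Lemma~\ref{L:Proofkey} gives $\mathbbm{1}_{d\epsilon_i}\star\mathbb{R}\simeq\mathbb{P}\oplus\mathbb{T}$ with $\mathbb{R}$ simple of strictly smaller dimension vector, hence pure of weight zero by the induction hypothesis, and $\supp\mathbb{T}\subset E_{\gamma}^{i;\geq d+1}$. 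Since $\mathbbm{1}_{d\epsilon_i}$ is pure of weight zero and $\underline{m}$ (being $q_!r_{\#}p^*$ with the correct twists, $p$ smooth, $q$ proper) preserves purity of weight zero, the left side is pure of weight zero, hence so is its summand $\mathbb{P}$. This closes the induction.

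The main obstacle I anticipate is the bookkeeping of Tate twists and the precise normalisation of the canonical Weil structure: one must verify that the shifts $[\dim p]$ etc. are everywhere accompanied by exactly the half-integer Tate twists that keep the weight equal to zero throughout the inductive manipulations, and that the Weil structure obtained on a given $\mathbb{P}$ via one reduction path (one choice of sink, one sequence of simple dimension vectors) agrees with the one obtained via another. The first is handled by the convention fixed at the end of Section~3.7 that every $[n]$ carries a $(n/2)$; the second follows from the uniqueness of a weight-zero pure Weil structure up to a root of unity on a simple perverse sheaf, together with the observation that on the open stratum where $\mathbb{P}$ restricts to $\qlb[\dim]$ (or to an explicit local system built from a Galois cover, as in the affine case of Section~2.5, where the cover is again defined over $\fq$) the Frobenius eigenvalue is literally $1$; this last point is the one requiring a small genuine argument rather than formal nonsense, and it is exactly where the geometric origin of the $IC$ sheaves --- their realisation via the proper, $\fq$-defined maps $q_{\underline\nu}$ --- is used. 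Beyond that the proof is a routine application of Deligne's purity theorem and the mixed Decomposition Theorem, and I would refer to \cite{LusHall} for the details, as the text itself signals.
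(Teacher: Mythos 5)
Your proposal reaches the right conclusion and ultimately reproduces the paper's argument, but the second paragraph overclaims. Asserting that a pure weight-zero Weil complex of geometric origin ``decomposes as a Weil complex into a direct sum of simple perverse sheaves $IC(U,\mathcal{L})$, each pure of weight zero and each therefore equipped with a canonical Weil structure'' elides a genuine subtlety: the semisimplicity statement (BBD 5.3.8) gives a decomposition into simple \emph{mixed} perverse sheaves over $\mathbb{F}_q$, but these need not stay simple after base change to $\overline{\mathbb{F}_q}$ --- Frobenius may permute a nontrivial orbit of geometrically simple factors, in which case an individual $\mathbb{P}\in\mathcal{P}_{\vec{Q}}$ would \emph{not} carry a Weil structure. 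The remark ``which can be avoided since everything is already defined over $\fq$'' does not address this: the maps and spaces being $\fq$-rational does not by itself prevent Frobenius from permuting the simple constituents of $L_{\a_1,\ldots,\a_n}$. Ruling this out is exactly the content of the proposition, and it cannot be done by appealing to the Decomposition Theorem alone.

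Your third paragraph then supplies the argument that actually works, and it is the one the paper uses: induct on $\gamma$ (and on $d$ downwards), use the Fourier--Deligne transform to place $\mathbb{P}$ in $\mathcal{P}^\gamma_{\vec{Q}',i;d}$ with $d\geq 1$ for a reorientation with sink $i$, and apply Lemma~\ref{L:Proofkey} to write $\mathbbm{1}_{d\epsilon_i}\star\mathbb{R}=\mathbb{P}\oplus\mathbb{T}$ with $\operatorname{supp}\mathbb{T}\subset E_\gamma^{i;\geq d+1}$. The point you should make explicit here --- and which you state only as ``hence so is its summand $\mathbb{P}$'' --- is that since $E_\gamma^{i;\geq d+1}$ is closed and $\fq$-rational, and $\mathbb{P}$ occurs with multiplicity one and is the unique simple constituent whose support is not contained in $E_\gamma^{i;\geq d+1}$, the $\mathbb{P}$-isotypic piece is Frobenius-stable; this is what produces the Weil structure on $\mathbb{P}$ (purity then follows as you say). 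So the proposal is correct as a whole, but the logical weight rests on the induction, not on the appeal to the decomposition theorem; I would cut or significantly weaken the second paragraph. The lengthy discussion of canonicity and of Tate-twist bookkeeping is reasonable but goes beyond what the paper records.
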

\begin{proof} The Lusztig sheaves $L_{\underline{\a}}=q_{\underline{\a}!}(\qlb_{E_{\underline{\a}}})$ have an obvious Weil structure (coming from that of the constant sheaf $\qlb_{E_{\underline{\a}}}$) . Moreover, they are pure of weight zero by Deligne's Theorem since $q_{\underline{\a}}$ is proper. More generally, if two complexes $\mathbb{P}', \mathbb{P}''$ possess Weil structures, then these induce one on $\mathbb{P}' \star \mathbb{P}'' $; it is pure of weight zero if $\mathbb{P}', \mathbb{P}''$ are.

Let us fix a dimension vector $\gamma$ and a perverse sheaf $\mathbb{P} \in \mathcal{P}^{\gamma}$. As in the proof of Theorem~\ref{T:LU}, we may assume that $\mathbb{P} \in \mathcal{P}^{\gamma}_{i; d}$ with $d \geq 1$ for some $i \in I$. Arguing by induction, we may in addition assume that all $\mathbb{S} \in \mathcal{P}^{\gamma'}$ with $\gamma'<\gamma$, as well as all $\mathbb{S} \in \mathcal{P}^{\gamma}_{i; \geq d+1}$ posses fixed Weil structures, and are pure of weight zero. By Lemma~\ref{L:Proofkey}, there exists $\mathbb{R} \in \mathcal{Q}^{\gamma-d\epsilon_i}$ such that $\mathbbm{1}_{d\epsilon_i} \star \mathbb{R} = \mathbb{P} \oplus \mathbb{T}$, where $supp\;\mathbb{T} \subset E_{\gamma}^{i;\geq d+1}$. It is clear that $\mathbbm{1}_{d\epsilon_i}$ has a Weil structure and is pure of weight zero, and by our assumptions the same holds for $\mathbb{R}$ and $\mathbb{T}$. It follows that the isotypical component $\mathbb{P}$ of $\mathbbm{1}_{d\epsilon_i} \star \mathbb{R}$ also has such a Weil structure, and is pure of weight zero. \end{proof}

\vspace{.1in}

Let $\mathbb{P}',\mathbb{P}'' \in \PQ$ and let us write
$$ \mathbb{P}' \star \mathbb{P}''=\bigoplus_{\mathbb{P}} M_{\mathbb{P}', \mathbb{P}''}^{\mathbb{P}} \otimes \mathbb{P}$$
where $M_{\mathbb{P}', \mathbb{P}''}^{\mathbb{P}}= Hom(\mathbb{P} ,\mathbb{P}' \star \mathbb{P}'')$ is the multiplicity complex. By Proposition~\ref{P:Pure}, $M_{\mathbb{P}', \mathbb{P}''}^{\mathbb{P}}$ has a Weil structure for which it is pure of weight zero. This means that the Frobenius eigenvalues of $H^i(M_{\mathbb{P}', \mathbb{P}''}^{\mathbb{P}})$ are all algebraic numbers of absolute value $\sqrt{q}^{i}$.

In a similar fashion, we may write
$$\underline{\Delta}(\mathbb{P})=\bigoplus_{\mathbb{P}', \mathbb{P}''}  N^{\mathbb{P}', \mathbb{P}''}_{\mathbb{P}} \otimes (\mathbb{P}' \boxtimes \mathbb{P}'')$$
for some multiplicity complex $N^{\mathbb{P}', \mathbb{P}''}_{\mathbb{P}}$. Note that (because the definition of the restriction functor involves a pushforward by a non proper map), the complex $N^{\mathbb{P}', \mathbb{P}''}_{\mathbb{P}}$ is mixed, but not pure of weight zero in general\footnote{see e.g. Lemma~\ref{L:coprodun}.}. This means that the Frobenius eigenvalues of $H^i(N^{\mathbb{P}',\mathbb{P}''}_{\mathbb{P}})$ are algebraic numbers of absolute value belonging to $\sqrt{q}^{\Z}$.

\vspace{.1in}

We may now define an algebra and a coalgebra $\mathfrak{U}_{\vec{Q}}$ as follows~: as a vector space
$$\mathfrak{U}_{\vec{Q}}=\bigoplus_{\gamma} \mathfrak{U}^{\gamma}, $$
$$\mathfrak{U}_{\gamma}=\bigoplus_{\mathbb{P} \in \mathcal{P}^{\gamma}} \C \bo_{\mathbb{P}}\;\;;$$
the multiplication and comultiplication are given by
$$\bo_{\mathbb{P}'} \cdot \bo_{\mathbb{P}''}=\sum_{\mathbb{P}} Tr(M_{\mathbb{P}', \mathbb{P}''}^{\mathbb{P}}) \bo_{\mathbb{P}}$$
$$\Delta(\bo_{\mathbb{P}})=\sum_{\mathbb{P}',\mathbb{P}''} Tr(N^{\mathbb{P}', \mathbb{P}''}_{\mathbb{P}}) \bo_{\mathbb{P}'} \otimes \bo_{\mathbb{P}''}.$$
It is easy to check that these operations are (co)associative. The same arguments as in the proof of Theorem~\ref{T:LU} show that $\mathfrak{U}_{\vec{Q}}$ is generated by $\{\bo_{\mathbbm{1}_{\epsilon_i}}\;|\; i \in I\}$ (it is not necessary to consider the divided powers $\bo_{\mathbbm{1}_{d\epsilon_i}}$ here since $\mathfrak{U}_{\vec{Q}}$ is defined over $\C$).

\vspace{.1in}

Let $\mathbf{H}_{\vec{Q}}$ be the Hall algebra of the category $Rep_{\fq}\vec{Q}$ (see \cite{Trieste}). The definition of $\H_{\vec{Q}}$ requires a choice of a square root $\nu$ of $q$. It will be convenient\footnote{the $-$ sign is taken here to balance out the signs appearing in the definition of the trace map (\ref{E:Trace}).} to choose $\nu=-\sqrt{q}$. Recall that as a vector space,
$$\H_{\vec{Q}}=\bigoplus_{\gamma} \C_{G^0_{\gamma}}[E^0_{\gamma}(\fq)]$$
where $\C_{G^0_{\gamma}}[E^0_{\gamma}(\fq)]$ denotes the set of $G^0_{\gamma}$-invariant $\C$-valued functions
on $E^0_{\gamma}(\fq)$. The multiplication and comultiplication in $\H_{\vec{Q}}$ may be written as
\begin{equation}\label{E:Hall11}
m_{\a,\beta}(f\otimes g)=\nu^{\langle \a,\beta\rangle} q^0_{!} r^0_{\#}(p^0)^* (f \boxtimes g),
\end{equation}
\begin{equation}\label{E:Hall22}
\Delta_{\a,\beta}(h)=\nu^{\langle \a,\beta\rangle -2\sum_i \a_i \beta_i} \kappa^0_! (\iota^0)^*(h)
\end{equation}
where $q^0,r^0_{\#} p^0,\kappa^0,\iota^0$ are the $\fq$-versions of the maps defined in Section~1.3. Note that $q^0_!, r_\#^0,$ etc. stand for the standard pushforward or pullback operations on spaces of \textit{functions} on sets.
The coincidence of (\ref{E:Hall11}) with the Hall multiplication as it is defined in \cite[Lecture~1]{Trieste}  is obvious. It is slightly less so for (\ref{E:Hall22}). We leave the details to the reader. For any dimension vector $\a$ we denote by $\mathbf{1}_{\a} \in \mathbf{H}_{\vec{Q}}$ the constant function on $E^0_{\a}(\fq)$.
Let $\mathbf{C}_{\vec{Q}} \subset \H_{\vec{Q}}$ denote the composition subalgebra of $\H_{\vec{Q}}$, generated by the functions $\{\mathbf{1}_{\epsilon_i}\;|\;i \in I\}$.

\vspace{.1in}

Consider the $\C$-linear map 
\begin{equation}
\begin{split}
tr: \mathfrak{U}_{\vec{Q}} &\to \H_{\vec{Q}}\\
\mathfrak{U}^{\gamma} \ni \bo_{\mathbb{P}} &\mapsto \nu^{dim\;G_{\gamma}} Tr(\mathbb{P}).
\end{split}
\end{equation}

For example, if $\mathbbm{1}_{\gamma}$ belongs to $\mathcal{P}^{\gamma}$ then $tr(\bo_{\mathbbm{1}_{\gamma}})=\nu^{\langle \gamma, \gamma \rangle} \mathbf{1}_{\gamma}$. This comes from Lemma~\ref{L:dim}.

\vspace{.1in}

\begin{theo}[Lusztig]\label{T:Lu2} The map $tr$ is an isomorphism of algebra and coalgebras onto $\mathbf{C}_{\vec{Q}}$.
\end{theo}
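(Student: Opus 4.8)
The plan is to show that $tr$ intertwines the induction and restriction functors on $\mathfrak{U}_{\vec{Q}}$ with the multiplication and comultiplication on $\mathbf{H}_{\vec{Q}}$, that its image is exactly $\mathbf{C}_{\vec{Q}}$, and that it is injective. First I would recall the Grothendieck Faisceaux--Fonctions principle in its relevant form: for a morphism $f:X^0\to Y^0$ of $\mathbb{F}_q$-schemes and a Weil complex $\mathbb{P}$ on $X$, one has $Tr(f_!\mathbb{P})=f_!^{\mathrm{fct}}Tr(\mathbb{P})$ and $Tr(f^*\mathbb{P})=f^{*,\mathrm{fct}}Tr(\mathbb{P})$, and moreover $Tr(\mathbb{P}\boxtimes\mathbb{Q})=Tr(\mathbb{P})\boxtimes Tr(\mathbb{Q})$, $Tr(\mathbb{P}[n](n/2))=(-\sqrt q)^{n}Tr(\mathbb{P})\cdot(\text{sign and }\sqrt q\text{ bookkeeping})$. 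Applying this to the defining diagrams \eqref{E:165} for $\underline m$ and \eqref{E:18} for $\underline\Delta$, rewritten with Tate twists as in Section~3.7, and comparing with \eqref{E:Hall11} and \eqref{E:Hall22} (here is where the choice $\nu=-\sqrt q$ and the normalizing factor $\nu^{\dim G_\gamma}$ in the definition of $tr$ are calibrated so that the shift-by-$\dim p$ contributions, the factor $\nu^{\langle\alpha,\beta\rangle}$, and the ``$(-1)^i$'' signs in the trace all match), I would get that $tr$ is a homomorphism of algebras and of coalgebras. This is mostly a careful but routine bookkeeping of dimensions: one checks, using Lemma~\ref{L:dim} and the formula $\dim p=\dim G_{\alpha+\beta}-\langle\alpha,\beta\rangle$, that $\nu^{\dim G_{\alpha+\beta}}Tr(\underline m(\mathbb P\boxtimes\mathbb Q))$ equals $\nu^{\langle\alpha,\beta\rangle}\,q^0_!r^0_\#(p^0)^*\big(\nu^{\dim G_\alpha}Tr(\mathbb P)\boxtimes\nu^{\dim G_\beta}Tr(\mathbb Q)\big)$, and the dual computation for $\underline\Delta$.

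Next I would identify the image. Since $\mathfrak{U}_{\vec{Q}}$ is generated as an algebra by the classes $\bo_{\mathbbm 1_{\epsilon_i}}$ (by the same argument as in Theorem~\ref{T:LU}, noting that over $\C$ divided powers are not needed) and $tr$ is an algebra map, the image of $tr$ is the subalgebra of $\mathbf{H}_{\vec{Q}}$ generated by $tr(\bo_{\mathbbm 1_{\epsilon_i}})$. But $E_{\epsilon_i}=\{0\}$, so $\mathbbm 1_{\epsilon_i}=\overline{\mathbb Q_l}_{\{0\}}$ is pure of weight zero with trivial Frobenius action, $\dim G_{\epsilon_i}=1$, and hence $tr(\bo_{\mathbbm 1_{\epsilon_i}})=\nu\,\mathbf 1_{\epsilon_i}$, a nonzero scalar multiple of the generator $\mathbf 1_{\epsilon_i}$ of $\mathbf{C}_{\vec{Q}}$. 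Therefore the image of $tr$ is precisely $\mathbf{C}_{\vec{Q}}$. It remains to show $tr$ is injective. The cleanest route is to compare with the already-established isomorphism $\Phi:\U^{\Z}_{\nu}(\n_+)\xrightarrow{\sim}\mathcal K_{\vec{Q}}$ (Theorem~\ref{T:LU}): the trace map, being compatible with products and coproducts and sending $\bo_{\mathbbm 1_{\epsilon_i}}$ to a nonzero multiple of $\mathbf 1_{\epsilon_i}$, factors the ``classical'' isomorphism $\U_{\nu}(\n_+)\otimes_{\Z[\nu,\nu^{-1}]}\C\xrightarrow{\sim}\mathbf C_{\vec{Q}}$ (which follows from \cite{Trieste}, the composition algebra being a specialization of the quantum group) through $\mathfrak U_{\vec{Q}}$; since $\dim_\C\mathfrak U^\gamma=\#\mathcal P^\gamma=\mathrm{rank}\,\mathcal K^\gamma=\dim_\C(\mathbf C_{\vec{Q}})^\gamma$, a surjection between finite-dimensional spaces of equal dimension is an isomorphism.

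The main obstacle is the purity input: for $tr$ to land where claimed and for the multiplicity spaces $M_{\mathbb P',\mathbb P''}^{\mathbb P}$ to have traces that are honest integers (or at least Laurent polynomials in $\nu$ with the expected sign pattern), one needs Proposition~\ref{P:Pure}, that every $\mathbb P\in\mathcal P_{\vec{Q}}$ is pure of weight zero, together with the statement that $M_{\mathbb P',\mathbb P''}^{\mathbb P}$ is pure of weight zero and $N^{\mathbb P',\mathbb P''}_{\mathbb P}$ is mixed with Frobenius eigenvalues in $\sqrt q^{\,\Z}$ — all of which are recorded just before the theorem. The genuinely delicate point, which I would treat by invoking \cite{LusHall}, is that these Frobenius structures are canonical and compatible, i.e. the Weil structure on $\mathbbm 1_{d\epsilon_i}\star\mathbb R$ used inductively agrees with the one coming from the Lusztig sheaves, so that $tr$ is well defined independently of choices; without this one only gets a map defined up to roots of unity on each simple perverse sheaf. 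Granting that, the remaining computations — matching Tate twists, checking $tr(\bo_{\mathbbm 1_{\gamma}})=\nu^{\langle\gamma,\gamma\rangle}\mathbf 1_{\gamma}$ via Lemma~\ref{L:dim}, and the dimension count — are routine, and I would not carry them out in detail.
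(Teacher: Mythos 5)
Your argument is correct and follows essentially the same route as the paper: Grothendieck's Faisceaux--Fonctions dictionary for the convolution diagrams (plus a bookkeeping of shifts, Tate twists and the normalizing factors $\nu^{\dim G_\gamma}$) to establish that $tr$ is a morphism of algebras and coalgebras, then the generation of $\mathfrak{U}_{\vec{Q}}$ by the $\bo_{\mathbbm 1_{\epsilon_i}}$ to identify the image with $\mathbf{C}_{\vec{Q}}$, and finally the comparison of graded dimensions with $\U_\nu(\n_+)$ (via Ringel's isomorphism $\mathbf{C}_{\vec{Q}}\simeq\U_\nu(\n_+)$, which is what the paper cites) to conclude that the surjection is an isomorphism. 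The caveat you raise at the end about the canonicity of the Weil structures is already discharged by Proposition~\ref{P:Pure}, which supplies canonical weight-zero Weil structures on all objects of $\PQ$; no further appeal to \cite{LusHall} is needed for this particular theorem.
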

\begin{proof} Let us first check the compatibility of $tr$ with the product. We have $Tr \circ p^* = (p^0)^* \circ Tr$ and $Tr \circ
r_{\#}=\nu^{dim\;r} r^0_{\#} \circ Tr$. Moreover, by Grothendieck's trace formula, we have $Tr \circ q_!=q^0_! \circ Tr$. It follows that for $\mathbb{P} \in \mathcal{P}^{\a}, \mathbb{Q} \in \mathcal{P}^{\beta}$,
\begin{equation*}
\begin{split}
tr(\bo_\mathbb{P})\cdot tr(\bo_\mathbb{Q})&= \nu^{dim(G_{\a} \times G_{\beta}) + \langle \a, \beta \rangle} q^0_! r^0_{\#} (p^0)^* \big( Tr(\mathbb{P} \boxtimes \mathbb{Q})\big)\\
&=\nu^{\langle \a,\beta\rangle} Tr \big(q_!r_{\#} p^* (\mathbb{P} \boxtimes \mathbb{Q})\big)\\
&=\nu^{dim\;G_{\a+\beta}} Tr\big(q_!r_{\#} p^* (\mathbb{P} \boxtimes \mathbb{Q})[dim\;p]\big)\\
&=\nu^{dim\;G_{\a+\beta}} Tr( \mathbb{P} \star \mathbb{Q})\\
&=tr(\bo_{\mathbb{P}} \cdot \bo_{\mathbb{Q}})
\end{split}
\end{equation*}
The computation concerning the comultiplication is similar. We have $Tr \circ \kappa_!=\kappa^0_! \circ Tr$ and $ Tr \circ \iota^*=(\iota^0)^* \circ Tr$. Therefore,
\begin{equation*}
\begin{split}
\Delta_{\a,\beta}\big(tr(\bo_{\mathbb{R}})\big)&= \nu^{\langle \a,\beta \rangle -2\sum_i \a_i \beta_i} \kappa^0_! (\iota^0)^* \big(tr(\mathbb{R})\big)\\
&= \nu^{\langle \a, \beta \rangle + dim( G_{\a} \times G_{\beta})-dim\;G_{\a+\beta}}  \kappa^0_! (\iota^0)^* \big(tr(\mathbb{R})\big)\\
&=\nu^{\langle \a, \beta \rangle + dim( G_{\a} \times G_{\beta})}  \kappa^0_! (\iota^0)^* \big(Tr(\mathbb{R})\big)\\
&=\nu^{dim(G_{\a} \times G_{\beta})} Tr \big( \kappa_! \iota^* (\mathbb{R})[-\langle \a, \beta \rangle ]\big)\\
&=tr \big({\Delta}_{\a,\beta}(\bo_{\mathbb{R}})\big).
\end{split}
\end{equation*}
We thus have a well-defined algebra morphism $tr:\mathfrak{U}_{\vec{Q}} \to \H_{\vec{Q}}$. Since $\mathfrak{U}_{\vec{Q}}$ is generated by $\bo_{\mathbbm{1}_{\epsilon_i}}$ for $i \in I$, the image of $tr$ is equal to $\mathbf{C}_{\vec{Q}}$. By Ringel's theorem, (see e.g. \cite[Lecture~3]{Trieste}) $\mathbf{C}_{\vec{Q}} \simeq \U_{\nu}(\n_+)$. A comparison of graded dimensions now ensures that $tr$ is an isomorphism.
\end{proof}

\vspace{.1in}

It is interesting to compare Theorems~\ref{T:LU} and \ref{T:Lu2} : both yield
a realization of the same quantum group in terms of an algebra built out
of the perverse sheaves in $\PQ$, but the algebras in question
$\mathfrak{U}_{\vec{Q}}$ and $\KQ$ are apparently very different. Namely,
$\KQ$ is an algebra over $\Z[v,v^{-1}]$ and the structure constants ignore
any considerations of Frobenius weights, while $\mathfrak{U}_{\vec{Q}}$ is
defined over $\C$ but takes into account the Frobenius action. Also,
$\mathfrak{U}_{\vec{Q}}$ is related to the Hall algebra $\H_{\vec{Q}}$ while
there are \textit{a priori} no reason to expect such a link for $\KQ$. The
following deep (and difficult) theorem of Lusztig explains everything (see
\cite{LusHall}) :

\vspace{.1in}

\begin{theo}[Lusztig]\label{T:LUweights} For any $\mathbb{P}, \mathbb{P}',
\mathbb{P}'' \in \PQ$ and any $i\in \Z$, the Frobenius eigenvalues in
$H^i( M^{\mathbb{P}}_{\mathbb{P}',\mathbb{P}''})$ are all equal to
$\sqrt{q}^i$.
\end{theo}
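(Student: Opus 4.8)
The strategy is to imitate, word for word, the inductive scaffolding used in the proof of Theorem~\ref{T:LU} (and of Propositions~\ref{P:LU} and \ref{P:Pure}), now keeping track of the precise Frobenius eigenvalues rather than just their absolute values. Concretely: fix a dimension vector $\gamma$ and induct on $\gamma$; within a fixed $\gamma$, use a Fourier--Deligne transform to assume that $\vec{Q}$ has a sink at some vertex $i$ and reduce to a perverse sheaf $\mathbb{P}\in\mathcal{P}^{\gamma}_{i;d}$ with $d\geq 1$; within a fixed $\gamma$ do a downward induction on $d$. The point is that the reduction
$$\mathbbm{1}_{d\epsilon_i}\star\mathbb{R}\simeq\mathbb{P}\oplus\mathbb{T},\qquad \mathrm{supp}\,\mathbb{T}\subset E_{\gamma}^{i;\geq d+1}$$
of Lemma~\ref{L:Proofkey} is an isomorphism of \emph{Weil} complexes once one has put the canonical Weil structures of Proposition~\ref{P:Pure} on everything. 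So if I can show the eigenvalue-purity statement for $\mathbbm{1}_{d\epsilon_i}$, $\mathbb{R}$ (a sheaf on a strictly smaller $E_{\gamma-d\epsilon_i}$, hence covered by the induction on $\gamma$), and $\mathbb{T}$ (covered by the downward induction on $d$), then I must transfer it to $\mathbb{P}$ and, more importantly, to the multiplicity complexes $M^{\mathbb{P}}_{\mathbb{P}',\mathbb{P}''}$.

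\textbf{Key steps, in order.} First, I would reformulate the claim: $M^{\mathbb{P}}_{\mathbb{P}',\mathbb{P}''}$ having all Frobenius eigenvalues on $H^i$ equal to exactly $\sqrt{q}^{\,i}$ is the statement that $M^{\mathbb{P}}_{\mathbb{P}',\mathbb{P}''}$, as a complex of $\qlb$-vector spaces with Frobenius action, is a direct sum of Tate twists $\qlb(-i/2)[-i]$ --- i.e. it is \emph{Tate type} and pure of weight $0$ in the normalization where a shift $[n]$ comes with a twist $(n/2)$. The base case is trivial: when $\gamma$ is simple, $\mathcal{P}^{\gamma}=\{\mathbbm{1}_{n\epsilon_i}\}$ and the only relevant product is $\mathbbm{1}_{\epsilon_i}^{\star n}$, whose multiplicity complexes are the cohomologies of flag varieties (Lemma~\ref{L:21}, equation~(\ref{E:n1})) --- manifestly of Tate type with the right weights. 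For the inductive step I would: (1) take $\mathbb{P}',\mathbb{P}''$ arbitrary in $\PQ$ and write each as a summand of a Lusztig sheaf, reducing by associativity to understanding, for each reorientation, how $\star$-products of the $\mathbbm{1}_{\epsilon_i}$ build the simple summands; (2) apply Proposition~\ref{P:FD} to move to an orientation with a sink at $i$, noting that Fourier--Deligne \emph{almost} commutes with Verdier duality (Theorem~\ref{T:FDT}) and, crucially, twists Frobenius weights only by a controlled Tate twist --- so Tate-type-ness and the exact eigenvalue normalization are preserved; (3) invoke Lemma~\ref{L:Proofkey}, now read as an isomorphism of Weil complexes, to get $\mathbbm{1}_{d\epsilon_i}\star\mathbb{R}\simeq\mathbb{P}\oplus\mathbb{T}$; (4) observe $\mathbbm{1}_{d\epsilon_i}$ is of Tate type with the correct weights (it is just the constant sheaf on a point, shifted), and $\mathbb{R}$ lives over $E_{\gamma-d\epsilon_i}$ so by the $\gamma$-induction \emph{all} its products and multiplicities are of Tate type; (5) conclude that $\mathbbm{1}_{d\epsilon_i}\star\mathbb{R}$ has Tate-type multiplicity complexes, hence so do its summands $\mathbb{P}$ and $\mathbb{T}$; then feed $\mathbb{T}$ (supported in $E_{\gamma}^{i;\geq d+1}$) into the downward induction on $d$. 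Finally I would assemble: since every $\mathbb{P}\in\mathcal{P}_{\vec{Q}}$ arises this way and the class of complexes "of Tate type, pure of weight $0$" is closed under $\star$ (the hard-to-see closure under $\underline\Delta$ is not needed here, only closure under $\underline m$, which is immediate from properness of $q$ and smoothness of $p$ together with Deligne's theorem), the eigenvalue statement propagates to all $M^{\mathbb{P}}_{\mathbb{P}',\mathbb{P}''}$.

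\textbf{The main obstacle.} The genuinely hard point --- the one Lusztig attributes to \cite{LusHall} and calls deep --- is step (3)/(5): showing that the \emph{multiplicity} complexes $M^{\mathbb{P}}_{\mathbb{P}',\mathbb{P}''}=\mathrm{Hom}(\mathbb{P},\mathbb{P}'\star\mathbb{P}'')$ are not merely pure of weight $0$ (which follows from Proposition~\ref{P:Pure} and Deligne) but of \emph{Tate type}, i.e. the Frobenius eigenvalues are exactly the rational numbers $\sqrt q^{\,i}$ rather than arbitrary weight-$i$ algebraic numbers. Purity by itself only pins down absolute values; ruling out, say, a non-rational Weil number of the right absolute value requires genuine input. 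The way to get it is to exhibit, at each stage of the induction, a model for $M^{\mathbb{P}}_{\mathbb{P}',\mathbb{P}''}$ as the cohomology of an explicit variety stratified into affine-space bundles over products of partial flag varieties --- exactly the geometry entering Lemma~\ref{L:coprodun}, Lemma~\ref{L:816} and Remark~2.13 --- whose cohomology is visibly generated by algebraic cycle classes and hence Tate. The bookkeeping that identifies the relevant intersection cohomology stalks and multiplicity spaces with such cohomologies, uniformly along the Fourier--Deligne-assisted induction, is where all the real work lies; I would expect to have to track the stratifications of $E^{i;d}_\gamma$ from (\ref{E:Proof3}) carefully and to use that the $\mathfrak{S}_l$-local systems appearing for affine quivers (Section~2.5) are themselves of Tate type because $\mathfrak{S}_l$-representations have rational character values --- so even outside finite type no non-Tate eigenvalue can sneak in. The remaining verifications (compatibility of Weil structures with $r_\#$, $q_!$, $p^*$, and the b\-ehavior of twists under Fourier--Deligne) are routine and follow the pattern already displayed in Section~3.7.
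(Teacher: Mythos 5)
The paper does not actually prove Theorem~\ref{T:LUweights}: it is stated as a ``deep (and difficult) theorem of Lusztig'' with a pointer to \cite{LusHall}, so there is no ``paper's own proof'' to compare against. Your proposal reconstructs the right inductive \emph{scaffolding} --- induct on $\gamma$, use the Fourier--Deligne transform to put a sink at $i$, use Lemma~\ref{L:Proofkey} to write $\mathbbm{1}_{d\epsilon_i}\star\mathbb{R}\simeq\mathbb{P}\oplus\mathbb{T}$, argue downward in $d$ --- and you correctly identify that purity alone only fixes absolute values, so one needs to rule out non-rational Weil numbers. That diagnosis is accurate. But that is precisely where the proposal stops being a proof.

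The gap is in what you call the ``main obstacle'' paragraph. You assert that one can ``exhibit a model for $M^{\mathbb{P}}_{\mathbb{P}',\mathbb{P}''}$ as the cohomology of an explicit variety stratified into affine-space bundles over products of partial flag varieties,'' and that the ``remaining verifications are routine.'' This is where the real difficulty lives, and the reduction $\mathbbm{1}_{d\epsilon_i}\star\mathbb{R}\simeq\mathbb{P}\oplus\mathbb{T}$ does not deliver it. What it gives, combined with the $\gamma$-induction, is control over the multiplicities of $\mathbb{P}$ inside this \emph{particular} product, not the Tate-type property of the multiplicity complexes $\mathbb{V}_{\mathbb{P}}$ in the decomposition $L_{\underline{\alpha}}=\bigoplus_{\mathbb{P}}\mathbb{V}_{\mathbb{P}}\otimes\mathbb{P}$ of a general Lusztig sheaf, which is what one needs to conclude the statement for arbitrary $\mathbb{P}',\mathbb{P}''$. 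Knowing that $L_{\underline{\alpha}}$ admits a stratification into affine bundles over flag varieties (Lemma~\ref{L:816}) makes the \emph{total} pushforward Tate; it does \emph{not} make the individual simple summands or their multiplicity spaces Tate, because the Decomposition Theorem gives no control on the Frobenius eigenvalues entering each isotypic piece. Your step (5), ``conclude that $\mathbbm{1}_{d\epsilon_i}\star\mathbb{R}$ has Tate-type multiplicity complexes, hence so do its summands,'' silently assumes exactly what one must prove. A concrete sanity check that something is wrong with the ``routine'' claim: immediately after Theorem~\ref{T:LUweights}, the paper records the analogous stalk statement $H^i(\mathbb{P}_{|x^0})$ having eigenvalues $\sqrt{q}^{\,i}$ only for finite type quivers, and explicitly labels it an open expectation beyond that. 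If your strategy of modeling things by cell-decomposed varieties closed the argument for the multiplicity complexes in the way you describe, it would close it for the stalks too, contradicting the paper's stated state of the art. So the proposal is a plausible plan of attack but it does not contain a proof of the crucial Tate-type assertion, and the mechanism you offer for it cannot be right as stated.
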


\vspace{.1in}

Let us pause to reflect a little on the above Theorem. It states that all
the multiplicity complexes, which typically encode the cohomology of the
(potentially very singular) fibers of the maps $q_{\a,\beta}$, have very special Frobenius eigenvalues
(similar, say, to cohomology of varieties admitting cell decompositions).
Of course, we have checked this in a few simple examples in Lecture 2, but
Lusztig's Theorem says that this holds for \textit{any} quiver and
\textit{any} dimension vector. As a corollary of Theorem~\ref{T:LUweights}, we may
identify $\mathfrak{U}_{\vec{Q}}$ with the specialization ${\KQ}_{|v=-\sqrt{q}^{-1}}$ via $\bo_{\mathbb{P}} \mapsto \mathbf{b}_{\mathbb{P}}$, and we obtain a commutative diagram of isomorphisms
\begin{equation}\label{E:coomutatyj}
\xymatrix{
{\KQ}_{|v=-\sqrt{q}^{-1}} \ar[r]  & \mathfrak{U}_{\vec{Q}} \ar[d]_-{tr} \\
\U_{\nu}^{\Z}(\n_+)_{|\nu=-\sqrt{q}} \ar[u]^-{\Psi} \ar[r]^-{\Phi} & \mathbf{C}_{\vec{Q}}
}
\end{equation}
The isomorphism $\Phi : \U_{\nu}^{\Z}(\n_+)_{|\nu=-\sqrt{q}} =\mathbf{U}_{-\sqrt{q}}(\n_+) \stackrel{\sim}{\to} \mathbf{C}_{\vec{Q}}$ which appears here is slightly renormalized from \cite[Theorem~3.16]{Trieste}~: it is defined by $\Phi(E_i)=\nu \mathbf{1}_{\epsilon_i} (= \nu [S_i]$ in the notations of \cite{Trieste}\footnote{This renormalization comes from the (silly) fact that, in the ''stacky'' sense, we have $dim\; \underline{\mathcal{M}}^{\epsilon_i}=dim \;\{pt\}/GL(1)=-1$.}) for $i \in I$. One important consequence of (\ref{E:coomutatyj}) is that the elements of the canonical basis $\mathbf{B}=\{\Psi^{-1}(\mathbf{b}_{\mathbb{P}})\;|\; \mathbb{P} \in \PQ\}$ are realized, in the Hall algebra $\H_{\vec{q}}$, as the traces of the simple perverse sheaves $\{\mathbb{P}\;|\; \mathbb{P} \in \PQ\}$ (again, up to a simple normalization).

\vspace{.1in}

As for the Frobenius eigenvalues of the simple perverse sheaves $\mathbb{P} \in \PQ$ themselves, we state the following result proved in \cite{LusHall}~:

\vspace{.1in}

\begin{theo}[Lusztig] Let $\vec{Q}$ be a finite type quiver. For any dimension vector $\gamma$, for any $\mathbb{P} \in \mathcal{P}^{\gamma}$, for any $x^0 \in E_{\gamma}^0(\fq)$ and for any $i \in \Z$, the Frobenius eigenvalues of $H^i(\mathbb{P}_{|x^0})$ are all equal to $\sqrt{q}^i$.
\end{theo}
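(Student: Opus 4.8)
By Theorem~\ref{T:finiteype}, for a finite type quiver every $\mathbb{P}\in\mathcal{P}^{\gamma}$ has the form $IC(\mathcal{O})$ for a single $G_{\gamma}$-orbit $\mathcal{O}\subset E_{\gamma}$ (with trivial coefficients), and by Proposition~\ref{P:Pure} it carries its canonical Weil structure and is pure of weight zero. Since $IC(\mathcal{O})_{|x^{0}}=0$ unless $x^{0}\in\overline{\mathcal{O}}(\fq)$, the plan is to fix $\mathbb{P}=IC(\mathcal{O})$, assume $x^{0}\in\overline{\mathcal{O}}(\fq)$, and argue by induction on $\dim\mathcal{O}$. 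The base case $\mathcal{O}=\{0\}$ is immediate: $IC(\{0\})=\qlb_{\{0\}}$ has stalk $\qlb$ in degree $0$ with Frobenius eigenvalue $1=\sqrt{q}^{\,0}$.

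For the inductive step I would invoke Reineke's Proposition~\ref{P:Reineke} to produce a sequence $\underline{\nu}=(\nu_{1},\dots,\nu_{s})$ of multiples of simple dimension vectors for which $q_{\underline{\nu}}\colon E_{\underline{\nu}}\to E_{\gamma}$ is a desingularization of $\overline{\mathcal{O}}$. Since $q_{\underline{\nu}}$ is proper and birational onto $\overline{\mathcal{O}}$, the Decomposition Theorem together with Theorem~\ref{T:finiteype} gives a Frobenius-equivariant splitting
\[
L_{\underline{\nu}}=\mathbbm{1}_{\nu_{1}}\star\cdots\star\mathbbm{1}_{\nu_{s}}=IC(\mathcal{O})\ \oplus\ \bigoplus_{\mathcal{O}'\subsetneq\overline{\mathcal{O}}}IC(\mathcal{O}')\otimes\mathbb{V}_{\mathcal{O}'},
\]
where $\dim\mathcal{O}'<\dim\mathcal{O}$ in each remaining summand and the multiplicity of $IC(\mathcal{O})$ is $\qlb$ in degree $0$. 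Each $\mathbb{V}_{\mathcal{O}'}$ is the multiplicity complex $M^{IC(\mathcal{O}')}_{\mathbbm{1}_{\nu_{1}},\dots,\mathbbm{1}_{\nu_{s}}}$ of an iterated induction product of simple perverse sheaves, so by Theorem~\ref{T:LUweights} — extended to iterated products by induction on $s$ using the associativity of $\star$ (Proposition~\ref{P:assoc}) — the eigenvalues of $F$ on $H^{k}(\mathbb{V}_{\mathcal{O}'})$ are all equal to $\sqrt{q}^{\,k}$. Restricting the splitting to the stalk at $x^{0}$,
\[
(L_{\underline{\nu}})_{|x^{0}}=IC(\mathcal{O})_{|x^{0}}\ \oplus\ \bigoplus_{\mathcal{O}'\subsetneq\overline{\mathcal{O}}}\mathbb{V}_{\mathcal{O}'}\otimes IC(\mathcal{O}')_{|x^{0}}.
\]
By the induction hypothesis the eigenvalues of $F$ on $H^{j}(IC(\mathcal{O}')_{|x^{0}})$ are all $\sqrt{q}^{\,j}$, and the property ``all $H^{i}$-eigenvalues are $\sqrt{q}^{\,i}$'' is stable under tensor products of complexes; hence every summand on the right except $IC(\mathcal{O})_{|x^{0}}$ has this property, and the whole problem reduces to proving that $(L_{\underline{\nu}})_{|x^{0}}$ itself does.

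The last point is where the real work lies, and it is the step I expect to be the main obstacle. Because $q_{\underline{\nu}}$ is proper, $(L_{\underline{\nu}})_{|x^{0}}$ is, up to the fixed shift and Tate twist built into $L_{\underline{\nu}}$, the compactly supported cohomology $R\Gamma_{c}\big(q_{\underline{\nu}}^{-1}(x^{0}),\qlb\big)$ of the fibre, i.e.\ of the variety of flags of subrepresentations of $M_{x^{0}}$ with prescribed subquotient dimension vectors. Using that $q_{\underline{\nu}}^{-1}(x^{0})$ maps onto the quiver Grassmannian $Gr_{\nu_{2}+\cdots+\nu_{s}}(M_{x^{0}})$ of subrepresentations of $M_{x^{0}}$, with fibres of the analogous shape for the chosen subrepresentation, one reduces inductively in $s$ to the following statement: for any representation $M$ of a Dynkin quiver and any dimension vector $\beta$, the quiver Grassmannian of $\beta$-dimensional subrepresentations of $M$ has $H^{j}_{c}$ with all Frobenius eigenvalues equal to $\sqrt{q}^{\,j}$. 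This is the only ingredient not already contained in the excerpt; it holds in the representation-finite case, either because such quiver flag varieties admit affine pavings, or by combining the polynomiality in $q$ of their point counts (Hall polynomials, after Ringel) with the pointwise purity granted by Proposition~\ref{P:Pure}. Granting it, $(L_{\underline{\nu}})_{|x^{0}}$ has the required eigenvalues, the induction closes, and the theorem follows. (One could instead try to run the induction via the reduction Lemma~\ref{L:Proofkey} at a sink, as in the proof of Theorem~\ref{T:LU}; but no Fourier--Deligne reorientation is permissible here since it would introduce Gauss sums into the stalks, and in any event Reineke's desingularization is unavoidable for the orbits of representations having no simple direct summand.)
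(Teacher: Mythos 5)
The paper does not actually prove this theorem: it states it and cites \cite{LusHall}, so there is no in-paper argument to compare your proposal against. Treating the proposal as a free-standing proof attempt, the overall framework is natural and well-chosen, and your observation that the sink-reduction Lemma~\ref{L:Proofkey} is not directly available (because an intervening Fourier--Deligne transform twists the stalks by the Artin--Schreier local system and hence introduces Gauss sums into the Frobenius eigenvalues) is correct and is a genuine obstacle that many readers would overlook. The induction on orbit dimension, the use of Reineke's desingularization, and the extension of Theorem~\ref{T:LUweights} to iterated products via associativity and K\"unneth are all fine.

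There is, however, a genuine gap in the step you flag yourself as the real work, and it is compounded by an unflagged difficulty just before it. First, the reduction ``inductively in $s$'' from $q_{\underline{\nu}}^{-1}(x^0)$ (a quiver flag variety) to quiver Grassmannians via the forgetful projection is not a clean fibration: the fibre over a subrepresentation $N\subset M_{x^0}$ is the flag variety of $N$ (or of $M_{x^0}/N$, depending on which step of the flag you forget), and this depends on the \emph{isomorphism class} of $N$, not just on its dimension vector. So the Leray spectral sequence does not give a K\"unneth splitting, and the very rigid statement ``all $H^j$-eigenvalues are exactly $\sqrt{q}^{\,j}$'' (not merely ``of weight $j$'') is not obviously preserved by the attendant long exact sequences over a stratification of the base. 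Second, the quiver-Grassmannian input you isolate is real but your two proposed justifications do not carry full weight as stated: affine pavings of Dynkin quiver Grassmannians/flag varieties are a substantial result that postdates \cite{LusHall} and would need citing and verifying in the generality required; and the Hall-polynomial argument is incomplete, since polynomiality of point counts plus pointwise purity only pins down the \emph{absolute values} of the Frobenius eigenvalues by degree, not the eigenvalues themselves, unless one already knows odd cohomology vanishes --- and $q_{\underline{\nu}}^{-1}(x^0)$ can be singular, so the usual smooth-proper argument for odd vanishing does not apply. As it stands the heart of the theorem has been relocated rather than proved.
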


\vspace{.1in}

The author strongly suspects that the same holds for affine quivers, and perhaps for all other quivers as well.

\vspace{.2in}

\addtocounter{theo}{1}
\noindent \textbf{Remark \thetheo .} In defining the Hall category $\QQ$ we have chosen our ground field to be $k=\fqb$ from the start and worked with $\qlb$-coefficients. This was motivated by the desire to have a natural trace morphism to the Hall algebra $\H_{\vec{Q}}$, by passing to the finite field $\fq$ and using Frobenius eigenvalues. However, by Theorem~\ref{T:LUweights}, these Frobenius eigenvalues are all trivial and we see, \textit{a posteriori}, that we would have lost no information by working over $\C$, with $\C$-coefficients and by defining $\QQ$ and $\KQ$ in the same way as was done here (the arguments based on the Fourier-Deligne transform may be replaced by similar arguments based on the Fourier-Sato transform (see e.g. \cite{KasShap})). General comparison theorems (see \cite{BBD}) ensure that we would in fact get equivalent Hall categories $\QQ$ by working over $\C$.

\newpage

\centerline{\large{\textbf{Lecture~4.}}}
\addcontentsline{toc}{section}{\tocsection {}{}{Lecture~4.}}

\setcounter{section}{4}
\setcounter{theo}{0}
\setcounter{equation}{0}

\vspace{.2in}

This Lecture is devoted to a different aspect of the relationship between moduli spaces of representations of quivers and (quantized) enveloping algebras. Namely, we now work over the field of complex numbers $\C$ and instead of considering the moduli spaces $\underline{\mathcal{M}}_{\vec{Q}}$ themselves, we consider the cotangent bundles\footnote{An important technical point~: the correct notion of the cotangent bundle to $\underline{\mathcal{M}}_{\vec{Q}}$ is not a stack, but a \textit{derived stack}; we will only consider in this Lecture the naive, ''underived'' (or $H^0$) notion, which will be enough for our purposes} $T^* \underline{\mathcal{M}}_{\vec{Q}}$; and rather than constructible (or perverse) sheaves on $\underline{\mathcal{M}}_{\vec{Q}}$, we consider involutive (or Lagrangian) subvarieties in $T^* \underline{\mathcal{M}}_{\vec{Q}}$. Presumably, any operation in the derived category $D^b(\underline{\mathcal{M}}_{\vec{Q}})$ can be transposed to the derived category $D^b(Coh(T^*\underline{\mathcal{M}}_{\vec{Q}}))$, (or better yet,  to the so-called \textit{Fukaya category} of $T^*\underline{\mathcal{M}}_{\vec{Q}}$) via the fancy process of microlocalization (or of ''branification'' , see \cite{Nadler}). Hence one can hope to give a construction of the quantum enveloping algebra $\U_{\nu}^{\Z}(\n_+)$ as the Grothendieck group of a suitable category of coherent sheaves on $T^*\underline{\mathcal{M}}_{\vec{Q}}$ and to get in this way another realization of the canonical basis $\mathbf{B}$. This would have (at least) one important advantage over the approach given in Lectures~1--3 in that the cotangent bundle $T^* \underline{\mathcal{M}}_{\vec{Q}}$ is a more canonical object than $\underline{\mathcal{M}}_{\vec{Q}}$; it does not depend on the choice of an orientation of the quiver $\vec{Q}$ (see Section~4.2.).

\vspace{.1in}

Unfortunately, such a realization has, to the author's knowledge, not yet been worked out in the litterature\footnote{one of the reasons is perhaps that this would require exploiting the \textit{full} structure of $T^*\underline{\mathcal{M}}_{\vec{Q}}$ as a derived stack.}. Instead, we will present an important first step in that direction, due to Kashiwara and Saito (see \cite{KS})~: the construction of the crystal graph structure on $\mathbf{B}$ by means of well-chosen correspondences on a certain Lagrangian subvariety of $\underline{\Lambda}_{\vec{Q}} \subset T^*\underline{\mathcal{M}}_{\vec{Q}}$.

\vspace{.1in}

The plan of the Lecture is as follows~: after giving a brief introduction to Kashiwara's theory of crystals, we describe the cotangent space (stack) $T^*\underline{\mathcal{M}}_{\vec{Q}}$ and, following Lusztig, we construct a certain Lagrangian subvariety $\underline{\Lambda}_{\vec{Q}}$ of it (see Section~4.2). The different connected components of $\underline{\Lambda}_{\vec{Q}}$ (parametrized by dimension vectors of $\vec{Q}$) are related by some type of \textit{Hecke correspondences}. The construction of the crystal graph $\mathcal{B}(\infty)$ of $\mathbf{B}$ is itself given in Section~4.4. The precise relationship with the Hall category $\QQ$ and the set of simple perverse sheaves $\PQ$ is explained in Sections~4.5. and 4.6.

\vspace{.3in}

\centerline{\textbf{4.1. Kashiwara crystals.}}
\addcontentsline{toc}{subsection}{\tocsubsection {}{}{\; 4.1. Kashiwara crystals.}}

\vspace{.15in}

We give a quick introduction to Kashiwara's beautiful theory of crystals, for which \cite{Jo}, \cite{Kang}, \cite{Kash} are good references. Let $A=(a_{ij})_{i,j\in I}$ be a generalized Cartan matrix, which we assume to be symmetric for simplicity. Let $\Pi=\{\a_1, \ldots, \a_r\}, \Pi^{\vee}=\{h_1, \ldots, h_r\}$ be a realization of $A$, and let $\g$ denote the associated Kac-Moody algebra. We let $\g=\n_- \oplus \h \oplus \n_+$ be its Cartan decomposition, $\Delta=\Delta_+ \cup \Delta_-$ its root system and we denote by $P$ the weight lattice of $\g$.

\vspace{.2in}

\addtocounter{theo}{1}
\noindent \textbf{Definition \thetheo .} A \textit{$\g$-crystal} consists of a set $\mathcal{B}$ together with maps 
$$wt: \mathcal{B} \to P,$$
$$ \tilde{e}_i, \tilde{f}_i: \mathcal{B} \to \mathcal{B} \cup \{0\},$$
$$\varepsilon_i, \phi_i: \mathcal{B} \to \Z \cup \{-\infty\},$$
 for $i \in I$, subject to the following conditions~:
\begin{enumerate}
\item[i)] $\phi_i(b)=\varepsilon_i(b)+ \langle h_i, wt(b)\rangle$,
\item[ii)] $wt(\tilde{e}_ib)=wt(b)+\alpha_i$ if $\tilde{e}_ib \in \mathcal{B}$,
\item[iii)] $wt(\tilde{f}_ib)=wt(b)+-\alpha_i$ if $\tilde{f}_ib \in \mathcal{B}$,
\item[iv)] $\varepsilon_i(\tilde{e}_ib)=\varepsilon_i(b) -1$, $\phi_i(\tilde{e}_ib)=\phi_i(b) +1$ if $\tilde{e}_ib \in \mathcal{B}$,
\item[v)] $\varepsilon_i(\tilde{f}_ib)=\varepsilon_i(b)+1$, $\phi_i(\tilde{f}_ib)=\phi_i(b) -1$ if $\tilde{f}_ib \in \mathcal{B}$,
\item[vi)] $\tilde{f}_ib=b'$ if and only if $b=\tilde{e}_i b'$ for $b,b' \in \mathcal{B}$,
\item[vii)] if $\phi_i(b)=-\infty$ (or equivalently $\varepsilon_i(b)=-\infty$) then $\tilde{e}_ib=\tilde{f}_ib=0$.
\end{enumerate}

\vspace{.1in}

The notion of $\g$-crystal might seem complex at first glance. But note that conditions ii), iii) and iv),v) go together. These are (usually) easy to check in practice. Also, by i), $\phi_i$ can be reconstructed from $\varepsilon_i$ and $wt$, and vice versa.

\vspace{.1in}

Crystals can be thought of as some kind of combinatorial skeleton of $\g$-modules~: elements of $\mathcal{B}$ correspond to a ``basis'' of the module,  $\tilde{e}_i, \tilde{f}_i$ provide the action of the Chevalley generators $e_i, f_i$ on this ``basis'', while $wt: \mathcal{B} \to P$ gives the weight of the ``basis'' elements. 
Of course, unless $\g=\mathfrak{sl}_2$, extremely few $\g$-modules possess an actual basis preserved by the Chevalley generators $e_i,f_i$. However, this is the case for integrable lowest (or highest) weight representations for the quantum group $\U_\nu(\g)$ \textit{in the limit $\nu \mapsto 0$}. To make sense out of this last sentence, we need to introduce one more concept~: 

\vspace{.1in}

Let $M=\bigoplus_{\mu \in P} M_{\mu}$ be a highest\footnote{of course, all that follows also works with ''highest'' replaced by ''lowest'', and the roles of $E_i$ and $F_i$ interchanged.} weight integrable $\U_\nu(\g)$-module, and let us fix a simple root $\a_i \in \Pi$. The elements $E_i, F_i, K_i$ generate a subalgebra $\U_i$ of $\U_{\nu}(\g)$ which is isomorphic to $\U_{\nu}(\mathfrak{sl}_2)$. By the representation theory of $\U_i$, any element $u \in M$ may be written (in a unique fashion) as
\begin{equation}\label{E:Kashops}
u=u_0 + F_i u_1 + \cdots + F_i^{(n)} u_n
\end{equation}
where $u_k \in Ker\;E_i$ for all $k$. Define the \textit{Kashiwara operators} $\tilde{e}_i, \tilde{f}_i \in End(M)$ by
$$\tilde{e}_i(u)=\sum_{k=1}^n F_i^{(k-1)}u_k, \qquad \tilde{f}_i(u)=\sum_{k=0}^n F_i^{(k+1)}u_k.$$
 The Kashiwara operators are subtle renormalizations of the Chevalley operators. Let $\mathcal{A}_0=\C[\nu]_0 =\{f(\nu)/g(\nu)\;|\; g(0) \neq 0\}\subset \C(\nu)$ be the localization at $0$ of $\C[\nu]$. A \textit{crystal lattice} of $M$ is a free $\mathcal{A}_0$-submodule $\mathcal{L}$ of $M$ such that $\mathcal{L} \otimes_{\mathcal{A}_0} \C(\nu) = M$, $\mathcal{L}=\bigoplus_\mu \mathcal{L}_{\mu}$ where $\mathcal{L}_{\mu}=\mathcal{L} \cap M_{\mu}$, and such that $\tilde{e}_i \mathcal{L}, \tilde{f}_i \mathcal{L} \subset \mathcal{L}$ for all $i \in I$. Finally, a \textit{crystal basis} of $M$ consists of a pair $(\mathcal{L}, \mathcal{B})$ satisfying
\begin{enumerate}
\item[i)] $\mathcal{L}$ is a crystal lattice of $M$,
\item[ii)] $\mathcal{B}$ is a $\C$-basis of $\mathcal{L}/\nu \mathcal{L} \simeq \mathcal{L} \otimes_{\mathcal{A}_0} \C$,
\item[iii)] $\mathcal{B}$ is compatible with the weight decomposition $\mathcal{L}/\nu \mathcal{L}=\bigoplus_{\mu} \mathcal{L}_{\mu}/\nu \mathcal{L}_{\mu}$, 
\item[iv)] $\tilde{e}_i \mathcal{B} \subset \mathcal{B} \cup \{0\}$, $\tilde{f}_i \mathcal{B} \subset \mathcal{B} \cup \{0\}$ for all $i \in I$,
\item[v)] for any $b,b' \in \mathcal{B}$ and any $i \in I$ we have $\tilde{e}_ib=b'$ if and only if $\tilde{f}_ib'=b$.
\end{enumerate}
If $(\mathcal{L}, \mathcal{B})$ is a crystal basis of $M$ then one can indeed think of $\mathcal{B}$ as a basis of $M$ in the limit $\nu \mapsto 0$.
 
 \vspace{.1in}
 
 From a crystal basis it is easy to construct an actual crystal~: for any $i \in I$ and $b \in \mathcal{B}$ set
 $$\varepsilon_i(b)=max\{k \geq 0\;|\; \tilde{e}_i^kb \neq 0\}, \qquad \phi_i(b)=max\{k \geq 0\;|\; \tilde{f}_ib \neq 0\}.$$
 It is easy to check that $\mathcal{B}$, equipped with the obvious weight map $wt: \mathcal{B} \to  P$, $\tilde{e}_i, \tilde{f}_i$ and $\varepsilon_i, \phi_i$, is a $\g$-crystal.
 
 \vspace{.1in}
 
 Let us say that two crystal bases $(\mathcal{L}, \mathcal{B})$ and $(\mathcal{L}', \mathcal{B}')$ of a module $M$ are isomorphic if there exists an $\mathcal{A}_0$-linear isomorphism $\psi:\mathcal{L} \stackrel{\sim}{\to} \mathcal{L}'$ commuting with $\tilde{e}_i, \tilde{f}_i$ for all $i \in I$ and mapping $\mathcal{B} \to \mathcal{B}'$. It is clear that two isomorphic crystal bases produce the same crystal.
 
 \vspace{.1in}
 
The notion of crystal basis is sufficiently soft to afford many examples, and yet sufficiently rigid to have some good uniqueness properties. This is illustrated by the following fundamental result~:

\vspace{.1in}

\begin{theo}[Kashiwara] Let $M$ be an integrable $\U_{\nu}(\g)$-module belonging to category $\mathcal{O}$. Then there exists, up to isomorphism, a unique crystal basis $(\mathcal{L},\mathcal{B})$ of $M$. Moreover, if $M=V_\lambda$ is irreducible then
$$\mathcal{L}=\sum_{i_1, i_2, \ldots} \mathcal{A}_{0} \tilde{f}_{i_1} \tilde{f}_{i_2} \cdots \tilde{f}_{i_l} v_{\lambda}$$
$$\mathcal{B}=\big\{ \tilde{f}_{i_1} \tilde{f}_{i_2} \cdots \tilde{f}_{i_l} v_{\lambda}\;|\; i_1, i_2 \ldots \in I\big\} \backslash \{0\}$$
is a crystal basis of $M$.
\end{theo}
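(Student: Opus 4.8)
The argument is the classical ``grand loop'' induction due to Kashiwara, and I would follow it essentially verbatim. The statement has two halves: existence/uniqueness of $(\mathcal L,\mathcal B)$ for an arbitrary integrable module $M$ in category $\mathcal O$, and the explicit description of $(\mathcal L,\mathcal B)$ when $M=V_\lambda$ is irreducible. Since every integrable module in category $\mathcal O$ is a direct sum of irreducible highest weight modules $V_\lambda$ with $\lambda$ dominant integral, and both crystal lattices and crystal bases behave additively under direct sums (a crystal basis of $\bigoplus M_k$ is the direct sum of crystal bases, and conversely any crystal basis decomposes compatibly with the isotypic decomposition, using that the $\tilde e_i,\tilde f_i$ commute with $\U_\nu(\g)$-module maps modulo $\nu$), I would reduce immediately to the case $M=V_\lambda$ irreducible. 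So the whole content is: the pair $(\mathcal L(\lambda),\mathcal B(\lambda))$ defined by the displayed formulas is a crystal basis, and it is the unique one up to isomorphism.

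\textbf{Key steps, in order.} First I would set up the rank-one picture: for $\g=\mathfrak{sl}_2$ one checks by hand, using the explicit description of finite-dimensional $\U_\nu(\mathfrak{sl}_2)$-modules and the decomposition \eqref{E:Kashops}, that $\tilde e_i,\tilde f_i$ preserve the lattice generated by the $\tilde f_i^{(k)}$ applied to highest weight vectors, that $\tilde e_i\tilde f_i=\mathrm{id}$ on the appropriate subspace, and that modulo $\nu$ these operators permute the obvious basis with the stated $\varepsilon_i,\phi_i$. This is the base of the induction. Second, I would introduce, for each $i\in I$, the $i$-string decomposition of $M$ under the subalgebra $\U_i\simeq\U_\nu(\mathfrak{sl}_2)$, and define the candidate lattice $\mathcal L=\sum \mathcal A_0\,\tilde f_{i_1}\cdots\tilde f_{i_l}v_\lambda$ and candidate set $\mathcal B=\{\tilde f_{i_1}\cdots\tilde f_{i_l}v_\lambda\}\setminus\{0\}\pmod{\nu\mathcal L}$. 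Third comes the heart of the matter: one proves \emph{simultaneously} by induction on the weight (i.e. on $\mathrm{ht}(\lambda-\mu)$) a package of statements, traditionally labelled something like
\begin{itemize}
\item[(A$_n$)] $\tilde e_i\mathcal L\subset\mathcal L$ for all $i$, in weights $\mu$ with $\mathrm{ht}(\lambda-\mu)\le n$;
\item[(B$_n$)] $\mathcal L=\bigoplus_i \tilde f_i^{(k)}(\mathcal L\cap\ker\tilde e_i)$ compatibly, in that range;
\item[(C$_n$)] the images in $\mathcal L/\nu\mathcal L$ of the monomials $\tilde f_{i_1}\cdots\tilde f_{i_l}v_\lambda$ form a basis, and $\tilde e_i,\tilde f_i$ act on $\mathcal B\cup\{0\}$ by the string rules;
\item[(D$_n$)] $\tilde e_ib=b'\iff \tilde f_ib'=b$ on $\mathcal B$, in that range.
\end{itemize}
(I would of course not grind these out, but the forward-looking point is that each of (A)--(D) at level $n$ is deduced from all of them at level $n-1$ by restricting to a single $i$-string and invoking the rank-one case; the genuinely delicate part is that one cannot prove (A) alone, one must carry the whole bundle.) Fourth, once existence is in hand, uniqueness follows: given any crystal basis $(\mathcal L',\mathcal B')$, one shows $v_\lambda\in\mathcal L'$ with $v_\lambda\pmod{\nu\mathcal L'}\in\mathcal B'$ (from the weight-$\lambda$ condition, since that weight space is one-dimensional and stable under the $\mathcal A_0$-structure), then by downward induction on weights, using that $\mathcal L'$ is stable under $\tilde f_i$ and that modulo $\nu$ the operators satisfy the string relations, one shows $\mathcal L'\supset\mathcal L$ and $\mathcal B'\supset\mathcal B\pmod\nu$; comparing ranks in each weight space gives equality, hence an isomorphism $\mathcal L\xrightarrow{\sim}\mathcal L'$ carrying $\mathcal B$ to $\mathcal B'$.

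\textbf{Main obstacle.} The hard part is unquestionably the simultaneous induction in the third step --- in particular establishing (B$_n$), the statement that $\mathcal L$ decomposes as a direct sum over $i$-strings \emph{with $\mathcal A_0$-coefficients}, and the fact that the two $\mathcal A_0$-module structures coming from different choices of $i$ are compatible enough to keep $\mathcal B$ well-defined. This requires a careful analysis of how the $\mathfrak{sl}_2$-string decompositions for two different nodes $i\neq j$ interact, which is where the symmetry/Cartan-matrix combinatorics genuinely enter and where one must be most careful about the localization $\mathcal A_0$ and about vectors of the form $u_0+F_iu_1+\cdots$ not being pure weight vectors for the other $\U_j$. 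Everything else --- the rank-one verification, the additive reduction, and the uniqueness argument --- is routine once this core induction is set up. I would cite \cite{Kash} for the full details of the grand loop rather than reproduce it, since it is long and entirely standard, and simply indicate the structure above.
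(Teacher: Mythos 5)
The paper does not prove this theorem; it is stated as a citation of Kashiwara's result (with references such as \cite{Kash}) and the lectures build on it as known background. Your outline is an accurate high-level summary of Kashiwara's original ``grand loop'' argument (reduction to the irreducible case, rank-one base, simultaneous induction on height with the bundle of statements about $\tilde e_i$-stability, $i$-string decomposition over $\mathcal A_0$, and well-definedness of the basis modulo $\nu$, followed by uniqueness by downward weight induction), and you correctly identify the core difficulty as the compatibility of the $i$- and $j$-string decompositions at the lattice level; deferring the grind to \cite{Kash} is the right call in the context of these notes.
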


\vspace{.1in}

In particular, there is, for any integral dominant weight $\lambda \in P^+$, a well-defined $\g$-crystal $\mathcal{B}({\lambda})$. The relationship between crystal and canonical bases for highest weight integrable $\U_{\nu}(\g)$-modules, is given by

\vspace{.1in}

\begin{theo}[Kashiwara, Grojnowski-Lusztig] Let $\mathbf{B}_{\lambda} \subset V_{\lambda}$ be the canonical basis. Set
$$\mathcal{L}=\bigoplus_{\mathbf{b} \in \mathbf{B}_{\lambda}} \mathcal{A}_0 \mathbf{b}, \qquad \mathcal{B}=\{ \mathbf{b}\;mod\;\nu \mathcal{L} \;|\; \mathbf{b} \in \mathbf{B}_{\lambda}\}.$$
Then $(\mathcal{L}, \mathcal{B})$ is a crystal basis of $V_{\lambda}$.
\end{theo}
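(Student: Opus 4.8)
The plan is to relate the canonical-basis lattice to Kashiwara's crystal lattice and then invoke the uniqueness theorem. First I would recall that, by the results collected in Lecture~3, the canonical basis $\mathbf{B}$ of $\U^{\Z}_\nu(\n_+)$ is compatible with all integrable lowest (equivalently, after swapping $+$ and $-$, highest) weight representations: Theorem~\ref{T:Rep} says that $\mathbf{B}_\lambda=\{\mathbf{b}\cdot v_\lambda\mid \mathbf{b}\cdot v_\lambda\neq 0\}$ is a weight basis of $V_\lambda$, and in fact (from the proof of that theorem via Lemma~\ref{L:Proofkey2}) the subset of $\mathbf{B}$ killed by $v_\lambda$ is exactly $\bigcup_i \mathbf{B}_{i,\geq(\lambda,\epsilon_i)}$, so the surviving elements $\mathbf{b}\cdot v_\lambda$ are linearly independent and span. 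Set $\mathcal{L}=\bigoplus_{\mathbf{b}\in\mathbf{B}_\lambda}\mathcal{A}_0\,\mathbf{b}$ and $\mathcal{B}=\{\mathbf{b}\bmod\nu\mathcal{L}\mid\mathbf{b}\in\mathbf{B}_\lambda\}$; then conditions (ii), (iii) of the definition of a crystal basis are immediate (the $\mathbf{b}\bmod\nu\mathcal{L}$ form a $\C$-basis of $\mathcal{L}/\nu\mathcal{L}$ by construction, and $\mathbf{B}_\lambda$ is a weight basis so $\mathcal{L}$ respects the weight grading).

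The substance is conditions (i), (iv), (v): that $\mathcal{L}$ is stable under the Kashiwara operators $\tilde e_i,\tilde f_i$, and that these operators permute $\mathcal{B}\cup\{0\}$ in the involutive way required. Here I would use the positivity and integrality properties of $\mathbf{B}$ recorded in (\ref{E:Posmult2}), (\ref{E:Poscop2}), (\ref{E:Posscal2}) together with the scalar product $\{\,,\,\}$ and the bar involution (\ref{E:Invol}). The key technical input is the ``at most one'' phenomenon for the Kashiwara operators: acting by $E_i^{(n)}$ on a canonical basis vector $\mathbf{b}$, one writes $E_i^{(n)}\mathbf{b}$ in the $\U_i\cong\U_\nu(\mathfrak{sl}_2)$ decomposition (\ref{E:Kashops}) and shows, using the positivity of structure constants and the pairing, that modulo $\nu\mathcal{A}_0$ the Kashiwara operator $\tilde f_i\mathbf{b}$ equals a single element of $\mathbf{B}_\lambda$ (or $0$), with coefficient $1$. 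Concretely: decompose $\mathbf{B}$ into $i$-strings via the partition $\mathcal{P}^\gamma=\bigsqcup_d\mathcal{P}^\gamma_{i;d}$ and the bijections $r^{i;d}_\gamma$ of Lemma~\ref{L:Proofkey}/\ref{L:Proofkey2}; the maps $e_i^\#,f_i^\#$ of Section~3.6 are the ``$\nu\to0$ limit'' of $\tilde e_i,\tilde f_i$, and one checks that $\tilde f_i\mathbf{b}\equiv f_i^\#(\mathbf{b})\bmod\nu\mathcal{L}$ by comparing the leading terms of $\mathbbm{1}_{d\epsilon_i}\star\mathbb{R}=\mathbb{P}\oplus\mathbb{T}$ with the $\mathfrak{sl}_2$-representation-theoretic normalization in (\ref{E:Kashops}). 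Granting this, stability of $\mathcal{L}$ and properties (iv), (v) follow formally, and (i) is the defining relation $\phi_i=\varepsilon_i+\langle h_i,wt\rangle$ which holds for any crystal coming from an integrable module.

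Finally I would appeal to Kashiwara's uniqueness theorem (the first boxed theorem in Section~4.1): $V_\lambda$ has a crystal basis, unique up to isomorphism, and it is given by $\mathcal{L}=\sum\mathcal{A}_0\tilde f_{i_1}\cdots\tilde f_{i_l}v_\lambda$, $\mathcal{B}=\{\tilde f_{i_1}\cdots\tilde f_{i_l}v_\lambda\}\setminus\{0\}$. Once one knows that the $(\mathcal{L},\mathcal{B})$ built from $\mathbf{B}_\lambda$ satisfies (i)--(v), it \emph{is} a crystal basis, hence isomorphic to Kashiwara's; but in fact the argument above shows directly that $\mathbf{B}_\lambda$ modulo $\nu\mathcal{L}$ is obtained from $v_\lambda$ by iterated $\tilde f_i$'s (this is how $\mathbf{B}_\lambda$ is generated, since $\mathcal{K}_{\vec{Q}}$ is generated by the $\mathbbm{1}_{\epsilon_i}$), so the two crystal lattices and bases literally coincide. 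I expect the main obstacle to be the precise matching of normalizations in the previous paragraph --- i.e. proving that the geometric/combinatorial operator $f_i^\#$ agrees modulo $\nu$ with the algebraically-defined $\tilde f_i$, with coefficient exactly $1$ and no lower-order corrections. This requires combining the purity/positivity results (Theorem~\ref{T:LUweights}, Proposition~\ref{P:Pure}) with a careful bookkeeping of $v$-degrees in Lemma~\ref{L:Proofkey2}, which is exactly the content of Grojnowski--Lusztig's comparison and is not a routine calculation.
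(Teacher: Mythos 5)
The paper does not prove this theorem: it is stated as a citation to Kashiwara (for the global basis) and Grojnowski--Lusztig (for the coincidence of global and canonical bases), with the remark immediately following the statement serving as the entire ``proof''. So there is no paper argument to compare your sketch against; you are reconstructing the Grojnowski--Lusztig argument from the ingredients available in the text.

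As an outline your reconstruction is reasonable and follows the broad shape of the actual proof (reduce to an $i$-string analysis, show $\tilde e_i, \tilde f_i$ preserve the $\mathcal{A}_0$-lattice spanned by the canonical basis with leading coefficient $1$, then observe that conditions (i)--(v) of the crystal-basis definition are verified). Two cautions are worth making explicit, though. First, the hinge of your argument --- ``$\tilde f_i\mathbf{b}\equiv f_i^\#(\mathbf{b})\bmod\nu\mathcal{L}$'' --- is stated but not established, and this is where essentially all of the substance lies; you acknowledge this, but the proposal as written would not convince a reader that the theorem is proved rather than reformulated. Second, there is a real circularity hazard you must steer around: Proposition~\ref{P:LusKas} in Section~4.6, which identifies the Lusztig graph with the crystal graph $\mathcal{B}^+(\infty)$, is proved in the paper \emph{using} Theorem~\ref{T:Kassw}, which is exactly the $\U_\nu(\n_\pm)$-counterpart of the theorem you are trying to prove (also cited, not proved). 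So you cannot simply borrow that identification; the ``leading-term'' comparison has to be done from scratch, and moreover has to be done for the \emph{module} Kashiwara operators on $V_\lambda$ (defined via the $\mathfrak{sl}_2$-decomposition (\ref{E:Kashops}) relative to $F_i$/$E_i$), which are a priori distinct from the algebra Kashiwara operators on $\U_\nu(\n_\pm)$ defined via $e_i'$. Your sketch elides this distinction; making it precise is exactly the nontrivial content of the cited theorem.

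Finally, a minor remark: the appeal to Kashiwara's uniqueness theorem at the end is unnecessary for the statement as given. Once (i)--(v) are verified, $(\mathcal{L},\mathcal{B})$ \emph{is} a crystal basis, which is all that is claimed; uniqueness is only needed if one additionally wants to identify it with the $\mathcal{B}(\lambda)$ generated by iterated $\tilde f_i$'s.
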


Kashiwara proved the above theorem for the global basis\footnote{which we didn't define, see e.g. \cite{Kang}.}, and Grojnowski-Lusztig proved the coincidence of the global and canonical bases. The reader will find in \cite{Kang} the explicit description of many crystals $\mathcal{B}(\lambda)$.

\vspace{.1in}

As we have said, $\mathcal{B}(\lambda)$ is some sort of combinatorial shadow of $V_{\lambda}$. For one thing, the character of $V_{\lambda}$ is simply obtained from $\mathcal{B}(\lambda)$ by
$$ch(V_{\lambda})=\sum_{\mu} \# \{ b \in \mathcal{B}(\lambda)\;|\; wt(b)=\mu\} e^{\mu}.$$
But $\mathcal{B}(\lambda)$ contains much more information. For instance, it allows one to recover tensor product multiplcities~:

\vspace{.2in}

\addtocounter{theo}{1}
\noindent \textbf{Definition \thetheo .} Let $\mathcal{B}_1, \mathcal{B}_2$ be two $\g$-crystals. The \textit{tensor product} $\mathcal{B}_1 \otimes \mathcal{B}_2$ is the set $\mathcal{B}_1 \times \mathcal{B}_2$ equipped with the following maps~:
$$ wt(b_1 \otimes b_2)=wt(b_1)+wt(b_2),$$
$$\varepsilon_i(b_1 \otimes b_2)=max\big( \varepsilon_i(b_1), \epsilon_i(b_2)-\langle h_i, wt(b_1)\rangle \big),$$
$$\phi_i(b_1 \otimes b_2)=max\big( \phi_i(b_2), \phi_i(b_2)+\langle h_i, wt(b_2)\rangle \big),$$
$$\tilde{e}_i(b_1 \otimes b_2)=\begin{cases} \tilde{e}_ib_1 \otimes b_2 & \text{if}\; \phi_i(b_1) \geq \varepsilon_i(b_2),\\ 
b_1 \otimes \tilde{e}_i b_2 & \text{if}\; \phi_i(b_1) < \varepsilon_i(b_2)
\end{cases} $$
$$\tilde{f}_i(b_1 \otimes b_2)=\begin{cases} \tilde{f}_ib_1 \otimes b_2 & \text{if}\; \phi_i(b_1) > \varepsilon_i(b_2),\\ 
b_1 \otimes \tilde{f}_i b_2 & \text{if}\; \phi_i(b_1) \leq \varepsilon_i(b_2).
\end{cases} $$
It is easy to check that $\mathcal{B}_1 \otimes \mathcal{B}_2$ is a $\g$-crystal. In other words, the category of $\g$-crystals is equipped with a monoidal structure. This monoidal structure is \textit{not} symmetric.

\vspace{.1in}

\begin{theo}[Kashiwara] For $\lambda,\mu \in P^+$ we have
$$\mathcal{B}(\lambda) \otimes \mathcal{B}(\mu)=\bigsqcup_{\sigma \in P^+} \mathcal{B}(\sigma)^{\sqcup m^{\lambda,\mu}_{\sigma}}$$
where the multiplicities $m^{\lambda,\mu}_{\sigma}$ are determined by
$$V_\lambda \otimes V_\mu=\bigoplus_{\sigma \in P^+} V_{\sigma}^{\oplus m^{\lambda,\mu}_{\sigma}}.$$
\end{theo}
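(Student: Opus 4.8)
The statement is Kashiwara's theorem that $\mathcal{B}(\lambda) \otimes \mathcal{B}(\mu)$ decomposes into crystals $\mathcal{B}(\sigma)$ with the same multiplicities $m_\sigma^{\lambda,\mu}$ as in the decomposition of $V_\lambda \otimes V_\mu$. The plan is to bootstrap the combinatorial statement from the representation-theoretic one via crystal bases. First I would recall that $V_\lambda$ and $V_\mu$, being integrable in category $\mathcal{O}$, admit (unique up to isomorphism) crystal bases $(\mathcal{L}_\lambda, \mathcal{B}_\lambda)$ and $(\mathcal{L}_\mu, \mathcal{B}_\mu)$ by Kashiwara's existence theorem quoted above. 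The key technical input is that $(\mathcal{L}_\lambda \otimes_{\mathcal{A}_0} \mathcal{L}_\mu, \ \mathcal{B}_\lambda \otimes \mathcal{B}_\mu)$ is a crystal basis of the tensor product module $V_\lambda \otimes V_\mu$ over $\U_\nu(\g)$, where the combinatorial tensor product of bases is precisely the one in the Definition above. This is the step where the asymmetric rules for $\tilde{e}_i$, $\tilde{f}_i$ on $b_1 \otimes b_2$ have to be matched against the coproduct $\Delta(E_i) = E_i \otimes 1 + K_i \otimes E_i$ (and the analogous one for $F_i$): one restricts to the rank-one subalgebra $\U_i \cong \U_\nu(\mathfrak{sl}_2)$ generated by $E_i, F_i, K_i$, decomposes $V_\lambda \otimes V_\mu$ as a $\U_i$-module, and computes the action of $\tilde{e}_i, \tilde{f}_i$ on the lattice modulo $\nu$. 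The case analysis comparing $\phi_i(b_1)$ with $\varepsilon_i(b_2)$ emerges exactly from tracking which of the two summands of the coproduct survives the $\nu \to 0$ limit.

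Next, since $V_\lambda \otimes V_\mu = \bigoplus_{\sigma \in P^+} V_\sigma^{\oplus m_\sigma^{\lambda,\mu}}$ as $\U_\nu(\g)$-modules (complete reducibility in the integrable category $\mathcal{O}$), I would invoke the uniqueness part of Kashiwara's theorem: the crystal basis of a direct sum is, up to isomorphism, the disjoint union of the crystal bases of the summands, and the crystal basis of each $V_\sigma$ is $(\mathcal{L}_\sigma, \mathcal{B}_\sigma)$ with associated crystal $\mathcal{B}(\sigma)$. Combining this with the previous paragraph, the crystal associated to $(\mathcal{L}_\lambda \otimes \mathcal{L}_\mu, \mathcal{B}_\lambda \otimes \mathcal{B}_\mu)$ — which is $\mathcal{B}(\lambda) \otimes \mathcal{B}(\mu)$ by construction of the crystal tensor product — must coincide with $\bigsqcup_\sigma \mathcal{B}(\sigma)^{\sqcup m_\sigma^{\lambda,\mu}}$. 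One small point to address is that an abstract isomorphism of crystal bases induces an isomorphism of the associated crystals (immediate from the definitions, since $\tilde{e}_i, \tilde{f}_i, wt$ and hence $\varepsilon_i, \phi_i$ are all transported), so the identification is canonical once the crystal-basis isomorphism is fixed.

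The main obstacle is the first step: proving that $\mathcal{B}_\lambda \otimes \mathcal{B}_\mu$ really is the crystal basis of $V_\lambda \otimes V_\mu$, i.e. that the naive tensor lattice $\mathcal{L}_\lambda \otimes_{\mathcal{A}_0} \mathcal{L}_\mu$ is stable under the Kashiwara operators of the tensor product and that $\mathcal{B}_\lambda \otimes \mathcal{B}_\mu$ descends to a basis of $(\mathcal{L}_\lambda \otimes \mathcal{L}_\mu)/\nu(\mathcal{L}_\lambda \otimes \mathcal{L}_\mu)$ on which $\tilde{e}_i, \tilde{f}_i$ act by the stated combinatorial rules. This is genuinely delicate because the Kashiwara operators are defined on the whole module by the $\U_i$-decomposition \eqref{E:Kashops}, not termwise on tensors, so one must carry out the $\mathfrak{sl}_2$-computation showing that modulo $\nu$ the mixed terms drop out in a way governed exactly by the inequalities between $\phi_i(b_1)$ and $\varepsilon_i(b_2)$. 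I would organize this as a lemma treating the $\mathfrak{sl}_2$ case first (tensor products of the standard crystal bases $\mathcal{B}(m)$), then deduce the general statement by restricting each $i$-string to $\U_i$ and using that a crystal lattice is by definition compatible with each such restriction. Everything after this lemma is a formal consequence of complete reducibility and the uniqueness of crystal bases.
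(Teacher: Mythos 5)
The paper states this result without proof, as a theorem cited from Kashiwara's work, so there is no in-paper argument to compare against. Your outline is the standard and correct route: the result is a formal consequence of (a) the tensor product theorem for crystal bases, i.e.\ that $(\mathcal{L}_\lambda \otimes_{\mathcal{A}_0} \mathcal{L}_\mu, \mathcal{B}_\lambda \otimes \mathcal{B}_\mu)$ is a crystal basis of $V_\lambda \otimes V_\mu$ with Kashiwara operators acting by the combinatorial tensor rule, (b) complete reducibility in integrable category $\mathcal{O}$, and (c) the uniqueness of crystal bases up to isomorphism, which you quote from the preceding theorem in the text.

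You correctly identify that the entire weight of the argument rests on step (a), which is genuinely the delicate one. A small caution worth making explicit when you carry it out: the specific asymmetric form of the tensor rule (which factor $\tilde e_i$ or $\tilde f_i$ lands on, according to whether $\phi_i(b_1)\gtrless\varepsilon_i(b_2)$) is dictated by the particular coproduct convention being used, here $\Delta(E_i)=E_i\otimes 1+K_i\otimes E_i$; if you instead used the coproduct with $K_i^{-1}$ (as Kashiwara does in some papers), the roles of the two factors in the rule would be reversed. So when you do the rank-one $\U_i\cong\U_\nu(\mathfrak{sl}_2)$ computation, you must verify not merely \emph{some} case distinction but precisely the one matching the Definition in the paper; otherwise you will prove a statement about the tensor product in the opposite order. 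The second small point is in your last paragraph: a crystal lattice is compatible with the weight decomposition and stable under $\tilde e_i,\tilde f_i$, but it is not defined string-by-string, so "restricting each $i$-string to $\U_i$" should be phrased more carefully as restricting the $\U_i$-action to each weight space; the string decomposition of $\mathcal{L}/\nu\mathcal{L}$ is a conclusion, not an input. With these two points tightened, your plan is a faithful sketch of Kashiwara's original argument.
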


\vspace{.1in}

In particular, there exists a unique embedding of crystals $\mathcal{B}(\lambda +\mu) \subset \mathcal{B}(\lambda) \otimes \mathcal{B}(\mu)$. Indeed, all the weights appearing in the tensor product $\mathcal{B}(\lambda) \otimes \mathcal{B}(\mu)$ are smaller than $\lambda + \mu$ except for the tensor product of the two highest weight elements $b_{\lambda} \otimes b_{\mu}$, hence there can be at most one embedding of crystals $\mathcal{B}(\lambda +\mu) \subset \mathcal{B}(\lambda) \otimes \mathcal{B}(\mu)$.
There is an obvious notion of a highest weight $\g$-crystal. One might wonder if there exists a characterization of $\mathcal{B}(\lambda)$ as the unique $\g$-crystal of highest weight $\lambda$ satisfying certain integrability conditions. Unfortunately, there seems to be many more $\g$-crystals than $\g$-modules and no such characterization is known. However, there does exist a very useful characterization of the \textit{collection} of all $\g$-crystals $\{\mathcal{B}(\lambda)\;|\; \lambda \in P^+\}$ due to Joseph (see \cite[Prop.~6.4.21]{Joseph}). We say that a $\g$-crystal $\mathcal{B}$ is \textit{normal} (also called \textit{semiregular}) if for any $b \in \mathcal{B}$ and any $i \in I$ we have
$$\varepsilon_i(b)=max\{k \geq 0\;|\; \tilde{e}_i^kb \neq 0\}, \qquad \phi_i(b)=max\{k \geq 0\;|\; \tilde{f}_ib \neq 0\}.$$
Note that any $\g$-crystal coming from an integrable highest weight $\g$-module is normal.

\vspace{.1in}

\begin{theo}[Joseph] There exists a unique collection of crystals $\{\mathcal{B}^{\lambda}\;|\; \lambda \in P^+\}$ satisfying
\begin{enumerate}
\item[i)] $\mathcal{B}(\lambda)$ is a normal highest weight crystal of highest weight $\lambda$,
\item[ii)] For any $\lambda,\mu \in P^+$ there exists a (unique) crystal embedding $\mathcal{B}^{\lambda+\mu} \subset \mathcal{B}^{\lambda} \otimes \mathcal{B}^{\mu}$.
\end{enumerate}
\end{theo}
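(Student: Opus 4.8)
The plan is to establish existence by exhibiting an explicit such collection, and then to prove uniqueness by a rigidity argument that reduces everything to the case of fundamental weights.

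\emph{Existence.} I would set $\mathcal{B}^\lambda:=\mathcal{B}(\lambda)$, the crystal of the irreducible integrable highest weight module $V_\lambda$. Condition (i) is immediate: by Kashiwara's theorem $\mathcal{B}(\lambda)$ is a highest weight crystal with distinguished element $b_\lambda$ of weight $\lambda$, generated by $b_\lambda$ under the $\tilde f_i$, and it is normal because it comes from a genuine integrable $\U_\nu(\g)$-module. Condition (ii) has in fact already been verified in the discussion immediately preceding the statement: from the tensor product decomposition $\mathcal{B}(\lambda)\otimes\mathcal{B}(\mu)=\bigsqcup_\sigma\mathcal{B}(\sigma)^{\sqcup m^{\lambda,\mu}_\sigma}$ and the fact that $b_\lambda\otimes b_\mu$ is the unique highest weight element of weight $\lambda+\mu$, there is exactly one summand of highest weight $\lambda+\mu$, hence a unique crystal embedding $\mathcal{B}^{\lambda+\mu}\hookrightarrow\mathcal{B}^\lambda\otimes\mathcal{B}^\mu$.

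\emph{Uniqueness, set-up.} Let $\{\mathcal{C}^\lambda\}_{\lambda\in P^+}$ be any collection satisfying (i) and (ii). I would first dispose of $\lambda=0$: in a normal highest weight crystal of highest weight $0$ normality gives $\varepsilon_i(b_0)=0$, hence $\tilde e_i b_0=0$, while $\phi_i(b_0)=\varepsilon_i(b_0)+\langle h_i,0\rangle=0$ forces $\tilde f_i b_0=0$ for all $i$; since the crystal is generated by $b_0$ it equals $\{b_0\}$. Thus $\mathcal{C}^0=\mathcal{B}^0$ is the trivial crystal, and tensoring with it on either side is canonically the identity functor. Next I would record the rigidity lemma: any strict morphism of normal highest weight crystals of the same highest weight $\lambda$ sending highest weight element to highest weight element is an isomorphism — surjectivity because the image is a $\tilde f_i$-stable subcrystal containing the generator, injectivity because a strict morphism cannot shorten $\tilde e_i$-strings, so two elements with a common image would contradict normality of $\varepsilon_i$ along the $\tilde f_i$-path joining them to $b_\lambda$. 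Granting this, the theorem reduces to producing, for each $\lambda$, a strict morphism between $\mathcal{C}^\lambda$ and $\mathcal{B}(\lambda)$ fixing the highest weight element. Writing $\lambda=\omega_{j_1}+\cdots+\omega_{j_N}$ as a sum of fundamental weights and iterating the unique embeddings of (ii) yields canonical strict embeddings
$$\mathcal{C}^\lambda\hookrightarrow\mathcal{C}^{\omega_{j_1}}\otimes\cdots\otimes\mathcal{C}^{\omega_{j_N}},\qquad \mathcal{B}(\lambda)\hookrightarrow\mathcal{B}(\omega_{j_1})\otimes\cdots\otimes\mathcal{B}(\omega_{j_N}),$$
each sending the highest weight element to the tensor of highest weight elements, so everything collapses to the identification $\mathcal{C}^{\omega_i}\cong\mathcal{B}(\omega_i)$ for every $i\in I$.

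\emph{The main obstacle.} The genuinely hard step is exactly this last identification: a fundamental weight does not split inside $P^+$, so (ii) affords no downward reduction for $\mathcal{C}^{\omega_i}$ and one must instead exploit the constraints (ii) imposes ``upward'' and ``sideways'', namely the embeddings $\mathcal{C}^{n\omega_i}\hookrightarrow(\mathcal{C}^{\omega_i})^{\otimes n}$ and $\mathcal{C}^{\omega_i+\omega_j}\hookrightarrow\mathcal{C}^{\omega_i}\otimes\mathcal{C}^{\omega_j}$ for all $j$, together with normality. Following Joseph (loc.\ cit.), I would analyse the string parametrizations of the elements of $\mathcal{C}^{\omega_i}$: normality pins down the $\mathfrak{sl}_2$-structure at the node $i$, the mixed embeddings propagate this information to all other nodes, and a stabilization argument in $n$ leaves $\mathcal{C}^{\omega_i}$ no freedom to differ from $\mathcal{B}(\omega_i)$. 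This combinatorial analysis is the delicate part of the proof; alternatively, once the infinity crystal $\mathcal{B}(\infty)$ and its characterization are available (as developed later in this Lecture), one can shortcut it by realizing each $\mathcal{C}^\lambda$ as a subcrystal of $\mathcal{B}(\infty)\otimes T_\lambda$ — with $T_\lambda$ the one-element crystal of weight $\lambda$ — and invoking the uniqueness of $\mathcal{B}(\infty)$.
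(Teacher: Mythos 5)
The paper does not actually prove this theorem: it is stated as Joseph's result and cited to \cite[Prop.~6.4.21]{Joseph}. So there is no in-paper argument to compare yours against, and your proposal has to be judged on its own merits.

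Your existence half and the reduction of uniqueness to the trivial case $\lambda=0$ and to fundamental weights are sound: $\mathcal{C}^0$ must be trivial by normality, both $\mathcal{C}^\lambda$ and $\mathcal{B}(\lambda)$ embed into $\mathcal{B}(\omega_{j_1})\otimes\cdots\otimes\mathcal{B}(\omega_{j_N})$ (once $\mathcal{C}^{\omega_i}\cong\mathcal{B}(\omega_i)$ is granted), and the unique highest-weight element of weight $\lambda$ in that tensor product forces both to land on the same connected component, which settles the comparison via your rigidity lemma. But the proof has a genuine gap exactly where you flag it: the identification $\mathcal{C}^{\omega_i}\cong\mathcal{B}(\omega_i)$ is not proved, only asserted to follow ``by analysing string parametrizations'' and citing Joseph. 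Since this is the entire content of the uniqueness assertion after the (correct) reduction, the argument is incomplete rather than merely under-detailed.

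The alternative route via $\mathcal{B}(\infty)$ also needs more than one sentence. To use the Kashiwara--Saito characterization (Theorem~4.8 in the text) you would have to \emph{construct} a candidate infinity crystal from the given family $\{\mathcal{C}^\lambda\}$ — say as an ascending limit of $\mathcal{C}^\lambda\otimes T_{-\lambda}$ along the dominance order, which itself requires producing compatible strict embeddings $\mathcal{C}^\lambda\otimes T_{-\lambda}\hookrightarrow\mathcal{C}^{\lambda+\mu}\otimes T_{-\lambda-\mu}$ from (ii) — and then verify the four Kashiwara--Saito axioms for it. Simply saying one can ``realize $\mathcal{C}^\lambda$ as a subcrystal of $\mathcal{B}(\infty)\otimes T_\lambda$'' presupposes the very comparison one is trying to establish. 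Either the limit construction and axiom check, or a direct combinatorial argument for fundamental weights, must be carried out before this can be called a proof.
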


\vspace{.1in}

Here is an example of a $\g$-crystal which clearly does not come from a $\g$-module. For any $i \in I$ put $\mathcal{B}_i=\{b_i(n)\;|\; n \in \Z\}$ and define maps $wt, \varepsilon_j, \phi_j, \tilde{e}_j, \tilde{f}_j$ by
\begin{align*}
&wt (b_i(n))=n\a_i,\\
&\phi_i(b_i(n))=n, \qquad \varepsilon_i(b_i(n))=-n,\\
&\phi_j(b_i(n))= \varepsilon_j(b_i(n))=-\infty \qquad (i \neq j),\\
&\tilde{e}_i(b_i(n))=b_i(n+1), \qquad \tilde{f}_i(b_i(n))=b_i(n-1),\\
&\tilde{e}_j(b_i(n))=\tilde{f}_j(b_i(n))=0 \qquad (i \neq j).
\end{align*}

Note that $\mathcal{B}_i$ is neither highest weight nor lowest weight.

\vspace{.1in}

In these notes we are not as much interested in crystals associated to $\g$-modules as to a particular crystal associated to the enveloping algebra $\U(\n_-)$ itself. In order to define it, we introduce Kashiwara operators in $\U_{\nu}(\n_-)$ as follows. One shows that for any $P \in \U_{\nu}(\n_-)$ there exists unique elements $R,S \in \U_{\nu}(\n_-)$ such that
$$[E_i,P]=\frac{K_i S-K_i^{-1}R}{\nu-\nu^{-1}}.$$
The assignement $P \mapsto R$ is a linear endomorphism of $\U_{\nu}(\n_-)$ which we denote by $e'_i$. It is a suitable substitute for the adjoint action by $E_i$. Next we decompose any $u \in \U_{\nu}(\n_-)$ in a unique fashion as
\begin{equation}\label{E:Kashops2}
u=u_0 + F_i u_1 + \cdots + F_i^{(n)} u_n
\end{equation}
where $u_k \in Ker\;e'_i$ for all $k$. We may now again define the \textit{Kashiwara operators} $\tilde{e}_i, \tilde{f}_i \in End(\U_{\nu}(\n_-))$ by
$$\tilde{e}_i(u)=\sum_{k=1}^n F_i^{(k-1)}u_k, \qquad \tilde{f}_i(u)=\sum_{k=0}^n F_i^{(k+1)}u_k.$$
The notion of a crystal basis for $\U_{\nu}(\n_-)$ is easily adapted from the module situation and we have

\vspace{.1in}

\begin{theo}[Kashiwara, Grojnowski-Lusztig]\label{T:Kassw} i) Any two crystal bases of $\U_{\nu}(\n_-)$ are isomorphic.\\
ii) Set
$$\mathcal{L}'=\sum_{i_1, i_2, \ldots} \mathcal{A}_{0} \tilde{f}_{i_1} \tilde{f}_{i_2} \cdots \tilde{f}_{i_l} 1$$
$$\mathcal{B}'=\big\{ \tilde{f}_{i_1} \tilde{f}_{i_2} \cdots \tilde{f}_{i_l} v_{\lambda}\;|\; i_1, i_2 \ldots \in I\big\} \backslash \{0\}.$$
Then $(\mathcal{L}', \mathcal{B}')$ is a crystal basis of $\U_{\nu}(\n_-)$.\\
iii) Let $\mathbf{B}$ be the canonical basis of $\U_{\nu}(\n_-)$. Put
$$\mathcal{L}''=\bigoplus_{\mathbf{b} \in \mathbf{B}} \mathcal{A}_0 \mathbf{b}, \qquad \mathcal{B}''=\{ \mathbf{b}\;mod\;\mathcal{L}'' \;|\; \mathbf{b} \in \mathbf{B}\}.$$
Then $(\mathcal{L}'', \mathcal{B}'')$ is a crystal basis of $\U_{\nu}(\n_-)$.
\end{theo}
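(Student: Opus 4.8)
\textit{Proof proposal for Theorem~\ref{T:Kassw}.} The plan is to treat $\U_{\nu}(\n_-)$ as a ``limit'' of the integrable lowest weight modules $V_{\lambda}$ and to deduce everything from the corresponding statements there. The basic mechanism is this: for $\lambda$ integral antidominant there is a surjection $\pi_{\lambda}:\U_{\nu}(\n_-) \tto V_{\lambda}$, $u \mapsto u\cdot v_{\lambda}$, and the point is that this surjection is compatible with the Kashiwara operators $\tilde{e}_i,\tilde{f}_i$ \emph{on the part of $\U_{\nu}(\n_-)$ of weight bounded relative to $\lambda$}. More precisely, if $u$ has weight $-\mu$ with $\langle h_i,\lambda+\mu\rangle$ large for all $i$ (one pushes $\lambda$ far into the antidominant chamber), then $\pi_{\lambda}(\tilde{f}_i u)=\tilde{f}_i \pi_{\lambda}(u)$ and $\pi_{\lambda}(\tilde{e}_i u)=\tilde{e}_i \pi_{\lambda}(u)$, because the unique decomposition (\ref{E:Kashops2}) of $u$ with respect to $e'_i$ maps under $\pi_{\lambda}$ to the unique decomposition (\ref{E:Kashops}) of $\pi_{\lambda}(u)$ with respect to the $\U_i$-action on $V_{\lambda}$. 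This is the algebraic heart of the matter and where the $\U_{\nu}(\mathfrak{sl}_2)$-representation theory (finiteness of strings, exactness of the $e'_i$-decomposition) gets used.

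First I would set up this comparison carefully, tracking the precise bound on $\lambda$ needed for a given weight space, and verify that the operators $e'_i$ on $\U_{\nu}(\n_-)$ are intertwined with the ``$E_i$-reduced'' decomposition on $V_{\lambda}$. Then, for part ii), I would define $\mathcal{L}'$ and $\mathcal{B}'$ as stated and check the five axioms of a crystal lattice/basis one at a time: stability of $\mathcal{L}'$ under $\tilde{e}_i,\tilde{f}_i$ is immediate from the definition as an $\mathcal{A}_0$-span of monomials in the $\tilde{f}_i$; the nontrivial axioms are that $\mathcal{L}'\otimes_{\mathcal{A}_0}\C(\nu)=\U_{\nu}(\n_-)$, that $\mathcal{B}'$ is a $\C$-basis of $\mathcal{L}'/\nu\mathcal{L}'$, and axiom v) ($\tilde{e}_i b = b' \Leftrightarrow \tilde{f}_i b' = b$). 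Each of these I would reduce to its already-known analogue in $V_{\lambda}$ by choosing $\lambda$ antidominant enough (depending on the weight space under consideration) and applying $\pi_{\lambda}$; surjectivity/injectivity statements transfer because $\pi_{\lambda}$ restricted to a bounded range of weights is an isomorphism onto the corresponding weight spaces of $V_{\lambda}$ in the limit $\lambda \to -\infty$. For part i), uniqueness up to isomorphism, I would argue that any crystal basis $(\mathcal{L},\mathcal{B})$ of $\U_{\nu}(\n_-)$ pushes forward under $\pi_{\lambda}$ to a crystal basis of $V_{\lambda}$ (for $\lambda$ antidominant enough in each degree), which by Kashiwara's uniqueness theorem for integrable modules is isomorphic to the canonical one; gluing these isomorphisms over an exhausting family of weights $\lambda$ and using compatibility with the crystal tensor product $\mathcal{B}(\lambda)\otimes\mathcal{B}(\mu)$ yields an isomorphism $(\mathcal{L},\mathcal{B})\cong(\mathcal{L}',\mathcal{B}')$ on all of $\U_{\nu}(\n_-)$.

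Part iii) is then essentially formal given part ii) together with Theorem~\ref{T:LU} and the material of Section~3.2: one knows the canonical basis $\mathbf{B}$ exists (as the classes of simple perverse sheaves under $\Phi$), one knows it is bar-invariant by (\ref{E:Invol}) and satisfies the almost-orthonormality (\ref{E:Posscal2}), and one knows each $\mathbf{B}_{\lambda}=\pi_{\lambda}(\mathbf{B})\setminus\{0\}$ is the canonical basis of $V_{\lambda}$ by Theorem~\ref{T:Rep}. The statement that $(\mathcal{L}'',\mathcal{B}'')$ is a crystal basis then follows by comparing $\mathcal{L}''=\bigoplus_{\mathbf{b}\in\mathbf{B}}\mathcal{A}_0\mathbf{b}$ with the $\mathcal{L}'$ of part ii): both push forward to $\bigoplus_{\mathbf{b}\in\mathbf{B}_{\lambda}}\mathcal{A}_0\mathbf{b}$ in each bounded degree, which by Theorem~(Kashiwara, Grojnowski--Lusztig for modules) is the crystal lattice of $V_{\lambda}$, so $\mathcal{L}''=\mathcal{L}'$ degree by degree, and hence $\mathcal{B}''$ and $\mathcal{B}'$ give the same crystal.

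I expect the main obstacle to be the bookkeeping in the comparison step: making precise ``for $\lambda$ antidominant enough, $\pi_{\lambda}$ intertwines the Kashiwara operators on the weight spaces of bounded weight'' and then checking that each crystal-basis axiom is genuinely \emph{local} in the weight grading, so that it can be verified after applying $\pi_{\lambda}$ with $\lambda$ depending on the degree. Everything else is a transfer of Kashiwara's module-level theorem, which I am allowed to assume. One should also be slightly careful that axiom v), the matching of $\tilde{e}_i$ and $\tilde{f}_i$, does transfer cleanly — it does, because $0$ in $V_{\lambda}$ only comes from weights outside the bounded range once $\lambda$ is pushed far enough, so no spurious cancellations occur in the limit.
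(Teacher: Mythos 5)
The paper does not actually prove Theorem~\ref{T:Kassw}: it is stated as a cited result, with the attribution ``Kashiwara proved the above theorem for the global basis, and it translates to the canonical basis by \cite{GL}'' --- there is no in-text argument to compare your proposal against. Your sketch is therefore \emph{supplying} a proof where the paper only cites, and as an outline of Kashiwara's actual strategy it is essentially on the right track: the mechanism of comparing $\U_{\nu}(\n_-)$ with the family of lowest weight modules $V_{\lambda}$ via $\pi_{\lambda}\colon u\mapsto u\cdot v_{\lambda}$, intertwining the Kashiwara operators for $\lambda$ deep enough, and reducing each crystal-basis axiom to its module analogue in bounded weight is exactly how the argument in \cite{Kas} goes, and part iii) is indeed a combination of the module-level result with the identification of the canonical and global bases.

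Two cautionary remarks about where the proposal glosses over real difficulties. First, the intertwining you state is slightly off: one has $\pi_{\lambda}(\tilde{f}_i u)=\tilde{f}_i\pi_{\lambda}(u)$ \emph{unconditionally}, while it is only $\tilde{e}_i$ whose intertwining requires $\langle h_i,\lambda\rangle$ to be large relative to the weight of $u$. This asymmetry is harmless but is precisely what makes the bookkeeping in the weight-bound argument nontrivial. Second, and more seriously, your treatment of part i) (uniqueness of the crystal basis) suggests one can first establish part ii) and then separately push forward an arbitrary crystal basis to $V_{\lambda}$ and invoke module-level uniqueness. In fact Kashiwara's proof does not separate the two: existence and uniqueness for $\U_{\nu}(\n_-)$ and for all $V_{\lambda}$ are proved \emph{simultaneously} in the ``grand loop'' induction on weight, because establishing that $\mathcal{B}'$ is linearly independent modulo $\nu\mathcal{L}'$ in a given degree already uses uniqueness in lower degrees, and vice versa. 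An attempt to prove part ii) in isolation, then deduce part i), runs into circularity. Since you are allowed to quote Kashiwara's module-level theorem, the honest statement is that parts i) and ii) together are a restatement of Kashiwara's theorem for $\U_{\nu}(\n_-)$ (whose proof is entangled with the module case, not a corollary of it), and only part iii) is a genuine deduction, following as you say from Theorem~\ref{T:Rep} and \cite{GL}.
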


\vspace{.1in}

Again, Kashiwara proved the above theorem for the global basis, and it translates to the canonical basis by \cite{GL}.

\vspace{.1in}

If $(\mathcal{L}, \mathcal{B})$ is a crystal basis of $\U_{\nu}(\n_-)$ we define the associated $\g$-crystal $\mathcal{B}$ by putting
$$\varepsilon_i(b)=max\{k \geq 0\;|\; \tilde{e}_i^kb \neq 0\}, \qquad \phi_i(b)=\varepsilon_i(b) + \langle h_i wt(b)\rangle.$$
This crystal is usually denoted $\mathcal{B}(\infty)$. It may be viewed as a certain limit as $\lambda \to \infty$ of the crystal $\mathcal{B}(\lambda)$, in the following sense (see \cite[Section~3.13.]{Jo} )~: there exists unique maps $\phi_{\lambda}~: \mathcal{B}(\infty) \to \mathcal{B}(\lambda) \cup \{0\}$ shifting weight by $\lambda$ and commuting with $\tilde{e}_i, \tilde{f}_i$s; the intersection of all the kernels of these maps $\phi_{\lambda}$ as $\lambda \to \infty$ is empty. 

\vspace{.1in}

We finish with a very useful characterization of $\mathcal{B}(\infty)$, proved in \cite{KS}~:

\vspace{.1in}

\begin{theo}[Kashiwara-Saito]\label{T:KSBinfty} Let $\mathcal{B}$ be a highest weight crystal of highest weight $0$ and assume that
\begin{enumerate}
\item[i)] $\varepsilon_i(b) \in \Z$ for any $b \in \mathcal{B}$ and $i \in I$,
\item[ii)] for any $i \in I$ there exists an embedding $\Psi_i :\mathcal{B} \to \mathcal{B} \otimes \mathcal{B}_i$,
\item[iii)] we have $\Psi_i(\mathcal{B}) \subset \mathcal{B} \times \{ \tilde{f}_i^k b_i(0); k \geq 0\}$,
\item[iv)] for any $b \in \mathcal{B}$ of weight $wt(b) \neq 0$ there exists $i$ such that $\Psi_i(b)=b' \otimes \tilde{f}_i^kb_i(0)$ with $k >0$.
\end{enumerate}
Then $\mathcal{B}$ is isomorphic to $\mathcal{B}(\infty)$.
\end{theo}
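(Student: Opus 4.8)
The strategy is to reduce the statement to an abstract uniqueness assertion and then build the isomorphism by induction on weight. First I would recall that $\mathcal{B}(\infty)$ itself satisfies hypotheses (i)--(iv): here $\Psi_i$ is Kashiwara's $\ast$-embedding $\mathcal{B}(\infty)\hookrightarrow \mathcal{B}(\infty)\otimes\mathcal{B}_i$, sending $b$ to $(\tilde{e}_i^{\ast})^{\varepsilon_i^{\ast}(b)}b\otimes \tilde{f}_i^{\,\varepsilon_i^{\ast}(b)}b_i(0)$, where $\ast$ is the anti-involution of $\U_{\nu}(\n_-)$ and $\varepsilon_i^{\ast}$ the associated string-length; condition (i) is the integrality of $\varepsilon_i$ on $\mathcal{B}(\infty)$, (ii)--(iii) hold by construction, and (iv) is the fact that $\mathcal{B}(\infty)$ is generated from its highest weight vector $u_{\infty}$ under the operators $\tilde{f}_i^{\ast}$ (see \cite{Kash}, \cite{Jo}). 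Since both $\mathcal{B}(\infty)$ and the given $\mathcal{B}$ satisfy (i)--(iv), it suffices to prove that any two crystals with these properties are isomorphic.

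Second, given such a crystal $\mathcal{B}$ with highest weight element $b_0$ and embeddings $\Psi_i$, I would extract from each $\Psi_i$ partial ``star'' operators $\tilde{e}_i^{\ast},\tilde{f}_i^{\ast}$ on $\mathcal{B}$ by the recipe: if $\Psi_i(b)=b_1\otimes \tilde{f}_i^{\,m}b_i(0)$ then $\tilde{f}_i^{\ast}b:=\Psi_i^{-1}(b_1\otimes \tilde{f}_i^{\,m+1}b_i(0))$, and dually for $\tilde{e}_i^{\ast}$ when $m\geq 1$. Using that $\Psi_i$ is a strict embedding (so $\mathrm{im}\,\Psi_i$ is a union of connected components of $\mathcal{B}\otimes\mathcal{B}_i$) together with (iii), one checks that $\tilde{f}_i^{\ast}$ is everywhere defined and injective with partial inverse $\tilde{e}_i^{\ast}$, and --- crucially --- that (iv) forces $\mathcal{B}=\{\tilde{f}_{i_1}^{\ast}\cdots\tilde{f}_{i_l}^{\ast}b_0 : l\geq 0,\ i_k\in I\}$, by descending induction on the height of $-\mathrm{wt}(b)$ (each application of (iv) strictly raises the weight toward $0$). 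One also records how the honest operators $\tilde{e}_j,\tilde{f}_j$, as well as $\mathrm{wt},\varepsilon_j,\phi_j$, act on an element presented as $\tilde{f}_{i_1}^{\ast}\cdots\tilde{f}_{i_l}^{\ast}b_0$: because each $\Psi_i$ is strict, this action is governed entirely by the tensor product rule applied to the $\mathcal{B}_i$'s, hence depends only on the word $(i_1,\dots,i_l)$ and not on $\mathcal{B}$ itself.

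Third, I would define $\Xi\colon\mathcal{B}\to\mathcal{B}(\infty)$ by $\tilde{f}_{i_1}^{\ast}\cdots\tilde{f}_{i_l}^{\ast}b_0\mapsto \tilde{f}_{i_1}^{\ast}\cdots\tilde{f}_{i_l}^{\ast}u_{\infty}$, and $\Xi'$ in the reverse direction. Once $\Xi$ is known to be well defined it is automatically a bijection (with inverse $\Xi'$) and a morphism of crystals, since by the previous step $\tilde{e}_j,\tilde{f}_j,\mathrm{wt},\varepsilon_j,\phi_j$ are computed from the $\ast$-word alone. Thus everything reduces to well-definedness: whenever $\tilde{f}_{i_1}^{\ast}\cdots\tilde{f}_{i_l}^{\ast}b_0=\tilde{f}_{j_1}^{\ast}\cdots\tilde{f}_{j_m}^{\ast}b_0$ in $\mathcal{B}$, the same equality must hold in $\mathcal{B}(\infty)$, and conversely. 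This is the main obstacle. The plan here is to show that all relations among $\tilde{f}_i^{\ast}$-words are consequences of ``rank $\leq 2$'' relations --- those internal to a single $i$-string (governed by the $\mathfrak{sl}_2$-crystal $\mathcal{B}_i$) and those between two colours $i\neq j$ (governed by comparing $\Psi_i\Psi_j$ with $\Psi_j\Psi_i$ on $\mathcal{B}\otimes\mathcal{B}_i\otimes\mathcal{B}_j$) --- all of which are forced by the strictness of the $\Psi$'s together with (i)--(iii). I expect this local analysis, disentangling how $\tilde{e}_j,\tilde{f}_j$ commute past $\tilde{f}_i^{\ast}$ through the tensor factors exactly as in \cite{KS}, to be the technically demanding step; once it is in place, the comparison of $\mathcal{B}$ with $\mathcal{B}(\infty)$ via $\Xi$ and the already-established $\ast$-structure of the latter finishes the proof.
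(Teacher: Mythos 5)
A preliminary remark: the paper itself gives no proof of this statement --- it is quoted as a criterion from \cite{KS} --- so the only benchmark is Kashiwara--Saito's own argument. Your first step (checking that $\mathcal{B}(\infty)$ satisfies (i)--(iv) via Kashiwara's $\ast$-operators and the embedding $\mathcal{B}(\infty)\hookrightarrow\mathcal{B}(\infty)\otimes\mathcal{B}_i$, so that the theorem reduces to showing any two crystals with these properties are isomorphic) is sound and is indeed how one begins.

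The problem lies in steps two and three. The claim that the action of $\tilde{e}_j,\tilde{f}_j,\varepsilon_j,\phi_j$ on an element written as $\tilde{f}_{i_1}^{\ast}\cdots\tilde{f}_{i_l}^{\ast}b_0$ ``depends only on the word'' already presupposes that successive embeddings $\Psi_{i_1},\Psi_{i_2},\ldots$ can be disentangled, i.e.\ exactly the commutation analysis between $\Psi_i$ and $\Psi_j$ that you postpone: the tensor-product rule in $\mathcal{B}\otimes\mathcal{B}_i$ needs $\varepsilon_i,\phi_i$ of the first factor, which is again an unknown element of $\mathcal{B}$, so nothing is determined by the word alone without an induction controlling it. More seriously, your construction of $\Xi$ rests entirely on the assertion that every relation among $\tilde{f}_i^{\ast}$-words is a consequence of rank $\le 2$ relations forced by (i)--(iii); this amounts to a presentation of $\mathcal{B}(\infty)$ by its $\ast$-crystal structure, a substantial statement that you do not prove (you explicitly say you ``expect'' it), and it is precisely the heart of the theorem rather than a technical afterthought. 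It is also not how the result is actually established: Kashiwara and Saito never need such a global generators-and-relations statement. Their argument is an induction on the height of the weight: given $b$ with $wt(b)\neq 0$, condition (iv) produces an index $i$ and an element $b'$ of strictly smaller height, the image of $b$ is defined from the already-constructed image of $b'$ via $\Psi_i$, and the only thing to verify is that two different admissible choices $i\neq j$ in (iv) yield the same answer --- a local compatibility checked inside $\mathcal{B}\otimes\mathcal{B}_i\otimes\mathcal{B}_j$ using strictness of the embeddings and the tensor rule. Until you either carry out your rank-$\le 2$ claim in full or replace it by such a weight-by-weight consistency argument, the proposal is an outline with its central step missing.
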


\vspace{.1in}

Our aim in Lecture~4 is to present Kashiwara and Saito's geometric realization of $\mathcal{B}(\infty)$ in terms of quivers. Because of our conventions, we will actually get a construction of the universal \textit{lowest weight} crystal $\mathcal{B}^+(\infty)$, associated to $\U_{\nu}(\n_+)$. Of course, $\mathcal{B}^+(\infty)$ is simply obtained from $\mathcal{B}(\infty)$ by interchanging the roles of $\tilde{e}_i, \varepsilon_i$ and $\tilde{f}_i, \phi_i$ (and replacing $wt$ by $-wt$). For the reader's convenience, we rewrite Theorem~\ref{T:KSBinfty} for $\mathcal{B}^+(\infty)$. It plays a crucial role in the approach we will use in the next sections.

\vspace{.1in}

\begin{theo}[Kashiwara-Saito]\label{T:KSBinfty2} Let $\mathcal{B}$ be a lowest weight crystal of lowest weight $0$ and assume that
\begin{enumerate}
\item[i)] $\phi_i(b) \in \Z$ for any $b \in \mathcal{B}$ and $i \in I$,
\item[ii)] for any $i \in I$ there exists an embedding $\Psi_i :\mathcal{B} \to \mathcal{B}_i \otimes \mathcal{B}$,
\item[iii)] we have $\Psi_i(\mathcal{B}) \subset \{ \tilde{e}_i^k b_i(0); k \geq 0\} \times \mathcal{B}$,
\item[iv)] for any $b \in \mathcal{B}$ of weight $wt(b) \neq 0$ there exists $i$ such that $\Psi_i(b)= \tilde{e}_i^kb_i(0)\otimes b'$ with $k >0$.
\end{enumerate}
Then $\mathcal{B}$ is isomorphic to $\mathcal{B}^+(\infty)$.
\end{theo}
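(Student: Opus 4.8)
The statement to prove, Theorem~\ref{T:KSBinfty2}, is the ``plus'' version of Kashiwara--Saito's characterization of $\mathcal{B}(\infty)$, and the plan is simply to deduce it from Theorem~\ref{T:KSBinfty} by the duality that swaps $\tilde{e}_i \leftrightarrow \tilde{f}_i$, $\varepsilon_i \leftrightarrow \phi_i$ and $wt \leftrightarrow -wt$. Concretely, I would introduce the ``bar'' functor on the category of $\g$-crystals: given a crystal $\mathcal{B}$, let $\mathcal{B}^{\vee}$ be the same underlying set with $wt^{\vee}(b)=-wt(b)$, $\tilde{e}_i^{\vee}=\tilde{f}_i$, $\tilde{f}_i^{\vee}=\tilde{e}_i$, $\varepsilon_i^{\vee}=\phi_i$, $\phi_i^{\vee}=\varepsilon_i$. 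One checks directly that the seven axioms i)--vii) of Definition~4.1 are stable under this operation (they are visibly self-dual under the swap, using axiom i) to trade $\varepsilon_i$ for $\phi_i$), so $\mathcal{B}^{\vee}$ is again a $\g$-crystal; and $(\mathcal{B}^{\vee})^{\vee}=\mathcal{B}$. It is also immediate that $b$ is a lowest weight element of $\mathcal{B}$ of weight $\mu$ if and only if $b$ is a highest weight element of $\mathcal{B}^{\vee}$ of weight $-\mu$, and that $\phi_i(b)\in\Z$ in $\mathcal{B}$ is the same as $\varepsilon_i(b)\in\Z$ in $\mathcal{B}^{\vee}$.

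The second ingredient is compatibility with tensor products. From the explicit formulas in Definition~4.4 one verifies the identity of crystals $(\mathcal{B}_1 \otimes \mathcal{B}_2)^{\vee} \cong \mathcal{B}_2^{\vee} \otimes \mathcal{B}_1^{\vee}$ (the monoidal structure being noncommutative, the order is reversed under dualizing); this is a routine but slightly fiddly check of the four cases in the definitions of $\tilde{e}_i$ and $\tilde{f}_i$ on a tensor product. In particular, applying $\vee$ to the elementary crystal $\mathcal{B}_i$ of the final displayed example of Section~4.1 gives $\mathcal{B}_i^{\vee} \cong \mathcal{B}_i$ via $b_i(n) \mapsto b_i(-n)$, since that crystal is manifestly symmetric under $n \mapsto -n$ together with the swap of $\tilde{e}_i,\tilde{f}_i$ and of $\varepsilon_i,\phi_i$.

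With these two facts in hand the deduction is formal. Suppose $\mathcal{B}$ is a lowest weight crystal of lowest weight $0$ satisfying i)--iv) of Theorem~\ref{T:KSBinfty2}. Then $\mathcal{B}^{\vee}$ is a highest weight crystal of highest weight $0$; condition i) becomes $\varepsilon_i(b)\in\Z$ for all $b,i$; an embedding $\Psi_i:\mathcal{B}\hookrightarrow \mathcal{B}_i\otimes\mathcal{B}$ dualizes to an embedding $\Psi_i^{\vee}:\mathcal{B}^{\vee}\hookrightarrow \mathcal{B}^{\vee}\otimes\mathcal{B}_i^{\vee}\cong\mathcal{B}^{\vee}\otimes\mathcal{B}_i$, giving ii); condition iii) on $\mathcal{B}$, namely $\Psi_i(\mathcal{B})\subset\{\tilde{e}_i^k b_i(0)\}\times\mathcal{B}$, translates under $\vee$ (using $b_i(0)\mapsto b_i(0)$ and $\tilde{e}_i\leftrightarrow\tilde{f}_i$) into $\Psi_i^{\vee}(\mathcal{B}^{\vee})\subset\mathcal{B}^{\vee}\times\{\tilde{f}_i^k b_i(0)\}$, which is iii) of Theorem~\ref{T:KSBinfty}; and condition iv) translates similarly, the hypothesis $wt(b)\neq 0$ being preserved since $wt^{\vee}=-wt$. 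Hence $\mathcal{B}^{\vee}$ satisfies all the hypotheses of Theorem~\ref{T:KSBinfty}, so $\mathcal{B}^{\vee}\cong\mathcal{B}(\infty)$, and therefore $\mathcal{B}\cong(\mathcal{B}(\infty))^{\vee}=:\mathcal{B}^+(\infty)$ by the very definition of the latter. The only place demanding genuine care---the ``main obstacle'', though it is more bookkeeping than difficulty---is the verification that the tensor product operation reverses order under $\vee$ and that the four branch conditions in Definition~4.4 match up correctly after the swap; once that is pinned down the rest is a mechanical transport of structure, and indeed the whole theorem is, as the text says, merely Theorem~\ref{T:KSBinfty} ``rewritten for $\mathcal{B}^+(\infty)$''.
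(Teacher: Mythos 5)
Your proof is correct and is precisely the approach the paper has in mind: the text immediately preceding the statement says that $\mathcal{B}^+(\infty)$ is obtained from $\mathcal{B}(\infty)$ by swapping $\tilde{e}_i,\varepsilon_i$ with $\tilde{f}_i,\phi_i$ and negating $wt$, and that Theorem~4.8 is simply Theorem~4.7 rewritten under that swap; the paper just leaves the verification (in particular the order-reversal of tensor products under the duality and the self-duality of $\mathcal{B}_i$) implicit, whereas you spell it out.
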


\vspace{.3in}

\centerline{\textbf{4.2. Lusztig's Lagrangian.}}
\addcontentsline{toc}{subsection}{\tocsubsection {}{}{\; 4.2. Lusztig's Lagrangian.}}

\vspace{.15in}

Let $\vec{Q}$ be a fixed quiver as in Section~1.1. We will work over the field of complex numbers $\C$. We begin by describing the cotangent bundles to the moduli spaces $\underline{\mathcal{M}}^{\a}$ for $\a \in K_0(\vec{Q})$. As in Lecture~1, we will use the language of stacks only as a heuristic guide. We also want to stress that what we call and think of as the ``cotangent stack'' is only the ''underived cotangent stack'' rather than the real (derived) cotangent stack. This will suffice for all our purposes, but the reader should keep in mind that this is \textit{not} the correct notion of cotangent stack.

\vspace{.1in}

Recall that we have
$$\underline{\mathcal{M}}^{\a}=E_{\a} /G_{\a}.$$
To define $T^* \underline{\mathcal{M}}^{\a}$ we perform a symplectic (also called \textit{Marsden-Weinstein}) quotient (see \cite{Marsden}). Let $\vecdq$ be the doubled quiver of $\vec{Q}$~: it has the same vertex set $I$ but $H_{\vecdq}=H_{\vec{Q}} \sqcup \overline{H_{\vec{Q}}}$, i.e we replace each edge $h \in H_{\vec{Q}}$ by a pair of edges going in opposite orientations. If $k \in H_{\vecdq}$ then we put $\epsilon(k)=1$ if $k \in H_{\vec{Q}}$ and $\epsilon(k)=-1$ if $k \in \overline{H_{\vec{Q}}}$. Note that $\vecdq$ is independent of the choice of orientation of $\vec{Q}$. Let 
$$\overline{E}_{\a}=\bigoplus_{k \in H_{\vecdq}} \text{Hom}(V_{\a_{s(k)}}, V_{\a_{t(k)}})$$
be the space of representations of $\vecdq$ of dimension $\a$. It is a symplectic vector space with symplectic form
\begin{equation}
\begin{split}
\omega: \Lambda^2 \overline{E}_{\a} &\to \C\\
(\underline{x}, \underline{y}) &=\sum_{k \in H_{\vecdq}} Tr(\epsilon(k) x_ky_{\overline{k}}).
\end{split}
\end{equation}
In this way, $T^*E_{\a}$ gets identified with $\overline{E}_{\a}$. The $G_{\a}$-action on $E_{\a}$ gives rise to an action on $T^*E_{\a}$ which is the obvious one on $\overline{E}_{\a}$. The moment map associated to this action can be written as
\begin{equation}
\begin{split}
\mu:  \overline{E}_{\a} &\to \g_{\a}^* \simeq \g_{\a}\\
(\underline{x}) &=\sum_{k \in H_{\vecdq}} \epsilon(k) x_{\overline{k}}x_{k}.
\end{split}
\end{equation}
In the above we have identified $\g_{\a}=\bigoplus_{i} \mathfrak{gl}(\a_i,\C)$ with $\g^*_{\a}$ by means of the trace pairing. 

\vspace{.1in}

The level set $\mu^{-1}(0)$ is given by a collection of quadratic equations, one for each $i \in I$~:
$$\sum_{k \in H_{\vecdq}, s(k)=i} \epsilon(k) x_{\overline{k}}x_{k}=0.$$
There is a natural projection $\pi:\mu^{-1}(0) \to E_{\a}$. By construction, the fiber of $\pi$ over a point $(\underline{x}) \in E_{\a}$ is the orthogonal to the tangent space of $G_{\a} \cdot \underline{x}$ at the point $\underline{x}$. Another way of saying this is that $\mu^{-1}(0)$ is the union of the conormal bundles to all $G_{\a}$-orbits in $E_{\a}$. Note that by \cite{Brion}, the conormal bundle to $G_{\a} \cdot \underline{x}$ at $\underline{x}$ is canonically equal to $Ext^1(M_{\underline{x}},M_{\underline{x}})^*$, where $M_{\underline{x}}$ is the representation of $\vec{Q}$ associated to $\underline{x}$. We may naively think of the quotient $\mu^{-1}(0)/G_{\a}$ as the cotangent space (stack) $T^* \underline{\mathcal{M}}^{\a}$. One should keep in mind that $\mu^{-1}(0)$ is in general singular and reducible. Moreover, although a rapid count would give
$$dim\;\mu^{-1}(0)=dim\;\overline{E}_{\a}-dim\;\g_{\a}=2dim\;E_{\a}-dim\;G_{\a}$$
and hence $dim\;T^*\underline{\mathcal{M}}^{\a}=2\;dim E_{\a}-2 dim\;G_{\a}=2 dim\;\underline{\mathcal{M}}^{\a}$ as expected, one should be careful that $\mu$ is not submersive at $0$ in general, and $\mu^{-1}(0)$ may be of higher dimension\footnote{this is precisely why the correct notion of cotangent stack involves some higher derived terms.}. 

\vspace{.2in}

\addtocounter{theo}{1}
\noindent \textbf{Example \thetheo .} Let $\vec{Q}$ be a quiver of finite type. Then for any dimension vector $\a$, $E_{\a}$ has a finite number of $G_{\a}$-orbits. Hence $\mu^{-1}(0)=\bigsqcup_{\mathcal{O}} T^*_{\mathcal{O}}E_{\a}$ is a finite union of subvarieties of dimension $E_{\a}$. Hence $dim\;\mu^{-1}(0)=dim\;E_{\a}$ and the irreducible components of $\mu^{-1}(0)$ are parametrized by the $G_{\a}$-orbits in $E_{\a}$. It is easy to see that $\mu^{-1}(0)$ is singular as soon as $dim\;E_{\a} >0$. Note that in this case
$$dim\;\mu^{-1}(0)=dim\;E_{\a} > 2 dim\;E_{\a}-dim\;G_{\a}=dim\;E_{\a} -\langle \a, \a \rangle.$$
 \endexample

\vspace{.2in}

\addtocounter{theo}{1}
\noindent \textbf{Example \thetheo .} Let $\vec{Q}$ be the Kronecker quiver and let us consider the dimension vector $\delta=\epsilon_1 + \epsilon_2$. There is one semisimple orbit $\mathcal{O}_0$ (of dimension zero) in $E_{\a}$ and a $\mathbb{P}^1$-family of orbits $\{\mathcal{O}_{\lambda}\;|\; \lambda \in \mathbb{P}^!\}$, each of dimension one. Hence
$$\mu^{-1}(0)=T^*_{\mathcal{O}_0}E_{\a} \sqcup \bigsqcup_{\lambda} T^*_{\mathcal{O}_{\lambda}}E_{\a}$$
is of dimension three. Denoting the edges in $H_{\vec{Q}}$ by $h_1, h_2$ the equations for 
$\mu^{-1}(0)$ read
\begin{align}
x_{h_1}x_{\overline{h_1}} + x_{h_2} x_{\overline{h}_2}&=0 \tag{$1$}\\
x_{\overline{h_1}}x_{{h_1}} + x_{\overline{h_2}} x_{{h}_2}&=0 \tag{$\overline{1}$}
\end{align}
In dimension $\delta$, equations $(1), (\overline{1})$ are equivalent. One can check that $\mu^{-1}(0)$ 
is smooth away from zero and irreducible.
\endexample

\vspace{.2in}

We now introduce the main geometric character of this Lecture, which is an algebraic subvariety of $\mu^{-1}(0)$ much better behaved than $\mu^{-1}(0)$ itself. Set
\begin{equation}\label{E:deflambda}
\Lambda^{\a}=\big\{ \underline{x} \in \mu^{-1}(0)\;|\; \underline{x}\;\text{is\;nilpotent}\big\}.
\end{equation}
By nilpotent we mean that there exists $N \gg 0$ such that, for any path $k_1 \cdots k_N$ in $H_{\vecdq}$ of length $N$ the composition $x_{k_1} \cdots x_{k_n}=0$. The variety $\Lambda^{\a}$ was introduced by Lusztig in \cite{Lusaff}, and is sometimes called the \textit{Lusztig nilpotent variety}.

\vspace{.1in}

\begin{theo}[Lusztig]\label{T:LUlambda} The subvariety $\Lambda^\a \in \overline{E}_{\a}$ is Lagrangian.
\end{theo}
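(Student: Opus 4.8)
The goal is to show that $\Lambda^{\a}$, a closed subvariety of the symplectic vector space $\overline{E}_{\a}$ (note $dim\;\overline{E}_{\a}=2\,dim\;E_{\a}$, since each edge of $\vec{Q}$ contributes two edges to $\vecdq$), is isotropic and pure of dimension $dim\;E_{\a}$; together these two properties are exactly what ``Lagrangian'' means here. That $\Lambda^{\a}$ is closed is clear, since it is carved out of $\overline{E}_{\a}$ by the quadratic moment-map equations together with the nilpotency condition, both Zariski-closed.

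\emph{Isotropy.} This is the softer half. Recall that $\mu^{-1}(0)=\bigcup_{\mathcal{O}}T^{*}_{\mathcal{O}}E_{\a}$, the union being over the $G_{\a}$-orbits $\mathcal{O}\subset E_{\a}$, and that each conormal bundle $T^{*}_{\mathcal{O}}E_{\a}$ is a Lagrangian locally closed subvariety of $\overline{E}_{\a}\simeq T^{*}E_{\a}$. Grouping the orbits according to their dimension gives a finite stratification of $\mu^{-1}(0)$ into locally closed pieces, each of which is a union of conormal bundles and hence isotropic; therefore a dense open subset of any irreducible component $C$ of $\Lambda^{\a}$ sits inside an isotropic subvariety, so $\omega$ restricts to zero on the smooth locus of $C$. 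In particular $dim\;C\le dim\;E_{\a}$ for every component $C$.

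\emph{Purity.} The real work is to show that every component of $\Lambda^{\a}$ has dimension \emph{at least} $dim\;E_{\a}$. I would proceed by induction on $\sum_{i}\a_{i}$, the case $\a=0$ being trivial. Assume $\a\neq0$ and fix a vertex $i$ with $\a_{i}>0$. For $\underline{x}\in\Lambda^{\a}$ set $\varepsilon_{i}(\underline{x})=\mathrm{codim}_{(V_{\a})_{i}}\big(\mathrm{Im}\,\bigoplus_{k\in H_{\vecdq},\,t(k)=i}x_{k}\big)$; this is upper semicontinuous and yields a partition $\Lambda^{\a}=\bigsqcup_{d\ge0}\Lambda^{\a}_{i;d}$ into locally closed pieces, in close analogy with the stratification $E_{\gamma}=\bigsqcup_{d}E^{i;d}_{\gamma}$ used in the proof of Lusztig's theorem in Lecture~3. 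The crucial step is to analyse these strata: over $\Lambda^{\a}_{i;d}$ with $d\ge1$ there is a canonically determined $\underline{x}$-stable ``$S_{i}^{\oplus d}$'' piece at the vertex $i$, and quotienting it out defines a map $\Lambda^{\a}_{i;d}\to\Lambda^{\a-d\epsilon_{i}}_{i;0}$ which, after passing to a suitable partial-flag cover, is an iterated affine fibration whose rank one computes explicitly. Combining this count with the inductive hypothesis $dim\;\Lambda^{\a-d\epsilon_{i}}=dim\;E_{\a-d\epsilon_{i}}$ gives $dim\;\Lambda^{\a}_{i;d}=dim\;E_{\a}$ whenever the stratum is nonempty. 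To finish one observes that every component $C$ of $\Lambda^{\a}$ is generically contained in some $\Lambda^{\a}_{i;d}$ with $d\ge1$ -- equivalently, a nonzero nilpotent representation of $\vecdq$ in $\mu^{-1}(0)$ always has $\varepsilon_{i}>0$ for some $i$, which is elementary since such a representation has a nonzero simple quotient at some vertex. Together with the isotropy bound this shows $\Lambda^{\a}$ is pure of dimension $dim\;E_{\a}$, hence Lagrangian.

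The main obstacle is precisely the fibration analysis in the inductive step: one must check that ``remove an $S_{i}^{\oplus d}$ top'' is well defined on $\Lambda^{\a}_{i;d}$, that it is compatible with both the moment-map equations and the nilpotency condition (so that the target is indeed $\Lambda^{\a-d\epsilon_{i}}_{i;0}$), and that the resulting map is an affine fibration of the predicted rank over a principal-bundle-over-partial-flag base; the bookkeeping parallels Lecture~3 but must be redone on the doubled quiver $\vecdq$, where $i$ is never a sink. As an alternative route to the upper bound $dim\;\Lambda^{\a}\le dim\;E_{\a}$ one can invoke Lusztig's resolution: the smooth varieties of pairs $(\underline{x},\ \underline{x}\text{-stable flag of a fixed type})$, of dimension $dim\;E_{\a}$ for appropriate flag types, map properly to $\overline{E}_{\a}$ with images covering $\Lambda^{\a}$; but the lower bound and purity still require an argument of the kind above.
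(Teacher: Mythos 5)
Your purity argument follows essentially the same inductive scheme as the paper: the stratification $\Lambda^{\alpha} = \bigsqcup_d \Lambda^{\alpha}_{i;d}$, the Hecke-correspondence fibration relating $\Lambda^{\alpha}_{i;d}$ to $\Lambda^{\alpha - d\epsilon_i}_{i;0}$, and the observation that every component is generically contained in some $\Lambda^{\alpha}_{i;d}$ with $d \geq 1$ because a nonzero nilpotent representation of $\vecdq$ has a simple quotient at some vertex. Carrying out the dimension count you defer is precisely what the paper's Hecke-correspondence lemmas provide. The isotropy half, however, contains a genuine gap.

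The claim that grouping $G_{\alpha}$-orbits by dimension cuts $\mu^{-1}(0)$ into isotropic pieces is false: a union of Lagrangian conormal bundles parametrized by a positive-dimensional family of orbits is not isotropic. Over an $m$-parameter family of $d$-dimensional orbits one has, at a generic point, $m$ moduli directions transverse to any single conormal bundle on which $\omega$ need not vanish, and the union then has dimension $\dim\;E_{\alpha} + m$. Concretely, your argument would prove $\mu^{-1}(0)$ itself is isotropic, but Example~4.11 of this text (Kronecker quiver, $\alpha = \delta$) computes $\dim\;\mu^{-1}(0) = 3 > 2 = \dim\;E_{\delta}$, so it is not. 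The reasoning ``a union of conormal bundles, hence isotropic'' is where the argument breaks.

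What you have left out of the isotropy half is the nilpotency condition, and it is exactly what rescues the argument. The paper exhibits $\Lambda^{\alpha}$ inside the projection to $T^*E_{\alpha}$ of $T^*_Z(E_{\alpha} \times \mathcal{B}_{\alpha}) \cap (T^*E_{\alpha} \times \mathcal{B}_{\alpha})$, where $\mathcal{B}_{\alpha}$ is the full flag variety of $G_{\alpha}$ and $Z$ is the incidence variety of pairs $(\underline{x}, W_{\bullet})$ with $\underline{x}(W_k) \subset W_{k+1}$; nilpotency of $\underline{x}$ is what guarantees a compatible flag exists. Isotropy of this projection follows from a Kashiwara--Schapira result on projections from conormal varieties when the second factor is projective. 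This is precisely the argument you mention only as an ``alternative route to the upper bound'' --- but note it actually delivers the vanishing of $\omega$, not merely a dimension inequality, which the proper-covering-map formulation alone would not.
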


\vspace{.1in}

\noindent
We will prove Theorem~\ref{T:LUlambda} in Section~4.3. Here ``Lagrangian'' means by definition that $\Lambda^{\a}$ is of pure dimension $dim\;\overline{E}_{\a}/2=dim\;E_{\a}$, and that the symplectic form $\omega$ vanishes on the open set of smooth points of $\Lambda^{\a}$. 

\vspace{.1in}

One may also consider the quotient stack $\underline{\Lambda}^{\a}=\Lambda^{\a}/G_{\a}$. We have $dim\;\underline{\Lambda}^{\a}=dim\;E_{\a}-dim\;G_{\a}=-\langle \a, \a \rangle$, which is half of the (expected) dimension of $T^* \underline{\mathcal{M}}^{\a}$. We set $\underline{\Lambda}_{\vec{Q}}=\bigsqcup_{\a} \underline{\Lambda}^{\a}$.

\vspace{.2in}

\addtocounter{theo}{1}
\noindent \textbf{Example \thetheo .} Assume that $\vec{Q}$ is of finite type. Then as we have seen, $\mu^{-1}(0)$, being a finite union of conormal bundles, is already Lagrangian. This is explained by the following fact~:

\begin{prop}\label{P:finiteLag} If $\vec{Q}$ is of finite type then any $\underline{x} \in \mu^{-1}(0)$ is nilpotent.
\end{prop}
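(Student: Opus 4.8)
\emph{Proof plan.} The strategy is to translate the statement into the language of preprojective algebras and reduce it to one classical finiteness theorem. Recall that the \emph{preprojective algebra} of $\vec{Q}$ is $\Pi := k\vecdq / R$, where $R$ is the two-sided ideal generated by the elements $r_i = \sum_{k \in H_{\vecdq},\ s(k) = i} \epsilon(k)\,\overline{k}\,k$, one for each vertex $i \in I$. Comparing with the formula for the moment map $\mu$, one sees at once that a point $\underline{x} \in \overline{E}_{\a}$ lies in $\mu^{-1}(0)$ if and only if the assignment $k \mapsto x_k$ equips $V_{\a}$ with the structure of a $\Pi$-module; that is, $\mu^{-1}(0)$ is exactly the variety of $\Pi$-modules of dimension vector $\a$. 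Under this identification, the nilpotency condition on $\underline{x}$ from \eqref{E:deflambda} says precisely that the arrow ideal $\mathfrak{a} \subseteq \Pi$ acts on $V_{\a}$ through a nilpotent set of operators (equivalently, all sufficiently long paths in $\vecdq$ act as $0$). Hence it suffices to show that $\mathfrak{a}$ is a \emph{nilpotent ideal} of $\Pi$: for then any composition $x_{k_1}\cdots x_{k_N}$ with $N$ larger than the nilpotency index of $\mathfrak{a}$ vanishes, and this bound is uniform over all $\Pi$-modules.

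Since the defining relations $r_i$ are homogeneous of degree $2$, the algebra $\Pi$ inherits a nonnegative grading $\Pi = \bigoplus_{n \geq 0}\Pi_n$ by path length, with $\mathfrak{a} = \bigoplus_{n \geq 1}\Pi_n$; hence $\mathfrak{a}^{m} \subseteq \bigoplus_{n \geq m}\Pi_n$, and $\mathfrak{a}$ is nilpotent as soon as $\dim_k \Pi < \infty$. Thus the whole Proposition comes down to the assertion that \emph{the preprojective algebra of a finite type (Dynkin) quiver is finite-dimensional}. I would invoke this as a classical fact (Gel'fand--Ponomarev; see e.g.\ \cite{Ringelbook}); for completeness I would recall the reason. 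The algebra $\Pi$ admits a standard description in terms of the iterated inverse Auslander--Reiten translates of $k\vec{Q}$ (Theorem~\ref{T:AR}), from which $\dim_k\Pi$ is controlled by $\sum_{n\geq 0}\dim_k\mathrm{Hom}_{k\vec{Q}}\bigl(k\vec{Q},\,\tau^{-n}(k\vec{Q})\bigr)$. When $\vec{Q}$ is of finite type every indecomposable $k\vec{Q}$-module is preinjective, and since by Gabriel's theorem there are only finitely many of them, one has $\tau^{-n}(k\vec{Q}) = 0$ for all $n$ past some bound $N_0$; therefore the sum above is finite. Consequently $\mathfrak{a}^{N_0+1}=0$ in $\Pi$, and every $\underline{x}\in\mu^{-1}(0)$ is nilpotent.

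I expect the only genuine obstacle to be the finite-dimensionality of $\Pi$ itself; everything else is purely formal once the preprojective-algebra dictionary is in place. (An equivalent, slightly more hands-on packaging of the last step: if $\underline{x}$ were not nilpotent, the ``infinite radical'' $\bigcap_n \mathrm{rad}^n(M_{\underline{x}})$ would be a nonzero $\Pi$-submodule equal to its own radical, which is impossible once one knows $\mathfrak{a}$ is a nilpotent — hence, by Nakayama, superfluous-on-the-top — ideal.) It is worth noting that this argument also yields $\Lambda^{\a} = \mu^{-1}(0)$ for finite type, which explains why $\mu^{-1}(0)$ is already Lagrangian in that case, as observed just before the Proposition.
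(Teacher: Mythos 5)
Your argument is correct and takes a genuinely different route from the paper's. You reduce the statement to the classical theorem (Gel'fand--Ponomarev) that the preprojective algebra $\Pi(\vec{Q})$ of a Dynkin quiver is finite-dimensional: once that is granted, the arrow ideal $\mathfrak{a}$ is nilpotent because the relations $r_i$ are length-homogeneous, so $\mathfrak{a}^m\subseteq\bigoplus_{n\geq m}\Pi_n$ and finite-dimensionality forces $\Pi_n=0$ for $n\gg 0$; nilpotency of every $\underline{x}\in\mu^{-1}(0)$ follows with a bound uniform in $\alpha$. The paper's proof instead stays entirely at the level of a single representation and never mentions $\Pi$: it decomposes the $\vec{Q}$-module $M_{\pi(\underline{x})}$ into indecomposables, takes the $\prec$-maximal one $N_0$ (using the total order with $N\prec N'\Rightarrow\mathrm{Hom}(N',N)=\mathrm{Ext}^1(N,N')=0$, already cited earlier for Reineke's desingularization), shows that the reversed arrows $x_{\bar h}$ vanish on the canonical summand $N_0^{\oplus d_0}$, extracts a nonzero subspace $W$ killed by all of $\underline{x}$ using that $N_0$ itself is nilpotent, and inducts on $\dim V_\alpha/W$. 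What your approach buys is conceptual economy and a uniform nilpotency index coming from the algebra rather than from the particular module; what it costs is reliance on the finite-dimensionality of $\Pi$, which the paper neither proves nor cites. The paper's version buys self-containedness within tools it has already invoked, at the price of a slightly longer by-hand induction. Both proofs are complete as stated, and your closing observation that this explains $\Lambda^\alpha=\mu^{-1}(0)$ in finite type matches the remark the paper makes right after the Proposition.
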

\begin{proof} We will again rely on \cite[Lemma~3.19]{Trieste} which asserts that there exists a total ordering $\prec$ on the set of indecomposable representations of $\vec{Q}$ such that $N \prec N' \Rightarrow Hom(N',N)=Ext^1(N,N')=0$, and that $Ext^1(N,N)=0$ for all $N$. Let $\underline{x}=(x_k)_{k \in H_{\vecdq}}$ be a point in $\mu^{-1}(0)$, and let us denote as usual by $M_{\pi(\underline{x})}$ the associated representation of $\vec{Q}$. We may decompose 
\begin{equation}\label{E:juyg}
M_{\pi(\underline{x})}=\bigoplus_{N} N^{\oplus d_N},
\end{equation}
and let $N_0$ be the maximal indecomposable appearing in (\ref{E:juyg}) for the order $\prec$. The subspace $N_0^{\oplus d_0} \subset V_{\a}$ is canonical since $Hom(N_0,N')=0$ for all $N' \neq N_0$, and the restriction to $N_0^{\oplus d_0}$ of the maps $x_k$ for $k \in \overline{H}_{\vec{Q}}$ are zero since $Ext^1(N',N_0)^*=0$ for $N' \neq N_0$. Because $\vec{Q}$ is of finite type, it has no oriented cycles and hence $N_0$ is itself nilpotent. Thus we can find a nontrivial subspace $W \subset N_0^{\oplus d_0}$ on which all maps $x_k$ for $k \in H_{\vecdq}$ are zero. In other words, $Ker\;\underline{x} \neq 0$. Replacing $V_{\a}$ by $V_{\a}/W$ and arguing by induction we arrive at the conclusion that $\underline{x}$ is nilpotent as wanted.
\end{proof}

\vspace{.1in}

The coincidence of $\mu^{-1}(0)$ and $\Lambda^{\a}$ for finite type quivers may seem awkward -- but remember that $\mu^{-1}(0)$ only corresponds to the degree zero part of $T^*\underline{\mathcal{M}}^{\a}$ while $\Lambda^{\a}$ corresponds to $\underline{\Lambda}^{\a}$ which is morally a Lagrangian subvariety in the \textit{true} $T^*\underline{\mathcal{M}}^{\a}$. Note moreover that, in the stacky sense, $dim\;\underline{\mathcal{M}}^{\a} \leq 0$.
\endexample

\vspace{.2in}

\addtocounter{theo}{1}
\noindent \textbf{Example \thetheo .} Let $\vec{Q}$ be the Kronecker quiver of Example~4.11. Then
$$\Lambda^{\delta}=\big\{ (x_{h_1},x_{h_2}, x_{\overline{h}_1},x_{\overline{h}_2})\;|\; x_{h_1}x_{\overline{h}_1}= x_{h_2}x_{\overline{h}_2}=x_{h_1}x_{\overline{h}_2}=x_{h_2}x_{\overline{h}_1}=0\big\}.$$
We see that $\Lambda^{\delta}=\C^2 \times \{0\} \cup \{0\} \times \C^2$. It is indeed Lagrangian and has two irreducible components.
\endexample

\vspace{.2in}

\addtocounter{theo}{1}
\noindent \textbf{Example \thetheo .} As a last example, take the cyclic quiver

\centerline{
\begin{picture}(130, 50)
\put(30,20){\circle*{5}}
\put(100,20){\circle*{5}}
\put(27,10){$1$}
\put(97,10){$2$}
\put(55,28){$h_1$}
\put(55,8){$h_2$}
\put(65,25){\vector(1,0){5}}
\put(40,25){\line(1,0){50}}
\put(70,15){\vector(-1,0){5}}
\put(40,15){\line(1,0){50}}
\put(-10,18){$\vec{Q}=$}
\end{picture}}

and choose $\delta=\epsilon_1+\epsilon_2$ for the dimension vector again. This time,
$$\Lambda^{\delta}=\big\{ (x_{h_1},x_{h_2}, x_{\overline{h}_1},x_{\overline{h}_2})\;|\; x_{h_1}x_{\overline{h}_1}= x_{h_2}x_{\overline{h}_2}=x_{h_1}x_{{h}_2}=x_{\overline{h}_1}x_{\overline{h}_2}=0\big\}.$$
We see that $\Lambda^{\delta}=(\C \times \{0\}) \times (\C \times \{0\}) \cup (\{0\} \times \C) \times (\C \times \{0\})$. It is again Lagrangian and has two irreducible components.
\endexample

\vspace{.2in}

The last two examples suggest that, like $\overline{E}_{\a}=T^*E_{\a}$, $\Lambda^{\a}$ is independent of the choice of a particular orientation for $\vec{Q}$. This is indeed true and can be easily verified. If $H_1, H_2$ are two different orientations of the same graph, define a linear isomorphism
$\Phi: \overline{E}_{\a} \stackrel{\sim}{\to} \overline{E}_{\a}$ by
$$\Phi(x_h)=\begin{cases} x_h & \text{if}\; h \in \overline{H}_1 \cup H_2\\ -x_h & \text{if}\; h \in H_1, h \notin H_2.\end{cases}$$
Then $\Phi$ restricts to an isomorphism $\Lambda^{\a}_{H_1} \stackrel{\sim}{\to} \Lambda_{H_2}^{\a}$.

\vspace{.3in}

\centerline{\textbf{4.3. Hecke correspondences.}}
\addcontentsline{toc}{subsection}{\tocsubsection {}{}{\; 4.3. Hecke correspondences.}}

\vspace{.15in}

For $i \in I$, $d \geq 1$ and $\a \in K_0(\vec{Q})$. Let us consider the variety $\Lambda_{(1)}^{\a,\a+d\epsilon_i}$ of tuples $(\underline{x}, W, \rho_{\a}, \rho_{d\epsilon_i})$ where $\underline{x} \in \Lambda^{\a+d\epsilon_i}$, $W$ is an $\underline{x}$-stable subspace of $V_{\a+d\epsilon_i}$ of dimension $\a$, and $\rho_{\a}: W \stackrel{\sim}{\to} V_{\a},\; \rho_{d\epsilon_i}: V_{\a+d\epsilon_i}/W \stackrel{\sim}{\to} V_{d\epsilon_i}$. We also define a variety $\Lambda^{\a,\a+d\epsilon_i}$ of pairs $(\underline{x},W)$ as above.
There are some natural maps
\begin{equation}\label{E:Heckelambda}
\xymatrix{  & \Lambda^{\a,\a+d\epsilon_i}_{(1)} \ar[dl]_-{p} \ar[r]^-{r} & \Lambda^{\a,\a+d\epsilon_i} \ar[dr]^-{q} &\\
\Lambda^{\a} \simeq \Lambda^{\a} \times \Lambda^{d\epsilon_i} & & & \Lambda^{\a+d\epsilon_i}}
\end{equation}
given by $p(\underline{x}, W, \rho_{\a},\rho_{d\epsilon_i})=(\rho_{\a,*}(\underline{x}_{|W}), \rho_{d\epsilon_i,*}(\underline{x}_{|V_{\a+d\epsilon_i}/W}))$, $r(\underline{x}, W, \rho_{\a},\rho_{d\epsilon_i})=(\underline{x},W)$ and $q(\underline{x}, W)=\underline{x}$. The varieties $\Lambda_{(1)}^{\a,\a+d\epsilon_i}, \Lambda^{\a,\a+d\epsilon_i}$ are often called \textit{Hecke correspondences}. They are direct analogs of the correspondences used in the definition of the induction functor of the Hall category (see Section~1.3).

\vspace{.1in}

The map $r$ is a principal $G_{\a} \times G_{d\epsilon_i}$-bundle and $q$ is proper. Observe that neither $q$ nor $p$ are usually locally trivial. This is clear for $q$, and for $p$ it comes from the fact that we are considering representations of a quiver $\vecdq$ with relations. However there exists a simple stratification of the varieties $\Lambda^{\a}$ into pieces over which $p, q$ are indeed smooth fibrations. For $l \geq 0$ and $\gamma \in K_0(\vec{Q})$ set
$$\Lambda^{\a}_{i;l}=\bigg\{ \underline{x}\;|\; codim_{(V_{\gamma})_i} \bigg( Im\big( \bigoplus_{\substack{h \in H_{\vecdq}\\ t(h)=i}} x_h \big)\bigg)=l\bigg\}.$$
As in Section~3.5, each $\Lambda^{\a}_{i;l}$ is locally closed in $\Lambda^{\a}$, and $\Lambda^{\a}_{i;0}$ is open.  The notation $\Lambda^{\a}_{i,\geq l}$ requires no explanation. We will say that an irreducible components $C$ of $\Lambda$ generically belongs to $\Lambda^{\a}_{i;l}$ if $C \cap \Lambda^{\a}_{i;l}$ is open dense in $C$. Hence any $C$ \textit{generically belongs} to $\Lambda^{\a}_{i;l}$ for some $0 \leq l \leq \a_i$. It is easy to deduce from the nilpotency condition that
\begin{equation}\label{E:fwqa}
\Lambda^{\a}=\bigcup_{i \in I} \Lambda^{\a}_{i, \geq 1}.
\end{equation}
It follows that any irreducible component $C$ of $\Lambda^{\a}$ belongs to $\Lambda^{\a}_{j;l}$ for some $j \in I$ and some $l \geq 1$. Indeed, otherwise $C \cap \Lambda^{\a}_{j;0}$ is open dense in $C$ for all $j$, and hence so is $C \cap \bigcap_j \Lambda^{\a}_{j,0}.$ But this last set is empty by (\ref{E:fwqa}).

\vspace{.1in}

\begin{lem}\label{L:lambdadim} We have $qrp^{-1}(\Lambda^{\a}_{i;l}) = \Lambda^{\a}_{i;l+d}$. Moreover,
\begin{enumerate}
\item[i)] The restriction of $q$ to $q^{-1}(\Lambda^{\a+d\epsilon_i}_{i;l})$ for $l \geq d$ is a locally
trivial fibration with fiber $Gr(l-d,l)$,
\item[ii)] The restriction of $p$ to $p^{-1}(\Lambda^{\a}_{i;s})$ for $s \geq 0$ is a smooth map with fibers isomorphic to $G_{\a+d\epsilon_i}/U_{\a,\a+d\epsilon_i} \times \C^{-d(\a,\epsilon_i) +d\a_i +ds}$, where
$U_{\a,\a+d\epsilon_i} \simeq \C^{d\a_i}$ is the unipotent radical of the parabolic subgroup of type $(\a,d\epsilon_i)$.
\end{enumerate}
\end{lem}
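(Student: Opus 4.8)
The plan is to reduce everything to the elementary linear algebra of extensions of the zero representation of $\vecdq$ of dimension $d\epsilon_i$, exploiting that $\vec{Q}$, hence $\vecdq$, has no edge loops. A point of $\Lambda^{\a,\a+d\epsilon_i}$ is a pair $(\underline{x},W)$ with $\underline{x}\in\Lambda^{\a+d\epsilon_i}$ and $W$ an $\underline{x}$-stable $I$-graded subspace of $V_{\a+d\epsilon_i}$ of dimension $\a$, and $V_{\a+d\epsilon_i}/W$ is necessarily the zero representation; thus $W$ is determined by its component $W_i$ at the vertex $i$, and $\underline{x}$-stability amounts to the single condition $\mathrm{Im}\big(\bigoplus_{h:\,t(h)=i}x_h\big)\subseteq W_i$. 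I record two immediate facts: the moment map equation and nilpotency of $\underline{x}$ pass to the subrepresentation $\underline{x}|_W$; and conversely, since $V_{\a+d\epsilon_i}/W$ is concentrated at $i$ and there is no loop at $i$, every $x_k$ $(k\in H_{\vecdq})$ has image contained in $W$, so any extension of the zero representation by $\underline{x}|_W$ is again nilpotent (of index at most one more than that of $\underline{x}|_W$).

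Granting this, $qrp^{-1}(\Lambda^{\a}_{i;l})=\Lambda^{\a+d\epsilon_i}_{i;l+d}$ is a codimension count: if $\underline{x}$ has an $\underline{x}$-stable dimension-$\a$ subspace $W$ with $\underline{x}|_W\in\Lambda^{\a}_{i;l}$, then $\mathrm{Im}\big(\bigoplus_{h:\,t(h)=i}x_h\big)$ lies in $W_i$ with codimension $l$ there, hence has codimension $l+d$ in $(V_{\a+d\epsilon_i})_i$; conversely, any $\underline{x}\in\Lambda^{\a+d\epsilon_i}_{i;l+d}$ admits such a $W$ since a codimension-$d$ subspace of $(V_{\a+d\epsilon_i})_i$ containing that image exists exactly when $l+d\geq d$, and the induced $\underline{x}|_W$ then lies in $\Lambda^{\a}$ by the previous paragraph. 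The same analysis gives (i): the fibre of $q$ over $\underline{x}\in\Lambda^{\a+d\epsilon_i}_{i;l}$ is the set of codimension-$d$ subspaces of $(V_{\a+d\epsilon_i})_i$ containing $M_{\underline{x}}:=\mathrm{Im}\big(\bigoplus_{h:\,t(h)=i}x_h\big)$, i.e. $Gr(l-d,l)$ inside the $l$-dimensional quotient $(V_{\a+d\epsilon_i})_i/M_{\underline{x}}$ (empty for $l<d$); and since $M_{\underline{x}}$ has constant rank along $\Lambda^{\a+d\epsilon_i}_{i;l}$ it is a subbundle there, so the relative Grassmannian $q^{-1}(\Lambda^{\a+d\epsilon_i}_{i;l})\to\Lambda^{\a+d\epsilon_i}_{i;l}$ is Zariski-locally trivial.

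For (ii) I would first fix a reference subspace $W_0$ of type $(\a,d\epsilon_i)$ with reference isomorphisms, identifying the space of triples $(W,\rho_{\a},\rho_{d\epsilon_i})$ with $G_{\a+d\epsilon_i}/U_{\a,\a+d\epsilon_i}$, where $U_{\a,\a+d\epsilon_i}\cong\mathrm{Hom}(V_{d\epsilon_i},V_{\a})\cong\C^{d\a_i}$ is the unipotent radical of the corresponding parabolic. Having fixed such a triple, write $\underline{x}$ in block form with respect to a splitting $V_{\a+d\epsilon_i}=W\oplus V'$ with $V'\cong V_{\a+d\epsilon_i}/W$: since $\underline{x}|_W$ is prescribed and the quotient is zero, the only free datum is $B:=(b_k)_{k:\,s(k)=i}\in\mathrm{Hom}\big(V'_i,\bigoplus_{k:\,s(k)=i}W_{t(k)}\big)$. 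A direct computation of $\mu(\underline{x})_i$ in these blocks shows $\mu(\underline{x})_i=0$ is equivalent to $Y\circ B=0$, where $Y\colon\bigoplus_{k:\,s(k)=i}W_{t(k)}\to W_i$ is assembled from $\underline{x}|_W$ and, after the reindexing $\bar{k}\mapsto h$ and up to signs, equals $\bigoplus_{h:\,t(h)=i}(\underline{x}|_W)_h$; hence $\mathrm{rank}\,Y=\a_i-s$ when $\underline{x}|_W\in\Lambda^{\a}_{i;s}$. Combining with $\dim\bigoplus_{k:\,s(k)=i}W_{t(k)}=\sum_{j\neq i}(-a_{ij})\a_j=2\a_i-(\a,\epsilon_i)$ gives $\dim\ker Y=\a_i-(\a,\epsilon_i)+s$, so $B$ ranges over the affine space $\mathrm{Hom}(V'_i,\ker Y)\cong\C^{-d(\a,\epsilon_i)+d\a_i+ds}$. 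As $\ker Y$ has constant dimension along $\Lambda^{\a}_{i;s}$ it forms a vector subbundle, and $p^{-1}(\Lambda^{\a}_{i;s})$ is the total space of a vector bundle of that rank over $\big(G_{\a+d\epsilon_i}/U_{\a,\a+d\epsilon_i}\big)\times\Lambda^{\a}_{i;s}$; this realises the restriction of $p$ as a smooth morphism with fibres $G_{\a+d\epsilon_i}/U_{\a,\a+d\epsilon_i}\times\C^{-d(\a,\epsilon_i)+d\a_i+ds}$.

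The only step with real content is the block-matrix computation showing that the moment map equation $\mu(\underline{x})_i=0$ cuts out precisely $\mathrm{Hom}(V'_i,\ker Y)$ and that the rank of $Y$ is governed by the stratum index $s$; this is what forces the fibre dimension to be the stated $-d(\a,\epsilon_i)+d\a_i+ds$. That the nilpotency constraint is vacuous here — a consequence of the absence of loops at $i$ — is precisely what lets one separate this moment-map analysis from any further condition, and everything else (the codimension bookkeeping, the $G/U$ identification, the subbundle and local-triviality statements) is routine.
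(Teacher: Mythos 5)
Your argument is correct and follows essentially the same route as the paper's: you describe the $q$-fiber over $\underline{x}$ as the set of codimension-$d$ subspaces of $(V_{\a+d\epsilon_i})_i$ containing $\mathrm{Im}\big(\bigoplus_{t(h)=i}x_h\big)$, and the $p$-fiber over $\underline{y}$ by block-decomposing $\underline{x}$ and reducing the moment-map equation at vertex $i$ to $\mathrm{Im}(z)\subseteq\ker\big(\bigoplus_{t(h)=i}\epsilon(\overline{h})y_h\big)$, with the identical rank and kernel-dimension bookkeeping. The only difference is that you spell out the preliminary identity $qrp^{-1}(\Lambda^{\a}_{i;l})=\Lambda^{\a+d\epsilon_i}_{i;l+d}$ and the passage of nilpotency and the moment-map equation between $\underline{x}$ and $\underline{x}|_W$, which the paper treats as implicit.
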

\begin{proof} Statement i) is clear~: the fiber of $q$ at $\underline{x} \in \Lambda^{\a+d\epsilon_i}$ is the set of $\underline{x}$-stable subspaces $W \subset V_{\a+d\epsilon_i}$ of dimension $\a$; a subspace $W$ of dimension $\a$ is $\underline{x}$-stable if and only if it contains the characteristic subspace $Im\big( \bigoplus_{\substack{t(h)=i}} x_h \big)$.\\
To prove statement ii), let us fix some $\underline{y} \in \Lambda^{\a}_{i;s}$. The fiber of $p$ at $\underline{y}$ is the set of tuples $(\underline{x}, W, \rho_{\a}, \rho_{d\epsilon_i})$ where $W, \rho_{\a}, \rho_{d\epsilon_i}$ may be chosen arbitrarily, and where $\underline{x}$ is an extension of $\rho_{\a}^*(\underline{y})$ from $W$ to $V_{\a+d\epsilon_i}$. The choice of $(W,\rho_{\a}, \rho_{d\epsilon_i})$ is given by a point of $G_{\a+d\epsilon_i}/U_{\a,\a+d\epsilon_i}$.
As for the extension, it is given by a map
$z: V_{d\epsilon_i} \to \bigoplus_{ t(h)=i} (V_{\a})_{s(h)}$, and it belongs to $\Lambda^{\a+d\epsilon_i}$ if and only if the composition
$$\xymatrix{V_{d\epsilon_i} \ar[r]^-{z} & \bigoplus_{t(h)=i} (V_{\a})_{s(h)} \ar[r]^-{\bigoplus \epsilon(\overline{h})y_{h}} & (V_{\a})_i}$$ 
vanishes, i.e. if and only if $Im(z) \subset Ker \big( \bigoplus_{\substack{ t(h)=i}} \epsilon(\overline{h})y_h\big)$
(note that the nilpotency condition for $\underline{x}$ is automatically verified). By assumption
we have $rank\big( \bigoplus_{\substack{ t(h)=i}} \epsilon(\overline{h})y_h\big)=\a_i-s$, so we deduce that $dim\;Ker\big( \bigoplus_{\substack{ t(h)=i}} \epsilon(\overline{h})y_h\big)=\sum_{t(h)=i} \a_{s(h)}-\a_i+s =-(\a,\epsilon_i) + \a_i +s$. 
\end{proof}

\vspace{.1in}

Set $q'=pr$, and write $N^{\a,\a+d\epsilon_i}=p^{-1}(\Lambda^\a_{i;0})=(q')^{-1}(\Lambda^{\a+d\epsilon_i}_{i;d})$.

\vspace{.1in}

\begin{cor}\label{C:dimlambda} We have
\begin{enumerate}
\item[i)] $q': N^{\a,\a+d\epsilon_i} \to \Lambda^{\a+d\epsilon_i}_{i;d}$ is a principal $G_{\a} \times G_{d\epsilon_i}$-bundle,
\item[ii)] $p: N^{\a,\a+d\epsilon_i} \to \Lambda^{\a}_{i;0}$ is a smooth map whose fiber is connected of dimension $\sum_{j \neq i} \a_j^2 + (\a_i+d)^2-d(\a_i,\epsilon_i)$.
\end{enumerate}
\end{cor}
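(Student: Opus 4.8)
The plan is to obtain both assertions as the ``open stratum'' specializations of Lemma~\ref{L:lambdadim}, together with the bookkeeping identity $N^{\a,\a+d\epsilon_i}=p^{-1}(\Lambda^\a_{i;0})=(q')^{-1}(\Lambda^{\a+d\epsilon_i}_{i;d})$ recorded just above the statement, where $q'$ denotes the composition $q\circ r\colon\Lambda^{\a,\a+d\epsilon_i}_{(1)}\to\Lambda^{\a+d\epsilon_i}$. Since $q'=q\circ r$, this gives $N^{\a,\a+d\epsilon_i}=r^{-1}\big(q^{-1}(\Lambda^{\a+d\epsilon_i}_{i;d})\big)$, and surjectivity of $r$ then identifies $r\big(N^{\a,\a+d\epsilon_i}\big)$ with $q^{-1}(\Lambda^{\a+d\epsilon_i}_{i;d})$.

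For part i) I would factor the restriction of $q'$ to $N^{\a,\a+d\epsilon_i}$ as $q\circ(r|_{N^{\a,\a+d\epsilon_i}})$. The first map $r|_{N^{\a,\a+d\epsilon_i}}\colon N^{\a,\a+d\epsilon_i}\to q^{-1}(\Lambda^{\a+d\epsilon_i}_{i;d})$ is the restriction of the global principal $G_\a\times G_{d\epsilon_i}$-bundle $r$ over the preimage of the locally closed subset $q^{-1}(\Lambda^{\a+d\epsilon_i}_{i;d})$, hence is again a principal $G_\a\times G_{d\epsilon_i}$-bundle. The second map is $q$ restricted to $q^{-1}(\Lambda^{\a+d\epsilon_i}_{i;d})$, which by Lemma~\ref{L:lambdadim} i) applied with $l=d$ is a locally trivial fibration with fibre $Gr(l-d,l)=Gr(0,d)=\{pt\}$, i.e. an isomorphism onto $\Lambda^{\a+d\epsilon_i}_{i;d}$. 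A principal $G_\a\times G_{d\epsilon_i}$-bundle post-composed with an isomorphism of the base is again a principal $G_\a\times G_{d\epsilon_i}$-bundle, which gives i).

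For part ii) I would apply Lemma~\ref{L:lambdadim} ii) with $s=0$: on $p^{-1}(\Lambda^\a_{i;0})=N^{\a,\a+d\epsilon_i}$ the map $p$ is smooth with fibre isomorphic to $G_{\a+d\epsilon_i}/U_{\a,\a+d\epsilon_i}\times\C^{-d(\a,\epsilon_i)+d\a_i}$. Connectedness of this fibre is immediate: $G_{\a+d\epsilon_i}$ is connected, being a product of general linear groups, hence so is its quotient by the closed subgroup $U_{\a,\a+d\epsilon_i}$, and a product of connected varieties is connected. For the dimension I would just add up, using $dim\;G_{\a+d\epsilon_i}=\sum_{j\neq i}\a_j^2+(\a_i+d)^2$ and $dim\;U_{\a,\a+d\epsilon_i}=d\a_i$:
$$dim\Big(G_{\a+d\epsilon_i}/U_{\a,\a+d\epsilon_i}\times\C^{-d(\a,\epsilon_i)+d\a_i}\Big)=\sum_{j\neq i}\a_j^2+(\a_i+d)^2-d\a_i-d(\a,\epsilon_i)+d\a_i.$$
The two copies of $d\a_i$ cancel, leaving $\sum_{j\neq i}\a_j^2+(\a_i+d)^2-d(\a,\epsilon_i)$, which is the quantity in the statement.

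There is essentially no obstacle here: the Corollary is pure bookkeeping layered on top of Lemma~\ref{L:lambdadim}, where all the genuine geometry lives (the description of the fibres of $p$ via extension maps $z$ with image contained in a kernel, and the rank computation for $\bigoplus_{t(h)=i}\epsilon(\overline{h})y_h$). The only point deserving a moment's care is that $q'$ really does factor through the isomorphism locus of $q$, and this is precisely the content of the identity $N^{\a,\a+d\epsilon_i}=(q')^{-1}(\Lambda^{\a+d\epsilon_i}_{i;d})$ quoted above.
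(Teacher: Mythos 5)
Your proposal is correct and follows exactly the route the paper intends: both parts are direct specializations of Lemma~\ref{L:lambdadim} (part i) at $l=d$ so that $Gr(l-d,l)$ becomes a point, part ii) at $s=0$), combined with the identity $N^{\a,\a+d\epsilon_i}=p^{-1}(\Lambda^\a_{i;0})=(q')^{-1}(\Lambda^{\a+d\epsilon_i}_{i;d})$ stated just before the Corollary. You also correctly read the $d(\a_i,\epsilon_i)$ in the statement as the intended $d(\a,\epsilon_i)$, matching the expression in Lemma~\ref{L:lambdadim}~ii).
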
 

\vspace{.1in}

An important consequence of the above Corollary is that we have a canonical bijection between sets of irreducible components 
\begin{equation}\label{E:bijcomp}
\kappa^{\a}_{i;d}~: Irr\;\Lambda^{\a}_{i;0} \stackrel{\sim}{\to} Irr\;\Lambda^{\a+d\epsilon_i}_{i;d}.
\end{equation}

We are now in position to provide the

\vspace{.1in}

\noindent
\textit{Proof of Theorem~\ref{T:LUlambda}.} Let us first show that $\Lambda^{\a}$ is an isotropic subvariety of $T^*E_{\a}$. We will deduce this from the following general fact (see \cite[Section~8.4]{KasShap} for a proof)~:

\begin{prop} Let $X,Y$ be complex algebraic varieties and let $Z \subset X \times Y$ be a smooth algebraic subvariety. We assume that $Y$ is projective. Then the image of the projection
$T^*_Z(X \times Y) \cap (T^*X \times Y) \to T^*X$ is isotropic.
\end{prop}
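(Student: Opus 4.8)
The plan is to recognize the map in the statement as an instance of symplectic reduction applied to the Lagrangian conormal variety of $Z$, and then to check isotropy of the image by a pointwise argument at generic smooth points. First I would use the canonical symplectic identification $T^*(X\times Y)\cong T^*X\times T^*Y$, under which the canonical $1$-form, hence the symplectic form, is the sum of the pullbacks from the two factors, so that $\omega=\omega_X\oplus\omega_Y$; under this identification $T^*X\times Y$ becomes $T^*X\times 0_Y$, where $0_Y\subset T^*Y$ is the zero section. Since $Z$ is smooth, its conormal variety $T^*_Z(X\times Y)$ is a smooth \emph{Lagrangian} submanifold of $T^*(X\times Y)$; in particular $\omega$ vanishes on every tangent space to it.

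Next I would record two structural facts. Because $0_Y$ is Lagrangian in $(T^*Y,\omega_Y)$, the submanifold $W:=T^*X\times 0_Y$ is coisotropic in $(T^*(X\times Y),\omega)$: its null (characteristic) foliation has leaves the fibres of the first projection $p\colon W\to T^*X$, and $p$ exhibits $T^*X$ as the symplectic reduction of $W$. Concretely, $\omega|_W=p^*\omega_X$ and $\ker dp_m$ is exactly the $\omega$-radical of $T_mW$ at each point $m\in W$. I would then set $M:=T^*_Z(X\times Y)\cap W$; the claim is that $p(M)$ is isotropic in $T^*X$, and here I would use the projectivity of $Y$: it makes $p$ proper, so $p(M)$ is a closed subvariety of $T^*X$.

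To finish I would argue that $\omega_X$ vanishes on the tangent space to $p(M)$ at a generic smooth point of each of its irreducible components. Fixing such a component $Z_0\subseteq p(M)$ and an irreducible component $M_0$ of $M$ with $\overline{p(M_0)}=Z_0$, generic smoothness in characteristic zero (applied after restricting to the smooth locus of $M_0$) gives, over a dense open of $Z_0$, a smooth point $m\in M_0$ with $dp_m(T_mM_0)=T_{p(m)}Z_0$. For $v,w\in T_mM_0$ one has $v,w\in T_m T^*_Z(X\times Y)$, so $\omega(v,w)=0$; combined with $\omega|_W=p^*\omega_X$ this gives $\omega_X(dp_mv,dp_mw)=0$, i.e. $\omega_X$ vanishes on $T_{p(m)}Z_0$, whence $p(M)$ is isotropic.

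The delicate points, and what I expect to be the real work, are not any single computation but the careful component-by-component bookkeeping: $M$, $T^*_Z(X\times Y)$ and $p(M)$ may all be singular and reducible, so one must justify the choice of the smooth point $m$ with $dp_m$ surjecting onto $T_{p(m)}Z_0$ — this is exactly where properness, hence the projectivity of $Y$, enters — and separately verify the symplectic-reduction identity $\omega|_W=p^*\omega_X$ together with the coisotropy of $W$; both are standard but should be spelled out.
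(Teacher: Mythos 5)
Your argument is correct. Note that the paper does not prove this proposition itself: it refers the reader to Kashiwara--Schapira \cite[\S 8.4]{KasShap}, so there is no in-text proof to compare against. What you give is a self-contained version of the standard symplectic-reduction mechanism that underlies the cited result: you identify $T^*X\times Y$ with the coisotropic $W=T^*X\times 0_Y$ along which $\omega|_W=p^*\omega_X$, use that $T^*_Z(X\times Y)$ is Lagrangian, and propagate isotropy through the projection $p$, invoking generic smoothness in characteristic zero to find, in each irreducible component $M_0$ of the intersection, a smooth point $m$ at which $dp_m$ surjects onto the tangent space of the corresponding component $Z_0$ of the image. Projectivity of $Y$ enters, as you say, to make $p$ proper so that the image is a closed subvariety of $T^*X$. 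One minor bookkeeping point: before applying generic smoothness you should also shrink $Z_0$ to its smooth locus so that $T_{p(m)}Z_0$ is an honest tangent space (and then conclude by continuity that $\omega_X$ vanishes on all of $Z_0^{\mathrm{sm}}$); this is routine and does not affect the argument.
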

We apply the above result to the case $X=E_{\a}, Y= \mathcal{B}_{\a}$, where $\mathcal{B}_{\a}$ is the flag variety of $G_{\a}$ parametrizing (full) flags $V_{\a} =W_1\supset W_2 \supset \cdots$. For the subvariety $Z$ we take the variety of pairs $(\underline{x}, W_{\bullet})$ satisfying $\underline{x} (W_k) \subset W_{k+1}$ for all $k$. Then we have
$$T^* X\simeq \overline{E}_{\a}, $$
$$T^*Y \simeq \big\{ (W_\bullet, (z_i)_{i \in I})\;|\; z_i \in End((V_{\a})_i),\; z_i(W_k) \subset W_{k+1}\big\},$$
\begin{equation*}
\begin{split}
T^*_Z(X \times Y)= \big\{ &(\underline{x}, W_{\bullet}, (z_i))\in \overline{E}_{\a} \times \mathcal{B}_{\a} \times \prod_i End((V_{\a})_i)\;|\\
&\; \sum_{s(k)=i} \epsilon(k) x_{\bar{k}} x_{k}=z_i\;\text{for}\;i \in I; \; x_h(W_l)\subset W_{l+1}, x_{\bar{h}}(W_l) \subset W_l\;\text{for}\;h \in H_{\vec{Q}} \big\}
\end{split}
\end{equation*}
Hence
\begin{equation*}
\begin{split}
T^*_Z(X \times Y) \cap (T^*X \times Y)=\big\{ (\underline{x}, &W_{\bullet})\in \overline{E}_{\a} \times \mathcal{B}_{\a} \;|\; \mu(\underline{x})=0;\\
 &\; x_h(W_l)\subset W_{l+1}, x_{\bar{h}}(W_l) \subset W_l\;\text{for}\;h \in H_{\vec{Q}} \big\}.
 \end{split}
 \end{equation*}
It is easy to see using the nilpotency condition that $\Lambda^{\a}$ lies in the projection to $T^*E_{\a}$ of $T^*_Z(X \times Y) \cap (T^*X \times Y)$. Therefore $\Lambda^{\a}$ is isotropic as wanted.

\vspace{.1in}

It remains to prove that all the irreducible components of $\Lambda^{\a}$ are of the correct dimension $\frac{1}{2} dim\;T^*E_{\a}=dim\;E_{\a}=-\langle \a,\a \rangle + dim\;G_{\a}$. For this we argue by induction on $\a$. If $\a \in \{l\epsilon_i\;|\; i \in I, l \in \N\}$ then we have $\Lambda^{\a}=\{pt\}$ and the Lemma is verified. Now let us fix some $\a \in K_0(\vec{Q})$ and assume that $\Lambda^{\gamma}$ is Lagrangian for all $\gamma < \alpha$. Let $C \subset Irr\;\Lambda^{\a}$ be an irreducible component and choose $i \in I$ such that $C$ generically belongs to $\Lambda^{\a}_{i;l}$ with $l \geq 1$. By Corollary~\ref{C:dimlambda} there is a bijection $\kappa^{\a-l\epsilon_i}_{i;l} : Irr\;\Lambda^{\a-l\epsilon_i}_{i;0} \stackrel{\sim}{\to} Irr\;\Lambda^{\a}_{i;l}$, and 
\begin{equation}\label{E:hgb}
\begin{split}
dim\;C&=dim\;((\kappa^{\a-l\epsilon_i}_{i;l})^{-1}(C)) + dim\;G_{\a} -l(\a-l\epsilon_i,\epsilon_i)-dim\;(G_{\a-l\epsilon_i} \times G_{l\epsilon_i}) \\
&=dim\;((\kappa^{\a-l\epsilon_i}_{i;l})^{-1}(C)+2l\a_i -l(\a,\epsilon_i).
\end{split}
\end{equation}
Since $\Lambda^{\a-l\epsilon_i}_{i;0}$ is open in $\Lambda^{\a-l\epsilon_i}$ and since by our induction hypothesis $\Lambda^{\a-l\epsilon_i}$ is Lagrangian, the dimension of any irreducible component of $\Lambda^{\a-l\epsilon_i}_{i;0}$ is equal to $-\langle \a-l\epsilon_i, \a-l\epsilon_i\rangle + dim\;G_{\a-l\epsilon_i}$. We deduce from this and (\ref{E:hgb}) that $dim\;C=-\langle \a, \a \rangle + dim\;G_{\a}$ as wanted.\qed

\vspace{.2in}

The above proof actually gives a bit more : not only does it show that al irreducible components of $\Lambda^{\a}$ are of the correct dimension, but it also shows that all the irreducible components of $\Lambda^{\a}_{i;l}$ are of the correct dimension, for any $i, l$. In other words, each irreducible component of $\Lambda^{\a}_{i;l}$ is open in some (unique) irreducible component of $\Lambda^{\a}$; in particular, for any $i \in I$ there exists a canonical bijection
\begin{equation}\label{E:bijlambda}
Irr\;\Lambda^{\a} \simeq \bigsqcup_{l \geq 0} Irr\;\Lambda^{\a}_{i;l}.
\end{equation}

\vspace{.3in}

\centerline{\textbf{4.4. Geometric construction of the crystal.}}
\addcontentsline{toc}{subsection}{\tocsubsection {}{}{\; 4.4. Geometric construction of the crystal.}}

\vspace{.15in}

We now define, for any $i \in I$,  maps $\tilde{e}_i, \tilde{f}_i$ on the set $Irr\;\Lambda =\bigsqcup_{\a} Irr\;\Lambda^{\a}$ as follows. If $C \in Irr\;\Lambda^{\a}$ generically belongs to $\Lambda^{\a}_{i;l}$ then we set
\begin{equation}\label{E:tildedef}
\begin{split}
\tilde{e}_i C&=\kappa^{\a-l\epsilon_i}_{i;l+1} \circ (\kappa^{\a-l\epsilon_i}_{i;l})^{-1} (C),\\
\tilde{f}_i C&=\begin{cases} \kappa^{\a-l\epsilon_i}_{i;l-1} \circ (\kappa_{i;l}^{\a-l\epsilon_i})^{-1} (C) & \text{if}\;l \geq 1,\\ 0 & \text{if}\;l=0.
\end{cases}
\end{split}
\end{equation}
In the above definition, we have implicitly used the identifications (\ref{E:bijlambda}). Note that if $l=0$ then we simply have $\tilde{e}_iC=\kappa^{\a}_{i;1}(C)$.  We also set $wt(C)=\a$ and
\begin{equation}\label{E:tildedef2}
\phi_i(C)= l, \qquad \varepsilon_i(C)=\phi_i(C)-\langle h_i,\a \rangle.
\end{equation}

\vspace{.1in}

There is a useful characterization of the operators $\tilde{f}_i$ based on the notion of a \textit{generic point}. By definition, a point $\underline{x}$ of an irreducible component $C \in Irr\;\Lambda^{\a}$ is generic if it does not belong to any other irreducible component, and if 
$$C \cap S_j \;dense\;in\;C \Rightarrow \underline{x} \in C \cap S_j$$
for all the locally closed stratifications $\Lambda^{\a}=\bigsqcup_j S_j$ mentioned in these lectures\footnote{we leave it to the reader to check that these stratifications form a finite set.}.

\vspace{.1in}

\begin{lem}\label{L:KSproof}
Let $C \in Irr\;\Lambda^{\a}$ and let us assume that $\phi_i(C) \geq 1$. Let $\underline{x}$ be a generic point of $C$, and let us put $X= Im\;\big( \bigoplus_{t(h)=i} x_h \big) \subset (V_{\a})_i$. Then for a generic hyperplane $H \subset (V_{\a})_i$ containing $X$, the restriction $\underline{y}$ of $\underline{x}$ to $\bigoplus_{j \neq i} (V_{\a})_j \oplus H$ is a generic point of $\tilde{f}_iC$.
\end{lem}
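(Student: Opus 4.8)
The statement to be proven is Lemma~\ref{L:KSproof}: if $C \in Irr\;\Lambda^{\a}$ has $\phi_i(C) = l \geq 1$, and $\underline{x}$ is a generic point of $C$ with $X = Im\big(\bigoplus_{t(h)=i} x_h\big)$, then for a generic hyperplane $H \supset X$ in $(V_{\a})_i$, the restriction $\underline{y}$ of $\underline{x}$ to $\bigoplus_{j \neq i}(V_{\a})_j \oplus H$ is a generic point of $\tilde{f}_iC$. The strategy is to unwind the definition (\ref{E:tildedef}) of $\tilde{f}_i$ through the correspondence (\ref{E:Heckelambda}) and its associated bijections $\kappa^{\a}_{i;d}$, and to check that the operation ``restrict to a generic hyperplane containing the characteristic subspace'' is exactly the geometric incarnation of the composite $\kappa^{\a-l\epsilon_i}_{i;l-1} \circ (\kappa^{\a-l\epsilon_i}_{i;l})^{-1}$.

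First I would set up the relevant Hecke correspondence with $d = 1$: the maps $p, r, q$ relating $\Lambda^{\a-\epsilon_i}$, $\Lambda^{\a-\epsilon_i,\a}_{(1)}$, $\Lambda^{\a-\epsilon_i,\a}$, $\Lambda^{\a}$, and recall from Corollary~\ref{C:dimlambda} that over the locally closed strata $\Lambda^{\a-\epsilon_i}_{i;0}$ and $\Lambda^{\a}_{i;1}$ the map $q' = pr$ restricts to a principal $G_{\a-\epsilon_i} \times G_{\epsilon_i}$-bundle on $N^{\a-\epsilon_i,\a}$ and $p$ restricts to a smooth connected-fiber map. Observe that the point $\underline{y}$ obtained by restricting $\underline{x}$ to $\bigoplus_{j\neq i}(V_{\a})_j \oplus H$ is literally $p(\underline{x}, W, \rho_{\a-\epsilon_i}, \rho_{\epsilon_i})$ (up to the choice of the $\rho$'s, which do not affect the orbit) for $W = \bigoplus_{j \neq i}(V_{\a})_j \oplus H$: indeed $W$ is $\underline{x}$-stable precisely because $H \supset X = Im\big(\bigoplus_{t(h)=i} x_h\big)$ absorbs the image of all arrows into $i$, and the arrows $x_{\bar h}$ out of $i$ land in $W$ automatically since $X \subset H$; one checks directly the induced representation on $W$ is nilpotent, so $(\underline{x}, W) \in \Lambda^{\a-\epsilon_i,\a}$. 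Conversely, a generic choice of $H$ means $\underline{x}$ lies in the stratum where $\codim_{(V_{\a})_i} X = l$, i.e. $\underline{x} \in \Lambda^{\a}_{i;l}$, and the hyperplane $H$ contains $X$ with codimension $l-1$ in $(V_{\a})_i$, so $\underline{y} \in \Lambda^{\a-\epsilon_i}_{i;l-1}$.

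The core of the argument is then a genericity transfer: I would show that because $\underline{x}$ is a generic point of $C$, which generically belongs to $\Lambda^{\a}_{i;l}$, and because the fibers of $q$ over $\Lambda^{\a}_{i;l}$ are the (irreducible, smooth) Grassmannians $Gr(l-1,l) = \mathbb{P}(((V_{\a})_i/X)^*)$ by Lemma~\ref{L:lambdadim}~(i), a generic choice of $H$ in this Grassmannian fiber takes $(\underline{x}, W)$ to a generic point of the irreducible component $q^{-1}(C \cap \Lambda^{\a}_{i;l})^{\sim}$ of $\Lambda^{\a-\epsilon_i,\a}_{i;l}$ corresponding to $C$; applying the principal-bundle map $p$ (or rather passing through $\Lambda^{\a-\epsilon_i,\a}_{(1)}$ and using that $r$ is a principal bundle and $p$ is smooth with connected fibers over $\Lambda^{\a-\epsilon_i}_{i;l-1}$) then lands at a generic point of the unique irreducible component $C'$ of $\Lambda^{\a-\epsilon_i}_{i;l-1}$ matched to $C$ under the composite $\kappa^{\a-\epsilon_i}_{i;l-1}\circ(\kappa^{\a-\epsilon_i}_{i;l})^{-1}$, which by definition (\ref{E:tildedef}) is $\overline{C'} = \tilde f_i C$. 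Genericity is preserved at each stage: a smooth surjective map (resp. the projection of a principal bundle) with irreducible fibers sends generic points to generic points, and there are only finitely many of the relevant stratifications to worry about, so the ``generic'' conditions intersect in an open dense set.

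The main obstacle I anticipate is bookkeeping the genericity statement carefully across the two-step correspondence $\Lambda^{\a-\epsilon_i} \xleftarrow{p} \Lambda^{\a-\epsilon_i,\a}_{(1)} \xrightarrow{r} \Lambda^{\a-\epsilon_i,\a} \xrightarrow{q} \Lambda^{\a}$: one must make sure that the generic point $\underline{x}$ of $C$, combined with a generic $H$ in the (varying) Grassmannian fiber, really produces a generic point of $\tilde f_i C$ — not merely a point lying in $\tilde f_i C$ but possibly in some smaller locally closed subset that is dense in $\tilde f_i C$. This amounts to showing that the finitely many stratifications of $\Lambda^{\a-\epsilon_i}$ that appear pull back, via $p\circ r^{-1}\circ q^{-1}$, to stratifications of (an open subset of) $\Lambda^{\a}$ that are among the finitely many allowed ones — so that the genericity of $\underline{x}$ already forces $\underline{y}$ out of all the bad loci. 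This is essentially the content of Corollary~\ref{C:dimlambda} together with the observation that $q$, $r$, $p$ are all ``nice'' (proper with Grassmannian fibers, principal bundle, smooth with connected fibers) over the strata in question; once this compatibility of stratifications is pinned down, the dimension count from Corollary~\ref{C:dimlambda} and the bijection $\kappa$ do the rest, and the lemma follows.
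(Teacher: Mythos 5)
Your plan is correct but follows a more computational route than the paper. You unwind the $d=1$ Hecke correspondence directly and push the generic point of $C$, together with a generic choice of $H$ in the Grassmannian fiber $Gr(l-1,l)$, through $q^{-1}$, then $r^{-1}$ (principal bundle), then $p$ (smooth with connected fibers), tracking genericity at each stage; this works but, as you acknowledge, requires showing that the relevant stratifications of $\Lambda^{\a-\epsilon_i}$ pull back to the allowed ones on $\Lambda^{\a}$, which is a nontrivial amount of bookkeeping. The paper instead exploits a shortcut: rather than $d=1$, it first uses the $d=\phi_i(C)$ correspondence, which has the advantage that the $\underline{x}$-stable subspace $W = \bigoplus_{j\neq i}(V_{\a})_j \oplus X$ is \emph{canonical} (no generic choice required since $X$ is the minimal such), so the restriction $\underline{z}$ of $\underline{x}$ to that subspace is immediately a generic point of $\tilde f_i^{\phi_i(C)}C$ ``from the definitions''. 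Then, writing $C'$ for the component containing $\underline{y}$, it observes that restricting $\underline{y}$ further to $X$ again gives $\underline{z}$, hence $\tilde f_i^{\phi_i(C')}C' = \tilde f_i^{\phi_i(C)}C$ with $\phi_i(C') = \phi_i(C)-1$, and inverts once to get $C' = \tilde f_i C$. The paper's trick of ``resetting'' to the canonical minimal subspace sidesteps most of the genericity-transfer argument; your approach is closer to the raw definition of $\tilde f_i$ via $\kappa^{\a-l\epsilon_i}_{i;l-1}\circ(\kappa^{\a-l\epsilon_i}_{i;l})^{-1}$ and is valid provided you carry through the stratification-compatibility argument you flag as the obstacle. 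Both get there; the paper's is shorter, yours is more explicitly in the spirit of the Hecke correspondence machinery of Corollary~\ref{C:dimlambda}.
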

\begin{proof} From the definitions it follows that the restriction $\underline{z}$ of $\underline{x}$ to $\bigoplus_{j \neq i} (V_{\a})_j \oplus X$ is a generic point of $\tilde{f}_i^{\phi_i(C)}C$. It is clear that $\underline{y}$ is a generic point of its irreducible component $C'$. Since the restriction of $\underline{y}$ to $\bigoplus_{j \neq i} (V_{\a})_j \oplus X$ is also equal to $\underline{z}$, we have $\tilde{f}_i^{\phi_i(C')}C'=\tilde{f}_i^{\phi_i(C)}C$, and thus
$\tilde{f}_iC=C'$.\end{proof}

\vspace{.1in}

That being said, we have

\vspace{.1in}

\begin{prop}\label{P:KSstep0} The set $Irr\;\Lambda$, equipped with the maps $\tilde{e}_i, \varepsilon_i, \tilde{f}_i, \phi_i$ for $i \in I$ defined by (\ref{E:tildedef}), (\ref{E:tildedef2}), and the map $wt$, is a lowest weight $\g$-crystal of lowest weight $0$.
\end{prop}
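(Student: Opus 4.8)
The plan is to verify each of the axioms (i)--(vii) in the definition of a $\g$-crystal one by one, using the combinatorial bijections $\kappa^{\a}_{i;l}: Irr\;\Lambda^{\a}_{i;0} \stackrel{\sim}{\to} Irr\;\Lambda^{\a+l\epsilon_i}_{i;l}$ of (\ref{E:bijcomp}), together with the decomposition (\ref{E:bijlambda}) of $Irr\;\Lambda^{\a}$ according to the generic stratum. First I would record the basic bookkeeping: since $wt(C)=\a$ and $\tilde{e}_iC$ (resp. $\tilde{f}_iC$) is obtained, when $C$ generically lies in $\Lambda^{\a}_{i;l}$, by applying $\kappa^{\a-l\epsilon_i}_{i;l+1}\circ(\kappa^{\a-l\epsilon_i}_{i;l})^{-1}$ (resp. $\kappa^{\a-l\epsilon_i}_{i;l-1}\circ(\kappa^{\a-l\epsilon_i}_{i;l})^{-1}$), the resulting component lives in $Irr\;\Lambda^{\a+\epsilon_i}$ (resp. $Irr\;\Lambda^{\a-\epsilon_i}$). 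This gives axioms (ii) and (iii) immediately. Axiom (i), namely $\phi_i(C)=\varepsilon_i(C)+\langle h_i, wt(C)\rangle$, is just the defining relation (\ref{E:tildedef2}), and axiom (vii) is vacuous here because $\phi_i,\varepsilon_i$ take values in $\Z$ (never $-\infty$), by the same formula and the fact that $0\le \phi_i(C)\le \a_i$.

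Next I would treat axioms (iv), (v) and (vi), which are the heart of the matter and all reduce to understanding how the generic stratum index $l=\phi_i(C)$ changes under $\tilde{e}_i$ and $\tilde{f}_i$. The key point is that the bijection (\ref{E:bijlambda}) makes $Irr\;\Lambda^{\a}$ a disjoint union indexed by $l\ge 0$ of copies of $Irr\;\Lambda^{\a-l\epsilon_i}_{i;0}$, and that under this identification $C\in Irr\;\Lambda^{\a}_{i;l}$ corresponds to a well-defined element $\bar C:=(\kappa^{\a-l\epsilon_i}_{i;l})^{-1}(C)\in Irr\;\Lambda^{\a-l\epsilon_i}_{i;0}$ that is \emph{independent} of whether we view $C$ inside $\Lambda^{\a}$ via $\tilde{e}_i$ or $\tilde f_i$. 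Concretely: $\tilde{e}_iC$ corresponds to the \emph{same} $\bar C$ but now placed at stratum level $l+1$ in dimension $\a+\epsilon_i$, so $\phi_i(\tilde e_iC)=l+1=\phi_i(C)+1$; and one checks $\varepsilon_i(\tilde e_iC)=\phi_i(\tilde e_iC)-\langle h_i,\a+\epsilon_i\rangle = (l+1)-\langle h_i,\a\rangle - 2 = \varepsilon_i(C)-1$, which is axiom (iv). The computation for $\tilde f_i$ (axiom (v)) is entirely parallel, dropping the level by one, valid precisely when $l\ge 1$; when $l=0$ one has $\tilde f_iC=0$ by definition and there is nothing to check. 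Axiom (vi) ($\tilde f_iC=C'\iff C=\tilde e_iC'$) then follows because both operations are, on the level of the index set $\bigsqcup_l Irr\;\Lambda^{\a-l\epsilon_i}_{i;0}$, simply the shift $l\mapsto l\pm 1$ that preserves the $\bar C$-component, hence manifestly mutually inverse (with the $l=0$ floor matching the convention $\tilde f_i(\cdot)=0$).

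Having checked (i)--(vii), I would then observe that $Irr\;\Lambda$ is a lowest weight crystal of lowest weight $0$: the unique component of $\Lambda^{0}=\{pt\}$ has weight $0$, and by (\ref{E:fwqa}), for any $\a\neq 0$ every irreducible component $C$ of $\Lambda^{\a}$ generically belongs to $\Lambda^{\a}_{i;l}$ for some $i\in I$ with $l=\phi_i(C)\ge 1$, so $\tilde f_iC\neq 0$; iterating, every component is reached from the point component of $\Lambda^{0}$ by a sequence of $\tilde e_i$'s, which is exactly the lowest-weight condition (with all weights in $\sum_i \N\epsilon_i$, i.e. $-wt$ dominant in the appropriate sense). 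I expect the main obstacle to be purely organizational rather than deep: one must be careful that the operators $\tilde e_i,\tilde f_i$ are genuinely well-defined, i.e. that the index $l$ attached to a component via ``generically belongs to $\Lambda^{\a}_{i;l}$'' is unambiguous and that $\kappa^{\a}_{i;l}$ really is a bijection of irreducible-component sets — but this is exactly the content of Corollary~\ref{C:dimlambda} and the remark following the proof of Theorem~\ref{T:LUlambda} (that each irreducible component of $\Lambda^{\a}_{i;l}$ is open in a unique component of $\Lambda^{\a}$), so all the geometric input is already in hand and the proof is a bookkeeping exercise with the bijections.
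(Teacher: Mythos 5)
Your argument is correct and follows essentially the same route as the paper: you verify the crystal axioms directly from the stratum-shift description of $\tilde{e}_i,\tilde{f}_i$ via the bijections $\kappa^{\a}_{i;l}$, and you invoke (\ref{E:fwqa}) to conclude that every component can be reached from $Irr\;\Lambda^0=\{pt\}$ by applying $\tilde e_i$'s, giving the lowest weight property. The paper only spells out axiom (vi) and the lowest-weight claim explicitly (treating (i)--(v) as ``easily checked''), but your more detailed bookkeeping for (iv), (v) is the same computation.
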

\begin{proof} Axioms i)-v) of a $\g$-crystal are easily checked. We show that vi) holds~: let $C \in Irr\;\Lambda^{\a}_{i;l}$ with $l \geq 1$, and put $\tilde{f}_iC=C'$; by definition $C' \in Irr\;\Lambda^{\a-\epsilon_i}_{i;l-1}$, and we have
$$(\kappa^{\a-l\epsilon_i}_{i;l})^{-1}(C)=(\kappa^{\a-l\epsilon_i}_{i;l-1})^{-1}(C');$$
but then $\tilde{e}_iC'=C$ by definition. The last axiom vii) is automatically verified here since $\phi_i(C), \varepsilon_i(C) \neq -\infty$ for all $C$, $i$. Thus $Irr\;\Lambda$ is indeed a $\g$-crystal. We claim that it is generated by $\{0\} = Irr\; \Lambda^{0}$ under the operators $\{\tilde{e}_i\;|\; i \in I\}$. Indeed for any $C \in Irr\;\Lambda^{\a}$ with $\a >0$ there exists some $j \in I$ and $l \geq 1$ for which $C \in Irr\;\Lambda^{\a}_{j;l}$. Hence 
$C=\tilde{e}_j^{l} \tilde{f}_j^l C$ and $C$ belongs to the image of $\tilde{e}_j$. Arguing in this fashion by induction, we obtain the desired result. \end{proof}

We will soon show that $Irr\;\Lambda \simeq \mathcal{B}^+(\infty)$ (see Theorem~\ref{T:KSMain}). In order to apply the criterion in Theorem~\ref{T:KSBinfty2} we need to construct certain embeddings $\Psi_i :Irr\;\Lambda \to \mathcal{B}_i \otimes Irr\;\Lambda $. And in order to do this, we introduce a new operation on the set $Irr\;\Lambda$, namely a ''duality'' involution $C \mapsto C^*$. Let us fix an $I$-graded identification $V_{\a} \simeq V_{\a}^*$ for all $\a$. Then $\underline{x} \mapsto \hspace{.01in}^t\underline{x}$ defines an automorphism of $\overline{E}_{\a}$, which preserves $\Lambda^{\a}$ and induces a permutation of $Irr\;\Lambda^{\a}$. Observe that because $\Lambda^{\a}$ is $G_{\a}$-invariant, and because $G_{\a}$ acts (simply) transitively on the set of identifications $V_{\a} \simeq V_{\a}^*$, this permutation $*: Irr\;\Lambda^{\a} \to Irr\;\Lambda^{\a}$ is canonical. For the same reason, it is involutive, i.e. $(C^*)^* =C$. Let us now put
$$\varepsilon_i^*(C)= \varepsilon_i(C^*), \qquad \phi^*_i(C)= \phi_i(C^*),$$
$$\tilde{e}_i^*C= (\tilde{e}_iC^*)^*, \qquad \tilde{f}_i^*C= (\tilde{f}_iC^*)^*.$$
For any $i \in I$ define  a map $\Psi_i: Irr\;\Lambda \to \mathcal{B}_i \otimes Irr\;\Lambda $ by
$$C \mapsto  \tilde{e}_i^n b_i(0) \otimes (\tilde{f}_i^*)^nC$$
where $n=\phi^*_i(C)$.

\vspace{.1in}

\begin{prop}\label{P:KSstep1} The map $\Psi_i$ is an embedding of $\g$-crystals.
\end{prop}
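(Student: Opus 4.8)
The assertion to prove is that $\Psi_i: Irr\;\Lambda \to \mathcal{B}_i \otimes Irr\;\Lambda$, $C \mapsto \tilde{e}_i^n b_i(0) \otimes (\tilde{f}_i^*)^n C$ with $n = \phi_i^*(C)$, is an embedding of $\g$-crystals. The plan is to first establish that $\Psi_i$ is injective, then check compatibility with each piece of crystal data ($wt$, $\varepsilon_j$, $\phi_j$, $\tilde{e}_j$, $\tilde{f}_j$) using the tensor product rule of Definition~4.4.

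First I would verify injectivity. The key point is that the ``starred'' operators $\tilde{e}_i^*, \tilde{f}_i^*$ are, like the unstarred ones, mutually inverse bijections on their natural strata (this follows from applying $C \mapsto C^*$, which is an involution conjugating the stratification $\Lambda^{\a} = \bigsqcup_l \Lambda^{\a}_{i;l}$ to the analogous stratification by the corank of $\bigoplus_{s(h)=i} x_{\bar h}$). In particular $(\tilde f_i^*)^n$ applied to $C$ with $n = \phi_i^*(C)$ lands in the locus where $\phi_i^* = 0$, and $C$ is recovered from $(\tilde f_i^*)^n C$ and $n$ by applying $(\tilde e_i^*)^n$. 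So $\Psi_i$ is injective: if $\Psi_i(C) = \Psi_i(C')$ then $n = \phi_i^*(C) = \phi_i^*(C')$ (read off from the $\mathcal{B}_i$ factor, since $\tilde{e}_i^n b_i(0) = b_i(n)$) and $(\tilde f_i^*)^n C = (\tilde f_i^*)^n C'$, whence $C = C'$.

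Next I would check that $\Psi_i$ intertwines the crystal structures. On weights: $wt(\tilde e_i^n b_i(0)) = n\a_i$ and $wt((\tilde f_i^*)^n C) = wt(C) - n\epsilon_i$ (each application of $\tilde f_i^*$ drops the dimension vector by $\epsilon_i$, since $*$ preserves $wt$), so $wt(\Psi_i(C)) = wt(C)$, using $\epsilon_i = \a_i$ as an element of $K_0(\vec Q)$. For the operators $\tilde e_j, \tilde f_j$ with $j \neq i$: the operator $\tilde e_j^*$ commutes with $\tilde f_i^*$ and with $\tilde e_i^*$ (the relevant strata for distinct vertices are ``independent'' — this should be checked via the generic-point description of Lemma~4.22, exactly as one checks axioms for the crystal in Proposition~4.23), and moreover $\tilde e_j, \tilde f_j$ act trivially on $\mathcal{B}_i$; combining with the tensor product rule (where $\varepsilon_j(\mathcal{B}_i) = -\infty$ for $j \neq i$ forces the operator to act on the second factor) gives $\Psi_i(\tilde e_j C) = \tilde e_j \Psi_i(C)$ and similarly for $\tilde f_j$. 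The crux is the $i$-th operators. Here one uses the identity $\phi_i(C) + \varepsilon_i^*(C) = $ (something expressible via $wt$) — concretely, a lemma of the form $\varepsilon_i(C) + \varepsilon_i^*(C) = -\langle h_i, wt(C)\rangle$ or an analogue relating $\phi_i, \phi_i^*$ and $wt$, which governs which factor of the tensor product $\tilde e_i, \tilde f_i$ hit according to Definition~4.4 — to show that $\Psi_i(\tilde e_i C) = \tilde e_i \Psi_i(C)$ (the $\tilde e_i$ lands on the $\mathcal{B}_i$ factor, raising $n$ by one, when $\phi_i(b_i(n)) \geq \varepsilon_i^*(\cdots)$, i.e. always in the relevant range) and $\Psi_i(\tilde f_i C) = \tilde f_i \Psi_i(C)$. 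I expect this commutation relation between $\tilde e_i$ and the starred operators — essentially that for a generic point, shrinking the image of the outgoing maps at $i$ and shrinking the image of the incoming maps at $i$ are ``transverse'' operations — to be the main obstacle, and it is precisely the content that requires careful generic-point arguments à la Lemma~4.22 and Lusztig's analysis of the Hecke strata (Lemma~4.18, Corollary~4.19). Finally, one observes that the image $\Psi_i(Irr\;\Lambda)$ lies in $\{\tilde e_i^k b_i(0) : k \geq 0\} \times Irr\;\Lambda$ by construction, which is the extra structural fact needed so that $\Psi_i$ fits the hypotheses of Theorem~4.7 in the subsequent argument.
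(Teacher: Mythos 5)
Your overall plan matches the paper's strategy (weight compatibility, the $j\neq i$ case via the tensor rule, reduction of the crux to a generic-point analysis), and the injectivity argument is a sensible addition the paper leaves implicit. But the sketch of the crucial $j=i$ case contains a genuine gap that would derail the proof.

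You propose to use an identity of the form $\varepsilon_i(C) + \varepsilon_i^*(C) = -\langle h_i, wt(C)\rangle$. No such identity holds. Since $\phi_i = \varepsilon_i + \langle h_i, wt\rangle$ and $*$ preserves $wt$, this would say $\phi_i(C) + \phi_i^*(C) = \langle h_i, wt(C)\rangle$, which fails already because the left side is $\geq 0$ while the right side can be negative. What the paper actually establishes (eq.~(\ref{E:KSproof3}), via the generic-point dimension count of the fiber of the dual Hecke correspondence) is a \emph{supremum} relation
$$\phi_i(C) = \sup\big( \phi_i(C'),\; l + (\a-l\epsilon_i,\epsilon_i)\big), \qquad C' = (\tilde f_i^*)^l C,\; l=\phi_i^*(C),$$
which is exactly what the tensor-product rule of Definition~4.4 produces for $\phi_i(b_i(l)\otimes C')$. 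A sum relation and a sup relation behave completely differently, and only the latter is compatible with the two-case branching in the tensor rule.

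This feeds into a second error: you assert that $\tilde e_i$ applied to $\Psi_i(C)$ ``always'' lands on the $\mathcal{B}_i$ factor. That is false. The paper's equation~(\ref{E:KSSproof4}) shows that $\tilde f_i$ (equivalently, by axiom~vi), $\tilde e_i$) acts on the $\mathcal{B}_i$ factor precisely when $\varepsilon_i(C') < l$ and on the $Irr\;\Lambda$ factor when $\varepsilon_i(C')\geq l$; both cases genuinely occur, and the heart of the proof is the two separate generic-hyperplane arguments (one where the kernel $K$ sits inside $Im\;a$ and hence inside the generic hyperplane $H$, one where it does not) establishing that the geometry on the $\Lambda$ side makes the same case distinction as the tensor rule. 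Without this dichotomy the intertwining with $\tilde e_i, \tilde f_i$ cannot be proved. A minor further slip: in the $j\neq i$ discussion you write that $\tilde e_j^*$ commutes with $\tilde f_i^*$; what is actually needed is that the \emph{unstarred} $\tilde e_j, \tilde f_j$ commute with the starred $i$-operators and leave $\phi_i^*$ unchanged, which is what lets the tensor rule reduce everything to the second factor.
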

\begin{proof} It is obvious that $\Psi_i$ preserves the weight. We have to check that it commutes to the functions $\varepsilon_j, \phi_j$ and that $\Psi_i \circ \tilde{e}_j=\tilde{e}_j \circ \Psi_i, \Psi_i \circ \tilde{f}_j=\tilde{f}_j \circ \Psi_i$ for $j \in I$. We consider the stratification $\Lambda^{\a}=\bigsqcup_{l} \Lambda^{\a,i;l}$ of $\Lambda^{\a}$ which is dual to $\Lambda^{\a}=\bigsqcup_{l} \Lambda^{\a}_{i;l}$, namely
$$\Lambda^{\a,i;l}=\bigg\{ \underline{x}\;|\; dim_{(V_{\a})_i}\bigg( Ker ( \bigoplus_{\substack{h \in H_{\vecdq}\\ s(h)=i}} x_h) \bigg)=l\bigg\}.$$
It is easy to see that if $C \in Irr\;\Lambda^{\a}$ then $\phi^*_i(C)=l$ if and only if $C \cap \Lambda^{\a,i;l}$ is dense in $C$.
 Moreover, there is a Hecke correspondence 
\begin{equation}\label{E:Heckelambda2}
\xymatrix{  & \Lambda^{d\epsilon_i,\a+d\epsilon_i}_{(1)} \ar[dl]_-{p^*} \ar[r]^-{r^*} & \Lambda^{d\epsilon_i,\a+d\epsilon_i} \ar[dr]^-{q^*} &\\
\Lambda^{\a} \simeq \Lambda^{d\epsilon_i} \times \Lambda^{\a} & & & \Lambda^{\a+d\epsilon_i}}
\end{equation}
which induces an isomorphism $\kappa^{\a,i;d}~: Irr\; \Lambda^{\a,i;0} \stackrel{\sim}{\to} Irr\;\Lambda^{\a+d\epsilon_i,i;d}$, and the maps
$Irr\;\Lambda^{\a} \to Irr\;\Lambda^{\a\pm \epsilon_i}$ which one can define by means of formulas (\ref{E:tildedef}) coincide with $\tilde{e}_i^*, \tilde{f}_i^*$. Now let us fix an irreducible component $C \in Irr\;\Lambda^{\a}$ and set $\phi_i^*(C)=l, (\tilde{f}_i^*)^lC=C'$. Choose $\underline{x}' \in C'$ generic so that $\bigoplus_{s(h)=i} x'_{\overline{h}}$ is injective. Elements $\underline{x}$ in $q^*r^*(p^*)^{-1} (\underline{x}')$ may be described as follows. Fix a projection $\pi: V_{\a} \tto V_{\a-l\epsilon_i}$ with kernel $K$ and an identification $V_{\a}\simeq V_{\a-l\epsilon_i} \oplus K$. Then up to $G_{\a}$-conjugation, they are of the form $\underline{x}=\underline{x}' + u$ for some $u \in Hom( \bigoplus_{s(h)=i} V_{\a_{t(h)}}, K)$ which satisfies the condition $u \circ \big(\bigoplus_{s(h)=i} x'_{\overline{h}}\big)=0$. Let us set $W=Coker\;\big( \bigoplus_{s(h)=i} x'_{\overline{h}}\big)$. Then the previous condition on $u$ just means that $u$ is induced by a map $\overline{u} \in Hom(W, K)$. We may summarize this into a diagram
$$\xymatrix{
W \ar[r]^-a \ar[dr]_-{a'} & (V_{\a})_i \ar[d]_-{\pi}\\ & (V_{\a-l\epsilon_i})_i}$$
where
$$a'=\bigoplus_{s(h)=i} x'_{\overline{h}}, \qquad a=\bigoplus_{s(h)=i} x_{\overline{h}}= \big( \bigoplus_{s(h)=i} x'_{\overline{h}} \big) + \overline{u}.$$
We conclude that for generic $\overline{u}$ (i.e for generic $\underline{x}$ in $q^*r^*(p^*)^{-1} (\underline{x}')$) it holds
$$dim\;Im\;a=inf\big( l + dim\;Im\; a', dim\;W)$$
and hence
\begin{equation}\label{E:KSproof1}
codim_{(V_{\a})_i}\;Im\;a= sup \big( codim_{(V_{\a-l\epsilon_i})_i} \;Im\;a', \;\a_i-dim\;W).
\end{equation}
Observe that
\begin{equation}\label{E:KSproof2}
dim\;W =\sum_{s(h)=i} \a_{t(h)} -\a_i +l=\a_i-l - (\a-l\epsilon_i, \epsilon_i).
\end{equation}
Combining (\ref{E:KSproof1}) and (\ref{E:KSproof2}) for $\underline{x}, \underline{x}'$ generic we obtain
\begin{equation}\label{E:KSproof3}
\phi_i(C)= sup \big( \phi_i(C'), l+(\a-l\epsilon_i, \epsilon_i) \big)= \phi_i(\Psi_i(C)).
\end{equation}
The fact that $\varepsilon_i(C)=\varepsilon_i(\Psi_i(C))$ can now be deduce from axiom i) in the definition of $\g$-crystals. From the discussion above, it also trivially entails that $\phi_j(C)=\phi_j(\Psi_i(C))$ and $\varepsilon_j(C)=\varepsilon_j(\Psi_i(C))$ for $j \neq i$.

\vspace{.1in}

Let us turn to the compatibility between $\Psi_i$ and the Kashiwara operators $\tilde{e}_j, \tilde{f}_j$. This is essentially obvious if $j \neq i$ so we assume that $j=i$. By axiom vi) in the definition of crystals, it is enough to deal with $\tilde{f}_i$ only. Thus we have to prove that for $C \in Irr\;\Lambda^{\a}$ with $\phi^*_i(C)=l$ and $C'= (\tilde{f}_i^*)^lC$
\begin{equation}\label{E:KSSproof4}
\Psi_i(\tilde{f}_iC)=\begin{cases} \tilde{e}_i^l b_i(0) \otimes \tilde{f}_iC' & \text{if}\; \varepsilon_i(C') \geq \phi_i(\tilde{e}_i^l b_i(0))=l\\
 \tilde{e}_i^{l-1} b_i(0) \otimes C' & \text{if}\; \varepsilon_i(C') < \phi_i(\tilde{e}_i^l b_i(0))=l.
 \end{cases}
\end{equation}
We will use Lemma~\ref{L:KSproof}. Let $\underline{x}$ be a generic point of $C \in Irr\;\Lambda^{\a}$ and let us keep the same notation $\underline{x}', u, \overline{u}, W, K$ as above. Let $H \subset (V_{\a})_i$ be a generic hyperplane containing $Im\;(a : W \to (V_{\a})_i)$, and let $\underline{y}$ be the restriction of $\underline{x}$ to $\bigoplus_{j\neq i} (V_{\a})_j \oplus H$. This is a generic point of $D=\tilde{f}_iC$.

Assume first that $\varepsilon_i(C') \geq l$. This is equivalent to $dim\;Ker(a': W \to (V_{\a-l\epsilon_i})_i) \geq l$. In this case, $K \subset Im\;a$ (because $\overline{u} \in Hom(W,K)$ can be chosen arbitrarily) and thus $K \subset H$. It follows that $\phi_i^*(D)=\phi_i^*(C)=l$, and that $\pi_i(H)$ is a (generic) hyperplane in $(V_{\a-l\epsilon_i})_i$. From this we get $(\tilde{f}_i^*)^lD=\tilde{f}_iC'$, from which the first case of (\ref{E:KSSproof4}) follows.

Next if  $\varepsilon_i(C') <l$ then $dim\;Ker(a': W \to (V_{\a-l\epsilon_i})_i) < l$ and the generic hyperplane $H$ does not contain $K$. Thus $K \cap H$ is of dimension $l-1$ and $\phi_i^*(D)=l-1$. Moreover, the projection $\pi_i(H) \to (V_{\a-l\epsilon_i})_i$ is surjective and the induced quotient $\pi_*(\underline{y}) \in \Lambda^{\a-l\epsilon_i}$ coincides with $\underline{x}'$. This means that $(\tilde{e}_i^*)^{l-1}D =C'$. This yields the second case of (\ref{E:KSSproof4}) and concludes the proof of Proposition~\ref{P:KSstep1}.
\end{proof}

\vspace{.1in}

\begin{theo}[Kashiwara-Saito]\label{T:KSMain} There exists a (unique) isomorphism of $\g$-crystals $\mathcal{B}^+(\infty) \simeq Irr\;\Lambda$.
\end{theo}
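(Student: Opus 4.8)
The plan is to verify the four hypotheses of Theorem~\ref{T:KSBinfty2} for the $\g$-crystal $Irr\;\Lambda$ constructed in Propositions~\ref{P:KSstep0} and \ref{P:KSstep1}. By Proposition~\ref{P:KSstep0}, $Irr\;\Lambda$ is a lowest weight $\g$-crystal of lowest weight $0$ (the lowest weight vertex being the unique component $Irr\;\Lambda^0=\{pt\}$). Hypothesis i) of Theorem~\ref{T:KSBinfty2} is immediate from the definition (\ref{E:tildedef2}): $\phi_i(C)=l$ where $l$ is the generic value of $codim_{(V_\a)_i} Im(\bigoplus_{t(h)=i} x_h)$ on $C$, which is a nonnegative integer. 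Hypothesis ii) is exactly Proposition~\ref{P:KSstep1}: the map $\Psi_i: Irr\;\Lambda \to \mathcal{B}_i \otimes Irr\;\Lambda$ sending $C \mapsto \tilde{e}_i^n b_i(0) \otimes (\tilde{f}_i^*)^n C$ with $n=\phi_i^*(C)$ is an embedding of $\g$-crystals.

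For hypothesis iii) — that $\Psi_i(Irr\;\Lambda) \subset \{\tilde{e}_i^k b_i(0)\;|\; k \geq 0\} \times Irr\;\Lambda$ — one observes directly from the definition of $\Psi_i$: the first tensor factor is always of the form $\tilde{e}_i^n b_i(0)$ with $n = \phi_i^*(C) \geq 0$, and since $\tilde{f}_j b_i(0)=0$ for $j\neq i$ while $\tilde{e}_i^n b_i(0)=b_i(n)$, the image of the first factor lies in the asserted subset $\{\tilde{e}_i^k b_i(0); k\geq 0\}=\{b_i(k); k \geq 0\}$. Hypothesis iv) is the substantive point: for $C \in Irr\;\Lambda^\a$ with $wt(C)=\a \neq 0$, we must find $i \in I$ such that $\Psi_i(C)=\tilde{e}_i^k b_i(0)\otimes C'$ with $k>0$, i.e. such that $\phi_i^*(C) \geq 1$. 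But $\phi_i^*(C)$ is the generic value on $C$ of $dim\, Ker(\bigoplus_{s(h)=i} x_h)$ in degree $i$ — equivalently $C$ generically belongs to the stratum $\Lambda^{\a,i;\geq 1}$ — and the nilpotency of every $\underline{x} \in \Lambda^\a$ forces $Ker\,\underline{x} \neq 0$, hence some degree $i$ with $(\bigoplus_{s(h)=i} x_h)$ non-injective. More precisely, the dual analogue of (\ref{E:fwqa}), namely $\Lambda^\a = \bigcup_{i\in I} \Lambda^{\a,i;\geq 1}$, shows that for any irreducible component $C$ of $\Lambda^\a$ with $\a\neq 0$ there is some $i$ with $C \cap \Lambda^{\a,i;\geq 1}$ dense in $C$, which is precisely $\phi_i^*(C)\geq 1$; I would spell out this dual version using the same argument by genericity that established (\ref{E:fwqa}) (a generic point of $C$ cannot lie in $\bigcap_j \Lambda^{\a,j;0}$ since that intersection is empty by nilpotency).

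Having checked i)--iv), Theorem~\ref{T:KSBinfty2} yields an isomorphism of $\g$-crystals $Irr\;\Lambda \simeq \mathcal{B}^+(\infty)$. Uniqueness follows from the standard rigidity of lowest weight crystals: any two isomorphisms would differ by an automorphism of $\mathcal{B}^+(\infty)$ fixing the lowest weight vertex, and since $\mathcal{B}^+(\infty)$ is generated from that vertex by the $\tilde{e}_i$ with the property that $\tilde e_i$ acts injectively where defined, such an automorphism is the identity.

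I expect the main obstacle to be the careful bookkeeping in verifying hypothesis iv), specifically establishing the dual stratification statement $\Lambda^\a = \bigcup_i \Lambda^{\a,i;\geq 1}$ and relating the generic stratum of $C$ under the $*$-stratification to $\phi_i^*$. This requires re-running the genericity argument behind (\ref{E:fwqa}) in the transposed setting and confirming that the $*$-involution on $Irr\;\Lambda^\a$ intertwines the two stratifications $\{\Lambda^\a_{i;l}\}$ and $\{\Lambda^{\a,i;l}\}$ as used in the proof of Proposition~\ref{P:KSstep1} — everything else is a direct unwinding of definitions together with the criterion of Kashiwara and Saito.
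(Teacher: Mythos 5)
Your proposal is correct and follows essentially the same route as the paper: both verify the four hypotheses of Theorem~\ref{T:KSBinfty2} for $Irr\;\Lambda$ using Propositions~\ref{P:KSstep0} and~\ref{P:KSstep1}, with condition iv) reduced to the existence, for any $C$ of nonzero weight, of some $i$ with $\phi^*_i(C) \geq 1$. The only difference is one of presentation: you spell out the nilpotency argument behind the dual stratification identity $\Lambda^{\a} = \bigcup_i \Lambda^{\a,i;\geq 1}$ (and the uniqueness claim via rigidity of lowest weight crystals), which the paper leaves as a one-line assertion.
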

\begin{proof} By Proposition~\ref{P:KSstep0} $Irr\;\Lambda$ is a lowest weight crystal of lowest weight $0$. By Proposition~\ref{P:KSstep1} there are embeddings $\Psi_i:Irr\;\Lambda \to Irr\;\Lambda \otimes \mathcal{B}_i$ for all $i$. These satisfy condition iii) of  the criterion of Kashiwara and Saito by construction, and they satisfy condition iv) because for any $C \in Irr\;\Lambda$ there exists $i \in I$ for which $\phi^*_i(C) >0$. We may thus apply Theorem~\ref{T:KSBinfty2} to conclude.
\end{proof}

\vspace{.3in}

\centerline{\textbf{4.5. Relationship to $\QQ$ and $\PQ$.}}
\addcontentsline{toc}{subsection}{\tocsubsection {}{}{\; 4.5. Relationship to the Hall category.}}

\vspace{.15in}

In this final section we tie up the geometric construction of $\mathcal{B}^+(\infty)$ described here with Lusztig's Hall category $\QQ$ and the set of simple perverse sheaves $\PQ$.  Let $\vec{Q}=(I,H)$ be a quiver as usual, and let $\mathfrak{g}$ be the associated Kac-Moody algebra. All reference to crystals will be to $\mathfrak{g}$-crystals. We still work over $\C$.

\vspace{.1in}

The first link between $\QQ, \PQ$ and $Irr\;\Lambda$ is purely combinatorial~: by construction, there is a canonical bijection $\PQ \sim \mathbf{B}$ between the set of simple perverse sheaves in $\PQ$ and the canonical basis of $\U_{\nu}(\n_+)$. Next, by Theorem~\ref{T:Kassw} there is a a bijection $\mathbf{B} \simeq \mathcal{B}^+(\infty)$, and finally by Theorem~\ref{T:KSMain} there is a bijection
$\mathcal{B}^+(\infty) \simeq Irr\;\Lambda$. Composing all these, we obtain a natural bijection
\begin{equation}\label{E:bij1}
i_{\vec{Q}}~:\PQ \simeq Irr\;\Lambda.
\end{equation}

For $H'$ a different orientation of the graph underlying $\vec{Q}$ and $\vec{Q}'=(I,H')$, the Fourier-Deligne transform provides us with an identification $\boldsymbol{\Theta} :  \mathcal{P}_{\vec{Q}'} \stackrel{\sim}{\to} \PQ$ (see Section~3.4). Thanks to Theorem~\ref{T:LUU}, this is compatible with (\ref{E:bij1}), i.e. $i_{\vec{Q}'}=i_{\vec{Q}} \circ \boldsymbol{\Theta}$. Here we have implicitly used the fact that $\overline{E}_{\a}=T^*E_{\a}$ is orientation independent (for all $\a \in \N^I$).

\vspace{.1in}

There is another, more direct, relationship between the objects of $\PQ$ and $\Lambda$, given in terms of singular supports (or characteristic varieties). We refer to \cite{KasShap} for the relevant theory. We will content ourselves here by saying that if $\mathbb{P}$ is a perverse sheaf (or a semisimple complex) on a smooth complex variety then its singular support $SS(\mathbb{P})$ is a possibly singular Lagrangian subvariety of $T^*X$, which measures in some sense the directions in which $\mathbb{P}$ ''propagates''. 

\begin{theo}[Lusztig]\label{T:SSLU} The singular support $SS(\mathbb{P})$ of any $\mathbb{P} \in \PQ$ lies in $\Lambda$.
\end{theo}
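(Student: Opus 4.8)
The plan is to prove Theorem~\ref{T:SSLU} by induction on the dimension vector $\gamma$, following closely the inductive scheme used in the proof of Theorem~\ref{T:LU} (reduction to a sink via Fourier--Deligne transform, then the key Lemma~\ref{L:Proofkey}), combined with the standard functorial estimates for singular supports under proper pushforward, smooth pullback, and Verdier duality. First I would recall the three basic facts about $SS$ that will be used throughout: (a) $SS$ is stable under Verdier duality and under the Fourier--Deligne (or Fourier--Sato) transform, up to the natural identification $T^*E_\gamma \simeq \overline{E}_\gamma$ which is orientation-independent; (b) if $f$ is proper then $SS(f_!\mathbb{P}) \subset f_\circ(SS(\mathbb{P}))$, the image of $SS(\mathbb{P})$ under the natural correspondence on cotangent bundles; (c) if $f$ is smooth then $SS(f^*\mathbb{P}) = f^\ast_\circ(SS(\mathbb{P}))$. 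These are exactly the estimates in \cite{KasShap} and \cite{Lusbook}, and I would simply cite them.

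The base of the induction is $\gamma$ simple, where $E_\gamma = \{0\}$, $\mathcal{P}^\gamma = \{\mathbbm{1}_{n\epsilon_i}\}$ and $SS(\mathbbm{1}_{n\epsilon_i}) = \{0\} = \Lambda^{n\epsilon_i}$. For the inductive step, fix $\gamma$ and $\mathbb{P} \in \mathcal{P}^\gamma$. Using a Fourier--Deligne transform $\boldsymbol{\Theta}$ to a reorientation $\vec{Q}'$ in which a vertex $i$ is a sink, and using the orientation-independence of $\Lambda$ (Section~4.2, the isomorphism $\Phi: \overline{E}_\alpha \stackrel{\sim}{\to} \overline{E}_\alpha$ at the end of that section), it suffices to treat $\mathbb{P} \in \mathcal{P}^\gamma_{\vec{Q}',i;d}$ with $d \geq 1$; the case $\mathbb{P} \in \mathcal{P}^\gamma_{\vec{Q}',i;0}$ is pushed through by the argument at the end of the proof of Theorem~\ref{T:LU} (any simple perverse sheaf supported in $E_\gamma^{i;\geq 1}$ is covered after reorienting). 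By Lemma~\ref{L:Proofkey} we have $\mathbbm{1}_{d\epsilon_i} \star \mathbb{R} \simeq \mathbb{P} \oplus \mathbb{T}$ with $\mathbb{R} \in \mathcal{Q}^{\gamma - d\epsilon_i}$ and $\operatorname{supp}\mathbb{T} \subset E_\gamma^{i;\geq d+1}$. Since $SS(\mathbb{P})$ is a direct summand (in the sense of being a union of irreducible components) of $SS(\mathbbm{1}_{d\epsilon_i} \star \mathbb{R})$, and since by the induction hypothesis $SS(\mathbb{R}) \subset \Lambda^{\gamma-d\epsilon_i}$ and trivially $SS(\mathbbm{1}_{d\epsilon_i}) = \{0\} \subset \Lambda^{d\epsilon_i}$, it is enough to prove the key propagation statement: $SS(\mathbb{P}_1 \star \mathbb{P}_2) \subset \Lambda^{\alpha+\beta}$ whenever $SS(\mathbb{P}_1) \subset \Lambda^\alpha$ and $SS(\mathbb{P}_2) \subset \Lambda^\beta$.

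This propagation statement is the heart of the matter, and it is where I expect the main obstacle. The induction functor $\underline{m}$ is $q_! r_\# p^*$ along the diagram (\ref{E:16}) $E_\alpha \times E_\beta \stackrel{p}{\leftarrow} E^{(1)}_{\alpha,\beta} \stackrel{r}{\to} E_{\alpha,\beta} \stackrel{q}{\to} E_{\alpha+\beta}$, with $p$ smooth, $r$ a principal bundle and $q$ proper. Applying (c) to $p^*$ and (b) to $q_!$ (and the obvious statement for the principal bundle $r_\#$) gives $SS(\mathbb{P}_1 \star \mathbb{P}_2) \subset q_\circ r_\circ p^\ast_\circ (SS(\mathbb{P}_1) \boxtimes SS(\mathbb{P}_2))$. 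The task is then purely a computation inside cotangent bundles: one must show that the image of $\Lambda^\alpha \times \Lambda^\beta$ under this chain of correspondences lands in $\Lambda^{\alpha+\beta}$. Concretely, a point of $T^*_W(E_{\alpha,\beta})$ over $(\underline{y},W) \in E_{\alpha,\beta}$ conormal to the flag condition is a tuple $(\underline{y}, W, \underline{z})$ with $\underline{z} \in \overline{E}_{\alpha+\beta}$ satisfying the moment-map-type relation coming from differentiating the $G_{\alpha+\beta}$-action, and the constraint that the induced $\underline{z}$-data on $W$ and on $V_{\alpha+\beta}/W$ be nilpotent (this is where $SS(\mathbb{P}_i) \subset \Lambda^{\bullet}$, i.e. the nilpotency of the cotangent directions of the factors, enters); one then checks that $q_\circ$ forgets $W$ and produces a point of $\overline{E}_{\alpha+\beta}$ which still satisfies $\mu = 0$ and is still nilpotent, because an extension of a nilpotent representation of $\vecdq$ (with relations) by a nilpotent one, along a stable subspace, is again nilpotent. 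This is precisely the kind of argument Lusztig carries out in \cite{Lusbook} (the preservation of $\Lambda$ under the Hecke-type correspondences, cf. Section~4.3 here, in particular the proof of Theorem~\ref{T:LUlambda} and the diagram (\ref{E:Heckelambda})), and I would organize the verification around the maps $p, p^*, q$ of (\ref{E:Heckelambda}) rather than reproving it from scratch. The one genuinely delicate point is to make the inclusion of singular supports interact correctly with the shifts and with the principal-bundle equivalence $r_\#$ (so that no spurious non-nilpotent components appear), and to handle the equivariance throughout by working on the spaces $X_\Gamma$ of Section~1.5 as in the proof of Proposition~\ref{P:Hopfscalar}; once that bookkeeping is set up, the nilpotency check is the quiver-theoretic statement just described.
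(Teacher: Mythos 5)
Your plan is correct in substance, but it carries a lot of scaffolding that turns out to be dead weight. You eventually say ``it is enough to prove the key propagation statement'' $SS(\mathbb{P}_1 \star \mathbb{P}_2) \subset \Lambda^{\alpha+\beta}$, and indeed that is true --- but once you have that statement, the whole inductive apparatus (induction on $\gamma$, Fourier--Deligne reduction to a sink, Lemma~\ref{L:Proofkey}) is unnecessary. Every $\mathbb{P} \in \mathcal{P}_{\vec{Q}}$ is, by \emph{definition} of $\mathcal{P}_{\vec{Q}}$, a direct summand of some $L_{\alpha_1, \ldots, \alpha_n} = \mathbbm{1}_{\alpha_1} \star \cdots \star \mathbbm{1}_{\alpha_n}$ with each $\alpha_k$ simple; each $\mathbbm{1}_{\alpha_k}$ is a constant sheaf on $E_{\alpha_k} = \{0\}$, so $SS(\mathbbm{1}_{\alpha_k}) = \{0\}$; repeated propagation gives $SS(L_{\underline{\alpha}}) \subset \Lambda$; and $SS$ is monotone under direct summands. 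Your reduction-to-a-sink argument is not wrong, it just reproves a weaker consequence of the lemma you need anyway.

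The paper does not reprove this result (it cites \cite{Lusaff}), and the cited route is more direct still than your propagation lemma: write $L_{\underline{\alpha}} = (q_{\underline{\alpha}})_!\bigl(\qlb_{E_{\underline{\alpha}}}[\,\cdots\,]\bigr)$ for the \emph{single} proper map $q_{\underline{\alpha}}: E_{\underline{\alpha}} \to E_{\gamma}$, apply the singular-support estimate for proper pushforward once, and observe that the resulting conormal variety of $q_{\underline{\alpha}}$ lies in $\Lambda^{\gamma}$. The nilpotency check then comes for free from the flag structure with simple steps: this is exactly the computation the paper carries out in the proof of Theorem~\ref{T:LUlambda}, where the conormal conditions $x_h(W_l) \subset W_{l+1}$ and $x_{\bar{h}}(W_l) \subset W_l$, together with the fact that each $W_l/W_{l+1}$ has simple dimension vector and $\vec{Q}$ has no edge loops, force every arrow (forward \emph{and} backward) to strictly decrease the flag.

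This is also where your sketch is thinnest. You reduce the nilpotency check to ``an extension of a nilpotent representation of $\vecdq$ by a nilpotent one, along a stable subspace, is again nilpotent,'' but that statement only applies once you know the cotangent direction $\underline{z}$ (the reverse arrows) \emph{preserves} the subspace $W$ and descends to the given nilpotent directions on $W$ and $V/W$. That is precisely the content of the cotangent-correspondence analysis, and you pass over it with ``one then checks.'' It is true, but it is the substantive step, and it is not a consequence of $\underline{y}$ preserving $W$; you must extract it from the codifferential conditions coming from the correspondence, which is exactly what the flag computation in the proof of Theorem~\ref{T:LUlambda} makes explicit. The single-proper-map formulation has the advantage of making this the only nontrivial verification, and of making it identical to a verification already done in the text.
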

This result is actually not hard to prove and follows from some simple functorial properties of $SS$ with respect to proper maps (see \cite{Lusaff}). Note that there is no reason to expect the converse of Theorem~\ref{T:SSLU} to hold. It is known that the singular support is invariant under Fourier transform, in particular $SS(\boldsymbol{\Theta}(\mathbb{P})=SS(\mathbb{P})$ for any $\mathbb{P} \in \PQ$. In this sense,  $SS(\mathbb{P})$
is a more canonical object than $\mathbb{P}$ itself.

\vspace{.2in}

\addtocounter{theo}{1}
\noindent \textbf{Example \thetheo .} Let us consider the setting of Example~2.20. So $\vec{Q}$ is the cyclic quiver of order $2$, and $\a=\delta=\epsilon_1+\epsilon_2$. Then $E_{\delta}=\C^2$ while $E^{nil}_{\delta}=\{(x,y) \in \C\;|\; xy=0\}=T_x \cup T_y$ is the union of the two coordinate axes in $\C^2$. 
Hence $\overline{E}_{\delta}=T^*\C^2=\{(x,y, x^*,y^*)\;|\; x,y,x^*,y^* \in \C\}$, and the Lusztig nilpotent variety is
$$\Lambda^{\delta}=\big\{(x,y,x^*,y^*)\;|\;xx^*=yy^*=xy=x^*y^*=0 \big\}.$$
There are three $G_{\delta}$-equivariant simple perverse sheaves on $E_{\delta}^{nil}$, namely $\qlb_{\{0\}}, \qlb_{T_x}[1]$ and $\qlb_{T_y}[1]$. Since each of these is the constant sheaf on a smooth close subvariety, we have
$$SS(\qlb_{T_x}[1])=T^*_{T_x}E_{\delta}=\{(x,0,0,y^*)\;|\; x,y^* \in \C\},$$
$$SS(\qlb_{T_y}[1])=T^*_{T_y}E_{\delta}=\{(0,y,x^*,0)\;|\; x^*,y \in \C\},$$
$$SS(\qlb_{\{0\}})=T^*_{\{0\}}E_{\delta}=\{(0,0,x^*,y^*)\;|\; x^*,y^* \in \C\}.$$
Of these three supports, note that only the first two lie in $\Lambda^{\delta}$. This is in accordance with Theorem~\ref{T:SSLU} above; indeed, as we have seen in Example~2.19, $\mathcal{P}^{\delta}=\{ \qlb_{T_x}[1], \qlb_{T_y}[1]\}$. One can check that the bijection $i_{\vec{Q}}$ maps $\qlb_{T_x}[1]$ to $T^*_{T_x} E_{\delta}$ and 
$\qlb_{T_y}[1]$ to $T^*_{T_y} E_{\delta}$.

\vspace{.1in}

Although this example is rather trivial, it also shows how one may use the singular support as a way to discard some potential candidates for $\PQ$. This is
for instance useful in the case of multiples of the imaginary root $\delta$ of a cyclic quiver of order $n \geq 2$. The generalization of the argument above is straightforward~: the singular support of the perverse sheaf $\qlb_{\{0\}}$ on $E_{d\delta}$ is not contained in $\Lambda^{d\delta}$, and hence $\qlb_{\{0\}} \not\in \mathcal{P}^{d\delta}$. Note that $\qlb_{\{0\}}=IC(\mathcal{O}_{(1^d), (1^d), \ldots, (1^d)})$ corresponds to a non aperiodic multipartition in the terminology of Section~2.4. In fact, one may prove using singular supports that only aperiodic multipartitions can belong to $\PQ$. (see Theorem~2.18 and \cite[Section~10]{Lusaff}).
\endexample

\vspace{.2in}

It was conjectured for some time that $SS(\mathbb{P})$ is irreducible for $\mathbb{P} \in \PQ$, at least when $\vec{Q}$ is of finite type. This would have provided a direct, geometric link, between $\PQ$ and $Irr\;\Lambda$. In \cite{KS} Kashiwara and Saito found a counterexample in type $A_5$, for the dimension vector $\a=(2,4,4,4,2)$\footnote{This particular counterexample also turns up in other questions, related for instance to \textit{dual canonical bases}--see \cite{Leclerc}.}. So in general $SS(\mathbb{P})$ consists of several irreducible components. Ideally, one would like to be able to extract a ``leading term'' in $SS(\mathbb{P})$ and obtain the desired bijection. For notational convenience, let us write $\Lambda_{\mathbb{R}}=i_{\vec{Q}}(\mathbb{R})$.

\vspace{.1in}

\begin{theo}[Kashiwara-Saito]\label{T:KSSSup} For any $\mathbb{P} \in \mathbb{P} \in \PQ$, and any $i \in I$
\begin{equation}\label{E:Kimura}
\Lambda_{\mathbb{P}} \subset SS(\mathbb{P}) \subset \Lambda_{\mathbb{P}}\; \cup \hspace{-.1in}\bigcup_{\substack{\mathbb{R} \in \PQ\\ \phi_i(\mathbb{R}) \geq \phi_i(\mathbb{P})}} \Lambda_{\mathbb{R}}.
\end{equation}
\end{theo}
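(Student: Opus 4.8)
\textit{Proof proposal.} The plan is to exploit the two descriptions of the Kashiwara operators we now have at our disposal: the algebraic/combinatorial one from Lemma~\ref{L:Proofkey} (via the partition $\mathcal{P}^{\gamma}=\bigsqcup_d \mathcal{P}^{\gamma}_{i;d}$ and the equivalences $r_\#$), and the geometric one from Section~4.4 (via the stratification $\Lambda^{\a}=\bigsqcup_l \Lambda^{\a}_{i;l}$ and the bijections $\kappa^{\a}_{i;l}$). The key observation is that these two constructions are matched by the singular support functor: if $\mathbb{P}\in\mathcal{P}^{\gamma}_{i;d}$ then, because $E_\gamma^{i;d}$ is the locus where the representation contains $S_i^{\oplus d}$ as a direct summand (cf. the discussion preceding Lemma~\ref{L:Proofkey}) and $\Lambda^\gamma_{i;d}$ is the analogous locus in the nilpotent variety, one expects that the ``leading'' (top-dimensional, generic-in-$E^{i;d}_\gamma$) component of $SS(\mathbb{P})$ lies over $E_\gamma^{i;d}$, and corresponds under $\kappa$ to the leading component of $SS(\mathbb{R})$ where $\mathbb{R}=(j^{i;0}_{\gamma-d\epsilon_i})_{*!}(r_\#\mathbb{P})$. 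This should identify $\phi_i(\mathbb{P})$ — which by the bijection $i_{\vec{Q}}$ equals $\phi_i(\Lambda_{\mathbb{P}})$, hence the stratum index of $\Lambda_{\mathbb{P}}$ — with the integer $d$ attached to $\mathbb{P}$ via $\mathcal{P}^{\gamma}_{i;d}$. Establishing this compatibility is the heart of the matter.

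First I would set up the induction on $\gamma$. For $\gamma$ a multiple of a simple root the statement is trivial. For the inductive step, fix $\mathbb{P}\in\mathcal{P}^{\gamma}_{i;d}$ and apply Lemma~\ref{L:Proofkey}: $\mathbbm{1}_{d\epsilon_i}\star\mathbb{R}\simeq\mathbb{P}\oplus\mathbb{T}$ with $\mathrm{supp}\,\mathbb{T}\subset E_\gamma^{i;\ge d+1}$, and $\underline{\Delta}_{d\epsilon_i,\gamma-d\epsilon_i}(\mathbb{P})\simeq(\mathbbm{1}_{d\epsilon_i}\boxtimes\mathbb{R})\oplus\mathbb{Q}$ with $\mathrm{supp}\,\mathbb{Q}\subset E_{d\epsilon_i}\times E^{i,\ge1}_{\gamma-d\epsilon_i}$. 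I would then compute $SS(\mathbbm{1}_{d\epsilon_i}\star\mathbb{R})$ using the functorial behaviour of singular supports under the proper pushforward $q$, the principal bundle $r$, and the smooth pullback $p$ in the induction diagram (\ref{E:16}) — this is exactly the type of computation underlying Theorem~\ref{T:SSLU}, carried out here relative to the Hecke diagram (\ref{E:Heckelambda}) restricted over $\Lambda^{\gamma}_{i;d}$. Corollary~\ref{C:dimlambda} and the bijection $\kappa^{\gamma-d\epsilon_i}_{i;d}:Irr\,\Lambda^{\gamma-d\epsilon_i}_{i;0}\xrightarrow{\sim}Irr\,\Lambda^{\gamma}_{i;d}$ are precisely the tools needed: they guarantee that a component $C'$ of $SS(\mathbb{R})$ generically in $\Lambda^{\gamma-d\epsilon_i}_{i;0}$ produces, after the Hecke operation, a component $\kappa(C')$ generically in $\Lambda^{\gamma}_{i;d}$ of the same dimension, while components of $SS(\mathbb{R})$ in strata $\Lambda^{\gamma-d\epsilon_i}_{i;\ge1}$, together with the contribution of $\mathbb{T}$, land in $\bigcup_{l\ge d+1}\Lambda^{\gamma}_{i;l}\subset\bigcup_{\mathbb{R}':\phi_i(\mathbb{R}')>\phi_i(\mathbb{P})}\Lambda_{\mathbb{R}'}$ by the induction hypothesis and (\ref{E:bijlambda}).

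The two inclusions in (\ref{E:Kimura}) would then be extracted as follows. For the lower inclusion $\Lambda_{\mathbb{P}}\subset SS(\mathbb{P})$: by definition $\Lambda_{\mathbb{P}}=i_{\vec{Q}}(\mathbb{P})$ is the component of $Irr\,\Lambda$ which, under the crystal isomorphisms $\mathcal{P}_{\vec{Q}}\simeq\mathbf{B}\simeq\mathcal{B}^+(\infty)\simeq Irr\,\Lambda$, corresponds to $\mathbb{P}$; tracing the crystal operators $\tilde e_i,\tilde f_i$ on both sides (Lemma~\ref{L:Proofkey} computes them on $\PQ$ via the integer $d$, and (\ref{E:tildedef}) computes them on $Irr\,\Lambda$ via the stratum index $l$, and Proposition~\ref{P:KSstep1}/Theorem~\ref{T:KSMain} pin down the matching) shows $\Lambda_{\mathbb{P}}$ is generically in $\Lambda^{\gamma}_{i;d}$ and is $\kappa^{\gamma-d\epsilon_i}_{i;d}(\Lambda_{\mathbb{R}})$; the computation above shows this component does occur in $SS(\mathbb{P})$. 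For the upper inclusion: the same computation shows every \emph{other} component of $SS(\mathbb{P})$ either comes from $SS(\mathbb{R})$ via strata of higher index, or from $\mathbb{T}$, hence is $\Lambda_{\mathbb{R}'}$ for some $\mathbb{R}'$ with $\phi_i(\mathbb{R}')\ge d+1>\phi_i(\mathbb{P})$ — once one knows (again by the $\kappa$-bijections and induction) that $SS$ of complexes supported on $E^{i;\ge d+1}_\gamma$ decomposes into components $\Lambda_{\mathbb{R}'}$ with $\phi_i(\mathbb{R}')\ge d+1$. The main obstacle I anticipate is the bookkeeping in the singular-support computation for the induction product: one must verify that no \emph{new} component, not of the form $\Lambda_{\mathbb{R}'}$ with the required $\phi_i$-bound, is created by the non-properness of $p$ or by the passage through $\mathbbm{1}_{d\epsilon_i}$, and that the ``leading term'' $\kappa(\Lambda_{\mathbb{R}})$ genuinely survives in $SS(\mathbb{P})$ rather than being absorbed into $SS(\mathbb{T})$ — this requires a careful generic-point argument on the Hecke correspondence in the style of Lemma~\ref{L:KSproof}, comparing dimensions via Corollary~\ref{C:dimlambda}.
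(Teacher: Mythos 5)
The paper does not prove this theorem: it is stated with a reference to \cite{KS} (and, for the corrected $\geq$ form, to the private communication \cite{KimuraNak}), and the proof is omitted. So there is no in-paper argument to compare against, only the statement and the discussion in Remark~4.25.

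Your proposal has a genuine error that the paper's own Remark~4.25 pinpoints. You assert that every component of $SS(\mathbb{P})$ other than $\Lambda_{\mathbb{P}}$ is some $\Lambda_{\mathbb{R}'}$ with $\phi_i(\mathbb{R}') \geq d+1 > \phi_i(\mathbb{P})$, i.e.\ you are proving the \emph{strict} inequality. But this is exactly the version that Kashiwara--Saito originally wrote and that Remark~4.25 shows is \emph{false}: for the Kronecker quiver at $\gamma=2\delta$, $\Lambda_{IC(U_{2\delta},\mathcal{L}_{triv})}\subset SS(IC(U_{2\delta},\mathcal{L}_{sign}))$ although $\phi_2$ agrees on the two sheaves. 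If your chain of reasoning were sound it would prove a false statement, so something must break.

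Here is where it breaks. After writing $\mathbbm{1}_{d\epsilon_i}\star\mathbb{R}\simeq\mathbb{P}\oplus\mathbb{T}$ with $\mathbb{R}\in\mathcal{P}^{\gamma-d\epsilon_i}_{i;0}$ and $\mathrm{supp}\,\mathbb{T}\subset E_\gamma^{i;\geq d+1}$, you decompose $SS(\mathbb{R})$ into $\Lambda_{\mathbb{R}}$ plus ``components in strata $\Lambda^{\gamma-d\epsilon_i}_{i;\geq1}$.'' But there is nothing forcing the other components of $SS(\mathbb{R})$ to lie in the higher strata. Indeed $\phi_i(\mathbb{R})=0$, so the inductive hypothesis applied to $\mathbb{R}$ and the index $i$ says only $SS(\mathbb{R})\subset\Lambda$, which is Theorem~\ref{T:SSLU} and constrains nothing. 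In general $SS(\mathbb{R})$ can (and by the $A_5$ example of \cite{KS} does) contain a second irreducible component $\Lambda_{\mathbb{R}''}$ with $\mathbb{R}''\in\mathcal{P}^{\gamma-d\epsilon_i}_{i;0}$ distinct from $\mathbb{R}$. Pushing such a component through the Hecke correspondence via $\kappa^{\gamma-d\epsilon_i}_{i;d}$ lands you in $\Lambda^\gamma_{i;d}$, i.e.\ at some $\Lambda_{\mathbb{P}'}$ with $\phi_i(\mathbb{P}')=d=\phi_i(\mathbb{P})$ — not $>d$. Once you account for these, your argument delivers the correct $\geq$ bound rather than $>$, but the step as written overclaims. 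Finally, the concern you flag but do not resolve — that $\kappa(\Lambda_{\mathbb{R}})$ survives in $SS(\mathbb{P})$ rather than being absorbed — is a genuine gap: singular support is only inclusion-compatible with proper pushforward $q_!$, so the lower inclusion $\Lambda_{\mathbb{P}}\subset SS(\mathbb{P})$ really does need a local computation on the open stratum, not just the formalism.
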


As a consequence, we see that the singular support of $\mathbb{P}$ can be reducible only if there exists a perverse sheaf $\mathbb{R} \in \PQ$ of the same dimension vector as $\mathbb{P}$ satisfying $\phi_i(\mathbb{R}) \geq \phi_i(\mathbb{P})$ for \textit{all} $i \in I$.

\vspace{.2in}

\addtocounter{theo}{1}
\noindent \textbf{Remark \thetheo .} Theorem~\ref{T:KSSSup} is stated in \cite{KS} with the strict inequality $\phi_i(\mathbb{R}) > \phi_i(\mathbb{P})$ in (\ref{E:Kimura}) instead. However this strict inequality sometimes fails, as the following example shows. Let $\vec{Q}$ be the Kronecker quiver (see example 2.28) and consider the dimension vector $2\delta$. The set $\mathcal{P}_{\vec{Q}}^{2\delta}$ contains six elements among which are $IC(U_{2\delta}, \mathcal{L}_{triv})=\mathbbm{1}_{2\delta}$ and $IC(U_{2\delta},\mathcal{L}_{sign})$. Let us denote by $\pi: T^*\underline{\mathcal{M}}_{\vec{Q}}^{2\delta} \to \underline{\mathcal{M}}_{\vec{Q}}^{2\delta}$ the projection. Let also $\underline{\mathcal{M}}^{\mathbb{R},2\delta}_{\vec{Q}}=E^{\mathbb{R},2\delta}_{\vec{Q}}/G_{2\delta}$ stand for the open subset of $\underline{\mathcal{M}}^{2\delta}_{\vec{Q}}$ parametrizing regular modules (see Section~2.5.). There is a ``support map'' $\rho: \underline{\mathcal{M}}^{\mathbb{R},2\delta}_{\vec{Q}} \to S^2\mathbb{P}^1$ and we have $U_{2\delta}=\rho^{-1}(S^2\mathbb{P}^1 \backslash \Delta_{\mathbb{P}^1})$. One checks that
$$i_{\vec{Q}}(IC(U_{2\delta},\mathcal{L}_{triv}))=\underline{\mathcal{M}}_{\vec{Q}}^{2\delta} $$
(the zero section of the cotangent bundle $\pi$) while 
$$i_{\vec{Q}}(IC(U_{2\delta},\mathcal{L}_{sign}))=\overline{T^*(\rho^{-1}(\Delta_{\mathbb{P}^1})) \cap \underline{\Lambda}^{2\delta}_{\vec{Q}}}.$$
From this one gets $\phi_2(IC(U_{2\delta},\mathcal{L}_{sign}))=\phi_2(IC(U_{2\delta},\mathcal{L}_{triv}))=0$. But on the other hand $i_{\vec{Q}}(IC(U_{2\delta},\mathcal{L}_{triv})) =\underline{\mathcal{M}}_{\vec{Q}}^{2\delta} \subset SS(IC(U_{2\delta},\mathcal{L}_{sign}))$. Hence indeed in this example, there is an equality rather than a strict inequality in (\ref{E:Kimura}).

Note that, because of the above equality phenomenon, it is not directly possible to extract a leading term from the singular support of a simple perverse sheaf $\mathbb{P} \in \PQ$, at least for an arbitrary quiver $\vec{Q}$. The case of finite and affine quivers is the subject of \cite{KimuraNak}.

We thank Y. Kimura for kindly explaining this point to us.

\vspace{.2in}

\addtocounter{theo}{1}
\noindent \textbf{Remark \thetheo .} Let $\vec{Q}$ be a quiver of finite type. In that situation, we have for any dimension vector $\alpha$, $\Lambda^{\alpha}=\mu^{-1}(0)=\bigsqcup_{\mathcal{O}} T^*_{\mathcal{O}}E_{\alpha}$ where $\mathcal{O}$ runs among the set of $G_{\alpha}$-orbits in $E_{\alpha}$ (see Examples 4.10, 4.13). This provides a canonical parametrization of the set of irreducible components of $\Lambda^{\alpha}_{\vec{Q}}$ by $G_{\alpha}$-orbits.
On the other hand, $\mathcal{P}^{\alpha}_{\vec{Q}}=\{IC(\mathcal{O})\;|\; \mathcal{O}\}$ (see Section~2.3). Thus we have a chain of canonical bijections 
$$\mathcal{P}^{\alpha}_{\vec{Q}} \leftrightarrow \{\mathcal{O} \;|\; \mathcal{O} \in E_{\alpha}/G_{\alpha}\} \leftrightarrow Irr\;\Lambda^{\alpha}_{\vec{Q}}.$$ 
It turns out that the composed bijection $\mathcal{P}^{\alpha}_{\vec{Q}} \leftrightarrow Irr\;\Lambda^{\alpha}_{\vec{Q}}$ coincides with the map $i_{\vec{Q}}$. We thank P. Baumann for this remark (see \cite{Baumann}).

\vspace{.3in}

\centerline{\textbf{4.6. Relationship to the Lusztig graph.}}
\addcontentsline{toc}{subsection}{\tocsubsection {}{}{\; 4.6. Relationship to the Lusztig graph.}}

\vspace{.15in}

The reader who still remembers the content of Section~3.6. will surely wonder if there is any relation between the crystal graph $\mathcal{B}^+(\infty)$ for a Kac-Moody Lie algebra $\g$ and the Lusztig graph $\mathcal{C}_{\vec{Q}}$ of any quiver $\vec{Q}$ associated to $\g$.  The main point of the current Lecture was to show how one may obtain $\mathcal{B}^+(\infty)$ from the cotangent geometry of the moduli spaces $\underline{\mathcal{M}}_{\vec{Q}}$ of representations of $\vec{Q}$. On the other hand, the Lusztig graph encodes some information related to perverse sheaves on the spaces $\underline{\mathcal{M}}_{\vec{Q}}$ themselves. Hence, according to the general philosophy of microlocalization alluded to in the introduction to this Lecture, it is quite natural to expect such a link. 

\vspace{.1in}

More precisely, recall from Sections~3.2. and 4.1. that there is a canonical weight-preserving bijection $\theta~: \mathcal{P}_{\vec{Q}} \simeq \mathbf{B} \simeq \mathcal{B}^+(\infty)$ given by
$$\mathbb{P} \mapsto \Psi^{-1}(\mathbf{b}_{\mathbb{P}}) \mapsto  \Psi^{-1}(\mathbf{b}_{\mathbb{P}}) \;mod\; \nu \mathcal{L}.$$
Here $\mathcal{L}=\bigoplus_{\mathbf{b} \in \mathbf{B}}  \mathcal{A}_0 \mathbf{b} \subset \U_{\nu}(\n_+)$ (see Theorem~\ref{T:Kassw}). In an effort to unburden the notation we will henceforth identify $\mathcal{K}_{\vec{Q}}$ and $\U_{\nu}(\n_+)$ via $\Psi$ and write $\mathbf{b}_{\mathbb{P}}$ in place of $\Psi^{-1}(\mathbf{b}_{\mathbb{P}})$. The set $\mathcal{P}_{\vec{Q}}$ is equipped with an $I$-colored graph structure given by the maps $e_i^\#, f_i^\#$ while $\mathcal{B}^+(\infty)$ carries the $I$-colored graph structure given by the Kashiwara operators $\tilde{e}_i, \tilde{f}_i$. It turns out, as one would expect, that the bijection $\theta$ intertwines these two graph structures\footnote{the author was not able to locate a precise reference for this fact in the litterature.}. We will give a purely algebraic proof for this, which does not rely on any cotangent geometry. 

\vspace{.1in}

\begin{prop}\label{P:LusKas} The bijection $\theta$ realizes an isomorphism of $I$-colored graphs.
\end{prop}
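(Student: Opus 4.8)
The plan is to give an algebraic proof that $\theta$ intertwines the two $I$-colored graph structures, i.e.\ that for every vertex $i \in I$ and every $\mathbb{P} \in \PQ$ one has $\theta(e_i^\# \mathbb{P}) = \tilde{e}_i \theta(\mathbb{P})$ (and dually for $f_i^\#$, $\tilde{f}_i$). Since $\theta$ is weight-preserving and both pairs of operators change the weight by $\pm \epsilon_i$, it suffices to match them vertex by vertex. The key observation is that both constructions are governed by the \emph{same} filtration of $\PQ^\gamma$ (equivalently of $\mathcal{K}^\gamma$) by the order of vanishing along the characteristic subspaces at a sink $i$: the partition $\mathcal{P}^{\gamma} = \bigsqcup_d \mathcal{P}^{\gamma}_{i;d}$ from Section~3.6 on the one side, and the filtration of $\U_\nu(\n_+)$ coming from the $\U_i \simeq \U_\nu(\mathfrak{sl}_2)$-module structure used to define the Kashiwara operators on the other.

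\textbf{Step 1: reduce to a sink.} Fix $i$ and choose an orientation $\vec{Q}'$ of the underlying graph for which $i$ is a sink. By Corollary~\ref{C:FD} the Fourier--Deligne transform $\FD$ identifies $\KQ$ with $\mathcal{K}_{\vec{Q}'}$ as algebras and carries $\PQ$ to $\mathcal{P}_{\vec{Q}'}$, and by Theorem~\ref{T:LUU} the canonical basis (hence the induced crystal $\mathcal{B}^+(\infty)$ and the bijection $\theta$) is unchanged. The maps $e_i^\#, f_i^\#$ were \emph{defined} via such a reduction and shown in Section~3.6 to be independent of the choice; the Kashiwara operators $\tilde e_i,\tilde f_i$ are intrinsic to $\U_\nu(\n_+)$. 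So we may and do work with $\vec{Q}'$, where $i$ is a sink.

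\textbf{Step 2: identify the two filtrations.} For $\mathbb{P} \in \mathcal{P}^{\gamma}_{\vec{Q}',i;d}$ Lemma~\ref{L:Proofkey} gives $\mathbbm{1}_{d\epsilon_i} \star \mathbb{R} = \mathbb{P} \oplus \mathbb{T}$ with $\mathbb{R} \in \mathcal{P}^{\gamma - d\epsilon_i}_{\vec{Q}',i;0}$, $\supp\,\mathbb{T} \subset E_\gamma^{i;\geq d+1}$, and $\underline{\Delta}_{d\epsilon_i,\gamma-d\epsilon_i}(\mathbb{P}) = (\mathbbm{1}_{d\epsilon_i} \boxtimes \mathbb{R}) \oplus \mathbb{Q}$ with $\supp\,\mathbb{Q} \subset E_{d\epsilon_i} \times E_{\gamma-d\epsilon_i}^{i,\geq 1}$. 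Passing to the Grothendieck group and invoking $\Phi(E_i^{(d)}) = \mathbf{b}_{\mathbbm{1}_{d\epsilon_i}}$ (Theorem~\ref{T:LU}) together with the positivity relations (\ref{E:Posmult}), this says: $\mathbf{b}_{\mathbb{P}} = E_i^{(d)} \mathbf{b}_{\mathbb{R}} - (\text{terms of higher }d)$, and $e_i'(\mathbf{b}_{\mathbb{P}})$ — the map dual to left multiplication by $E_i$, which is read off from $\underline{\Delta}$ — lies in the span of the $\mathbf{b}_{\mathbb{S}}$ with $\mathbb{S} \in \bigcup_{d'\geq d-1}\mathcal{P}^{\gamma-\epsilon_i}_{\vec{Q}',i;d'}$, with leading term (in lowest $d'$) a multiple of $\mathbf{b}_{\mathbb{R}'}$ where $r_\gamma^{i;d}$ and $r_{\gamma-\epsilon_i}^{i;d-1}$ relate $\mathbb{R}$ to $\mathbb{R}'$. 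Comparing this with the decomposition $u = u_0 + F_i u_1 + \cdots$ of (\ref{E:Kashops2}) and the definition of $\tilde{e}_i, \tilde{f}_i$, one concludes that the subspace $\bigoplus_{d'\geq d} \bigoplus_{\mathbb{S} \in \mathcal{P}^{\gamma}_{\vec{Q}',i;d'}} \mathcal{A}_0\,\mathbf{b}_{\mathbb{S}}$ of $\mathcal{L}$ coincides, modulo $\nu\mathcal{L}$, with the $\U_i$-weight filtration defining $\varepsilon_i$; in particular $\phi_i^{\mathrm{Lusztig}}$ on $\mathcal{P}^{\gamma}_{\vec{Q}',i;d}$ matches $\phi_i$ on the crystal (both are constant $= \langle h_i,\gamma\rangle + \varepsilon_i$, resp. $d$ adjusted), and the bijections $r_\gamma^{i;d}$ become, modulo $\nu$, exactly the Kashiwara operators' action on strata.

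\textbf{Main obstacle and conclusion.} The crux — and the step I expect to cost the most — is Step~2: making precise that the recursion $\mathbf{b}_{\mathbb{P}} \equiv E_i^{(d)}\mathbf{b}_{\mathbb{R}} \pmod{\text{higher }d}$ is \emph{exactly} the recursion (\ref{E:Kashops2}) that defines $\tilde f_i$ modulo $\nu\mathcal L$, rather than merely a triangular approximation of it. This requires knowing the leading coefficient is a unit in $\mathcal A_0$ and congruent to $1$ mod $\nu$; this follows from the $v$-integrality of the expansion $E_i^n = [n]!\,\mathbbm{1}_{n\epsilon_i}$ (Example~3.14, (\ref{E:n12})) combined with the positivity (\ref{E:Posscal2}) of the geometric pairing, exactly as in Lusztig's original verification that $\{\mathbf{b}_{\mathbb{P}}\}$ is a crystal basis (Theorem~\ref{T:Kassw}). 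Once the two filtrations and the two families of transition maps $r_\gamma^{i;d}$ vs.\ the weight-shift isomorphisms of $\U_\nu(\mathfrak{sl}_2)$-theory are identified, the equalities $\theta \circ e_i^\# = \tilde e_i \circ \theta$ and $\theta \circ f_i^\# = \tilde f_i \circ \theta$ are immediate from the definitions of $e_i^\#, f_i^\#$ in Section~3.6 and of $\tilde e_i, \tilde f_i$ in Section~4.1, proving that $\theta$ is an isomorphism of $I$-colored graphs.
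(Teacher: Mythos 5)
Your overall strategy — reduce to a sink $i$ via Fourier--Deligne, then compare the filtration $\mathcal{P}^\gamma = \bigsqcup_d \mathcal{P}^\gamma_{i;d}$ with the filtration underlying the Kashiwara operators — is the same as the paper's. Your Step 1 is correct and matches the paper exactly.

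The gap is in Step 2 and the ``main obstacle'' paragraph: you identify the right hurdle but the proposed resolution does not quite work. The paper's argument at this point does not rest on the $v$-integrality of $E_i^n = [n]!\,\mathbbm{1}_{n\epsilon_i}$ or the positivity of the geometric pairing. What is actually needed is the conjunction of two concrete facts, both of which you leave unaddressed. First, one must prove that $\theta'$ restricts to a bijection between $\mathcal{P}_{i;0}$ and $\{b : \tilde f_i(b)=0\}$ (the paper's equation (3.3)); the non-trivial direction of this is the assertion that $\tilde f_i(b)\neq 0$ forces $u_0 = 0$ in the decomposition $\mathbf{b} = u_0 + E_i u_1 + \cdots$, and the paper proves this by a bar-involution argument ($u_0$ is bar-invariant, lies in $\nu\mathcal{L}$, and $\nu\mathcal{L}\cap\overline{\nu\mathcal{L}}=\{0\}$). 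Second, one needs to know that the ideal $E_i^{n}\U_\nu(\n_+)$ is compatible with $\mathbf{B}$ — precisely Lemma 4.28 in the paper, proved by the same induction on supports as Theorem 3.13 — in order to read off from the two congruences
$$\mathbf{b}_{(e_i^\#)^n(\mathbb{P})} \equiv E_i^{(n)}\mathbf{b}_{\mathbb{P}} \pmod{E_i^{n+1}\U_\nu(\n_+)}, \qquad \mathbf{b}' \equiv E_i^{(n)}\mathbf{b}_{\mathbb{P}} \pmod{E_i^{n+1}\U_\nu(\n_+)}$$
that $\mathbf{b}' = \mathbf{b}_{(e_i^\#)^n(\mathbb{P})}$. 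Neither ``positivity of the geometric pairing'' nor an appeal to Theorem 4.7 (which the paper states without proof and which in any case concerns the \emph{existence} of the crystal basis rather than the identification of the two graph structures) can substitute for these. In short, the high-level picture is right, but the comparison of the Lusztig stratification with the $\U_i$-filtration needs the bar-involution step and Lemma 4.28 to be carried out, and these are precisely the parts your sketch leaves out.
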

\begin{proof} Let us fix some $i \in I$. We will show that 
\begin{equation}\label{E:LuKa1}
\theta \circ e_i^\# = \tilde{e}_i \circ \theta.
\end{equation}
Let $\vec{Q}'$ be a reorientation of $\vec{Q}$ in which $i$ is a sink and let $\FD~: \mathcal{P}_{\vec{Q}} \simeq \mathcal{P}_{\vec{Q}'}$ be the
Fourier-Deligne transform of Section~3.4.  We may compose $\theta$ with $\FD$ to obtain a bijection $\theta'~: \mathcal{P}_{\vec{Q}'} \simeq \mathcal{B}^+(\infty)$ and (\ref{E:LuKa1}) is equivalent to
\begin{equation}\label{E:LuKa2}
\theta' \circ e_i^\# = \tilde{e}_i \circ \theta'
\end{equation}
since the Lusztig graph $\mathcal{C}_{\vec{Q}}$ is invariant under Fourier-Deligne transform (by construction). 

\vspace{.1in}

We will begin by showing that $\theta'$ restricts to a bijection
\begin{equation}\label{E:LuKa3}
\theta'~: \{ \mathbb{P} \in \mathcal{P}_{i;0}\} \simeq \{ {b} \in \mathcal{B}^+(\infty)\;|\; \tilde{f}_i({b})=0\}.
\end{equation}
Here and below the notation $\mathcal{P}_{i;0}=\bigsqcup_{\gamma} \mathcal{P}^{\gamma}_{i;0}$ refers to the quiver $\vec{Q}'$, see Section~3.5. 

\vspace{.1in}

We first claim that
\begin{equation}\label{E:LuKa4}
\{b \in \mathcal{B}^+(\infty)\;|\; \tilde{f}_i(b)=0\} =\{ \mathbf{b}\;mod\;\nu \mathcal{L}\;|\; \mathbf{b} \not\in E_i \U_{\nu}(\n_+)\}.
\end{equation}
Indeed, let $\mathbf{b} \in \mathbf{B}$ and let us write $\mathbf{b}=u_0 + E_i u_1 + \cdots + E_i^{(k)}u_k$ for some $u_0, \ldots, u_k \in Ker\;f_i'$. By definition we have $\tilde{f}_i(\mathbf{b})=u_1 + \cdots + E_i^{(k-1)}u_k$. Set $b=\mathbf{b}\;mod\;\nu \mathcal{L}$. If $\tilde{f}_i(b)=0$ then $u_1+ \cdots + E_i^{(k-1)}u_k \in \nu \mathcal{L}$. But then $u_0 \neq 0$ for otherwise $\mathbf{b} = \tilde{e}_i \tilde{f}_i(\mathbf{b}) \in \nu \mathcal{L}$ since $\tilde{e}_i\mathcal{L} \subset \mathcal{L}$, which is impossible. It follows that $\mathbf{b} \equiv u_0\not\equiv 0\;(mod\;E_i\U_{\nu}(\n_-))$. Conversely, if $\tilde{f}_i(b) \neq 0$ then $\tilde{e}_i \tilde{f}_i(b) \equiv b \;(\nu \mathcal{L})$ and thus
$u_0 = \mathbf{b}-\tilde{e}_i\tilde{f}_i(\mathbf{b}) \in \nu\mathcal{L}$. On the other hand, from $\overline{\mathbf{b}}=\mathbf{b}$ and from the fact that $\overline{Ker\;f'_i}=Ker\;f'_i$ we get $\overline{u_0}=u_0$. Here $x \mapsto \overline{x}$ is the involution of $\U_{\nu}(\n_+)$ defined in Section~3.1. But since clearly $\nu \mathcal{L} \cap \overline{\nu \mathcal{L}}=\{0\}$ we deduce $u_0=0$. Hence $\mathbf{b}$ is indeed divisible by $E_i$. This proves (\ref{E:LuKa4}).

\vspace{.1in}

Next we invoke the following result whose proof is identical to that of Theorem~\ref{T:Rep} (up to replacing `source' by `sink').

\begin{lem}\label{L:LuKa1} For any $i \in I$ and $n \in \N$  there exists a subset $\mathbf{B}_i^{\geq n} \subset \mathbf{B}$ such that $E_i^n\U_{\nu}(\n_+) = \bigoplus_{\mathbf{b} \in \mathbf{B}^{\geq n}_i} \C(\nu) \mathbf{b}$.
Moreover, if $\vec{Q}'$ is a quiver associated to $\g$ in which $i$ is a sink then $\mathbf{B}^{\geq n}_i=\{\mathbf{b}_{\mathbb{P}}\;|\; \mathbb{P} \in \mathcal{P}_{i; \geq n}\}$.
\end{lem}

\vspace{.1in}

By Lemma~\ref{L:LuKa1} above we have
$$\{\mathbf{b}\;|\; \mathbf{b} \not\in E_i\U_{\nu}(\n_+)\}=\{\mathbf{b}_{\mathbb{P}}\;|\; \mathbb{P} \in \mathcal{P}_{i;0}\}.$$
This together with (\ref{E:LuKa4}) proves (\ref{E:LuKa3}).

\vspace{.1in}

We make the following simple but useful observation~: when we only consider $i$-arrows, the graph $\mathcal{B}^+(\infty)$ (resp. $\mathcal{C}_{\vec{Q}'}$) decomposes into infinite strings $\{b, \tilde{e}_i(b), \tilde{e}_i^2(b), \ldots\}$ (resp. $\{\mathbb{P}, e_i^\#(\mathbb{P}), (e_i^\#)^2(\mathbb{P}), \ldots\}$) starting at the vertices $b$ satisfying $\tilde{f}_i(b)=0$ (resp. the vertices $\mathbb{P}$ satisfying $f_i^\#(\mathbb{P})=0$).
This yields a parametrization of all vertices of $\mathcal{B}^+(\infty)$ as $\{\tilde{e}_i^n(b)\;|\; b \in Ker\;\tilde{f}_i, n \in \N\}$.  Of course, something similar holds for $\mathcal{C}_{\vec{Q}'}$. In view of (\ref{E:LuKa3}) we see that in order to prove (\ref{E:LuKa2}) it is now enough to show that
\begin{equation}\label{E:LuKa5}
\theta'((e_i^\#)^n(\mathbb{P}))=\tilde{e}_i^n(\theta'(\mathbb{P}))
\end{equation}
for all $\mathbb{P} \in \mathcal{P}_{i;0}$ and all $n \in \N$.

\vspace{.1in}

The final argument is based on the next result~:

\begin{lem}\label{L:LuKa2} The following holds~:
\begin{enumerate}
\item[i)] Let $\mathbb{P} \in \mathcal{P}_{i;0}$. For any $n \geq 1$
$$\mathbf{b}_{(e_i^\#)^n(\mathbb{P})} \equiv E_i^{(n)} \mathbf{b}_{\mathbb{P}} \;(mod\; E_i^{n+1}\U_{\nu}(\n_+)).$$
\item[ii)] Let $b \in \mathcal{B}^+(\infty)$ satisfy $\tilde{f}_i(b)=0$ and $n \in \N$. Let $\mathbf{b}, \mathbf{b}' \in \mathbf{B}$ be such that
$b=\mathbf{b} \;mod\;\nu \mathcal{L}$ and $\tilde{e}_i^n(b)= \mathbf{b}' \;mod\;\nu\mathcal{L}$. Then
$$\mathbf{b}' \equiv E_i^{(n)} \mathbf{b}\;(mod\;E_i^{n+1}\U_{\nu}(\n_+)).$$
\end{enumerate}
\end{lem}
\begin{proof} Statement i) is a direct consequence of Lemma~\ref{L:Proofkey}. To prove ii) we write $\mathbf{b}=u_0 + E_iu_1 + \cdots + E_i^{(k)}u_k$ as usual. Since $\tilde{f}_i(b)=0$ we have, as above $u_0 \neq 0$ and thus 
$$\tilde{e}_i^n(\mathbf{b})=E_i^{(n)}u_0 + \cdots + E_i^{(n+k)}u_k \equiv E_i^{(n)}\mathbf{b}\;(mod\;E_i^{n+1}\U_{\nu}(\n_+)).$$
On the other hand, $\tilde{e}_i^n(b) \equiv \mathbf{b}'\;(mod\;\nu \mathcal{L})$ by definition. Since $E_i^{(n)}\mathbf{b} \not\in \nu \mathcal{L}$ and since $E_i^{n+1}\U_{\nu}(\n_+)$ is compatible with the canonical basis $\mathbf{B}$ we deduce that in fact $\mathbf{b}' \equiv E_i^{(n)}\mathbf{b}\;(mod\;E_i^{n+1}\U_{\nu}(\n_+))$ as wanted.
\end{proof}

\vspace{.1in}

Using Lemma~\ref{L:LuKa2} it is a simple matter to get (\ref{E:LuKa5}). Indeed, let $n \in \N$, $\mathbb{P} \in \mathcal{P}_{i;0}$ and set $b=\theta'(\mathbb{P})$. Thus $b =\mathbf{b}_{\mathbb{P}}\;mod\;\nu \mathcal{L}$. Denote by $\mathbf{b}'$ the unique element of $\mathbf{B}$ satisfying $\tilde{e}_i^n(b) = \mathbf{b}'\;mod\;\nu \mathcal{L}$. Then by Lemma~\ref{L:LuKa2} ii) we have
$$E_i^{(n)} \mathbf{b}_{\mathbb{P}} \equiv \mathbf{b}'\;(mod\;E_i^{n+1}\U_{\nu}(\n_+))$$
while by i)
$$E_i^{(n)}\mathbf{b}_{\mathbb{P}} \equiv \mathbf{b}_{(e_i^\#)^n(\mathbb{P})} \;(mod\; E_i^{n+1}\U_{\nu}(\n_+))$$
hence $\mathbf{b}'=\mathbf{b}_{(e_i^\#)^n(\mathbb{P})}$ and $\theta'((e_i^\#)^n(\mathbb{P}))=\tilde{e}_i^n(b)=\tilde{e}_i^n(\theta'(\mathbb{P}))$ as wanted.

\vspace{.1in}

Now that (\ref{E:LuKa5}) is proved, (\ref{E:LuKa1}) and (\ref{E:LuKa2}) follow. Since (\ref{E:LuKa1}) holds for all $i\in I$ and since the operators $\tilde{f}_i, f_i^\#$ are (quasi)inverses to $\tilde{e}_i$ and $e_i^\#$ we finally deduce that $\theta$ is a graph isomorphism. Proposition~\ref{P:LusKas} is proved.
\end{proof}

\vspace{.2in}

\addtocounter{theo}{1}
\noindent \textbf{Remark \thetheo .} Although Proposition~\ref{P:LusKas} shows that it is possible to find the crystal graph structure already inside the category of perverse sheaves without having to resort to any cotangent geometry, there is a price to pay. Namely the definition of the Lusztig operators $e_i^{\#}$ and $f_i^\#$ requires one to consider \textit{all} possible (re)orientations of the quiver $\vec{Q}$ at once and use the Fourier-Deligne transforms to identity the various categories $\mathcal{Q}_{\vec{Q}}$ .

\newpage

\centerline{\large{\textbf{Lecture~5.}}}
\addcontentsline{toc}{section}{\tocsection {}{}{Lecture~5.}}

\setcounter{section}{5}
\setcounter{theo}{0}
\setcounter{equation}{0}

\vspace{.2in}

In this final Lecture, we elaborate on what could be the correct analog of Lusztig's Hall category in the context of coherent sheaves on smooth projective curves, and describe the partial results obtained in that direction in \cite{SInvent}, \cite{Scano}. As explained in \cite[Lecture~4]{Trieste}, the categories $Coh(X)$ of coherent sheaves on smooth projective curves bear a strong ressemblance to the categories $Rep_{k}\;\vec{Q}$ of representations of quivers. It was shown there that the Hall algebra $\H_X$ (more precisely the spherical Hall algebra $\mathbf{C}_X$) of a curve $X$ admits a presentation very similar to that of quantum loop groups (in the so-called Drinfeld presentation).

\vspace{.1in}

With this analogy in mind, we consider convolution products of the form
$$L_{\a_1, \ldots, \a_r}=\mathbbm{1}_{\a_1} \star \cdots \star \mathbbm{1}_{\a_r}$$
where $\a_i \in K_0(X)$ is a class of rank at most one, and where $\mathbbm{1}_{\a_r}=\qlb_{\underline{Coh}^{\a}_X}[dim\;\underline{Coh}^{\a}_X]$ is the constant sheaf over the moduli stack $\underline{Coh}^{\a}_X$ parametrizing coherent sheaves over $X$ of class $\a$. Note that there are no simple objects in $Coh(X)$ other than the simple skyscraper sheaves. The construction of the convolution product is due to Laumon \cite{Laumon}. We define the Hall category $\mathcal{Q}_X$ as the (additive) category generated by the simple summands of the semisimple complexes $L_{\a_1, \ldots, \a_r}$, and denote by $\mathcal{P}_X$ the collection of all these simple summands. At this point, one could define a graded Grothendieck ring $\mathcal{K}_X$ directly as in Lecture~3. However, contrary to the case of quivers, the ensuing algebra is different from the spherical subalgebra $\mathbf{C}_X$. This is explained by the fact that the Frobenius eigenvalues of the 
objects of $\mathcal{P}_X$ are not all powers of $q^{1/2}$, but involve as well the Frobenius eigenvalues of the curve $X$ itself. This has to be expected since the category $\mathcal{Q}_X$ (and the composition algebra $\mathbf{C}_X$) depend of course on the particular choice of the curve $X$ within the moduli space of all curves (of a given genus). In other words, any sensible definition of a graded Grothendieck ring $\mathcal{K}_X$ of $\mathcal{Q}_X$ should take into account the finer weight structure of the simple perverse sheaves in $\mathcal{P}_X$.

\vspace{.1in}

This Lecture runs as follows. After a brief description of the moduli stacks $\underline{Coh}^{\a}_X$ and of Laumon's convolution product we define the Hall category $\mathcal{Q}_X$ (Sections~5.1., 5.2.). This category has been determined in several low genus cases (when the genus of $X$ is at  most one--including the case of weighted projective lines)~: we explain these results in Section~5.3. 

All the objects appearing here have been known and studied for some time because of the role they play in the so-called geometric Langlands program (for $GL(r)$). In particular, the complexes $L_{\a_1, \ldots, \a_r}$ are direct summands of Laumon's \textit{Eisenstein sheaves}, and
the direct analog of Lusztig's nilpotent variety $\underline{\Lambda}_{\vec{Q}}$ is the so-called \textit{global nilpotent cone} $\underline{\Lambda}_X$. We briefly explain this point of view in Sections~5.4. and 5.5. To finish we state a few general conjectures on the structure of $\mathcal{Q}_X$ and its precise position in the geometric Langlands program.

\vspace{.3in}

\centerline{\textbf{5.1. Moduli stacks of coherent sheaves on curves.}}
\addcontentsline{toc}{subsection}{\tocsubsection {}{}{\; 5.1. Moduli stacks of coherent sheaves on curves.}}

\vspace{.15in}

Let us fix a connected, smooth projective curve $X_0$ defined over a finite field $\mathbb{F}_q$, and let $X=X_0 \otimes_{\mathbb{F}_q} k$ be its extension of scalars to $k=\fqb$. We denote by $Coh(X)$ the abelian category of coherent sheaves over $X$ and by $K(X)$ its Grothendieck group. The Euler form
$$\langle \mathcal{F},\mathcal{G}\rangle =dim\;Hom(\mathcal{F},\mathcal{G})-dim\;Ext^1(\mathcal{F},\mathcal{G})$$
can be computed by means of the Riemann-Roch formula~: if $X$ is of genus $g$ then
\begin{equation}\label{E:5Euler}
\langle \mathcal{F},\mathcal{G}\rangle=(1-g)rank(\mathcal{F})rank(\mathcal{G}) + rank(\mathcal{F})deg(\mathcal{G})-rank(\mathcal{G})deg(\mathcal{F}).
\end{equation}
We will only use the numerical Grothendieck group $K_0(X)= \Z^2$ in which the class of a sheaf is $[\mathcal{F}]= (rank(\mathcal{F}),deg(\mathcal{F}))$. For $\a \in K_0(X)$ we consider the moduli stack $\underline{Coh}^{\a}_X$ associated to the functor $(Aff/k) \to Groupoids$ given by
$$T \mapsto \left\{ 
\begin{array}{c}
T-\text{flat\;coherent\;sheaves\;}\mathcal{F}\text{\;on\;}T \times X \\ \text{s.t.}\;[\mathcal{F}_{|t}]=\alpha\;\text{for\;all\;closed\;points\;}t \in T
\end{array}\right\}.$$
It is known that $\underline{Coh}^{\a}_X$ is a smooth stack of dimension $-\langle \a, \a \rangle$. Although it is not itself a global quotient, it may be presented as an inductive limit of open substacks which are global quotients as follows. For $\mathcal{L}$ a line bundle over $X$, let $Quot_{\mathcal{L}}^{\a}$ be the projective scheme representing the functor $(Aff/k) \to Sets$ given by
$$T \mapsto \left\{ 
\begin{array}{c}
\phi_T~:\mathcal{L}^{\oplus \langle \mathcal{L},\a\rangle}\boxtimes \mathcal{O}_T \tto \mathcal{F}\;\text{s.t.}\\ \;[\mathcal{F}_{|t}]=\alpha\;\text{for\;all\;closed\;points\;}t \in T
\end{array}\right\}\huge{/}\sim$$
where two maps $\phi,\phi'$ are considered to be equivalent if $Ker\;\phi=Ker\;\phi'$. Let $Q_{\mathcal{L}}^{\a}$ be the open subscheme of $Quot^{\a}_{\mathcal{L}}$ represented by the subfunctor
$$T \mapsto \left\{ 
\begin{array}{c}
\phi_T~:\mathcal{L}^{\oplus \langle \mathcal{L},\a\rangle}\boxtimes \mathcal{O}_T \tto \mathcal{F}\;\text{s.t.}\\ \;[\mathcal{F}_{|t}]=\alpha,\;
\phi_*~:k^{\langle \mathcal{L},\a\rangle} \stackrel{\sim}{\to} Hom(\mathcal{L},\mathcal{F}_{t})\\
\text{for\;all\;closed\;points\;}t \in T
\end{array}\right\}\huge{/}\sim.$$
Note that if $\phi_T$ belongs to $Q^{\a}_{\mathcal{L}}$ then necessarily $\text{Ext}^1(\mathcal{L},\mathcal{F}_{|t})=0$ for all $t \in T$. In addition, $Q^{\a}_{\mathcal{L}}$ is a smooth scheme. The group $G_{\a,\mathcal{L}}:=Aut(\mathcal{L}^{\oplus \langle \mathcal{L},\a\rangle})=GL(\langle \mathcal{L},\a\rangle,k)$ acts on $Q^{\a}_{\mathcal{L}}$ and we can form the quotient stack
$$\underline{Coh}^{\a}_{X,\mathcal{L}}=Q^{\a}_{\mathcal{L}}/G_{\a,\mathcal{L}}.$$
It can be shown that, for any $\mathcal{L}$, $\underline{Coh}^{\a}_{X,\mathcal{L}}$ is canonically identified with an open substack of $\underline{Coh}_{X}^{\a}$ and that these open substacks $\underline{Coh}^{\a}_{X,\mathcal{L}}$ form an open cover
$$\underline{Coh}_{X}^{\a}=\bigcup_{\mathcal{L}} \underline{Coh}^{\a}_{X,\mathcal{L}}=\underset{\longrightarrow}{\text{Lim}}\; \underline{Coh}^{\a}_{X,\mathcal{L}}.$$
Finally, there is another natural open substack $\underline{Bun}^{\a}_X$ of $\underline{Coh}^{\a}_{X}$ which parametrizes vector bundles rather then coherent sheaves. Unless $\a$ is of rank one, $\underline{Bun}^{\a}_{X}$ is also not a global quotient.

\vspace{.1in}

Let us examine a little closer $\underline{Coh}^{\a}_{X}$ in some special cases. Let us first assume that $\a=(0,1)$ so that $\underline{Coh}^{\a}_X$ classifies simple torsion sheaves $\mathcal{O}_x$ of degree one. Since these are in bijection $\mathcal{O}_x \leftrightarrow x$ with closed point of $X$, we have
$$\underline{Coh}^{(0,1)}_{X} \simeq X/G_m$$
(here the multiplicative group $G_m$ acts trivially), and thus $dim\;\underline{Coh}^{(0,1)}_{X}=0$. More generally, the stack $\underline{Coh}^{(0,d)}_X$ classifies torsion sheaves of degree $d$, and contains a dense open substack $\underline{U}^{(0,l)}_X$ classifiying torsion sheaves of the form $\mathcal{O}_{x_1} \oplus \cdots \oplus \mathcal{O}_{x_d}$ where $x_1, \ldots, x_d$ are \textit{distinct} closed points. We have
$$\underline{U}^{(0,l)}_X=\big(S^dX \backslash \Delta\big)/G_m^d$$
where $\Delta=\{(x_i)\;|\; x_k=x_l\;\text{for\;some\;}k \neq l\}$ is the large diagonal, and where again the multiplicative group $G_m^d$ acts trivially. As before, $dim\;\underline{Coh}^{(0,d)}_{X}=dim\;\underline{U}^{(0,l)}_X=0$. Observe that the stacks $\underline{Coh}^{(0,d)}_X$ are global quotients.

Next, let us consider the stacks of coherent sheaves of rank one. The open substack $\underline{Bun}^{(1,d)}_X$ parametrizes line bundles of degree $d$, and hence
$$\underline{Bun}^{(1,d)}_X \simeq Pic_X(d)/G_m.$$
As a consequence, $dim\;\underline{Coh}^{(1,d)}_X=dim\;\underline{Bun}^{(1,d)}_X=g-1$.
Any coherent sheaf of rank one $\mathcal{F}$ decomposes a direct sum $\mathcal{F} \simeq \mathcal{L} \oplus \mathcal{T}$ for some line bundle $\mathcal{L}$ and some torsion sheaf $\mathcal{T}$. We may stratify $\underline{Coh}_X^{(1,d)}$ according to the degree $l$ of $\mathcal{T}$
\begin{equation}\label{E:strat0}
\underline{Coh}^{(1,d)}_X=\bigsqcup_{l \geq 0} \underline{\mathcal{V}}_{(1,d-l),(0,l)}
\end{equation}
and for any $l \geq 0$ there is an affine fibration
$$\pi_l~:\underline{Bun}^{(1,d-l)}_X \times \underline{Coh}^{(0,l)}_X \to \underline{\mathcal{V}}_{(1,d-l),(0,l)}l$$ of rank $l$ (this fibration comes from the natural fibration $Aut(\mathcal{L} \oplus \mathcal{T}) \to Aut(\mathcal{L}) \times Aut(\mathcal{T})$). As we see, there is a rather precise description of the stacks $\underline{Coh}^{\a}_X$ for $\a$ a class of rank at most one.

\vspace{.2in}

Let $D^b(\underline{Coh}^{\a}_X)$ stand for the category of constructible sheaves on $\underline{Coh}^{\a}_X$. The correct formalism of $l$-adic sheaves on stacks has recently been fully developped in \cite{Olsson-Laszlo}, but it is not necessary for us to appeal to this general theory~: indeed our stacks $\underline{Coh}^{\a}_X$ admit open covers by global quotients $\underline{Coh}^{\a}_{X,\mathcal{L}}$, and thus an object $\mathbb{P}$ of $D^b(\underline{Coh}^{\a}_X)$ may be  described as a collection $(\mathbb{P}_{\mathcal{L}})_{\mathcal{L}}$ of objects in $D^b(\underline{Coh}^{\a}_{X,\mathcal{L}})$ for all $\mathcal{L}$ satisfying some suitable compatibility conditions\footnote{that is as a collection of $G_{\a,\mathcal{L}}$-equivariant complexes over $Q^{\a}_{\mathcal{L}}$ for all $\mathcal{L}$ satisfying some suitable compatibility conditions.}. The category of semisimple complexes $D^b(\underline{Coh}^{\a}_X)^{ss}$ admits a very similar description. 

\vspace{.2in}

\addtocounter{theo}{1}
\noindent \textbf{Example \thetheo .} The simplest curve is $X=\mathbb{P}^1$. One can show that for any $\a=(r,d)$ 
$$\underline{Coh}_{\mathbb{P}^1}^{(r,d)}=\bigcup_n \underline{Coh}^{(r,d)}_{\mathbb{P}^1,\mathcal{O}(n)}$$
$$Q^{(r,d)}_{\mathcal{O}(n)}=\big\{ \phi~: \mathcal{O}(n)^{\oplus (r+d-nr)} \tto \mathcal{F}\;|\; Ker\;\phi \simeq \mathcal{O}(n-1)^{\oplus (d-nr)}\big\},$$
$$G_{(r,d),\mathcal{O}(n)}=GL(r+d-nr,k).$$
For instance, 
$$\underline{Coh}^{(1,1)}_{\mathbb{P}^1,\mathcal{O}(1)}=Q^{(1,1)}_{\mathcal{O}(1)}/G_{(1,1),\mathcal{O}(1)}=\{pt\}/G_m$$
(the only point corresponds to the sheaf $\mathcal{O}(1)$ itself), while
$$\underline{Coh}^{(1,1)}_{\mathbb{P}^1,\mathcal{O}}=Q^{(1,1)}_{\mathcal{O}}/G_{(1,1),\mathcal{O}}=\big\{ \mathcal{O}(-1) \hookrightarrow \mathcal{O}^{\oplus 2}\big\}/GL(2,k) \simeq \mathbb{P}^3(k)/GL(2,k)$$
where the action of $GL(2,k)$ on $\mathbb{P}^3(k)$ is as follows~:
$$\begin{pmatrix} a & b\\ c& d \end{pmatrix} \cdot (x:y:z:w)=(ax+cz:ay+cw:bx+dz:by+dw).$$
It is easy to see that there is one dense orbit (corresponding to $\mathcal{O}(1)$) and a $\mathbb{P}^1$-family of orbits of dimension one (corresponding to the sheaves of the form $\mathcal{O} \oplus \mathcal{O}_x, x \in \mathbb{P}^1$).

\vspace{.1in}

Since any vector bundle on $\mathbb{P}^1$ splits as a direct sum of line bundles $\mathcal{O}(n)$, there is a natural stratification
\begin{equation}\label{E:strat1}
\underline{Bun}^{(r,d)}=\bigsqcup_{\underline{n}}\underline{\mathcal{V}}_{\underline{n}}
\end{equation}
where $\underline{n}$ runs among all classes of vector bundles of class $(r,d)$, i.e. among all $r$-tuples of integers $(n_1, \ldots, n_r)$ satisfying $n_1+ \cdots + n_r=d$ and $\underline{\mathcal{V}}_{\underline{n}}$ is the stack classifiying vector bundles isomorphic to $\mathcal{O}(n_1) \oplus \cdots \oplus \mathcal{O}(n_r)$; and likewise
\begin{equation}\label{E:strat2}
\underline{Coh}^{(r,d)}_X=\bigsqcup_{\underline{n},l} \underline{\mathcal{V}}_{\underline{n},l}
\end{equation}
where $(\underline{n},l)$ runs among all $r$-tuples of integers $(n_1, \ldots, n_r)$ satisfying $n_1+ \cdots + n_r=d-l$ where now $\underline{\mathcal{V}}_{\underline{n},l}$ is the stack classifiying vector bundles isomorphic to $\mathcal{O}(n_1) \oplus \cdots \oplus \mathcal{O}(n_r) \oplus \mathcal{T}$ for some torsion sheaf $\mathcal{T}$ of degree $l$. As in the rank one case, there is a natural affine fibration 
$\underline{\mathcal{V}}_{\underline{n}} \times \underline{Coh}^{(0,l)}_{\mathbb{P}^1} \tto \underline{\mathcal{V}}_{\underline{n},l}$.

Note that the stratifications (\ref{E:strat1}) and (\ref{E:strat2}) are \textit{locally finite}, that is, for any fixed line bundle $\mathcal{O}(n)$, there are only finitely many stratas intersecting $\underline{Bun}^{(r,d)}_{\mathbb{P}^1, \mathcal{O}(n)}$ or $\underline{Coh}^{(r,d)}_{\mathbb{P}^1, \mathcal{O}(n)}$. In particular, from this one can easily deduce that any simple perverse sheaf on $\underline{Bun}^{(r,d)}_{\mathbb{P}^1}$ is isomorphic to $IC(\underline{\mathcal{V}}_{\underline{n}})$ for some $\underline{n}$ (compare with Section~2.3. and finite type quivers). As we will see, the case of $\underline{Coh}^{(r,d)}_{\mathbb{P}^1}$ is slightly more complex since the spaces $\underline{Coh}^{(0,l)}_{\mathbb{P}^1}$ do carry some nontrivial perverse sheaves.\endexample

\vspace{.3in}

\centerline{\textbf{5.2. Convolution functors and the Hall category.}}
\addcontentsline{toc}{subsection}{\tocsubsection {}{}{\; 5.2. Convolution functors and the Hall category.}}

\vspace{.15in}

As in the case of quivers, there is a natural convolution diagram
\begin{equation}\label{E:convolcoh}
\xymatrix{ & \underline{\mathcal{E}}^{\a,\beta} \ar[dl]_-{p_1} \ar[dr]^-{p_2} &\\ 
\underline{Coh}^{\a}_X \times \underline{Coh}^{\beta}_X & & \underline{Coh}^{\a+\beta}_X}
\end{equation}
where $\underline{\mathcal{E}}^{\a,\beta}$ is the stack associated to the functor $(Aff/k) \to Groupoids$ defined by
$$T \mapsto \left\{ 
\begin{array}{c} \text{pairs\;of\;}
T-\text{flat\;coherent\;sheaves\;}\mathcal{F}\supset \mathcal{G}\text{\;on\;}T \times X \\ \text{s.t.}\;[\mathcal{F}_{|t}]=\a+\beta, [\mathcal{G}_{|t}]=\beta\;\text{for\;all\;closed\;points\;}t \in T
\end{array}\right\}$$
and where the morphisms $p_1, p_2$ are given by $p_1(\mathcal{F} \supset \mathcal{G}) = (\mathcal{F}/\mathcal{G}, \mathcal{G})$ and $p_2(\mathcal{F} \supset \mathcal{G})=\mathcal{G}$. We will spare the reader the translation of (\ref{E:convolcoh}) in terms of the quot schemes (see however \cite{SquotcanIweb} ). It is important to note that the morphism $p_2$ is proper and representable~: the fiber over a point $\mathcal{F}$ is isomorphic to the projective {scheme} $Quot_{\mathcal{F}}^{\beta}$ parametrizing all subsheaves of $\mathcal{F}$ of class $\beta$. The morphism $p_1$ is smooth. This allows us to define convolution functors of induction and restriction as before
\begin{equation}\label{E:killbill}
\begin{split}
\underline{m}~: D^b(\underline{Coh}^{\alpha}_{X} \times \underline{Coh}^{\beta}_{X}) &\to  D^b(\underline{Coh}^{\alpha+\beta}_{X} )\\
\mathbb{P} &\mapsto p_{2!} p_1^* (\mathbb{P})[dim\;p_1],
\end{split}
\end{equation}
and 
\begin{equation}
\begin{split}
\underline{\Delta}~: D^b(\underline{Coh}^{\alpha+\beta}_{X})  &\to  D^b(\underline{Coh}^{\alpha}_{X} \times \underline{Coh}^{\beta}_{X}) \\
\mathbb{P} &\mapsto p_{1!} p_2^* (\mathbb{P})[dim\;p_2].
\end{split}
\end{equation}
These have properties very similar to those of their cousins in the framework of quivers~: for instance, $\underline{m}$ commutes with Verdier duality and preserves the subcategory of semisimple complexes; both functors are associative in the appropriate sense. We will sometimes write $\mathbb{P} \star \mathbb{Q}$ for $\underline{m}(\mathbb{P} \boxtimes \mathbb{Q})$.

\vspace{.2in}

We are now in position to give the definition of the Hall category $\mathcal{Q}_X$. For $\a \in K_0(X) =\Z^2$, let us set
$\mathbbm{1}_{\a}=\qlb_{\underline{Coh}^{\a}_X}[dim\;  \underline{Coh}^{\a}_X]$. A \textit{Lusztig sheaf} is an induction product of the form
$$L_{\a_1, \ldots, \a_r} = \mathbbm{1}_{\a_1} \star \cdots \star \mathbbm{1}_{\a_r}.$$
By the Decomposition theorem, $L_{\a_1, \ldots, \a_r}$ is a semisimple complex. We let $\mathcal{P}_X=\bigsqcup_{\a} \mathcal{P}^{\a}$ stand for the set of all simple perverse sheaves appearing in some Lusztig sheaf $L_{\a_1, \ldots, \a_r}$ where for all $\a_i=(r_i,d_i)$ we have $r_i \leq 1$; we denote by $\mathcal{Q}_X=\bigsqcup_{\a} \mathcal{Q}^{\alpha}$ the additive category generated by the objects of $\mathcal{P}_X$ and their shifts. 
In other words, we take the constant sheaves over the moduli stacks in ranks zero or one as the building blocks for our category, and see what these generate under the induction product.

\vspace{.1in}

The following can be proved just like Proposition~\ref{P:closed}~:

\begin{prop}The category $\mathcal{Q}_X$ is preserved by the functors $\underline{m}$ and $\underline{\Delta}$.
\end{prop}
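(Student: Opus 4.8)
The plan is to mimic the proof of Proposition~\ref{P:closed} in the quiver setting, splitting the statement into the case of the induction functor $\underline{m}$ and the (harder) case of the restriction functor $\underline{\Delta}$. For the induction functor, stability of $\mathcal{Q}_X$ is immediate from the definition: by associativity of $\underline{m}$ (established in the same way as in Lecture~1), a Lusztig sheaf convolved with a Lusztig sheaf is again a Lusztig sheaf, namely $L_{\a_1,\ldots,\a_r}\star L_{\beta_1,\ldots,\beta_s}=L_{\a_1,\ldots,\a_r,\beta_1,\ldots,\beta_s}$, and $\mathcal{Q}_X$ is generated by the simple summands of such sheaves and is closed under direct summands; since $\underline{m}$ preserves semisimple complexes (Decomposition Theorem, as $p_1$ smooth and $p_2$ proper), this handles one half.

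For the restriction functor the plan is to prove a Lusztig-type coproduct formula, the exact analogue of Lemma~\ref{L:coprodun}: I would compute $\underline{\Delta}_{\beta,\gamma}(L_{\a_1,\ldots,\a_r})$ by base change along the convolution diagram (\ref{E:convolcoh}), obtaining a composition $(\kappa q')_!(\qlb_{|F_{\a_1,\ldots,\a_r}})$ of the appropriate flag-type correspondence, where $F_{\a_1,\ldots,\a_r}$ parametrizes filtrations of a coherent sheaf with prescribed subquotient classes together with a compatible sub. The key point is then to stratify $F_{\a_1,\ldots,\a_r}$ by the types $(\underline{\beta},\underline{\gamma})$ of the induced filtrations on the chosen subsheaf and its quotient, subject to $\beta_i+\gamma_i=\a_i$, $\sum\beta_i=\gamma$, $\sum\gamma_i=\beta$ (the roles of the two factors being whatever matches the sign conventions in (\ref{E:killbill})). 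On each stratum the map factors as a vector bundle (coming from the $\mathrm{Ext}^1$/$\mathrm{Hom}$ contributions computed via Riemann--Roch, formula (\ref{E:5Euler})) followed by a proper representable map, so Lusztig's Lemma~\ref{L:816} applies verbatim and gives
$$\underline{\Delta}_{\beta,\gamma}(L_{\a_1,\ldots,\a_r})=\bigoplus_{(\underline{\beta},\underline{\gamma})} L_{\underline{\beta}}\boxtimes L_{\underline{\gamma}}\,[d_{\underline{\beta},\underline{\gamma}}]$$
for suitable shifts $d_{\underline{\beta},\underline{\gamma}}$ depending on the Euler form. The crucial observation that closes the argument — the analogue of the remark that subquotients of simples are simple — is that if each $\a_i$ has rank $\le 1$, then in every admissible decomposition $\a_i=\beta_i+\gamma_i$ both $\beta_i$ and $\gamma_i$ again have rank $\le 1$ (a rank-one class splits only as rank-one plus rank-zero, or rank-zero plus rank-one, or rank-zero plus rank-zero, since ranks are nonnegative). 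Hence each $L_{\underline{\beta}}$ and $L_{\underline{\gamma}}$ is again a Lusztig sheaf of the allowed type, and all their simple summands lie in $\mathcal{Q}_X$, so $\underline{\Delta}_{\beta,\gamma}(\mathcal{Q}_X)\subset \mathcal{Q}_X\times\mathcal{Q}_X$.

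The main obstacle, relative to the quiver case, is geometric bookkeeping rather than a new idea: one must check that the stratification of $F_{\a_1,\ldots,\a_r}$ by filtration type is locally finite and by locally closed smooth pieces (using the presentation of $\underline{Coh}^{\a}_X$ as an increasing union of global quotients $\underline{Coh}^{\a}_{X,\mathcal{L}}$, so that on each chart only finitely many strata appear), that the relevant intermediate stacks $\underline{\mathcal{E}}^{(\a_\bullet),(\beta,\gamma)}$ are algebraic stacks with the $\mathrm{Quot}$-type properness of $q'$ inherited from that of $p_2$, and that the vector bundle ranks are the ones predicted by Riemann--Roch. Once the base-change and stratification are set up carefully on the quot-scheme level, Lemma~\ref{L:816} and the rank-$\le 1$ stability remark finish the proof exactly as in Proposition~\ref{P:closed}. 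I would also record, as in the quiver case, that the shift $d_{\underline{\beta},\underline{\gamma}}$ and the coproduct formula will be reused later to identify $\mathcal{K}_X$ with the relevant (twisted) bialgebra, but that is not needed for the present statement.
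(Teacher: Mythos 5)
Your proposal is correct and follows exactly the route the paper indicates: the paper itself offers no proof beyond the remark that the statement ``can be proved just like Proposition~\ref{P:closed}'', and your plan is a faithful fleshing out of that remark — $\underline{m}$-stability is immediate since $L_{\underline{\a}}\star L_{\underline{\a}'}=L_{\underline{\a},\underline{\a}'}$, and $\underline{\Delta}$-stability follows from a coproduct formula in the style of Lemma~\ref{L:coprodun}, proved by stratifying the flag correspondence by the types of the induced filtrations and invoking Lemma~\ref{L:816}, with the decisive closure property being that a class of rank $\le 1$ only decomposes into classes of rank $\le 1$. One small remark: the ``local finiteness'' you flag as a potential obstacle is in fact not an issue at the level of the coproduct of a single Lusztig sheaf. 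For fixed $(\beta,\gamma)$, once the ranks of the pieces $\beta_i,\gamma_i$ are chosen (finitely many possibilities when each $\a_i$ has rank $\le 1$), the positivity of torsion degrees together with the constraints $\sum\beta_i=\beta$, $\sum\gamma_i=\gamma$, $\beta_i+\gamma_i=\a_i$ forces the degrees to range over a bounded, hence finite, set. So the stratification of $F_{\a_1,\ldots,\a_r}$ has finitely many nonempty strata, exactly as in the quiver case, and the direct sum in the coproduct formula is genuinely finite.
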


\vspace{.3in}

\centerline{\textbf{5.3. Examples, curves of low genera.}}
\addcontentsline{toc}{subsection}{\tocsubsection {}{}{\; 5.3. Examples, curves of low genera.}}

\vspace{.15in}

In this Section, we briefly collect  some descriptions of the objects of $\mathcal{P}_X$ in several examples. 

\vspace{.2in}

\addtocounter{theo}{1}
\noindent \textbf{Example \thetheo .} We begin with a partial result, valid for any curve $X$. Recall that the stack $\underline{Coh}^{(0,l)}_X$ contains a dense open substack $\underline{U}_X^{(0,l)}$ which parametrizes torsion sheaves $\mathcal{O}_{x_1} \oplus \cdots \oplus \mathcal{O}_{x_l}$ where $x_1, \ldots, x_d$ are distinct points of $X$. By construction, there is a morphism $\pi_1( \underline{U}_X^{(0,l)})\to BX_l$, the braid group of $X$ of rank $l$, and hence also a morphism
$\pi_1( \underline{U}_X^{(0,l)})\to \mathfrak{S}_l$. An irreducible representation $\chi$ of $\mathfrak{S}_l$
gives rise in this way to an irreducible local system $\mathcal{L}_{\chi}$ on $\underline{U}_X^{(0,l)}$ (as in Section~2.5).

\vspace{.1in}

The following result may be interpreted as a global version of the Springer construction (and is a direct consequence of that construction, see \cite{Laumon} or Section~2.4.)~:

\begin{prop}\label{P:zzx} For any $X$ and any $l \geq 1$ we have
$$\mathcal{P}^{(0,l)}=\big\{ IC(\underline{U}_X^{(0,l)},\mathcal{L}_{\chi})\;|\; \chi \in Irr\;\mathfrak{S}_l\big\}.$$
\end{prop}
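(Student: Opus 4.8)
The plan is to reduce Proposition~\ref{P:zzx} to the classical Springer construction for $GL_n$, exactly as was done for the Jordan quiver in Lemma~\ref{L:springervariant} and Proposition~\ref{P:Luscyc}. First I would observe that the stack $\underline{Coh}^{(0,l)}_X$ is a global quotient and admits a concrete presentation: fixing a closed point or a sufficiently ample line bundle, torsion sheaves of degree $l$ are classified by pairs (finite length module structure), so that $\underline{Coh}^{(0,l)}_X$ may be covered by pieces of the form $Z_l/GL_l$ where $Z_l$ is a space of ``nilpotent-type'' endomorphism data together with a framing — essentially the analogue of $E^{nil}_{l}/GL_l$ for the Jordan quiver, but now glued over the base curve $X$ via the support map $\mathrm{supp}: \underline{Coh}^{(0,l)}_X \to S^l X$. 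The stratification of $\underline{Coh}^{(0,l)}_X$ by ``type of torsion sheaf'' (i.e.\ the partition recording the lengths of the local pieces at each point) is locally finite, and each stratum is smooth and locally closed; this is the direct analogue of the stratification of the nilpotent cone by nilpotent orbits, now fibered over configurations of points on $X$. The only $G$-equivariant local systems one needs to track are those on the open dense stratum $\underline{U}^{(0,l)}_X$, where the monodromy comes from permuting the (distinct) points $x_1,\dots,x_l$, hence factors through $\mathfrak{S}_l$; on the deeper strata the relevant automorphism groups are connected up to the symmetric-group factor already accounted for, by the same argument as Lemma~\ref{L:syst}.

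Next I would compute the relevant Lusztig sheaf. The key is that $L_{(0,1),\dots,(0,1)}$ ($l$ copies) equals $q_!$ of the constant sheaf on the moduli stack of complete flags of torsion sheaves $\mathcal{O}_{x_l}\text{-type} \subset \cdots \subset \mathcal{F}$ with successive quotients of length one, and that the projection $q$ to $\underline{Coh}^{(0,l)}_X$ is proper; this is the \emph{global Springer resolution}. Over the open stratum $\underline{U}^{(0,l)}_X$ the fibers of $q$ are torsors under $\mathfrak{S}_l$ (orderings of the $l$ distinct simple summands), exactly as in the computation of $(L_{\delta,\delta})_{|U_{2\delta}}$ in Example~2.28; over a point with repeated local data one gets a product of classical (type $A$) Springer fibers. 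By the Decomposition Theorem together with the classical fact that the Springer map is semismall with all strata relevant (cf.\ Proposition~5.8 of \cite{Laumon}, or the argument of Proposition~\ref{P:Luscyc}), every $IC(\underline{U}^{(0,l)}_X,\mathcal{L}_\chi)$ for $\chi \in Irr\;\mathfrak{S}_l$ appears as a direct summand of $L_{(0,1),\dots,(0,1)}$ with nonzero multiplicity, giving the inclusion ``$\supseteq$''.

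For the reverse inclusion ``$\subseteq$'' I would argue that any simple perverse sheaf occurring in some $L_{\a_1,\dots,\a_r}$ with all $\a_i$ of rank at most one and total class $(0,l)$ must in fact already occur in a Lusztig sheaf built only from classes $(0,m_j)$ (rank-zero classes), since the rank-one pieces $\mathbbm{1}_{(1,d)}$ cannot contribute to a total class of rank zero; and then, using that $\mathbbm{1}_{(0,m)}$ is proportional to $\mathbbm{1}_{(0,1)}\star\cdots\star\mathbbm{1}_{(0,1)}$ in the Grothendieck group exactly as in Remark~2.2 (the global Springer fiber over a single fat point having a ``paving''-type cohomology, via Lemma~\ref{L:21}), one reduces to summands of $L_{(0,1),\dots,(0,1)}$. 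Finally, since the support of this sheaf is all of $\underline{Coh}^{(0,l)}_X$ and its restriction to $\underline{U}^{(0,l)}_X$ decomposes precisely into the $\mathcal{L}_\chi$ isotypic components (the monodromy representation of $\pi_1(\underline{U}^{(0,l)}_X)$ on $q_! \qlb$ restricted to the open stratum factors through $\mathfrak{S}_l$ and is its regular representation), every simple summand restricts over $\underline{U}^{(0,l)}_X$ to one of the $\mathcal{L}_\chi$, hence \emph{is} one of the $IC(\underline{U}^{(0,l)}_X,\mathcal{L}_\chi)$.

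The main obstacle I expect is the bookkeeping needed to make the ``global'' Springer picture over the curve rigorous at the level of stacks: one must check that the naive local model $Z_l/GL_l$ glues correctly over the support map $\underline{Coh}^{(0,l)}_X \to S^l X$, that the flag-of-torsion-sheaves stack is smooth and the map $q$ proper and semismall with the expected fibers, and that the monodromy of $q_!\qlb$ over the open stratum really is the regular representation of $\mathfrak{S}_l$ pulled back from $(X^l\setminus\Delta)/\mathfrak{S}_l = S^lX\setminus\Delta$. All of these are consequences of standard results (\cite{Laumon}, \cite{ChrissGinzburg} Chap.~3, and the local-to-global comparison for torsion sheaves), so once the reductions above are in place the proof is essentially a citation; the genuine content is already contained in the Jordan-quiver case of Section~2.4 together with Laumon's theory of Eisenstein sheaves, and the present statement is its sheaf-theoretic packaging.
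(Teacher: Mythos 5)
Your overall strategy is the one the paper has in mind: reduce by rank-additivity to Lusztig sheaves built from torsion classes, realize $L_{(0,1),\ldots,(0,1)}$ as a proper pushforward from a stack of complete flags of torsion sheaves (the ``global Springer resolution''), and apply the Decomposition Theorem. But one step is backwards, and it is precisely where the content of the Proposition lives.

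You invoke ``the Springer map is semismall with all strata relevant,'' and you then argue the inclusion $\mathcal{P}^{(0,l)}\subseteq\{IC(\underline{U}^{(0,l)}_X,\mathcal{L}_\chi)\}$ by saying every simple summand restricts nontrivially over $\underline{U}^{(0,l)}_X$. The statement ``all strata relevant'' is the \emph{local} fact for $\widetilde{\mathcal{N}}_n\to\mathcal{N}_n$, and it is exactly why, in the Jordan-quiver Proposition of Section~2.4, $\mathcal{P}_{\vec{Q}_0}$ contains $IC(\overline{\mathcal{O}_\lambda})$ for \emph{all} nilpotent orbit closures, not just the dense one. If the global Springer map $q$ over $\underline{Coh}^{(0,l)}_X$ likewise had all strata relevant, the Decomposition Theorem would produce simple summands of \emph{proper} support (on the strata of torsion sheaves with fat or coincident support points), and these would falsify the Proposition. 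Your ``$\subseteq$'' argument cannot rule them out, because a simple perverse sheaf supported on a proper closed substack restricts to $0$ over $\underline{U}^{(0,l)}_X$; ``restricts to some $\mathcal{L}_\chi$'' is what must be proved, not assumed.

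What you actually need, and what is true, is that the global Springer map is \emph{small}: only the open stratum $\underline{U}^{(0,l)}_X$ is relevant. Allowing the support points to vary along $X$ raises the codimension of every non-open stratum by at least one relative to the local picture, while the Springer fiber dimensions are unchanged, so the semismallness inequality is strict off the open stratum. For instance, for $l=2$ the stratum of sheaves of type $\mathcal{O}_x^{\oplus 2}$ has codimension $3$ in $\underline{Coh}^{(0,2)}_X$ with Springer fiber $\mathbb{P}^1$, giving $2\cdot 1<3$; in $\mathcal{N}_2$ the analogous codimension of $\{0\}$ is $2$ and equality holds. Once smallness is in place, $q_!\qlb$ is (up to shift) the intermediate extension of its restriction to $\underline{U}^{(0,l)}_X$, which decomposes via the $\mathfrak{S}_l$-monodromy into the $\mathcal{L}_\chi$'s, and both inclusions follow simultaneously. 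A smaller slip: $\mathbbm{1}_{(0,m)}$ is \emph{not} proportional to $\mathbbm{1}_{(0,1)}\star\cdots\star\mathbbm{1}_{(0,1)}$ as in Remark~2.2 --- unlike $E_{m\epsilon_i}=\{0\}$, the stack $\underline{Coh}^{(0,m)}_X$ carries the nontrivial local systems $\mathcal{L}_\chi$, and $\mathbbm{1}_{(0,1)}\star\cdots\star\mathbbm{1}_{(0,1)}$ contains all of them whereas $\mathbbm{1}_{(0,m)}$ is only the trivial-character summand. What your reduction requires, and what is true, is merely that $\mathbbm{1}_{(0,m)}$ is a direct summand of $\mathbbm{1}_{(0,1)}\star\cdots\star\mathbbm{1}_{(0,1)}$ up to shift.
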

\endexample

\vspace{.2in}

\addtocounter{theo}{1}
\noindent \textbf{Example \thetheo .} We still consider an arbitrary curve $X$, and we describe this time the objects of $\mathcal{P}^{\a}$ for $\a$ of rank one. Recall the stratification (\ref{E:strat0})
$\underline{Coh}^{(1,d)}_X=\bigsqcup_{l} \underline{\mathcal{V}}_{(1,d-l),(0,l)}$ and the affine fibration $\pi_l~:\underline{Bun}_X^{(1,d-l)} \times \underline{Coh}^{(0,l)}_X \to \underline{\mathcal{V}}_{(1,d-l),(0,l)}$. Set $\underline{\mathcal{V}}^0_{(1,d-l),(0,l)}=\pi_l (\underline{Bun}_X^{(1,d-l)} \times \underline{U}^{(0,l)}_X)$. This open substack of $\underline{\mathcal{V}}_{(1,d-l),(0,l)}$ carries, for every representation $\chi$ of $\mathfrak{S}_l$, a unique local system $\widetilde{\mathcal{L}}_{\chi}$ such that $\pi_l^*(\widetilde{\mathcal{L}}_{\chi})=\qlb \boxtimes \mathcal{L}_{\chi}$.

\vspace{.1in}

\begin{prop}\label{P:xzs} For any $X$ and any $d \in \Z$ we have
$$\mathcal{P}^{(1,d)}=\big\{ IC(\underline{\mathcal{V}}^0_{(1,d-l),(0,l)},\widetilde{\mathcal{L}}_{\chi})\;|\; \chi \in Irr\;\mathfrak{S}_l, l \geq 0 \big\}.$$
\end{prop}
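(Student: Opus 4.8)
\textbf{Proof strategy for Proposition~\ref{P:xzs}.}

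The plan is to mimic the proof of Theorem~\ref{T:finiteype} (finite type quivers), using the explicit stratification (\ref{E:strat0}) of $\underline{Coh}^{(1,d)}_X$ and the structure of rank-one sheaves. The first observation to record is that every simple perverse sheaf in $\mathcal{P}^{(1,d)}$, being a summand of some $L_{\a_1,\ldots,\a_r}$ with all $\a_i$ of rank $\leq 1$, must be supported on the closure of one of the strata $\underline{\mathcal{V}}_{(1,d-l),(0,l)}$; indeed, since $p_2$ is proper, the image of a Lusztig sheaf's support is a finite union of such closures, and the $G_\alpha$-orbit (hence stratum) structure together with the fibration $\pi_l$ forces the support to be irreducible on a single $\overline{\underline{\mathcal{V}}_{(1,d-l),(0,l)}}$. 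Next I would classify the simple $G$-equivariant perverse sheaves on each stratum: via the affine fibration $\pi_l\colon \underline{Bun}^{(1,d-l)}_X \times \underline{Coh}^{(0,l)}_X \to \underline{\mathcal{V}}_{(1,d-l),(0,l)}$, pulling back along $\pi_l$ is an equivalence up to shift, and $\underline{Bun}^{(1,d-l)}_X \simeq Pic_X(d-l)/G_m$ carries only the constant perverse sheaf (it is smooth and its fundamental group acts trivially on rank-one equivariant local systems because the stabilizers $\mathrm{Aut}(\mathcal{L}) = G_m$ are connected, as in Lemma~\ref{L:syst}). Hence the simple perverse sheaves on $\underline{\mathcal{V}}^0_{(1,d-l),(0,l)}$ are exactly the $IC(\underline{\mathcal{V}}^0_{(1,d-l),(0,l)},\widetilde{\mathcal{L}}_\chi)$ for $\chi \in Irr\,\mathfrak{S}_l$, using Proposition~\ref{P:zzx} to identify the perverse sheaves on the torsion factor.

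It then remains to show two inclusions. For ``$\subseteq$'': any simple summand of a Lusztig sheaf supported on $\overline{\underline{\mathcal{V}}_{(1,d-l),(0,l)}}$ restricts, on the open dense $\underline{\mathcal{V}}^0_{(1,d-l),(0,l)}$, to one of the $IC$'s above, by the classification of the previous paragraph. For ``$\supseteq$'', I would produce, for each $l$ and each $\chi$, an explicit Lusztig sheaf whose support is $\overline{\underline{\mathcal{V}}_{(1,d-l),(0,l)}}$ and which contains $IC(\underline{\mathcal{V}}^0_{(1,d-l),(0,l)},\widetilde{\mathcal{L}}_\chi)$ as a summand. The natural candidate is $\mathbbm{1}_{(1,d-l)} \star (\text{a Lusztig sheaf for the torsion part of class }(0,l)\text{ realizing }\mathcal{L}_\chi)$: by Example~2.5's analogue and the global Springer picture (Proposition~\ref{P:zzx}, which itself comes from iterated products $\mathbbm{1}_{(0,1)} \star \cdots$), the torsion factor of class $(0,l)$ can be built so that $\mathcal{L}_\chi$ appears; convolving with $\mathbbm{1}_{(1,d-l)}$ on the left, using that $\mathrm{Hom}(\mathcal{L},\mathcal{T}) = \mathrm{Ext}^1(\mathcal{T},\mathcal{L})=0$ for $\mathcal{L}$ a line bundle and $\mathcal{T}$ torsion, puts us in the situation of Remark~2.13 (the analogue for coherent sheaves), which gives that the convolution restricts on $\underline{\mathcal{V}}^0_{(1,d-l),(0,l)}$ to $r_\# p^*(\mathbbm{1} \boxtimes (\text{torsion sheaf}))$, whence the desired summand appears.

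The main obstacle I anticipate is the transversality/Remark~2.13 step: one must verify that for rank-one sheaves the convolution $\mathbbm{1}_{(1,d-l)} \star (\text{torsion})$ is ``clean'' along the stratum, i.e.\ that the map $q$ restricted to the preimage of $\underline{\mathcal{V}}^0_{(1,d-l),(0,l)}$ is an isomorphism (a unique decomposition $\mathcal{F} = \mathcal{L} \oplus \mathcal{T}$ need not be canonical, but the induced filtration $\mathcal{T} \subset \mathcal{F}$ is, exactly as in (\ref{E:filtr})); and that the higher strata contribute only supported-away terms, so no spurious simple summands of larger support intervene. Controlling the behaviour over the closure $\overline{\underline{\mathcal{V}}_{(1,d-l),(0,l)}}$ (where the torsion part degenerates and the stratification is only locally finite) requires a little care with the inductive-limit presentation $\underline{Coh}^{\a}_X = \varinjlim \underline{Coh}^{\a}_{X,\mathcal{L}}$, but since rank-one moduli are essentially $Pic$ times symmetric products, the geometry is explicit enough that this should go through without genuine difficulty.
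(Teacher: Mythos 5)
Your strategy of classifying simple equivariant perverse sheaves on each stratum and then showing exactly those appear is reasonable in spirit, but it contains a genuine error at the crucial step, and that error is precisely what the paper's actual proof is designed to circumvent.

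The problem is your claim that $\underline{Bun}^{(1,d-l)}_X \simeq Pic_X(d-l)/G_m$ ``carries only the constant perverse sheaf.'' Lemma~\ref{L:syst} does not apply here: for quivers each stratum is a single $G_{\alpha}$-orbit, i.e.\ a homogeneous space with stabilizer $\mathrm{Aut}(M)$, so once the stabilizer is connected the equivariant local systems are trivial. But $Pic^{d-l}(X)$ is \emph{not} an orbit of $G_m$ (which acts trivially); it is an abelian variety of dimension $g$, and for $g \geq 1$ it has nontrivial $\pi_1$ and hence carries many irreducible rank-one local systems, all of them automatically $G_m$-equivariant. So there are plenty of non-constant simple perverse sheaves on $\underline{Bun}^{(1,d-l)}_X$, and the classification you use to deduce ``$\subseteq$'' is simply false.

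What the paper does instead is decisive precisely at this point: rather than classifying all perverse sheaves on the open stratum and hoping the answer is ``just the constant one,'' it computes that the relevant Lusztig sheaves themselves restrict to constant sheaves. Concretely, one first observes that a Lusztig sheaf $L_{\a_1,\ldots,\a_r}$ is nonzero on $\underline{Bun}_X^{(1,d)}$ only if $\a_1,\ldots,\a_{r-1}$ have rank $0$ and $\a_r$ has rank $1$, which reduces to studying products $\mathbbm{1}_{(0,1)}^l\star\mathbbm{1}_{\a}$. For $l=1$ the fiber of $p_2$ over a bundle $\mathcal{F}$ is $Quot^{(0,1)}_{\mathcal{F}}\simeq X$ (every nonzero map $\mathcal{F}\to\mathcal{O}_x$ is surjective and $\dim\mathrm{Hom}(\mathcal{F},\mathcal{O}_x)=1$), whence $(\mathbbm{1}_{(0,1)}\star\mathbbm{1}_{\a})|_{\underline{Bun}}\simeq \mathbbm{1}\otimes H^*(X,\qlb)[1]$ is constant; induction on $l$ then shows every relevant Lusztig sheaf is constant on the bundle locus. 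That explicit geometric calculation is the missing ingredient in your argument, and without it the ``$\subseteq$'' direction does not go through. The lower strata are then handled by applying $\underline{\Delta}_{(1,d-l),(0,l)}$ and invoking the rank-one case together with Proposition~\ref{P:zzx} and Remark~2.13, much as you sketched; that part of your plan is fine.
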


\begin{proof} The proof is easy so we sketch it briefly. The support of a simple perverse sheaf $\mathbb{P}$ over $\underline{Coh}_X^{(1,d)}$ has a dense intersection with precisely one of the stratas $\underline{\mathcal{V}}_{(1,d-l),(0,l)}$; we will say that such a $\mathbb{P}$ is \textit{generically supported} on that strata. Let us first consider the case of a $\mathbb{P} \in \mathcal{P}^{(1,d)}$ generically supported on $\underline{Bun}_X^{(1,d)}$. By definition, such a $\mathbb{P}$ appears in a Lusztig sheaf $L_{\a_1, \ldots, \a_r}$ with $\sum \a_i=(1,d)$. It is easy to see that $(L_{\a_1, \ldots, \a_r})_{|\underline{Bun}_X^{(1,d)}} \neq 0$ if and only if $rk(\a_1)=\cdots =rank(\a_{r-1})$ and $rk(\a_r)=1$. Thus $\mathbb{P}$ appears in some product $\mathbb{R} \star \mathbbm{1}_{\a_r}$ with $\mathbb{R}$ belonging to $\mathcal{P}^{(0,l)}$ for some $l$. Because any object of $\mathcal{P}^{(0,l,)}$ itself appears in $\mathbbm{1}_{(0,1)}^l$, it is actually enough to consider products of the form $\mathbbm{1}_{(0,1)}^l \star \mathbbm{1}_{\a}$. When $l=1$, we have by construction
\begin{equation*}
\begin{split}
\mathbbm{1}_{(0,1)} \star \mathbbm{1}_{\a}=& p_{2!}p_1^*(\qlb_{\underline{Coh}^{(0,1)}_X}\boxtimes \qlb_{\underline{Coh}^{\a}_X} \qlb_{X})[dim\;  \underline{Coh}^{\a}_X+dim\;p_1]\\
=&p_{2!}(\qlb_{\underline{\mathcal{E}}^{(0,1),\a}})[dim\;\underline{\mathcal{E}}^{(0,1),\a}].
\end{split}
\end{equation*}
The fiber of $p_2$ over a vector bundle $\mathcal{F}$ is equal to $Quot^{(0,1)}_{\mathcal{F}}$. But  any nonzero morphism to a simple torsion sheaf $\mathcal{O}_x, x \in X$ is surjective, and we have $Hom(\mathcal{F},\mathcal{O}_x)=1$ for all $x$. It follows that
$$p_{2!}(\qlb_{\underline{\mathcal{E}}^{(0,1),\a}})[dim\;\underline{\mathcal{E}}^{(0,1),\a}]_{|\underline{Bun}^{\a+(0,1)}_X} \simeq \big( \mathbbm{1}_{\a+(0,1)} \otimes H^*(X,\qlb)[1]\big)_{|\underline{Bun}_X^{\a+(0,1)}}.$$
In particular, $(\mathbbm{1}_{(0,1)} \star \mathbbm{1}_{\a})_{|\underline{Bun}^{\a+(0,1)}_X}$ is again a constant sheaf. By induction, we see that the same is true of any product $\mathbbm{1}_{(0,1)}^l \star \mathbbm{1}_{\a}$. The proves Proposition~\ref{P:xzs} for objects $\mathbb{P}$ generically supported on $\underline{Bun}_X^{(1,d)}$. If $\mathbb{P}$ is generically supported on some lower strata 
$\underline{\mathcal{V}}_{(1,d-l),(0,l)}$ then we apply the restriction functor $\underline{\Delta}_{(1,d-l),(0,l)}$ to $\mathbb{P}$. This yields an object of $\mathcal{Q}^{(1,d-l)}\times \mathcal{Q}^{(0,l)}$ which is generically supported on $\underline{Bun}^{(1,d-l)}_X \times \underline{Coh}^{(0,l)}_X$. Using the first part of the proof together with Proposition~\ref{P:zzx}, and the setup of Remark~2.13, one arrives at the conclusion that $\mathbb{P}$ is of the desired form.
\end{proof}
\endexample

\vspace{.2in}

\addtocounter{theo}{1}
\noindent \textbf{Example \thetheo .} Let us now specialize to $X=\mathbb{P}^1$. This case was fully treated by Laumon in \cite{Laumon}. The relevant stratification data here is
$$\underline{Coh}_{\mathbb{P}^1}^{(r,d)}=\bigsqcup_{\underline{n},l} \underline{\mathcal{V}}_{\underline{n},l},$$
$$\pi_{\underline{n},l}~: \underline{\mathcal{V}}_{\underline{n}} \times \underline{Coh}^{(0,l)}_{\mathbb{P}^1} \to \underline{\mathcal{V}}_{\underline{n},l}.$$
Again, to any $\underline{n},l$ and any representation $\chi$ of $\mathfrak{S}_{l}$ is associated a local system $\widetilde{\mathcal{L}}_{\chi}$ over $\underline{\mathcal{V}}^0_{\underline{n},l}:=\pi_{\underline{n},l}(\underline{\mathcal{V}}_{\underline{n}} \times \underline{U}^{(0,l)}_{\mathbb{P}^1} )$.

\vspace{.1in}

\begin{theo}[Laumon] Let $X=\mathbb{P}^1$. For any $(r,d)$ we have
$$\mathcal{P}^{(r,d)}=\big\{ IC(\underline{\mathcal{V}}^0_{\underline{n},l}, \widetilde{\mathcal{L}}_{\chi})\;|\; \chi \in Irr\;\mathfrak{S}_l\}$$
where $l \geq 0$ and $\underline{n}=(n_1 \leq \cdots \leq n_s)$ runs among all tuples such that
$\sum_i n_i+l=d$.
\end{theo}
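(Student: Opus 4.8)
The strategy is to mimic the proof of the rank-one case (Proposition~5.9, i.e. Proposition~\ref{P:xzs}) and the proof of Theorem~\ref{T:finiteype} for finite-type quivers, exploiting the fact that on $\mathbb{P}^1$ the stratifications (\ref{E:strat1}), (\ref{E:strat2}) are locally finite with strata indexed by isomorphism classes of vector bundles (plus the degree of the torsion part). First I would establish the ``easy'' inclusion, namely that every $IC(\underline{\mathcal{V}}^0_{\underline{n},l},\widetilde{\mathcal{L}}_\chi)$ does belong to $\mathcal{P}^{(r,d)}$. For this, pick the tuple $\underline{n}=(n_1\le\cdots\le n_s)$ and write the Lusztig sheaf associated to the sequence $\big((0,1)^{\star l},(1,n_1),\ldots,(1,n_s)\big)$ (or better, $(0,l),(1,n_1),\ldots,(1,n_s)$, reordering the line-bundle classes in increasing slope so that $\mathrm{Hom}$'s vanish in one direction and $\mathrm{Ext}^1$'s in the other — this is the exact analogue of Reineke's ordering in Proposition~\ref{P:Reineke}). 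One then shows that the associated correspondence map $p_2$ (iterated) is, over the open stratum $\underline{\mathcal{V}}^0_{\underline{n},l}$, a fibration with point (or connected, cell-decomposable) fibers, so that the Lusztig sheaf restricted there is $\widetilde{\mathcal L}_\chi$ up to shift — hence $IC(\underline{\mathcal{V}}^0_{\underline{n},l},\widetilde{\mathcal{L}}_\chi)$ appears as a summand. The input about the local systems $\widetilde{\mathcal L}_\chi$ coming from $\mathbbm{1}_{(0,1)}^{\star l}$ is exactly Proposition~\ref{P:zzx} (the global Springer construction), transported to $\underline{\mathcal V}^0_{\underline n,l}$ through the affine fibration $\pi_{\underline n,l}$ as in Example~5.8/Remark~2.13.

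Next I would prove the reverse inclusion: every simple summand of every Lusztig sheaf $L_{\a_1,\ldots,\a_r}$ with all $r_i\le 1$ is of the claimed form. Here one argues by descending induction on the stratum on which the summand $\mathbb{P}$ is \emph{generically supported}. If $\mathbb{P}$ is generically supported on $\underline{\mathcal{V}}_{\underline{n},l}$, then applying the restriction functor $\underline{\Delta}$ along the partition $\big((1,n_1),\ldots,(1,n_s),(0,l)\big)$ (grouping appropriately) produces an object of $\prod_i\mathcal{Q}^{(1,n_i)}\times\mathcal{Q}^{(0,l)}$ which is generically supported on the \emph{open} stratum, i.e. on $\prod_i\underline{Bun}^{(1,n_i)}_{\mathbb{P}^1}\times\underline{Coh}^{(0,l)}_{\mathbb{P}^1}$; by Propositions~\ref{P:zzx} and~\ref{P:xzs} (the rank $\le 1$ cases already settled) together with the Künneth-type structure of $\underline{\Delta}$ on a product of rank-one factors, the restriction is forced to be a direct sum of the expected $IC$'s. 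One then uses the setup of Remark~2.13 (i), (ii), (iii) — the smoothness of the relevant $p$, the properness of $q$, and the uniqueness of the compatible filtration for a split vector bundle on $\mathbb{P}^1$ — to reconstruct $\mathbb{P}$ from its restriction and conclude that $\mathbb{P}$ itself is $IC(\underline{\mathcal V}^0_{\underline n,l},\widetilde{\mathcal L}_\chi)$ for some $\chi$, up to a semisimple correction supported on strictly smaller strata, which is handled by the induction hypothesis. The key geometric facts that make this work on $\mathbb{P}^1$ specifically are: every vector bundle splits into line bundles (so the orbit structure on $\underline{Bun}$ is ``finite-type-like''), $\mathrm{Hom}(\mathcal O(m),\mathcal O(n))$ and $\mathrm{Ext}^1(\mathcal O(m),\mathcal O(n))$ are controlled by $m-n$ so that the ordering trick applies, and the affine fibration $\pi_{\underline n,l}$ is genuinely a fibration so that perverse sheaves transport cleanly.

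The main obstacle, I expect, is the bookkeeping around the torsion part and the local systems $\widetilde{\mathcal L}_\chi$ under the induction and restriction functors — in particular checking that no \emph{new} simple perverse sheaves on $\underline{Coh}^{(0,l)}_{\mathbb{P}^1}$ or on the intermediate strata can appear beyond those forced by $\mathfrak{S}_l$-representations, and that the induction product $\mathbb{R}\star\mathbbm{1}_{(1,n)}$ of a torsion-supported $IC$ with a constant sheaf on a rank-one stratum produces exactly $\widetilde{\mathcal L}_\chi$ and not some twist of it. This is where Laumon's original argument in \cite{Laumon} has to be invoked carefully; the cleanest route is probably to note that all the maps involved, after restricting to an open quotient stack $\underline{Coh}^{(r,d)}_{\mathbb{P}^1,\mathcal O(n)}$, become maps of honest (finite-type) varieties with group actions, and then apply Lemma~\ref{L:816} (Lusztig's stratified-semismallness lemma) stratum by stratum exactly as in the affine-quiver case of Section~2.5. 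Modulo that bookkeeping, both inclusions follow, and the theorem is proved.
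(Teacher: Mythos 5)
The paper gives no actual proof of this theorem — it simply remarks that ``the above theorem can be proved in much the same way as Proposition~\ref{P:xzs}'' — so there is no line-by-line comparison to make. Your plan is a reasonable fleshing-out of that one-sentence pointer, and your three main ingredients (the locally finite stratification of $\underline{Coh}^{(r,d)}_{\mathbb{P}^1}$ by splitting type, a Reineke-style ordering argument transported via Remark~2.13, and reduction to the rank $\leq 1$ cases of Propositions~\ref{P:zzx} and~\ref{P:xzs} by iterated restriction) are exactly what the paper's pointer implies.

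However, there is a genuine slip in your ``easy'' direction that you should fix, because taken literally it would not produce the nontrivial local systems $\widetilde{\mathcal{L}}_{\chi}$. You propose the sequence $\bigl((0,1)^{\star l},(1,n_1),\ldots,(1,n_s)\bigr)$ or $\bigl((0,l),(1,n_1),\ldots,(1,n_s)\bigr)$, with the torsion classes listed \emph{first}. With the paper's convention ($\alpha_1$ labels the top quotient $W_1/W_2$ and $\alpha_n$ the innermost subsheaf $W_n$ of the filtration), Reineke's ordering on $\mathbb{P}^1$ forces the torsion to go \emph{last}: one has $\mathcal{O}(m)\prec\mathcal{O}(n)$ iff $m<n$, and $\mathcal{O}(n)\prec\mathcal{T}$ for every torsion $\mathcal{T}$ (since $\mathrm{Hom}(\mathcal{T},\mathcal{O}(n))=\mathrm{Ext}^1(\mathcal{O}(n),\mathcal{T})=0$), so the ``biggest'' object, placed at the bottom of the filtration, is the torsion part. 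Concretely, with torsion first the innermost step $W_{s+l}$ must be a degree-$n_s$ line subbundle of $\mathcal{F}=\bigoplus\mathcal{O}(n_i)\oplus\mathcal{T}$, and these form a positive-dimensional family (they are twisted by elements of $\mathrm{Hom}(\mathcal{O}(n_s),\mathcal{T})$), so the fiber of $q$ over a generic point of $\underline{\mathcal{V}}^0_{\underline{n},l}$ is not finite and the Springer/$\mathfrak{S}_l$ local system does not come out. With the sequence $(1,n_1),\ldots,(1,n_s),(0,1),\ldots,(0,1)$ (torsion last, line bundles grouped by degree), the innermost torsion subsheaf is forced to be the canonical one, the line-bundle subquotients are then uniquely determined by the Reineke argument, and the remaining choice — a complete flag of torsion subsheaves at $l$ \emph{distinct} points — is an $\mathfrak{S}_l$-torsor, which is precisely what produces $\bigoplus_\chi\widetilde{\mathcal{L}}_\chi^{\oplus\dim\chi}$. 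You in fact use the correct ``torsion-last'' order in your reverse-inclusion paragraph, so this is a slip rather than a conceptual error; but you should also note that the folded class $(0,l)$ you label as ``better'' is actually worse here, since $\mathbbm{1}_{(0,l)}$ only ever yields the trivial local system, and the unfolded $(0,1)^{\star l}$ is needed to see the other $\chi$'s.
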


The above theorem can be proved in much the same way as Proposition~\ref{P:xzs}. Observe that it states in particular that, after restriction to the open susbtack $\underline{Bun}_X$, we obtain \textit{all} possible simple perverse sheaves. In this sense, $\mathbb{P}^1$ behaves very much like a quiver of finite type (compare with Section~2.3.).
\endexample

\vspace{.2in}

\addtocounter{theo}{1}
\noindent \textbf{Example \thetheo .} To conclude the list of examples we consider the case of an elliptic curve $X$. Let us briefly recall the structure of the category $Coh(X)$ (see \cite{Atiyah} or \cite{Scano} for a more detailed treatment). Define the \textit{slope} of a sheaf $\mathcal{F} \in Coh(X)$ as
$$\mu(\mathcal{F})=\frac{deg(\mathcal{F})}{rank(\mathcal{F})} \in \mathbb{Q} \cup \{\infty\}.$$
We say that $\mathcal{F}$ is semistable of slope $\nu$ if $\mu(\mathcal{F})=\nu$ and if $\mu(\mathcal{G}) \leq \nu$ for any subsheaf $\mathcal{G} \subset \mathcal{F}$. The full subcategory $Coh^{(\nu)}(X)$ consisting of semsitable sheaves of slope $\nu$ is abelian and artinian. For instance, $Coh^{(\infty)}(X) = Tor(X)$ is the subcategory of torsion sheaves over $X$. Atiyah's proved the following fundamental theorem~:

\vspace{.1in}

\begin{theo}[Atiyah] The following holds~:
\begin{enumerate}
\item[i)] there are (canonical) equivalences of abelian categories $Coh^{(\nu)}(X) \simeq Coh^{(\nu')}(X)$ for any $\nu,\nu' \in \mathbb{Q} \cup \{\infty\}$,
\item[ii)] any $\mathcal{F} \in Coh(X)$ admits a decomposition
\begin{equation}\label{E:plkio}
\mathcal{F} \simeq \mathcal{F}_{\nu_1} \oplus \cdots \oplus \mathcal{F}_{\nu_l}
\end{equation}
where $\mathcal{F}_{\nu_i} \in Coh^{(\nu_i)}(X)$ and $\nu_1 < \cdots < \nu_l$.
\end{enumerate}
\end{theo}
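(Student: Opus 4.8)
The statement to prove is the classification of coherent sheaves on an elliptic curve $X$: the existence of equivalences $Coh^{(\nu)}(X) \simeq Coh^{(\nu')}(X)$ for all slopes, and the existence of a canonical decomposition \eqref{E:plkio} into semistable pieces of increasing slope. Since this is a classical result of Atiyah, the plan is to recall the two classical ingredients and assemble them, rather than to reprove everything from scratch; the excerpt explicitly invites a reference (\cite{Atiyah}, \cite{Scano}), so I would keep the argument to its essential skeleton.

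First I would establish part ii), the existence of the Harder--Narasimhan type decomposition. The key point is that on a curve, the slope function $\mu$ together with the Euler form \eqref{E:5Euler} (here with $g=1$, so $\langle\mathcal{F},\mathcal{G}\rangle = rank(\mathcal{F})deg(\mathcal{G}) - rank(\mathcal{G})deg(\mathcal{F})$) makes $Coh(X)$ behave like a category with a well-behaved stability condition: the subsheaves of a fixed $\mathcal{F}$ have bounded slope (a boundedness argument using that $\mathrm{Hom}(\mathcal{G},\mathcal{F}) \neq 0$ forces constraints via Riemann--Roch when $\mu(\mathcal{G}) > \mu(\mathcal{F})$), so a maximal destabilizing subsheaf exists and is unique. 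Iterating gives a Harder--Narasimhan filtration with semistable quotients of strictly \emph{decreasing} slope; on an elliptic curve one moreover shows this filtration \emph{splits} — this is the genuinely special feature and uses that $\mathrm{Ext}^1(\mathcal{F}_{\nu},\mathcal{F}_{\nu'}) = \mathrm{Hom}(\mathcal{F}_{\nu'},\mathcal{F}_{\nu})^*$ by Serre duality (the canonical bundle of $X$ is trivial, so $Coh(X)$ is $1$-Calabi--Yau/hereditary of a symmetric type), and that $\mathrm{Hom}(\mathcal{F}_{\nu'},\mathcal{F}_{\nu}) = 0$ when $\nu < \nu'$ since a nonzero map between semistables would violate slope monotonicity. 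Reindexing the HN pieces in increasing order of slope gives \eqref{E:plkio}; the uniqueness of the isomorphism classes $\mathcal{F}_{\nu_i}$ follows from uniqueness of the HN filtration, while the decomposition itself (the splitting) is non-canonical only up to the action of the (unipotent) automorphisms mixing the pieces.

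Next I would prove part i), the equivalences $Coh^{(\nu)}(X) \simeq Coh^{(\nu')}(X)$. The standard approach is to exhibit autoequivalences of the derived category $D^b(Coh(X))$ that act on slopes by the natural $SL(2,\Z)$-action on $(rank, deg)$: twisting by a line bundle $\mathcal{L}$ of degree $1$ sends $\mu \mapsto \mu + 1$ (the translation $(r,d)\mapsto(r,d+r)$), and the Fourier--Mukai transform associated to the Poincaré bundle on $X \times \widehat{X} \cong X \times X$ implements the inversion $(r,d) \mapsto (d,-r)$ (so $\mu \mapsto -1/\mu$). Together these generate $SL(2,\Z)$ acting transitively on primitive vectors, hence on $\mathbb{Q}\cup\{\infty\}$; one then checks that each such derived autoequivalence, possibly composed with a shift, restricts to an \emph{exact} equivalence of the abelian hearts $Coh^{(\nu)}(X) \to Coh^{(\nu')}(X)$ because semistability of slope $\nu$ is characterized intrinsically (as the heart of a rotated $t$-structure, or directly: objects with no sub/quotients of larger/smaller slope). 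In particular taking $\nu' = \infty$ identifies every $Coh^{(\nu)}(X)$ with $Tor(X) = Coh^{(\infty)}(X)$, which is the category of finite-length sheaves and is completely understood.

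\textbf{Main obstacle.} The delicate step is the \emph{splitting} of the Harder--Narasimhan filtration — i.e. upgrading the filtration \eqref{E:filtr}-style statement to an actual direct sum \eqref{E:plkio}. In positive genus this fails in general, and the reason it works for $g=1$ is the vanishing $\mathrm{Ext}^1(\mathcal{F}_{\nu'},\mathcal{F}_{\nu})=0$ for $\nu<\nu'$, which I would deduce from Serre duality on $X$ (trivial canonical bundle) combined with Hom-vanishing between semistables of decreasing slope. Making this rigorous requires being careful that the semistable subquotients really do satisfy the Hom-vanishing even when slopes are equal within a piece but the argument only needs strict inequality across pieces; I would isolate this as a lemma. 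The $SL(2,\Z)$ action in part i) is also somewhat technical to set up cleanly (one must verify the Fourier--Mukai functor genuinely preserves the relevant subcategories up to shift), but this is standard and I would cite \cite{Atiyah} or a modern account for the details rather than reproduce the full computation.
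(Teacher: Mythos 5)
The paper states this theorem without proof, citing \cite{Atiyah} and \cite{Scano} for details, so there is no in-text argument to compare against. Your sketch is mathematically correct. The splitting argument for ii) is stated with the right indices: you want $\mathrm{Ext}^1(\mathcal{F}_{\nu},\mathcal{F}_{\nu'})=0$ for $\nu<\nu'$, and Serre duality on a curve with $\Omega_X \simeq \mathcal{O}_X$ reduces this to $\mathrm{Hom}(\mathcal{F}_{\nu'},\mathcal{F}_{\nu})=0$, which holds because the image of a nonzero map would be a quotient of a semistable of slope $\nu'$ and a subobject of a semistable of slope $\nu<\nu'$. This is exactly what is needed to split the HN filtration from the top down. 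Your argument for i) via line-bundle twists and Fourier--Mukai, generating an $SL(2,\Z)$-action on $(rank, deg)$ and hence a Möbius action on slopes transitive on $\mathbb{Q}\cup\{\infty\}$, is also correct, provided one keeps track of the shifts needed so that the derived autoequivalence restricts to an exact equivalence of hearts; you flag this.

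It is worth noting that your route is the modern one, quite different in flavor from Atiyah's original 1957 argument, which predates stability, Harder--Narasimhan filtrations, and Fourier--Mukai, and proceeds instead by a direct classification of indecomposable vector bundles by induction on rank using explicit extension computations. The modern approach you give is shorter and more structural, and is closer in spirit to the one used in \cite{Scano} (which the paper also cites). One small point on presentation: you attribute the boundedness needed to find a maximal destabilizing subsheaf to the Euler form \eqref{E:5Euler} at $g=1$; while correct, this is really a general fact on smooth projective curves (using that the degree of a subsheaf of a fixed vector bundle is bounded above), and the genus-one restriction only enters through the $\mathrm{Ext}^1$-vanishing for the splitting and the self-duality $\widehat{X}\cong X$ used to set up Fourier--Mukai. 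Isolating these two uses of $g=1$, as you partially do, is the right way to organize the proof.
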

 
 The decomposition (\ref{E:plkio}) is \textit{not} canonical, but the isomorphism classes of the factors $\mathcal{F}_{\nu_i}$ are uniquely determined. In addition, the subsheaves $\mathcal{F}_{\nu_l}, \mathcal{F}_{\nu_{l-1}} \oplus \mathcal{F}_{\nu_l}, \cdots$ are canonical\footnote{The decomposition (\ref{E:plkio}) refines the standard decomposition of a coherent sheaf over a curve as a direct sum of a vector bundle and a torsion sheaf.}. Atiyah's Theorem bears the following geometric consequences. Let us call a sequence $\underline{\a}=(\a_1, \ldots, \a_l)$ of classes in $K_0(X)$ satisfying $\sum_i \a_i=\a$ and $\mu(\a_1)< \cdots < \mu(\a_l)$ a \textit{Harder-Narasimhan-type} (or simply \textit{HN-type}) of weight $\a$. For such a sequence, let $\underline{\mathcal{V}}_{\underline{\a}}$ denote the stack parametrizing sheaves
$\mathcal{F}= \mathcal{F}_{\nu_1} \oplus \cdots \oplus \mathcal{F}_{\nu_l}$ with $\mu(\mathcal{F}_i)=\a_i$ for $i=1, \ldots, l$. Then
 
\vspace{.1in}

\begin{cor}  There exists a stratification
$$\underline{Coh}^{\a}_{X} = \bigsqcup_{\underline{\a}} \underline{\mathcal{V}}_{\underline{\a}}$$
indexed by all HN-types $\underline{\a}$ of weight $\a$.
\end{cor}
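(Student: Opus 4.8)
The plan is to show that the $\underline{\mathcal{V}}_{\underline{\a}}$, as $\underline{\a}$ runs over HN-types of weight $\a$, are locally closed, disjoint, cover $\underline{Coh}^{\a}_X$, and that the closure of a stratum is contained in the union of strata, thereby exhibiting a genuine stratification. The disjointness and set-theoretic covering are immediate from Atiyah's theorem: part ii) says every $\mathcal{F}$ decomposes as $\mathcal{F}_{\nu_1}\oplus\cdots\oplus\mathcal{F}_{\nu_l}$ with strictly increasing slopes and uniquely determined isomorphism classes of the factors, so the assignment $\mathcal{F}\mapsto (\mu(\mathcal{F}_{\nu_1}),\ldots,\mu(\mathcal{F}_{\nu_l}))$, recorded as the sequence of classes $(\a_1,\ldots,\a_l)$ with $\a_i=[\mathcal{F}_{\nu_i}]$, is a well-defined map from the $k$-points of $\underline{Coh}^{\a}_X$ to the (finite) set of HN-types of weight $\a$, and its fibers are exactly the $\underline{\mathcal{V}}_{\underline{\a}}$. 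Finiteness of the index set follows because $\sum_i\a_i=\a$ forces only finitely many choices of ranks, and for each the degrees are constrained by the strict slope inequalities together with semistability (so the $\underline{\mathcal{V}}_{\underline{\a}}$ are in fact empty for all but finitely many $\underline{\a}$ — indeed the stratification will be locally finite in the sense of Example~5.6).

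Next I would establish that each $\underline{\mathcal{V}}_{\underline{\a}}$ is a locally closed substack, which is the main point requiring actual algebraic geometry. For this I would use the canonicity statement from the corollary's preamble: the subsheaves $\mathcal{F}_{\nu_l}\subset\mathcal{F}_{\nu_{l-1}}\oplus\mathcal{F}_{\nu_l}\subset\cdots$ form a canonical (Harder–Narasimhan-type) filtration of $\mathcal{F}$ with semistable subquotients of strictly increasing slopes. The existence of such a filtration is well known to be an open condition once the numerical type is fixed (semistability is open in families, and the vanishing $\mathrm{Hom}(\mathcal{F}_{\nu_i},\mathcal{F}_{\nu_j})=0$ for $i>j$ holds by the slope inequality $\mu(\a_i)>\mu(\a_j)$, which rigidifies the decomposition), and one obtains $\underline{\mathcal{V}}_{\underline{\a}}$ as the locus where the sheaf has HN-filtration of exactly type $\underline{\a}$. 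Concretely, on each chart $\underline{Coh}^{\a}_{X,\mathcal{L}}=Q^{\a}_{\mathcal{L}}/G_{\a,\mathcal{L}}$ one can work on the smooth scheme $Q^{\a}_{\mathcal{L}}$: the condition that the HN-type be $\ge\underline{\a}$ in the dominance order is closed (upper semicontinuity of the HN polygon), and the condition that it be exactly $\underline{\a}$ is then locally closed. Patching over all $\mathcal{L}$ gives the substack structure. I would also note that $\underline{\mathcal{V}}_{\underline{\a}}$ is smooth, being (up to the affine fibration coming from $\mathrm{Aut}(\mathcal{F}_{\nu_1}\oplus\cdots\oplus\mathcal{F}_{\nu_l})\to\prod_i\mathrm{Aut}(\mathcal{F}_{\nu_i})$, just as in the rank $\le 1$ discussion of Section~5.1) a quotient of a product of the smooth stacks $\underline{Coh}^{(\nu_i)}$ of semistable sheaves.

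Finally I would record the closure relations: $\overline{\underline{\mathcal{V}}_{\underline{\a}}}\subset\bigsqcup_{\underline{\beta}\ge\underline{\a}}\underline{\mathcal{V}}_{\underline{\beta}}$, where $\underline{\beta}\ge\underline{\a}$ is dominance of HN-polygons; this is exactly the semicontinuity statement used above, and it shows the $\underline{\mathcal{V}}_{\underline{\a}}$ assemble into a stratification in the strong sense. The hard part, as indicated, is the local closedness — everything else is a direct bookkeeping consequence of Atiyah's theorem. Since for an elliptic curve all the categories $Coh^{(\nu)}(X)$ are equivalent to $Coh^{(\infty)}(X)=Tor(X)$ by part i), one can, if desired, make the argument even more explicit by transporting to the torsion situation, but this refinement is not needed for the statement.
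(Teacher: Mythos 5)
The paper gives no proof of this corollary; it is simply stated as an immediate consequence of Atiyah's theorem, so your filling in of the details (disjointness and covering from Atiyah~ii), local closedness via semicontinuity of the Harder–Narasimhan polygon, frontier condition) is the right thing to do and the core of your argument is correct.

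However, your finiteness claim is false and you should drop it. The set of HN-types of weight $\a$ is \emph{infinite} as soon as $rank(\a)\geq 2$: for $\a=(2,0)$ the sequences $\underline{\a}_n=((1,-n),(1,n))$ for $n\geq 1$ are all valid HN-types, and on an elliptic curve \emph{all} of the corresponding strata $\underline{\mathcal{V}}_{\underline{\a}_n}$ are nonempty, since part~i) of Atiyah's theorem identifies every slope category $Coh^{(\nu)}(X)$ with $Tor(X)$ and hence every rational slope is represented (concretely, take $\mathcal{F}=L_1\oplus L_2$ with $\deg L_1=-n$, $\deg L_2=n$). So your sentence ``the $\underline{\mathcal{V}}_{\underline{\a}}$ are in fact empty for all but finitely many $\underline{\a}$'' is wrong, and it contradicts the local finiteness you (correctly) invoke immediately afterwards: what is true is that each chart $\underline{Coh}^{\a}_{X,\mathcal{L}}$ meets only finitely many strata, because the quot-scheme presentation bounds the HN polygon from below, exactly as in Example~5.6 for $\mathbb{P}^1$. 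Replacing the false finiteness claim with this local finiteness statement repairs the sketch; the rest — openness of semistability, the Shatz semicontinuity giving local closedness of the locus with HN polygon exactly $\underline{\a}$, the $\mathrm{Hom}$-vanishing from the slope inequalities, and the closure relations in the dominance order — is fine.
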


There is as usual an affine fibration
$$\pi_{\underline{\a}}~: \underline{\mathcal{V}}_{\a_1} \times \cdots \underline{\mathcal{V}}_{\a_l} \to \underline{\mathcal{V}}_{\underline{\a}}.$$
Using the equivalence $Coh^{(\mu(\a_i))}(X) \simeq Coh^{(\infty)}(X)$, one can show that $\underline{\mathcal{V}}_{\a_i} \simeq \underline{Coh}_X^{(0,d(\a_i))}$, where $d((r_i,d_i))=g.c.d(r_i,d_i)$. To put it in words, we have that the whole stack $\underline{Coh}_X$ can be cut into pieces, each of which looks like a cartesian product of the simpler stacks $\underline{Coh}_X^{(0,d)}$. For a fixed class $\a_i$ let $\underline{\mathcal{V}}_{\a_i}^0$ denote the open substack of $\underline{\mathcal{V}}_{\a_i}^0$ corresponding to $\underline{U}_X^{(0,d(\a_i))}$, and if $\chi$ is a representation of $\mathfrak{S}_{d(\a_i)}$ let $\mathcal{L}_{\chi}$ be the associated local system over $\underline{\mathcal{V}}_{\a_i}^0$. If $\a=(\a_1, \ldots, \a_l)$ is an HN type and if $\chi_1, \ldots, \chi_l$ are representations of $\mathfrak{S}_{d(\a_1)}, \ldots, \mathfrak{S}_{d(\a_l)}$ respectively then we write $\widetilde{\mathcal{L}}_{\chi_1, \ldots, \chi_l}$ for the unique local system over $\underline{\mathcal{V}}^0_{\underline{\a}}:=\pi_{\a}(\underline{\mathcal{V}}_{\a_1}^0 \times \cdots \underline{\mathcal{V}}_{\a_l}^0)$ such that $\pi_{\a}^*(\widetilde{\mathcal{L}}_{\chi_1, \ldots, \chi_l}) =\mathcal{L}_{\chi_1} \boxtimes \cdots \boxtimes \mathcal{L}_{\chi_l}$. 

\vspace{.1in}

The following result is proved in \cite{Scano}~:

\vspace{.1in}

\begin{theo}\label{T:Pelliptic} Let $X$ be an elliptic curve. Then for any $\a \in K_0(X)$ we have
$$\mathcal{P}^{\a}=\big\{ IC(\underline{\mathcal{V}}^0_{\underline{\a}},\widetilde{\mathcal{L}}_{\chi_1, \ldots, \chi_l})\;|\; \chi_1 \in Irr\;\mathfrak{S}_{d(\a_1)}, \ldots, \chi_l \in Irr\;\mathfrak{S}_{d(\a_l)}\big\}$$
where $\underline{\a}=(\a_1, \ldots, \a_l)$ runs among all HN types of weight $\a$.
\end{theo}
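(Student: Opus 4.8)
The plan is to prove Theorem~\ref{T:Pelliptic} by first establishing the inclusion ``$\supset$'' (every $IC(\underline{\mathcal{V}}^0_{\underline{\a}},\widetilde{\mathcal{L}}_{\chi_1,\ldots,\chi_l})$ does appear in a Lusztig sheaf), and then the inclusion ``$\subset$'' (nothing else does), exactly in the spirit of the proof of Proposition~\ref{P:xzs} and the affine-quiver argument of Section~2.5. The key structural input is Atiyah's Corollary: the stratification $\underline{Coh}^{\a}_X=\bigsqcup_{\underline{\a}}\underline{\mathcal{V}}_{\underline{\a}}$ together with the affine fibration $\pi_{\underline{\a}}:\underline{\mathcal{V}}_{\a_1}\times\cdots\times\underline{\mathcal{V}}_{\a_l}\to\underline{\mathcal{V}}_{\underline{\a}}$ and the equivalences $Coh^{(\mu(\a_i))}(X)\simeq Coh^{(\infty)}(X)\simeq Tor(X)$, under which $\underline{\mathcal{V}}_{\a_i}\simeq\underline{Coh}^{(0,d(\a_i))}_X$. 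This reduces every computation on an HN-stratum to a computation on the rank-zero stacks, where Proposition~\ref{P:zzx} already gives the complete answer $\mathcal{P}^{(0,l)}=\{IC(\underline{U}^{(0,l)}_X,\mathcal{L}_\chi)\}$.

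For the inclusion ``$\supset$'': I would first reduce to Lusztig sheaves of the form $\mathbbm{1}_{(0,1)}^{\star n_0}\star\mathbbm{1}_{\b_1}\star\cdots\star\mathbbm{1}_{\b_t}$ with each $\b_j$ of rank one, by using that any object of $\mathcal{P}^{(0,l)}$ appears in $\mathbbm{1}_{(0,1)}^{\star l}$ (Proposition~\ref{P:zzx}) and that $\mathbbm{1}_\a$ for $\a$ of rank one is itself a Lusztig sheaf. The core computation is then the analogue of the key step in Proposition~\ref{P:xzs}: one shows that the restriction of $\mathbbm{1}_{\b_1}\star\cdots\star\mathbbm{1}_{\b_t}$ (all $\b_j$ rank one, slopes arranged suitably) to the open stratum $\underline{\mathcal{V}}^0_{\underline{\a}}$ computes, via the properness of $p_2$ and the fact that the relevant Quot-scheme fibers are Grassmannian-like, a local system induced up from the rank-zero strata, matching $\widetilde{\mathcal{L}}_{\chi_1,\ldots,\chi_l}$; here I would invoke the formalism of Remark~2.13 (adapted to coherent sheaves, using the affine fibrations $\pi_{\underline{\a}}$ in place of the vector bundles $\kappa$), together with the fact that the HN filtration is canonical, so the analogue of the ``unique filtration'' hypothesis in Remark~2.13 iii) holds automatically. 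One then takes $IC$ of the open stratum and peels off lower-dimensional pieces inductively, as in the finite-type quiver argument of Theorem~\ref{T:finiteype}.

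For the inclusion ``$\subset$'': any $\mathbb{P}\in\mathcal{P}^\a$ has dense support on exactly one stratum $\underline{\mathcal{V}}_{\underline{\a}}$; applying the iterated restriction functor $\underline{\Delta}^{(l)}$ along the HN type $\underline{\a}$ (which preserves $\mathcal{Q}_X$ by Proposition~5.9, the coherent analogue of Proposition~\ref{P:closed}) produces an object of $\mathcal{Q}^{\a_1}\times\cdots\times\mathcal{Q}^{\a_l}$ generically supported on $\underline{\mathcal{V}}_{\a_1}\times\cdots\times\underline{\mathcal{V}}_{\a_l}\simeq\prod_i\underline{Coh}^{(0,d(\a_i))}_X$. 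By Proposition~\ref{P:zzx} each factor is a sum of $IC(\underline{U}^{(0,d(\a_i))}_X,\mathcal{L}_{\chi_i})$'s, and by the compatibility of $\underline{\Delta}$ and $\underline{m}$ with the affine-fibration geometry (Remark~2.13 ii), iii)) one recovers $\mathbb{P}$ as a summand of $\underline{m}$ applied back to such a product, forcing $\mathbb{P}\simeq IC(\underline{\mathcal{V}}^0_{\underline{\a}},\widetilde{\mathcal{L}}_{\chi_1,\ldots,\chi_l})$.

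The main obstacle I anticipate is not the combinatorial bookkeeping but the verification that the correspondence diagram (\ref{E:convolcoh}) restricts well over the HN-strata — i.e.\ that over $\underline{\mathcal{V}}^0_{\underline{\a}}$ the maps $p_1,p_2$ genuinely stratify into (composition of a vector-bundle-like affine fibration and a proper smooth Grassmannian fibration), so that Lemma~\ref{L:816} applies. This requires controlling $\mathrm{Ext}^1$ and $\mathrm{Hom}$ vanishing between semistable sheaves of distinct slopes (the analogue of Proposition~\ref{P:extiszero}), which follows from Atiyah's theorem and Serre duality on $X$, plus a careful analysis of $Quot^{\beta}_{\mathcal F}$ for $\mathcal F$ in a fixed stratum; on an elliptic curve the numerics simplify since $\langle\a,\a\rangle$ depends only on $\det\begin{pmatrix}r&d\\r&d\end{pmatrix}$-type data, but one still must check that the dimension count in Lemma~\ref{L:816} comes out right, and that passing to the inductive limit over line bundles $\mathcal{L}$ causes no difficulty (it does not, by the local finiteness already used in Example~5.5). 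Once this geometric input is secured, the rest is a routine transcription of the finite-type and affine-type arguments of Lectures~2 and~3.
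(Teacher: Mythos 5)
The paper does not give a proof of Theorem~\ref{T:Pelliptic}: it is stated with the single sentence \emph{``The following result is proved in \cite{Scano}''} and no argument is supplied in these lectures. So there is no proof in the paper to compare your proposal against; what I can do is assess whether your plan is consistent with the methodology the paper \emph{does} develop (Proposition~\ref{P:xzs} in rank one, Remark~2.13 for the quiver analogue, and the affine-quiver argument in Section~2.5), and whether it has a visible gap.

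Your outline is a faithful adaptation of that methodology, and the ingredients you name are the right ones: Atiyah's HN stratification with the equivalences $Coh^{(\nu)}(X)\simeq Tor(X)$; Proposition~\ref{P:zzx} as the base case on the torsion strata; the $\mathrm{Hom}/\mathrm{Ext}^1$ vanishing between blocks of distinct slopes to place yourself in the setting of Remark~2.13; the canonicity of the HN filtration (as opposed to the non-canonical splitting) to satisfy the uniqueness hypothesis in Remark~2.13~iii); Lemma~\ref{L:816} to get semisimplicity from the ``affine fibration followed by proper'' decomposition of $p_2$ over the strata; and the fact that $\langle\a,\a\rangle=0$ for $g=1$, which streamlines the dimension count. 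The restriction-functor argument you give for ``$\subset$'' is exactly parallel to the one sketched in the proof of Proposition~\ref{P:xzs}. (Your reference to ``Proposition~5.9'' for the stability of $\mathcal{Q}_X$ under $\underline{m},\underline{\Delta}$ is the unlabeled proposition at the end of Section~5.2; the numbering is off but the citation is clear.)

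The one place you correctly flag as ``the core computation'' but then leave essentially untouched is, in my view, the genuine crux of the $\supset$ inclusion: the building blocks $\mathbbm{1}_{\a}$ have $\mathrm{rank}(\a)\leq 1$, yet you must manufacture the local systems $\mathcal{L}_{\chi_i}$ on strata $\underline{\mathcal{V}}_{\a_i}\simeq\underline{Coh}^{(0,d(\a_i))}_X$ indexed by classes $\a_i$ of arbitrary rank. Reducing to $\mathbbm{1}_{(0,1)}^{\star n_0}\star\mathbbm{1}_{\b_1}\star\cdots\star\mathbbm{1}_{\b_t}$ with the $\b_j$ of rank one does not by itself explain how the $\mathfrak{S}_{d(\a_i)}$-monodromy on, say, a rank-$q$ semistable block of slope $p/q$ with $\gcd(p,q)=1$ actually materializes in the restriction of such a product, since none of the generators lives in that slope. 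This is where the Atiyah equivalence has to be put to genuine geometric work (generic Quot fibers over decomposable semistable bundles contribute the Weyl-group cover), and it is precisely the part that cannot be read off from Proposition~\ref{P:xzs}, where the rank-one hypothesis makes the HN filtration at most two-step. Your proposal is therefore a reasonable reconstruction of the intended strategy, but as written it remains a plan: the identification of the restricted Lusztig sheaves over the open strata $\underline{\mathcal{V}}^0_{\underline{\a}}$ with the desired induced local systems is asserted, not established, and that is where the substance of the proof in \cite{Scano} must lie.
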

\endexample

\vspace{.2in}

\addtocounter{theo}{1}
\noindent \textbf{Remark \thetheo .} The examples~5.7. and 5.9. may be generalized to the case of weighted projective lines $\mathbb{X}_{p,\lambda}$ of genus $g<1$ and $g=1$ respectively (see \cite{SInvent} ). Namely, when $g<1$ (i.e. when $\mathbb{X}_{p,\lambda}$ is of finite type) all perverse sheaves on $\underline{Bun}_{\mathbb{X}_{p,\lambda}}$ do belong to $\mathcal{P}_{\mathbb{X}_{p,\lambda}}$, and if $g=1$ (i.e. when $\mathbb{X}_{p,\lambda}$ is of tame, or tubular type) then the objects of $\mathcal{P}_{\mathbb{X}_{p,\lambda}}$ are essentially parametrized by pairs consisting of an HN-type $\underline{\a}$ and a tuple of irreducible representations of some symmetric groups as in Theorem~\ref{T:Pelliptic}.

\vspace{.2in}

\addtocounter{theo}{1}
\noindent \textbf{Remark \thetheo .} In all the examples we have been able to compute, the elements of $\mathcal{P}_X$ are self-dual perverse sheaves. 

\vspace{.3in}

\centerline{\textbf{5.4. Higgs bundles and the global nilpotent cone.}}
\addcontentsline{toc}{subsection}{\tocsubsection {}{}{\; 5.4. Higgs bundles and the global nilpotent cone.}}

\vspace{.15in}

Motivated by the analogy between curves and quivers and the content of Lecture~4, it is natural to expect the existence of a ``Lagrangian'' version of the Hall category $\mathcal{Q}_X$, in which the role of the simple perverse sheaves from $\mathcal{P}_X$ is played by certain Lagrangian subvarieties
in the cotangent stack $T^*\underline{Coh}_X$. It turns out that this cotangent stack is a well-known and well-studied object in geometry, and that there are natural candidates for the Lagrangian subvarieties.
In this Section we assume that we are working over the field of complex numbers $\mathbb{C}$.

\vspace{.2in}

Let $\Omega_X$ denote the canonical bundle of $X$. A \textit{Higgs sheaf} over $X$ consists of a pair $(\mathcal{F}, \varphi)$ where $\mathcal{F} \in Coh(X)$ and where $\varphi$ --the so-called \textit{Higgs field}-- is an element of the vector space $Hom(\mathcal{F}, \mathcal{F} \otimes \Omega_X)$. The notion of (iso)morphism between Higgs sheaves is the obvious one. For $\a \in K_0(X)$ one defines as in Section~5.1. a stack $\underline{Higgs}^{\a}_X$ classifying Higgs sheaves over $X$ of class $\a$.

\vspace{.1in}

The stack $\underline{Higgs}^{\a}_X$ can be identified  with the cotangent stack\footnote{here and after, we make the same abuse of notions as in Lecture~4~: we only consider the underived cotangent stack.} of $\underline{Coh}^{\a}_X$. Let us try to justify this briefly. We have $T^*\underline{Coh}^{\a}_X = \bigcup\;  T^*\underline{Coh}^{\a}_{X,\mathcal{L}}= \underset{\longrightarrow}{\text{Lim}}\; T^*\underline{Coh}^{\a}_{X,\mathcal{L}}$ where $\mathcal{L}$ runs among all line bundles over $X$. Since $\underline{Coh}^{\a}_{X,\mathcal{L}}=Q^{\a}_{\mathcal{L}}/G_{\a,\mathcal{L}}$ is a global quotient, its cotangent stack is obtained by symplectic reduction. The tangent space to $Q^{\a}_{\mathcal{L}}$ at a point $\phi~:\mathcal{L}^{\oplus \langle \mathcal{L},\a\rangle} \tto \mathcal{F}$ is canonically identified with $Hom(Ker\;\phi, \mathcal{F})$. Hence, $T^*Q^{\a}_{\mathcal{L}}$ represents the functor classifying pairs $(\phi~:\mathcal{L}^{\oplus \langle \mathcal{L},\a\rangle} \tto \mathcal{F}, \theta)$ where $\theta \in Hom(Ker\;\phi,\mathcal{F})^*$.
There is a long exact sequence
\begin{align*}
0 &\longrightarrow Hom(\mathcal{F},\mathcal{F}) \longrightarrow Hom(\mathcal{L}^{\oplus \langle \mathcal{L},\a\rangle}, \mathcal{F}) \longrightarrow Hom(Ker\;\phi, \mathcal{F}) \longrightarrow \\
 &\longrightarrow Ext(\mathcal{F}, \mathcal{F}) \longrightarrow Ext(\mathcal{L}^{\oplus \langle \mathcal{L},\a\rangle}, \mathcal{F})=0.
 \end{align*}
Dualizing, we get in particular
\begin{equation}\label{E:momentmydear}
0 \longrightarrow Ext(\mathcal{F},\mathcal{F})^* \longrightarrow  Hom(Ker\;\phi, \mathcal{F})^* \longrightarrow Hom(\mathcal{L}^{\oplus \langle \mathcal{L},\a\rangle}, \mathcal{F})^* =\mathfrak{g}_{\a,\mathcal{L}}^*.
\end{equation}
It can be shown that the moment map $\mu~: T^*_{\phi}Q^{\a}_{\mathcal{L}} \to \mathfrak{g}_{\a,\mathcal{L}}^*$ at the point $\phi$ is given by the connecting map in 
(\ref{E:momentmydear}). Therefore its kernel is $Ext(\mathcal{F},\mathcal{F})^*$, which is itself canonically identified with $Hom(\mathcal{F},\mathcal{F}\otimes \Omega_X)$ via Serre duality. We have shown that
$T^* \underline{Coh}^{\a}_{X,\mathcal{L}}=\mu^{-1}(0)/G_{\a,\mathcal{L}}=\big\{(\phi~:\mathcal{L}^{\oplus \langle \mathcal{L},\a\rangle} \tto \mathcal{F}, \varphi)\;|\; \varphi \in Hom(\mathcal{F},\mathcal{F}\otimes \Omega_X)\big\}/G_{\a,\mathcal{L}}$. The limit (or union) of all these quotient stacks as $\mathcal{L}$ varies is equal to $\underline{Higgs}^{\a}_X$.

\vspace{.2in}

Following Hitchin (see \cite{Hitchin}), consider for each $\a$ the substack $\underline{\Lambda}^{\a}_X \subset \underline{Higgs}^{\a}_X$ classifying Higgs sheaves $(\mathcal{F},\varphi)$ of class $\a$ for which $\varphi$ is \textit{nilpotent} (i.e. for which a sufficiently high composition $\varphi ^l~: \mathcal{F} \to \mathcal{F}\otimes \Omega_X^{\otimes l}$ vanishes). The union of all these substacks as $\a$ varies is called the \textit{global nilpotent cone} of $X$.

\vspace{.1in}

\begin{theo}[Ginzburg, \cite{Ginzburg}] For any $\a$, the substack $\underline{\Lambda}^{\a}_X$ is Lagrangian.
\end{theo}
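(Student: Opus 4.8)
The statement is the curve analogue of Lusztig's Theorem~\ref{T:LUlambda}, so the natural strategy is to mimic the argument used there, namely to produce $\underline{\Lambda}^{\a}_X$ as lying inside the image of a conormal-type variety attached to a suitable correspondence, which will give isotropy, and then to bound the dimension of each irreducible component from below by a Hecke-type induction. Throughout I will work with the charts $\underline{Higgs}^{\a}_{X,\mathcal{L}}=\mu^{-1}(0)/G_{\a,\mathcal{L}}$ established above, so that it suffices to prove the corresponding statement for the $G_{\a,\mathcal{L}}$-invariant subvariety $\Lambda^{\a}_{\mathcal{L}}\subset \mu^{-1}(0)\subset T^*Q^{\a}_{\mathcal{L}}$ cut out by the nilpotency condition on the Higgs field; one then checks these local descriptions glue as $\mathcal{L}$ varies (this is harmless since $T^*\underline{Coh}^{\a}_X$ and the nilpotency condition are intrinsic).

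\textbf{Isotropy.} First I would show $\Lambda^{\a}_{\mathcal{L}}$ is isotropic in $T^*Q^{\a}_{\mathcal{L}}$ by the same device as in the proof of Theorem~\ref{T:LUlambda}: apply the general fact that for $Z\subset X\times Y$ a smooth subvariety with $Y$ projective, the image of $T^*_Z(X\times Y)\cap(T^*X\times Y)\to T^*X$ is isotropic (\cite[Section~8.4]{KasShap}). Here I would take $X=Q^{\a}_{\mathcal{L}}$ and $Y$ the relative flag scheme over $Q^{\a}_{\mathcal{L}}$ parametrizing, over a quotient $\phi\colon\mathcal{L}^{\oplus N}\tto\mathcal{F}$, a full flag $\mathcal{F}=\mathcal{F}_1\supset\mathcal{F}_2\supset\cdots\supset 0$ of subsheaves refining the class, with $Z$ the locus where the flag is $\varphi$-stable in the strict sense $\varphi(\mathcal{F}_k)\subset\mathcal{F}_{k+1}\otimes\Omega_X$. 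One then identifies $T^*X\simeq\mu^{-1}(0)$-ambient-space $\overline{Higgs}$, computes $T^*_Z(X\times Y)\cap(T^*X\times Y)$ as the locus of Higgs sheaves together with a $\varphi$-cofiltration, and observes using the nilpotency condition that every nilpotent Higgs sheaf admits such a strict cofiltration, hence $\Lambda^{\a}_{\mathcal{L}}$ lies in the isotropic projection. The main care needed here is that $Y$ is only a relative flag scheme over $Q^{\a}_{\mathcal{L}}$, not projective absolutely, but its fibres are projective and the cited local statement is about cotangent fibres over points of $X$, so this is fine.

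\textbf{Dimension count via Hecke correspondences.} The half-dimensionality is the part requiring real work, and I expect it to be the main obstacle. The target dimension of each component of $\Lambda^{\a}_X$ (as a stack) is $-\langle\a,\a\rangle$, which is half of $\dim T^*\underline{Coh}^{\a}_X$. I would argue by induction on $\a$ using Hecke-type modifications by simple torsion sheaves: for a closed point $x\in X$ and $d\geq 1$ set up the correspondence on nilpotent global cones relating $\underline{\Lambda}^{\a}_X$ and $\underline{\Lambda}^{\a+(0,d)}_X$ (modifications at $x$), stratify $\underline{\Lambda}^{\a}_X$ by the ``defect'' of how far $\varphi$ degenerates at each point, and transport dimension estimates through the smooth/proper legs of the correspondence exactly as in Lemma~\ref{L:lambdadim} and Corollary~\ref{C:dimlambda}. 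The base case is $\a$ of rank zero supported at distinct points, or $\a$ a line bundle class with $\varphi$ scalar-valued, where $\underline{\Lambda}^{\a}_X$ is easily described. The genuine difficulty, absent in the quiver case, is that for bundle classes there is no single ``minimal'' stratum playing the role of $\{l\epsilon_i\}$, and one must instead induct using the Harder–Narasimhan stratification together with the equivalences $Coh^{(\nu)}(X)\simeq Coh^{(\infty)}(X)$ of Atiyah's theorem; one checks that on each HN stratum $\underline{\mathcal{V}}_{\underline{\a}}$ the nilpotent cone fibres over a product of nilpotent cones for torsion sheaves, for which the statement is classical (the Springer picture of Section~2.4), and that the affine fibration $\pi_{\underline{\a}}$ contributes exactly the right dimension. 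Assembling these local estimates and invoking the already-proved isotropy to rule out components of excess dimension yields that every component has dimension exactly $-\langle\a,\a\rangle$, completing the proof.
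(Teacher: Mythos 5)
Your plan correctly separates the two tasks (isotropy, then a half-dimension estimate) and correctly identifies the second as the obstacle, but the dimension count you propose does not close for a general curve. Your inductive step uses Hecke modifications by torsion sheaves of class $(0,d)$, which change the degree but not the rank of $\a$. For this to terminate you would need, given any nilpotent Higgs pair $(\mathcal{F},\varphi)$ with $\mathcal{F}$ not torsion, some way to strip off a torsion piece. In the quiver case this works because every nonzero nilpotent representation has a nonzero kernel localized at a vertex, which is exactly the content of $\Lambda^\a=\bigcup_i\Lambda^{\a}_{i;\geq 1}$ (equation (\ref{E:fwqa})), and the same is true for torsion sheaves on a curve. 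But when $\mathcal{F}$ is a vector bundle the kernel of $\varphi$ is a subsheaf of a locally free sheaf, hence torsion-free; after saturating it is a lower-rank sub\emph{bundle}, not a skyscraper, so the ``peel off a simple torsion sheaf'' step is never available and the rank never drops. Your fallback of pushing each Harder--Narasimhan stratum down to the torsion case via the equivalences $Coh^{(\nu)}(X)\simeq Coh^{(\infty)}(X)$ invokes Atiyah's theorem, which is a phenomenon special to genus one; it simply does not exist for $g\geq 2$, where the semistable locus of a fixed slope is nothing like a category of torsion sheaves. As written your argument only covers $\mathbb{P}^1$ (already due to Laumon) and elliptic curves.

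The paper does not reproduce Ginzburg's proof; it cites \cite{Ginzburg}, whose mechanism for the dimension bound is non-inductive and has no quiver analogue. Namely, one uses the Hitchin map $h\colon\underline{Higgs}^{\a}_X\to B$ to the affine Hitchin base, observes that the global nilpotent cone is exactly $h^{-1}(0)$, and extracts equidimensionality from properties of this single map (essentially a flatness/complete-intersection argument), coupled with an isotropy argument broadly in the spirit of your first step. There is simply no Hitchin-type fibration available on $T^*\underline{\mathcal{M}}_{\vec{Q}}$, which is why the quiver proof has to be recursive, and conversely why the recursive strategy does not transport to curves. If you insisted on a Kashiwara--Saito-style induction for curves, the honest replacement for the $\Lambda^{\a}_{i;l}$ stratification is the one by the kernel filtration $0\subset\ker\varphi\subset\ker\varphi^2\subset\cdots$ of the Higgs field, which recurses on ranks rather than torsion supports; making that close needs a separate dimension estimate on moduli of filtered Higgs data, considerably more work than the quick appeal to Lemma~\ref{L:lambdadim} and Corollary~\ref{C:dimlambda} that you suggest, and in any case is not what \cite{Ginzburg} does. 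Your isotropy step also needs care: the result quoted from \cite{KasShap} requires $Y$ to be projective absolutely, whereas your $Y$ is only a relative flag scheme over $Q^{\a}_{\mathcal{L}}$, so you should use the properness of the projection $Z\to X$ rather than projectivity of $Y$ as a variety.
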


\vspace{.2in}

\addtocounter{theo}{1}
\noindent \textbf{Example \thetheo .} Assume that $X=\mathbb{P}^1$. Then $\Omega_X = \mathcal{O}(-2)$. If $\mathcal{F}$ is a vector bundle then any $\varphi$ is automatically nilpotent. This is again reminiscent of the behavior of a finite type quiver (compare with Proposition~\ref{P:finiteLag}). In particular the whole conormal bundle $T^*_{\underline{\mathcal{V}}_{\underline{n}}}\underline{Coh}_{X}$ belongs to $\underline{\Lambda}_X$ for any $\underline{n}=(n_1 \leq \cdots \leq n_r)$, and in fact
$$\underline{\Lambda}_X \cap T^*\underline{Bun}_X=\bigcup_{\underline{n}} T^*_{\underline{\mathcal{V}}_{\underline{n}}}\underline{Coh}_{X}$$
is the (locally finite) union of conormal bundles to the various stratas $\underline{\mathcal{V}}_{\underline{n}}$. This describes the irreducible components of $\underline{\Lambda}_X$ which intersect $T^*\underline{Bun}_X$.

More generally, let $\underline{n}=(n_1 \leq \cdots \leq n_r)$ and let $\lambda=(\lambda_1 \geq \cdots \lambda_s)$ be any partition. Let $\underline{Z}_{\underline{n},\lambda}$ be the substack of $\underline{Higgs}_X$ classifying Higgs sheaves $(\mathcal{F},\varphi)$ where 
$$\mathcal{F}\simeq \mathcal{O}(n_1) \oplus \cdots \oplus \mathcal{O}(n_r) \oplus \mathcal{O}_{x_1}^{(\lambda_1)} \oplus \cdots \oplus \mathcal{O}_{x_s}^{(\lambda_s)}$$
with $x_1, \ldots, x_s$ distinct and $\varphi \in Hom(\mathcal{F}, \mathcal{F}(-2))$ nilpotent. 

\vspace{.1in}

\begin{prop}[Pouchin, \cite{Pouchin}] Assume $X=\mathbb{P}^1$. Then the irreducible components of $\underline{\Lambda}_X$ are given by
the closures $\overline{\underline{Z}_{\underline{n},\lambda}}$ 
as $\underline{n}$ varies among all vector bundle types and $\lambda$ varies among all partitions.
\end{prop}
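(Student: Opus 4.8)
The statement is the exact analogue, in the coherent-sheaf setting, of Lusztig's description of $Irr\,\underline{\Lambda}_{\vec{Q}}$ for finite-type quivers (cf. Example~4.13 and the proof of Theorem~4.8), with $\mathbb{P}^1$ playing the role of the finite-type quiver because $\Omega_X=\mathcal{O}(-2)$ has negative degree. By Ginzburg's theorem (Theorem~5.11) $\underline{\Lambda}^{\a}_X$ is Lagrangian, hence equidimensional of dimension $\dim\underline{Coh}^{\a}_X=-\langle\a,\a\rangle$; so it suffices to exhibit a locally finite stratification of $\underline{Coh}^{\a}_X$ into smooth irreducible locally closed substacks, to show that over each stratum the part of $\underline{\Lambda}^{\a}_X$ lying above it is either of the expected (Lagrangian) dimension or of strictly smaller dimension, to identify the Lagrangian ones as the $\underline{Z}_{\underline{n},\lambda}$, and finally to check that the closures $\overline{\underline{Z}_{\underline{n},\lambda}}$ are pairwise distinct and none is contained in another.

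\textbf{Step 1: the stratification.} On $\mathbb{P}^1$ every coherent sheaf of class $\a=(r,d)$ is $\mathcal{E}\oplus\mathcal{T}$ with $\mathcal{E}$ locally free of type $\underline{n}=(n_1\le\cdots\le n_r)$ and $\mathcal{T}$ torsion, and $\mathcal{T}$ is in turn determined, up to isomorphism, by a finite collection of pairs (point of $X$, partition). Stratify $\underline{Coh}^{\a}_X$ accordingly; the resulting strata are smooth, irreducible, locally closed, and one writes down their (stacky) dimension explicitly using the automorphism groups $\mathrm{Aut}(\mathcal{O}_x^{(\mu)})$ and the moduli of the supporting points. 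Over a point $(\mathcal{F})=(\mathcal{E}\oplus\mathcal{T})$ of such a stratum, the fibre of $\underline{\Lambda}^{\a}_X\to\underline{Coh}^{\a}_X$ consists of the nilpotent elements of $\Hom(\mathcal{F},\mathcal{F}\otimes\Omega_X)$; since $\Hom(\mathcal{T},\mathcal{E}\otimes\Omega_X)=0$, a Higgs field is block-triangular $\bigl(\begin{smallmatrix}\varphi_\mathcal{E}&\psi\\0&\varphi_\mathcal{T}\end{smallmatrix}\bigr)$ with $\psi\in\Hom(\mathcal{E},\mathcal{T}\otimes\Omega_X)$ free, $\varphi_\mathcal{E}$ automatically nilpotent (because $\Omega_X$ is negative, exactly as in Example~5.12), and the only genuine constraint being nilpotence of $\varphi_\mathcal{T}$.

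\textbf{Step 2: the dimension count.} Combining the above, the dimension of $\underline{\Lambda}^{\a}_X$ above a stratum is: (dimension of the stratum) $+\dim\Hom(\mathcal{E},\mathcal{E}\otimes\Omega_X)+\dim\Hom(\mathcal{E},\mathcal{T}\otimes\Omega_X)+\dim(\text{nilpotent cone in }\Hom_{\mathcal{O}_X}(\mathcal{T},\mathcal{T}\otimes\Omega_X))$. The locally free contributions reorganize, via Riemann--Roch, into $-\langle(r,d-|\lambda|),(r,d-|\lambda|)\rangle$ plus the conormal bookkeeping of the Shatz stratum $\underline{\mathcal{V}}_{\underline{n}}$, so that the bundle part of $\underline{\Lambda}_X$ restricted to $\underline{Bun}_X$ is exactly $\bigcup_{\underline{n}}T^*_{\underline{\mathcal{V}}_{\underline{n}}}\underline{Coh}_X$, giving the components with $\lambda=\varnothing$. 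The torsion part reduces to a purely local question at each supporting point: for a finite-length $\widehat{\mathcal{O}}_{X,x}$-module $M_\mu$, compare $\dim\mathrm{Aut}(M_\mu)$ with the dimension of the variety of nilpotent $\widehat{\mathcal{O}}_{X,x}$-linear endomorphisms of $M_\mu$ — this is precisely the local model governing the nilpotent commuting variety, and the point is that the single-Jordan-block module (together with the mobility of the point) is the only configuration achieving the Lagrangian balance, all others dropping in dimension. Assembling the local contributions over distinct moving points yields exactly the strata whose closure is $\underline{Z}_{\underline{n},\lambda}$, where $\lambda=(\lambda_1\ge\cdots\ge\lambda_s)$ records the block sizes; every other stratum contributes a piece of strictly smaller dimension. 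Alternatively one may run this as an induction on $\a$ in the style of the proof of Theorem~4.8, using a Hecke-type correspondence $\underline{\Lambda}^{\a-\beta}_X\leftarrow\cdot\rightarrow\underline{\Lambda}^{\a}_X$ for $\beta$ of rank zero to set up bijections between the relevant sets of components and peel off one length of torsion at a time. One finishes by observing that the underlying sheaf of a generic point of $\underline{Z}_{\underline{n},\lambda}$ (vector bundle of type $\underline{n}$ plus Jordan blocks of sizes $\lambda_i$ at $s$ distinct points) has an isomorphism type that is not specialized by any other $\underline{Z}_{\underline{n}',\lambda'}$ of class $\a$, so the closures are distinct and irredundant.

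\textbf{Main obstacle.} The delicate part is Step~2: the local dimension estimate for nilpotent endomorphisms of $\widehat{\mathcal{O}}_{X,x}$-modules (equivalently, controlling the nilpotent commuting variety), and the bookkeeping of the cross term $\psi\in\Hom(\mathcal{E},\mathcal{T}\otimes\Omega_X)$ against the codimension of the strata, so that the Lagrangian balance $-\langle\a,\a\rangle$ is attained exactly on the $\underline{Z}_{\underline{n},\lambda}$ and nowhere else. One must also handle degenerations carefully — the collision of supporting points shows a single $\underline{Z}_{\underline{n},\lambda}$ can, in its closure, meet several smaller strata, so some care is needed to be sure no spurious extra component appears. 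Everything else (the stratification, the genus-$0$ automatic nilpotency on bundles, the identification of the bundle-part components with conormals to Shatz strata) is routine given Ginzburg's theorem and the structure results of Sections~5.1 and~5.4.
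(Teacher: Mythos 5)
The paper does not actually give a proof of this proposition: it is attributed to Pouchin's thesis/preprint \cite{Pouchin}, which the bibliography lists as ``work in preparation,'' and no argument appears in the text. So there is no paper proof to compare against; I can only assess your sketch on its own terms.

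Your approach is sound and the key local estimate checks out. Once you know (Ginzburg, Theorem~5.11) that $\underline{\Lambda}_X$ is Lagrangian, hence equidimensional, it does suffice to stratify $\underline{Coh}^\alpha_X$ by isomorphism type of $\mathcal{F}=\mathcal{E}\oplus\mathcal{T}$ and compare dimensions. The block-triangular shape of $\varphi$, the automatic nilpotence of $\varphi_\mathcal{E}$ from $H^0(\mathbb{P}^1,\mathcal{O}(m))=0$ for $m<0$, and the reduction of the nilpotence condition to $\varphi_\mathcal{T}$ are all correct. The crux, which you correctly flag, is the torsion contribution at a single point: for a finite-length $k[[t]]$-module $M_\mu$ of type $\mu$ with $\ell(\mu)$ parts, the variety of nilpotent $k[[t]]$-endomorphisms has dimension $\dim\mathrm{End}(M_\mu)-\ell(\mu)$, so the local balance (orbit dimension $-\dim\mathrm{Aut}(M_\mu)$, plus $+1$ for the moving point, plus the nilpotent fibre) is $1-\ell(\mu)\le 0$, with equality exactly when $\mu$ is a single Jordan block. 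Summing over support points and adding the locally free cross-term $\Hom(\mathcal{E},\mathcal{T}\otimes\Omega_X)$ and the conormal bookkeeping for the Shatz stratum then gives the expected Lagrangian dimension precisely on the $\underline{Z}_{\underline{n},\lambda}$. Since each $\underline{Z}_{\underline{n},\lambda}$ is irreducible (configuration space of distinct points times affine fibres), and its closure already has the top dimension, distinct $(\underline{n},\lambda)$ give distinct components; and purity of $\underline{\Lambda}_X$ forces every other piece to lie in some such closure. Your outline is therefore essentially a complete proof strategy; the remaining work is the explicit Riemann--Roch bookkeeping which you rightly identify as the tedious step but which involves no further idea.
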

\endexample

\vspace{.2in}

Ginzburg proved in \cite{Ginzburg} that for any $\mathbb{P} \in \mathcal{P}_X$ the singular support
$SS(\mathbb{P})$ is contained in $\underline{\Lambda}_X$. In view of the analogy with quivers and Theorem~\ref{T:KSSSup}, it is natural to make the following

\begin{conj} There exists a partial order $\prec$ on $\mathcal{P}_X$ and a bijection $\mathcal{P}_X \simeq Irr\;\underline{\Lambda}_X, \mathbb{P} \leftrightarrow \Lambda_{\mathbb{P}}$ such that
$$\Lambda_{\mathbb{P}} \subset SS(\mathbb{P}) \subset \Lambda_{\mathbb{P}} \cup \bigcup_{\substack{\mathbb{Q} \in \mathcal{P}_X\\ \mathbb{Q} \prec \mathbb{P}}} \Lambda_{\mathbb{Q}}.$$
\end{conj}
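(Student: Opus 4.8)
Since this is a conjecture, there is of course no proof yet; but here is the strategy I would pursue, patterned on the quiver case (Theorem~\ref{T:KSSSup}) and the cotangent‑geometry picture of Lecture~4.

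\textbf{Step 1: construct the bijection and the order.} The first task is to produce a natural bijection $\mathcal{P}_X \simeq Irr\;\underline{\Lambda}_X$. Following Section~4.3, I would introduce Hecke‑type correspondences on the global nilpotent cone: for a closed point $x \in X$ (resp. a line bundle $\mathcal{L}$) and $d \geq 1$, the stack of pairs of nilpotent Higgs sheaves $(\mathcal{F},\varphi) \supset (\mathcal{G},\varphi)$ with $\mathcal{F}/\mathcal{G}$ of class $d[\mathcal{O}_x]$ (resp. $[\mathcal{L}]$) yields a diagram $\underline{\Lambda}^{\beta}_X \leftarrow \widetilde{\underline{\Lambda}} \to \underline{\Lambda}^{\beta + d[\mathcal{O}_x]}_X$ entirely analogous to (\ref{E:Heckelambda}), with Ginzburg's theorem that $\underline{\Lambda}^{\a}_X$ is Lagrangian playing the role of Theorem~\ref{T:LUlambda}. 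Stratifying $\underline{\Lambda}_X$ by the corank of the Higgs‑twisted evaluation maps at $x$ should produce canonical bijections between irreducible components of the open stratum (where this corank is $0$) of $\underline{\Lambda}^{\beta}_X$ and the corank‑$d$ stratum of $\underline{\Lambda}^{\beta + d[\mathcal{O}_x]}_X$, exactly as in Corollary~\ref{C:dimlambda}; on the $\mathcal{P}_X$ side, the restriction functor $\underline{\Delta}_{(0,d),\bullet}$ together with the mechanism of Remark~2.13 furnishes the matching reduction of simple perverse sheaves, just as Lemmas~\ref{L:Proofkey} and \ref{L:Proofkey2} do for a sink or a source. One then defines the bijection recursively on $K_0(X)$, using these reductions to descend to the "building block" classes of rank $\leq 1$, where both $\mathcal{P}_X$ and $Irr\;\underline{\Lambda}_X$ are described explicitly (Propositions~\ref{P:zzx} and \ref{P:xzs}). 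The partial order $\prec$ would be defined so that $\mathbb{Q} \prec \mathbb{P}$ whenever they have the same class and $\mathbb{Q}$ branches off later than $\mathbb{P}$ in these Hecke strings — the analogue of $\phi_i(\mathbb{Q}) \geq \phi_i(\mathbb{P})$ for all $i$ in Section~4.

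\textbf{Step 2: the two inclusions.} For the lower bound $\Lambda_{\mathbb{P}} \subset SS(\mathbb{P})$, I would argue by induction on the class, writing $\mathbb{P}$ as a summand of $\mathbb{R} \star \mathbbm{1}_{\a}$ with $\a$ of rank $\leq 1$ and tracking singular supports through the convolution diagram (\ref{E:convolcoh}): $p_1$ is smooth so $SS(p_1^*\,\cdot)$ is controlled exactly, $p_2$ is proper and representable so the Kashiwara–Schapira estimate for $SS(p_{2!}\,\cdot)$ applies, and the Hecke stratification guarantees that the component $\Lambda_{\mathbb{P}}$ appears generically in the image. For the upper bound $SS(\mathbb{P}) \subset \Lambda_{\mathbb{P}} \cup \bigcup_{\mathbb{Q} \prec \mathbb{P}} \Lambda_{\mathbb{Q}}$ the model is the microlocal argument of Kashiwara–Saito behind Theorem~\ref{T:KSSSup}: one analyses how $SS$ propagates along the same correspondences and shows that only the components labelled by $\mathbb{Q}$ with $\phi_x(\mathbb{Q}) \geq \phi_x(\mathbb{P})$ for all $x$ can occur. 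Before attempting the general case I would verify the whole package where everything is known — $X = \mathbb{P}^1$ (Laumon's classification and Pouchin's description of $Irr\;\underline{\Lambda}_{\mathbb{P}^1}$), elliptic curves (Theorem~\ref{T:Pelliptic} plus the analogous component count), and weighted projective lines of genus $\leq 1$ — which also serves as the base of the induction and pins down the correct $\prec$.

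\textbf{Main obstacles.} The decisive difficulty, absent for quivers, is twofold. First, there is no analogue of the Fourier–Deligne transform: the curve carries no "orientation" one can reverse, so one cannot reduce to a sink and the descent in Step~1 must be carried out genuinely globally — presumably by relating the local Hecke correspondence at $x$ to the affine Grassmannian of $GL(r)$ and importing connectedness and smoothness of its fibres from geometric Satake. Second, the skyscraper sheaves $\{\mathcal{O}_x\}_{x \in X}$ form a whole family rather than a finite set of simple objects, so the relevant combinatorics is that of a quantum loop (or toroidal) algebra; controlling $SS(\mathbb{P})$ in these "imaginary" torsion directions — and even describing $Irr\;\underline{\Lambda}^{\a}_X$ at all once $g > 1$ — is presently out of reach. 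A subsidiary annoyance is that, unlike in Theorem~\ref{T:LUweights}, the objects of $\mathcal{P}_X$ are not pure of weight zero (their Frobenius eigenvalues involve those of $X$), so the purity‑based bookkeeping of Lecture~3 must be replaced by weight estimates that survive passage to the de Rham / complex setting in which $SS$ and $\underline{\Lambda}_X$ live.
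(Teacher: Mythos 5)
Since this is stated as a conjecture, the paper offers no proof: it motivates the statement by the analogy with Theorem~\ref{T:KSSSup} (Kashiwara--Saito) and only remarks that it can be checked by direct inspection when $X = \mathbb{P}^1$ or $X$ is elliptic. You correctly recognize this and your Step~1/Step~2 scheme is precisely the elaboration of the quiver‐case strategy that the paper is implicitly inviting, so your proposal is aligned with the paper's own (unstated) intent rather than diverging from it.

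Two points to keep in mind if you actually try to execute this. First, your recursion rests on an analogue of (\ref{E:fwqa}), namely that every irreducible component of $\underline{\Lambda}^{\a}_X$ should, for \emph{some} Hecke direction, generically lie in a stratum of nonzero "corank"; for quivers this falls out of the finitude of $I$ and the nilpotency condition, but for curves the rank‑$0$ Hecke moves (at a varying $x \in X$) only change the degree, while the rank‑lowering moves by a line bundle $\mathcal{L}$ are not simple‑root moves and are precisely where the absence of a Fourier--Deligne transform bites. You flag this obstacle, but it is worth noting that it already threatens the well‑definedness of the bijection in Step~1, not only the inclusions in Step~2. Second, the paper defines $\mathcal{Q}_X$ and $SS$ over $k=\overline{\mathbb{F}_q}$ and over $\mathbb{C}$ in different Lectures; your parenthetical remark about weights surviving the passage to the de~Rham setting is the right worry, and in the quiver case the bridge is Remark~3.27 (the comparison theorem of \cite{BBD}), which should extend here but should be cited explicitly rather than assumed. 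Otherwise your identification of $X=\mathbb{P}^1$, elliptic curves, and weighted projective lines of genus $\leq 1$ as the testing ground, and your description of the "imaginary direction" problem for $g > 1$, match what the paper itself says about the current state of the conjecture.
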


It can be checked (by direct inspection) that this conjecture holds when $X=\mathbb{P}^1$, or when $X$ is an elliptic curve.

\vspace{.1in}

At this point, a natural question arises : is it possible to equip the set $Irr\;\underline{\Lambda}_X$ with an extra structure similar to that of a crystal ? In other words, is it possible to relate the various irreducible components of $\underline{\Lambda}_X$ by well-chosen correspondences, and to obtain in this way a rich and rigid combinatorial structure\footnote{which might even be relevant to some representation theory of the associated Hall algebra ?}? There are some notable differences with the quiver case~: for one thing, the most natural correspondences are now parametrized by the infinite set of line bundles rather then by the finite set of vertices of a quiver, hence we cannot expect a to obtain a crystal in the usual sense. This question has been addressed by Pouchin in the case of $\mathbb{P}^1$ (see \cite{Pouchin}).

\vspace{.3in}

\centerline{\textbf{5.5. Conjectures and relation to the geometric Langlands program.}}
\addcontentsline{toc}{subsection}{\tocsubsection {}{}{\; 5.5. Conjectures and relation to the geometric Langlands program.}}

\vspace{.15in}

In this final Section, we discuss some conjectures and some heuristics. 

\vspace{.1in}

Let's begin with the conjectures. Recall that $X=X_0 \otimes \fqb$ where $X_0$ is a smooth projective curve defined over the finite field $\mathbb{F}_q$. Thus there is a geometric Frobenius automorphism $F$ acting on $X$, and this action extends to the stacks $\underline{Coh}_X, \underline{Bun}_X,\ldots$. For $\a \in K_0(X)$ the constant sheaves $\mathbbm{1}_{\a}$ have an obvious Weil structure. Hence any Lusztig sheaf $L_{\a_1, \ldots, \a_r}$ also has a Weil structure, and by Deligne's theorem $L_{\a_1, \ldots, \a_r}$ is pure of weight zero for this Weil structure (since it is obtained as a proper pushforward of a pure complex).

\vspace{.1in}

\begin{conj}\label{C:one} Any $\mathbb{P} \in \mathcal{P}_X$ has a (canonical) Weil structure for which it is pure of weight zero.
\end{conj}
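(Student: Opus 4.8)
The plan is to transpose the proof of Proposition~\ref{P:Pure} from the quiver setting to that of curves. First I would note that each Lusztig sheaf $L_{\a_1,\dots,\a_r}=\mathbbm{1}_{\a_1}\star\cdots\star\mathbbm{1}_{\a_r}$ carries a tautological Weil structure, inherited from the constant sheaf $\qlb$ on the iterated incidence stack, and that it is pure of weight zero by Deligne's theorem: in the convolution diagram~(\ref{E:convolcoh}) the map $p_2$ is proper and representable, so $L_{\a_1,\dots,\a_r}$ is a proper pushforward of a pure-of-weight-zero complex, up to the smooth pullback $p_1^*[\dim p_1](\dim p_1/2)$. It follows, exactly as in Section~3.7, that the functor $\underline m$ sends a pair of pure-of-weight-zero Weil complexes to a pure-of-weight-zero Weil complex. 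Equivalently, since $L_{\underline\a}$ is already pure and semisimple, its simple summands are all pure once Frobenius fixes their isomorphism classes individually (rather than merely permuting them); so the real content of the conjecture is this Frobenius-stability, summand by summand, together with a normalisation making the chosen Weil structure canonical.

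Next one propagates the existence of a canonical pure-of-weight-zero Weil structure from the $L_{\underline\a}$ to their simple summands by induction on the class $\a=(r,d)$, ordered lexicographically (or by dimension of support). The formal engine is that the isotypic components of a semisimple pure Weil perverse sheaf are canonically defined Weil subobjects (see \cite{BBD}); hence as soon as a given $\mathbb{P}\in\mathcal{P}^{\a}$ is realised as an isotypic summand of a product $\mathbb{R}\star\mathbbm{1}_{\a'}$ whose two factors are already known to be pure of weight zero as Weil complexes, $\mathbb{P}$ inherits a canonical pure-of-weight-zero Weil structure. The base of the induction is supplied by Section~5.3: for $\a=(0,l)$, Proposition~\ref{P:zzx} identifies $\mathcal{P}^{(0,l)}$ with $\{IC(\underline U_X^{(0,l)},\mathcal{L}_\chi)\}$, and the $\mathcal{L}_\chi$ are pulled back from the $\mathfrak{S}_l$-Galois cover of $\underline U_X^{(0,l)}$, which is defined over $\mathbb{F}_q$; being of finite monodromy they are pointwise pure of weight zero and carry an obvious canonical Weil structure. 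The rank-one case is then covered by Proposition~\ref{P:xzs}, whose proof already realises every object of $\mathcal{P}^{(1,d)}$ as an isotypic summand of an induction product built from rank-zero data.

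The inductive step for $r\ge 2$ is where the main obstacle lies. In the quiver case this step rested on Lemma~\ref{L:Proofkey}, i.e.\ on the Fourier--Deligne transform, which lets one move a sink to a prescribed vertex and so write $\mathbbm{1}_{d\epsilon_i}\star\mathbb{R}\simeq\mathbb{P}\oplus\mathbb{T}$ with $\mathbb{T}$ supported on a strictly smaller closed locus. There is no Fourier--Deligne transform on a curve, so one needs a substitute: stratify $\underline{Coh}^{\a}_X$ (by torsion degree, or more intrinsically by Harder--Narasimhan type, as in Example~5.9) so that every $\mathbb{P}\in\mathcal{P}^{\a}$ is generically supported on a single stratum $\underline{\mathcal V}$, then restrict the correspondence~(\ref{E:convolcoh}) to that stratum and use the affine fibrations $\pi_{\underline\a}$ together with the slope equivalences $Coh^{(\nu)}(X)\simeq Coh^{(\infty)}(X)$ — precisely the mechanism of Remark~2.13 and of the proof of Proposition~\ref{P:xzs} — to exhibit $\mathbb{P}$ as the leading isotypic summand of $\mathbb{R}\star\mathbbm{1}_{\a'}$ with $\a'$ of rank $\le 1$ and $\mathbb{R}$ of strictly smaller class, modulo summands supported on $\overline{\underline{\mathcal V}}\setminus\underline{\mathcal V}$. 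Because the Frobenius eigenvalues of objects of $\mathcal{P}_X$ genuinely involve the Weil numbers of $X$ (in contrast with Theorem~\ref{T:LUweights}), one can only hope to propagate purity of weight zero, not triviality of Frobenius — but that is exactly what the statement asks. The hard part is that this reduction lemma demands good control of the HN-stratification of $\underline{Coh}^{\a}_X$ and of the local geometry of its strata, which is currently available only in low genus (the same reason the structural results of Section~5.3 are proved there and conjectural in general); and one must finally verify that the Weil structure produced on $\mathbb{P}$ is independent of the auxiliary choices ($\underline\a$, the stratification), which follows from the canonicity of isotypic decompositions of pure Weil perverse sheaves but should be spelled out carefully.
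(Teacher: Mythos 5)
The statement you are proving is a \emph{conjecture} in the paper; the author offers no proof, and in fact in the sentence immediately after the conjecture he names exactly the obstruction you name: ``the Frobenius automorphism $F$ could in principle permute the simple factors of any given Lusztig sheaf $L_{\a_1, \ldots, \a_r}$.'' So there is no paper proof to compare against, and your proposal is best read as a diagnosis of what would be needed rather than a proof.

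That diagnosis is sound as far as it goes: the Lusztig sheaves are pure of weight zero by Deligne since $p_2$ is proper, the real content is $F$-stability of the individual simple summands, the base cases $\alpha=(0,l)$ and $\alpha=(1,d)$ are handled by the explicit descriptions of Propositions~\ref{P:zzx} and~\ref{P:xzs}, and the model for the inductive step is Proposition~\ref{P:Pure} together with its key reduction Lemma~\ref{L:Proofkey}, which in the quiver case depends on the Fourier--Deligne transform. But the genuine gap is precisely the inductive step you flag: you would need a curve analogue of Lemma~\ref{L:Proofkey} asserting that any $\mathbb{P}\in\mathcal{P}^{\a}$ with $rank(\a)\geq 2$ is the unique ``leading'' isotypic summand of $\mathbb{R}\star\mathbbm{1}_{\a'}$ with $\mathbb{R}$ of strictly smaller class and the remainder supported on a strictly smaller closed substack. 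Your proposed substitute for Fourier--Deligne --- Harder--Narasimhan strata, affine fibrations $\pi_{\underline{\a}}$, slope equivalences, run through the setup of Remark~2.13 --- requires knowing that every object of $\mathcal{P}_X$ is generically supported on a single clean stratum and that $\underline{\Delta}$ respects this stratification; that structural control over $\underline{Coh}^{\a}_X$ is exactly what is established only in low genus in Section~5.3 and is otherwise open. You say so honestly, so the proposal does not overclaim; but as written it is a correct account of why the conjecture is hard, not a proof, and that matches the status of the statement in the paper.
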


This does not automatically follow from the fact that Lusztig sheaves have this property~: the Frobenius automorphism $F$ could in principle permute the simple factors of any given Lusztig sheaf $L_{\a_1, \ldots, \a_r}$.

\vspace{.1in}

Granted Conjecture~\ref{C:one} holds, we can construct just as in Section~3.6. a $\C$-algebra and coalgebra $\mathfrak{U}_{X}$ as 
$$\mathfrak{U}_{X}=\bigoplus_{\gamma \in K_0(X)} \mathfrak{U}^{\gamma}, \qquad 
\mathfrak{U}^{\gamma}=\bigoplus_{\mathbb{P} \in \mathcal{P}^{\gamma}} \C \bo_{\mathbb{P}}\;\;;$$
with (co)multiplication given by
$$\bo_{\mathbb{P}'} \cdot \bo_{\mathbb{P}''}=\sum_{\mathbb{P}} Tr(M_{\mathbb{P}', \mathbb{P}''}^{\mathbb{P}}) \bo_{\mathbb{P}}$$
$$\Delta(\bo_{\mathbb{P}})=\sum_{\mathbb{P}',\mathbb{P}''} Tr(N^{\mathbb{P}', \mathbb{P}''}_{\mathbb{P}}) \bo_{\mathbb{P}'} \otimes \bo_{\mathbb{P}''}.$$
Here $M_{\mathbb{P}', \mathbb{P}''}^{\mathbb{P}}$ and $N^{\mathbb{P}', \mathbb{P}''}_{\mathbb{P}}$ are the structure complexes defined by
$$M_{\mathbb{P}', \mathbb{P}''}^{\mathbb{P}}= Hom(\mathbb{P} ,\mathbb{P}' \star \mathbb{P}''), \qquad
N^{\mathbb{P}', \mathbb{P}''}_{\mathbb{P}}= Hom(\mathbb{P}' \otimes \mathbb{P}'', \underline{\Delta}(\mathbb{P})).$$

\vspace{.1in}

Note that the product and coproduct are a priori given by infinite sums, hence $\mathfrak{U}_X$ is only a topological (co)algebra. We may also directly introduce a completion $\widehat{\mathfrak{U}}_X$ of $\mathfrak{U}_X$, defined as
$$\widehat{\mathfrak{U}}_{X}=\bigoplus_{\gamma \in K_0(X)} \widehat{\mathfrak{U}}^{\gamma}, \qquad 
\widehat{\mathfrak{U}}^{\gamma}=\prod_{\mathbb{P} \in \mathcal{P}^{\gamma}} \C \bo_{\mathbb{P}}\;\;;$$
(i.e., we authorize infinite linear combinations of homogeneous elements of the same degree). One can check that  $\widehat{\mathfrak{U}}_{X}$ is still an algebra and a (topological) coalgebra.

\vspace{.1in}

Let $\mathbf{H}_X$ be the Hall algebra of $Coh(X_0)$, and let $\mathbf{C}_X$ be its spherical subalgebra. We briefly recall their definition here and refer to Section~3.6, and \cite[Lecture~4]{Trieste} for more. 
Let us write $Coh^{\gamma}(X_0)(\fq)$ for the set of (isomorphism classes of) coherent sheaves over $X_0$ of class $\gamma$, and let $\C
[Coh^{\gamma}(X_0)(\fq)]$ be the set of $\C$-valued functions with finite support. Then $$\mathbf{H}_X =\bigoplus_{\gamma} \C[Coh^{\gamma}(X_0)(\fq)]$$
as a vector space, equipped with a convolution (co)product (defined by (\ref{E:Hall11}), (\ref{E:Hall22})).
Denote by $\mathbf{1}_{\a}$, $\mathbf{1}^{vec}_{\a}$ the constant functions on $Coh^{\a}(X_0)(\fq)$ and $Bun^{\a}(X_0)(\fq)$ respectively. Then $\mathbf{C}_X \subset \mathbf{H}_X$ is by definition the subalgebra generated by $\{\mathbf{1}_{(0,d)}\;|\; d \geq 1\}$ and $\{\mathbf{1}^{vec}_{(1,n)}\;|\; n \in \Z\}$. Dropping the assumption of finite support in the definition of $\mathbf{H}_X$ we obtain a completed algebra $\widehat{\mathbf{H}}_X$; we write $\widehat{\mathbf{C}}_X$ for the closure of $\mathbf{C}_X$ in $\widehat{\mathbf{H}}_X$.

\vspace{.1in}

As in the case of quivers, there is natural $\C$-linear trace map 
\begin{align*}
tr: \widehat{\mathfrak{U}}_X &\to \widehat{\mathbf{H}}_X\\
\mathfrak{U}^{\gamma} \ni \bo_{\mathbb{P}} &\mapsto \underset{\longrightarrow}{\text{Lim}} \;\nu^{dim\; G_{\gamma,\mathcal{L}}} Tr(\mathbb{P}_{|\underline{Coh}^{\gamma}_{X,\mathcal{L}}}).
\end{align*}
This map is a morphism of algebras and coalgebras.

\vspace{.1in}

\begin{conj}\label{C:conj2} The trace map is an isomorphism of $\widehat{\mathfrak{U}}_X$ onto $\widehat{\mathbf{C}}_X$.
\end{conj}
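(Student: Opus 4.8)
The overall strategy parallels the proof of Theorem~\ref{T:Lu2} in the quiver setting, with the extra subtlety that one must first dispose of Conjecture~\ref{C:one}. First I would grant Conjecture~\ref{C:one}, so that $\widehat{\mathfrak{U}}_X$ and the trace map are well defined and $tr$ is a morphism of algebras and coalgebras; this last point is proved exactly as in Theorem~\ref{T:Lu2}, using Grothendieck's trace formula ($Tr \circ q_! = q^0_! \circ Tr$ for the proper representable map $q$ of (\ref{E:convolcoh})), the identities $Tr \circ p^* = (p^0)^* \circ Tr$, $Tr \circ r_\# = \nu^{\dim r}\, r^0_\# \circ Tr$, and the analogous identities for $\kappa_!$ and $\iota^*$. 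The only genuinely new point is that all these identities must be compatible with the inductive limit $\underline{Coh}^{\gamma}_X = \mathrm{Lim}\ \underline{Coh}^{\gamma}_{X,\mathcal{L}}$, so that the normalizing factor $\nu^{\dim G_{\gamma,\mathcal{L}}}$ appears under a limit; this is routine once one checks that the restriction maps in the limit are compatible with the convolution diagrams.

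Next I would establish surjectivity onto $\widehat{\mathbf{C}}_X$. By the definition of $\mathcal{Q}_X$ and the argument used for Proposition~\ref{P:closed}, the algebra $\mathfrak{U}_X$ is generated by the classes $\bo_{\mathbb{P}}$ with $\mathbb{P}$ a simple summand of some $\mathbbm{1}_{(0,d)}$ or $\mathbbm{1}_{(1,n)}$, hence also by $\bo_{\mathbbm{1}_{(0,d)}}$ and $\bo_{\mathbbm{1}_{(1,n)}}$. Their images under $tr$ are, up to invertible scalars, the constant functions $\mathbf{1}_{(0,d)}$ and $\mathbf{1}_{(1,n)}$ on $Coh^{(0,d)}(X_0)(\fq)$ and $Coh^{(1,n)}(X_0)(\fq)$. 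Using the locally finite stratification $\underline{Coh}^{(1,n)}_X = \bigsqcup_l \underline{\mathcal{V}}_{(1,n-l),(0,l)}$ together with the affine fibrations $\pi_l$, one checks that the subalgebra of $\mathbf{H}_X$ generated by $\{\mathbf{1}_{(0,d)}\} \cup \{\mathbf{1}_{(1,n)}\}$ coincides with the one generated by $\{\mathbf{1}_{(0,d)}\} \cup \{\mathbf{1}^{vec}_{(1,n)}\}$, i.e. with $\mathbf{C}_X$. Passing to closures gives $\mathrm{Im}(tr) = \widehat{\mathbf{C}}_X$.

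The heart of the matter is injectivity, and I see two possible routes. The first, more geometric, is to construct a geometric Hopf pairing $\{\,,\,\}$ on $\mathfrak{U}_X$ mimicking Section~1.5 (integration over $\underline{Coh}^{\gamma}_X$ against equivariant cohomology of the global quotient presentations $Q^{\gamma}_{\mathcal{L}}/G_{\gamma,\mathcal{L}}$), prove that $tr$ intertwines it with Green's Hopf pairing on $\widehat{\mathbf{H}}_X$ (the computation of (\ref{E:scalFD}) carries over), and then establish the analog of Proposition~\ref{C:scalarprod}: a ``leading term'' estimate showing that the Gram matrix $(\{\bo_{\mathbb{P}}, \bo_{\mathbb{Q}}\})_{\mathbb{P},\mathbb{Q} \in \mathcal{P}_X}$ is, in a suitable completed sense, triangular with invertible diagonal with respect to Verdier duality on $\mathcal{P}_X$. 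This forces $\{\,,\,\}$ to be nondegenerate on $\mathfrak{U}_X$, whence $\ker(tr) \subseteq \mathrm{rad}\{\,,\,\} = 0$. The second route is to argue directly that $\{tr(\bo_{\mathbb{P}})\}_{\mathbb{P} \in \mathcal{P}_X}$ is linearly independent in $\widehat{\mathbf{H}}_X$, by stratifying each $\gamma$-component according to the support $\overline{\underline{Z}}$ of $\mathbb{P}$ and, within a fixed support, using linear independence of the trace functions of the distinct irreducible local systems $\mathcal{L}_\chi$ — which, via the global Springer construction of Proposition~\ref{P:zzx}, reduces to orthogonality of irreducible characters of symmetric groups evaluated on Frobenius-permutations of unordered tuples of closed points of $X_0$.

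The main obstacle is that, as things stand, both routes run only in low genus. For the first route one needs the analog of Proposition~\ref{P:LU} and Proposition~\ref{C:scalarprod} (self-duality of the elements of $\mathcal{P}_X$, and diagonal dominance of the geometric pairing), and there is currently no substitute for the reduction lemma (Lemma~\ref{L:Proofkey}) that powers the inductive proof of these facts for quivers; neither the stratification of $\underline{Coh}_X$ by torsion degree nor the one by Harder--Narasimhan type furnishes such an induction in genus $\geq 2$, essentially because a sub-line-bundle plays a much less rigid role than a sink. For the second route one needs a classification of $\mathcal{P}_X$ precise enough to know that each $\mathbb{P}$ is $IC$ of a pair $(\underline{Z},\mathcal{L})$ with $\mathcal{L}$ of ``symmetric-group'' type, which is exactly what Examples~5.5--5.10 provide for $X = \mathbb{P}^1$, elliptic curves and weighted projective lines of genus $\leq 1$, and which is open otherwise; equivalently, there is no known presentation of $\mathbf{C}_X$ in higher genus, so one cannot pin down $\dim \mathbf{C}_X^{\gamma}$ and compare it with $\#\,\mathcal{P}^{\gamma}$. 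The realistic scope of this plan is therefore to re-derive the statement in the genus $\leq 1$ cases (recovering the results of \cite{SInvent}, \cite{Scano}) and to isolate precisely the two missing geometric inputs — a reduction lemma on $\underline{Coh}_X$ and control of the singular supports of the $\mathbb{P} \in \mathcal{P}_X$ inside the global nilpotent cone $\underline{\Lambda}_X$ — whose establishment in general would settle the conjecture.
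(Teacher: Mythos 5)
The statement you were given to prove is labelled a \emph{Conjecture} in the paper and is \emph{not} proved there: Section~5.5 explicitly records that Conjecture~\ref{C:conj2} has been verified only in genus zero (\cite{Laumon}), genus one (\cite{Scano}) and for weighted projective lines of genus at most one (\cite{SInvent}), and is open in general. There is therefore no ``paper's own proof'' to compare against, and your proposal correctly treats the statement as a \emph{program} rather than a proof, with an explicit and accurate acknowledgement in the last paragraph that the two routes to injectivity you sketch both run only in low genus. Your diagnosis of what blocks the program in genus $\geq 2$ --- the absence of any curve substitute for the reduction Lemma~\ref{L:Proofkey} and of a sufficiently precise description of $\mathcal{P}_X$ (equivalently, of a presentation of $\mathbf{C}_X$) --- matches what the paper itself points to via its appeal to \cite{SInvent}, \cite{Scano} and via Remark~5.11 and Conjecture~\ref{C:conj4}.

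Two cautionary remarks on the steps you present as unproblematic. First, the assertion that $\mathfrak{U}_X$ (or its completion) is topologically generated by $\{\bo_{\mathbbm{1}_{\a}}\,|\,\mathrm{rk}(\a)\leq 1\}$ is not a formal consequence of the definition of $\mathcal{P}_X$: each $\bo_{\mathbb{P}}$ occurs with a nonzero coefficient in the class of \emph{some} Lusztig sheaf, but extracting $\bo_{\mathbb{P}}$ requires an upper-triangularity argument with respect to a suitable partial order on $\mathcal{P}_X$, and in the quiver case that order and the peeling-off are supplied precisely by Lemma~\ref{L:Proofkey} plus the Fourier-Deligne transform, which is exactly the missing ingredient you then complain about; the paper isolates the analogous generation statement as a separate Conjecture~\ref{C:conj4} for this reason, so it should not be cited as an established input to the surjectivity argument. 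Second, the identification of the subalgebra generated by $\{\mathbf{1}_{(0,d)}\}\cup\{\mathbf{1}_{(1,n)}\}$ with the one generated by $\{\mathbf{1}_{(0,d)}\}\cup\{\mathbf{1}^{vec}_{(1,n)}\}$ requires inverting an infinite unitriangular change of generators, which is legitimate only after completion; this is not an error, but it is worth noting that this is exactly why the conjecture is phrased for $\widehat{\mathfrak{U}}_X$ and $\widehat{\mathbf{C}}_X$ rather than for the uncompleted algebras, and why the trace of $\mathbbm{1}_{(1,n)}$ lands outside $\mathbf{H}_X$.
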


\vspace{.1in}

Note that it is essential that we compare \textit{completed} algebras on both sides~: for instance the constant sheaf $\mathbbm{1}_{\a}$ is sent, by the trace map, to $\nu^{\langle \a, \a \rangle} \mathbf{1}_{\a}$ which belongs to $\mathbf{H}_X$ only when $\a$ is a torsion class. Conjecture~\ref{C:conj2} has now been proved in genus zero (\cite{Laumon}), one (\cite{Scano}), and for weighted projective lines of genus at most one (\cite{SInvent}). This way one obtains ``canonical bases'' for several interesting quantum groups, like quantum affine and toroidal algebras, or the spherical Cherednik algebra of $GL(\infty)$ (see \cite[Lecture 4.]{Trieste} ). A few explicit examples of such canonical basis elements are computed for $X=\mathbb{P}^1$ in \cite[Section~12]{SInvent}.

\vspace{.2in}

The obvious next question is~: is it possible to define a graded Grothendieck group $\mathcal{K}_X$ of $\mathcal{Q}_X$, which would be an ``integral form'' of $\mathfrak{U}_X$, and to which it would specialize ? In view of Conjecture~\ref{C:conj2}, this should yield integral forms of several interesting quantum groups along with some canonical bases for them. Although one could in principle define $\mathcal{K}_X$ as in Section~3.1., this would not give the desired result unless $X=\mathbb{P}^1$ (or is a weighted projective line) since the ensuing algebra would only depend on one parameter whereas the Hall algebra $\mathbf{H}_X$ (and the spherical subalgebra $\mathbf{C}_X$) depend, in addition to the choice of the finite field $\fq$, on the choice of the particular curve $X$. More precisely (see \cite[Lecture~4]{Trieste}), $\mathbf{C}_X$ admits a presentation depending on the Frobenius eigenvalues in $H^1(X,\qlb)$. 

This motivates the following conjecture. Let $\{\xi_1, \ldots, \xi_{2g}\}$ be the set of Frobenius eigenvalues in $H^1(X,\qlb)$, where $g$ is the genus of $X$; we may arrange these in such a way that $\xi_{2k} \xi_{2k-1}=q$ for $k=1, \ldots, g$.
For any $i \in \Z$, set
$$\mathcal{S}_i=\{\xi_1^{n_1} \cdots \xi_{2g}^{n_{2g}}\;|\;\forall\;i,\; n_i \geq 0 \; ; n_1+\cdots + n_{2g}=i\}.$$

\vspace{.1in}

\begin{conj}\label{C:conj3} The following holds~:
\begin{enumerate}
\item[i)] For any $\mathbb{P},\mathbb{P}', \mathbb{P}'' \in \mathcal{P}_X$, all the Frobenius eigenvalues in $H^i(M_{\mathbb{P}', \mathbb{P}''}^{\mathbb{P}})$ and $H^i(N_{\mathbb{P}}^{\mathbb{P}',\mathbb{P}''})$ belong to $\mathcal{S}_i$.
\item[ii)] For any $\gamma \in K_0(X)$, for any $\mathbb{P} \in \mathcal{P}^{\gamma}$, for any $x^0 \in \underline{Coh}^\gamma(X_0)$ and any $i \in \Z$, all the Frobenius eigenvalues in $H^i(\mathbb{P}_{|x^0})$ belong to $\mathcal{S}_i$.
\end{enumerate}
\end{conj}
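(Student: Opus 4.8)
The statement is conjectural, so what follows is a strategy rather than a proof. The plan is to adapt the inductive machinery used by Lusztig in the quiver setting --- the reduction argument behind Theorem~\ref{T:LU} and the purity argument of Proposition~\ref{P:Pure} --- to the category $\mathcal{Q}_X$, the base of the induction being the case of torsion sheaves. Observe first that the set $\mathcal{S}_i$ is tailored precisely to what one expects: the ``$q$-part'' of the eigenvalues (products $\xi_{2k}\xi_{2k-1}=q$) should come from the cohomology of flag varieties and Quot schemes, exactly as in Theorem~\ref{T:LUweights}, while the ``curve part'' (individual eigenvalues $\xi_j$) should enter only through the cohomology of symmetric powers and configuration spaces $S^l(X)\backslash\Delta$ attached to torsion sheaves, whose $H^1$ is built out of $H^1(X,\qlb)$. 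Since the Lusztig sheaves $L_{\a_1,\dots,\a_r}$ are pure of weight zero and carry obvious Weil structures, part (i) for them is immediate from Deligne's theorem; the content of both parts of the conjecture is to propagate this control to the simple summands and their stalks, once Conjecture~\ref{C:one} has supplied the Weil structures.

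The first step would be to establish a curve analogue of the key reduction Lemma~\ref{L:Proofkey}. In the quiver case one passes, via a Fourier--Deligne transform, to an orientation with a sink at a vertex $i$ and peels off the $S_i$-isotypic part; for curves no such transform is available, and this is the main structural obstacle. Instead one would stratify $\underline{Coh}^{\gamma}_X$ by the degree of the maximal torsion subsheaf (and, in higher genus, by full Harder--Narasimhan slope data, cf. the stratifications (\ref{E:strat0}), (\ref{E:strat2}) and Theorem~\ref{T:Pelliptic}); over each stratum the functors $\underline{m}$ and $\underline{\Delta}$ restrict to affine fibrations (the maps $\pi_l$, $\pi_{\underline{\a}}$), so that a simple perverse sheaf $\mathbb{P}$ can be written, up to error terms supported on smaller strata, as a convolution of a constant sheaf $\mathbbm{1}_{\beta}$ with $\beta$ of rank at most one and a simple perverse sheaf of strictly smaller numerical class. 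Granting such a lemma, an induction on $\gamma$ reduces part (i) to two things: (a) the torsion base case, and (b) controlling the Frobenius weights of the cohomology sheaves $H^i$ of the fibers of $p_2$, i.e.\ of $Quot^{\beta}_{\mathcal{F}}$ and of the relevant locally closed pieces inside it.

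For the torsion base case one uses the global Springer construction underlying Proposition~\ref{P:zzx}: $\mathbbm{1}_{(0,1)}^{\star l}$ is a proper pushforward of a constant sheaf along a map whose fibers are iterated flag bundles over $S^l(X)\backslash\Delta$, so its degree-$i$ cohomology --- and hence that of every $IC(\underline{U}^{(0,l)}_X,\mathcal{L}_\chi)$ and of the multiplicity complexes between such sheaves --- has Frobenius eigenvalues in $\mathcal{S}_i$ (Tate classes times classes pulled back from $H^\bullet(X^l)$). Part (ii) for $\gamma$ of rank $\le 1$ should then follow from the explicit descriptions of Propositions~\ref{P:zzx} and \ref{P:xzs}, and for curves of genus $\le 1$ the whole conjecture should follow from the explicit classification of $\mathcal{P}_X$ (Laumon's theorem for $\mathbb{P}^1$, Theorem~\ref{T:Pelliptic} for elliptic curves, and the analogous results for weighted projective lines of genus at most one) together with the already-established Conjecture~\ref{C:conj2} in those cases; verifying these low-genus instances directly is a sensible first test of the general mechanism.

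The hard part will be step (b) in higher genus. The Quot schemes $Quot^{\beta}_{\mathcal{F}}$ with $\mathcal{F}$ of rank $\ge 2$ are badly singular, and one needs a ``cell-like'' or parity-type structure theorem for the pieces of their cohomology that actually contribute to the simple summands --- the curve analogue of Lusztig's Theorem~\ref{T:LUweights}, whose proof in the quiver case relied crucially on the Fourier transform that is missing here. A plausible route is to reformulate the problem in terms of the stratified geometry of the global nilpotent cone $\underline{\Lambda}_X$ (Section~5.4) and a hypothetical weight-compatibility of the singular-support correspondence, but at present I see no way to bypass some genuinely new geometric input into the structure of these Quot schemes; this is presumably the reason the statement is left as a conjecture.
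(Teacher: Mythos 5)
This statement is a conjecture in the paper; the paper offers no proof and only records that the conjecture (together with Conjectures~\ref{C:conj4} and \ref{C:conj5}) has been verified in genus zero by Laumon and in genus one in \cite{Scano}. You recognize this correctly, and what you propose is a strategy rather than a proof, which is the honest posture here.

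Your roadmap is consistent with the quiver template the paper sets up in Lectures 1--3: reduce along a stratification (here Harder--Narasimhan or torsion-degree strata playing the role of the sink stratification $E_\gamma^{i;d}$), peel off a rank-$\le 1$ factor by an analogue of Lemma~\ref{L:Proofkey}, control the multiplicity complexes coming from the fibers of $p_2$, and close the induction on the numerical class. You also correctly isolate the two genuine obstructions: the absence of a Fourier--Deligne transform (which in the quiver case is what lets one always reach a favorable ``sink'' orientation, and whose curve analogue would presumably be some form of Radon or Hecke-functor duality on $\underline{Coh}_X$ that is not available off the shelf), and the intractable geometry of $Quot^\beta_{\mathcal F}$ for $\mathcal F$ of rank $\ge 2$, which is exactly where the curve analogue of Theorem~\ref{T:LUweights} would have to live. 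Two small refinements worth flagging: first, the conjecture presupposes Conjecture~\ref{C:one} (a Weil structure on each $\mathbb P\in\mathcal P_X$), so any induction must carry that along too, and your Lemma~\ref{L:Proofkey} analogue would simultaneously have to produce the Weil structure on the ``smaller'' simple summand $\mathbb R$ --- in the quiver case this is exactly what the proof of Proposition~\ref{P:Pure} does. Second, for the base case in rank zero, the statement that the eigenvalues in degree $i$ lie in $\mathcal S_i$ is slightly finer than just purity: you need that the $i$-th cohomology of the relevant Springer-type pushforward is a sum of one-dimensional pieces indexed by monomials in the $\xi_j$ of total degree $i$, not merely that it has weight $i$; this holds because the fibers are iterated flag bundles over open subsets of $X^l$, but it is worth saying explicitly since it is what forces the appearance of the set $\mathcal S_i$ rather than a weight filtration. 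With those caveats, your proposal is a faithful and plausible extension of the paper's methods, and your final diagnosis --- that some genuinely new input on Quot-scheme cohomology is needed in higher genus --- matches the reason the paper leaves this as a conjecture.
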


\vspace{.1in}

Let us assume that Conjecture~\ref{C:conj3} holds.  Let us introduce the torus
$$T^g_{a}=\{(\zeta_1, \ldots, \zeta_{2g}) \in (\C^*)^{2g}\;|\; \zeta_1\zeta_2=\zeta_{3}\zeta_4=\cdots=\zeta_{2g-1}\zeta_{2g}\} \simeq (\C^*)^{g+1}.$$
For any curve $X$ of genus $g$, we have $(\xi_1, \ldots, \xi_{2g}) \in T^g_a$.
We will call $X$ \textit{generic} if $\chi\big((\xi_1,\ldots, \xi_{2g})\big) \neq 1$ for any character $\chi$ of $T^g_a$. If $X$ is generic and $u \in \C^*$ is any monomial in $\xi_1, \ldots, \xi_{2g}$ then there exists a unique character $\chi_u$ of $T^g_a$ satisfying $u=\chi_u\big( ( \xi_1, \ldots, \xi_{2g})\big)$. This allows us to define, for any $\mathbb{P}, \mathbb{P}',\mathbb{P}''$, some ``formal traces'' 
$$ftr\big( M_{\mathbb{P}', \mathbb{P}''}^{\mathbb{P}}\big)=\sum_{\lambda} \chi_{\lambda}, \qquad ftr\big(N_{\mathbb{P}}^{\mathbb{P}',\mathbb{P}''}\big)=\sum_{\lambda} \chi_{\lambda}$$
taking values in the representation ring $Rep(T^g_a)$, where $\lambda$ runs over all Frobenius eigenvalues of $M_{\mathbb{P}', \mathbb{P}''}^{\mathbb{P}}$ and $N_{\mathbb{P}}^{\mathbb{P}',\mathbb{P}''}$ respectively.
Using these, we may define a graded Grothendieck group $\mathcal{K}_X$ over $Rep(T^g_{a})$ as follows~:
$$\mathcal{K}_X=\bigoplus_{\mathbb{P} \in \mathcal{P}_X} Rep(T^g_{a}) \mathbf{b}_{\mathbb{P}},$$
$$\mathbf{b}_{\mathbb{P}'} \mathbf{b}_{\mathbb{P}''} =\sum_{\mathbb{P}} ftr\big( M_{\mathbb{P}', \mathbb{P}''}^{\mathbb{P}}\big) \mathbf{b}_{\mathbb{P}},$$
$$\Delta(\mathbf{b}_{\mathbb{P}})=\sum_{\mathbb{P}',\mathbb{P}''} ftr\big(N_{\mathbb{P}}^{\mathbb{P}',\mathbb{P}''}\big)\mathbf{b}_{\mathbb{P}'}\otimes \mathbf{b}_{\mathbb{P}''}.$$
Then $\mathcal{K}_X$ is a (topological) algebra and coalgebra, and moreover, $(\mathcal{K}_X)_{|\zeta_i =-\xi_i} \simeq \mathfrak{U}_X$. The formal analogue of Conjecture~\ref{C:conj2} is

\vspace{.1in}

\begin{conj}\label{C:conj4} The algebra $\mathcal{K}_X$ is topologically generated by $\{\mathbf{b}_{\mathbbm{1}_{\a}}\;|\; rank(\a) \leq 1\}$.\footnote{As in the case of quivers, this conjecture implies that $\mathcal{K}_X$ is a (topological) bialgebra--see Corollary~\ref{C:bialg}.}
\end{conj}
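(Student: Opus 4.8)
\textbf{Proof strategy for Conjecture~\ref{C:conj4}.} The plan is to mimic, as closely as the curve setting allows, the proof of Lusztig's Theorem~\ref{T:LU} as carried out in Section~3.5. First I would set $\mathcal{K}_X^{gen} \subset \mathcal{K}_X$ to be the (closed) topological subalgebra generated by the classes $\{\mathbf{b}_{\mathbbm{1}_{\a}}\;|\;rank(\a) \leq 1\}$; by the associativity relation $L_{\a_1,\ldots,\a_r}\star L_{\b_1,\ldots,\b_s}=L_{\a_1,\ldots,\a_r,\b_1,\ldots,\b_s}$, $\mathcal{K}_X^{gen}$ is spanned (topologically, over $Rep(T^g_a)$) exactly by the classes of the Lusztig sheaves $\mathbf{b}_{L_{\a_1,\ldots,\a_r}}$ with all $rank(\a_i)\leq 1$. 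The goal is then to prove by induction on the class $\gamma \in K_0(X)$ (ordered by, say, total degree in a fixed polarization, with the rank-$0$ and rank-$1$ classes forming the base of the induction) that every $\mathbf{b}_{\mathbb{P}}$ with $\mathbb{P}\in\mathcal{P}^{\gamma}$ lies in $\mathcal{K}_X^{gen}$. For $\gamma$ of rank $0$ this is immediate since $\mathcal{P}^{(0,l)}$ consists of the $IC(\underline{U}_X^{(0,l)},\mathcal{L}_\chi)$, all of which appear in $\mathbbm{1}_{(0,1)}^{\star l}$ (Proposition~\ref{P:zzx}), and for $\gamma$ of rank $1$ it follows from the explicit description in Proposition~\ref{P:xzs}: every element of $\mathcal{P}^{(1,d)}$ appears in some $\mathbb{R}\star\mathbbm{1}_{(1,n)}$ with $\mathbb{R}\in\mathcal{P}^{(0,l)}$, and we apply the rank-$0$ case together with the restriction functor as in Remark~2.13.

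For the inductive step I would seek an analogue of the key reduction Lemma~\ref{L:Proofkey}. The natural substitute for ``choosing an orientation with a sink at $i$'' is to stratify $\underline{Coh}^{\gamma}_X$ according to the maximal torsion subsheaf, or more precisely according to the codimension data of a distinguished rank-$\leq 1$ piece in the Harder--Narasimhan-type filtration: for a class $\a$ of rank at most one and $d\geq 1$, consider the locally closed substacks $\underline{Coh}^{\gamma}_{\a;d}$ of sheaves $\mathcal{F}$ whose ``$\a$-part'' (a torsion summand $\mathcal{O}_{x_1}\oplus\cdots$ or the torsion part of the unique rank-one HN piece) has length exactly $d$. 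Using the transitivity (\ref{E:P1})--(\ref{E:P2}) of induction/restriction together with the coproduct formula of Lemma~\ref{L:coprodun} (which has an exact coherent-sheaf analogue, proved in \cite{Laumon}, \cite{SInvent}, \cite{Scano} in the cases treated there, and which in general follows from the same stratification-of-the-correspondence argument), one should be able to show that for $\mathbb{P}$ generically supported on $\underline{Coh}^{\gamma}_{\a;d}$ one has $\underline{\Delta}_{d\cdot\a,\gamma-d\cdot\a}(\mathbb{P})\simeq (\mathbbm{1}_{d\cdot\a}\boxtimes\mathbb{R})\oplus\mathbb{Q}$ with $\mathbb{R}$ a simple perverse sheaf of strictly smaller class and $\mathbb{Q}$ supported on the lower strata, and dually $\mathbbm{1}_{d\cdot\a}\star\mathbb{R}\simeq\mathbb{P}\oplus\mathbb{T}$ with $\mathbb{T}$ supported on the strata of deeper codimension. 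Then, exactly as in Section~3.5, in $\mathcal{K}_X$ we get $\mathbf{b}_{\mathbb{P}}=\mathbf{b}_{\mathbbm{1}_{d\cdot\a}}\mathbf{b}_{\mathbb{R}}-\mathbf{b}_{\mathbb{T}}$; the first term is in $\mathcal{K}_X^{gen}$ by the inductive hypothesis applied to $\mathbb{R}$ (and since $\mathbbm{1}_{d\cdot\a}$ is a generator, $\mathbf{b}_{\mathbbm{1}_{d\cdot\a}}$ being up to a formal scalar $\mathbf{b}_{\mathbbm{1}_{\a}}^{\star d}$ divided by $[d]!$ when $rank(\a)=0$, or simply $\mathbf{b}_{\mathbbm{1}_{d\cdot\a}}$ itself a constant-sheaf generator when $rank(\a)=1$), and $\mathbf{b}_{\mathbb{T}}$ is in $\mathcal{K}_X^{gen}$ by a downward induction on $d$. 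Finally, to cover \emph{all} $\mathbb{P}\in\mathcal{P}^{\gamma}$ I would argue that every simple perverse sheaf appears in some Lusztig sheaf $L_{\a_1,\ldots,\a_r}$ and hence is generically supported on a stratum of the above form for a suitable choice of $\a$; in the quiver case this used the Fourier--Deligne transform to move between orientations, and here the role of the orientation choice is played by the freedom to split off different HN-types via different stratifications of $\underline{Coh}^{\gamma}_X$.

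The main obstacle I anticipate is twofold. The more serious issue is that there is no Fourier--Deligne transform for curves: the combinatorics that in Lectures~1--3 let one always arrange $i$ to be a sink has no evident coherent-sheaf analogue, so one must verify directly that, for \emph{every} simple $\mathbb{P}\in\mathcal{P}^{\gamma}$, there is \emph{some} class $\a$ of rank $\leq 1$ such that $\mathbb{P}$ is generically supported on $\underline{Coh}^{\gamma}_{\a;d}$ with $d\geq 1$; this amounts to showing that the strictly-generic open stratum (the one with $d=0$ for all choices of $\a$) supports no simple perverse sheaf in $\mathcal{P}^{\gamma}$ except in rank $0$ and $1$, which is essentially the content of the known classifications (Examples~5.7, 5.8, 5.9, Remark~5.12) and which would have to be extracted as a general structural fact. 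The second obstacle is bookkeeping of the formal traces: since everything happens in the completed algebra over $Rep(T^g_a)$, one must check that the infinite sums defining products and coproducts converge appropriately (degreewise finiteness of $\mathcal{P}^{\gamma}$ for fixed $\gamma$, which holds because the stratifications of $\underline{Coh}^{\gamma}_X$ are locally finite and each stratum carries finitely many simple local systems) and that the identity $\mathbf{b}_{\mathbb{P}}=\mathbf{b}_{\mathbbm{1}_{d\cdot\a}}\mathbf{b}_{\mathbb{R}}-\mathbf{b}_{\mathbb{T}}$ holds with the correct $T^g_a$-equivariant weights; granting Conjectures~\ref{C:one} and \ref{C:conj3} (which the statement of \ref{C:conj4} presupposes) this weight bookkeeping is routine but needs care. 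Once surjectivity onto $\mathcal{P}_X$ is established, the bialgebra assertion of the footnote follows by the same computation as in the proof of Corollary~\ref{C:bialg}, using the coherent analogue of Lemma~\ref{L:coprodun}.
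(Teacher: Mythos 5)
This statement is labeled \textbf{Conjecture}~\ref{C:conj4} in the paper and is not proved there. The text immediately following Conjecture~\ref{C:conj6} records that ``Conjectures~\ref{C:conj3}--\ref{C:conj5} have been shown to hold for $g=0$ (by Laumon, \cite{Laumon}) or $g=1$ (see \cite{Scano})'', and these low-genus verifications go through the \emph{explicit} classifications of $\mathcal{P}_X$ recalled in Section~5.3 (Examples~5.7--5.9 and Remark~5.10), not through a general inductive reduction of the kind you sketch. So there is no proof in the paper against which to compare your argument: you have been asked to prove an open problem.

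That said, your strategy is the natural one, and what you write is consistent with why the statement remains conjectural. You correctly identify the crux: in the quiver case, the surjectivity argument of Section~3.5 hinges on Lemma~\ref{L:Proofkey} together with the freedom, via the Fourier-Deligne transform (Section~3.4), to choose an orientation making any given vertex a sink, so that \emph{every} simple $\mathbb{P}\in\mathcal{P}^{\gamma}$ is reached by some reduction. For curves there is no such transform, and your proposed substitute --- stratifying $\underline{Coh}^{\gamma}_X$ by the length of a torsion or rank-one HN piece --- does not a priori reach every simple perverse sheaf: you would need, as you say, a general structural fact asserting that no $\mathbb{P}\in\mathcal{P}^{\gamma}$ in rank $\geq 2$ is generically supported on the stratum where every such length vanishes. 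This is exactly what is extracted in the low-genus cases from Atiyah-type classification theorems (Theorem~\ref{T:Pelliptic} for elliptic curves, Laumon's theorem for $\mathbb{P}^1$) and is not known in higher genus. So your ``main obstacle'' paragraph is the honest content of your answer; the rest is a plausible but incomplete plan, and it should not be presented as a proof. Your remarks about convergence of the completed sums and about the bialgebra assertion in the footnote are fine as far as they go, but they are downstream of the surjectivity statement that is the actual open point.
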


\vspace{.1in}

We also expect the following~:

\vspace{.1in}

\begin{conj}\label{C:conj5} For any two generic curves $X,X'$ we have $\mathcal{K}_X \simeq \mathcal{K}_{X'}$.
\end{conj}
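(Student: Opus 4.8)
\textit{Proof proposal for Conjecture~\ref{C:conj5}.} Throughout, fix the genus $g$ of $X$ (and $X'$); the statement should be read within a fixed genus, since for varying $g$ the ground ring $Rep(T^g_a)=Rep((\C^*)^{g+1})$ over which $\mathcal{K}_X$ is defined itself changes. The strategy is, in the spirit of Theorem~\ref{T:LUU} — where the canonical basis was shown to be orientation-independent by exhibiting a presentation insensitive to the orientation — to produce a presentation of $\mathcal{K}_X$ by generators and relations over $Rep(T^g_a)$ whose ingredients depend only on $g$ and on the abstract torus $T^g_a$, not on the particular curve $X$. Granting this, the same presentation computes $\mathcal{K}_{X'}$, and $\mathcal{K}_X\simeq\mathcal{K}_{X'}$ follows.

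The first step is to fix uniform generators: by Conjecture~\ref{C:conj4}, $\mathcal{K}_X$ is topologically generated by the classes $\mathbf{b}_{\mathbbm{1}_\a}$ with $\mathrm{rank}(\a)\le 1$, so write $\theta_d=\mathbf{b}_{\mathbbm{1}_{(0,d)}}$ ($d\ge 1$) and $\sigma_n=\mathbf{b}_{\mathbbm{1}_{(1,n)}}$ ($n\in\Z$), indices that make no reference to $X$. The second step is to show that the structure constants of $\mathcal{K}_X$ in these generators — the coefficients of $\theta_d\theta_{d'}$, $\sigma_m\sigma_n$, $\theta_d\sigma_n$ and of their coproducts — are explicit elements of $Rep(T^g_a)$ independent of $X$. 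These coefficients are the formal traces $ftr(M^{\mathbb{P}}_{\mathbb{P}',\mathbb{P}''})$ and $ftr(N_{\mathbb{P}}^{\mathbb{P}',\mathbb{P}''})$ of the multiplicity complexes arising from the decomposition of $\mathbbm{1}_\a\star\mathbbm{1}_\beta$ and $\underline{\Delta}(\mathbbm{1}_\gamma)$ for $\mathrm{rank}(\a),\mathrm{rank}(\beta),\mathrm{rank}(\gamma)\le 1$. By Conjecture~\ref{C:conj3}(i) these complexes have Frobenius eigenvalues in $\bigcup_i\mathcal{S}_i$, so the formal traces are well defined; and the underlying geometry — flag-type schemes over the stacks $\underline{Coh}^{(0,\bullet)}_X$ and $\underline{Bun}^{(1,\bullet)}_X$, together with their cohomology, which is built from $H^*(S^dX)$ and $\Lambda^\bullet H^1(X,\qlb)$ — depends on $X$ only through the Frobenius data $(\xi_1,\dots,\xi_{2g})$, with $\xi_{2k}\xi_{2k-1}=q$. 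Hence each Frobenius eigenvalue is the same explicit monomial in the $\xi_i$ for all curves of genus $g$, and, by genericity of $X$, the associated character of $T^g_a$ is determined independently of $X$. This yields a uniform surjection of a free topological algebra on $\{\theta_d,\sigma_n\}$ onto $\mathcal{K}_X$.

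It remains to show that the relations are uniform and complete, and this is the heart of the matter. Here I would work through the Hall-algebra side: by Conjecture~\ref{C:conj2} the trace map identifies $\widehat{\mathfrak{U}}_X$ with $\widehat{\mathbf{C}}_X$, and $\mathbf{C}_X$ admits a Drinfeld-type presentation (see \cite[Lecture~4]{Trieste}) whose defining relations have coefficients that are explicit rational expressions in $q$ and $\xi_1,\dots,\xi_{2g}$. The plan is to lift this presentation to $\mathcal{K}_X$: rewrite each such relation as an identity of elements with coefficients in $Rep(T^g_a)$ (replacing every monomial $\xi_1^{n_1}\cdots\xi_{2g}^{n_{2g}}$ by the character $\chi_u$ it defines, which is legitimate by genericity), verify it holds in $\mathcal{K}_X$ because it holds after the faithful specialization $\zeta_i=-\xi_i$ realizing $\mathcal{K}_X\twoheadrightarrow\mathfrak{U}_X\cong\mathbf{C}_X$, and then argue there are no further relations by a Poincar\'e--Birkhoff--Witt-type count: the graded rank of $\mathcal{K}_X$ over $Rep(T^g_a)$ in each class $\gamma$ equals $\#\mathcal{P}^\gamma$, which once more depends only on $g$. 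I expect this rank count to be the main obstacle, as it presupposes Conjectures~\ref{C:one} and~\ref{C:conj3} (to make $\mathcal{K}_X$ a well-behaved graded module at all) together with an independent combinatorial description of $\mathcal{P}_X$, available so far only in low genus (Section~5.3). An alternative, more conceptual route avoiding explicit presentations would be to work relatively over the moduli stack $\mathcal{M}_g$ and prove that the family of Hall categories $\mathcal{Q}_X$ is, in the appropriate derived sense, locally constant there; but this would demand genuinely new technology, since — unlike in the quiver case — there is no Fourier--Deligne transform available to move between fibres. Once uniformity and completeness of the presentation are in place, the analogue of Corollary~\ref{C:bialg} (again a consequence of Conjecture~\ref{C:conj4}) ensures the two bialgebra structures agree, and $\mathcal{K}_X\simeq\mathcal{K}_{X'}$ follows.
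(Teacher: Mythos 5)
The paper does not prove Conjecture~\ref{C:conj5}: it is stated as an open conjecture, grouped with Conjectures~\ref{C:conj3} and~\ref{C:conj4}, and the only supporting evidence cited is for $g=0$ (Laumon) and $g=1$ (Schiffmann). There is therefore no proof in the paper for your argument to be measured against, and what you have written must be read as a strategy sketch. You are in fact commendably candid that it is one, but a few of the gaps you pass over are sharper than your phrasing suggests.

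First, the step "verify it holds in $\mathcal{K}_X$ because it holds after the faithful specialization $\zeta_i=-\xi_i$ realizing $\mathcal{K}_X\twoheadrightarrow\mathfrak{U}_X$" is circular: that map is a surjection onto a specialization, not a faithful map, and relations in a quotient do not lift. A nonzero element of $Rep(T^g_a)$ can vanish at the point $(-\xi_1,\dots,-\xi_{2g})$ even when that point is generic in the sense used in the paper (genericity there rules out $\chi(\xi)=1$ for characters $\chi$, not the vanishing of arbitrary Laurent polynomials), so passing to a single such evaluation loses information. Second, the assertion that the multiplicity complexes $M^{\mathbb{P}}_{\mathbb{P}',\mathbb{P}''}$ and $N^{\mathbb{P}',\mathbb{P}''}_{\mathbb{P}}$ depend on $X$ "only through the Frobenius data $(\xi_1,\dots,\xi_{2g})$" is not a consequence of Conjecture~\ref{C:conj3}(i); it is essentially the content of Conjecture~\ref{C:conj6} and of the "motivic" point of view sketched after it, and you should cite it as an independent input rather than deduce it. Third, and this is the point you yourself flag as the obstacle, the PBW-type rank count $\#\mathcal{P}^\gamma$ presupposes a curve-independent combinatorial description of $\mathcal{P}_X$, which the paper explicitly says is available only in genus at most one (Section~5.3). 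Finally, your analogy with Theorem~\ref{T:LUU} is apt but cuts the other way: there, the Fourier--Deligne transform supplies a concrete equivalence $\mathcal{Q}_{\vec{Q}}\simeq\mathcal{Q}_{\vec{Q}'}$ from which orientation-independence follows; here no such geometric comparison functor between $\mathcal{Q}_X$ and $\mathcal{Q}_{X'}$ is available, and retreating to a "uniform presentation" imports the full weight of Conjectures~\ref{C:one}, \ref{C:conj2}, \ref{C:conj3} and~\ref{C:conj4}, so the argument rests on a tower of open statements. The relative-over-$\mathcal{M}_g$ approach you mention in passing is closer to how the paper itself proposes to attack this (cf.\ the discussion preceding Conjecture~\ref{C:conj6}) and is probably the better route, but as you correctly observe it requires technology that does not yet exist.
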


\vspace{.1in}

Granted that all of the above conjectures hold (!), we obtain in this way some kind of ``universal spherical Hall algebra'' (or ``universal quantum group'')  $\mathcal{K}_{g}$ defined over $Rep(T^g_a)$ and which only depends on the genus $g$. Conjectures~\ref{C:conj3}--\ref{C:conj5} have been shown to hold for $g=0$ (by Laumon, \cite{Laumon}) or $g=1$ (see \cite{Scano}). 

\vspace{.1in}

What about nongeneric curves ? One can still expect that there exists a natural grading by the character group $\boldsymbol{\chi}(T^a_g)$ on the Hall category $\mathcal{Q}_{X}$, even though it is not directly readable from the weights. Another possibility is to define a Hall category varying \textit{over} the moduli stack $\mathcal{M}_g$ of curves of genus $g$. Better yet, one could try to build a Hall category directly out of the motive of the universal curve over $\mathcal{M}_g$, whose genuine Grothendieck group should be $\mathcal{K}_{g}$. 

\vspace{.1in}

In the same spirit, we propose~:

\vspace{.1in}

\begin{conj}\label{C:conj6} Let $k=\C$. There exists a natural grading by $\boldsymbol{\chi}(T^a_g)$ on the Hall category $\mathcal{Q}_X$, and the associated graded Grothendieck group satisfies $\mathcal{K}_X \simeq \mathcal{K}_{g}$.
\end{conj}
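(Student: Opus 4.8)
\noindent \textbf{A proof proposal for Conjecture~\ref{C:conj6}.} Since we now work over $k=\C$, the natural substitute for the Frobenius weight structure used in Section~3.7 is Saito's theory of mixed Hodge modules. The plan is to carry out the whole construction of $\mathcal{Q}_X$ inside $D^b_{\mathrm{MHM}}$: using the cover $\underline{Coh}^{\a}_X=\bigcup_{\mathcal{L}}\underline{Coh}^{\a}_{X,\mathcal{L}}$, one works with compatible systems of $G_{\a,\mathcal{L}}$-equivariant mixed Hodge modules on the smooth schemes $Q^{\a}_{\mathcal{L}}$. Each $\mathbbm{1}_{\a}$ underlies a canonical pure Hodge module of weight $\dim\underline{Coh}^{\a}_X$; the induction and restriction functors of Section~5.2 lift to this setting (with $\underline{m}$ commuting with Hodge duality, exactly as in Lemma~\ref{L:Verdierind}), so the Lusztig sheaves $L_{\a_1,\ldots,\a_r}$ are semisimple pure Hodge modules of weight $0$ by Saito's decomposition theorem. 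First I would upgrade Conjecture~\ref{C:one} to the Hodge setting, i.e. show that the monodromy of the simple summands of a given $L_{\a_1,\ldots,\a_r}$ does not permute them, so that every $\mathbb{P}\in\mathcal{P}_X$, and likewise every multiplicity complex $M^{\mathbb{P}}_{\mathbb{P}',\mathbb{P}''}$ and $N^{\mathbb{P}',\mathbb{P}''}_{\mathbb{P}}$, carries a canonical Hodge structure.

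The core of the argument is a \emph{motivic rigidity} statement: each $H^i(M^{\mathbb{P}}_{\mathbb{P}',\mathbb{P}''})$, $H^i(N^{\mathbb{P}',\mathbb{P}''}_{\mathbb{P}})$ and each stalk $H^i(\mathbb{P}_{|x^0})$ should be an iterated extension of Tate twists of tensor powers of the weight-one Hodge structure $h^1(X)$ — this is the Hodge-theoretic incarnation of Conjecture~\ref{C:conj3}. For a generic curve $X$ the Mumford--Tate group of $h^1(X)$ is the full symplectic similitude group $\mathrm{GSp}_{2g}$, a maximal torus of which (after fixing the polarization form) is precisely $T^a_g$, the relations $\zeta_{2k-1}\zeta_{2k}=\zeta_{2k+1}\zeta_{2k+2}$ reflecting the similitude character. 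Hence any Hodge structure of the above type is determined up to isomorphism by its class in $\mathrm{Rep}(T^a_g)$, and that class is independent of $X$ within the generic locus. This is exactly what yields the grading by $\boldsymbol{\chi}(T^a_g)$ on $\mathcal{Q}_X$: one declares a simple summand $\mathbb{P}$ homogeneous according to the $T^a_g$-isotypic decomposition of its cohomology, and the ``formal traces'' $ftr(M^{\mathbb{P}}_{\mathbb{P}',\mathbb{P}''})$ and $ftr(N^{\mathbb{P}',\mathbb{P}''}_{\mathbb{P}})$ of Section~5.5 become honest elements of $\mathrm{Rep}(T^a_g)$, so that the structure constants of $\mathcal{K}_X$ live in $\mathrm{Rep}(T^a_g)$ as required, and the specialization $\zeta_i=-\xi_i$ recovers $\mathfrak{U}_X$.

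To establish the rigidity statement I would imitate the proof behind Theorem~\ref{T:Pelliptic} in the elliptic case: stratify $\underline{Coh}^{\gamma}_X$ by Harder--Narasimhan type, and observe that along the open piece of a stratum $\underline{\mathcal V}_{\underline{\a}}$ the equivalences $\mathrm{Coh}^{(\nu)}(X)\simeq\mathrm{Tor}(X)$ reduce everything to the torsion case — where $\mathcal{P}^{(0,l)}$ is governed by the symmetric groups $\mathfrak{S}_l$ and all motives in sight are pure Tate, by the global Springer picture of Proposition~\ref{P:zzx} — together with the gluing affine fibrations $\pi_{\underline{\a}}$, whose fibres are $\mathrm{Hom}$- and $\mathrm{Ext}^1$-spaces between semistable sheaves of distinct slopes and hence carry, by Riemann--Roch (\ref{E:5Euler}), only powers of $h^1(X)$. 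The (singular) fibres of the proper map $q_{\a,\beta}$, which are Quot schemes of subsheaves, would be handled by the same HN stratification, reducing their cohomology to contributions of Grassmannians (pure Tate) and of such products. With rigidity in hand, the isomorphism $\mathcal{K}_X\simeq\mathcal{K}_g$ would follow from the genus-$g$ analogues of Conjectures~\ref{C:conj4} and~\ref{C:conj5}: once $\mathcal{K}_X$ is topologically generated in ranks $\le 1$, one checks that the structure constants among these generators are the universal ones — the generalization of the explicit genus-$0$ and genus-$1$ computations of Laumon and of \cite{Scano} — so that $\mathcal{K}_X$ and $\mathcal{K}_g$ are both the quotient of the free topological object on these generators by the same Riemann--Roch relations, now written in the coordinates of $T^a_g$.

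The hard part will be the motivic rigidity/purity statement, that is Conjecture~\ref{C:conj3}(i): proving that the cohomology of the generically very singular Quot-scheme fibres which govern the multiplicities is built solely out of $h^1(X)^{\otimes\bullet}$ and Tate twists, with no stray motives appearing. Its quiver analogue is Lusztig's Theorem~\ref{T:LUweights}, described by Lusztig himself as deep and difficult, whose proof crucially uses the Fourier--Deligne transform — a tool with no evident counterpart on $\underline{Coh}_X$. For curves of genus $\ge 2$ even the $\ell$-adic (Frobenius-eigenvalue) form of the statement is open, so this step is likely to require a genuinely new input: perhaps an induction on HN type combined with a Hodge-theoretic hard Lefschetz for the maps $q_{\a,\beta}$, or a reduction to the geometric Langlands properties of the Eisenstein sheaves of Section~5.4 of which the $L_{\a_1,\ldots,\a_r}$ are direct summands.
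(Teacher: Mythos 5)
This statement is a conjecture that the paper leaves open; there is no proof in the text to compare against, so your ``proof proposal'' is really a research program, and you treat it as such. Within those terms the sketch is reasonable, and you correctly identify the central obstruction (the Hodge-theoretic analogue of Conjecture~\ref{C:conj3}, parallel to Lusztig's Theorem~\ref{T:LUweights}) as the step that is genuinely missing. A few of the intermediate claims, however, are either imprecise or skip over a point that the conjecture itself is silent about.

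The most substantial vagueness is in how the $\boldsymbol{\chi}(T^g_a)$-grading on the \emph{category} $\mathcal{Q}_X$ is actually to be defined. In the quiver case the $\Z$-grading on $\mathcal{K}_{\vec{Q}}$ is induced by a genuine endofunctor of $\mathcal{Q}_{\vec{Q}}$, the cohomological shift $[1]$. Here $\boldsymbol{\chi}(T^g_a)\simeq\Z^{g+1}$, and while one factor can plausibly be the Tate twist, you do not say which operations on constructible sheaves (or mixed Hodge modules) on $\underline{Coh}_X$ should realize the other $g$ directions of the grading; ``declaring $\mathbb{P}$ homogeneous according to the $T^g_a$-isotypic decomposition of its cohomology'' does not by itself produce a grading of the category, since a simple $\mathbb{P}\in\mathcal{P}_X$ can and will have stalks of mixed isotype. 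What your Mumford--Tate observation does give, granting motivic rigidity, is that the multiplicity spaces $M^{\mathbb{P}}_{\mathbb{P}',\mathbb{P}''}$ and $N_{\mathbb{P}}^{\mathbb{P}',\mathbb{P}''}$ land in the Tannakian category $\langle h^1(X)\rangle^{\otimes}$, and hence have well-defined classes in $\mathrm{Rep}(T^g_a)$ after semisimplification; but note that your sentence ``any Hodge structure of the above type is determined up to isomorphism by its class in $\mathrm{Rep}(T^g_a)$'' is false as stated (nontrivial extensions between Tate twists exist), and only the weaker Grothendieck-group statement is what you need and use. The $T^g_a\subset\mathrm{GSp}_{2g}$ identification itself is a nice and apparently correct interpretation of why this torus appears over $\C$, and it mirrors the relations $\xi_{2k-1}\xi_{2k}=q$ in the $\ell$-adic setting; but it should be stated carefully as a statement about the maximal torus of the generic Mumford--Tate group, and the match with Section~5.5's notion of ``generic'' (no multiplicative relations among Frobenius eigenvalues) is a comparison you assert rather than establish.

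Finally, the last paragraph of your sketch, where you reduce rigidity to the HN stratification and Riemann--Roch, is exactly the strategy that works in genera $0$ and $1$, but you are candid that for $g\ge 2$ even the $\ell$-adic form of Conjecture~\ref{C:conj3}(i) is open, that the Quot-scheme fibres are much more singular and do not obviously reduce to Tate motives and powers of $h^1(X)$, and that there is no Fourier--Deligne substitute on $\underline{Coh}_X$ to run Lusztig's argument. That candor is appropriate: the proposal is a coherent framework for attacking the conjecture, with the genuine gap concentrated precisely where the paper's own conjectures suggest it should be, but it is not a proof and does not claim to be.
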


\vspace{.2in}

We have abused enough of the reader's gullibility. Let's conclude these notes with some heuristics related to the geometric Langlands program for $GL(r)$. This lies at the origins of the theory of Hall algebras for curves (it is the principal motivation for \cite{Kapranov}, \cite{Laumon}). For more on the geometric Langlands conjecture, we refer to \cite{Frenkel}. 

\vspace{.1in}

Let us fix a smooth projective curve $X$ over $\mathbb{C}$. Define the $i$th Hecke correspondence (of rank $r$) in this context as the diagram
$$\xymatrix{ & \mathcal{H}ecke_i \ar[dl]_-{\overline{p}_1} \ar[dr]^-{\overline{p}_2} &\\ \underline{Bun}^r_X & & X \times \underline{Bun}^r_X}$$
where we have set $\underline{Bun}^r_X=\bigsqcup_d \underline{Bun}^{(r,d)}_X$, where $\mathcal{H}ecke_i$ is the stack classifying 
$$\left\{ (\mathcal{F} \supset \mathcal{G})\; |\; \begin{array}{c} \mathcal{F}, \mathcal{G}\; \text{vector\;bundles\;on\;X},\; rank(\mathcal{F})=rank(\mathcal{G})=r\\ \mathcal{F}/\mathcal{G} \simeq \mathcal{O}_x^{\oplus i}\;\text{for\;some\;}x \in X\end{array}\right\}$$
and where $\overline{p}_1(\mathcal{F} \supset \mathcal{G})=\mathcal{G}$ and $\overline{p}_2(\mathcal{F} \supset \mathcal{G})=(supp(\mathcal{F}/\mathcal{G}), \mathcal{F})$. We define the $i$th Hecke operator as
\begin{equation*}
\begin{split}
Hecke_i~: D^b(\underline{Bun}^r_X) \to& D^b(X \times \underline{Bun}^r_X)\\
\mathbb{P} \mapsto& \overline{p}_{2!}\overline{p}_1^*(\mathbb{P})[dim\;\overline{p}_1].
\end{split}
\end{equation*}
Note that for any point $i_x: x \hookrightarrow X$ the operator $(i_x^* \times Id) \circ Hecke_i$ is simply the restriction to the open substack $\underline{Bun}^r_X \subset \underline{Coh}^r_X$ of the convolution operator $\mathbb{P} \mapsto \mathbbm{1}_{\mathcal{O}_x^{\oplus i}} \star \mathbb{P}$ in $D^b(\underline{Coh}^r_X)$. Here $\mathbbm{1}_{\mathcal{O}_x^{\oplus i}} \in D^b(\underline{Coh}^0_X)$ is the constant (perverse) sheaf supported on the closed substack paramatrizing sheaves isomorphic to $\mathcal{O}_x^{\oplus n}$. More general Hecke operators may be obtained by replacing $\mathbbm{1}_{\mathcal{O}_x^{\oplus i}}$ with other complexes supported at the point $x$.

\vspace{.1in}

Fix a local system $E$ of rank $r$ over $X$. A constructible complex $\mathbb{P} \in D^b(\underline{Bun}^r_X)$ is said to be a \textit{Hecke eigensheaf with eigenvalue $E$} if for any $i$ we have $Hecke_i(\mathbb{P}) \simeq \Lambda^iE \boxtimes \mathbb{P}$. The $\C$-version of the Langlands conjecture for $GL(r)$ proved in \cite{FGV}, \cite{G} states that

\vspace{.1in}

\begin{theo}[Frenkel, Gaitsgory, Vilonen]\label{T:geolang} For any irreducible local system $E$ of rank $r$ on $X$ there exists an irreducible perverse sheaf $Aut_E$ over $\underline{Bun}^r_X$ which is a Hecke eigensheaf with eigenvalue $E$.
\end{theo}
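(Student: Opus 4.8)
The plan is to follow the strategy of Frenkel--Gaitsgory--Vilonen, completed by Gaitsgory (see \cite{FGV}): build a candidate eigensheaf out of the Eisenstein--Laumon sheaves of this Lecture, transfer it to a Whittaker model, descend it to $\underline{Bun}^r_X$, and finally read off the Hecke property. The genuinely hard input, the one step I could not reduce to routine geometry, is a vanishing theorem for certain $E$-twisted averaging functors. \textbf{As a first step}, mimicking the construction of Proposition~\ref{P:zzx} with the tautological rank-one local systems on $\underline{Coh}^{(0,1)}_X\simeq X/\mathbb{G}_m$ replaced by $E$ itself, I would attach to $E$ a family of perverse sheaves $\mathcal{L}^{(0,d)}_E$ on the stacks $\underline{Coh}^{(0,d)}_X$: on the open substack $\underline{U}^{(0,d)}_X=(S^dX\backslash\Delta)/\mathbb{G}_m^d$ one descends $E^{\boxtimes d}$ along the cover $(X^d\backslash\Delta)\to S^dX\backslash\Delta$, extracts the $\mathfrak{S}_d$-isotypic pieces, and takes the intermediate extension to $\underline{Coh}^{(0,d)}_X$. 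As in the global Springer picture of Section~2.4 (cf. Lemma~\ref{L:springervariant}) the resolutions computing $\mathcal{L}^{(0,d)}_E$ are semismall, so $\mathcal{L}^{(0,d)}_E$ is \emph{clean}, with stalks given by a global Springer formula and with matching $*$- and $!$-restrictions. Convolving these with the constant sheaves $\mathbbm{1}_{(1,n)}$ on the line-bundle stacks via the induction functor $\underline{m}$ of (\ref{E:killbill}) produces the $E$-twisted analogues of the sheaves $IC(\underline{\mathcal{V}}^0_{\underline{n},l},\widetilde{\mathcal{L}}_\chi)$ of Section~5.3, which will be the building blocks for $Aut_E$.

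\textbf{Secondly}, since for $r>1$ a naive Eisenstein series assembled from rank-one pieces is far from irreducible, I would introduce an auxiliary stack $\mathcal{Y}_r$ parametrizing a rank-$r$ bundle $\mathcal{M}$ together with a \emph{generic} full flag — equivalently, a chain of generically injective maps $\Omega^{\otimes(r-1)}\hookrightarrow\cdots\hookrightarrow\mathcal{M}$ with torsion cokernels — along with its structure map $\mathfrak{p}\colon\mathcal{Y}_r\to\underline{Bun}^r_X$. Pulling the $E$-twisted Laumon sheaf back along the natural morphism $\mathcal{Y}_r\to\underline{Coh}_X$ and averaging against an exponential (Artin--Schreier) character on the unipotent directions, in the style of the Fourier--Deligne transform of Section~3.4, yields a Whittaker-type complex $W_E$ on $\mathcal{Y}_r$; the candidate is then the suitably shifted pushforward
\[
Aut_E=\mathfrak{p}_{!}(W_E).
\]
The content of this step is that $\mathfrak{p}_!$ loses no information, that $Aut_E$ is perverse and irreducible, and that it is nonzero on every component $\underline{Bun}^{(r,d)}_X$; in Lectures~1--3 the corresponding cleanness and support statements followed from elementary stratification arguments, whereas here they rest on the vanishing theorem below.

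\textbf{The main obstacle} is Gaitsgory's vanishing theorem: for $E$ irreducible of rank $r$, the averaging functors $Av^d_E$ — modification of a complex on $\underline{Bun}^r_X$ by a length-$d$ Hecke modification twisted by $\mathcal{L}^{(0,d)}_E$ — annihilate the constant sheaf once $d$ exceeds an explicit bound, and are perverse irreducible in the remaining range. I would prove this by induction on $r$: the rank-one and torsion cases reduce to the explicit description of $\mathcal{L}_E$ from the first step; the inductive step restricts to the locus of bundles admitting a sub-line-bundle of controlled degree, applies a Fourier--Deligne transform in the ``generically surjective maps'' directions to convert the assertion into one about the lower-rank stack, and then invokes irreducibility of $E$ through the (non)vanishing of $H^*(X,\Lambda^iE)$ and related cohomology groups, which detects the absence of sub-local-systems in $E$ and in its exterior powers. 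The serious geometric input is the fine structure of Drinfeld's compactification of the stack of generic flags together with dimension estimates for its boundary strata; this is exactly where one must exploit the full weight (monodromy) structure of the sheaves, with no analogue in the quiver setting of Lectures~1--4, and I expect it to be the bulk of the proof.

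\textbf{Finally}, the Hecke eigenproperty $Hecke_i(Aut_E)\simeq\Lambda^iE\boxtimes Aut_E$ is essentially built into the construction. A Hecke modification of $\mathcal{M}$ at a point $x$ by $\mathcal{O}_x^{\oplus i}$ corresponds, after passing through $\mathcal{L}_E$, to the appearance of the fibre $\Lambda^i(E_x)$ — a purely local computation on configuration spaces governed by Schur--Weyl duality, i.e. the Springer side of Proposition~\ref{P:zzx}, or equivalently an instance of geometric Satake for the minuscule coweight $\omega_i$ of $GL_r^\vee=GL_r$. Since the Hecke correspondence acts on $\underline{Bun}^r_X$ away from the locus where the generic flag degenerates, this property commutes with the pushforward $\mathfrak{p}_!$ of the second step, and one obtains the eigensheaf property, completing the proof.
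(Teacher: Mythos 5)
The paper does not prove this theorem: it is stated and cited to \cite{FGV} and \cite{G}, and nothing in Lecture~5 attempts the argument. So there is no ``paper's own proof'' to compare against; what you have written is a blind sketch of the actual Frenkel--Gaitsgory--Vilonen/Gaitsgory construction, and it should be judged on that footing.

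As an outline of that construction, your sketch is broadly faithful: the $E$-twisted Laumon sheaves $\mathcal{L}^{(0,d)}_E$ on $\underline{Coh}^{(0,d)}_X$ built by descent of $E^{\boxtimes d}$ along the Springer/configuration-space picture, the passage through a stack of (generalized) full flags with a Whittaker averaging, and the role of the vanishing theorem in guaranteeing that the $!$- and $*$-pushforwards agree (cleanness) and that the resulting complex is irreducible perverse and supported on all of $\underline{Bun}_X^r$ --- these are the genuine ingredients. The local Satake/Schur--Weyl identification of the eigenvalue as $\Lambda^i E$ is also correct. However, what you label ``the main obstacle'' is not a gap in the usual sense but the entire content of \cite{G}, a separate long paper, and the Drinfeld compactification geometry plus the cleanness of the Whittaker sheaf constitute most of \cite{FGV}. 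Two smaller inaccuracies: your $\mathcal{Y}_r$ must be the full Drinfeld compactification $\overline{\mathrm{Bun}}_N$ (allowing the maps $\Omega^{\otimes k}\to\bigwedge^{?}\mathcal{M}$ to degenerate), not merely the open locus of generically injective flags, since the boundary strata are exactly where the cleanness estimates live; and $Aut_E$ is not literally $\mathfrak{p}_!(W_E)$ on $\underline{Bun}^r_X$ but is extracted from the Whittaker category after establishing cleanness, the identification with a perverse sheaf on $\underline{Bun}^r_X$ being part of what must be proved. In short: a reasonable roadmap, but not a proof, and in no sense comparable to the paper, which (appropriately for a survey) simply cites the result.
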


For example, the constant perverse sheaf $\mathbbm{1}_{\underline{Bun}^1_X}:=\bigoplus_{d} \C_{\underline{Bun}^{(1,d)}_X}[dim\; \underline{Bun}^{(1,d)}_X]$ satisfies $Hecke_1(\mathbbm{1}_{\underline{Bun}^1_X}) \simeq \C_X \boxtimes \mathbbm{1}_{\underline{Bun}^1_X}$ hence it is a Hecke eigensheaf with eigenvalue given by the trivial local system $\C_X$ over $X$.

\vspace{.1in}

The perverse sheaf $Aut_E$ constructed in Theorem~\ref{T:geolang} is moreover expected to be unique. Thus there is in principle a correspondence
\begin{equation}\label{E:corresp1}
\left\{ \begin{array}{c} \text{Irreducible\;local\;systems}\\\text{over\;}X\;\text{of\;rank}\;r\end{array}\right\} \rightarrow \left\{ \begin{array}{c} \text{cuspidal\;Hecke\;eigensheaves}\\\text{over\;}\underline{Bun}^r_X\;\end{array}\right\} 
\end{equation}

One may wonder about what happens when one considers \textit{reducible} local systems, like the trivial local system $\C^r_X$ of rank $r >1$ ? It is easy to prove that if $\mathbb{P}, \mathbb{P}'$ are Hecke eigensheaves with eigenvalues $E$, $E'$ then the induction product\footnote{we state everything for perverse sheaves or complexes over $\underline{Bun}^r_X$ and tend to forget about $\underline{Coh}^r_X$; nevertheless, note that the definition (\ref{E:killbill}) of the induction functor $*$ uses the stacks $\underline{Coh}^r_X$ in a crucial way.} $\mathbb{P} \star \mathbb{P}'$ is a Hecke eigensheaf of eigenvalue $E \oplus E'$. There are however no reasons for $\mathbb{P} \star \mathbb{P}'$ to be either perverse, irreducible, or  for the Hecke eigensheaf to be unique. For instance, to the trivial local system $\C^r_X$ is associated in this way the so-called \textit{Eisenstein sheaf} (see \cite{Laumon})
$$\mathbbm{1}_{\underline{Bun}^1_X} \star \mathbbm{1}_{\underline{Bun}^1_X} \cdots \star \mathbbm{1}_{\underline{Bun}^1_X},$$
which is in general not irreducible~: indeed, its simple direct summands are given \textit{by contruction} by the elements\footnote{more precisely, by the elements of $\mathcal{P}_X^{r}$ whose support intersects $\underline{Bun}^r_X$.} of $\mathcal{P}_X^r=\bigsqcup_d \mathcal{P}_X^{(r,d)}$. Therefore one may think of the Hall category $\mathcal{Q}_X$ as corresponding, in some weak sense, to the collection of trivial local systems $\C^r_X$ over $X$ (for $r \geq 1$).

\vspace{.1in}

It is possible to get a slightly better heuristic if one considers Beilinson and Drinfeld's broad generalization of (\ref{E:corresp1}), according to which there should be an equivalence of derived categories
\begin{equation}\label{E:corresp2}
D(\mathcal{O}_{\underline{Loc}^r_X}) \simeq D(\mathcal{D}_{\underline{Bun}^r_X})
\end{equation}
where $\underline{Loc}^r_X$ is the stack classifying local systems of rank $r$ over $X$, and where $\mathcal{O}, \mathcal{D}$ stand for the categories of $\mathcal{O}$-modules and $\mathcal{D}$-modules respectively. Presumably, (\ref{E:corresp1}) is obtained by restricting (\ref{E:corresp2}) to the skyscraper sheaves at points of $\underline{Loc}^r_X$ corresponding to irreducible local systems. The formal neighborhood $\widehat{\C}^r_X$ of $\C^r_X$ in $\underline{Loc}^r_X$ may be described as follows\footnote{the case of $X = \mathbb{P}^1$ is a little special (see \cite{Lafforgue}) so we assume that $X$ is of genus one or more.} (see \cite[Section~2]{BD}). Let $\underline{Loc}^{r,triv}_X$ be the stack parametrizing local systems on $X$ with a trivialization at a fixed point $x_0 \in X$. Then $\underline{Loc}^r_X=\underline{Loc}^{r,triv}_X/GL(r,\C)$. The tangent complex of $\underline{Loc}^{r,triv}_X$ at $\C^r_X$ is quasi-isomorphic to the cohomology of $X$ with values in $\g=\mathfrak{gl}(r,\C)$
$$\mathbb{T}_{\C^r_X} \underline{Loc}^{r,triv}_X \simeq H^*(X,\g)$$
and $\mathbb{T}_{\C^r_X} \underline{Loc}^{r,triv}_X[1] \simeq H^{*+1}(X,\g)$ has a canonical dg Lie algebra structure given by $(c \otimes g, c' \otimes g') \mapsto c \cdot c' \otimes [g,g']$. The formal neighborhood of $\C^r_X$ in $\underline{Loc}^{r,triv}_X$ is isomorphic to the formal neighborhood around $0$ of the kernel of the Maurer-Cartan equation $[u,u]=0$ in $H^0( \mathbb{T}_{\C^r_X} \underline{Loc}^{r,triv}_X[1]) \simeq H^{1}(X,\g)$. Therefore, $\widehat{\C}^r_X $ is isomorphic to the formal neighborhood of $0$ in
$$\mathcal{C}^r_X:= \{ u \in H^1(X) \otimes \mathfrak{g}\;|\; [u,u]=0\}/GL(r,\C).$$
In particular, if $g$ is the genus of $X$ then $\widehat{\C}^r_X$ is the formal neighborhood of $0$ in the quotient stack
$$\mathcal{C}^r_g=\big\{ (x_1, \ldots, x_{2g})\in \mathfrak{gl}(r,\C)\;|\;\sum_{i=1}^g [x_{2i-1},x_{2i}]=0\big\}/GL(r,\C).$$
The torus 
$$T^g_s=\{(\eta_1, \ldots, \eta_{2g}) \in (\C^*)^{2g}\;|\; \eta_1\eta_2=\eta_{3}\eta_4=\cdots=\eta_{2g-1}\eta_{2g}\}$$
acts on $\mathcal{C}^r_g$ by multiplication. To give an example, if $g=1$ then 
$$\mathcal{C}^r_g=\{(x,y) \in \mathfrak{gl}(r,\C)\;|\; [x,y]=0\}/GL(r,\C)$$ is the commuting variety and $T^g_s=(\C^*)^2$ acts on $\mathcal{C}^r_g$ by $(\eta_1, \eta_2) \cdot (x,y)=(\eta_1 x, \eta_2 y)$.

\vspace{.1in}

We expect a strong relationship between the category $Coh_{T^g_s}(\mathcal{C}^r_g)$ equipped with its natural $\boldsymbol{\chi}(T^g_s)$-grading on the one hand and the category $\mathcal{Q}_X^r=\bigsqcup_{d} \mathcal{Q}^{(r,d)}_X$ equipped\footnote{actually, instead of $\mathcal{Q}_X^r$ --which is semisimple-- it is probably better to consider the $Ext$-algebra of the set of simple perverse sheaves in $\mathcal{P}_X^r$.} with the (hypothetical) $\boldsymbol{\chi}(T^g_a)$-grading coming from Conjecture~\ref{C:conj6} on the other hand. Note that $T^g_s=T^g_a$ but the two tori play rather different roles~: $T^g_s$ is a group of infinitesimal symmetries around the trivial local system in $\underline{Loc}^r_X$ while $T^g_a$ accounts for the moduli space of curves of genus $g$.

\vspace{.1in}

A first step towards understanding this relationship is to pass to the Grothendieck groups. It is possible to define a convolution product of equivariant K-theory groups 
$$m_{r,r'}~:K^{T^g_s}(\mathcal{C}^r_{g}) \otimes K^{T^g_s}(\mathcal{C}^{r'}_g) \to K^{T^g_s}(\mathcal{C}_g^{r+r'})$$
and hence to define an associative $Rep(T^g_s)$-algebra $K^{T^g_s}(\mathcal{C}_g)=\bigoplus_{r \geq 0} K^{T^g_s}(\mathcal{C}^r_{g})$ (see \cite{SV2}, \cite{SV3}). 
Here $Rep(T^g_s) \simeq \Z[ \eta_1^{\pm 1}, \ldots, \eta_{2g}^{\pm 1}]/ \langle \eta_{2i-1}\eta_{2i}=\eta_{2j-1}\eta_{2j}\;\forall\; i, j \rangle$ is the representation ring of $T^g_s$.

\vspace{.1in}

One would hope that the algebra $K^{T^g_s}(\mathcal{C}_g)$ is strongly related to the Grothendieck groups of the Hall categories $\mathcal{Q}_X$ for $X$ of genus $g$, and hence to the ``universal spherical Hall algebra'' $\mathcal{K}_g$. As shown in \cite{SV3}, this is essentially true although there are some subtle issues related to completions. The spherical Hall algebra $\mathbf{C}_X$ has a generic form $\mathbf{C}_g$ defined over $Rep(T^g_a)$. Modulo Conjectures~\ref{C:conj4} and \ref{C:conj5}, $\mathbf{C}_g$ may be embedded into a completion $\widehat{\mathcal{K}}_g$ of $\mathcal{K}_g$ as a dense subalgebra. Morally, $\mathbf{C}_g$ corresponds to those elements of $\widehat{\mathcal{K}}_g$ which are supported on a finite number of HN strata of $\underline{Bun}_X$.

\vspace{.1in}

\begin{theo}[S.-Vasserot]\label{T:SV3} Let $\mathbf{K}_g \subset K^{T^g_s}(\mathcal{C}_g)$ be the $Rep(T^g_s)$-subalgebra generated by $K^{T^g_s}(\mathcal{C}^1_g)$. There exists an algebra isomorphism
\begin{equation}\label{E:SV3}
\Phi~: \mathbf{C}_g \otimes k_a \stackrel{\sim}{\to} \mathbf{K}_g \otimes k_s
\end{equation}
where $k_a=Frac(Rep(T^g_a))=Frac(Rep(T^g_s))=k_s$.
\end{theo}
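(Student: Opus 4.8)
The plan is to prove Theorem~\ref{T:SV3} by realizing both algebras through an explicit presentation by generators and relations and checking that, after extension of scalars to the common fraction field $k_a=k_s$, the two presentations coincide under the tautological identification of the tori $T^g_a\simeq T^g_s\simeq(\C^*)^{g+1}$. The map $\Phi$ is defined on generators. The rank-one part $K^{T^g_s}(\mathcal{C}^1_g)$ is, by homotopy invariance of equivariant $K$-theory for the linear action on the stack $[\C^{2g}/\C^*]$ (every rank-one tuple commutes), a free $Rep(T^g_s)$-module with a $\Z$-graded basis $\{z^n\}_{n\in\Z}$ coming from the $GL(1)$-automorphism weight; we declare $\Phi$ to send this basis to the generic forms of the rank-one functions $\mathbf{1}^{vec}_{(1,n)}$, $n\in\Z$, of the spherical Hall algebra. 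Since $\mathbf{K}_g$ is by definition generated over $Rep(T^g_s)$ by $K^{T^g_s}(\mathcal{C}^1_g)$, such a $\Phi$ is automatically surjective once it is shown to be a well-defined algebra map; the content is then (i) well-definedness and (ii) injectivity.

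For (i) on the Hall side I would use the Drinfeld-type presentation of the generic spherical Hall algebra $\mathbf{C}_g$ of a genus-$g$ curve recalled in \cite[Lecture~4]{Trieste} (whose structure constants are uniform rational functions of the Frobenius eigenvalues $\xi_1,\dots,\xi_{2g}$, which generate $Rep(T^g_a)$). Equivalently, feeding the rank-one elements into the Green--Kapranov coproduct and using the nondegenerate Hopf pairing — the curve analogue of Proposition~\ref{P:Hopfpairing} — embeds $\mathbf{C}_g\otimes k_a$ into an explicit shuffle algebra over $k_a$ whose defining kernel is a single two-variable rational function built from the Euler form (\ref{E:5Euler}) and the $\xi_i$; this records exactly the relations among the rank-one generators. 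For (i) on the $K$-theoretic side the core computation is the convolution product $m_{r,r'}\colon K^{T^g_s}(\mathcal{C}^r_g)\otimes K^{T^g_s}(\mathcal{C}^{r'}_g)\to K^{T^g_s}(\mathcal{C}^{r+r'}_g)$ of \cite{SV2}, defined through the correspondence parametrizing pairs consisting of a commuting $2g$-tuple in $\mathfrak{gl}(r{+}r')$ together with a stable $r'$-dimensional subspace. I would evaluate the iterated product $m_{1,\dots,1}$ on tensor powers of the rank-one generator by Thomason localization to the $T^g_s\times(\C^*)^r$-fixed loci of $\mathcal{C}^r_g$ — again products of smaller commuting-type pieces — carefully tracking the virtual normal-bundle classes, since the $\mathcal{C}^r_g$ are singular and only ``underived''. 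The outcome is a shuffle presentation of $\mathbf{K}_g\otimes k_s$ with an explicit rational kernel in the equivariant parameters $\eta_1,\dots,\eta_{2g}$; comparing with the Hall-side kernel, the substitution $\xi_i\mapsto\eta_i$, together with the matching of the ``$q$''-parameters $\xi_1\xi_2=\eta_1\eta_2$ and the sign normalization $\nu=-\sqrt q$ already in force in Section~3.7, makes the two kernels equal. Hence $\Phi$ is a well-defined algebra homomorphism $\mathbf{C}_g\otimes k_a\to\mathbf{K}_g\otimes k_s$.

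For (ii) I would compare Hilbert series of source and target in each bidegree $(r,d)\in\Z_{\geq0}\times\Z$, the degree being read off on the $K$-theory side from the central $\C^*\subset GL(r)$ weight: both $\mathbf{C}_g\otimes k_a$ and $\mathbf{K}_g\otimes k_s$ carry PBW-type bases over their respective fraction fields — for $\mathbf{C}_g$ this is part of the cited presentation, for $\mathbf{K}_g$ it follows from a Bia{\l}ynicki-Birula-type cell/stratification structure on the $\mathcal{C}^r_g$ — and one checks these bases have the same cardinality in every bidegree; combined with surjectivity this forces $\Phi$ to be an isomorphism. The main obstacle is the $K$-theoretic convolution computation of the second step: reduction to fixed loci is routine, but bookkeeping the normal bundles on the singular $\mathcal{C}^r_g$ and pinning down all normalizations so that the two rational kernels agree literally — rather than merely up to an undetermined invertible scalar — is where the real work lies. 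A convenient way to organize it is to verify the match only on the minimal generating set and on the quadratic ``shuffle'' relations, and then use the known Drinfeld-type presentation of $\mathbf{C}_g$ to propagate it to all relations. The full argument is carried out in \cite{SV3}.
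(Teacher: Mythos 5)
Your proposal takes the same route that the paper itself indicates: it says only that ``the proof relies on an algebraic description of both sides of (\ref{E:SV3}) as some kind of shuffle algebras,'' deferring all details to the reference \cite{SV3} (in preparation), and your plan --- realize $\mathbf{C}_g\otimes k_a$ via the Green/Kapranov coproduct as a shuffle algebra over $k_a$, realize $\mathbf{K}_g\otimes k_s$ as a shuffle algebra over $k_s$ by computing the convolution via localization to torus-fixed loci, match the two rational kernels under $\xi_i\mapsto\eta_i$, and then compare graded dimensions for injectivity --- is a consistent and reasonable elaboration of that sketch. Since the paper provides no further proof to compare against, I can only confirm that the shuffle-algebra strategy is the intended one, and that the technical point you flag (bookkeeping virtual normal bundles and normalizations on the singular, merely ``underived'' varieties $\mathcal{C}^r_g$) is indeed where the unverified work sits.
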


\vspace{.1in}

The proof of the above theorem relies on an algebraic description of both sides of (\ref{E:SV3}) as some kind of shuffle algebras. When $g=0$, Theorem~\ref{T:SV3} relates two different constructions
of the (Drinfeld) positive half of the quantum group $\mathbf{U}_{\nu}(\widehat{\mathfrak{sl}}_2)$~: one involving the Nakajima quiver variety associated to the quiver $A_1$ and the other in terms of the Hall algebra of $\mathbb{P}^1$. Similarly, when $g=1$ Theorem~\ref{T:SV3} relates two realizations of the positive half of the spherical Cherednik algebra $\mathbf{S}\ddot{\mathbf{H}}_{\infty}$ of type $GL(\infty)$~: one involving the commuting variety and Hilbert schemes of points on $\C^2$ and the other in terms of the Hall algebra of an elliptic curve (see \cite{SV}).

\vspace{.1in}

To finish, let us illustrate the subtle issue of extending (\ref{E:SV3}) to $\widehat{\mathcal{K}}_g$. Fix $r>1$ and let $\mathbf{1}_r$ be the constant function on $\underline{Bun}_X^r$. This element does not belong to $\mathbf{C}_X$ although it belongs to $\mathcal{K}_X$ --it corresponds to the class of the constant perverse sheaf $\mathbbm{1}_r$ on $\underline{Bun}_X^r$. We may, however approximate it by a sequence of elements $\mathbf{1}_{r,\geq n} \in \mathbf{C}_X$, where $\mathbf{1}_{r, \geq n}$ is the characteristic function of the set of bundles of rank $r$ which are generated by $Pic^d(X)$ for $d \geq n$. The elements $\mathbf{1}_{r,\geq n}$ may be lifted to the generic form $\mathbf{C}_g$.

\vspace{.1in}

\begin{prop}[S.-Vasserot] The sequence $\{\Phi(\mathbf{1}_{r,\geq n})\}$ converges to zero as $n$ tends to $-\infty$.\end{prop}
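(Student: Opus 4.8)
The plan is to compute $\Phi(\mathbf{1}_{r,\geq n})$ inside the shuffle‑algebra realizations of $\mathbf{C}_g$ and $\mathbf{K}_g$ that underlie the proof of Theorem~\ref{T:SV3}, where $\Phi$ acts essentially as the identity, and then to control the resulting element in the natural completion $\widehat{\mathbf{K}}_g$ of $\mathbf{K}_g\otimes k_s$ — the completion into which $\mathbf{K}_g$ embeds densely but to which $\Phi$ is not expected to extend, which is exactly the phenomenon the Proposition makes precise. So the first task is to make the statement precise: work at a fixed total class $(r,d)$, where a rank‑$r$ degree‑$d$ bundle generated by $\mathrm{Pic}^{e}(X)$ for some $e\geq n$ has all of its Harder--Narasimhan slopes bounded below in terms of $n$, so that $\mathbf{1}_{(r,d),\geq n}$ is a genuine, finitely supported element of $\mathbf{C}_X$.

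Next I would expand this element in terms of the rank‑one generators. Running the Harder--Narasimhan recursion for $\underline{Bun}^{(r,d)}_X$, together with the rewriting of the semistable constituents through rank one, produces an expansion
\[
\mathbf{1}_{(r,d),\geq n}=\sum c_{d_1,\dots,d_r}\;\mathbf{1}^{vec}_{(1,d_1)}\cdots\mathbf{1}^{vec}_{(1,d_r)},\qquad d_i\geq n,\ \textstyle\sum_i d_i=d,
\]
with coefficients in $Rep(T^g_a)=Rep(T^g_s)$; lifting to the generic form $\mathbf{C}_g$ keeps all of these coefficients well defined. Tensoring by a line bundle (the Picard translation) is an automorphism of $\mathbf{C}_g$ carrying the family $\mathbf{1}_{(r,d),\geq n}$ to $\mathbf{1}_{(r,d+re),\geq n+e}$, hence relating all the cases with the ``codimension parameter'' $c=d/r-n$ fixed; under $\Phi$ this automorphism becomes multiplication by an explicit $T^g_s$‑character, so up to that character the $n$‑dependence is carried entirely by $c$.

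Transporting the above expansion through $\Phi$ and the shuffle description of Theorem~\ref{T:SV3} turns $\Phi(\mathbf{1}_{(r,d),\geq n})$ into an explicit rational shuffle element of $\mathbf{K}_g\otimes k_s$ obtained by a partial geometric summation over the $d_i\geq n$. I would then identify $\widehat{\mathbf{K}}_g$ via the filtration by support codimension in $\mathcal C^r_g$ (equivalently, working through the curve analogue of the trace map of Section~3.7, via the normalizing exponent $\dim G_{\gamma,\mathcal L}$, which grows with the size of the smallest Quot presentation accommodating all the generated bundles involved) and reduce the assertion ``$\Phi(\mathbf{1}_{(r,d),\geq n})\to 0$'' to the single quantitative claim that this codimension (resp.\ this exponent) grows without bound as $c\to\infty$, i.e.\ as $n\to-\infty$.

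The hard part is precisely this estimate: one must prove, uniformly in $c$, a lower bound of the shape $\mathrm{codim}\,\mathrm{supp}\,\Phi(\mathbf{1}_{(r,d),\geq n})\geq c-O(1)$ (or the corresponding growth of the normalizing exponent), so that the geometric summation over the Harder--Narasimhan strata genuinely escapes to codimension infinity rather than stabilizing. This is exactly the kind of control on supports that is encoded in the explicit shuffle relations established in the proof of Theorem~\ref{T:SV3}; in genus zero and one it can be extracted directly from the formulas of \cite{Laumon} and \cite{Scano}, and carrying it out uniformly in $c$ is the one step of the argument that is not routine bookkeeping.
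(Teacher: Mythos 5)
The paper does not actually prove this Proposition: it is stated without proof, attributed to [SV3] which the bibliography marks as \emph{in preparation}, and the surrounding text only offers a heuristic interpretation (that $\mathbbm{1}_r$ should correspond under Langlands to an acyclic unbounded complex at the trivial local system). So there is no paper proof to compare against, and your proposal should be judged on its own terms.

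On those terms, your outline is plausible and aligned with the only clues the paper gives — the shuffle realization of Theorem~\ref{T:SV3}, the filtration of $\mathcal C^r_g$ by support codimension, and the completion issues flagged just before the Proposition. But the argument is not a proof: you yourself isolate the entire content into a single unverified estimate, namely that the support codimension (equivalently, the normalizing exponent $\dim G_{\gamma,\mathcal L}$) of $\Phi(\mathbf{1}_{(r,d),\geq n})$ grows without bound as $n\to-\infty$, and then state that "carrying it out uniformly in $c$ is the one step of the argument that is not routine bookkeeping." That is precisely the step that would have to be carried out, and nothing in the preceding reduction makes it obviously true. Two specific points that need justification before the reduction itself is even watertight: (i) the claim that the completion $\widehat{\mathbf K}_g$ is the completion with respect to the codimension filtration is an assumption, not something the paper states — the paper only defines $\widehat{\mathfrak U}_X$ and $\widehat{\mathbf C}_X$ by degreewise products of lines, and the topology on $\mathbf K_g\otimes k_s$ induced by $\Phi$ has to be matched to that; and (ii) the expansion of $\mathbf{1}_{(r,d),\geq n}$ as a \emph{finite} $Rep(T^g_a)$-linear combination of products $\mathbf 1^{vec}_{(1,d_1)}\cdots\mathbf 1^{vec}_{(1,d_r)}$ with all $d_i\geq n$ requires an argument: the Harder--Narasimhan recursion naturally produces semistable constituents of all ranks, and rewriting those in terms of rank-one pieces again introduces sums over Pic-degrees that must be shown to stay $\geq n$ and to be finite. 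So the proposal is a reasonable roadmap consistent with what the paper hints at, but it leaves a genuine gap at the crux of the statement.
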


\vspace{.1in}

This seems to suggest that the constant sheaf $\mathbbm{1}_r$ is mapped, under the Langlands correspondence, to an \textit{acyclic} unbounded complex of
coherent sheaves on (the formal neighborhood $\widehat{\mathbb{C}}_X^r$ of the trivial local system in) $\underline{Loc}^r_X$. This is in accordance with the results in \cite{Lafforgue} when $X=\mathbb{P}^1$.

\newpage

\centerline{\large{\textbf{Windows.}}}
\addcontentsline{toc}{section}{Windows}

\setcounter{section}{6}
\setcounter{theo}{0}
\setcounter{equation}{0}

\vspace{.2in}

We wrap up this survey by providing the interested reader with a few pointers towards further developments of the theory and towards some potential research directions.
Needless to say, this section claims no kind of exhaustivity whatsoever, but rather reflects the author's personal taste.

\vspace{.2in}

\noindent
\textit{Motivic Hall algebras.} Passing from the ``naive'' Hall algebra $\mathbf{H}_\mathcal{A}$ of some abelian finitary category $\mathcal{A}$ to the more
geometric Hall category $\mathcal{Q}_X$ essentially amounts to two things~: \\
i) replacing the (discrete) set of isomorphism classes $\mathcal{M}_{\mathcal{A}}$ of objects in
$\mathcal{A}$ by the stack $\underline{\mathcal{M}}_{\mathcal{A}}$ parametrizing objects in $\mathcal{A}$; and ii) replacing functions on $\mathcal{M}_{\mathcal{A}}$ by 
constructible sheaves on $\underline{\mathcal{M}}_{\mathcal{A}}$. In terms of the product and coproduct, we replace a point count, typically the number $\#Z(\mathbb{F}_q)$ of $\mathbb{F}_q$-rational points of some algebraic variety $Z$ parametrizing extensions in the category $\mathcal{A}$, by the whole cohomology $H^*_c(Z \otimes \overline{\mathbb{F}_q})$.

One can try to go even further, and to keep not just the cohomology $H^*_c(Z\otimes \overline{\mathbb{F}_q})$ but the actual variety $Z$ itself, or its \textit{motive}. In other words, one can try to endow a suitable category of motives $M \to \underline{\mathcal{M}}_{\mathcal{A}}$ over $\underline{\mathcal{M}}_{\mathcal{A}}$ with some induction and restriction functors. Such a construction has been worked out by To\"en (unpublished) and Joyce \cite{Joyce2}. This was also later generalized to the $A_{\infty}$ setting by Kontsevich-Soibelman \cite{KoSoi}. The motivation for the papers \cite{Joyce2}, \cite{KoSoi} is the study of the wall-crossing phenomenon in
Donaldson-Thomas invariants on Calabi-Yau manifolds.

\vspace{.2in}

\noindent
\textit{Canonical bases in modules.}  Fix a quiver $\vec{Q}$ associated to a Kac-Moody algebra $\mathfrak{g}$. By Theorem~\ref{T:Rep}, the canonical basis
$\mathbf{B}$ of $\U_\nu(\n_+)$ is compatible with all lowest weight integrable representations $L_{\lambda}$ of $\U_q(\mathfrak{g})$ in the sense that $\{ \mathbf{b} \cdot v_{\lambda}\;|\; \mathbf{b} \cdot v_{\lambda} \neq 0\}$ forms a basis of $L_{\lambda}$. Here $v_{\lambda}$ is the lowest weight vector of $L_{\lambda}$.
It would be interesting to understand an analogue of this result for other Hall algebras equipped with canonical bases, such as the Hall algebras of smooth projective curves, or weighted projective lines. In the simplest case of $\mathbb{P}^1$, the canonical basis $\mathbf{B}$ of $\U_\nu^+(\widehat{\mathfrak{sl}}_2)$ constructed in \cite{SInvent} has been checked to be compatible with some integrable lowest weight representations; one would also expect it to be compatible with certain \textit{finite-dimensional} modules of $\U_q(\widehat{sl}_2)$ since $\mathbf{B}$ is adapted to the Drinfeld realization of $\U_\nu(\widehat{sl}_2)$ (see \cite[Lecture~4]{Trieste}). It might also be interesting to try to develop an algebraic theory of canonical bases for quantum loop algebras adapted to the Drinfeld realization, taking the above example of $\widehat{sl}_2$ as a model.

\vspace{.2in}

\noindent
\textit{Derived equivalences and canonical bases.} As explained in \cite[Lecture~5]{Trieste}, the Hall algebras of two hereditary finitary categories which are derived equivalent are very closely related. More precisely, if $\mathcal{A}$ and $\mathcal{B}$ are as above then a derived equivalence $F: D^b(\mathcal{A}) \to D^b(\mathcal{B})$ induces (under some mild finiteness conditions, see \textit{loc. cit.}) an isomorphism of \textit{Drinfeld doubles} $\mathbf{D}F:~\mathbf{DH}_{\mathcal{A}} \simeq \mathbf{DH}_{\mathcal{B}}$. It seems natural to try to lift the isomorphism $\mathbf{D}F$ to the geometric level; i.e. to try relate in some way the relevant categories of constructible sheaves over the moduli spaces $\underline{\mathcal{M}}_{\mathcal{A}}$ 
and $\underline{\mathcal{M}}_{\mathcal{B}}$ (or possibly some categories of coherent sheaves over the cotangent bundles $T^*\underline{\mathcal{M}}_{\mathcal{A}}$, $T^*\underline{\mathcal{M}}_{\mathcal{B}}$).
Of course, one has to account for the fact that the geometric constructions explained in Lectures~2 and ~4 relate to Hall algebras $\mathbf{H}_{\mathcal{A}}$ rather then to Drinfeld doubles $\mathbf{DH}_{\A}$. For instance, if $\A=Coh(\mathbb{P}^1)$ and $\mathcal{B}=Rep\;\vec{Q}$ where $\vec{Q}$ is the Kronecker quiver then $\mathbf{DC}_{\mathcal{A}} \simeq \U_\nu(\widehat{\mathfrak{sl}}_2) \simeq \mathbf{DC}_{\mathcal{B}}$ but the composition algebras $\mathbf{C}_{\mathcal{A}}$ and $\mathbf{C}_{\mathcal{B}}$ correspond to different choices of a Borel subalgebra in $\widehat{\mathfrak{sl}}_2$ (see \cite[Lecture~5]{Trieste} for details). Hence it is not directly possible, for exemple, to compare the canonical basis of $\mathbf{C}_{\mathcal{A}}$ (coming from the geometry of the moduli space of vector bundles over $\mathbb{P}^1$) with that of $\mathbf{C}_{\mathcal{B}}$ (coming from the geometry of the moduli space of representations of $\vec{Q}$). However, it might be possible to compare the images of these two canonical bases in some \textit{representation} of $\U_\nu(\widehat{\mathfrak{sl}}_2)$. Some simple cases are computed in \cite[Section~11]{SInvent}, where the two bases are shown to agree in some lowest weight integrable representations.

\vspace{.2in}

\noindent
\textit{Khovanov-Lauda-Rouquier algebras and categorification.} For a given quiver $\vec{Q}$,  the category $\mathcal{Q}_{\vec{Q}}$ is by definition semisimple (its objects are by construction semisimple complexes). However the simple perverse sheaves $\mathbb{P} \in \mathcal{P}_{\vec{Q}}$ of course have some nontrivial extensions spaces between themselves in the derived category $D^b(\underline{\mathcal{M}}_{\vec{Q}})$. One way to encode these extension spaces is to consider the
associative Ext-algebra $\mathbf{R}'=\bigoplus_{\mathbb{P}, \mathbb{P}'} Ext^*(\mathbb{P},\mathbb{P}')$, where $\mathbb{P}, \mathbb{P}'$ vary in $\mathcal{P}_{\vec{Q}}$. Here the algebra structure comes from the Yoneda product on the Ext groups in $D^b(\underline{\mathcal{M}}_{\vec{Q}})$. Observe that $\mathbf{R}'$ splits as a direct sum of associative algebras $\mathbf{R}'=\bigoplus_{\gamma \in \N^I} \mathbf{R}'_{\gamma}$ according to the dimension vector for $\vec{Q}$.
It turns out that it is better to consider a slightly different algebra. Recall that to any sequence $\mathbf{i}=(i_1, \ldots, i_l)$ of vertices of $\vec{Q}$ is associated the Lusztig sheaf $L_{\mathbf{i}}:=L_{\epsilon_{i_1}, \ldots, \epsilon_{i_l}}$ (a semisimple complex on $\underline{\mathcal{M}}_{\vec{Q}}$) and that the simple perverse sheaves of $\mathcal{P}_{\vec{Q}}$ are precisely the simple factors of the $L_{\mathbf{i}}$s as the sequence $\mathbf{i}$ varies. Put $\mathbf{R}=\bigoplus_{\mathbf{i},\mathbf{i}'} Ext^*(L_{\mathbf{i}}, L_{\mathbf{i'}})$. There is again a decomposition $\mathbf{R}=\bigoplus_{\gamma \in \N^I} \mathbf{R}_{\gamma}$. Khovanov-Lauda (see \cite{KL}) and independently (and in a wider context) Rouquier (see \cite{Rouquier}) conjectured an explicit, combinatorial presentation for the algebra $\mathbf{R}$, which was later verified by Varagnolo-Vasserot (see \cite{VV2}). The algebra $\mathbf{R}$ and its presentation play a fundamental role in Rouquier's theory of $2$-categorifications of quantum groups (see \textit{loc. cit}). By some standard yoga, the assignement $\mathbb{P} \mapsto \bigoplus_{\mathbf{i}} Ext^*(L_{\mathbf{i}},\mathbb{P})$ induces an isomorphism at the level of the Grothendieck groups $\U_{\nu}(\n_+) \simeq \mathcal{K}_{\vec{Q}} \stackrel{\sim}{\to} K_0(\mathbf{R}\text{-Modgr})$, where $\mathbf{R}$-Modgr is the category of finitely generated graded $\mathbf{R}$-modules. Moreover, this map sends the canonical basis to the classes of indecomposable projective objects. This may be viewed as a realization of the project of `categorification' of $\U_{\nu}(\n_+)$.

\vspace{.2in}

\noindent
\textit{Zheng's geometric construction of integrable representations.} Let $\vec{Q}$ be a quiver as in Lecture~1. The category $\mathcal{Q}_{\vec{Q}}$, along with its set of simple objects $\mathcal{P}_{\vec{Q}}$ and induction and restriction functors $\underline{m}, \underline{\Delta}$, yields a geometric lift of the quantum enveloping algebra $\U_\nu(\n_+)$ with a canonical basis $\mathbf{B}$. It has long been sought to obtain such a geometric lift not only of $\U_q(\n_+)$ itself but also of all the integrable highest weight representations $L_{\lambda}$ of $\U_\nu(\mathfrak{g})$, for $\lambda$ a dominant integral weight of $\mathfrak{g}$. In the recent work \cite{Zheng}, Hao Zheng manages to produce such a lift
by suitably ``truncating'' the category $\mathcal{Q}_{\vec{Q}}$, keeping only objects satisfying certain stability conditions (depending on $\lambda$). The induction and restriction functors, as well as the set of simple objects $\mathcal{P}_{\vec{Q}}$ get ``truncated'' in a similar way. The notion of stability used is directly inspired by Nakajima's theory of quiver varieties (see below). Zheng's construction requires one to consider all orientations of the quiver $\vec{Q}$ at once and to use the Fourier-Deligne transform; once again it is natural to expect that the best formulation would use some kind of Fukaya or Floer category in the cotangent bundle to the moduli spaces of (stable) representations of $\vec{Q}$. 

By a similar method, Zheng is also able to realize geometrically arbitrary tensor products $L_{\lambda_1} \otimes \cdots \otimes L_{\lambda_l}$. This provides a geometric construction of the canonical basis for such tensor products and has some interesting corollaries (such as the positivity of the structure constants in this canonical basis, etc.). 
This construction in terms of perverse sheaves may also be used to construct an explicit `categorification' of all the irreducible modules $L_{\lambda}$ as well as of their tensor products, in the same sense as in the previous paragraph (see \cite{Rouquier}).

\vspace{.1in}

It would be very nice to generalize Zheng's construction to the setting of Hall algebras of curves. This would provide, for instance, a nice class of representations of quantum toroidal algebras (when the curve is a weighted projective line of genus one, see \cite{Trieste}, Lecture~4).

\vspace{.2in}

\noindent
\textit{The semicanonical basis and constructible functions on the nilpotent variety.} Fix again a quiver $\vec{Q}$ associated to a Kac-Moody algebra 
$\mathfrak{g}$. Besides Lusztig's geometric canonical basis $\mathbf{B}$
of $\U_\nu(\n_+)$ (constructed via perverse sheaves on $\underline{\mathcal{M}}_{\vec{Q}}$), and Kashiwara-Saito's crystal graph $\mathcal{B}(\infty)$ (realized in the set of irreducible components of the Lagrangian variety $\underline{\Lambda}_{\vec{Q}} \subset T^* \underline{\mathcal{M}}_{\vec{Q}}$), there is a third construction, due to Lusztig, of a basis $B$ which is somewhat intermediate called the \textit{semicanonical basis} (\cite{LUsemi}). This is a basis of the \textit{classical} enveloping algebra $\U(\n_+)$, and it comes from a realization of $\U(\n_+)$ as a convolution algebra of constructible \textit{functions} on $\underline{\Lambda}_{\vec{Q}}$. Although it differs in general from the specialization of $\mathbf{B}$ at $\nu=1$ the basis $B$ also has a number of remarkable properties, such as compatibility with all lowest weight integrable representations. In addition, it seems to be intimately related to the theory of cluster algebras and 2-Calabi-Yau categories (see \cite{CGL}, \cite{CGL2},...).

In \cite{Pouchin}, Pouchin constructed and studied an analogue of the semicanonical in the case of weighted projective curves $\mathbb{X}_{p,\lambda}$. In that setting, elements of the semicanonical basis are certain constructible functions on the global nilpotent cone $\underline{\Lambda}_{\mathbb{X}_{p,\lambda}}$. The case of arbitrary smooth projective curves remains to be done. It should be interesting to relate these semicanonical bases have with some representation theory or combinatorics (as in \cite{CGL}). 

\vspace{.2in}

\noindent
\textit{Nakajima quiver varieties.} In the mid 90s H. Nakajima introduced a family of smooth algebraic varieties $\mathfrak{M}_{\mathbf{v},\mathbf{w}}$ attached to an arbitrary quiver $\vec{Q}$ and to a pair of dimension vectors $\mathbf{v}, \mathbf{w}$ of $\vec{Q}$. These \textit{quiver varieties} may (roughly) be thought of as moduli spaces of representations of the double quiver $\overline{Q}$ over $\mathbb{C}$, of dimension $\mathbf{v}$, which satisfy the moment map equation $\mu(\underline{x})=0$ as in Section~4.2., and which are stable with respect to some stability condition depending on $\mathbf{w}$. Thus the variety $\mathfrak{M}_{\mathbf{w}}:=\bigsqcup_{\mathbf{v}} \underline{\mathcal{M}}_{\mathbf{v},\mathbf{w}}$ is a kind of smooth approximation of the whole cotangent stack $T^*\underline{\mathcal{M}}_{\vec{Q}}$. The auxiliary parameter $\mathbf{w}$ may be thought of as an integral  dominant (or antidominant) weight $\Lambda_\mathbf{w}:=\sum_i w_i \Lambda_i$ for $\g$.
 Just as $T^*\underline{\mathcal{M}}_{\vec{Q}}$ has the Lagrangian subvariety $\underline{\Lambda}_{\vec{Q}}$, $\mathfrak{M}_{\mathbf{w}}$ comes with a remarkable Lagrangian subvariety $\mathfrak{L}_{\mathbf{w}}$, defined by means of certain nilpotency conditions. Note that $\mathfrak{M}_{\mathbf{w}}$ and $\mathfrak{L}_{\mathbf{w}}$ are genuine algebraic varieties whereas $T^*\underline{\mathcal{M}}_{\vec{Q}}$ and $\underline{\Lambda}_{\vec{Q}}$ are only stacks.

Nakajima's quiver varieties are expected to play a fundamental role in the problem of geometric realization or categorification of integrable lowest (or highest) weight modules $L_{\Lambda_{\mathbf{w}}}$ for $\U_\nu(\g)$, similar to the one which $T^*\underline{\mathcal{M}}_{\vec{Q}}$ is expected to play for $\U_{\nu}(\n_+)$ (see Lecture~4). In particular, one would ideally like to realize the modules $L_{\Lambda_{\mathbf{w}}}$ as the Grothendieck group of a suitable category of coherent sheaves on $\mathfrak{M}_{\mathbf{w}}$ supported on $\mathfrak{L}_{\mathbf{w}}$ or as a subcategory of the Fukaya category of $\mathfrak{M}_{\mathbf{w}}$.

Although --to the author's knowledge-- such a program has not been carried through yet, there exist a profusion of important results relating the quiver varieties $\mathfrak{M}_{\mathbf{w}}$ to the modules $L_{\Lambda_{\mathbf{w}}}$. For instance, the module $L_{\Lambda_\mathbf{w}}$ for the classical enveloping algebra $\U(\g)$ may be realized as the top Borel-Moore homology group $H^{top}(\mathfrak{L}_{\mathbf{w}})$, with the action of the Chevalley generators $e_i, f_i, h_i, i \in I$ of $\g$ given by the convolution with some natural Hecke correspondences (see \cite{Nak1}). Taking the fundamental classes $[C]$ of all the irreducible components $C$ of $\mathfrak{L}_{\mathbf{w}}$ yields a natural basis $B_{\mathbf{w}}$ of $L_{\Lambda_\mathbf{w}}$; it is conjectured to coincide with the projection of the semicanonical basis $B$ onto $L_{\Lambda_{\mathbf{w}}}$.
  
We briefly state a number of other results: Saito equipped the set $Irr\;\mathfrak{L}_{\mathbf{w}}$ of irreducible components of the Lagrangian quiver variety $\mathfrak{L}_{\mathbf{w}}$ with an $I$-colored graph structure, and showed that it is isomorphic to $\mathcal{B}(\Lambda_{\mathbf{w}})$ (see \cite{Saito}); Nakajima realized the classical $L_{\Lambda_{\mathbf{w}}}$ in terms of constructible functions on the same Lagrangian quiver variety $\mathfrak{L}_{\mathbf{w}}$ (see \cite{Nak2}); Finally, he also constructed an action of the quantum loop algebra $\U_{\nu}(L\g)$ on the equivariant K-theory group $K^{G_{\mathbf{w}} \times \C^*}(\mathfrak{L}_{\mathbf{w}})$ and identified the corresponding module (see \cite{Nak3}). This last result has some important consequences for the study of finite dimensional representations of quantum affine algebras. We refer the interested reader to the surveys \cite{Naksurvey}, \cite{Snaksurvey} and the references therein for more.

\vspace{.1in}

The theory of Nakajima quiver varieties has, at the moment, no analogue when the quiver gets replaced by a smooth projective curve. The author can't help thinking that there is a lot yet to be discovered there.

\newpage

\centerline{\textbf{Acknowledgements}}

\vspace{.1in}

I would like to thank all the organisers of the summer school ``Geometric representation theory'' for giving me the opportunity to give these lectures, and the participants for attending them. I am especially grateful to Michel Brion for encouraging me to write up these notes, to M. Anel for making me (slightly) less ignorant about stacks and to J.-F. Dat, D. Juteau, E. Vasserot and G. Williamson for some helpful comments.

\vspace{.2in}

\small{}

\vspace{.2in}

\noindent
Olivier Schiffmann,\\
Institut Math\'ematique de Jussieu, CNRS,\\
175 rue du Chevaleret, 75013 Paris \\
FRANCE,\\
email:\;\texttt{olive@math.jussieu.fr}
\end{document}